\providecommand{\printnomenclature}{\printglossary}
\providecommand{\makenomenclature}{\makeglossary}
\providecommand{\tabularnewline}{\\}
\def\RSthmtxt{theorem~}\newref{thm}{name = \RSthmtxt}}
\def\RSlemtxt{lemma~}\newref{lem}{name = \RSlemtxt}}
\theoremstyle{plain}
\newtheorem*{fact*}{\protect\factname}
\theoremstyle{plain}
\newtheorem{thm}{\protect\theoremname}
\theoremstyle{definition}
\newtheorem{defn}[thm]{\protect\definitionname}
\theoremstyle{plain}
\newtheorem{fact}[thm]{\protect\factname}
\theoremstyle{plain}
\newtheorem{lem}[thm]{\protect\lemmaname}
\theoremstyle{plain}
\newtheorem{cor}[thm]{\protect\corollaryname}
\theoremstyle{remark}
\newtheorem*{rem*}{\protect\remarkname}
\theoremstyle{plain}
\newtheorem{blackbox}[thm]{\protect\blackboxname}
\theoremstyle{definition}
\newtheorem*{defn*}{\protect\definitionname}
\theoremstyle{definition}
\newtheorem*{problem*}{\protect\problemname}
\DeclareMathOperator{\Imm}{Im}
\DeclareMathOperator{\Ree}{Re}
\DeclareMathOperator{\vol}{vol}
\let\div\relax
\DeclareMathOperator{\div}{div}
\DeclareMathOperator{\Div}{div}
\DeclareMathOperator{\Ric}{Ric}
\DeclareMathOperator{\grad}{grad}
\DeclareMathOperator{\colim}{colim}
\DeclareMathOperator{\Ker}{Ker}
\DeclareMathOperator{\supp}{supp}
\DeclareMathOperator{\tr}{tr}
\DeclareMathOperator{\avg}{avg}
\def\Xint#1{\mathchoice
{\XXint\displaystyle\textstyle{#1}}%
{\XXint\textstyle\scriptstyle{#1}}%
{\XXint\scriptstyle\scriptscriptstyle{#1}}%
{\XXint\scriptscriptstyle\scriptscriptstyle{#1}}%
\!\int}
\def\XXint#1#2#3{{\setbox0=\hbox{$#1{#2#3}{\int}$ }
\vcenter{\hbox{$#2#3$ }}\kern-.6\wd0}}
\def\dashint{\Xint-}
    \let\old@@@nomenclature=\@@@nomenclature        
        \newcounter{@nomcount} \setcounter{@nomcount}{0}%
        \renewcommand\the@nomcount{\two@digits{\value{@nomcount}}}% Ensure 10>01
        \def\@@@nomenclature[#1]#2#3{% Taken from package documentation
          \addtocounter{@nomcount}{1}%
        \def\@tempa{#2}\def\@tempb{#3}%
          \protected@write\@nomenclaturefile{}%
          {\string\nomenclatureentry{\the@nomcount\nom@verb\@tempa @[{\nom@verb\@tempa}]%
          \begingroup\nom@verb\@tempb\protect\nomeqref{\theequation}%
          |nompageref}{\thepage}}%
          \endgroup
          \@esphack}%
\newcommand\iso{\xrightarrow{
   \,\smash{\raisebox{-0.65ex}{\ensuremath{\scriptstyle\sim}}}\,}}
\newcommand\restr[2]{{% we make the whole thing an ordinary symbol
  \left.\kern-\nulldelimiterspace % automatically resize the bar with \right
  #1 % the function
  \vphantom{\big|} % pretend it's a little taller at normal size
  \right|_{#2} % this is the delimiter
  }}
\providecommand{\blackboxname}{Blackbox}
\providecommand{\corollaryname}{Corollary}
\providecommand{\definitionname}{Definition}
\providecommand{\factname}{Fact}
\providecommand{\lemmaname}{Lemma}
\providecommand{\problemname}{Problem}
\providecommand{\remarkname}{Remark}
\providecommand{\theoremname}{Theorem}
\begin{document}
\title{Hodge-theoretic analysis on manifolds with boundary, heatable currents,
and Onsager's conjecture in fluid dynamics}
\author{Khang Manh Huynh}
\maketitle
\begin{abstract}
We use Hodge theory and functional analysis to develop a clean approach
to heat flows and Onsager's conjecture on Riemannian manifolds with
boundary, where the weak solution lies in the trace-critical Besov
space $B_{3,1}^{\frac{1}{3}}$. We also introduce heatable currents
as the natural analogue to tempered distributions and justify their
importance in Hodge theory.
\end{abstract}

\subsection*{Acknowledgments}

The author thanks his advisor Terence Tao for the invaluable guidance
and patience as the project quickly outgrew its original scale. The
author is also grateful to Jochen Glück for recommending Stephan Fackler's
insightful PhD thesis \cite{Stephan_thesis}.

\tableofcontents{}

\section{Introduction}

\subsection{Onsager's conjecture\label{subsec:Onsager's-conjecture_intro}}

Recall the incompressible Euler equation in fluid dynamics:

\begin{equation}
\left\{ \begin{array}{rll}
\partial_{t}V+\Div\left(V\otimes V\right) & =-\grad p & \text{ in }M\\
\Div V & =0 & \text{ in }M\\
\left\langle V,\nu\right\rangle  & =0 & \text{ on }\partial M
\end{array}\right.\label{eq:Euler}
\end{equation}

where $\left\{ %
\begin{tabular}{l}
$(M,g)$ is an oriented, compact smooth Riemannian manifold with smooth
boundary, dimension $\geq2$\tabularnewline
$\nu$ is the outwards unit normal vector field on $\partial M$.\tabularnewline
$I\subset\mathbb{R}$ is an open interval, $V:I\to\mathfrak{X}M$,
$p:I\times M\to\mathbb{R}$.\tabularnewline
\end{tabular}\right.$

Observe that the \textbf{Neumann condition} $\left\langle V,\nu\right\rangle =0$
means $V\in\mathfrak{X}_{N}$, where $\mathfrak{X}_{N}$ is the set
of vector fields on $M$ which are tangent to the boundary. Note that
when $V$ is not smooth, we need the trace theorem to define the condition
(see \Subsecref{On-domains}).

Roughly speaking, Onsager's conjecture says that the energy $\left\Vert V(t,\cdot)\right\Vert _{L^{2}}$
is a.e. constant in time when $V$ is a weak solution whose regularity
is at least $\frac{1}{3}$. Making that statement precise is part
of the challenge.

In the boundaryless case, the ``positive direction'' (conservation
when regularity is at least $\frac{1}{3})$ has been known for a long
time \parencite{Eyink1994_prelim,constantin1994,Cheskidov2008}. The
``negative direction'' (failure of energy conservation when regularity
is less than $\frac{1}{3}$) is substantially harder \parencite{DeLellis2012_Onsager,DeLellis2014_Onsager},
and was finally settled by Isett in his seminal paper \cite{PhilipIsett2018}
(see the survey in \parencite{DeLellis2019_survey} for more details
and references).

Since then more attention has been directed towards the case with
boundary, and its effects in the generation of turbulence. In \cite{Titi_Onsager_bounded_domain},
the ``positive direction'' was proven in the case $M$ is a bounded
domain in $\mathbb{R}^{n}$ and $V\in L_{t}^{3}C^{0,\alpha}\mathfrak{X}_{N}$
($\alpha>\frac{1}{3}$). The result was then improved in various ways
\parencite{Drivas_2018_Onsager_anomalous,Bardos_2018_Gwiazda_conservation,Bardos_2019_Onsager_Holder_int}.
In \cite{Nguyen_Onsager_bounded_domain}, the conjecture was proven
for $V$ in $L_{t}^{3}B_{3,\infty}^{\alpha}\mathfrak{X}$ ($\alpha>\frac{1}{3}$)
along with some ``strip decay'' conditions for $V$ and $p$ near
the boundary (more details in \Subsecref{Searching-for-the}). Most
recently, the conjecture was proven as part of a more general conservation
of entropy law in \parencite{Titi_Gwiazda_conservation_entropy},
where $M$ is a domain in $\mathbb{R}^{n}$, $V\in L_{t}^{3}\underline{B}_{3,\mathrm{VMO}}^{1/3}\mathfrak{X}$
(where $\underline{B}_{3,\mathrm{VMO}}^{1/3}\mathfrak{X}$ is a VMO-type
subspace of $B_{3,\infty}^{1/3}\mathfrak{X}$), along with a ``strip
decay'' condition involving both $V$ and $p$ near the boundary
(see \Subsecref{Searching-for-the}).

Much less is known about the conjecture on general Riemannian manifolds.
The key arguments on flat spaces rely on the nice properties of convolution,
such as $\Div\left(T*\phi_{\varepsilon}\right)=\Div\left(T\right)*\phi_{\varepsilon}$
where $T$ is a tensor field and $\phi_{\varepsilon}\xrightarrow{\varepsilon\downarrow0}\delta_{0}$
is a mollifier, or that mollification is essentially local. This ``local
approach'' by convolution does not generalize well to Riemannian
manifolds. In \parencite{Isett2015_heat} -- the main inspiration
for this paper -- Isett and Oh used the heat flow to prove the conjecture
on compact Riemannian manifolds without boundary, for $V\in L_{t}^{3}B_{3,c\left(\mathbb{N}\right)}^{\frac{1}{3}}\mathfrak{X}$
(where $B_{3,c\left(\mathbb{N}\right)}^{\frac{1}{3}}\mathfrak{X}$
is the $B_{3,\infty}^{\frac{1}{3}}$-closure of compactly supported
smooth vector fields). The situation becomes more complicated when
the boundary is involved. Most notably, the covariant derivative behaves
badly on the boundary (e.g. the second fundamental form), and it is
difficult to avoid boundary terms that come from integration by parts.
Even applying the heat flow to a distribution might no longer be well-defined.
This requires a finer understanding of analysis involving the boundary,
as well as the properties of the heat flow.

In this paper, we will see how we can resolve these issues, and that
the conjecture still holds true with the boundary:
\begin{fact*}
Assuming $M$ as in \Eqref{Euler}, conservation of energy is true
when $\left(V,p\right)$ is a weak solution with $V\in L_{t}^{3}B_{3,1}^{\frac{1}{3}}\mathfrak{X}_{N}$.
\end{fact*}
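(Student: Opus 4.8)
The plan is to run a Hodge-theoretic version of the Constantin--E--Titi argument in which spatial mollification is replaced by the boundary-adapted heat semigroup. Identify $V(t)$ with a $1$-form via the metric and let $e^{t\Delta}$ denote the heat semigroup of the self-adjoint Hodge Laplacian $\Delta$ on $1$-forms with absolute boundary conditions --- the operator whose form domain is the $H^1$-closure of $\mathfrak{X}_N$ --- so that $e^{t\Delta}$ is uniformly bounded on $L^p\mathfrak{X}_N$, strongly continuous on $L^2$, and commutes with $d$ and $\delta$, hence preserves both the tangency $\langle\cdot,\nu\rangle=0$ and the constraint $\Div=0$. Set $V_\ell:=e^{\ell\Delta}V$. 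By the heatable-currents framework of the earlier sections, $V_\ell$ is smooth on $M$ for $\ell>0$, absolutely continuous in $t$, and solves $\partial_tV_\ell+e^{\ell\Delta}\Div(V\otimes V)=-\grad p_\ell$, where $p_\ell:=e^{\ell\Delta_0}p$ with $\Delta_0$ the Neumann Laplacian on functions (here $V\otimes V\in L^{3/2}_tL^{3/2}$, and $p\in L^{3/2}_{t,\mathrm{loc}}L^{3/2}$ solves by elliptic regularity the Neumann problem $\Delta_0p=-\Div\Div(V\otimes V)$, $\partial_\nu p=\mathrm{II}(V,V)$ obtained by testing the momentum equation against $\nu$ on $\partial M$, using the trace of $V$). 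Pairing with $V_\ell$ yields the absolutely continuous energy identity
\[
\tfrac12\tfrac{d}{dt}\|V_\ell\|_{L^2}^2=-\int_M\big\langle V_\ell,\ e^{\ell\Delta}\Div(V\otimes V)\big\rangle ,
\]
because the pressure contributes $-\int_M(\Div V_\ell)\,p_\ell+\int_{\partial M}\langle V_\ell,\nu\rangle p_\ell=0$; this is the first place the boundary-adapted Hodge heat flow is essential.

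Next I would unwind the flux. Commuting $e^{\ell\Delta}$ past $\Div$ costs only a Weitzenb\"ock-curvature remainder, which by a Duhamel expansion together with the $s^{-1/2}$ smoothing of the heat flow has $L^{3/2}$-norm $O(\ell^{1/2})\|V\|_{L^3}^2$ --- harmless after pairing with $V_\ell$ and integrating over a compact time interval. Modulo this, integration by parts turns the flux into $\int_M\langle\nabla V_\ell,(V\otimes V)_\ell\rangle$ plus a boundary integral $\int_{\partial M}V_\ell^i\big((V\otimes V)_\ell\big)_{ji}\nu^j$, where $(\cdot)_\ell$ is now the heat flow on symmetric $2$-tensors; that boundary integral vanishes provided one uses for it exactly the $2$-tensor Hodge Laplacian whose boundary conditions preserve the vanishing of $\iota_\nu$ on $\partial M$ (valid for $V\otimes V$ since $V$ is tangent to $\partial M$), so the boundary-adapted heat flow pays off a second time. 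Writing $(V\otimes V)_\ell=V_\ell\otimes V_\ell+r_\ell$ and using $\int_M\langle\nabla V_\ell,V_\ell\otimes V_\ell\rangle=-\tfrac12\int_M|V_\ell|^2\Div V_\ell+\tfrac12\int_{\partial M}|V_\ell|^2\langle V_\ell,\nu\rangle=0$, the flux reduces to
\[
\mathrm{Flux}_\ell(t)=\int_M\langle\nabla V_\ell,\ r_\ell\rangle+O(\ell^{1/2}),\qquad r_\ell:=(V\otimes V)_\ell-V_\ell\otimes V_\ell ,
\]
and a heat-flow commutator estimate of Constantin--E--Titi type gives $\|r_\ell\|_{L^{3/2}}\lesssim\|V-V_\ell\|_{L^3}^2$.

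The core of the proof is to show $\int_K|\mathrm{Flux}_\ell(t)|\,dt\to0$ as $\ell\downarrow0$ for every compact $K\subset I$. Write $a_j(t):=\|P_jV(t)\|_{L^3}$ for a Littlewood--Paley decomposition. The heat-flow Bernstein inequalities give $\|\nabla V_\ell(t)\|_{L^3}\lesssim\sum_{2^{2j}\ell\lesssim1}2^ja_j(t)$ and $\|V(t)-V_\ell(t)\|_{L^3}\lesssim\sum_{2^{2j}\ell\gtrsim1}a_j(t)+\ell\sum_{2^{2j}\ell\lesssim1}2^{2j}a_j(t)$. For a.e.\ $t$, the hypothesis $V\in L^3_tB^{1/3}_{3,1}$ gives $\sum_j2^{j/3}a_j(t)<\infty$, hence $2^{j/3}a_j(t)\to0$; since convolving a null sequence with any of the summable kernels $2^{-2k/3},2^{-k/3},2^{-5k/3}$ again produces a null sequence, one obtains $\|\nabla V_\ell(t)\|_{L^3}=o(\ell^{-1/3})$ and $\|V(t)-V_\ell(t)\|_{L^3}=o(\ell^{1/6})$. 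This summability is precisely what upgrades these from the merely $O(\cdot)$ bounds available under $B^{1/3}_{3,\infty}$ to $o(\cdot)$ --- which is why the theorem is stated at the summable endpoint. H\"older in space now gives $|\mathrm{Flux}_\ell(t)|\lesssim\|\nabla V_\ell(t)\|_{L^3}\|V(t)-V_\ell(t)\|_{L^3}^2+O(\ell^{1/2})\|V(t)\|_{L^3}^2=o(1)$ pointwise in $t$, while the crude powers $\ell^{-1/3},\ell^{1/6}$ majorize $|\mathrm{Flux}_\ell(t)|$ by $C\big(\|V(t)\|_{B^{1/3}_{3,1}}^3+\|V(t)\|_{B^{1/3}_{3,1}}^2\big)\in L^1_t(K)$ uniformly in $\ell$; dominated convergence finishes it. Finally $e^{\ell\Delta}$ is strongly continuous on $L^2$, so $V_\ell(t)\to V(t)$ in $L^2$ and $\|V_\ell(t)\|_{L^2}^2\to\|V(t)\|_{L^2}^2$ for a.e.\ $t$; integrating the energy identity from $s$ to $t$ and letting $\ell\downarrow0$ gives $\|V(t)\|_{L^2}=\|V(s)\|_{L^2}$ for a.e.\ $s,t\in I$, i.e.\ conservation of energy.

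I expect the principal obstacle to be not the commutator estimate itself but the boundary analysis that legitimizes it: showing that $e^{\ell\Delta}$, and its interaction with $\Div$, $d$ and $\grad$, is well defined on the merely distributional $V$ and $\partial_tV$ (the heatable-currents theory); choosing Hodge Laplacians on $1$- and $2$-tensors whose boundary conditions simultaneously annihilate the pressure term and the integration-by-parts boundary integral while staying compatible under $d$, $\delta$ and $\Div$; and bounding the Weitzenb\"ock and second-fundamental-form contributions uniformly in $\ell$. Obtaining the $L^{3/2}$ pressure regularity from the elliptic Neumann problem with the rough boundary data $\mathrm{II}(V,V)$, and verifying that the weak-solution class $L^3_tB^{1/3}_{3,1}\mathfrak{X}_N$ is stable under the heat flow with $\ell$-uniform norms, are the remaining technical hinges.
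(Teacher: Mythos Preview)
Your overall architecture is right --- replace convolution by a boundary-adapted heat flow, reduce to a commutator, and use the $B^{1/3}_{3,1}$ summability to get the $o(1)$ endpoint vanishing. But the central step, the estimate $\|r_\ell\|_{L^{3/2}}\lesssim\|V-V_\ell\|_{L^3}^2$ for $r_\ell=(V\otimes V)_\ell-V_\ell\otimes V_\ell$, is not available for the heat semigroup. The Constantin--E--Titi bound relies on the pointwise identity
\[
(fg)*\phi_\ell - (f*\phi_\ell)(g*\phi_\ell)=\big((f-f(x))(g-g(x))\big)*\phi_\ell - (f-f(x))*\phi_\ell\cdot (g-g(x))*\phi_\ell,
\]
which uses that convolution fixes constants \emph{centered at each point} and that the kernel is nonnegative. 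For the Hodge heat flow on forms there is no such centering, the kernel is not pointwise positive, and on a curved manifold ``constant vector field'' is not even heat-invariant. No known argument produces your quadratic commutator bound for the form heat flow; this is precisely why Isett--Oh (and the present paper) do \emph{not} mollify the tensor $V\otimes V$.

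The paper's route avoids both this and your auxiliary $2$-tensor heat flow. One never smooths $V\otimes V$; instead $\Div(V\otimes V)$ is interpreted as a heatable $1$-current and heated directly. The pressure is removed at the outset by testing against $\mathbb{P}\mathfrak{X}$ (Hodge--Leray weak solutions), so no Neumann problem for $p$ with the rough boundary data $\mathrm{II}(V,V)$ is needed. The commutator that replaces your $r_\ell$ is
\[
\mathcal{W}(s)=e^{3s\Delta}\Div(V\otimes V)-e^{s\Delta}\Div\!\big(e^{2s\Delta}V\otimes e^{2s\Delta}V\big),
\]
chosen so that $(\partial_s-3\Delta)\mathcal{W}=:\mathcal{N}$ is explicit and purely in terms of $\nabla V^{2\sigma}$, $\Delta V^{2\sigma}$. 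Duhamel gives $\mathcal{W}(s)=\int_{0^+}^{s}e^{3(s-\sigma)\Delta}\mathcal{N}(\sigma)\,d\sigma$, and after moving the outer heat factor onto the test field and integrating by parts in Penrose notation, all curvature and boundary terms are lower order; the three surviving second-order pieces cancel exactly. The vanishing input is not a Littlewood--Paley null-sequence argument but the single interpolation fact $s^{1/3}\|e^{s\Delta}X\|_{W^{1,3}}\to 0$ for $X\in\mathbb{P}B^{1/3}_{3,1}\mathfrak{X}_N$, which feeds directly into the three error integrals. If you want to salvage your plan, the honest fix is to drop the tensor mollifier and the quadratic commutator bound and work with a heat-flow commutator of the form above; the $2$-tensor heat flow and the compatibility of its boundary conditions with $\Div$ are a second project in themselves, and the paper explicitly sidesteps them.
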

It is not a coincidence that this is also the lowest regularity where
the trace theorem holds. We also note a very curious fact that no
``strip decay'' condition involving $p$ (which is present in different
forms for the results on flat spaces) seems to be necessary, and we
only need $p\in L_{\mathrm{loc}}^{1}\left(I\times M\right)$ (see
\Subsecref{Justification} for details). One way to explain this minor
improvement is that the ``strip decay'' condition involving $V$
naturally originates from the trace theorem (see \Subsecref{Justification}),
and is therefore included in the condition $V\in L_{t}^{3}B_{3,1}^{\frac{1}{3}}\mathfrak{X}_{N}$,
while the presence of $p$ is more of a technical artifact arising
from localization (see \parencite[Section 4]{Titi_Gwiazda_conservation_entropy}),
which typically does not respect the Leray projection. By using the
trace theorem and the heat flow, our approach becomes global in nature,
and thus avoids the artifact. Another approach is to formulate the
conjecture in terms of Leray weak solutions like in \parencite{Skipper2018_onsager_no_pressure},
without mentioning $p$ at all, and we justify how this is possible
in \Subsecref{Justification}.

A more local approach, where we assume $V\in L_{t}^{3}B_{3,c\left(\mathbb{N}\right)}^{\frac{1}{3}}\mathfrak{X}$
as in \parencite{Isett2015_heat}, and the ``strip decay'' condition
as in \parencite[Equation 4.9]{Titi_Gwiazda_conservation_entropy},
would be a good topic for another paper. Nevertheless, $B_{3,1}^{\frac{1}{3}}\mathfrak{X}_{N}$
is an interesting space with its own unique results, which keep the
exposition simple and allow the boundary condition to be natural.

\subsection{Modularity}

The paper is intended to be modular: the part dealing with Onsager's
conjecture (\Secref{Onsager's-conjecture}) is relatively short, while
the rest is to detail the tools for harmonic analysis on manifolds
we will need (and more). As we will summarize the tools in \Secref{Onsager's-conjecture},
they can be read independently.

\subsection{Motivation behind the approach\label{subsec:Motivation-behind-the}}

Riemannian manifolds (and their semi-Riemannian counterparts) are
among the most important natural settings for modern geometric PDEs
and physics, where the objects for analysis are often vector bundles
and differential forms. The two fundamental tools for a harmonic analyst
-- \textbf{mollification} and \textbf{Littlewood-Paley projection}
via the Fourier transform -- do not straightforwardly carry over
to this setting, especially when the boundary is involved. Even in
the case of scalar functions on bounded domains in $\mathbb{R}^{n}$,
mollification arguments often need to stay away from the boundary,
which can present a problem when the trace is nonzero. Consider, however,
the idea of a special kind of Littlewood-Paley projection which preserves
the boundary conditions and commutes with important operators such
as divergence and the \textbf{Leray projection}, or using the principles
of harmonic analysis without translation invariance. It is one among
a vast constellation of ideas which have steadily become more popular
over the years, with various approaches proposed (and we can not hope
to fully recount here).

For our discussion, the starting point of interest is perhaps \parencite{Strichartz1983},
in which Strichartz introduced to analysts what had long been known
to geometers, the rich setting of complete Riemannian manifolds, where
harmonic analysis (and the \textbf{Riesz transform }in particular)
can be done via the Laplacian and the \textbf{heat semigroup} $e^{t\Delta}$,
constructed by \textbf{dissipative operators} and Yau's lemma. Then
in \cite{Klainerman2006}, Klainerman and Rodnianski defined the $L^{2}$-heat
flow by the \textbf{spectral theorem} and used it to get the Littlewood-Paley
projection on compact 2-surfaces. In \cite{Isett2015_heat}, Isett
and Oh successfully tackled Onsager's conjecture on Riemannian manifolds
without boundary by using Strichartz's heat flow. These results hint
at the central importance of the heat flow for analysis on manifolds.
But it is not enough to settle the case with boundary, especially
when derivatives are involved. Some pieces of the puzzle are still
missing.

To paraphrase James Arthur (in his introduction to the trace formula
and the Langlands program), there is an intimate link between geometric
objects and ``spectral'' phenomena, much like how the shape of a
drum affects its sounds. For a Riemannian manifold, that link is better
known as the Laplacian -- the generator of the heat flow -- and
\textbf{Hodge theory} is the study of how the Laplacian governs the
cohomology of a Riemannian manifold. An oversimplified description
of Fourier analysis on $\mathbb{R}^{n}$ would be ``the spectral
theory of the Laplacian'' \parencite{strichartz1989harmonic_spectral_laplacian},
where the heat kernel is the Gaussian function, invariant under the
Fourier transform and a possible choice of mollifier. Additionally,
the \textbf{Helmholtz decomposition}, originally discovered in a hydrodynamic
context, turned out to be a part of Hodge theory. It should therefore
be no surprise that Hodge theory is the natural framework in which
we formulate harmonic analysis on manifolds, heat flows and Onsager's
conjecture. Wherever there is the Laplacian, there is harmonic analysis.
Historically, Milgram managed to establish a subset of Hodge theory
by heat flow methods \parencite{Milgram1951heat}. Here, however,
we will establish Hodge theory by standard elliptic estimates, from
which we develop analysis on manifolds and construct the heat flow.
Most notably, Hodge theory greatly simplifies some crucial approximation
steps involving the boundary (\Corref{crucial_boundary_approx_homN}),
and helps predict some key results Onsager's conjecture would require
(\Thmref{different_conditions_generalized}, \Subsecref{Distributions-and-adjoints},
\Subsecref{Interpolation-and-B-analyticity}). That such leaps of
faith turn out to be true only further underscore how well-made the
conjecture is in its anticipation of undiscovered mathematics.

For those familiar with the smoothing properties of Littlewood-Paley
projection as well as \textbf{Bernstein inequalities} \parencite[Appendix A]{tao2006nonlinear},
the rough picture is that $e^{t\Delta}\approx P_{\leq\frac{1}{\sqrt{t}}}$.
While the introduction of curvature necessitates the change of constants
in estimates, and the boundary requires its own considerations, it
is remarkable how far we can go with this analogy. Regarding the properties
we will need for Onsager's conjecture, there is a satisfying explanation:
the theory of \textbf{sectorial operators} in functional analysis.
This, together with Hodge theory, the theory of \textbf{Besov spaces}
and \textbf{interpolation theory}, allows us to build a basic foundation
for global analysis on Riemannian manifolds in general, which will
be more than enough to handle Onsager's conjecture.

Hodge theory and sectorial operators, in their various forms, have
been used in fluid dynamics for a long time by Fujita, Kato, Giga,
Miyakawa \emph{et al. }(cf. \parencite{Fujita1964_Kato_Navier_Stokes,Miyakawa_Lp_analyticity_Navier,Giga1981_Analyticity_stokes_op,Giga1985_Lp_Stokes_solution,Baba2016}
and their references). Although we will not use them for this paper,
we also ought to mention the results regarding bisectorial operators,
$H^{\infty}$ functional calculus, and Hodge theory on rough domains
developed by Alan McIntosh, Marius Mitrea, Sylvie Monniaux\emph{ et
al.} (cf. \parencite{Alan_Hinf_calculus_86,Duong_Alan_Functional_calculi_1996,Fabes1998_Mitrea_Boundary_layers_Helmholtz,AlanMcIntosh2004_Lip_domain,Mitrea2008_Stokes_Fujita_Kato_approach,Mitrea2009_Sylvie_Neumann_Laplacian,mitrea2009_Nonlinear_hodge_NS,Gol_dshtein_2010_Mitrea_Hodge_mixed_formalism,Shen2012_Stokes_Lipschitz,mcintosh_hal_Hodge_dirac}
and their references), which generalize many Hodge-theoretic results
in this paper. Alternative formalizations of Littlewood-Paley theory
also exist (cf. \parencite{Han_2008_Besov_metric_Carnot,Kerkyacharian_2014_Heat_Dirichlet_decomposition,Feichtinger_2016_wavelet,Kriegler_2016_sectorial_Paley,DuongThinh_Weighted_besov_operator,Taniguchi_2018_Neumann_Besov}
and their references). Here, we are mainly focused on the analogy
between the heat flow and the Littlewood-Paley projection on $L^{p}$
spaces of differential forms (over manifolds with boundary), as well
as the interplay with Hodge theory.

Lastly, we also introduce \textbf{heatable currents} -- the largest
space on which the heat flow can be profitably defined -- as the
analogue to tempered distributions on manifolds (\Subsecref{Distributions-and-adjoints}).
In doing so, we will realize that the energy-conserving weak solution
in Onsager's conjecture solves the Euler equation in the sense of
heatable currents. This is an elegant insight that helps show how
interconnected these subjects are. Much like how learning the language
of measure theory can shed light on problems in calculus and familiarity
with differential geometry simplifies many calculations in fluid dynamics,
the cost of learning ostensibly complicated formalism is often dwarfed
by the benefits in clarity it brings. That being said, accessibility
is also important, and besides providing a gentle introduction to
the theory with copious references, this paper also hopes to convince
the reader of the naturality behind the formalism.

\subsection{Blackboxes}

Since we draw upon many areas, the paper is intended to be as self-contained
as possible, but we will assume familiarity with basic elements of
functional analysis, harmonic analysis and complex analysis. Some
familiarity with differential and Riemannian geometry is certainly
needed (cf. \parencite{jeffrey_lee2009manifolds,chavel2006riemannian}),
as well as \textbf{Penrose notation} (cf. \parencite[Section 2.4]{wald1984general}).
In addition, a number of blackbox theorems will be borrowed from the
following sources:
\begin{enumerate}
\item For interpolation theory: \citetitle{Lofstrom} \cite{Lofstrom} and
\citetitle{Voigt1992} \cite{Voigt1992}
\item For harmonic analysis and elements of functional analysis:
\begin{itemize}
\item \citetitle{eliasstein1971_singular_integrals} \parencite{eliasstein1971_singular_integrals}
\item \citetitle{Taylor_PDE1} \cite{Taylor_PDE1}
\item \citetitle{rieusset2002_recent_developments} \parencite{rieusset2002_recent_developments}
\end{itemize}
\item For Besov spaces: \citetitle{triebel_function_I,triebel_function_II}
\parencite{triebel_function_I,triebel_function_II}
\item For Hodge theory: \citetitle{Schwarz1995} \cite{Schwarz1995}
\item For semigroups and sectorial operators: \citetitle{engel2000one-parameter}
\cite{engel2000one-parameter} and \citetitle{Arendt_Laplace} \cite{Arendt_Laplace}
\end{enumerate}
The first three categories should be familiar with harmonic analysts.

\subsection{For the specialists}

Some noteworthy characteristics of our approach:
\begin{itemize}
\item An alternative development of the (absolute Neumann) heat flow. In
particular, the extrapolation of analyticity to $L^{p}$ spaces does
not involve establishing the resolvent estimate in \emph{Yosida's
half-plane criterion} (\Thmref{yosida_half_plane}), either via ``Agmon's
trick'' \parencite{Agmon_trick} as done in \parencite{Miyakawa_Lp_analyticity_Navier}
or manual estimates as in \parencite{Baba2016}. Instead, by abstract
Stein interpolation, we only need the local boundedness of the heat
flow on $L^{p}$, which can follow cleanly from Gronwall and integration
by parts (\Thmref{local_boundedness_Lp}). In short, functional analysis
does the heavy lifting. We also managed to attain $W^{1,p}$-analyticity
assuming the Neumann condition (\Subsecref{W1p-analyticity}), and
$B_{p,1}^{\frac{1}{p}}$-analyticity via the Leray projection (\Subsecref{Interpolation-and-B-analyticity}).
\item We do not focus on the \textbf{Stokes operator} in this paper, but
our results (\Subsecref{W1p-analyticity}, \Subsecref{Interpolation-and-B-analyticity})
do contain the case of the Stokes operator corresponding to the ``Navier-type''
/ ``free'' boundary condition, as discussed in \parencite{Miyakawa_Lp_analyticity_Navier,Giga1982_Nonstationary_first_order,Mitrea2009_Sylvie_Neumann_Laplacian,mitrea2009_Nonlinear_hodge_NS,Baba2016}
and others. This should not be confused with the Stokes operator corresponding
to the ``no-slip'' boundary condition, as discussed in \parencite{Fujita1964_Kato_Navier_Stokes,Giga1985_Lp_Stokes_solution,Mitrea2008_Stokes_Fujita_Kato_approach}
and others. See \parencite{Hieber2018_Stokes_operator_survey} for
more references.
\item For simplicity, we stay within the smooth and compact setting, which,
as Hilbert would say, is that special case containing all the germs
of generality. An effort has also been made to keep the material concrete
(as opposed to, for instance, using Hilbert complexes).
\item Heatable currents are introduced as the analogue to tempered distributions,
and we show how they naturally appear in the characterization of the
adjoints of $d$ and $\delta$ (\Subsecref{Distributions-and-adjoints}).
\item A refinement of a special case of the fractional Leibniz rule, with
the supports of functions taken into account, is given in \Thmref{product_estimate}.
\item For the proof of Onsager's conjecture, there are some subtle, but
substantial differences with \parencite{Isett2015_heat}:
\begin{itemize}
\item In \parencite{Isett2015_heat}, Besov spaces are \emph{defined} by
the heat flow, and compatibility with the usual scalar Besov spaces
is proven when $M$ is $\mathbb{R}^{n}$ or $\mathbb{T}^{n}$. Here
we will use the standard scalar Besov spaces as defined by Triebel
in \parencite{triebel_function_I,triebel_function_II}, and prove
the appropriate estimates for the heat flow by interpolation.
\item The heat flow used by Isett \& Oh (constructed by Strichartz using
dissipative operators) is generated by the \textbf{Hodge Laplacian},
which is self-adjoint in the no-boundary case. In the case with boundary,
there are four different self-adjoint versions for the Hodge Laplacian
(see \Thmref{4_version_gaffney}), and we choose the \textbf{absolute
Neumann} version. There are also heat flows generated by the \textbf{connection
Laplacian}, but we do not use them in this paper since the connection
Laplacian does not commute with the \textbf{exterior derivative} and
the Leray projection etc. The theory of dissipative operators is also
not sufficient to establish $L^{p}$-analyticity and $W^{1,p}$-analyticity
for all $p\in\left(1,\infty\right)$, so we instead use the theory
of sectorial operators, which is made for this purpose.
\item The commutator we will use is a bit different from that in \parencite{Isett2015_heat}.
This will help us eliminate some boundary terms. We will also avoid
the explicit formula and computations in \parencite[Lemma 4.4]{Isett2015_heat},
as they also lead to various boundary terms. Generally speaking, the
covariant derivative behaves badly on the boundary.
\end{itemize}
\item A calculation of the pressure by negative-order Hodge-Sobolev spaces
(\Subsecref{Calc_pressure}).
\item More results will be proven for analysis on manifolds than needed
for Onsager's conjecture, as they are of independent interest. For
the sake of accessibility, we will also review most of the relevant
background material, with the assumption that the reader is a harmonic
analyst who knows some differential geometry.
\end{itemize}
It is hard to overstate our indebtedness to all the mathematicians
whose work our theory will build upon, from harmonic analysis to Hodge
theory and sectorial operators, and yet hopefully each will be able
to find within this paper something new and interesting.

\section{Common notation\label{sec:Common-notation}}

It might not be an exaggeration to say the main difficulty in reading
a paper dealing with Hodge theory is understanding the notation, and
an effort has been made to keep our notation as standard and self-explanatory
as possible.

Some common notation we use:
\begin{itemize}
\item $A\lesssim_{x,\neg y}B$ means $A\leq CB$ where $C>0$ depends on
$x$ and not $y$. Similarly, $A\sim_{x,\neg y}B$ means $A\lesssim_{x,\neg y}B$
and $B\lesssim_{x,\neg y}A$. When the dependencies are obvious by
context, we do not need to make them explicit.
\item $\mathbb{N}_{0},\mathbb{N}_{1}:$ the set of natural numbers, starting
with $0$ and $1$ respectively.
\item DCT: dominated convergence theorem, FTC: fundamental theorem of calculus,
PTAS: passing to a subsequence, WLOG: without loss of generality.
\item TVS: topological vector space, NVS: normed vector space, SOT: strong
operator topology.
\item For TVS $X$, $Y\leq X$ means $Y$ is a subspace of $X$.
\item $\mathcal{L}(X,Y):$ the space of continuous linear maps from TVS
$X$ to $Y$. Also $\mathcal{L}(X)=\mathcal{L}(X,X)$.
\item $C^{0}(S\to Y)$: the space of bounded, continuous functions from
metric space $S$ to normed vector space $Y$. Not to be confused
with $C_{\text{loc}}^{0}(S\to Y)$, which is the space of locally
bounded, continuous functions.
\item $\left\Vert x\right\Vert _{D(A)}=\left\Vert x\right\Vert _{X}+\left\Vert Ax\right\Vert _{X}$
and $\left\Vert x\right\Vert _{D(A)}^{*}=\left\Vert Ax\right\Vert _{X}$
where $A$ is an unbounded operator on (real/complex) Banach space
$X$ and $x\in D(A).$ Note that $\left\Vert \cdot\right\Vert _{D(A)}^{*}$
is not always a norm. Also define $D(A^{\infty})=\cap_{k\in\mathbb{N}_{1}}D(A^{k}).$
\item For $\delta\in(0,\pi]$, define the open sector $\Sigma_{\delta}^{+}=\{z\in\mathbb{C}\backslash\{0\}:|\arg z|<\delta\}$,
$\Sigma_{\delta}^{-}=-\Sigma_{\delta}^{+}$, $\mathbb{D}=\{z\in\mathbb{C}:|z|<1\}.$
Also define $\Sigma_{0}^{+}=(0,\infty)$ and $\Sigma_{0}^{-}=-\Sigma_{0}^{+}$.
\item $B(x,r)$: the open ball of radius $r$ centered at $x$ in a metric
space.
\item $\mathcal{S}(\mathbb{R}^{n})$: the space of Schwartz functions on
$\mathbb{R}^{n}$, $\mathcal{S}(\overline{\Omega})$: restrictions
of Schwartz functions to the domain $\Omega\subset\mathbb{R}^{n}$.
\end{itemize}
There is also a list of other symbols we will use at the end of the
paper.

\section{Onsager's conjecture\label{sec:Onsager's-conjecture}}

\subsection{Summary of preliminaries}

At the cost of some slight duplication of exposition, we will quickly
summarize the key tools we need for the proof, and leave the development
of such tools for the rest of the paper. Alternatively, the reader
can read the theory first and come back to this section later.
\begin{defn}
For the rest of the paper, unless otherwise stated, let $M$ be a
compact, smooth, Riemannian $n$-dimensional manifold, with no or
smooth boundary. We also let $I\subset\mathbb{R}$ be an open time
interval. We write $M_{<r}=\{x\in M:\mathrm{dist}(x,\partial M)<r\}$
for $r>0$ small. Similarly define $M_{\geq r},M_{<r},M_{[r_{1},r_{2}]}$
etc. Let $\accentset{\circ}{M}$ denote the interior of $M$.

By musical isomorphism, we can consider $\mathfrak{X}M$ (the space
of \textbf{smooth vector fields}) mostly the same as $\Omega^{1}(M)$
(the space of \textbf{smooth $1$-forms}), \emph{mutatis mutandis}.
We note that $\mathfrak{X}M$, $\mathfrak{X}\left(\partial M\right)$
and $\restr{\mathfrak{X}M}{\partial M}$ are different. Unless otherwise
stated, \uline{let the implicit domain be \mbox{$M$}}, so $\mathfrak{X}$
stands for $\mathfrak{X}M$, and similarly $\Omega^{k}$ for $\Omega^{k}M$.
For $X\in\mathfrak{X}$, we write $X^{\flat}$ as its dual $1$-form.
For $\omega\in\Omega^{1}$, we write $\omega^{\sharp}$ as its dual
vector field.

Let $\mathfrak{X}_{00}\left(M\right)$ denote the set of smooth vector
fields of compact support in $\accentset{\circ}{M}$. Define $\Omega_{00}^{k}\left(M\right)$
similarly (smooth differential forms with compact support in $\accentset{\circ}{M}$).

Let $\nu$ denote the outwards unit normal vector field on $\partial M$.
$\nu$ can be extended via geodesics to a smooth vector field $\widetilde{\nu}$
which is of unit length near the boundary (and cut off at some point
away from the boundary).

For $X\in\mathfrak{X}M,$ define $\mathbf{n}X=\left\langle X,\nu\right\rangle \nu\in\left.\mathfrak{X}M\right|_{\partial M}$
(the \textbf{normal part}) and $\mathbf{t}X=\left.X\right|_{\partial M}-\mathbf{n}X$
(the \textbf{tangential part}). We note that $\mathbf{t}X$ and $\mathbf{n}X$
only depend on $\restr{X}{\partial M}$, so $\mathbf{t}$ and $\mathbf{n}$
can be defined on $\restr{\mathfrak{X}M}{\partial M}$, and $\mathbf{t}\left(\left.\mathfrak{X}M\right|_{\partial M}\right)\iso\mathfrak{X}(\partial M)$.

For $\omega\in\Omega^{k}\left(M\right),$ define $\mathbf{t}\omega$
and $\mathbf{n}\omega$ by 
\[
\mathbf{t}\omega(X_{1},...,X_{k}):=\omega(\mathbf{t}X_{1},...,\mathbf{t}X_{k})\;\;\forall X_{j}\in\mathfrak{X}M,j=1,...,k
\]
and $\mathbf{n}\omega=\left.\omega\right|_{\partial M}-\mathbf{t}\omega$.
Note that $\left(\mathbf{n}X\right)^{\flat}=\mathbf{n}X^{\flat}\;\forall X\in\mathfrak{X}$.

Let $\nabla$ denote the \textbf{Levi-Civita connection}, $d$ the
\textbf{exterior derivative}, $\delta$ the \textbf{codifferential},
and $\Delta=-\left(d\delta+\delta d\right)$ the \textbf{Hodge-Laplacian},
which is defined on vector fields by the musical isomorphism.

Familiar scalar function spaces such as $L^{p},W^{m,p}$ (\textbf{Lebesgue-Sobolev
spaces}), $B_{p,q}^{s}$ (\textbf{Besov spaces}), $C^{0,\alpha}$
(\textbf{Holder spaces}) (see \Secref{Scalar-function-spaces} for
precise definitions) can be defined on $M$ by partitions of unity
and given a unique topology (\Subsecref{On-domains}, \Subsecref{Compatibility-with-scalar}).
Similarly, we define such function spaces for \textbf{tensor fields}
and \textbf{differential forms} on $M$ by partitions of unity and
local coordinates (see \subsecref{The-setting}). For instance, we
can define $L^{2}\mathfrak{X}$ or $B_{3,1}^{\frac{1}{3}}\mathfrak{X}$.
\end{defn}

\begin{fact}
\label{fact:Fact_embed} $\forall\alpha\in\left(\frac{1}{3},1\right),\forall p\in\left(1,\infty\right):W^{1,p}\mathfrak{X}\hookrightarrow B_{p,1}^{\frac{1}{p}}\mathfrak{X\hookrightarrow}L^{p}\mathfrak{X}$
and $C^{0,\alpha}\mathfrak{X}=B_{\infty,\infty}^{\alpha}\mathfrak{X}\hookrightarrow B_{3,\infty}^{\alpha}\mathfrak{X}\hookrightarrow B_{3,1}^{\frac{1}{3}}\mathfrak{X}$
 (cf. \Subsecref{On-domains}, \Subsecref{Interpolation-=000026-embedding})
\end{fact}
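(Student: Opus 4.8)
The plan is to reduce every claimed embedding to a standard embedding between \emph{scalar} function spaces on a fixed bounded smooth domain $\overline{\Omega}\subset\mathbb{R}^{n}$, and then transfer back through the localization machinery. Recall from \subsecref{The-setting} and \Subsecref{On-domains} that $W^{1,p}\mathfrak{X}$, $B_{p,q}^{s}\mathfrak{X}$ and $C^{0,\alpha}\mathfrak{X}$ are defined via a fixed finite atlas $\{(\mathcal{U}_{j},\varphi_{j})\}$ and a subordinate partition of unity $\{\chi_{j}\}$, with norm equivalent to $\sum_{j}\sum_{k}\bigl\|\,((\chi_{j}X)_{k})\circ\varphi_{j}^{-1}\,\bigr\|_{\mathcal{F}(\overline{\Omega_{j}})}$ over the coordinate components $(\chi_{j}X)_{k}$. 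Since the cutoffs $\chi_{j}$ and the transition maps are fixed and smooth, and since on each $\overline{\Omega_{j}}$ the scalar spaces are preserved with norm bounds under multiplication by fixed smooth compactly supported functions and under fixed smooth diffeomorphisms (standard pointwise-multiplier and change-of-variables facts, compatible with the conventions of \Subsecref{On-domains} and \Subsecref{Compatibility-with-scalar}), any inclusion $\mathcal{F}_{1}\mathfrak{X}\hookrightarrow\mathcal{F}_{2}\mathfrak{X}$ follows once the scalar inclusion $\mathcal{F}_{1}(\overline{\Omega})\hookrightarrow\mathcal{F}_{2}(\overline{\Omega})$ is known with an operator-norm bound: one writes $X=\sum_{j}\chi_{j}X$ and estimates $\|X\|_{\mathcal{F}_{2}\mathfrak{X}}\lesssim\sum_{j}\|\chi_{j}X\|_{\mathcal{F}_{2}\mathfrak{X}}$ patch by patch.

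It then remains to quote the scalar embeddings, all classical and found in \parencite{triebel_function_I,triebel_function_II}. For the first chain: $W^{1,p}(\overline{\Omega})=F_{p,2}^{1}(\overline{\Omega})$ for $1<p<\infty$; the universal Triebel--Lizorkin/Besov embedding gives $F_{p,2}^{1}\hookrightarrow B_{p,\infty}^{1}$; and the Besov embedding in the smoothness parameter, $B_{p,q_{0}}^{s_{0}}\hookrightarrow B_{p,q_{1}}^{s_{1}}$ whenever $s_{0}>s_{1}$ (arbitrary fine indices), applies since $1>\tfrac{1}{p}$, yielding $B_{p,\infty}^{1}\hookrightarrow B_{p,1}^{1/p}$; finally $B_{p,1}^{1/p}\hookrightarrow B_{p,1}^{0}\hookrightarrow L^{p}$ since $\tfrac{1}{p}>0$ (the last inclusion is just the triangle inequality on Littlewood--Paley pieces). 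For the second chain: $C^{0,\alpha}(\overline{\Omega})=B_{\infty,\infty}^{\alpha}(\overline{\Omega})$ for $\alpha\in(0,1)$ is the classical identification of Holder spaces with Holder--Zygmund spaces; on the \emph{bounded} domain $\overline{\Omega}$ one has $L^{\infty}\hookrightarrow L^{3}$ by Holder's inequality with the finite measure, which upgrades to $B_{\infty,\infty}^{\alpha}\hookrightarrow B_{3,\infty}^{\alpha}$; and $B_{3,\infty}^{\alpha}\hookrightarrow B_{3,1}^{1/3}$ is again the smoothness embedding, valid because $\alpha>\tfrac{1}{3}$. Chaining these inclusions componentwise through the partition of unity gives the two displayed embeddings for vector fields, and the middle equality $C^{0,\alpha}\mathfrak{X}=B_{\infty,\infty}^{\alpha}\mathfrak{X}$ transfers the same way.

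I do not expect any genuine obstacle here; the substance is entirely bookkeeping against the localization machinery of \Subsecref{On-domains} together with the catalogue of scalar Besov embeddings, both already in hand. The only two spots deserving a moment's care are: (i) that $B_{p,\infty}^{1}$ must be read through its second-difference (Zygmund) characterization, its smoothness order being the integer $1$, so one should not carelessly invoke a first-difference estimate when justifying $W^{1,p}\hookrightarrow B_{p,\infty}^{1}$; and (ii) the inclusion $L^{\infty}\hookrightarrow L^{3}$, hence $B_{\infty,\infty}^{\alpha}\hookrightarrow B_{3,\infty}^{\alpha}$, which fails on $\mathbb{R}^{n}$ and genuinely uses that the coordinate patches $\overline{\Omega_{j}}$ are bounded, i.e.\ that $M$ is compact.
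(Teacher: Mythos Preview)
Your proposal is correct and matches the paper's approach: the Fact is stated without proof, merely cross-referencing \Subsecref{On-domains} and \Subsecref{Interpolation-=000026-embedding}, where exactly the scalar embeddings you invoke ($B_{p,1}^{m}\hookrightarrow W^{m,p}\hookrightarrow B_{p,\infty}^{m}$, the Besov smoothness embedding on bounded domains, and $C^{0,\alpha}=B_{\infty,\infty}^{\alpha}$) are collected, with the transfer to vector fields via the partition-of-unity projection $P$ of \Subsecref{Compatibility-with-scalar}. Your two cautionary notes are well placed but do not affect correctness.
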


\begin{defn}
We write $\left\langle \cdot,\cdot\right\rangle $ to denote the \textbf{Riemannian
fiber metric} for tensor fields on $M$. We also define the dot product
\[
\left\langle \left\langle \sigma,\theta\right\rangle \right\rangle =\int_{M}\left\langle \sigma,\theta\right\rangle \vol
\]
where $\sigma$ and $\theta$ are tensor fields of the same type,
while $\vol$ is the \textbf{Riemannian volume form}. When there is
no possible confusion, we will omit writing $\vol$.

We define $\mathfrak{X}_{N}=\{X\in\mathfrak{X}:\begin{array}{c}
\mathbf{n}X=0\end{array}\}$ (\textbf{Neumann condition}). Similarly, we can define $\Omega_{N}^{k}$.
In order to define the Neumann condition for less regular vector fields
(and differential forms), we need to use the \textbf{trace theorem}.
\end{defn}

\begin{fact}
\label{fact:fact_trace}(\Subsecref{On-domains}, \Subsecref{Compatibility-with-scalar})
Let $p\in[1,\infty)$. Then
\begin{itemize}
\item $B_{p,1}^{\frac{1}{p}}\left(M\right)\twoheadrightarrow L^{p}\left(\partial M\right)$
and $B_{p,1}^{\frac{1}{p}}\mathfrak{X}M\twoheadrightarrow L^{p}\restr{\mathfrak{X}M}{\partial M}$
are continuous surjections.
\item $\forall m\in\mathbb{N}_{1}:B_{p,1}^{m+\frac{1}{p}}\mathfrak{X}M\twoheadrightarrow B_{p,1}^{m}\restr{\mathfrak{X}M}{\partial M}\hookrightarrow W^{m,p}\restr{\mathfrak{X}M}{\partial M}$
is continuous.
\end{itemize}
\end{fact}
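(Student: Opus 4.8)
The plan is to reduce the whole statement to the classical scalar Besov trace theorem on the half-space and then to globalise by a partition of unity. First, fix a finite atlas of $M$ adapted to the boundary: finitely many \emph{boundary charts} realising $M$ as an open subset of $\mathbb{R}^{n}_{+}=\{(x',x_{n}):x_{n}>0\}$ with $\partial M$ the corresponding piece of $\mathbb{R}^{n-1}=\{x_{n}=0\}$ (boundary-normal coordinates will do), finitely many interior charts avoiding a neighbourhood of $\partial M$, and a subordinate partition of unity $\{\chi_{\alpha}\}$. Since $M$ and $\partial M$ are compact, the spaces $B_{p,1}^{s}\mathfrak{X}M$, $B_{p,1}^{s}\restr{\mathfrak{X}M}{\partial M}$ and $W^{m,p}\restr{\mathfrak{X}M}{\partial M}$ are, by their very definition, the ones assembled from such an atlas (and the resulting topology is atlas-independent); in a boundary chart the bundle $\restr{TM}{\partial M}$ is trivialised by the coordinate frame $\partial_{1},\dots,\partial_{n}$, so the trace of a vector field is just the vector of traces of its $n$ scalar components. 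Hence, modulo routine partition-of-unity bookkeeping, everything follows from the scalar statements on the half-space below (interior charts never see the boundary).

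For the scalar model I would invoke the standard trace theorem for Besov spaces \cite{triebel_function_I,triebel_function_II}: the restriction $\mathrm{Tr}\,u:=u(\cdot,0)$ is, for $s>\tfrac1p$, a retraction from $B_{p,q}^{s}(\mathbb{R}^{n}_{+})$ onto $B_{p,q}^{s-1/p}(\mathbb{R}^{n-1})$, with a bounded linear coretraction (extension operator); and in the \emph{critical} case $s=\tfrac1p$ this persists precisely when the fine index is $q=1$, giving a retraction from $B_{p,1}^{1/p}(\mathbb{R}^{n}_{+})$ onto $B_{p,1}^{0}(\mathbb{R}^{n-1})$. Taking $s=m+\tfrac1p$ with $q=1$ produces the boundary space $B_{p,1}^{m}(\mathbb{R}^{n-1})$ of the second bullet, and $s=\tfrac1p$ with $q=1$ produces $B_{p,1}^{0}(\mathbb{R}^{n-1})\hookrightarrow L^{p}(\mathbb{R}^{n-1})$, the precise image behind the first bullet. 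It is worth recording why $q=1$ is forced, since that is the heart of the matter: writing $u=\sum_{j}\Delta_{j}u$ and applying a one-dimensional Bernstein/Nikolskii inequality in the normal variable (then integrating tangentially) gives $\|\mathrm{Tr}\,\Delta_{j}u\|_{L^{p}(\mathbb{R}^{n-1})}\lesssim 2^{j/p}\|\Delta_{j}u\|_{L^{p}}$, whence $\|\mathrm{Tr}\,u\|_{L^{p}}\lesssim\sum_{j}2^{j/p}\|\Delta_{j}u\|_{L^{p}}=\|u\|_{B_{p,1}^{1/p}}$ — a bare $\ell^{1}$ sum, with no slack for a larger $q$. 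A matching extension is $g\mapsto\sum_{j}(\Delta_{j}'g)(x')\,\phi(2^{j}x_{n})$ with $\phi\in\mathcal{S}(\mathbb{R})$, $\phi(0)=1$ and $\widehat{\phi}$ compactly supported: each summand lives at frequency $\sim 2^{j}$, so its $B_{p,1}^{1/p}$-norm is $\sim\|g\|_{B_{p,1}^{0}(\mathbb{R}^{n-1})}$. (Alternatively, all of this is quoted verbatim from \cite{triebel_function_I,triebel_function_II}.)

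Globalising is then routine. Define $\mathrm{Tr}_{M}$ by summing, over the boundary charts, the scalar trace of $\chi_{\alpha}u$ read off in each chart; boundedness is the triangle inequality together with the chart-wise estimates. For a global right inverse, patch the local extension operators against cutoffs $\psi_{\alpha}$ supported in the boundary charts with $\sum_{\alpha}\psi_{\alpha}\equiv1$ on a neighbourhood of $\partial M$; because the $\psi_{\alpha}$ add to $1$ near $\partial M$, applying $\mathrm{Tr}_{M}$ to the patched extension of a boundary datum returns that datum, which gives the asserted continuous surjections with bounded linear right inverses. Finally the tail embeddings reduce, in the same manner, to scalar embeddings on $\partial M$: for every $p\in[1,\infty)$ one has $B_{p,1}^{0}\hookrightarrow L^{p}$ since $\|u\|_{L^{p}}=\|\sum_{j}\Delta_{j}u\|_{L^{p}}\le\sum_{j}\|\Delta_{j}u\|_{L^{p}}=\|u\|_{B_{p,1}^{0}}$, and hence $B_{p,1}^{m}\hookrightarrow W^{m,p}$ because $\partial^{\gamma}$ maps $B_{p,1}^{m}$ into $B_{p,1}^{m-|\gamma|}\hookrightarrow B_{p,1}^{0}\hookrightarrow L^{p}$ for $|\gamma|\le m$, so $\|u\|_{W^{m,p}}=\sum_{|\gamma|\le m}\|\partial^{\gamma}u\|_{L^{p}}\lesssim\|u\|_{B_{p,1}^{m}}$.

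The only genuine analytic ingredient is the critical-index case $s=\tfrac1p$, $q=1$ of the model trace theorem — exactly the borderline at which a trace exists at all (for $p=3$, the regularity $\tfrac13$ of Onsager's conjecture), and where the fine index $q=1$ is indispensable; but this is classical, and I would simply cite it. Everything else is bookkeeping, the one point demanding care being to keep the boundary bundle $\restr{\mathfrak{X}M}{\partial M}$ (all $n$ coordinate components in a boundary chart) distinct from $\mathfrak{X}(\partial M)$ (the $n-1$ tangential ones), and to verify that the patched extension really does invert $\mathrm{Tr}_{M}$.
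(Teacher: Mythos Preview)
Your proposal is correct and follows essentially the same route as the paper. The paper does not give a standalone proof of this Fact; it is stated in the summary section with pointers to \Subsecref{On-domains} and \Subsecref{Compatibility-with-scalar}, where the scalar trace theorems (including the critical case $B_{p,1}^{1/p}\twoheadrightarrow L^{p}$) are imported wholesale from Triebel as \Blackboxref{trace_ext}, and the passage to vector bundles is done exactly by the partition-of-unity and local-trivialisation machinery you describe. Your sketch of why the fine index $q=1$ is forced, and your explicit extension operator, go beyond what the paper records, but the architecture is identical.
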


Also closely related is the \textbf{coarea formula}:
\begin{fact}
\label{fact:fact_coarea}(\Thmref{coarea}) Let $p\in[1,\infty)$,
$r>0$ be small and $f$ be in $B_{p,1}^{\frac{1}{p}}(M)$:
\begin{enumerate}
\item $\left([0,r)\to\mathbb{R},\rho\mapsto\left\Vert f\right\Vert _{L^{p}(\partial M_{>\rho})}\right)$
is continuous and bounded by $C\left\Vert f\right\Vert _{B_{p,1}^{\frac{1}{p}}}$
for some $C>0$.
\item $\left|M_{<r}\right|\sim_{M,\neg r}\left|\partial M\right|r$ and
$\left\Vert f\right\Vert _{L^{p}(M_{\leq r})}\sim_{\neg r}\left\Vert \left\Vert f\right\Vert _{L^{p}(\partial M_{>\rho})}\right\Vert _{L_{\rho}^{p}((0,r))}$.
\item $\left\Vert f\right\Vert _{L^{p}(M_{\leq r},\avg)}\lesssim_{\neg r}\left\Vert f\right\Vert _{B_{p,1}^{\frac{1}{p}}(M)}$
and $\left\Vert f\right\Vert _{L^{p}(M_{\leq r},\avg)}\xrightarrow{r\downarrow0}\left\Vert f\right\Vert _{L^{p}(\partial M,\avg)}$,
where $\mathrm{\avg}$ means normalizing the measure to make it a
probability measure.
\item Let $\mathfrak{f}\in L^{p}(I\to B_{p,1}^{\frac{1}{p}}(M)),$ then
$\left\Vert \mathfrak{f}\right\Vert _{L_{t}^{p}B_{p,1}^{\frac{1}{p}}(M)}\gtrsim_{\neg r}\left\Vert \mathfrak{f}\right\Vert _{L_{t}^{p}L^{p}(M_{\leq r},\avg)}\xrightarrow{r\downarrow0}\left\Vert \mathfrak{f}\right\Vert _{L_{t}^{p}L^{p}(\partial M,\avg)}$.
\end{enumerate}
Analogous results hold if $f\in B_{p,1}^{\frac{1}{p}}\mathfrak{X}$.
(\Subsecref{Compatibility-with-scalar})
\end{fact}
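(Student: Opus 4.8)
The plan is to reduce every assertion to the coarea formula for the distance-to-boundary function on a collar of $\partial M$, combined with the scalar trace theorem. Fix $r_{0}>0$ small enough that the inward normal exponential map $\Phi:\partial M\times[0,r_{0})\to M_{<r_{0}}$, $\Phi(x,\rho)=\exp_{x}\!\bigl(-\rho\,\nu(x)\bigr)$, is a diffeomorphism; then $\mathrm{dist}\bigl(\Phi(x,\rho),\partial M\bigr)=\rho$, so $\partial M_{>\rho}=\Phi(\partial M\times\{\rho\})$ and $M_{\leq r}=\Phi(\partial M\times[0,r])$ for $r\leq r_{0}$. In these coordinates the metric splits as $\Phi^{*}g=d\rho^{2}+g_{\rho}$ for a smooth family $\rho\mapsto g_{\rho}$ of metrics on $\partial M$ with $g_{0}$ the induced boundary metric, so $\Phi^{*}\vol=d\rho\wedge\vol_{g_{\rho}}$ with $\vol_{g_{\rho}}=J(\cdot,\rho)\,\vol_{g_{0}}$, $J$ smooth, $J(\cdot,0)\equiv1$, $J\sim_{M,\neg\rho}1$ (this collar is the construction behind $\widetilde{\nu}$). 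Part (2) is then immediate: $|M_{<r}|=\int_{\partial M}\int_{0}^{r}J(x,\rho)\,d\rho\,\vol_{g_{0}}\sim_{M,\neg r}|\partial M|\,r$ since $J\sim1$, and expanding the volume integral in the collar gives the identity $\norm{f}_{L^{p}(M_{\leq r})}^{p}=\int_{0}^{r}\norm{f}_{L^{p}(\partial M_{>\rho})}^{p}\,d\rho$ --- the coarea formula for $\rho=\mathrm{dist}(\cdot,\partial M)$, whose gradient is a unit field on the collar --- the norm on $\partial M_{>\rho}\cong(\partial M,g_{\rho})$ being taken in its intrinsic measure $J(\cdot,\rho)\vol_{g_{0}}$; any other reasonable normalization differs by a factor $\sim1$, giving the stated $\sim_{\neg r}$.

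For (1) the crux is the uniform-in-$\rho$ trace bound $\sup_{\rho\in[0,r_{0})}\norm{f}_{L^{p}(\partial M_{>\rho})}\lesssim_{M}\norm{f}_{B_{p,1}^{1/p}(M)}$. By diffeomorphism invariance of Besov spaces (\Subsecref{Compatibility-with-scalar}), $f\circ\Phi\in B_{p,1}^{1/p}(\partial M\times[0,r_{0}))$ with norm $\lesssim\norm{f}_{B_{p,1}^{1/p}(M)}$; localizing the $\partial M$-factor by a fixed finite atlas and extending in $\rho$, the bound reduces to the scalar trace theorem on the half-space $[0,\infty)\times\mathbb{R}^{n-1}$ ($B_{p,1}^{1/p}\twoheadrightarrow B_{p,1}^{0}\hookrightarrow L^{p}$ on $\{\rho=0\}$), which is translation-invariant in $\rho$, so the constant for the slice $\{\rho=\text{const}\}$ does not depend on $\rho$ --- morally this is the vector-valued endpoint embedding $B_{p,1}^{1/p}(\mathbb{R}_{\rho}\to L^{p}(\partial M))\hookrightarrow C^{0}(\mathbb{R}_{\rho}\to L^{p}(\partial M))$, and $B_{p,1}^{1/p}$ with summability index $1$ is exactly the endpoint at which the trace still lands in $L^{p}$, which is why it is the natural hypothesis. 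Granting the bound, continuity is routine: for $f$ smooth, $\rho\mapsto\norm{f}_{L^{p}(\partial M_{>\rho})}^{p}=\int_{\partial M}|f\circ\Phi(x,\rho)|^{p}J(x,\rho)\,\vol_{g_{0}}$ is continuous by dominated convergence (jointly continuous integrand, $\partial M$ compact); smooth functions are dense in $B_{p,1}^{1/p}(M)$ (as $p,1<\infty$), so applying the uniform bound to $f-f_{k}$ shows the continuous maps $\rho\mapsto\norm{f_{k}}_{L^{p}(\partial M_{>\rho})}$ converge uniformly to $\rho\mapsto\norm{f}_{L^{p}(\partial M_{>\rho})}$, which is therefore continuous on $[0,r_{0})$, and its value at $\rho=0$ gives $\norm{f}_{L^{p}(\partial M_{>\rho})}\xrightarrow{\rho\downarrow0}\norm{f}_{L^{p}(\partial M)}$.

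Parts (3) and (4) are bookkeeping on top of (1) and (2). Dividing the identity of (2) by $|M_{\leq r}|\sim|\partial M|r$ gives $\norm{f}_{L^{p}(M_{\leq r},\avg)}^{p}\sim_{\neg r}\frac{1}{r}\int_{0}^{r}\norm{f}_{L^{p}(\partial M_{>\rho})}^{p}\,d\rho$, which is $\leq\sup_{\rho}\norm{f}_{L^{p}(\partial M_{>\rho})}^{p}\lesssim\norm{f}_{B_{p,1}^{1/p}}^{p}$ by (1) and tends as $r\downarrow0$ to $\norm{f}_{L^{p}(\partial M)}^{p}$ by the fundamental theorem of calculus for the continuous integrand, so (using $|M_{\leq r}|/r\to|\partial M|$) $\norm{f}_{L^{p}(M_{\leq r},\avg)}^{p}\xrightarrow{r\downarrow0}\norm{f}_{L^{p}(\partial M,\avg)}^{p}$. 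For (4), applying (3) pointwise in $t$ and integrating $p$-th powers gives $\norm{\mathfrak{f}}_{L_{t}^{p}B_{p,1}^{1/p}}\gtrsim_{\neg r}\norm{\mathfrak{f}}_{L_{t}^{p}L^{p}(M_{\leq r},\avg)}$ uniformly in $r$; since $\norm{\mathfrak{f}(t)}_{L^{p}(M_{\leq r},\avg)}^{p}\to\norm{\mathfrak{f}(t)}_{L^{p}(\partial M,\avg)}^{p}$ for a.e.\ $t$ with integrable dominating function $t\mapsto C\norm{\mathfrak{f}(t)}_{B_{p,1}^{1/p}}^{p}$, dominated convergence gives the remaining limit. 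The $B_{p,1}^{1/p}\mathfrak{X}$ version is the very same argument read componentwise in a fixed local frame, with the vector-field trace theorem from the Fact above in place of the scalar one.

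I expect the only genuinely delicate step to be the uniform-in-$\rho$ trace estimate: a careless application of the trace theorem to the manifold-with-boundary $M_{\geq\rho}$ would produce a constant depending on the intrinsic geometry of the hypersurface $\partial M_{>\rho}$ (its second fundamental form, its collar inside $M_{\geq\rho}$), and the whole point of the collar form $\Phi^{*}g=d\rho^{2}+g_{\rho}$ is to confine all $\rho$-dependence to the uniformly comparable data $J$ and $g_{\rho}$ while keeping the trace direction flat and translation-invariant; everything else is Fubini, averaging, the fundamental theorem of calculus, and dominated convergence.
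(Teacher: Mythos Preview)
Your proposal is correct and follows essentially the same route as the paper: reduce via the collar/geodesic normals to the half-space, use translation invariance in the normal direction together with the endpoint trace $B_{p,1}^{1/p}\twoheadrightarrow L^{p}$ for the uniform-in-$\rho$ bound, then coarea/Fubini for (2), averaging plus continuity for (3), and DCT for (4). The only cosmetic difference is that the paper packages the continuity in (1) as ``translation is a $C_{0}$ semigroup on $B_{p,1}^{1/p}(\mathbb{R}^{n})$, then compose with the trace'', whereas you argue by density of smooth functions and the uniform trace bound---both are the same idea.
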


Therefore, we can define spaces such as $B_{3,1}^{\frac{1}{3}}\mathfrak{X}_{N}=\{X\in B_{3,1}^{\frac{1}{3}}\mathfrak{X}:\begin{array}{c}
\mathbf{n}X=0\end{array}\}$ and $W^{1,3}\mathfrak{X}_{N}$. However, something like $L^{2}\mathfrak{X}_{N}$
would not make sense since the trace map does not continuously extend
to $L^{2}\mathfrak{X}$.
\begin{defn}
We define $\mathbb{P}$ as the \textbf{Leray projection} (constructed
in \Thmref{Friedrichs_Decomposition}), which projects $\mathfrak{X}$
onto $\mathrm{Ker}\left(\restr{\mathrm{div}}{\mathfrak{X}_{N}}\right)$.
Note that the Neumann condition is enforced by $\mathbb{P}$.
\end{defn}

\begin{fact}
$\forall m\in\mathbb{N}_{0},\forall p\in\left(1,\infty\right)$, \textbf{$\mathbb{P}$
}is continuous on $W^{m,p}\mathfrak{X}$ and $\mathbb{P}\left(W^{m,p}\mathfrak{X}\right)=W^{m,p}\text{-}\mathrm{cl}\left(\mathrm{Ker}\left(\restr{\mathrm{div}}{\mathfrak{X}_{N}}\right)\right)$
(closure in the $W^{m,p}$-topology). (\Subsecref{Hodge-decomposition})
\end{fact}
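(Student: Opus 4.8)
The plan is to make $\mathbb{P}$ explicit through the scalar Neumann Laplacian and then read off both continuity and the description of the range from $L^{p}$-elliptic theory together with a density argument. From the construction of $\mathbb{P}$ via the Hodge/Friedrichs decomposition (\Thmref{Friedrichs_Decomposition}), for \emph{smooth} $X\in\mathfrak{X}$ one has $\mathbb{P}X=X-\nabla p_{X}$, where $p_{X}\in C^{\infty}(M)$, unique modulo constants (locally constant functions if $M$ is disconnected), solves the Neumann problem
\[
\Delta p_{X}=\mathrm{div}\,X\text{ in }M,\qquad\partial_{\nu}p_{X}=\left\langle X,\nu\right\rangle \text{ on }\partial M,
\]
which is solvable because $\int_{M}\mathrm{div}\,X=\int_{\partial M}\left\langle X,\nu\right\rangle $ by the divergence theorem (componentwise on a disconnected $M$; the boundary datum is simply absent when $\partial M=\emptyset$). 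The field $\mathbb{P}X$ so produced is smooth, and a direct computation gives $\mathrm{div}(\mathbb{P}X)=\mathrm{div}\,X-\Delta p_{X}=0$ and $\mathbf{n}(\mathbb{P}X)=\mathbf{n}X-(\partial_{\nu}p_{X})\nu=0$, so $\mathbb{P}X\in\mathrm{Ker}\left(\restr{\mathrm{div}}{\mathfrak{X}_{N}}\right)$; moreover $\mathbb{P}^{2}=\mathbb{P}$, and $\mathbb{P}$ restricts to the identity on $\mathrm{Ker}\left(\restr{\mathrm{div}}{\mathfrak{X}_{N}}\right)$ since it is a projection onto it.

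For continuity I would handle $m=0$ first, since the higher cases bootstrap off it. When $X$ is merely $L^{p}$ one cannot speak of $\partial_{\nu}p_{X}$ directly, so I would use the equivalent variational characterization: $p_{X}$ is determined modulo constants by $\left\langle \left\langle \nabla p_{X},\nabla\phi\right\rangle \right\rangle =\left\langle \left\langle X,\nabla\phi\right\rangle \right\rangle $ for all $\phi\in C^{\infty}(M)$, the boundary term produced by integration by parts being exactly what encodes the Neumann condition $\partial_{\nu}p_{X}=\left\langle X,\nu\right\rangle $. Hence $\nabla p_{X}=\nabla\mathcal{A}^{-1}\mathcal{B}X$, where $\mathcal{B}X\in\left(W^{1,p'}(M)/\mathbb{R}\right)^{*}$ is the functional $\phi\mapsto\left\langle \left\langle X,\nabla\phi\right\rangle \right\rangle $, bounded by $\left\Vert X\right\Vert _{L^{p}}$ with no loss, and $\mathcal{A}:W^{1,p}(M)/\mathbb{R}\to\left(W^{1,p'}(M)/\mathbb{R}\right)^{*}$, $\left\langle \mathcal{A}u,\phi\right\rangle =\left\langle \left\langle \nabla u,\nabla\phi\right\rangle \right\rangle $, is an isomorphism with bounded inverse. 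This last point is precisely the $W^{1,p}$-wellposedness of the Neumann problem with divergence-form data, a Calder\'on--Zygmund-type statement belonging to the $L^{p}$-elliptic theory for the Neumann Laplacian on a manifold with boundary. Since $\nabla:W^{1,p}(M)/\mathbb{R}\to L^{p}\mathfrak{X}$ is bounded, $\left\Vert \mathbb{P}X\right\Vert _{L^{p}}\leq\left\Vert X\right\Vert _{L^{p}}+\left\Vert \nabla p_{X}\right\Vert _{L^{p}}\lesssim\left\Vert X\right\Vert _{L^{p}}$.

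For $m\geq1$, once $p_{X}\in W^{1,p}(M)$ is known from the previous step, I would bootstrap: $\mathrm{div}\,X\in W^{m-1,p}(M)$ and, by the standard trace theorem (cf.\ Fact~\ref{fact:fact_trace}), $\left\langle X,\nu\right\rangle $ lies in the trace space $W^{m-1/p,p}\restr{\mathfrak{X}M}{\partial M}$; the elliptic regularity estimate for the Neumann Laplacian then upgrades $p_{X}$ to $W^{m+1,p}(M)$ with $\left\Vert p_{X}\right\Vert _{W^{m+1,p}(M)/\mathbb{R}}\lesssim\left\Vert \mathrm{div}\,X\right\Vert _{W^{m-1,p}}+\left\Vert \left\langle X,\nu\right\rangle \right\Vert _{W^{m-1/p,p}(\partial M)}\lesssim\left\Vert X\right\Vert _{W^{m,p}}$, whence $\left\Vert \mathbb{P}X\right\Vert _{W^{m,p}}\lesssim\left\Vert X\right\Vert _{W^{m,p}}$. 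As smooth vector fields are dense in $W^{m,p}\mathfrak{X}$, $\mathbb{P}$ extends uniquely to a bounded idempotent on $W^{m,p}\mathfrak{X}$, still denoted $\mathbb{P}$. For the range: the inclusion $\mathbb{P}\left(W^{m,p}\mathfrak{X}\right)\subseteq W^{m,p}\text{-}\mathrm{cl}\left(\mathrm{Ker}\left(\restr{\mathrm{div}}{\mathfrak{X}_{N}}\right)\right)$ follows by approximating $X$ with smooth $X_{j}\to X$ in $W^{m,p}$, since then $\mathbb{P}X_{j}\in\mathrm{Ker}\left(\restr{\mathrm{div}}{\mathfrak{X}_{N}}\right)$ by the first paragraph and $\mathbb{P}X_{j}\to\mathbb{P}X$ by continuity; conversely, a bounded idempotent has closed image $\mathrm{Ker}(I-\mathbb{P})$, and this image contains the smooth space $\mathrm{Ker}\left(\restr{\mathrm{div}}{\mathfrak{X}_{N}}\right)$ (on which $\mathbb{P}$ is the identity), hence contains its $W^{m,p}$-closure, giving the reverse inclusion and equality.

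The crux --- and the only place genuine analysis enters --- is the $m=0$ continuity, equivalently the $L^{p}$-boundedness of $\mathbb{P}$, equivalently the estimate $\left\Vert \mathcal{A}^{-1}\right\Vert <\infty$ for $p\neq2$. For $p=2$ it is immediate, $\mathbb{P}$ being an $L^{2}$-orthogonal projection; for general $p$ it relies on Calder\'on--Zygmund theory for the Neumann Laplacian on a manifold with boundary (equivalently, $L^{p}$-boundedness of the Riesz transform $\nabla(-\Delta_{N})^{-1/2}$ and its adjoint $(-\Delta_{N})^{-1/2}\mathrm{div}$). Everything else --- the bootstrap for $m\geq1$, the density argument, and the closed-image argument for the range --- is then routine.
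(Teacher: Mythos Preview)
Your argument is correct but follows a genuinely different route from the paper. The paper builds $\mathbb{P}=\mathcal{P}_{3}^{N}+\mathcal{P}_{2}$ out of the full Hodge--Morrey machinery of \Subsecref{Hodge-decomposition}: $\mathcal{P}_{2}=\delta_{c}d(-\Delta_{N})^{-1}\mathcal{P}^{N\perp}$ and $\mathcal{P}_{1}=d_{c}\delta(-\Delta_{D})^{-1}\mathcal{P}^{D\perp}$ are compositions of first-order operators with the Neumann and Dirichlet potentials \emph{on $1$-forms}, and continuity on $W^{m,p}$ for every $m\geq0$ follows in one stroke from the potential estimate (\Blackboxref{potential_estimates}) that $(-\Delta_{N})^{-1}$ and $(-\Delta_{D})^{-1}$ gain two derivatives. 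The range identity then drops out of the algebra in \Thmref{Friedrichs_Decomposition}, specifically that $\mathcal{P}_{1}$ and $\mathcal{P}_{3}^{\mathrm{ex}}$ annihilate $\Ker(\restr{\delta_{c}}{\Omega_{N}^{k}})$. You instead stay entirely in the scalar world, writing $(1-\mathbb{P})X=\nabla p_{X}$ with $p_{X}$ the scalar Neumann solution, and split the continuity into a weak/variational $L^{p}$ argument at $m=0$ (the isomorphism of $\mathcal{A}$, i.e.\ $L^{p}$-boundedness of the scalar Neumann Riesz transform) plus a trace/elliptic-regularity bootstrap for $m\geq1$. Your approach is more elementary in that it never needs a Laplacian on forms and is close in spirit to the classical Helmholtz projection in fluid dynamics; the paper's approach is more uniform (no case split on $m$), treats all degrees $k$ simultaneously, and sits inside the Hodge formalism that the rest of the paper uses. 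Both rest on an $L^{p}$-elliptic blackbox of comparable depth --- yours on scalars, the paper's on forms.
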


We collect some results regarding our heat flow in one place:
\begin{fact}[Absolute Neumann heat flow]
\label{fact:Fact_heat} There exists a semigroup of operators $\left(S(t)\right)_{t\geq0}$
acting on $\cup_{p\in\left(1,\infty\right)}L^{p}\mathfrak{X}$ such
that
\begin{enumerate}
\item $S\left(t_{1}\right)S\left(t_{2}\right)=S\left(t_{1}+t_{2}\right)\;\forall t_{1},t_{2}\geq0$
and $S\left(0\right)=1$.
\item (\Subsecref{Lp-analyticity}) $\forall p\in\left(1,\infty\right),\forall X\in L^{p}\mathfrak{X}:$
\begin{enumerate}
\item $S(t)X\in\mathfrak{X}_{N}$ and $\partial_{t}\left(S(t)X\right)=\Delta S(t)X$
$\forall t>0$.
\item $S(t)X\xrightarrow[t\to t_{0}]{C^{\infty}}S\left(t_{0}\right)X$ $\forall t_{0}>0$.
\item $\left\Vert S(t)X\right\Vert _{W^{m,p}}\lesssim_{m,p}\left(\frac{1}{t}\right)^{\frac{m}{2}}\left\Vert X\right\Vert _{L^{p}}$
$\forall m\in\mathbb{N}_{0},\forall t\in\left(0,1\right)$.
\item $S(t)X\xrightarrow[t\to0]{L^{p}}X$.
\end{enumerate}
\item (\Subsecref{W1p-analyticity}) $\forall p\in\left(1,\infty\right),\forall X\in W^{1,p}\mathfrak{X}_{N}:$
\begin{enumerate}
\item $\left\Vert S(t)X\right\Vert _{W^{m+1,p}}\lesssim_{m,p}\left(\frac{1}{t}\right)^{\frac{m}{2}}\left\Vert X\right\Vert _{W^{1,p}}$
$\forall m\in\mathbb{N}_{0},\forall t\in\left(0,1\right)$.
\item $S(t)X\xrightarrow[t\to0]{W^{1,p}}X$.
\end{enumerate}
\item (\Thmref{compat_Helmholtz_leray}) $S\left(t\right)\mathbb{P}=\mathbb{P}S\left(t\right)$
on $W^{m,p}\mathfrak{X}$ $\forall m\in\mathbb{N}_{0},\forall p\in\left(1,\infty\right),\forall t\geq0$.
\item (\Subsecref{Lp-analyticity}) $\left\langle \left\langle S(t)X,Y\right\rangle \right\rangle =\left\langle \left\langle X,S(t)Y\right\rangle \right\rangle \forall t\geq0,\forall p\in\left(1,\infty\right),\forall X\in L^{p}\mathfrak{X},\forall Y\in L^{p'}\mathfrak{X}$.
\end{enumerate}
\end{fact}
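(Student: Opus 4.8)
The plan is to build $S(t)$ first on $L^{2}$ and then extrapolate. On $L^{2}\mathfrak{X}\cong L^{2}\Omega^{1}$ take the self-adjoint realization of the Hodge Laplacian $\Delta=-(d\delta+\delta d)$ with the \emph{absolute} boundary conditions (requiring $\mathbf{n}\omega=0$ and $\mathbf{n}(d\omega)=0$), whose domain sits inside $W^{2,2}$ by elliptic boundary regularity; it is non-positive, so the spectral theorem yields a bounded analytic $C_{0}$-semigroup $S(t)=e^{t\Delta}$ of self-adjoint contractions. This already gives the semigroup law (1), strong $L^{2}$-continuity, and the duality identity (6) on $L^{2}$.

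Next comes local $L^{p}$-boundedness: for $X\in L^{2}\cap L^{p}$ one differentiates $\|S(t)X\|_{L^{p}}^{p}$ in $t$, uses $\partial_{t}S(t)X=\Delta S(t)X$, and integrates by parts. The choice of the absolute/Neumann version is exactly what makes the boundary contributions vanish, and the leftover zeroth-order curvature terms are absorbed by Gronwall, giving $\|S(t)X\|_{L^{p}}\lesssim_{p}\|X\|_{L^{p}}$ for $t\in[0,1]$ (\Thmref{local_boundedness_Lp}); extending by density defines $S(t)$ on every $L^{p}\mathfrak{X}$, $p\in(1,\infty)$, consistently. Abstract Stein interpolation for analytic semigroups now upgrades ``analytic on $L^{2}$ plus locally bounded on $L^{p}$'' to ``bounded analytic on $L^{p}$'', \emph{without} ever proving the resolvent estimate in Yosida's half-plane criterion (\Thmref{yosida_half_plane}) --- this is where functional analysis does the heavy lifting. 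Analyticity immediately gives (2a), (2b), (2d), and identifies the $L^{p}$-generator as $\Delta$ with the same boundary conditions, so in particular $S(t)X\in\mathfrak{X}_{N}$ for $t>0$. For the smoothing bound (2c): analyticity gives $\|\Delta^{k}S(t)X\|_{L^{p}}\lesssim t^{-k}\|X\|_{L^{p}}$, elliptic boundary regularity gives $D(\Delta_{p}^{k})\hookrightarrow W^{2k,p}\mathfrak{X}$, and interpolating in the order of smoothness yields $\|S(t)X\|_{W^{m,p}}\lesssim_{m,p}t^{-m/2}\|X\|_{L^{p}}$. Property (6) for general $p$ follows from the $L^{2}$ case by density and $L^{p}$/$L^{p'}$-continuity, and (4) --- commutation with $\mathbb{P}$ --- follows because $\Delta$ with absolute boundary conditions preserves the Hodge/Helmholtz decomposition, so $S(t)$, being a function of $\Delta$, commutes with the spectral projections defining $\mathbb{P}$ on $L^{2}$, and this passes to $W^{m,p}$ by density together with the $W^{m,p}$-continuity of $S(t)$ and $\mathbb{P}$.

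For the $W^{1,p}$-statements (3a)--(3b) I would use that $d$ and $\delta$ intertwine the heat flows on consecutive form-degrees, $dS_{k}(t)=S_{k+1}(t)d$ and $\delta S_{k+1}(t)=S_{k}(t)\delta$, since $\Delta$ commutes with $d$ and $\delta$; for $X\in W^{1,p}\mathfrak{X}_{N}$ the Neumann condition is precisely what lets one differentiate $S(t)X$ once more and push the derivative through the flow, so combining the intertwinings with the $L^{p}$-smoothing on the higher-degree forms gives $\|S(t)X\|_{W^{m+1,p}}\lesssim_{m,p}t^{-m/2}\|X\|_{W^{1,p}}$, and strong $W^{1,p}$-continuity at $0$ then follows from the uniform bound together with strong continuity on a dense set of smooth fields.

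I expect two main obstacles. First, making the Stein-interpolation step genuinely rigorous --- so that one really does circumvent the hard resolvent estimate --- and pinning down the $L^{p}$-generator's domain as the Sobolev-regular space one wants (this is where smoothness and compactness of $M$ and $\partial M$ enter through elliptic boundary estimates). Second, the $W^{1,p}$-analyticity: the integration-by-parts identities underlying both the energy method and the $d,\delta$-intertwinings generate boundary terms, and one must check that the absolute boundary conditions --- and only those --- annihilate all of them; the connection Laplacian would fail here because it does not commute with $d$. Everything else should be a routine consequence of semigroup theory, elliptic regularity, and density.
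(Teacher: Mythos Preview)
Your outline matches the paper's strategy closely, but two points are either wrong as stated or hide a real difficulty you did not anticipate.

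First, in the $L^{p}$-boundedness step you claim the absolute Neumann condition ``makes the boundary contributions vanish''. It does not: for $\omega\in\Omega_{\hom N}^{k}$ one only has $|\nabla_{\nu}(|\omega|^{2})|\lesssim|\omega|^{2}$ on $\partial M$ (see \Subsecref{An-easy-mistake}; the naive expectation $\nabla_{\nu}\omega=0$ fails because of Christoffel terms). Thus a boundary integral of the form $\int_{\partial M}|U|^{p}$ survives in the Gronwall argument and must be absorbed via Ehrling's inequality, using compactness of the trace $H^{1}(M)\to L^{2}(\partial M)$. This is the actual content of \Thmref{local_boundedness_Lp}.

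Second, and more substantially, for (3b) you invoke ``strong continuity on a dense set of smooth fields'', but the natural candidate core is $D(\widetilde{\Delta_{N}}^{\infty})\subset W^{2,p}\Omega_{\hom N}$, and proving that this is $W^{1,p}$-dense in $W^{1,p}\Omega_{N}$ is genuinely nontrivial: you cannot use the heat flow for this, since its $W^{1,p}$-continuity is precisely what you are establishing. The paper resolves it through Hodge theory, specifically the Poincare--Hodge--Dirac inequality $\|\omega\|_{W^{1,p}}\sim\|d\omega\|_{L^{p}}+\|\delta_{c}\omega\|_{L^{p}}$ on $\mathcal{P}^{N\perp}W^{1,p}\Omega_{N}$ (\Corref{poincare_ineq_Neumann}) and its consequence \Corref{crucial_boundary_approx_homN}. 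That same inequality is also what turns the $d,\delta_{c}$-intertwinings into the $W^{1,p}$-local-boundedness you need for (3a), since without the Neumann condition $\|d\omega\|_{L^{p}}+\|\delta\omega\|_{L^{p}}$ does not control $\|\omega\|_{W^{1,p}}$. So your anticipated second obstacle is real, but its resolution lies in the Hodge decomposition rather than in checking boundary identities by hand.
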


These estimates precisely fit the analogy $e^{t\Delta}\approx P_{\leq\frac{1}{\sqrt{t}}}$
where $P$ is the \textbf{Littlewood-Paley projection}. We also stress
that the heat flow preserves the space of tangential, divergence-free
vector fields (the range of $\mathbb{P}$), and is intrinsic (with
no dependence on choices of local coordinates).

Analogous results hold for scalar functions and differential forms
(\Secref{Heat-flow}). We also have commutativity with the exterior
derivative and codifferential in the case of differential forms (\Thmref{commute_derivatives_I}).
Loosely speaking, this allows the heat flow to preserve the overall
Hodge structure on the manifold. All these properties would not be
possible under standard mollification via partitions of unity.

Note that for $X\in\mathfrak{X}$, $X\otimes X$ is not dual to a
differential form. As our heat flow is generated by the Hodge Laplacian,
it is less useful in mollifying general tensor fields (for which the
connection Laplacian is better suited). Fortunately, we will never
actually have to do so in this paper.

We observe some basic identities (cf. \Thmref{integrate_tensor_forms_by_parts}):
\begin{itemize}
\item Using \textbf{Penrose abstract index notation} (see \Subsecref{Differential-forms-=000026}),
for any smooth tensors $T_{a_{1}...a_{k}}$, we define $\left(\nabla T\right)_{ia_{1}...a_{k}}=\nabla_{i}T_{a_{1}...a_{k}}$
and $\div T=\nabla^{i}T_{ia_{2}...a_{k}}.$
\item For all smooth tensors $T_{a_{1}...a_{k}}$ and $Q_{a_{1}...a_{k+1}}$:
\[
\int_{M}\nabla_{i}\left(T_{a_{1}...a_{k}}Q^{ia_{1}...a_{k}}\right)=\int_{M}\nabla_{i}T_{a_{1}...a_{k}}Q^{ia_{1}...a_{k}}+\int_{M}T_{a_{1}...a_{k}}\nabla_{i}Q^{ia_{1}...a_{k}}=\int_{\partial M}\nu_{i}T_{a_{1}...a_{k}}Q^{ia_{1}...a_{k}}
\]
\item For $X\in\mathfrak{X}_{N},Y\in\mathfrak{X},f\in C^{\infty}(M):$
\begin{enumerate}
\item $\int_{M}Xf=\int_{M}\mathrm{div}\left(fX\right)-\int_{M}f\mathrm{div}\left(X\right)=\int_{\partial M}\left\langle fX,\nu\right\rangle -\int_{M}f\mathrm{div}X=-\int_{M}f\mathrm{div}X$
\item $\int_{M}\left\langle \mathrm{div}(X\otimes X),Y\right\rangle =-\int_{M}\left\langle X\otimes X,\nabla Y\right\rangle $
\end{enumerate}
\item $\left(\nabla_{a}\nabla_{b}-\nabla_{b}\nabla_{a}\right)T^{ij}{}_{kl}=-R_{ab\sigma}{}^{i}T^{\sigma j}{}_{kl}-R_{ab\sigma}{}^{j}T^{i\sigma}{}_{kl}+R_{abk}{}^{\sigma}T^{ij}{}_{\sigma l}+R_{abl}{}^{\sigma}T^{ij}{}_{k\sigma}$
for any tensor $T^{ij}{}_{kl}$, where $R$ is the \textbf{Riemann
curvature tensor}.\textbf{ }Similar identities hold for other types
of tensors. When we do not care about the exact indices and how they
contract, we can just write the \textbf{schematic identity }$\left(\nabla_{a}\nabla_{b}-\nabla_{b}\nabla_{a}\right)T^{ij}{}_{kl}=R*T.$
As $R$ is bounded on compact $M$, interchanging derivatives is a
zeroth-order operation on $M$. In particular, we have the \textbf{Weitzenbock
formula}:
\begin{equation}
\Delta X=\nabla_{i}\nabla^{i}X+R*X\;\forall X\in\mathfrak{X}M\label{eq:Weitzen_schematic}
\end{equation}
\item For $X\in\mathbb{P}L^{2}\mathfrak{X},Y\in\mathfrak{X},Z\in\mathfrak{X},f\in C^{\infty}\left(M\right):$
\begin{enumerate}
\item $\int_{M}Xf=0$
\item $\int_{M}\left\langle \nabla_{X}Y,Z\right\rangle =-\int_{M}\left\langle Y,\nabla_{X}Z\right\rangle $
.
\end{enumerate}
\end{itemize}
There is an elementary lemma which is useful for convergence (the
proof is straightforward and omitted):
\begin{lem}[Dense convergence]
\label{lem:dense_conv} Let $X,Y$ be (real/complex) Banach spaces
and $X_{0}\leq X$ be norm-dense. Let $(T_{j})_{j\in\mathbb{N}}$
be bounded in $\mathcal{L}(X,Y)$ and $T\in\mathcal{L}(X,Y)$.

If $T_{j}x_{0}\to Tx_{0}\;\forall x_{0}\in X_{0}$ then $T_{j}x\to Tx\;\forall x\in X$.
\end{lem}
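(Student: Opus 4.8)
The plan is to run the standard three-\(\varepsilon\) argument, using the uniform boundedness of \((T_j)_j\) in place of any appeal to Banach--Steinhaus (the bound is handed to us by hypothesis rather than deduced from pointwise convergence). Set \(M_0 := \sup_{j}\norm{T_j}_{\mathcal{L}(X,Y)}\), which is finite precisely because \((T_j)\) is bounded in \(\mathcal{L}(X,Y)\), and fix \(x\in X\) together with \(\varepsilon>0\). Using norm-density of \(X_0\), choose \(x_0\in X_0\) with \(\norm{x-x_0}_X<\varepsilon\).

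First I would split, for each \(j\),
\[
\norm{T_j x - Tx}_Y \leq \norm{T_j(x-x_0)}_Y + \norm{T_j x_0 - Tx_0}_Y + \norm{T(x_0-x)}_Y \leq \bigl(M_0+\norm{T}_{\mathcal{L}(X,Y)}\bigr)\varepsilon + \norm{T_j x_0 - Tx_0}_Y .
\]
Then I would let \(j\to\infty\): the hypothesis \(T_j x_0\to Tx_0\) kills the last term in the limit, so \(\limsup_{j\to\infty}\norm{T_j x - Tx}_Y \leq \bigl(M_0+\norm{T}_{\mathcal{L}(X,Y)}\bigr)\varepsilon\). Since \(\varepsilon>0\) was arbitrary and the right-hand side does not depend on \(j\), this forces \(\norm{T_j x - Tx}_Y\to 0\), i.e. \(T_j x\to Tx\), which is the claim.

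There is no genuine obstacle here; the only point worth flagging is that the uniform bound \(M_0<\infty\) is exactly what keeps the term \(\norm{T_j(x-x_0)}_Y\) from growing with \(j\), and it comes directly from the assumption that \((T_j)\) is bounded in \(\mathcal{L}(X,Y)\) (so completeness of \(X\) is not actually used, only the density of \(X_0\) and the uniform bound). Equivalently, one could phrase the whole argument structurally: the set \(\{x\in X:\ T_j x\to Tx\}\) is a linear subspace of \(X\) that is closed (the closure step being precisely the estimate above, powered by \(M_0<\infty\)) and contains the dense set \(X_0\), hence equals \(X\).
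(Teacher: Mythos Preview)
Your proof is correct and is exactly the standard three-$\varepsilon$ argument one would expect here. The paper itself omits the proof, stating only that it is ``straightforward and omitted,'' so there is nothing to compare against; your write-up is more detailed than what the paper provides.
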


\begin{defn}[Heatable currents]

As the heat flow does not preserve compact supports in $\accentset{\circ}{M}$,
it is not defined on distributions. This inspires the formulation
of \textbf{heatable currents}. Define:
\begin{itemize}
\item $\mathscr{D}\Omega^{k}=\Omega_{00}^{k}=\colim\{\left(\Omega_{00}^{k}\left(K\right),C^{\infty}\text{ topo}\right):K\subset\accentset{\circ}{M}\text{ compact}\}$
as the space of \textbf{test $k$-forms} with \textbf{Schwartz's topology}\footnote{Confusingly enough, ``Schwartz's topology'' refers to the topology
on the space of distributions, not the topology for Schwartz functions.} (colimit in the category of locally convex TVS).
\item $\mathscr{D}'\Omega^{k}=\left(\mathscr{D}\Omega^{k}\right)^{*}$ as
the space of\textbf{ $k\text{-}$currents} (or \textbf{distributional
$k$}-\textbf{forms}), equipped with the weak{*} topology.
\item $\mathscr{D}_{N}\Omega^{k}=\{\omega\in\Omega^{k}:\mathbf{n}\Delta^{m}\omega=0,\mathbf{n}d\Delta^{m}\omega=0\;\forall m\in\mathbb{N}_{0}\}$
as the space of \textbf{heated $k$-forms }with the Frechet $C^{\infty}$
topology and $\mathscr{D}'_{N}\Omega^{k}=\left(\mathscr{D}_{N}\Omega^{k}\right)^{*}$
as the space of\textbf{ heatable $k$-currents} (or \textbf{heatable
distributional $k$}-\textbf{forms}) with the weak{*} topology.
\item \textbf{Spacetime test forms}: $\mathscr{D}\left(I,\Omega^{k}\right)=C_{c}^{\infty}\left(I,\Omega_{00}^{k}\right)=\colim\{\left(C_{c}^{\infty}\left(I_{1},\Omega_{00}^{k}(K)\right),C^{\infty}\text{ topo}\right):I_{1}\times K\subset I\times\accentset{\circ}{M}\text{ compact}\}$
and $\mathscr{D}_{N}\left(I,\Omega^{k}\right)=\colim\{\left(C_{c}^{\infty}\left(I_{1},\mathscr{D}_{N}\Omega^{k}\right),C^{\infty}\text{ topo}\right):I_{1}\subset I\text{ compact}\}$.
\item \textbf{Spacetime distributions} $\mathscr{D}'\left(I,\Omega^{k}\right)=\mathscr{D}\left(I,\Omega^{k}\right)^{*}$,
$\mathscr{D}'_{N}\left(I,\Omega^{k}\right)=\mathscr{D}_{N}\left(I,\Omega^{k}\right)^{*}.$
\end{itemize}
In particular, $\mathscr{D}_{N}\mathfrak{X}$ is defined from $\mathscr{D}_{N}\Omega^{1}$
by the musical isomorphism, and it is invariant under our heat flow
(much like how the space of Schwartz functions $\mathcal{S}(\mathbb{R}^{n})$
is invariant under the Littlewood-Paley projection). By that analogy,
heatable currents are tempered distributions on manifolds, and we
can write 
\[
\left\langle \left\langle S(t)\Lambda,X\right\rangle \right\rangle =\left\langle \left\langle \Lambda,S\left(t\right)X\right\rangle \right\rangle \;\forall\Lambda\in\mathscr{D}'_{N}\mathfrak{X},\forall X\in\mathscr{D}_{N}\mathfrak{X},\forall t\geq0
\]
where the dot product $\left\langle \left\langle \cdot,\cdot\right\rangle \right\rangle $
is simply abuse of notation.
\end{defn}

\begin{fact}
\label{fact:D_N-basic-properties}Some basic properties of $\mathscr{D}_{N}\mathfrak{X}$
and $\mathscr{D}'_{N}\mathfrak{X}$:
\begin{itemize}
\item $\left\langle \left\langle \Delta X,Y\right\rangle \right\rangle =\left\langle \left\langle X,\Delta Y\right\rangle \right\rangle \;\forall X,Y\in\mathscr{D}_{N}\mathfrak{X}$.
(\Thmref{integrate_tensor_forms_by_parts})
\item $S(t)\Lambda\in\mathscr{D}_{N}\mathfrak{X}$ $\forall t>0,\forall\Lambda\in\mathscr{D}'_{N}\mathfrak{X}$.
(\Subsecref{Distributions-and-adjoints}, a heatable current becomes
heated once the heat flow is applied)
\item $\mathfrak{X}_{00}\subset\mathscr{D}_{N}\mathfrak{X}$ and is dense
in $L^{p}\mathfrak{X}$ $\forall p\in[1,\infty)$. Also, $L^{p}\mathfrak{X}\hookrightarrow\mathscr{D}'_{N}\mathfrak{X}$
is continuous $\forall p\in[1,\infty]$.
\item $\mathbb{P}B_{3,1}^{\frac{1}{3}}\mathfrak{X}=\mathbb{P}B_{3,1}^{\frac{1}{3}}\mathfrak{X}_{N}$,
$\mathbb{P}W^{1,p}\mathfrak{X}=\mathbb{P}W^{1,p}\mathfrak{X}_{N}$
and $\mathbb{P}\mathscr{D}_{N}\mathfrak{X}\leq\mathscr{D}_{N}\mathfrak{X}$.
(\Subsecref{Hodge-decomposition})
\item $W^{1,p}\text{-}\mathrm{cl}\left(\mathscr{D}_{N}\mathfrak{X}\right)=W^{1,p}\mathfrak{X}_{N}$
$\forall p\in\left(1,\infty\right)$ (\Subsecref{W1p-analyticity}),
$B_{3,1}^{\frac{1}{3}}\text{-}\mathrm{cl}\left(\mathbb{P}\mathscr{D}_{N}\mathfrak{X}\right)=\mathbb{P}B_{3,1}^{\frac{1}{3}}\mathfrak{X}_{N}$
(\Subsecref{Interpolation-and-B-analyticity})
\item $\forall X\in\mathscr{D}_{N}\mathfrak{X}:S(t)X\xrightarrow[t\downarrow0]{C^{\infty}}X$
and $\partial_{t}\left(S(t)X\right)=\Delta S(t)X=S(t)\Delta X\;\forall t\geq0.$
(\Thmref{Sobolev_tower}, \Subsecref{Lp-analyticity})
\item (\Subsecref{Lp-analyticity}, \Subsecref{Interpolation-and-B-analyticity})
$\forall t\in(0,1),\forall m,m'\in\mathbb{N}_{0},\forall p\in(1,\infty),\forall X\in\mathscr{D}_{N}\mathfrak{X}:$
\begin{enumerate}
\item $\left\Vert S(t)X\right\Vert _{W^{m+m',p}}\lesssim\left(\frac{1}{t}\right)^{\frac{m'}{2}}\left\Vert X\right\Vert _{W^{m,p}}$.
\item $\left\Vert S(t)X\right\Vert _{B_{p,1}^{m+m'+\frac{1}{p}}}\lesssim\left(\frac{1}{t}\right)^{\frac{1}{2p}+\frac{m'}{2}}\left\Vert X\right\Vert _{W^{m,p}}$.
\item $t^{\frac{1}{2}\left(m-\frac{1}{p}\right)}\left\Vert S(t)X\right\Vert _{W^{m,p}}+\left\Vert S(t)X\right\Vert _{B_{p,1}^{\frac{1}{p}}}\lesssim\left\Vert X\right\Vert _{B_{p,1}^{\frac{1}{p}}}$
when $m\geq1$ and $X\in\mathbb{P}\mathscr{D}_{N}\mathfrak{X}$. \\
By dense convergence (\Lemref{dense_conv}), this means $S(t)X\xrightarrow[t\downarrow0]{B_{3,1}^{\frac{1}{3}}}X$
$\forall X\in\mathbb{P}B_{3,1}^{\frac{1}{3}}\mathfrak{X}_{N}$.
\end{enumerate}
\end{itemize}
\end{fact}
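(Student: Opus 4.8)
I would dispatch each item by one of three devices — Green's formula, duality together with the self-adjointness and smoothing of the heat flow, or the recorded estimates of \factref{Fact_heat} and \lemref{dense_conv} — deferring the genuinely analytic content to the sections cited in the statement. For the symmetry $\langle\langle\Delta X,Y\rangle\rangle=\langle\langle X,\Delta Y\rangle\rangle$ on $\mathscr{D}_N\mathfrak{X}$, I would write $\Delta=-(d\delta+\delta d)$ and apply the Green formula for $d$ and $\delta$ on $1$-forms (\Thmref{integrate_tensor_forms_by_parts}): the difference $\langle\langle\Delta X,Y\rangle\rangle-\langle\langle X,\Delta Y\rangle\rangle$ becomes a sum of boundary integrals whose integrands involve only $\mathbf{t}$ and $\mathbf{n}$ of $X,Y,dX,dY$ on $\partial M$. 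The $m=0$ case of the defining relations of $\mathscr{D}_N\Omega^1$ gives $\mathbf{n}X=\mathbf{n}dX=0$, and likewise for $Y$ — these are exactly the absolute boundary conditions — so every boundary term vanishes.

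For the heat flow on heatable currents, the first observation (from the construction in \Subsecref{Distributions-and-adjoints}, where orbits of the absolute Neumann flow lie in $D(\Delta^\infty)$ with all absolute conditions) is that $S(t)L^{p'}\mathfrak{X}\subseteq\mathscr{D}_N\mathfrak{X}$ for $t>0$. Hence for $\Lambda\in\mathscr{D}'_N\mathfrak{X}$ the recipe $\langle\langle S(t)\Lambda,X\rangle\rangle:=\langle\langle\Lambda,S(t)X\rangle\rangle$ defines, for each fixed $t>0$, a bounded functional on $L^{p'}\mathfrak{X}$: $\Lambda$ is bounded against some $C^N$-seminorm of the Fréchet space $\mathscr{D}_N\mathfrak{X}$, and item 7(a) of the statement plus Sobolev embedding give $\|S(t)X\|_{C^N}\lesssim_t\|X\|_{L^{p'}}$. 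So $S(t)\Lambda$ is represented by some $g_t\in L^p\mathfrak{X}$; writing $g_t=S(\varepsilon)g_{t-\varepsilon}$ through the semigroup law and the self-adjointness in \factref{Fact_heat}(5) shows $g_t$ is smooth and satisfies the full tower of heated conditions, i.e. $S(t)\Lambda\in\mathscr{D}_N\mathfrak{X}$.

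The algebraic and density items are soft, but their reverse direction is where I expect the real work. A field vanishing near $\partial M$ meets every boundary condition vacuously, so $\mathfrak{X}_{00}\subset\mathscr{D}_N\mathfrak{X}$; $L^p$-density of $\mathfrak{X}_{00}$ for $p<\infty$ is standard; and $L^p\mathfrak{X}\hookrightarrow\mathscr{D}'_N\mathfrak{X}$ is continuous because $\mathscr{D}_N\mathfrak{X}\subset C^\infty\subset L^{p'}$ makes the $L^2$-pairing well defined, after which weak* continuity is formal. The equalities $\mathbb{P}B_{3,1}^{\frac{1}{3}}\mathfrak{X}=\mathbb{P}B_{3,1}^{\frac{1}{3}}\mathfrak{X}_N$ and $\mathbb{P}W^{1,p}\mathfrak{X}=\mathbb{P}W^{1,p}\mathfrak{X}_N$ hold because $\mathbb{P}$ maps continuously into the closed subspace of divergence-free tangential fields (\Subsecref{Hodge-decomposition}, \Subsecref{Interpolation-and-B-analyticity}), while $\mathbb{P}\mathscr{D}_N\mathfrak{X}\leq\mathscr{D}_N\mathfrak{X}$ follows from $\mathbb{P}$ commuting with $\Delta$ and preserving each absolute condition. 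The two closure statements $W^{1,p}\text{-}\mathrm{cl}(\mathscr{D}_N\mathfrak{X})=W^{1,p}\mathfrak{X}_N$ and $B_{3,1}^{\frac{1}{3}}\text{-}\mathrm{cl}(\mathbb{P}\mathscr{D}_N\mathfrak{X})=\mathbb{P}B_{3,1}^{\frac{1}{3}}\mathfrak{X}_N$ are the main obstacle: the inclusion into the trace-zero space is immediate from continuity of the trace, but the reverse inclusion forces us to approximate a field constrained only by $\mathbf{n}X=0$ by fields obeying infinitely many boundary conditions, which is exactly what the crucial boundary approximation \Corref{crucial_boundary_approx_homN}, together with \Subsecref{W1p-analyticity} and \Subsecref{Interpolation-and-B-analyticity}, is built to provide.

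Finally, for the quantitative items: restricting $S(t)$ to the Fréchet space $\mathscr{D}_N\mathfrak{X}$ produces a $C_0$-semigroup on the associated Sobolev tower (\Thmref{Sobolev_tower}), so for $X\in\mathscr{D}_N\mathfrak{X}$ one gets $S(t)X\to X$ in every $W^{m,p}$, hence in $C^\infty$ by embedding, and $\partial_t S(t)X=\Delta S(t)X=S(t)\Delta X$ since $\Delta$ is the generator and commutes with $S(t)$ on its domain. Estimates 7(a) and 7(b) combine the bounds of \factref{Fact_heat} with the Besov embedding and trace facts (\factref{fact_trace}), and 7(c) is the $B_{p,1}^{\frac{1}{p}}$-analyticity through $\mathbb{P}$ of \Subsecref{Interpolation-and-B-analyticity}. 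The asserted convergence $S(t)X\xrightarrow[t\downarrow0]{B_{3,1}^{\frac{1}{3}}}X$ for $X\in\mathbb{P}B_{3,1}^{\frac{1}{3}}\mathfrak{X}_N$ then follows from \lemref{dense_conv}: by 7(c) the family $\{S(t):t\in(0,1)\}$ is uniformly bounded on $\mathbb{P}B_{3,1}^{\frac{1}{3}}\mathfrak{X}_N$, and by the $C^\infty$-convergence above together with the density statement just proven it converges to the identity on the dense subspace $\mathbb{P}\mathscr{D}_N\mathfrak{X}$, hence on the whole space.
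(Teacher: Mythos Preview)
Your proposal is correct and essentially matches the paper's treatment. This statement is a summary with pointers, and the paper proves each item in the referenced section exactly along the lines you sketch: the symmetry of $\Delta$ via the Dirichlet-integral identity in \Thmref{integrate_tensor_forms_by_parts} with the absolute Neumann conditions killing the boundary terms; the smoothing of heatable currents by the $|\Lambda(\phi)|\lesssim\|\phi\|_{H^{m_1}}$ bound together with $e^{t\widetilde{\Delta_N}}\Lambda=e^{\frac{t}{2}\widetilde{\Delta_N}}\bigl(e^{\frac{t}{2}\widetilde{\Delta_N}}\Lambda\bigr)$ (the paper lands in $L^2$ rather than your $L^{p'}$, an immaterial variant); the closure identities via \Corref{crucial_boundary_approx_homN} and the interpolation formula $\mathbb{P}B_{p,1}^{1/p}\Omega=(\mathbb{P}L^p\Omega,\mathbb{P}W^{1,p}\Omega)_{1/p,1}$; and the quantitative estimates by combining the $L^p$- and $W^{1,p}$-analyticity with real interpolation and Sobolev tower, followed by \Lemref{dense_conv}.
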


\begin{cor}[Vanishing]
 \label{cor:vanishing_property}$\forall X\in\mathbb{P}B_{3,1}^{\frac{1}{3}}\mathfrak{X}_{N}:$
$s^{\frac{1}{3}}\left\Vert S(s)X\right\Vert _{W^{1,3}}\xrightarrow{s\downarrow0}0$.
\end{cor}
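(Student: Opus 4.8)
The plan is to reduce everything to the uniform‑in‑$s$ heat‑flow estimate already recorded in Fact~\ref{fact:D_N-basic-properties} and then run a density / three‑epsilon argument. First I would specialize the bound
\[
t^{\frac12\left(m-\frac1p\right)}\norm{S(t)X}_{W^{m,p}}+\norm{S(t)X}_{B_{p,1}^{1/p}}\lesssim\norm{X}_{B_{p,1}^{1/p}}\qquad\bigl(m\ge1,\ X\in\mathbb{P}\mathscr{D}_N\mathfrak{X}\bigr)
\]
from Fact~\ref{fact:D_N-basic-properties} to $p=3$, $m=1$; since $\tfrac12(1-\tfrac13)=\tfrac13$, this reads
\[
s^{\frac13}\norm{S(s)X}_{W^{1,3}}+\norm{S(s)X}_{B_{3,1}^{1/3}}\lesssim\norm{X}_{B_{3,1}^{1/3}}\qquad\text{for all }s\in(0,1),\ X\in\mathbb{P}\mathscr{D}_N\mathfrak{X},
\]
with implied constant independent of $s$. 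For each fixed $s\in(0,1)$ the map $X\mapsto s^{\frac13}S(s)X$ is bounded from $B_{3,1}^{1/3}\mathfrak{X}$ into $W^{1,3}\mathfrak{X}$ (using $B_{3,1}^{1/3}\mathfrak{X}\hookrightarrow L^3\mathfrak{X}$ from Fact~\ref{fact:Fact_embed} and $\norm{S(s)Y}_{W^{1,3}}\lesssim s^{-1/2}\norm{Y}_{L^3}$ from Fact~\ref{fact:Fact_heat}). Since $\mathbb{P}\mathscr{D}_N\mathfrak{X}$ is $B_{3,1}^{1/3}$‑dense in $\mathbb{P}B_{3,1}^{1/3}\mathfrak{X}_N$ (Fact~\ref{fact:D_N-basic-properties}), taking the limit of the displayed inequality along an approximating sequence extends the bound on $s^{\frac13}\norm{S(s)X}_{W^{1,3}}$ to every $X\in\mathbb{P}B_{3,1}^{1/3}\mathfrak{X}_N$, still uniformly in $s\in(0,1)$.

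With this uniform bound in hand I would finish as follows. Fix $X\in\mathbb{P}B_{3,1}^{1/3}\mathfrak{X}_N$ and $\varepsilon>0$, and pick $X_0\in\mathbb{P}\mathscr{D}_N\mathfrak{X}$ with $\norm{X-X_0}_{B_{3,1}^{1/3}}<\varepsilon$. Splitting $S(s)X=S(s)(X-X_0)+S(s)X_0$ and applying the uniform bound to $X-X_0\in\mathbb{P}B_{3,1}^{1/3}\mathfrak{X}_N$, the first term contributes $\lesssim\varepsilon$. For the second term, $X_0\in\mathscr{D}_N\mathfrak{X}$ (as $\mathbb{P}\mathscr{D}_N\mathfrak{X}\le\mathscr{D}_N\mathfrak{X}$), so $S(s)X_0\xrightarrow[s\downarrow0]{C^\infty}X_0$ by Fact~\ref{fact:D_N-basic-properties}; hence $\norm{S(s)X_0}_{W^{1,3}}$ stays bounded as $s\downarrow0$ and $s^{\frac13}\norm{S(s)X_0}_{W^{1,3}}\to0$. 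Therefore
\[
\limsup_{s\downarrow0}\ s^{\frac13}\norm{S(s)X}_{W^{1,3}}\ \lesssim\ \varepsilon ,
\]
and letting $\varepsilon\downarrow0$ gives $s^{\frac13}\norm{S(s)X}_{W^{1,3}}\to0$, which is the assertion.

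The argument is short and I do not expect a genuine obstacle; the one step warranting a sentence of justification is the passage of the quantitative estimate from the dense class $\mathbb{P}\mathscr{D}_N\mathfrak{X}$ to all of $\mathbb{P}B_{3,1}^{1/3}\mathfrak{X}_N$. This is precisely the uniform‑boundedness‑plus‑density mechanism behind Lemma~\ref{lem:dense_conv}, and in any case it is already implicitly used in deriving the convergence $S(t)X\xrightarrow{B_{3,1}^{1/3}}X$ stated in Fact~\ref{fact:D_N-basic-properties}.
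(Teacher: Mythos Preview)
Your proposal is correct and follows essentially the same approach as the paper: establish the uniform bound $s^{1/3}\|S(s)X\|_{W^{1,3}}\lesssim\|X\|_{B_{3,1}^{1/3}}$ on $\mathbb{P}B_{3,1}^{1/3}\mathfrak{X}_N$, verify the vanishing on the dense subspace $\mathbb{P}\mathscr{D}_N\mathfrak{X}$, and invoke the dense convergence lemma. The only cosmetic difference is that the paper obtains boundedness of $\|S(s)Y\|_{W^{1,3}}$ for $Y\in\mathbb{P}\mathscr{D}_N\mathfrak{X}$ directly from the $W^{1,3}$-boundedness of the semigroup (Fact~\ref{fact:Fact_heat}(3a) with $m=0$) rather than from $C^\infty$ convergence, but this is immaterial.
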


\begin{rem*}
So, for $\mathcal{U}\in L_{t}^{3}\mathbb{P}B_{3,1}^{\frac{1}{3}}\mathfrak{X}_{N}$:
$\text{\ensuremath{\left\Vert \mathcal{U}(t)\right\Vert _{L_{t}^{3}B_{3,1}^{\frac{1}{3}}}}\ensuremath{\ensuremath{\gtrsim}}}\left\Vert \left\Vert \sigma^{\frac{1}{3}}\left\Vert S(\sigma)\mathcal{U}(t)\right\Vert _{W^{1,3}}\right\Vert _{L_{\sigma}^{\infty}\left([0,s]\right)}\right\Vert _{L_{t}^{3}}\xrightarrow[DCT]{s\downarrow0}0$.\\
This pointwise vanishing property becomes important for the commutator
estimate in Onsager's conjecture at the critical regularity level
$\frac{1}{3}$, while higher regularity levels have enough room for
vanishing in norm (which is better).
\end{rem*}
\begin{proof}
For $Y\in\mathbb{P}\mathscr{D}_{N}\mathfrak{X}$, as $s>0$ small:
$s^{\frac{1}{3}}\left\Vert S(s)Y\right\Vert _{W^{1,3}}\lesssim s^{\frac{1}{3}}\left\Vert Y\right\Vert _{W^{1,3}}\xrightarrow{s\downarrow0}0$.
Then note $s^{\frac{1}{3}}\left\Vert S(s)X\right\Vert _{W^{1,3}}\lesssim\left\Vert X\right\Vert _{B_{3,1}^{\frac{1}{3}}}\forall X\in\mathbb{P}B_{3,1}^{\frac{1}{3}}\mathfrak{X}_{N}$,
so we can apply dense convergence (\Lemref{dense_conv}).
\end{proof}

\subsection{Searching for the proper formulation\label{subsec:Searching-for-the}}

Onsager's conjecture states that energy is conserved when $\mathcal{V}$
has enough regularity, with appropriate conditions near the boundary.
But making this statement precise is half of the challenge.
\begin{defn}
We say $\left(\mathcal{V},\mathfrak{p}\right)$ is a \textbf{weak
solution }to the Euler equation when
\begin{itemize}
\item $\mathcal{V}\in L_{\mathrm{loc}}^{2}\left(I,\mathbb{P}L^{2}\mathfrak{X}\right)$,
$\mathfrak{p}\in L_{\mathrm{loc}}^{1}(I\times M)$
\item $\forall\mathcal{X}\in C_{c}^{\infty}\left(I,\mathfrak{X}_{00}\right):$$\iint_{I\times M}\left\langle \mathcal{V},\partial_{t}\mathcal{X}\right\rangle +\left\langle \mathcal{V}\otimes\mathcal{V},\nabla\mathcal{X}\right\rangle +\mathfrak{p}\Div\mathcal{X}=0$.
\end{itemize}
\end{defn}

The last condition means $\partial_{t}\mathcal{V}+\mathrm{div}(\mathcal{V}\otimes\mathcal{V})+\grad\mathfrak{p}=0$
as spacetime distributions. Note that $\mathcal{V}\otimes\mathcal{V}\in L_{\mathrm{loc}}^{1}\left(I,L^{1}\mathfrak{X}\right)$
so it is a distribution.

The keen reader should notice we use a different font for time-dependent
vector fields.

There is not enough time-regularity for FTC, and we cannot say
\[
\left\langle \left\langle \mathcal{V}\left(t_{1}\right),X\right\rangle \right\rangle -\left\langle \left\langle \mathcal{V}\left(t_{0}\right),X\right\rangle \right\rangle =\int_{t_{0}}^{t_{1}}\left\langle \left\langle \mathcal{V}\otimes\mathcal{V},\nabla X\right\rangle \right\rangle +\int_{t_{0}}^{t_{1}}\int_{M}\mathfrak{p}\Div X\;\forall X\in\mathfrak{X}_{00}
\]
But we can still use approximation to the identity (in the time variable)
near $t_{0}$,$t_{1}$, as well as Lebesgue differentiation to get
something similar for a.e. $t_{0},t_{1}$. By using dense convergence
(\Lemref{dense_conv}) and modifying $I$ into $I_{0}\subset I$ such
that $\left|I\backslash I_{0}\right|=0$, we can say $\mathcal{V}\in C_{\mathrm{loc}}^{0}\left(I_{0},\left(L^{2}\mathfrak{X},\mathrm{weak}\right)\right)\leq L_{\mathrm{loc}}^{\infty}\left(I,L^{2}\mathfrak{X}\right).$

We do not have $\mathcal{V}\in C_{\mathrm{loc}}^{0}\left(I,L^{2}\mathfrak{X}\right)$,
so energy conservation only means $\partial_{t}\left(\left\Vert \mathcal{V}(t)\right\Vert _{L^{2}\mathfrak{X}}^{2}\right)=0$
as a distribution. In other words, the goal is to show 
\[
\int_{I}\eta'(t)\left\langle \left\langle \mathcal{V}(t),\mathcal{V}(t)\right\rangle \right\rangle \mathrm{d}t=0\;\forall\eta\in C_{c}^{\infty}(I)
\]

Next, having the test vector field $\mathcal{X}\in C_{c}^{\infty}\left(I,\mathfrak{X}_{00}\right)$
can be quite restrictive, since the heat flow (much like the Littlewood-Paley
projection) does not preserve compact supports in $\accentset{\circ}{M}$.
We need a notion that is more in tune with our theory.
\begin{defn}
We say $\left(\mathcal{V},\mathfrak{p}\right)$ is a \textbf{Hodge
weak solution }to the Euler equation when $\mathcal{V}\in L_{\mathrm{loc}}^{2}\left(I,\mathbb{P}L^{2}\mathfrak{X}\right)$,
$\mathfrak{p}\in L_{\mathrm{loc}}^{1}(I\times M)$ and
\[
\forall\mathcal{X}\in C_{c}^{\infty}\left(I,\mathfrak{X}_{N}\right):\iint_{I\times M}\left\langle \mathcal{V},\partial_{t}\mathcal{X}\right\rangle +\left\langle \mathcal{V\otimes V},\nabla\mathcal{X}\right\rangle +\mathfrak{p}\Div\mathcal{X}=0
\]
Now this looks better, since $\mathfrak{X}_{N}$ is invariant under
the heat flow. However, this is a leap of faith we will need to justify
later (cf. \Subsecref{Justification}).

As $\mathbb{P}\mathfrak{X}\leq\mathfrak{X}_{N}$, we can go further
and say $\mathcal{V}$ is a \textbf{Hodge-Leray weak solution }to
the Euler equation when $\mathcal{V}\in L_{\mathrm{loc}}^{2}\left(I,\mathbb{P}L^{2}\mathfrak{X}\right)$
and
\[
\forall\mathcal{X}\in C_{c}^{\infty}\left(I,\mathbb{P}\mathfrak{X}\right):\iint_{I\times M}\left\langle \mathcal{V},\partial_{t}\mathcal{X}\right\rangle +\left\langle \mathcal{V\otimes V},\nabla\mathcal{X}\right\rangle =0
\]
This would help give a formulation of Onsager's conjecture that does
not depend on the pressure, similar to \parencite{Skipper2018_onsager_no_pressure}.
\end{defn}

Next, we look at the conditions for $\mathcal{V}$ and $\mathfrak{p}$
near $\partial M$. In \parencite{Titi_Onsager_bounded_domain}, they
assumed $\mathcal{V}\in L_{t}^{3}C^{0,\alpha}\mathfrak{X}_{N}$ with
$\alpha\in\left(\frac{1}{3},1\right)$. In \parencite{Nguyen_Onsager_bounded_domain},
they assumed $\mathcal{V}\in L_{t}^{3}B_{3,\infty}^{\alpha}\mathfrak{X}$
($\alpha\in\left(\frac{1}{3},1\right)$) with a more general ``strip
decay'' condition:
\begin{itemize}
\item $\left\Vert \mathcal{V}\right\Vert _{L_{t}^{3}L^{3}\left(M_{<r},\mathrm{avg}\right)}^{2}\left\Vert \left\langle \mathcal{V},\widetilde{\nu}\right\rangle \right\Vert _{L_{t}^{3}L^{3}\left(M_{<r},\mathrm{avg}\right)}\xrightarrow{r\downarrow0}0$
\item $\left\Vert \mathfrak{p}\right\Vert _{L_{t}^{\frac{3}{2}}L^{\frac{3}{2}}\left(M_{<r},\mathrm{avg}\right)}\left\Vert \left\langle \mathcal{V},\widetilde{\nu}\right\rangle \right\Vert _{L_{t}^{3}L^{3}\left(M_{<r},\mathrm{avg}\right)}\xrightarrow{r\downarrow0}0$.
\end{itemize}
In \parencite{Titi_Gwiazda_conservation_entropy} (the most recent
result), they assumed $\mathcal{V}\in L_{t}^{3}\underline{B}_{3,\mathrm{VMO}}^{1/3}\mathfrak{X}$
(see the paper for the full definition), along with a minor relaxation
for the ``strip decay'' condition:

\[
\left\Vert \left(\frac{\left|\mathcal{V}\right|^{2}}{2}+\mathfrak{p}\right)\left\langle \mathcal{V},\widetilde{\nu}\right\rangle \right\Vert _{L_{t}^{1}L^{1}\left(M_{[\frac{r}{4},\frac{r}{2}]},\mathrm{avg}\right)}\xrightarrow{r\downarrow0}0
\]

When $\mathcal{V}\in L_{t}^{3}B_{3,1}^{\frac{1}{3}}\mathfrak{X}$,
$\left\Vert \left\langle \mathcal{V},\widetilde{\nu}\right\rangle \right\Vert _{L_{t}^{3}L^{3}\left(M_{<r},\mathrm{avg}\right)}\xrightarrow{r\downarrow0}\left\Vert \left\langle \mathcal{V},\nu\right\rangle \right\Vert _{L_{t}^{3}L^{3}\left(\partial M,\mathrm{avg}\right)}$
by \Factref{fact_coarea}. This motivates our formulation later in
\Subsecref{Proof-of-Onsager}, where we put $\mathcal{V}\in L_{t}^{3}\mathbb{P}B_{3,1}^{\frac{1}{3}}\mathfrak{X}_{N}$.

\subsection{Justification of formulation\label{subsec:Justification}}

We define the cutoffs 
\begin{equation}
\psi_{r}(x)=\Psi_{r}\left(\mathrm{dist}\left(x,\partial M\right)\right)\label{eq:cutoff}
\end{equation}
where $r>0$ small, $\Psi_{r}\in C^{\infty}([0,\infty),[0,\infty))$
such that $\mathbf{1}_{[0,\frac{3}{4}r)}\geq\Psi_{r}\geq\mathbf{1}_{[0,\frac{r}{2}]}$
and $\left\Vert \Psi'_{r}\right\Vert _{\infty}\lesssim\frac{1}{r}.$\\
Then $\nabla\psi_{r}(x)=f_{r}(x)\widetilde{\nu}(x)$ where $|f_{r}(x)|\lesssim\frac{1}{r}$
and $\supp\psi_{r}\subset M_{<r}$. \nomenclature{$\psi_{r},f_{r}$}{cutoffs on $M$ living near the boundary\nomrefpage}

Let $(\mathcal{V},\mathfrak{p})$ be a weak solution to the Euler
equation and $\alpha\in(\frac{1}{3},1)$. Define different conditions:
\begin{enumerate}
\item $\mathcal{V}\in L_{t}^{3}C^{0,\alpha}\mathfrak{X}_{N}$.
\item $\mathcal{V}\in L_{t}^{3}B_{3,\infty}^{\alpha}\mathfrak{X}$ and $\left\Vert \mathcal{V}\right\Vert _{L_{t}^{3}L^{3}\left(M_{<r},\mathrm{avg}\right)}^{2}\left\Vert \left\langle \mathcal{V},\widetilde{\nu}\right\rangle \right\Vert _{L_{t}^{3}L^{3}\left(M_{<r},\mathrm{avg}\right)}\xrightarrow{r\downarrow0}0$.
\item $\mathcal{V}\in L_{t}^{3}B_{3,1}^{\frac{1}{3}}\mathfrak{X}_{N}$.
\item $(\mathcal{V},\mathfrak{p})$ is a Hodge weak solution.
\item $\mathcal{V}$ is a Hodge-Leray weak solution.
\end{enumerate}
\begin{thm}
\label{thm:different_conditions_generalized} We have $(1)\implies(2)\implies(3)\implies\left(4\right)\implies(5)$.
\end{thm}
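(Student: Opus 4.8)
The plan is to establish the four implications in order, and the bulk of the work will be in $(2)\!\implies\!(3)$ and $(3)\!\implies\!(4)$; the remaining two are comparatively soft. The implication $(1)\!\implies\!(2)$ is essentially an embedding statement: by \Factref{fact_coarea} (part 3) and the embedding $C^{0,\alpha}\mathfrak{X}\hookrightarrow B^{\alpha}_{3,\infty}\mathfrak{X}$ from \Factref{Fact_embed}, one sees $\left\Vert \mathcal{V}\right\Vert_{L^3_t L^3(M_{<r},\avg)}$ stays bounded while $\left\Vert \langle \mathcal{V},\widetilde\nu\rangle\right\Vert_{L^3_t L^3(M_{<r},\avg)}\to \left\Vert \langle \mathcal{V},\nu\rangle\right\Vert_{L^3_t L^3(\partial M,\avg)}=0$ because $\mathcal{V}\in\mathfrak{X}_N$; so the product tends to $0$. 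For $(2)\!\implies\!(3)$, I would use that $B^{\alpha}_{3,\infty}\hookrightarrow B^{1/3}_{3,1}$ (\Factref{Fact_embed}) to upgrade the regularity, and then the content is to deduce the Neumann condition $\mathbf{n}\mathcal{V}=0$ (i.e. $\mathcal{V}\in L^3_t B^{1/3}_{3,1}\mathfrak{X}_N$) from the strip-decay hypothesis: by \Factref{fact_coarea}(4), $\left\Vert \langle\mathcal{V},\widetilde\nu\rangle\right\Vert_{L^3_t L^3(M_{<r},\avg)}\to\left\Vert \langle\mathcal{V},\nu\rangle\right\Vert_{L^3_t L^3(\partial M,\avg)}$, and since the full product goes to $0$ while the first factor $\left\Vert \mathcal{V}\right\Vert^2_{L^3_t L^3(M_{<r},\avg)}$ has a positive limit (it converges to $\left\Vert \mathcal{V}\right\Vert^2_{L^3_t L^3(\partial M,\avg)}$, and if that is $0$ then $\mathcal{V}|_{\partial M}=0$ trivially gives the Neumann condition), the trace $\langle\mathcal{V},\nu\rangle$ must vanish a.e. on $I\times\partial M$, which is exactly the Neumann condition in the trace sense.

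The implication $(3)\!\implies\!(4)$ is the heart of the theorem: upgrading the class of admissible test vector fields from $C^\infty_c(I,\mathfrak{X}_{00})$ to $C^\infty_c(I,\mathfrak{X}_N)$. Given $\mathcal{X}\in C^\infty_c(I,\mathfrak{X}_N)$, I would approximate it in an appropriate topology by fields of the form $\mathcal{X}-\psi_r\mathcal{X}$ (using the cutoffs $\psi_r$ of \Eqref{cutoff}), which are supported away from $\partial M$ but need not be compactly supported in $\accentset{\circ}{M}$ — so an additional standard interior cutoff/mollification gives genuine elements of $C^\infty_c(I,\mathfrak{X}_{00})$ for which the weak-solution identity holds. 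Passing to the limit $r\downarrow 0$, the terms $\iint\langle\mathcal{V},\partial_t(\psi_r\mathcal{X})\rangle$ and $\iint\langle\mathcal{V}\otimes\mathcal{V},\nabla(\psi_r\mathcal{X})\rangle$ and $\iint\mathfrak{p}\,\Div(\psi_r\mathcal{X})$ must be shown to vanish. The first is $O(\left\Vert\mathcal{V}\right\Vert_{L^2(M_{<r})})\to0$ by DCT. The delicate ones involve $\nabla\psi_r=f_r\widetilde\nu$ with $|f_r|\lesssim 1/r$: the term $\iint \langle\mathcal{V}\otimes\mathcal{V},\widetilde\nu\otimes\mathcal{X}\rangle f_r$ is controlled by $\frac1r\left\Vert\mathcal{V}\right\Vert_{L^3_t L^3(M_{<r})}\left\Vert\langle\mathcal{V},\widetilde\nu\rangle\right\Vert_{L^3_t L^3(M_{<r})}$, and since $\left\Vert\mathcal{V}\right\Vert_{L^3(M_{<r})}\lesssim r^{1/3}\left\Vert\mathcal{V}\right\Vert_{B^{1/3}_{3,1}}$ by \Factref{fact_coarea}(3) and $\left\Vert\langle\mathcal{V},\widetilde\nu\rangle\right\Vert_{L^3(M_{<r})}=o(r^{1/3})$ because the trace $\langle\mathcal{V},\nu\rangle=0$ (again \Factref{fact_coarea}, now exploiting that the boundary value is zero so the strip average tends to $0$ rather than merely staying bounded), the product is $o(1)$. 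The pressure term $\iint \mathfrak{p}f_r\langle\widetilde\nu,\mathcal{X}\rangle$ is the subtle one: here I would use that $\mathcal{X}\in\mathfrak{X}_N$ so $\langle\widetilde\nu,\mathcal{X}\rangle$ vanishes on $\partial M$ and hence $|\langle\widetilde\nu,\mathcal{X}\rangle|\lesssim \dist(\cdot,\partial M)\lesssim r$ on $M_{<r}$, killing the $1/r$ from $f_r$, so the term is $O(\left\Vert\mathfrak{p}\right\Vert_{L^1(I\times M_{<r})})\to0$ by DCT — this is precisely why no strip-decay hypothesis on $\mathfrak{p}$ is needed, a point the introduction emphasizes. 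Finally $(4)\!\implies\!(5)$ is immediate: $\mathbb{P}\mathfrak{X}\leq\mathfrak{X}_N$, so any $\mathcal{X}\in C^\infty_c(I,\mathbb{P}\mathfrak{X})$ is an admissible test field for the Hodge formulation, and for such $\mathcal{X}$ one has $\Div\mathcal{X}=0$, so the pressure term drops out entirely.

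The main obstacle is the cancellation of the $O(1/r)$ convective boundary term in $(3)\!\implies\!(4)$: it requires simultaneously the gain of a full $r^{1/3}$ from the Besov norm of $\mathcal{V}$ and a strict gain $o(r^{1/3})$ from the \emph{normal} component $\langle\mathcal{V},\widetilde\nu\rangle$, and the latter is available exactly because the Neumann condition forces the trace to be zero — this is where the critical exponent $\tfrac13$ and the space $B^{1/3}_{3,1}$ (rather than $B^{1/3}_{3,\infty}$) are used, since \Factref{fact_coarea} needs the $\ell^1$-summability to guarantee the strip averages converge to the boundary trace. A secondary technical point is making the double approximation (boundary cutoff $\psi_r$ followed by interior cutoff to land in $\mathfrak{X}_{00}$) precise without disturbing the limits, but that is routine given \Lemref{dense_conv} and dominated convergence in the time variable.
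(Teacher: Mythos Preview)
Your proposal follows the same route as the paper's proof and is essentially correct, but two execution points deserve correction.

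First, in $(3)\!\implies\!(4)$ the ``additional standard interior cutoff/mollification'' you invoke is unnecessary: by the definition of $\psi_r$ in \Eqref{cutoff} one has $\psi_r\equiv 1$ on $M_{\leq r/2}$, so $(1-\psi_r)\mathcal{X}$ is already supported in $M_{\geq r/2}$, which is compact in $\accentset{\circ}{M}$ since $M$ is compact. Hence $(1-\psi_r)\mathcal{X}\in C^\infty_c(I,\mathfrak{X}_{00})$ directly, and the paper simply plugs this into the weak-solution identity.

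Second, your H\"older bound on the convective boundary term is misstated: writing it as $\frac{1}{r}\|\mathcal{V}\|_{L^3_t L^3(M_{<r})}\|\langle\mathcal{V},\widetilde\nu\rangle\|_{L^3_t L^3(M_{<r})}$ uses exponents $\tfrac13+\tfrac13<1$, and inserting your $r^{1/3}$ and $o(r^{1/3})$ estimates would only yield $o(r^{-1/3})$, which does not vanish. The paper uses three $L^3$ factors ($\mathcal{V}$, $\langle\mathcal{V},\widetilde\nu\rangle$, and $\mathcal{X}$), so that $\frac{1}{r}\cdot r=1$ from the volume scaling and the vanishing comes entirely from $\|\langle\mathcal{V},\widetilde\nu\rangle\|_{L^3_t L^3(M_{<r},\avg)}\to 0$. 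Once you include the missing $\|\mathcal{X}\|_{L^3(M_{<r})}\lesssim r^{1/3}$ factor your argument goes through, so this is a bookkeeping slip rather than a genuine gap. Your treatment of the pressure term---gaining a full factor of $r$ from $|\langle\mathcal{X},\widetilde\nu\rangle|\lesssim r$ on $M_{<r}$ because $\mathcal{X}\in\mathfrak{X}_N$---is exactly what the paper does, and your handling of the remaining implications matches the paper as well.
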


\begin{proof}
By \Factref{Fact_embed}, $C^{0,\alpha}\mathfrak{X}_{N}=B_{\infty,\infty}^{\alpha}\mathfrak{X}_{N}\hookrightarrow B_{3,\infty}^{\alpha}\mathfrak{X}_{N}\hookrightarrow B_{3,1}^{\frac{1}{3}}\mathfrak{X}_{N}$.
Then by the coarea formula,
\[
\left\Vert \left\langle \mathcal{V},\widetilde{\nu}\right\rangle \right\Vert _{L_{t}^{3}L^{3}\left(M_{<r},\mathrm{avg}\right)}^{3}\lesssim\left\Vert \mathcal{V}\right\Vert _{L_{t}^{3}L^{3}\left(M_{<r},\mathrm{avg}\right)}^{2}\left\Vert \left\langle \mathcal{V},\widetilde{\nu}\right\rangle \right\Vert _{L_{t}^{3}L^{3}\left(M_{<r},\mathrm{avg}\right)}\lesssim\left\Vert \mathcal{V}\right\Vert _{L_{t}^{3}B_{3,1}^{\frac{1}{3}}\mathfrak{X}}^{2}\left\Vert \left\langle \mathcal{V},\widetilde{\nu}\right\rangle \right\Vert _{L_{t}^{3}L^{3}\left(M_{<r},\mathrm{avg}\right)}
\]

So for $\mathcal{V}\in L_{t}^{3}B_{3,1}^{\frac{1}{3}}\mathfrak{X}$:
$\left\Vert \mathcal{V}\right\Vert _{L_{t}^{3}L^{3}\left(M_{<r},\mathrm{avg}\right)}^{2}\left\Vert \left\langle \mathcal{V},\widetilde{\nu}\right\rangle \right\Vert _{L_{t}^{3}L^{3}\left(M_{<r},\mathrm{avg}\right)}\xrightarrow{r\downarrow0}0\iff\left\Vert \left\langle \mathcal{V},\nu\right\rangle \right\Vert _{L_{t}^{3}L^{3}\left(\partial M\right)}=0\iff\mathbf{n}\mathcal{V}=0$.

As $\left(4\right)\implies(5)$ is obvious, the only thing left is
to show $(3)\implies\left(4\right)$. Recall the cutoffs $\psi_{r}$
from \Eqref{cutoff}.

Let $I_{1}\subset I$ be bounded and $\mathcal{X}\in C_{c}^{\infty}\left(I_{1},\mathfrak{X}_{N}\right)$,
then $\left(1-\psi_{r}\right)\mathcal{X}\in C_{c}^{\infty}\left(I,\mathfrak{X}_{00}\right)$,
and so by the definition of weak solution:
\begin{align*}
\begin{aligned}0= & \iint_{I\times M}\left(1-\psi_{r}\right)\left\langle \mathcal{V},\partial_{t}\mathcal{X}\right\rangle +\left\langle \mathcal{V},\nabla_{\mathcal{V}}\left(\left(1-\psi_{r}\right)\mathcal{X}\right)\right\rangle +\mathfrak{p}\Div\left(\left(1-\psi_{r}\right)\mathcal{X}\right)\\
= & \iint_{I\times M}\left(1-\psi_{r}\right)\left(\left\langle \mathcal{V},\partial_{t}\mathcal{X}\right\rangle +\left\langle \mathcal{V},\nabla_{\mathcal{V}}\mathcal{X}\right\rangle +\mathfrak{p}\Div\mathcal{X}\right)-\iint_{I\times M}\left(\left\langle \mathcal{V},\nabla\psi_{r}\right\rangle \left\langle \mathcal{V},\mathcal{X}\right\rangle +\mathfrak{p}\left\langle \mathcal{X},\nabla\psi_{r}\right\rangle \right)
\end{aligned}
\end{align*}
We are done if the first term goes to zero as $r\downarrow0$ . So
we only need to show the second term goes to zero. Since $\nabla\psi_{r}=f_{r}\widetilde{\nu}$
and $\supp\psi_{r}\subset M_{<r}$, we only need to bound
\begin{align*}
 & \left|\iint_{I_{1}\times M_{<r}}f_{r}\left\langle \mathcal{V},\widetilde{\nu}\right\rangle \left\langle \mathcal{V},\mathcal{X}\right\rangle +\mathfrak{p}f_{r}\left\langle \mathcal{X},\widetilde{\nu}\right\rangle \right|\\
\lesssim & \frac{1}{r}\left\Vert \mathcal{V}\right\Vert _{L_{t}^{3}L^{3}\left(M_{<r}\right)}\left\Vert \left\langle \mathcal{V},\widetilde{\nu}\right\rangle \right\Vert _{L_{t}^{3}L^{3}\left(M_{<r}\right)}\left\Vert \mathcal{X}\right\Vert _{L_{t}^{3}L^{3}\left(M_{<r}\right)}+\frac{1}{r}\left\Vert \mathfrak{p}\right\Vert _{L^{1}(I_{1}\times M_{<r})}\left\Vert \left\langle \mathcal{X},\widetilde{\nu}\right\rangle \right\Vert _{L_{t}^{\infty}L^{\infty}\left(M_{<r}\right)}\\
\lesssim & \left\Vert \mathcal{V}\right\Vert _{L_{t}^{3}L^{3}\left(M_{<r},\mathrm{avg}\right)}\left\Vert \left\langle \mathcal{V},\widetilde{\nu}\right\rangle \right\Vert _{L_{t}^{3}L^{3}\left(M_{<r},\mathrm{avg}\right)}\left\Vert \mathcal{X}\right\Vert _{L_{t}^{3}L^{3}\left(M_{<r},\mathrm{avg}\right)}+\left\Vert \mathfrak{p}\right\Vert _{L^{1}(I_{1}\times M_{<r})}\left\Vert \left\langle \mathcal{X},\widetilde{\nu}\right\rangle \right\Vert _{L_{t}^{\infty}C^{0,1}\left(M_{<r}\right)}\\
\lesssim & \left\Vert \mathcal{V}\right\Vert _{L_{t}^{3}B_{3,1}^{\frac{1}{3}}\mathfrak{X}}\left\Vert \left\langle \mathcal{V},\widetilde{\nu}\right\rangle \right\Vert _{L_{t}^{3}L^{3}\left(M_{<r},\mathrm{avg}\right)}\left\Vert \mathcal{X}\right\Vert _{L_{t}^{3}B_{3,1}^{\frac{1}{3}}\mathfrak{X}}+\left\Vert \mathfrak{p}\right\Vert _{L^{1}(I_{1}\times M_{<r})}\left\Vert \left\langle \mathcal{X},\widetilde{\nu}\right\rangle \right\Vert _{L_{t}^{\infty}C^{0,1}\left(M_{<r}\right)} & \xrightarrow{r\downarrow0}0
\end{align*}
We used the estimate $\left\Vert \left\langle \mathcal{X},\widetilde{\nu}\right\rangle \right\Vert _{L^{\infty}\left(M_{<r}\right)}\lesssim r\left\Vert \left\langle \mathcal{X},\widetilde{\nu}\right\rangle \right\Vert _{C^{0,1}\left(M_{<r}\right)}$
since $\left\langle \mathcal{X},\nu\right\rangle =0$ on $\partial M$.
\end{proof}
\begin{rem*}
Interestingly, as \Subsecref{Proof-of-Onsager} will show, no ``strip
decay'' condition involving $\mathfrak{p}$ seems to be necessary.
See the end of \Subsecref{Onsager's-conjecture_intro} for a discussion
of this minor improvement.

We briefly note that when $\partial M=\emptyset,$ it is customary
to set $\mathrm{dist}\left(x,\partial M\right)=\infty$, and $\psi_{r}=0$,
$M_{>r}=M=\accentset{\circ}{M}$, $M_{<r}=\emptyset$, and $\mathscr{D}_{N}\mathfrak{X}M=\mathscr{D}\mathfrak{X}M=\mathfrak{X}M$.
\end{rem*}

\subsection{Heating the nonlinear term}

\label{subsec:heating_nonlinear} Let $U,V\in B_{3,1}^{\frac{1}{3}}\mathfrak{X}$.
Then $U\otimes V\in L^{1}\mathfrak{X}$ and $\Div\left(U\otimes V\right)$
is defined as a distribution. To apply the heat flow to $\Div\left(U\otimes V\right)$,
we need to define $\left(\Div\left(U\otimes V\right)\right)^{\flat}$
so that it is heatable.\\
Recall integration by parts:
\[
\left\langle \left\langle \Div\left(Y\otimes Z\right),X\right\rangle \right\rangle =-\left\langle \left\langle Y\otimes Z,\nabla X\right\rangle \right\rangle +\int_{\partial M}\left\langle \nu,Y\right\rangle \left\langle Z,X\right\rangle \;\forall X,Y,Z\in\mathfrak{X}\left(M\right)
\]
Observe that for $X\in\mathfrak{X}$, even though $\left\langle \left\langle \Div\left(U\otimes V\right),X\right\rangle \right\rangle $
is not defined, $\int_{\partial M}\left\langle \nu,U\right\rangle \left\langle V,X\right\rangle -\left\langle \left\langle U\otimes V,\nabla X\right\rangle \right\rangle $
is well-defined by the trace theorem. So we will define the heatable
$1$-current $\left(\Div\left(U\otimes V\right)\right)^{\flat}$ by
\[
\left\langle \left\langle \Div\left(U\otimes V\right),X\right\rangle \right\rangle =-\left\langle \left\langle U\otimes V,\nabla X\right\rangle \right\rangle +\int_{\partial M}\left\langle \nu,U\right\rangle \left\langle V,X\right\rangle \;\forall X\in\mathscr{D}_{N}\mathfrak{X}\;(X\text{ is heated})
\]
It is continuous on $\mathscr{D}_{N}\mathfrak{X}$ since $\left|\left\langle \left\langle \Div\left(U\otimes V\right),X\right\rangle \right\rangle \right|\lesssim\left\Vert U\right\Vert _{B_{3,1}^{\frac{1}{3}}}\left\Vert V\right\Vert _{B_{3,1}^{\frac{1}{3}}}\left\Vert X\right\Vert _{B_{3,1}^{\frac{1}{3}}}+\left\Vert U\right\Vert _{L^{3}}\left\Vert V\right\Vert _{L^{3}}\left\Vert \nabla X\right\Vert _{L^{3}}$.
By the same formula and reasoning, we see that $\left(\Div\left(U\otimes V\right)\right)^{\flat}$
is not just heatable, but also a continuous linear functional on $\left(\mathfrak{X}\left(M\right),C^{\infty}\text{ topo}\right)$.

On the other hand, we can get away with less regularity by assuming
$U\in\mathbb{P}L^{2}\mathfrak{X}.$ Then we simply need to define
$\left\langle \left\langle \Div\left(U\otimes V\right),X\right\rangle \right\rangle =-\left\langle \left\langle U\otimes V,\nabla X\right\rangle \right\rangle \;\forall X\in\mathfrak{X}$.

In short, $\left(\Div\left(U\otimes V\right)\right)^{\flat}$ is heatable
when $U\in\mathbb{P}L^{2}\mathfrak{X}$ and $V\in L^{2}\mathfrak{X}$.
Consequently, by \Thmref{different_conditions_generalized}, when
$\left(\mathcal{V},\mathfrak{p}\right)$ is a weak solution to the
Euler equation and $\mathcal{V}\in L_{t}^{3}B_{3,1}^{\frac{1}{3}}\mathfrak{X}_{N}$:
$\left(\mathcal{V},\mathfrak{p}\right)$ is a Hodge weak solution
and
\begin{equation}
\boxed{\partial_{t}\mathcal{V}+\mathrm{div}(\mathcal{V}\otimes\mathcal{V})+\grad\mathfrak{p}=0\text{ in }\mathscr{D}_{N}'\left(I,\mathfrak{X}\right).}\label{eq:sense_heatable_current}
\end{equation}

\subsection{Proof of Onsager's conjecture\label{subsec:Proof-of-Onsager}}

For the rest of the proof, we will write $e^{t\Delta}$ for $S(t)$,
as we will not need another heat flow. For $\varepsilon>0$ and vector
field $X$, we will write $X^{\varepsilon}$ for $e^{\varepsilon\Delta}X$.
\nomenclature{$e^{t\Delta}$}{the absolute Neumann heat flow, defined for the proof of Onsager's conjecture\nomrefpage}

We opt to formulate the conjecture without mentioning the pressure
(see \Subsecref{Justification} for the justification).
\begin{thm}[Onsager's conjecture]
 Let $M$ be a compact, oriented Riemannian manifold with no or smooth
boundary. Let $\mathcal{V}\in L_{t}^{3}\mathbb{P}B_{3,1}^{\frac{1}{3}}\mathfrak{X}_{N}$
such that $\forall\mathcal{X}\in C_{c}^{\infty}\left(I,\mathbb{P}\mathfrak{X}\right):$$\iint_{I\times M}\left\langle \mathcal{V},\partial_{t}\mathcal{X}\right\rangle +\left\langle \mathcal{V}\otimes\mathcal{V},\nabla\mathcal{X}\right\rangle =0$
(Hodge-Leray weak solution).

Then we can show
\[
\int_{I}\eta'(t)\left\langle \left\langle \mathcal{V}(t),\mathcal{V}(t)\right\rangle \right\rangle \mathrm{d}t=0\;\forall\eta\in C_{c}^{\infty}(I)
\]
Consequently, $\left\langle \left\langle \mathcal{V}(t),\mathcal{V}(t)\right\rangle \right\rangle $
is constant for a.e. $t\in I$.
\end{thm}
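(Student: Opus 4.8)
The plan is to mollify $\mathcal{V}$ by the heat flow, test the weak formulation against the mollified solution, and carefully extract a commutator whose contribution vanishes in the limit because $\mathcal{V}$ lives exactly at the critical regularity $\tfrac13$. First I would fix $\eta\in C_c^\infty(I)$ and, for $\varepsilon>0$, set $\mathcal{V}^\varepsilon = e^{\varepsilon\Delta}\mathcal{V}$. By \Factref{Fact_heat} and \Factref{D_N-basic-properties}, $\mathcal{V}^\varepsilon(t)\in\mathbb{P}\mathscr{D}_N\mathfrak{X}$ with all the smoothing bounds, $e^{\varepsilon\Delta}$ commutes with $\mathbb{P}$, and $e^{\varepsilon\Delta}$ is self-adjoint in $\langle\langle\cdot,\cdot\rangle\rangle$; moreover $\mathcal{V}^\varepsilon\to\mathcal{V}$ in $L^3_tB_{3,1}^{1/3}$ (hence in $L^3_tL^3$), using \Corref{vanishing_property} for the critical piece. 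The candidate test field $\eta(t)\,e^{\varepsilon\Delta}\mathcal{V}^\varepsilon(t) = \eta(t)\,e^{2\varepsilon\Delta}\mathcal{V}(t)$ is not compactly supported in time in general, but $\eta$ has compact support, so after a routine density/cutoff argument (as in \Subsecref{Searching-for-the}, smoothing in time as well if needed) it is an admissible element of the span of $C_c^\infty(I,\mathbb{P}\mathfrak{X})$-type test fields, and plugging it into the Hodge--Leray identity is justified; alternatively one works directly with \Eqref{sense_heatable_current} in $\mathscr{D}'_N(I,\mathfrak{X})$, which is the cleaner route since $\mathscr{D}_N\mathfrak{X}$ is heat-flow invariant.

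Next I would symmetrize. Testing \Eqref{sense_heatable_current} against $\eta\, e^{\varepsilon\Delta}\mathcal{V}^\varepsilon$ and moving one $e^{\varepsilon\Delta}$ onto the other factor by self-adjointness, the time-derivative term becomes $-\iint \eta'(t)\,\langle \mathcal{V}^\varepsilon,\mathcal{V}^\varepsilon\rangle$, which converges to $-\int_I\eta'(t)\langle\langle\mathcal{V}(t),\mathcal{V}(t)\rangle\rangle\,dt$ as $\varepsilon\downarrow0$ by the $L^2_{t,x}$ (in fact $L^3_tL^3$) convergence of $\mathcal{V}^\varepsilon$. The pressure/divergence term drops because $\mathcal{V}^\varepsilon$ is divergence-free and tangential (it lies in the range of $\mathbb{P}$), so $\langle\langle\grad\mathfrak{p},\mathcal{V}^\varepsilon\rangle\rangle=0$ exactly — this is precisely why the Hodge--Leray formulation lets us avoid any strip-decay hypothesis on $\mathfrak{p}$. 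It remains to show the nonlinear term $\iint \eta(t)\,\langle\langle \Div(\mathcal{V}\otimes\mathcal{V}),\,e^{2\varepsilon\Delta}\mathcal{V}\rangle\rangle$ tends to $0$. Writing $\langle\langle\Div(\mathcal{V}\otimes\mathcal{V}),e^{2\varepsilon\Delta}\mathcal{V}\rangle\rangle = -\langle\langle \mathcal{V}\otimes\mathcal{V},\nabla e^{2\varepsilon\Delta}\mathcal{V}\rangle\rangle$ (no boundary term, since $\mathcal{V}\in\mathbb{P}L^2\mathfrak{X}$, per \Subsecref{heating_nonlinear}), and using $\langle\langle \mathcal{V}\otimes\mathcal{V},\nabla Z\rangle\rangle = \langle\langle\nabla_{\mathcal{V}}\mathcal{V}^\varepsilon,\text{(stuff)}\rangle\rangle$-type manipulations, I would insert $e^{2\varepsilon\Delta}$ by duality and arrive at the Isett--Oh-style commutator $\langle\langle (\mathcal{V}\otimes\mathcal{V})^\varepsilon - \mathcal{V}^\varepsilon\otimes\mathcal{V}^\varepsilon,\ \nabla\mathcal{V}^\varepsilon\rangle\rangle$ plus a term of the form $\langle\langle\mathcal{V}^\varepsilon\otimes\mathcal{V}^\varepsilon,\nabla\mathcal{V}^\varepsilon\rangle\rangle$ that is a total derivative and vanishes by antisymmetry/integration by parts (using divergence-freeness and tangency again).

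The main obstacle — and the technical heart of the proof — is the commutator estimate: one must show $\big|\iint\eta(t)\,\langle\langle (\mathcal{V}\otimes\mathcal{V})^\varepsilon-\mathcal{V}^\varepsilon\otimes\mathcal{V}^\varepsilon,\ \nabla\mathcal{V}^\varepsilon\rangle\rangle\big|\to0$. The strategy mirrors the flat case but run through the heat semigroup: bound the commutator $\|(\mathcal{V}\otimes\mathcal{V})^\varepsilon-\mathcal{V}^\varepsilon\otimes\mathcal{V}^\varepsilon\|_{L^{3/2}}\lesssim \|\mathcal{V}\|_{B_{3,1}^{1/3}}\cdot o(\varepsilon^{1/3})$-type smallness, pair with $\|\nabla\mathcal{V}^\varepsilon\|_{L^3}\lesssim \varepsilon^{-1/3}\|\mathcal{V}\|_{B_{3,1}^{1/3}}$ from \Factref{Fact_heat}, and integrate in time with Hölder (exponents $3,3,3$ summing correctly since $\mathcal{V}\in L^3_t$). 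At the critical exponent $\tfrac13$ there is no room to spare, so the decisive input is the pointwise-in-$t$ vanishing $\varepsilon^{1/3}\|\nabla\mathcal{V}^\varepsilon(t)\|_{L^3}\to0$ of \Corref{vanishing_property}, combined with a quantitative heat-flow bound on the commutator — the analogue of the ``mollification commutator'' lemma — which I expect to prove via the fundamental theorem of calculus along the flow, $e^{\varepsilon\Delta}f - f = \int_0^\varepsilon \Delta e^{s\Delta}f\,ds$, together with the Weitzenböck formula \Eqref{Weitzen_schematic} to control $\nabla$-commutators and the Leibniz structure of $\mathcal{V}\otimes\mathcal{V}$; a dominated-convergence argument in $t$ (as in the Remark after \Corref{vanishing_property}) then closes the limit $\varepsilon\downarrow0$. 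Everything else is bookkeeping: justifying the test-function manipulation, checking that boundary terms never appear because we stay inside $\mathbb{P}$ and $\mathscr{D}_N\mathfrak{X}$, and assembling the pieces.
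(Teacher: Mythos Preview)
Your broad strategy---mollify by the heat flow, test against the mollified solution, reduce to a commutator---matches the paper. But there is a genuine gap in the heart of the argument: the Hodge heat flow $e^{\varepsilon\Delta}$ acts on differential forms (hence on vector fields via the musical isomorphism), \emph{not} on symmetric $2$-tensors. The expression $(\mathcal{V}\otimes\mathcal{V})^\varepsilon$ that your commutator is built around is simply undefined in this setting; the paper flags this explicitly (``the heat flow does not work on tensors $\mathcal{U}\otimes\mathcal{U}$ and $\nabla\mathcal{U}$''). Consequently the Constantin--E--Titi style bound $\|(\mathcal{V}\otimes\mathcal{V})^\varepsilon - \mathcal{V}^\varepsilon\otimes\mathcal{V}^\varepsilon\|_{L^{3/2}}=o(\varepsilon^{1/3})$ has no meaning here, and even if one tried to reinterpret it, the bilinear increment identity that drives that bound in the flat case has no analogue for heat semigroups on curved spaces.

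The paper's workaround is to place the commutator at the level of $\Div(\mathcal{V}\otimes\mathcal{V})$, which \emph{is} a heatable $1$-current. The commutator is taken as $\mathcal{W}(s)=\Div(\mathcal{U}\otimes\mathcal{U})^{3s}-\Div(\mathcal{U}^{2s}\otimes\mathcal{U}^{2s})^{s}$ (note the deliberately asymmetric heat exponents), and the estimate is \emph{not} a direct norm bound: one writes $\mathcal{W}$ via Duhamel in the $s$-variable, computes the source $\mathcal{N}=(\partial_s-3\Delta)\mathcal{W}$, pairs with $\mathcal{U}^\varepsilon$, and then expands in Penrose notation. After integration by parts and use of the Weitzenb\"ock identity, the three principal second-order pieces cancel exactly, and what remains are curvature and boundary error terms controlled by three explicit error estimates (your \Corref{vanishing_property} enters here, as you anticipated). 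Your sketch of ``FTC along the flow $+$ Weitzenb\"ock'' is in the right direction, but it must be executed on $\Div(\cdot)$ rather than on the tensor, and the cancellation is algebraic rather than a size estimate. A secondary point: the test field $\eta(t)e^{2\varepsilon\Delta}\mathcal{V}(t)$ is only $L^3$ in time, so an additional temporal mollification is genuinely required (the paper does this with a symmetric mollifier $\Phi_\tau$ and lets $\tau\downarrow0$ before $\varepsilon\downarrow0$); this is not hard but is more than a cutoff.
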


As usual, there is a \textbf{commutator estimate} which we will leave
for later:
\begin{align}
 & \int_{I}\eta\left\langle \left\langle \Div\left(\mathcal{U}\otimes\mathcal{U}\right)^{2\varepsilon},\mathcal{U}^{2\varepsilon}\right\rangle \right\rangle -\int_{I}\eta\left\langle \left\langle \Div\left(\mathcal{U}^{2\varepsilon}\otimes\mathcal{U}^{2\varepsilon}\right),\mathcal{U}^{2\varepsilon}\right\rangle \right\rangle \nonumber \\
= & \int_{I}\eta\left\langle \left\langle \Div\left(\mathcal{U}\otimes\mathcal{U}\right)^{3\varepsilon},\mathcal{U}^{\varepsilon}\right\rangle \right\rangle -\int_{I}\eta\left\langle \left\langle \Div\left(\mathcal{U}^{2\varepsilon}\otimes\mathcal{U}^{2\varepsilon}\right)^{\varepsilon},\mathcal{U}^{\varepsilon}\right\rangle \right\rangle \xrightarrow{\varepsilon\downarrow0}0\label{eq:commutator_est}
\end{align}
for all $\mathcal{U}\in L_{t}^{3}\mathbb{P}B_{3,1}^{\frac{1}{3}}\mathfrak{X}_{N},\eta\in C_{c}^{\infty}\left(I\right)$.

Notation: we write $\Div\left(\mathcal{U}\otimes\mathcal{U}\right)^{\varepsilon}$
for $\left(\Div\left(\mathcal{U}\otimes\mathcal{U}\right)\right)^{\varepsilon}$
and $\nabla\mathcal{U}^{\varepsilon}$ for $\nabla\left(\mathcal{U}^{\varepsilon}\right)$
(recall that the heat flow does not work on tensors $\mathcal{U}\otimes\mathcal{U}$
and $\nabla\mathcal{U}$). Compared with \parencite{Isett2015_heat},
our commutator estimate looks a bit different, to ease some integration
by parts procedures down the line.
\begin{rem*}
For any $U$ in $\mathbb{P}L^{2}\mathfrak{X}$, $\Div\left(U\otimes U\right)^{\flat}$
is a heatable $1$-current (see \Subsecref{heating_nonlinear}). In
particular, for $\varepsilon>0$, $\Div\left(U\otimes U\right)^{\varepsilon}$
is smooth and
\begin{equation}
\left\langle \left\langle \Div\left(U\otimes U\right)^{\varepsilon},Y\right\rangle \right\rangle =-\left\langle \left\langle U\otimes U,\nabla\left(Y^{\varepsilon}\right)\right\rangle \right\rangle \;\forall Y\in\mathfrak{X}\label{eq:distribution_parts_ok}
\end{equation}
Consequently, \Eqref{commutator_est} is well-defined.
\end{rem*}
\begin{thm}[Onsager]
 Assume \Eqref{commutator_est} is true. Then $\int_{I}\eta'(t)\left\langle \left\langle \mathcal{V}(t),\mathcal{V}(t)\right\rangle \right\rangle \mathrm{d}t=0$.
\end{thm}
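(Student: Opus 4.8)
The plan is to run the standard heat--flow mollification argument for Onsager's conjecture, with $e^{\varepsilon\Delta}$ in the role of a Littlewood--Paley truncation and with \Eqref{commutator_est} supplying the decisive flux cancellation. Throughout write $\mathcal{V}^{\varepsilon}=e^{\varepsilon\Delta}\mathcal{V}$, and recall from \Factref{Fact_heat} that $e^{\varepsilon\Delta}$ is $L^{2}$-self-adjoint, commutes with $\mathbb{P}$, preserves $\mathfrak{X}_{N}$, and (for each fixed $\varepsilon>0$) maps every Sobolev/Besov/heatable-current space of vector fields continuously into $C^{\infty}\mathfrak{X}$. First I would derive the mollified equation: for $\mathcal{Y}\in C_{c}^{\infty}(I,\mathbb{P}\mathfrak{X})$ the field $\mathcal{Y}^{\varepsilon}=e^{\varepsilon\Delta}\mathcal{Y}$ again lies in $C_{c}^{\infty}(I,\mathbb{P}\mathfrak{X})$ (smoothness in $t$ because $e^{\varepsilon\Delta}$ is a fixed continuous operator on $C^{\infty}\mathfrak{X}$, divergence-freeness because $e^{\varepsilon\Delta}\mathbb{P}=\mathbb{P}e^{\varepsilon\Delta}$). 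Plugging $\mathcal{Y}^{\varepsilon}$ into the Hodge--Leray weak formulation, then transferring one copy of $e^{\varepsilon\Delta}$ onto the remaining factor in each term --- using $L^{2}$-self-adjointness in the $\partial_{t}$-term and the defining identity \Eqref{distribution_parts_ok} for the heatable $1$-current $\Div(\mathcal{V}\otimes\mathcal{V})^{\varepsilon}$ in the nonlinear term --- gives
\[
\iint_{I\times M}\langle\mathcal{V}^{\varepsilon},\partial_{t}\mathcal{Y}\rangle=\iint_{I\times M}\langle\Div(\mathcal{V}\otimes\mathcal{V})^{\varepsilon},\mathcal{Y}\rangle\qquad\forall\,\mathcal{Y}\in C_{c}^{\infty}(I,\mathbb{P}\mathfrak{X}).
\]
Since $\mathcal{V}^{\varepsilon}(t)$ and $\mathbb{P}\Div(\mathcal{V}\otimes\mathcal{V})^{\varepsilon}(t)$ both lie in the closed subspace $\mathbb{P}L^{2}\mathfrak{X}$, in which $\mathbb{P}\mathfrak{X}$ is norm-dense, a routine density argument upgrades this to $\partial_{t}\mathcal{V}^{\varepsilon}=-\mathbb{P}\Div(\mathcal{V}\otimes\mathcal{V})^{\varepsilon}$ as an identity in $L_{\mathrm{loc}}^{3/2}(I,L^{2}\mathfrak{X})$; here the heat-flow smoothing bounds $\Div(\mathcal{V}\otimes\mathcal{V})^{\varepsilon}(t)$ in $C^{\infty}\mathfrak{X}$ by $C(\varepsilon)\|\mathcal{V}(t)\|_{L^{3}}^{2}\in L_{\mathrm{loc},t}^{3/2}$.

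Next I would run the energy identity for $\mathcal{V}^{\varepsilon}$. Since $\mathcal{V}^{\varepsilon}$ is then a $W_{\mathrm{loc}}^{1,1}$ curve in the Hilbert space $L^{2}\mathfrak{X}$, the scalar $g_{\varepsilon}(t):=\|\mathcal{V}^{\varepsilon}(t)\|_{L^{2}}^{2}$ is locally absolutely continuous with $g_{\varepsilon}'=2\langle\langle\partial_{t}\mathcal{V}^{\varepsilon},\mathcal{V}^{\varepsilon}\rangle\rangle=-2\langle\langle\Div(\mathcal{V}\otimes\mathcal{V})^{\varepsilon},\mathcal{V}^{\varepsilon}\rangle\rangle$, the projection being removable because it is $L^{2}$-orthogonal and fixes $\mathcal{V}^{\varepsilon}$. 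Integrating by parts against $\eta\in C_{c}^{\infty}(I)$,
\[
\int_{I}\eta'(t)\,g_{\varepsilon}(t)\,\mathrm{d}t=2\int_{I}\eta(t)\,\langle\langle\Div(\mathcal{V}\otimes\mathcal{V})^{\varepsilon}(t),\mathcal{V}^{\varepsilon}(t)\rangle\rangle\,\mathrm{d}t.
\]
On the left, $g_{\varepsilon}(t)=\langle\langle\mathcal{V}^{2\varepsilon}(t),\mathcal{V}(t)\rangle\rangle\to\langle\langle\mathcal{V}(t),\mathcal{V}(t)\rangle\rangle$ for a.e.\ $t$ by \Factref{Fact_heat}, and $|g_{\varepsilon}(t)|\lesssim\|\mathcal{V}(t)\|_{L^{2}}^{2}\in L_{\mathrm{loc},t}^{1}$, so dominated convergence gives $\int_{I}\eta'g_{\varepsilon}\to\int_{I}\eta'\langle\langle\mathcal{V},\mathcal{V}\rangle\rangle$ as $\varepsilon\downarrow0$.

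It then remains to see the right-hand side vanishes in the limit, which is where \Eqref{commutator_est} enters. By \Eqref{distribution_parts_ok} and the semigroup law, $\langle\langle\Div(\mathcal{V}\otimes\mathcal{V})^{\varepsilon},\mathcal{V}^{\varepsilon}\rangle\rangle=-\langle\langle\mathcal{V}\otimes\mathcal{V},\nabla\mathcal{V}^{2\varepsilon}\rangle\rangle$, and the same computation identifies the first term on the first line of \Eqref{commutator_est} with $-\int_{I}\eta\,\langle\langle\mathcal{V}\otimes\mathcal{V},\nabla\mathcal{V}^{4\varepsilon}\rangle\rangle$; its second term, $\int_{I}\eta\,\langle\langle\Div(\mathcal{V}^{2\varepsilon}\otimes\mathcal{V}^{2\varepsilon}),\mathcal{V}^{2\varepsilon}\rangle\rangle$, vanishes identically because $\mathcal{V}^{2\varepsilon}(t)$ is a genuine smooth, divergence-free vector field tangent to $\partial M$, so $\langle\langle\Div(\mathcal{V}^{2\varepsilon}\otimes\mathcal{V}^{2\varepsilon}),\mathcal{V}^{2\varepsilon}\rangle\rangle=-\langle\langle\mathcal{V}^{2\varepsilon}\otimes\mathcal{V}^{2\varepsilon},\nabla\mathcal{V}^{2\varepsilon}\rangle\rangle=-\tfrac{1}{2}\int_{M}\mathcal{V}^{2\varepsilon}\bigl(|\mathcal{V}^{2\varepsilon}|^{2}\bigr)=0$ by the divergence-free integration-by-parts identities. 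Thus \Eqref{commutator_est} says precisely that $\int_{I}\eta\,\langle\langle\mathcal{V}\otimes\mathcal{V},\nabla\mathcal{V}^{4\varepsilon}\rangle\rangle\to0$, hence also $\int_{I}\eta\,\langle\langle\mathcal{V}\otimes\mathcal{V},\nabla\mathcal{V}^{2\varepsilon}\rangle\rangle\to0$ (replace $\varepsilon$ by $\varepsilon/2$), so the right-hand side of the displayed identity tends to $0$. Combining with the previous paragraph, $\int_{I}\eta'(t)\langle\langle\mathcal{V}(t),\mathcal{V}(t)\rangle\rangle\,\mathrm{d}t=0$ for every $\eta\in C_{c}^{\infty}(I)$; since $\langle\langle\mathcal{V},\mathcal{V}\rangle\rangle\in L_{\mathrm{loc}}^{1}(I)$, it is then a.e.\ constant.

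The genuine difficulty is entirely contained in the commutator estimate \Eqref{commutator_est}, which is assumed; within this argument the only delicate points are bookkeeping. I expect the main care to go into justifying that $e^{\varepsilon\Delta}$ may legitimately be transferred onto the test field, and that for fixed $\varepsilon>0$ the heat-flow smoothing makes $\mathcal{V}^{\varepsilon}$ regular enough in time (namely $W_{\mathrm{loc}}^{1,3/2}$-valued in $L^{2}\mathfrak{X}$) for the Hilbert-space product rule to apply rigorously to $\|\mathcal{V}^{\varepsilon}(t)\|_{L^{2}}^{2}$; the various powers of $e^{\varepsilon\Delta}$ appearing here versus in \Eqref{commutator_est} are then harmlessly matched because everything is taken in the limit $\varepsilon\downarrow0$. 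All of this reduces to \Factref{Fact_heat}, \Factref{D_N-basic-properties} and the structure of $\mathscr{D}'_{N}\mathfrak{X}$.
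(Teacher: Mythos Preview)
Your argument is correct and follows the same spatial heat-flow strategy as the paper: test with $e^{\varepsilon\Delta}$-mollified fields, transfer the heat operator across via self-adjointness and \Eqref{distribution_parts_ok}, invoke \Eqref{commutator_est} (with the same $\varepsilon\mapsto\varepsilon/2$ substitution the paper uses), and kill the smooth remainder via $\int_{M}\mathcal{V}^{\varepsilon}\bigl(\tfrac12|\mathcal{V}^{\varepsilon}|^{2}\bigr)=0$.

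The one genuine difference is in handling the time derivative. The paper introduces an additional \emph{temporal} mollifier $\Phi_{\tau}$, so that $(\eta\mathcal{V}^{2\varepsilon}_{\tau})_{\tau}\in C_{c}^{\infty}(I,\mathbb{P}\mathfrak{X})$ can be plugged directly into the weak formulation and integration by parts in $t$ is purely classical; it then lets $\tau\downarrow0$ before $\varepsilon\downarrow0$. You instead bypass the temporal mollifier by first upgrading the weak formulation to the mollified equation $\partial_{t}\mathcal{V}^{\varepsilon}=-\mathbb{P}\Div(\mathcal{V}\otimes\mathcal{V})^{\varepsilon}$ in $L^{3/2}_{\mathrm{loc}}(I,L^{2}\mathfrak{X})$ and then applying the Hilbert-space product rule to $\lVert\mathcal{V}^{\varepsilon}(t)\rVert_{L^{2}}^{2}$. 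Both routes are valid: yours saves a mollification layer at the cost of a vector-valued $W^{1,1}_{\mathrm{loc}}$ regularity check (which, as you note, reduces to the heat-flow smoothing bounds in \Factref{Fact_heat}), while the paper's stays entirely at the level of smooth test functions and never invokes such a product rule.
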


\begin{proof}
Let $\Phi\in C_{c}^{\infty}(\mathbb{R})$ and $\Phi_{\tau}\xrightarrow{\tau\downarrow0}\delta_{0}$
be a radially symmetric mollifier. Write $\mathcal{V}^{\varepsilon}$
for $e^{\varepsilon\Delta}\mathcal{V}$ (spatial mollification) and
$\mathcal{V}_{\tau}$ for $\Phi_{\tau}*\mathcal{V}$ (temporal mollification).
First, we mollify in time and space
\[
\frac{1}{2}\int_{I}\eta'\left\langle \left\langle \mathcal{V},\mathcal{V}\right\rangle \right\rangle \stackrel{\text{DCT}}{=}\lim_{\varepsilon\downarrow0}\lim_{\tau\downarrow0}\frac{1}{2}\int_{I}\eta'\left\langle \left\langle \mathcal{V}_{\tau}^{\varepsilon},\mathcal{V}_{\tau}^{\varepsilon}\right\rangle \right\rangle 
\]
Then we want to get rid of the time derivative:
\begin{align*}
\frac{1}{2}\int_{I}\eta'\left\langle \left\langle \mathcal{V}_{\tau}^{\varepsilon},\mathcal{V}_{\tau}^{\varepsilon}\right\rangle \right\rangle  & =-\int_{I}\eta\left\langle \left\langle \partial_{t}\mathcal{V}_{\tau}^{\varepsilon},\mathcal{V}_{\tau}^{\varepsilon}\right\rangle \right\rangle =-\int_{I}\left\langle \left\langle \partial_{t}\left(\eta\mathcal{V}_{\tau}^{\varepsilon}\right),\mathcal{V}_{\tau}^{\varepsilon}\right\rangle \right\rangle +\int_{I}\eta'\left\langle \left\langle \mathcal{V}_{\tau}^{\varepsilon},\mathcal{V}_{\tau}^{\varepsilon}\right\rangle \right\rangle 
\end{align*}
Then we use the definition of Hodge-Leray weak solution, and exploit
the commutativity between spatial and temporal operators:
\begin{align*}
\frac{1}{2}\int_{I}\eta'\left\langle \left\langle \mathcal{V}_{\tau}^{\varepsilon},\mathcal{V}_{\tau}^{\varepsilon}\right\rangle \right\rangle  & =\int_{I}\left\langle \left\langle \partial_{t}\left(\eta\mathcal{V}_{\tau}^{\varepsilon}\right),\mathcal{V}_{\tau}^{\varepsilon}\right\rangle \right\rangle =\int_{I}\left\langle \left\langle \partial_{t}\left[\left(\eta\mathcal{V}_{\tau}^{2\varepsilon}\right)_{\tau}\right],\mathcal{V}\right\rangle \right\rangle \\
 & =-\int_{I}\left\langle \left\langle \nabla\left[\left(\eta\mathcal{V}_{\tau}^{2\varepsilon}\right)_{\tau}\right],\mathcal{V}\otimes\mathcal{V}\right\rangle \right\rangle  & \text{ as }\left(\eta\mathcal{V}_{\tau}^{2\varepsilon}\right)_{\tau}\in C_{c}^{\infty}\left(I,\mathbb{P}\mathfrak{X}\right)\\
 & =-\int_{I}\left\langle \left\langle \left[\eta\left(\nabla\mathcal{V}_{\tau}^{2\varepsilon}\right)\right]_{\tau},\mathcal{V}\otimes\mathcal{V}\right\rangle \right\rangle \\
 & =-\int_{I}\eta\left\langle \left\langle \left(\nabla\mathcal{V}^{2\varepsilon}\right)_{\tau},\left(\mathcal{V}\otimes\mathcal{V}\right)_{\tau}\right\rangle \right\rangle 
\end{align*}
As there is no longer a time derivative on $\mathcal{V}$, we get
rid of $\tau$ by letting $\tau\downarrow0$ (fine as $\mathcal{V}$
is $L^{3}$ in time). Recall \Eqref{distribution_parts_ok}:
\begin{align*}
\frac{1}{2}\int_{I}\eta'\left\langle \left\langle \mathcal{V}^{\varepsilon},\mathcal{V}^{\varepsilon}\right\rangle \right\rangle  & =-\int_{I}\eta\left\langle \left\langle \nabla\left(\mathcal{V}^{2\varepsilon}\right),\mathcal{V}\otimes\mathcal{V}\right\rangle \right\rangle =\int_{I}\eta\left\langle \left\langle \mathcal{V}^{\varepsilon},\Div\left(\mathcal{V}\otimes\mathcal{V}\right)^{\varepsilon}\right\rangle \right\rangle \\
 & =\int_{I}\eta\left\langle \left\langle \mathcal{V}^{\varepsilon},\Div\left(\mathcal{V}^{\varepsilon}\otimes\mathcal{V^{\varepsilon}}\right)\right\rangle \right\rangle +o_{\varepsilon}(1) & \text{(commutator estimate)}\\
 & =\int_{I}\eta\left\langle \left\langle \mathcal{V}^{\varepsilon},\nabla_{\mathcal{V}^{\varepsilon}}\mathcal{V^{\varepsilon}}\right\rangle \right\rangle +o_{\varepsilon}(1)=\int_{I}\eta\int_{M}\mathcal{V}^{\varepsilon}\left(\frac{\left|\mathcal{V}^{\varepsilon}\right|^{2}}{2}\right)+o_{\varepsilon}(1)=o_{\varepsilon}(1) & \text{as }\mathcal{V}^{\varepsilon}\in\mathbb{P}\mathfrak{X}
\end{align*}
So $\frac{1}{2}\int_{I}\eta'\left\langle \left\langle \mathcal{V},\mathcal{V}\right\rangle \right\rangle =\lim_{\varepsilon\downarrow0}\lim_{\tau\downarrow0}\frac{1}{2}\int_{I}\eta'\left\langle \left\langle \mathcal{V}_{\tau}^{\varepsilon},\mathcal{V}_{\tau}^{\varepsilon}\right\rangle \right\rangle =\lim_{\varepsilon\downarrow0}\frac{1}{2}\int_{I}\eta'\left\langle \left\langle \mathcal{V}^{\varepsilon},\mathcal{V}^{\varepsilon}\right\rangle \right\rangle =0$.
\end{proof}
The proof is short and did not much use the Besov regularity of $\mathcal{V}$.
It is the commutator estimate that presents the main difficulty. We
proceed similarly as in \parencite{Isett2015_heat}.

Let $\mathcal{U}\in L_{t}^{3}\mathbb{P}B_{3,1}^{\frac{1}{3}}\mathfrak{X}_{N}$.
By setting $\mathcal{U}(t)$ to $0$ for $t$ in a null set, WLOG
$\mathcal{U}(t)\in\mathbb{P}B_{3,1}^{\frac{1}{3}}\mathfrak{X}_{N}\;\forall t\in I$.
Define the commutator 
\[
\mathcal{W}(t,s)=\Div\left(\mathcal{U}(t)\otimes\mathcal{U}(t)\right)^{3s}-\Div\left(\mathcal{U}\left(t\right)^{2s}\otimes\mathcal{U}\left(t\right)^{2s}\right)^{s}
\]

When $t$ and $s$ are implicitly understood, we will not write them.
As $\Div\left(\mathcal{U}(t)\otimes\mathcal{U}(t)\right)^{3s}$ solves
$\left(\partial_{s}-3\Delta\right)\mathcal{X}=0$, we define $\mathcal{N}=\left(\partial_{s}-3\Delta\right)\mathcal{W}$.
Then $\mathcal{W}$ and $\mathcal{N}$ obey the Duhamel formula:
\begin{lem}[Duhamel formulas]
$ $
\begin{enumerate}
\item $\mathcal{W}(t,s)\xrightarrow{s\downarrow0}0$ in $\mathscr{D}'_{N}\mathfrak{X}$
and therefore in $\mathscr{D}'\mathfrak{X}$. Furthermore, $\mathcal{W}(\cdot,s)\xrightarrow{s\downarrow0}0$
in $\mathscr{D}'_{N}\left(I,\mathfrak{X}\right)$ and therefore in
$\mathscr{D}'\left(I,\mathfrak{X}\right)$ (spacetime distribution).
\item For fixed $t_{0}\in I$ and $s>0$: $\int_{\varepsilon}^{s}\mathcal{N}\left(t_{0},\sigma\right)^{3(s-\sigma)}\mathrm{d}\sigma\xrightarrow{\varepsilon\downarrow0}\mathcal{W}\left(t_{0},s\right)$
in $\mathscr{D}'_{N}\mathfrak{X}$.
\end{enumerate}
\end{lem}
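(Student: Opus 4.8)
The plan is to reduce both parts to the two formulas defining the heatable $1$-current $\Div(U\otimes V)^{\flat}$ (\Subsecref{heating_nonlinear}), the self-adjointness, smoothing and $C^{\infty}$-continuity of the heat flow (\Factref{Fact_heat}, \Factref{D_N-basic-properties}), and a one-variable fundamental theorem of calculus. The Besov regularity of $\mathcal{U}$ barely enters: the computations are essentially at the level of $\mathbb{P}L^{2}\mathfrak{X}$, with $L_{t}^{3}$ only supplying an integrable majorant in time.

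For part (1), I would fix $t\in I$, write $U=\mathcal{U}(t)\in\mathbb{P}B_{3,1}^{\frac{1}{3}}\mathfrak{X}_{N}\subset\mathbb{P}L^{2}\mathfrak{X}$, and take $X\in\mathscr{D}_{N}\mathfrak{X}$. Moving the heat flows onto $X$ by self-adjointness, applying the formula $\langle\langle\Div(U\otimes U),Y\rangle\rangle=-\langle\langle U\otimes U,\nabla Y\rangle\rangle$ (valid for $U\in\mathbb{P}L^{2}\mathfrak{X}$, $Y\in\mathfrak{X}$) to the rough term, and integrating by parts on the smooth term (its boundary term vanishing since $U^{2s}=S(2s)U\in\mathfrak{X}_{N}$ by \Factref{Fact_heat}), one gets
\[
\langle\langle\mathcal{W}(t,s),X\rangle\rangle=-\langle\langle U\otimes U,\nabla(X^{3s})\rangle\rangle+\langle\langle U^{2s}\otimes U^{2s},\nabla(X^{s})\rangle\rangle .
\]
As $s\downarrow0$, $X^{3s},X^{s}\to X$ in $C^{\infty}$ (\Factref{D_N-basic-properties}) so $\nabla(X^{3s}),\nabla(X^{s})\to\nabla X$ uniformly, while $U^{2s}\to U$ in $L^{2}$ (indeed in $B_{3,1}^{\frac{1}{3}}$) so $U^{2s}\otimes U^{2s}\to U\otimes U$ in $L^{1}$; hence the first term tends to $-\langle\langle U\otimes U,\nabla X\rangle\rangle$ and the second to $+\langle\langle U\otimes U,\nabla X\rangle\rangle$, giving $\langle\langle\mathcal{W}(t,s),X\rangle\rangle\to0$, i.e.\ $\mathcal{W}(t,s)\to0$ in $\mathscr{D}'_{N}\mathfrak{X}$; restricting test fields to $\mathfrak{X}_{00}\subset\mathscr{D}_{N}\mathfrak{X}$ (a finer topology) gives it in $\mathscr{D}'\mathfrak{X}$ as well. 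For the spacetime version I would pair with $\mathcal{X}\in C_{c}^{\infty}(I_{1},\mathscr{D}_{N}\mathfrak{X})$: the same identity, together with the uniform bound $\|S(3s)\mathcal{X}(t)\|_{W^{N,p}}+\|S(s)\mathcal{X}(t)\|_{W^{N,p}}\lesssim\|\mathcal{X}(t)\|_{W^{N,p}}$ for $s\in(0,1)$ (\Factref{D_N-basic-properties}) and a Sobolev embedding $W^{N,p}\hookrightarrow C^{1}$, yields $|\langle\langle\mathcal{W}(t,s),\mathcal{X}(t)\rangle\rangle|\lesssim_{\mathcal{X}}\|\mathcal{U}(t)\|_{L^{2}}^{2}\mathbf{1}_{I_{1}}(t)$, an $L_{t}^{1}$ majorant since $\mathcal{U}\in L_{t}^{3}$; dominated convergence plus the pointwise statement finishes part (1).

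For part (2), fix $t_{0}\in I$ and $s>0$. For $\sigma>0$ the fields $\Div(\mathcal{U}(t_{0})\otimes\mathcal{U}(t_{0}))^{3\sigma}$ and $\Div(\mathcal{U}(t_{0})^{2\sigma}\otimes\mathcal{U}(t_{0})^{2\sigma})^{\sigma}$ are smooth, lie in $\mathscr{D}_{N}\mathfrak{X}$ (\Factref{D_N-basic-properties}), and depend smoothly on $\sigma$ by the smoothing of the heat flow, so $\mathcal{W}(t_{0},\cdot)$ and $\mathcal{N}(t_{0},\cdot)=(\partial_{\sigma}-3\Delta)\mathcal{W}(t_{0},\cdot)$ are smooth families of heated fields. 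For $X\in\mathscr{D}_{N}\mathfrak{X}$ set $g(\sigma)=\langle\langle\mathcal{W}(t_{0},\sigma),X^{3(s-\sigma)}\rangle\rangle$ on $(0,s]$; differentiating, using $\partial_{\sigma}(X^{3(s-\sigma)})=-3\Delta X^{3(s-\sigma)}$ and the symmetry $\langle\langle\Delta P,Q\rangle\rangle=\langle\langle P,\Delta Q\rangle\rangle$ on $\mathscr{D}_{N}\mathfrak{X}$ (\Factref{D_N-basic-properties}), gives $g'(\sigma)=\langle\langle\mathcal{N}(t_{0},\sigma)^{3(s-\sigma)},X\rangle\rangle$. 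Since the integrand is continuous on the compact $[\varepsilon,s]$ (away from the singularity at $\sigma=0$), $\int_{\varepsilon}^{s}\mathcal{N}(t_{0},\sigma)^{3(s-\sigma)}\,\mathrm{d}\sigma$ exists in $\mathscr{D}'_{N}\mathfrak{X}$, and the fundamental theorem of calculus for $g$ on $[\varepsilon,s]$ gives
\[
\left\langle \left\langle \int_{\varepsilon}^{s}\mathcal{N}(t_{0},\sigma)^{3(s-\sigma)}\,\mathrm{d}\sigma,\,X\right\rangle \right\rangle =\langle\langle\mathcal{W}(t_{0},s),X\rangle\rangle-\langle\langle\mathcal{W}(t_{0},\varepsilon)^{3(s-\varepsilon)},X\rangle\rangle .
\]
The last term tends to $0$: it is the part (1) computation applied to the test field $X^{3(s-\varepsilon)}$, where the semigroup law collapses the $\varepsilon$-dependence, $(X^{3(s-\varepsilon)})^{3\varepsilon}=X^{3s}$ and $(X^{3(s-\varepsilon)})^{\varepsilon}=X^{3s-2\varepsilon}$, so the two terms become $-\langle\langle\mathcal{U}(t_{0})\otimes\mathcal{U}(t_{0}),\nabla(X^{3s})\rangle\rangle$ (independent of $\varepsilon$) and $+\langle\langle\mathcal{U}(t_{0})^{2\varepsilon}\otimes\mathcal{U}(t_{0})^{2\varepsilon},\nabla(X^{3s-2\varepsilon})\rangle\rangle\to+\langle\langle\mathcal{U}(t_{0})\otimes\mathcal{U}(t_{0}),\nabla(X^{3s})\rangle\rangle$, which cancel. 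Letting $\varepsilon\downarrow0$ then gives $\int_{\varepsilon}^{s}\mathcal{N}(t_{0},\sigma)^{3(s-\sigma)}\,\mathrm{d}\sigma\to\mathcal{W}(t_{0},s)$ in $\mathscr{D}'_{N}\mathfrak{X}$.

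The main obstacle is not analytic; it is bookkeeping which topology each limit is taken in, together with the one real observation in part (2) that the boundary term of the Duhamel identity which must vanish is $\mathcal{W}(t_{0},\varepsilon)^{3(s-\varepsilon)}$, not merely $\mathcal{W}(t_{0},\varepsilon)$ --- and this is delivered by the exact semigroup cancellation above rather than by any quantitative decay. The one technical point I would state carefully is the smoothness of $\sigma\mapsto\mathcal{W}(t_{0},\sigma)$ on $(0,\infty)$, which makes $\mathcal{N}$ well-defined and the differentiation of $g$ legitimate.
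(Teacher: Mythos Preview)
Your proof is correct and follows essentially the same route as the paper. Part (1) is identical: both you and the paper unwind the pairing to $-\langle\langle U\otimes U,\nabla(X^{3s})\rangle\rangle+\langle\langle U^{2s}\otimes U^{2s},\nabla(X^{s})\rangle\rangle$ and let $s\downarrow0$, with DCT handling the spacetime version. In part (2) the only stylistic difference is that the paper invokes the abstract Duhamel formula for the semigroup ODE $\partial_{s}\mathcal{W}=3\Delta\mathcal{W}+\mathcal{N}$ in the Banach spaces $H^{m}\text{-}\mathrm{cl}(\mathscr{D}_{N}\mathfrak{X})$, whereas you derive the same identity weakly by differentiating the scalar function $g(\sigma)=\langle\langle\mathcal{W}(t_{0},\sigma),X^{3(s-\sigma)}\rangle\rangle$ and applying FTC; both arrive at $\mathcal{W}(t_{0},s)=\mathcal{W}(t_{0},\varepsilon)^{3(s-\varepsilon)}+\int_{\varepsilon}^{s}\mathcal{N}(t_{0},\sigma)^{3(s-\sigma)}\,\mathrm{d}\sigma$ and then dispose of the boundary term $\mathcal{W}(t_{0},\varepsilon)^{3(s-\varepsilon)}$ by the exact same semigroup collapse $(X^{3(s-\varepsilon)})^{3\varepsilon}=X^{3s}$, $(X^{3(s-\varepsilon)})^{\varepsilon}=X^{3s-2\varepsilon}$.
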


\begin{proof}
$ $
\begin{enumerate}
\item Let $X\in\mathscr{D}_{N}\mathfrak{X},\mathcal{X}\in C_{c}^{\infty}\left(I,\mathscr{D}_{N}\mathfrak{X}\right).$
It is trivial to check (with DCT)
\[
\begin{aligned} & \left\langle \left\langle \mathcal{U}(t)\otimes\mathcal{U}(t),\nabla\left(X^{3s}\right)\right\rangle \right\rangle -\left\langle \left\langle \mathcal{U}(t)^{2s}\otimes\mathcal{U}(t)^{2s},\nabla\left(X^{s}\right)\right\rangle \right\rangle \xrightarrow{s\downarrow0}0\\
 & \int_{I}\left\langle \left\langle \mathcal{U}\otimes\mathcal{U},\nabla\left(\mathcal{X}^{3s}\right)\right\rangle \right\rangle -\int_{I}\left\langle \left\langle \mathcal{U}^{2s}\otimes\mathcal{U}^{2s},\nabla\left(\mathcal{X}^{s}\right)\right\rangle \right\rangle \xrightarrow{s\downarrow0}0
\end{aligned}
\]
\item Let $\varepsilon>0$. By the smoothing effect of $e^{s\Delta}$, $\mathcal{W}(t_{0},\cdot)$
and $\mathcal{N}(t_{0},\cdot)$ are in $C_{\mathrm{loc}}^{0}\left((0,1],\mathscr{D}_{N}\mathfrak{X}\right)$.
As $\left(e^{s\Delta}\right)_{s\geq0}$ is a $C_{0}$ semigroup on
$\left(H^{m}\text{-}\mathrm{cl}\left(\mathscr{D}_{N}\mathfrak{X}\right),\left\Vert \cdot\right\Vert _{H^{m}}\right)$
$\forall m\in\mathbb{N}_{0}$, and a semigroup basically corresponds
to an ODE (cf. \parencite[Appendix A, Proposition 9.10 \& 9.11]{Taylor_PDE1}),
from $\partial_{s}\mathcal{W}=3\Delta\mathcal{W}+\mathcal{N}$ for
$s\geq\varepsilon$ we get the Duhamel formula
\[
\forall s>\varepsilon:\mathcal{W}(t_{0},s)=\mathcal{W}\left(t_{0},\varepsilon\right)^{3(s-\varepsilon)}+\int_{\varepsilon}^{s}\mathcal{N}\left(t_{0},\sigma\right)^{3\left(s-\sigma\right)}\mathrm{d}\sigma
\]
So we only need to show $\mathcal{W}\left(t_{0},\varepsilon\right)^{3(s-\varepsilon)}\xrightarrow[\varepsilon\downarrow0]{\mathscr{D}'_{N}\mathfrak{X}}$0.
Let $X\in\mathscr{D}_{N}\mathfrak{X}.$ 
\begin{align*}
\left\langle \left\langle X,\mathcal{W}\left(t_{0},\varepsilon\right)^{3(s-\varepsilon)}\right\rangle \right\rangle  & =\left\langle \left\langle X^{3(s-\varepsilon)},\Div\left(\mathcal{U}\left(t_{0}\right)\otimes\mathcal{U}\left(t_{0}\right)\right)^{3\varepsilon}\right\rangle \right\rangle -\left\langle \left\langle X^{3(s-\varepsilon)},\Div\left(\mathcal{U}\left(t_{0}\right)^{2\varepsilon}\otimes\mathcal{U}\left(t_{0}\right)^{2\varepsilon}\right)^{\varepsilon}\right\rangle \right\rangle \\
 & =-\left\langle \left\langle \nabla\left(X^{3s}\right),\mathcal{U}\left(t_{0}\right)\otimes\mathcal{U}\left(t_{0}\right)\right\rangle \right\rangle +\left\langle \left\langle \nabla\left(X^{3s-2\varepsilon}\right),\mathcal{U}\left(t_{0}\right)^{2\varepsilon}\otimes\mathcal{U}\left(t_{0}\right)^{2\varepsilon}\right\rangle \right\rangle \xrightarrow{\varepsilon\downarrow0}0.
\end{align*}
\end{enumerate}
\end{proof}
From now on, we write $\int_{0+}^{s}$ for $\lim_{\varepsilon\downarrow0}\int_{\varepsilon}^{s}$.
Then 
\[
\int_{I}\mathrm{d}t\;\eta\left(t\right)\left\langle \left\langle \mathcal{W}\left(t,s\right),\mathcal{U}\left(t\right)^{s}\right\rangle \right\rangle =\int_{I}\mathrm{d}t\;\eta\left(t\right)\int_{0+}^{s}\mathrm{d}\sigma\left\langle \left\langle \mathcal{N}\left(t,\sigma\right)^{3(s-\sigma)},\mathcal{U}\left(t\right)^{s}\right\rangle \right\rangle 
\]

To clean up the algebra, we will classify the terms that are going
to appear but are actually negligible in the end. The following estimates
lie at the heart of the problem, showing why the regularity needs
to be at least $\frac{1}{3}$, and that our argument barely holds
thanks to the pointwise vanishing property (\Corref{vanishing_property}).
\begin{lem}[3 error estimates]
 Define the $k$-jet fiber norm $\left|X\right|_{J^{k}}=\left(\sum\limits _{j=0}^{k}\left|\nabla^{\left(j\right)}X\right|^{2}\right)^{\frac{1}{2}}\;\forall X\in\mathfrak{X}$
(more details in \Subsecref{Vector-bundles}). Then we have
\begin{enumerate}
\item $\int_{I}\left|\eta\right|\int_{0+}^{s}\mathrm{d}\sigma\int_{M}\left|\mathcal{U}^{2\sigma}\right|_{J^{1}}^{2}\left|\mathcal{U}^{4s-2\sigma}\right|_{J^{1}}\xrightarrow{s\downarrow0}0$
\item $\int_{I}\left|\eta\right|\int_{0+}^{s}\mathrm{d}\sigma\int_{\partial M}\left|\mathcal{U}^{2\sigma}\right|^{2}\left|\mathcal{U}^{4s-2\sigma}\right|_{J^{2}}\xrightarrow{s\downarrow0}0$
\item $\int_{I}\left|\eta\right|\int_{0+}^{s}\mathrm{d}\sigma\int_{\partial M}\left|\mathcal{U}^{2\sigma}\right|\left|\mathcal{U}^{2\sigma}\right|_{J^{1}}\left|\mathcal{U}^{4s-2\sigma}\right|_{J^{1}}\xrightarrow{s\downarrow0}0$
\end{enumerate}
\end{lem}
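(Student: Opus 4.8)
The plan is to dominate each of the three (nonnegative) integrands pointwise in $(t,\sigma)$ by a product of the shape $(\text{integrable power of }\sigma)\times(\text{power of }s\text{ that the }\sigma\text{-integral cancels})\times(\text{a quantity}\le C\norm{\mathcal{U}(t)}_{B_{3,1}^{\frac13}}^{3}\text{ that vanishes as }s\downarrow0)$, then integrate in $\sigma$ over $(0,s)$ and conclude by dominated convergence in $t$ (recall $\mathcal{U}\in L_{t}^{3}B_{3,1}^{\frac13}\mathfrak{X}_{N}$, and $\int_{0+}^{s}=\int_{0}^{s}$ on nonnegative integrands). The only real input is a package of heat-flow bounds for $\mathcal{U}(t)$ in which the scaling-critical power of the heat time is multiplied by the vanishing factor from \Corref{vanishing_property}; everything else is Hölder, one-dimensional integrals, and bookkeeping.

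First normalize as in \Subsecref{Proof-of-Onsager}: after redefining $\mathcal{U}$ on a null set, assume $\mathcal{U}(t)\in\mathbb{P}B_{3,1}^{\frac13}\mathfrak{X}_{N}$ for every $t\in I$, and take $s<\tfrac14$ so that every heat time appearing below (namely $2\sigma$ and $4s-2\sigma$, both in $(0,4s)$) lies in $(0,1)$. For each such $t$ put $h_{t}(\tau)=\tau^{\frac13}\norm{e^{\tau\Delta}\mathcal{U}(t)}_{W^{1,3}(M)}$. By \Factref{D_N-basic-properties} (smoothing and critical-norm bounds for the heat flow) together with \Corref{vanishing_property}, $h_{t}$ is bounded on $(0,1)$ by $C\norm{\mathcal{U}(t)}_{B_{3,1}^{\frac13}}$ and $h_{t}(\tau)\xrightarrow{\tau\downarrow0}0$; hence $\theta_{s}(t):=\sup_{\tau\in(0,4s]}h_{t}(\tau)$ satisfies $\theta_{s}(t)\le C\norm{\mathcal{U}(t)}_{B_{3,1}^{\frac13}}$ and $\theta_{s}(t)\xrightarrow{s\downarrow0}0$. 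Writing $e^{\tau\Delta}=e^{\frac{\tau}{2}\Delta}e^{\frac{\tau}{2}\Delta}$, using $e^{\frac{\tau}{2}\Delta}\mathcal{U}(t)\in\mathscr{D}_{N}\mathfrak{X}$ (\Factref{D_N-basic-properties}) and the $B_{3,1}^{1+m'+\frac13}$-smoothing bound of \Factref{D_N-basic-properties} applied with $m'=0,1$, I get, for $0<\tau\le4s$,
\[
\norm{e^{\tau\Delta}\mathcal{U}(t)}_{W^{1,3}(M)}=\tau^{-\frac13}h_{t}(\tau),\qquad\norm{e^{\tau\Delta}\mathcal{U}(t)}_{B_{3,1}^{\frac13}}\lesssim\norm{\mathcal{U}(t)}_{B_{3,1}^{\frac13}},
\]
\[
\norm{e^{\tau\Delta}\mathcal{U}(t)}_{B_{3,1}^{1+\frac13}}\lesssim\tau^{-\frac12}\theta_{s}(t),\qquad\norm{e^{\tau\Delta}\mathcal{U}(t)}_{B_{3,1}^{2+\frac13}}\lesssim\tau^{-1}\theta_{s}(t).
\]
The trace theorem (\Factref{fact_trace}; \Subsecref{Compatibility-with-scalar}) then yields $\norm{\,\lvert Y\rvert_{J^{k}}\,}_{L^{3}(\partial M)}\lesssim\sum_{j\le k}\norm{\nabla^{(j)}Y}_{B_{3,1}^{\frac13}(M)}\lesssim\norm{Y}_{B_{3,1}^{k+\frac13}(M)}$ for $k=0,1,2$, while on $M$ one has the trivial $\norm{\,\lvert Y\rvert_{J^{1}}\,}_{L^{3}(M)}\sim\norm{Y}_{W^{1,3}(M)}$.

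Now for fixed $t$, Hölder with exponents $(3,3,3)$ followed by the bounds above (using $(4s-2\sigma)^{-a}\lesssim s^{-a}$ since $4s-2\sigma>2s$, and $h_{t}\le\theta_{s}(t)$ on $(0,4s]$) give
\begin{align*}
\int_{M}\lvert\mathcal{U}^{2\sigma}\rvert_{J^{1}}^{2}\lvert\mathcal{U}^{4s-2\sigma}\rvert_{J^{1}} & \lesssim\norm{\mathcal{U}^{2\sigma}}_{W^{1,3}}^{2}\norm{\mathcal{U}^{4s-2\sigma}}_{W^{1,3}}\lesssim\sigma^{-\frac23}s^{-\frac13}\theta_{s}(t)^{3},\\
\int_{\partial M}\lvert\mathcal{U}^{2\sigma}\rvert^{2}\lvert\mathcal{U}^{4s-2\sigma}\rvert_{J^{2}} & \lesssim\norm{\mathcal{U}^{2\sigma}}_{L^{3}(\partial M)}^{2}\norm{\,\lvert\mathcal{U}^{4s-2\sigma}\rvert_{J^{2}}\,}_{L^{3}(\partial M)}\lesssim\norm{\mathcal{U}(t)}_{B_{3,1}^{\frac13}}^{2}\,s^{-1}\theta_{s}(t),\\
\int_{\partial M}\lvert\mathcal{U}^{2\sigma}\rvert\,\lvert\mathcal{U}^{2\sigma}\rvert_{J^{1}}\lvert\mathcal{U}^{4s-2\sigma}\rvert_{J^{1}} & \lesssim\norm{\mathcal{U}(t)}_{B_{3,1}^{\frac13}}\,\sigma^{-\frac12}s^{-\frac12}\theta_{s}(t)^{2},
\end{align*}
where in the last line I used $\norm{\mathcal{U}^{2\sigma}}_{L^{3}(\partial M)}\lesssim\norm{\mathcal{U}(t)}_{B_{3,1}^{\frac13}}$, $\norm{\,\lvert\mathcal{U}^{2\sigma}\rvert_{J^{1}}\,}_{L^{3}(\partial M)}\lesssim\sigma^{-\frac12}\theta_{s}(t)$ and $\norm{\,\lvert\mathcal{U}^{4s-2\sigma}\rvert_{J^{1}}\,}_{L^{3}(\partial M)}\lesssim s^{-\frac12}\theta_{s}(t)$. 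Integrating in $\sigma$ over $(0,s)$, the elementary identities $\int_{0}^{s}\sigma^{-\frac23}\,\mathrm{d}\sigma=3s^{\frac13}$, $\int_{0}^{s}\mathrm{d}\sigma=s$, $\int_{0}^{s}\sigma^{-\frac12}\,\mathrm{d}\sigma=2s^{\frac12}$ cancel the surviving power of $s$, so in each of the three cases $\int_{0+}^{s}\mathrm{d}\sigma\,(\cdots)\lesssim\norm{\mathcal{U}(t)}_{B_{3,1}^{\frac13}}^{3}\,\theta_{s}(t)^{j}$ with $\theta_{s}(t)$ to a positive power ($j=3,1,2$ respectively, absorbing leftover $\norm{\mathcal{U}(t)}_{B_{3,1}^{\frac13}}$ factors into $\theta_{s}(t)$). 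Finally, multiply by $\lvert\eta(t)\rvert$ and integrate in $t$: the integrand is $\le C\norm{\eta}_{\infty}\mathbf{1}_{\supp\eta}(t)\norm{\mathcal{U}(t)}_{B_{3,1}^{\frac13}}^{3}\in L^{1}(I)$ and tends to $0$ pointwise since $\theta_{s}(t)\xrightarrow{s\downarrow0}0$, so dominated convergence gives all three limits.

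The main obstacle — and the crux of why the regularity must be at least $\frac13$ — is the second display of heat-flow bounds: the genuine $o(\cdot)$ improvement over the scaling-critical powers $\tau^{-1/3},\tau^{-1/2},\tau^{-1}$. With merely the scaling-critical bounds (e.g. $\norm{e^{\tau\Delta}\mathcal{U}(t)}_{B_{3,1}^{k+1/3}}\lesssim\tau^{-k/2}\norm{\mathcal{U}(t)}_{B_{3,1}^{1/3}}$), the same computation only shows that the three double integrals are \emph{bounded} in $s$; it is precisely the pointwise vanishing $h_{t}(\tau)\to0$ of \Corref{vanishing_property}, propagated through the semigroup law and the trace theorem, that upgrades "bounded" to "$\to 0$". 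Everything else (Hölder, the one-dimensional $\sigma$-integrals, dominated convergence) is routine.
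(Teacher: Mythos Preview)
Your proof is correct and follows essentially the same approach as the paper. Both define the pointwise-vanishing quantity $\tau^{1/3}\lVert e^{\tau\Delta}\mathcal{U}(t)\rVert_{W^{1,3}}$ (the paper calls a shifted version of it $A(t,s)$), upgrade it via the semigroup law to the $B_{3,1}^{1+1/3}$ and $B_{3,1}^{2+1/3}$ bounds with the right powers of $\tau$, apply H\"older and the trace theorem, integrate in $\sigma$, and conclude by DCT in $t$; the only cosmetic difference is that you use the crude bound $(4s-2\sigma)^{-a}\lesssim s^{-a}$ where the paper rescales $\sigma\mapsto s\sigma$. One small wording slip: the line ``$\lesssim\norm{\mathcal{U}(t)}_{B_{3,1}^{1/3}}^{3}\,\theta_{s}(t)^{j}$, absorbing leftover $\norm{\mathcal{U}(t)}_{B_{3,1}^{1/3}}$ factors into $\theta_{s}(t)$'' should read $\norm{\mathcal{U}(t)}_{B_{3,1}^{1/3}}^{3-j}\theta_{s}(t)^{j}$ (and it is $\theta_{s}$ that gets absorbed into $\norm{\mathcal{U}(t)}_{B_{3,1}^{1/3}}$ for the dominating function, not the other way around), but your subsequent DCT sentence states the correct bound and conclusion.
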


\begin{proof}
Define $A\left(t,s\right)=s^{\frac{1}{3}}\left\Vert \mathcal{U}\left(t\right)^{\frac{s}{2}}\right\Vert _{W^{1,3}}$.
Then for $s>0$ small: $\left\Vert \mathcal{U}\left(t\right)^{s}\right\Vert _{B_{3,1}^{1+\frac{1}{3}}}\lesssim\left(\frac{1}{s}\right)^{\frac{1}{6}}\left\Vert \mathcal{U}\left(t\right)^{\frac{s}{2}}\right\Vert _{W^{1,3}}\lesssim\left(\frac{1}{s}\right)^{\frac{1}{2}}A\left(t,s\right)$
and $\left\Vert \left\Vert A\left(t,\sigma\right)\right\Vert _{L_{\sigma\leq s}^{\infty}}\right\Vert _{L_{t}^{3}}\xrightarrow{s\downarrow0}0$
by \Corref{vanishing_property}. We also note that $\left\Vert \mathcal{U}\left(t\right)^{s}\right\Vert _{B_{3,1}^{2+\frac{1}{3}}}\lesssim\left(\frac{1}{s}\right)^{\frac{2}{3}}\left\Vert \mathcal{U}\left(t\right)^{\frac{s}{2}}\right\Vert _{W^{1,3}}\lesssim\left(\frac{1}{s}\right)A\left(t,s\right)$.

Now we can prove the error estimates go to 0:
\begin{enumerate}
\item 
\begin{align*}
 & \int_{I}\left|\eta\right|\int_{0+}^{s}\mathrm{d}\sigma\int_{M}\left|\mathcal{U}^{2\sigma}\right|_{J^{1}}^{2}\left|\mathcal{U}^{4s-2\sigma}\right|_{J^{1}}\lesssim\int_{I}\left|\eta\right|\int_{0+}^{s}\mathrm{d}\sigma\left\Vert \mathcal{U}^{2\sigma}\right\Vert _{W^{1,3}}^{2}\left\Vert \mathcal{U}^{4s-2\sigma}\right\Vert _{W^{1,3}}\\
\lesssim & \int_{I}\mathrm{d}t\left|\eta(t)\right|\int_{0+}^{s}\mathrm{d}\sigma\left(\frac{1}{\sigma}\right)^{\frac{2}{3}}\left(\frac{1}{2s-\sigma}\right)^{\frac{1}{3}}A\left(t,2\sigma\right)^{2}A\left(t,4s-2\sigma\right)\\
\stackrel{\sigma\mapsto s\sigma}{=} & \int_{I}\mathrm{d}t\left|\eta(t)\right|\int_{0+}^{1}\mathrm{d}\sigma\;\left(\frac{1}{\sigma}\right)^{\frac{2}{3}}\left(\frac{1}{2-\sigma}\right)^{\frac{1}{3}}A\left(t,2s\sigma\right)^{2}A\left(t,4s-2s\sigma\right)\lesssim\int_{I}\mathrm{d}t\;\left|\eta(t)\right|\left\Vert A\left(t,\sigma\right)\right\Vert _{L_{\sigma\leq4s}^{\infty}}^{3}\xrightarrow{s\downarrow0}0.
\end{align*}
\item 
\begin{align*}
 & \int_{I}\left|\eta\right|\int_{0+}^{s}\mathrm{d}\sigma\int_{\partial M}\left|\mathcal{U}^{2\sigma}\right|^{2}\left|\mathcal{U}^{4s-2\sigma}\right|_{J^{2}}\lesssim\int_{I}\left|\eta\right|\int_{0+}^{s}\mathrm{d}\sigma\left\Vert \mathcal{U}^{2\sigma}\right\Vert _{L^{3}\mathfrak{X}M|_{\partial M}}^{2}\left\Vert \mathcal{U}^{4s-2\sigma}\right\Vert _{W^{2,3}\mathfrak{X}M|_{\partial M}}\\
\stackrel{\text{Trace}}{\lesssim} & \int_{I}\left|\eta\right|\int_{0+}^{s}\mathrm{d}\sigma\left\Vert \mathcal{U}^{2\sigma}\right\Vert _{B_{3,1}^{\frac{1}{3}}\mathfrak{X}M}^{2}\left\Vert \mathcal{U}^{4s-2\sigma}\right\Vert _{B_{3,1}^{2+\frac{1}{3}}\mathfrak{X}M}\\
\lesssim & \int_{I}\mathrm{d}t\;\left|\eta\left(t\right)\right|\left\Vert \mathcal{U}\left(t\right)\right\Vert _{B_{3,1}^{\frac{1}{3}}\mathfrak{X}M}^{2}\int_{0+}^{s}\mathrm{d}\sigma\left(\frac{1}{2s-\sigma}\right)A\left(t,4s-2\sigma\right)\\
\stackrel{\sigma\mapsto s\sigma}{=} & \int_{I}\mathrm{d}t\;\left|\eta\left(t\right)\right|\left\Vert \mathcal{U}\left(t\right)\right\Vert _{B_{3,1}^{\frac{1}{3}}\mathfrak{X}M}^{2}\int_{0+}^{1}\mathrm{d}\sigma\left(\frac{1}{2-\sigma}\right)A\left(t,4s-2s\sigma\right)\lesssim\int_{I}\mathrm{d}t\;\left|\eta\left(t\right)\right|\left\Vert \mathcal{U}\left(t\right)\right\Vert _{B_{3,1}^{\frac{1}{3}}\mathfrak{X}M}^{2}\left\Vert A\left(t,\sigma\right)\right\Vert _{L_{\sigma\leq4s}^{\infty}}\\
\lesssim & \left\Vert \mathcal{U}\right\Vert _{L_{t}^{3}B_{3,1}^{\frac{1}{3}}\left(M\right)}^{2}\left\Vert \left\Vert A\left(t,\sigma\right)\right\Vert _{L_{\sigma\leq4s}^{\infty}}\right\Vert _{L_{t}^{3}}\xrightarrow{s\downarrow0}0
\end{align*}
\item 
\begin{align*}
 & \int_{I}\left|\eta\right|\int_{0+}^{s}\mathrm{d}\sigma\int_{\partial M}\left|\mathcal{U}^{2\sigma}\right|\left|\mathcal{U}^{2\sigma}\right|_{J^{1}}\left|\mathcal{U}^{4s-2\sigma}\right|_{J^{1}}\\
\stackrel{\text{Trace}}{\lesssim} & \int_{I}\left|\eta\right|\int_{0+}^{s}\mathrm{d}\sigma\left\Vert \mathcal{U}^{2\sigma}\right\Vert _{B_{3,1}^{\frac{1}{3}}\mathfrak{X}M}\left\Vert \mathcal{U}^{2\sigma}\right\Vert _{B_{3,1}^{1+\frac{1}{3}}\mathfrak{X}M}\left\Vert \mathcal{U}^{4s-2\sigma}\right\Vert _{B_{3,1}^{1+\frac{1}{3}}\mathfrak{X}M}\\
\lesssim & \int_{I}\mathrm{d}t\;\left|\eta\left(t\right)\right|\left\Vert \mathcal{U}\left(t\right)\right\Vert _{B_{3,1}^{\frac{1}{3}}}\int_{0+}^{s}\mathrm{d}\sigma\left(\frac{1}{\sigma}\right)^{\frac{1}{2}}\left(\frac{1}{2s-\sigma}\right)^{\frac{1}{2}}A\left(t,2\sigma\right)A\left(t,4s-2\sigma\right)\\
\stackrel{\sigma\mapsto s\sigma}{=} & \int_{I}\mathrm{d}t\;\left|\eta\left(t\right)\right|\left\Vert \mathcal{U}\left(t\right)\right\Vert _{B_{3,1}^{\frac{1}{3}}}\int_{0+}^{1}\mathrm{d}\sigma\left(\frac{1}{\sigma}\right)^{\frac{1}{2}}\left(\frac{1}{2-\sigma}\right)^{\frac{1}{2}}A\left(t,2s\sigma\right)A\left(t,4s-2s\sigma\right)\\
\lesssim & \int_{I}\mathrm{d}t\;\left|\eta\left(t\right)\right|\left\Vert \mathcal{U}\left(t\right)\right\Vert _{B_{3,1}^{\frac{1}{3}}}\left\Vert A\left(t,\sigma\right)\right\Vert _{L_{\sigma\leq4s}^{\infty}}^{2}\lesssim\left\Vert \mathcal{U}\right\Vert _{L_{t}^{3}B_{3,1}^{\frac{1}{3}}}\left\Vert \left\Vert A\left(t,\sigma\right)\right\Vert _{L_{\sigma\leq4s}^{\infty}}\right\Vert _{L_{t}^{3}}^{2}\xrightarrow{s\downarrow0}0
\end{align*}
\end{enumerate}
\end{proof}
Note that
\begin{align*}
\mathcal{N}\left(t,\sigma\right) & =\left(\partial_{\sigma}-3\Delta\right)\left(-\Div\left(\mathcal{U}^{2\sigma}\otimes\mathcal{U}^{2\sigma}\right)^{\sigma}\right)=-2\Div\left(\Delta\mathcal{U}^{2\sigma}\otimes\mathcal{U}^{2\sigma}\right)^{\sigma}-2\Div\left(\mathcal{U}^{2\sigma}\otimes\Delta\mathcal{U}^{2\sigma}\right)^{\sigma}+2\Delta\Div\left(\mathcal{U}^{2\sigma}\otimes\mathcal{U}^{2\sigma}\right)^{\sigma}
\end{align*}

Finally, we will show 
\[
\int_{I}\eta\left\langle \left\langle \mathcal{W}(s),\mathcal{U}^{s}\right\rangle \right\rangle =\int_{I}\mathrm{d}t\;\eta\left(t\right)\left\langle \left\langle \mathcal{W}(t,s),\mathcal{U}\left(t\right)^{s}\right\rangle \right\rangle \xrightarrow{s\downarrow0}0
\]

\begin{proof}
Integrate by parts into 3 components:
\begin{align*}
\int_{I}\eta\left\langle \left\langle \mathcal{W}(s),\mathcal{U}^{s}\right\rangle \right\rangle  & =\int_{I}\mathrm{d}t\;\eta\left(t\right)\int_{0+}^{s}\mathrm{d}\sigma\left\langle \left\langle \mathcal{N}\left(t,\sigma\right)^{3(s-\sigma)},\mathcal{U}\left(t\right)^{s}\right\rangle \right\rangle =\int_{I}\mathrm{d}t\;\eta\left(t\right)\int_{0+}^{s}\mathrm{d}\sigma\left\langle \left\langle \mathcal{N}\left(t,\sigma\right),\mathcal{U}\left(t\right)^{4s-3\sigma}\right\rangle \right\rangle \\
 & =2\int_{I}\eta\int_{0+}^{s}\mathrm{d}\sigma\left\langle \left\langle \Delta\mathcal{U}^{2\sigma}\otimes\mathcal{U}^{2\sigma},\nabla\left(\mathcal{U}^{4s-2\sigma}\right)\right\rangle \right\rangle +2\int_{I}\eta\int_{0+}^{s}\mathrm{d}\sigma\left\langle \left\langle \mathcal{U}^{2\sigma}\otimes\Delta\mathcal{U}^{2\sigma},\nabla\left(\mathcal{U}^{4s-2\sigma}\right)\right\rangle \right\rangle \\
 & \;-2\int_{I}\eta\int_{0+}^{s}\mathrm{d}\sigma\left\langle \left\langle \mathcal{U}^{2\sigma}\otimes\mathcal{U}^{2\sigma},\nabla\left(\Delta\mathcal{U}^{4s-2\sigma}\right)\right\rangle \right\rangle 
\end{align*}
Note that for the third component, we used some properties from \Factref{D_N-basic-properties}
to move the Laplacian. It also explains our choice of $\mathcal{W}$.

We now use Penrose notation to estimate the 3 components. To clean
up the notation, we only focus on the integral on $M$, with the other
integrals $2\int_{I}\eta\int_{0+}^{s}\mathrm{d}\sigma\left(\cdot\right)$
in variables $t$ and $\sigma$ implicitly understood. We also use\textbf{
schematic identities} for linear combinations of similar-looking tensor
terms where we do not care how the indices contract (recall \Eqref{Weitzen_schematic}).
By the error estimates above, all the terms with $R$ or $\nu$ will
be negligible as $s\downarrow0$, and interchanging derivatives will
be a free action. We write $\approx$ to throw the negligible error
terms away. Also, when we write $\left(\nabla_{j}\mathcal{U}_{l}\right)^{4s-2\sigma},$
we mean the heat flow is applied to $\mathcal{U}$, not $\nabla\mathcal{U}$
(which is not possible anyway).

First component:
\begin{align*}
 & \int_{M}\left\langle \Delta\mathcal{U}^{2\sigma}\otimes\mathcal{U}^{2\sigma},\nabla\left(\mathcal{U}^{4s-2\sigma}\right)\right\rangle =\cancel{\int_{M}R*\mathcal{U}^{2\sigma}*\mathcal{U}^{2\sigma}*\nabla\left(\mathcal{U}^{4s-2\sigma}\right)}+\int_{M}\begin{array}{c}
\left(\nabla_{i}\nabla^{i}\mathcal{U}^{j}\right)^{2\sigma}\left(\mathcal{U}^{l}\right)^{2\sigma}\left(\nabla_{j}\mathcal{U}_{l}\right)^{4s-2\sigma}\end{array}\\
\approx & \cancel{\int_{\partial M}\left(\nu_{i}\nabla^{i}\mathcal{U}^{j}\right)^{2\sigma}\left(\mathcal{U}^{l}\right)^{2\sigma}\left(\nabla_{j}\mathcal{U}_{l}\right)^{4s-2\sigma}}-\cancel{\int_{M}\begin{array}{c}
\left(\nabla^{i}\mathcal{U}^{j}\right)^{2\sigma}\left(\nabla_{i}\mathcal{U}^{l}\right)^{2\sigma}\left(\nabla_{j}\mathcal{U}_{l}\right)^{4s-2\sigma}\end{array}}\\
 & -\int_{M}\begin{array}{c}
\left(\nabla^{i}\mathcal{U}^{j}\right)^{2\sigma}\left(\mathcal{U}^{l}\right)^{2\sigma}\left(\nabla_{i}\nabla_{j}\mathcal{U}_{l}\right)^{4s-2\sigma}\end{array}
\end{align*}
Second component:
\begin{align*}
 & \int_{M}\left\langle \mathcal{U}^{2\sigma}\otimes\Delta\mathcal{U}^{2\sigma},\nabla\left(\mathcal{U}^{4s-2\sigma}\right)\right\rangle =\cancel{\int_{M}\mathcal{U}^{2\sigma}*R*\mathcal{U}^{2\sigma}*\nabla\left(\mathcal{U}^{4s-2\sigma}\right)}+\int_{M}\begin{array}{c}
\left(\mathcal{U}^{j}\right)^{2\sigma}\left(\nabla_{i}\nabla^{i}\mathcal{U}^{l}\right)^{2\sigma}\left(\nabla_{j}\mathcal{U}_{l}\right)^{4s-2\sigma}\end{array}\\
\approx & \cancel{\int_{\partial M}\left(\mathcal{U}^{j}\right)^{2\sigma}\left(\nu_{i}\nabla^{i}\mathcal{U}^{l}\right)^{2\sigma}\left(\nabla_{j}\mathcal{U}_{l}\right)^{4s-2\sigma}}-\cancel{\int_{M}\begin{array}{c}
\left(\nabla_{i}\mathcal{U}^{j}\right)^{2\sigma}\left(\nabla^{i}\mathcal{U}^{l}\right)^{2\sigma}\left(\nabla_{j}\mathcal{U}_{l}\right)^{4s-2\sigma}\end{array}}\\
 & -\int_{M}\begin{array}{c}
\left(\mathcal{U}^{j}\right)^{2\sigma}\left(\nabla^{i}\mathcal{U}^{l}\right)^{2\sigma}\left(\nabla_{i}\nabla_{j}\mathcal{U}_{l}\right)^{4s-2\sigma}\end{array}
\end{align*}
For the third component, note $\nabla\left(R*U\right)=\nabla R*U+R*\nabla U$
\begin{align*}
 & -\int_{M}\left\langle \mathcal{U}^{2\sigma}\otimes\mathcal{U}^{2\sigma},\nabla\left(\Delta\mathcal{U}^{4s-2\sigma}\right)\right\rangle =-\cancel{\int_{M}\mathcal{U}^{2\sigma}*\mathcal{U}^{2\sigma}*\nabla\left(R*\mathcal{U}^{4s-2\sigma}\right)}-\int_{M}\begin{array}{c}
\left(\mathcal{U}^{j}\right)^{2\sigma}\left(\mathcal{U}^{l}\right)^{2\sigma}\left(\nabla_{j}\nabla^{i}\nabla_{i}\mathcal{U}_{l}\right)^{4s-2\sigma}\end{array}\\
\approx & -\int_{M}\begin{array}{c}
\left(\mathcal{U}^{j}\right)^{2\sigma}\left(\mathcal{U}^{l}\right)^{2\sigma}\left(\cancel{R*\nabla\left(\mathcal{U}^{4s-2\sigma}\right)}+\nabla^{i}\nabla_{j}\nabla_{i}\mathcal{U}_{l}^{4s-2\sigma}\right)\end{array}\\
\approx & -\int_{M}\begin{array}{c}
\left(\mathcal{U}^{j}\right)^{2\sigma}\left(\mathcal{U}^{l}\right)^{2\sigma}\left(\cancel{\nabla\left(R*\mathcal{U}^{4s-2\sigma}\right)}+\nabla^{i}\nabla_{i}\nabla_{j}\mathcal{U}_{l}^{4s-2\sigma}\right)\end{array}\\
\approx & -\cancel{\int_{\partial M}\left(\mathcal{U}^{j}\right)^{2\sigma}\left(\mathcal{U}^{l}\right)^{2\sigma}\left(\nu^{i}\nabla_{i}\nabla_{j}\mathcal{U}_{l}\right)^{4s-2\sigma}}+\int_{M}\begin{array}{c}
\left(\nabla^{i}\mathcal{U}^{j}\right)^{2\sigma}\left(\mathcal{U}^{l}\right)^{2\sigma}\left(\nabla_{i}\nabla_{j}\mathcal{U}_{l}\right)^{4s-2\sigma}\end{array}\\
 & +\int_{M}\begin{array}{c}
\left(\mathcal{U}^{j}\right)^{2\sigma}\left(\nabla^{i}\mathcal{U}^{l}\right)^{2\sigma}\left(\nabla_{i}\nabla_{j}\mathcal{U}_{l}\right)^{4s-2\sigma}\end{array}
\end{align*}
Add them up, and we get $0$ as $2\int_{I}\eta\int_{0+}^{s}\mathrm{d}\sigma\left(\cdot\right)\xrightarrow{s\downarrow0}0$.
\end{proof}
So we are done and the rest of the paper is to develop the tools we
have borrowed for the proof.

\section{Functional analysis}

\subsection{Common tools}

We note a useful inequality:
\begin{thm}[Ehrling's inequality]
\label{thm:Ehrling} Let $X,Y,\widetilde{X}$ be (real/complex) Banach
spaces such that $X$ is reflexive and $X\hookrightarrow\widetilde{X}$
is a continuous injection. Let $T:X\to Y$ be a linear compact operator.
Then $\forall\varepsilon>0,\exists C_{\varepsilon}>0$: 
\[
\left\Vert Tx\right\Vert _{Y}\leq\varepsilon\left\Vert x\right\Vert _{X}+C_{\varepsilon}\left\Vert x\right\Vert _{\widetilde{X}}\;\forall x\in X
\]
\end{thm}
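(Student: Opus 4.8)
The plan is to argue by contradiction, extracting a weakly convergent subsequence via the reflexivity of $X$ and then using the defining property of compact operators to reach a contradiction.

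First I would negate the conclusion: suppose there is an $\varepsilon_{0}>0$ such that for every $n\in\mathbb{N}_{1}$ there exists $x_{n}\in X$ with $\left\Vert Tx_{n}\right\Vert _{Y}>\varepsilon_{0}\left\Vert x_{n}\right\Vert _{X}+n\left\Vert x_{n}\right\Vert _{\widetilde{X}}$. Since $x_{n}=0$ would make both sides vanish, we have $x_{n}\neq 0$, so we may normalize $\left\Vert x_{n}\right\Vert _{X}=1$. As a compact operator is bounded, $\left\Vert Tx_{n}\right\Vert _{Y}\leq\left\Vert T\right\Vert _{\mathcal{L}(X,Y)}$, and the assumed inequality then forces $n\left\Vert x_{n}\right\Vert _{\widetilde{X}}<\left\Vert T\right\Vert _{\mathcal{L}(X,Y)}$, hence $\left\Vert x_{n}\right\Vert _{\widetilde{X}}\to 0$, while simultaneously $\left\Vert Tx_{n}\right\Vert _{Y}>\varepsilon_{0}$ for all $n$.

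Next I would invoke reflexivity: the sequence $(x_{n})$ is bounded in $X$, so after PTAS we have $x_{n}\rightharpoonup x$ weakly in $X$. Compactness of $T$ upgrades this to norm convergence $Tx_{n}\to Tx$ in $Y$ (boundedness of $T$ gives $Tx_{n}\rightharpoonup Tx$, while relative compactness of $\{Tx_{n}\}$ pins the strong limit down to $Tx$). Consequently $\left\Vert Tx\right\Vert _{Y}=\lim_{n}\left\Vert Tx_{n}\right\Vert _{Y}\geq\varepsilon_{0}>0$, so $x\neq 0$ and $Tx\neq 0$. On the other hand, the continuous injection $X\hookrightarrow\widetilde{X}$ is weak-to-weak continuous (precompose functionals in $\widetilde{X}^{*}$ with the embedding), so $x_{n}\rightharpoonup x$ in $\widetilde{X}$ as well; but $x_{n}\to 0$ in the norm of $\widetilde{X}$, hence weakly, so $x=0$ in $\widetilde{X}$, and injectivity of the embedding forces $x=0$ in $X$. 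This contradicts $Tx\neq 0$, completing the argument.

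The only non-elementary inputs are two standard facts of functional analysis: that in a reflexive Banach space every bounded sequence has a weakly convergent subsequence, and that a compact operator carries weakly convergent sequences to norm-convergent ones; everything else is bookkeeping. I therefore do not expect a genuine obstacle here — the single point that warrants a moment's care is the transfer of weak convergence from $X$ to $\widetilde{X}$, which is immediate from the continuity of the embedding, and making sure the normalization leaves $\left\Vert Tx_{n}\right\Vert _{Y}$ bounded below by $\varepsilon_{0}$.
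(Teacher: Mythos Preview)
Your proof is correct and follows essentially the same contradiction argument as the paper: normalize, extract a weakly convergent subsequence by reflexivity, upgrade to norm convergence of $Tx_{n}$ by compactness, and observe that the limit must be both zero (via the embedding into $\widetilde{X}$) and nonzero (via the lower bound on $\left\Vert Tx_{n}\right\Vert_{Y}$). The paper's version is terser but the logic is identical.
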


\begin{rem*}
Usually, $X$ is some higher-regularity space than $\widetilde{X}$
(e.g. $H^{1}$ and $L^{2}$). The inequality is useful when the higher-regularity
norm is expensive. We will need this for the $L^{p}$-analyticity
of the heat flow (\Thmref{local_boundedness_Lp}).
\end{rem*}
\begin{proof}
Proof by contradiction: Assume $\varepsilon>0$ and there is $\left(x_{j}\right)_{j\in\mathbb{N}}$
such that $\left\Vert x_{j}\right\Vert _{X}=1$ and $\left\Vert Tx_{j}\right\Vert _{Y}>\varepsilon+j\left\Vert x_{j}\right\Vert _{\widetilde{X}}$.
Since $X$ is reflexive, by Banach-Alaoglu and PTAS, WLOG assume $x_{j}\xrightharpoonup{X}x_{\infty}$.
Then $Tx_{j}\xrightharpoonup{Y}Tx_{\infty}$ and $x_{j}\xrightharpoonup{\widetilde{X}}x_{\infty}$.
As $T$ is compact, PTAS, WLOG $Tx_{j}\to Tx_{\infty}$. So $\left\Vert Tx_{\infty}\right\Vert _{Y}\geq\limsup_{j\to\infty}\left(\varepsilon+j\left\Vert x_{j}\right\Vert _{\widetilde{X}}\right)>0$
and $x_{j}\xrightarrow{\widetilde{X}}0$. Then $x_{j}\xrightharpoonup{\widetilde{X}}0$
and $x_{\infty}=0$, contradicting $\left\Vert Tx_{\infty}\right\Vert _{Y}>0$.
\end{proof}
\begin{defn}[Banach-valued holomorphic functions]
 Let $\Omega\subset\mathbb{C}$ be an open set and $X$ be a complex
Banach space. Then a function $f:\Omega\to X$ is said to be \textbf{holomorphic
}(or \textbf{analytic}) when $\forall z\in\Omega:f'(z):=\lim_{\left|h\right|\to0}\frac{f(z+h)-f(z)}{h}$
exists. The words ``holomorphic''\textbf{ }and ``analytic'' are
mostly interchangeable, but ``analytic'' stresses the existence
of power series expansion and can also describe functions on $\mathbb{R}$
for which analytic continuation into the complex plane exists.
\end{defn}

\begin{thm}[Identity theorem]
 Let $X$ be a complex Banach space and $X_{0}\leq X$ closed. Let
$\Omega\subset\mathbb{C}$ be connected, open and $f:\Omega\to X$
holomorphic. Assume there is a sequence $\left(z_{j}\right)_{j\in\mathbb{N}}$
such that $z_{j}\to z\in\Omega$ and $f(z_{j})\in X_{0}\;\forall j$.
Then $f(\Omega)\subset X_{0}$.
\end{thm}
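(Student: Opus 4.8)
The plan is to reduce to the classical (scalar-valued) identity theorem by testing $f$ against continuous linear functionals that annihilate $X_0$. Let $A=\{\phi\in X^{*}:\phi|_{X_{0}}=0\}$ be the annihilator of $X_0$. Since $X_0$ is a \emph{closed} subspace, the Hahn--Banach theorem gives the separation statement $X_{0}=\bigcap_{\phi\in A}\ker\phi$: if $x\notin X_{0}$ there is $\phi\in X^{*}$ with $\phi|_{X_{0}}=0$ and $\phi(x)=1$, so $x\notin\bigcap_{\phi\in A}\ker\phi$, while the reverse inclusion is trivial. Hence it suffices to show $\phi(f(w))=0$ for every $w\in\Omega$ and every $\phi\in A$.

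Fix $\phi\in A$ and set $g=\phi\circ f:\Omega\to\mathbb{C}$. Because $\phi$ is linear and bounded, $g$ is holomorphic: the difference quotient $\tfrac{g(z+h)-g(z)}{h}=\phi\!\left(\tfrac{f(z+h)-f(z)}{h}\right)$ converges (as $|h|\to0$) to $\phi(f'(z))$ by continuity of $\phi$, so $g'(z)=\phi(f'(z))$ exists on all of $\Omega$. Moreover $g(z_{j})=\phi(f(z_{j}))=0$ for all $j$, since $f(z_{j})\in X_{0}\subset\ker\phi$. After PTAS we may assume the $z_{j}$ are distinct, so that $z\in\Omega$ is an accumulation point of the zero set of $g$; as $\Omega$ is connected and open, the classical identity theorem forces $g\equiv0$ on $\Omega$. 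Since $\phi\in A$ was arbitrary, $f(w)\in\bigcap_{\phi\in A}\ker\phi=X_{0}$ for every $w\in\Omega$, which is the claim.

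There is no substantial obstacle here: the argument is just this reduction plus two standard facts (the scalar identity theorem and the Hahn--Banach separation of a point from a closed subspace). The one point that needs a moment's care is the interpretation of the hypothesis --- one must make sure that $z$ is genuinely an accumulation point of $\{z_{j}\}$ (equivalently, that after passing to a subsequence the $z_{j}$ are distinct), since the set $\{w\in\Omega:f(w)\in X_{0}\}$ always contains $0$'s preimage behaviour only trivially and an eventually-constant sequence would carry no information. With that understood, the proof above is complete.
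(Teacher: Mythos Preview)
Your proof is correct and follows exactly the approach the paper takes: pick $\Lambda\in X^{*}$ annihilating $X_{0}$, reduce to the scalar identity theorem for $\Lambda\circ f$, and conclude via Hahn--Banach separation. Your version is more detailed (and your remark about needing $z$ to be a genuine accumulation point is a fair caveat the paper leaves implicit), but the strategy is identical.
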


\begin{proof}
Let $\Lambda\in X^{*}$ such that $\Lambda(X_{0})=0$. Reduce this
to the scalar version in complex analysis.
\end{proof}
In fact, many theorems from scalar complex analysis similarly carry
over via linear functionals (cf. \parencite[Theorem 3.31]{Rudin_FuncAnal}).

\subsection{Interpolation theory}

We will quickly review the theory of complex and real interpolation,
and state the abstract Stein interpolation theorem. Interpolation
theory can be seen as vast generalizations of the Marcinkiewicz and
Riesz-Thorin interpolation theorems.
\begin{defn}
An \textbf{interpolation couple} of (real/complex) Banach spaces is
a pair $(X_{0},X_{1})$ of Banach spaces with a Hausdorff TVS $\mathcal{X}$
such that $X_{0}\hookrightarrow\mathcal{X},X_{1}\hookrightarrow\mathcal{X}$
are continuous injections. Then $X_{0}\cap X_{1}$ and $X_{0}+X_{1}$
are Banach spaces under the norms 
\[
\left\Vert x\right\Vert _{X_{0}\cap X_{1}}=\max\left(\left\Vert x\right\Vert _{X_{0}},\left\Vert x\right\Vert _{X_{1}}\right)\text{ and }\left\Vert x\right\Vert _{X_{0}+X_{1}}=\inf_{x=x_{0}+x_{1},x_{j}\in X_{j}}\left\Vert x_{0}\right\Vert _{X_{0}}+\left\Vert x_{1}\right\Vert _{X_{1}}
\]

Let $(Y_{0},Y_{1})$ be another interpolation couple. We say $T:\left(X_{0},X_{1}\right)\to\left(Y_{0},Y_{1}\right)$
is a \textbf{morphism} when $T\in\mathcal{L}\left(X_{0}+X_{1},Y_{0}+Y_{1}\right)$
and $T\in\mathcal{L}\left(X_{j},Y_{j}\right)$ for $j=0,1$ under
domain restriction. That implies $T\in\mathcal{L}\left(X_{0}\cap X_{1},Y_{0}\cap Y_{1}\right)$
and we write $T\in\mathcal{L}\left(\left(X_{0},X_{1}\right),\left(Y_{0},Y_{1}\right)\right)$.
We also write $\mathcal{L}\left(\left(X_{0},X_{1}\right)\right)=\mathcal{L}\left(\left(X_{0},X_{1}\right),\left(X_{0},X_{1}\right)\right)$.
\nomenclature{$\mathcal{L}\left(\left(X_{0},X_{0}\right),\left(Y_{0},Y_{1}\right)\right)$}{morphisms between interpolation couples\nomrefpage}

Let $P\in\mathcal{L}(\left(X_{0},X_{1}\right))$ such that $P^{2}=P$.
Then we call $P$ a\textbf{ projection }on the interpolation couple
$(X_{0},X_{1})$.
\end{defn}

\begin{defn}
Let $(X_{0},X_{1})$ be an interpolation couple of (real/complex)
Banach spaces. Then define the $J$-functional:

$$%
\begin{tabular}{ccl}
$J:$ & $(0,\infty)\times X_{0}\cap X_{1}$ & $\longrightarrow\mathbb{R}$\tabularnewline
 & $(t,x)$ & $\longmapsto\left\Vert x\right\Vert _{X_{0}}+t\left\Vert x\right\Vert _{X_{1}}$\tabularnewline
\end{tabular}$$

For $\theta\in(0,1),q\in[1,\infty],$ define the \textbf{real interpolation
space}

\[
\left(X_{0},X_{1}\right){}_{\theta,q}=\left\{ \sum_{j\in\mathbb{Z}}u_{j}:u_{j}\in X_{0}\cap X_{1},\left(2^{-j\theta}J(2^{j},u_{j})\right)_{j\in\mathbb{Z}}\in l_{j}^{q}(\mathbb{Z})\right\} 
\]
\nomenclature{$\left(X_{0},X_{1}\right)_{\theta,q}$}{real interpolation\nomrefpage}which
is Banach under the norm $\left\Vert x\right\Vert _{\left(X_{0},X_{1}\right)_{\theta,q}}=\inf\limits _{x=\sum\limits _{j\in\mathbb{Z}}u_{j}}\text{\ensuremath{\left\Vert 2^{-j\theta}J(2^{j},u_{j})\right\Vert }}_{l_{j}^{q}}$.
Note that $\sum_{j\in\mathbb{Z}}u_{j}$ denotes a series that converges
in $X_{0}+X_{1}$.
\end{defn}

\begin{itemize}
\item When $q\in[1,\infty]$ and $x\in X_{0}\cap X_{1},$ note that $\forall j\in\mathbb{Z}:x=\sum_{k\in\mathbb{Z}}\delta_{kj}x$
and
\[
\left\Vert x\right\Vert _{\left(X_{0},X_{1}\right){}_{\theta,q}}\leq\inf_{j\in\mathbb{Z}}\left|2^{-j\theta}J(2^{j},x)\right|=\inf_{j\in\mathbb{Z}}\left|2^{-j\theta}\left\Vert x\right\Vert _{X_{0}}+2^{j(1-\theta)}\left\Vert x\right\Vert _{X_{1}}\right|\sim_{\neg\theta,\neg q}\left\Vert x\right\Vert _{X_{0}}^{1-\theta}\left\Vert x\right\Vert _{X_{1}}^{\theta}
\]
The last estimate comes from AM-GM and shifting $j$ so that $\left\Vert x\right\Vert _{X_{0}}\sim2^{j}\left\Vert x\right\Vert _{X_{1}}$.
Note that the implied constants do not depend on $\theta$ and $q$.
\item By considering the finite partial sums $\sum_{|j|<j_{0}}u_{j},$ we
conclude that $X_{0}\cap X_{1}$ is dense in $\left(X_{0},X_{1}\right)_{\theta,q}$
when $q\in[1,\infty)$.
\item Let $(Y_{0},Y_{1})$ be another interpolation couple and $T\in\mathcal{L}\left(\left(X_{0},X_{1}\right),\left(Y_{0},Y_{1}\right)\right)$.
For $\theta\in(0,1),q\in[1,\infty]$, define $X_{\theta,q}=\left(X_{0},X_{1}\right)_{\theta,q},Y_{\theta,q}=\left(Y_{0},Y_{1}\right)_{\theta,q}$.
Then $T\in\mathcal{L}(X_{\theta,q},Y_{\theta,q})$ and
\[
\left\Vert T\right\Vert _{\mathcal{L}(X_{\theta,q},Y_{\theta,q})}\lesssim_{\neg\theta,\neg q,\neg T}\left\Vert T\right\Vert _{\mathcal{L}(X_{0},Y_{0})}^{1-\theta}\left\Vert T\right\Vert _{\mathcal{L}(X_{1},Y_{1})}^{\theta}
\]
where the implied constant does not depend on $\theta$ and $q$.
This can be proved by a simple shifting argument.
\item If $P$ is a projection on $\left(X_{0},X_{1}\right)$ then $\left(PX_{0},PX_{1}\right)_{\theta,q}=P\left(X_{0},X_{1}\right)_{\theta,q}$.
\end{itemize}
\begin{rem*}
There is also an equivalent characterization by the $K$-functional,
which we shall omit. This theory can also be extended to quasi-Banach
spaces. We refer to \parencite{Lofstrom,triebel_function_I} for more
details.
\end{rem*}
\begin{defn}
Let $(X_{0},X_{1})$ be an interpolation couple of complex Banach
spaces.

Let $\Omega=\{z\in\mathbb{C}:0<\Ree z<1\}$. We then define the Banach
space of vector-valued holomorphic/analytic functions on the strip:
\[
\mathcal{F}_{X_{0},X_{1}}=\{f\in C^{0}(\overline{\Omega}\to X_{0}+X_{1}):f\text{ holomorphic in }\Omega,\;\left\Vert f(it)\right\Vert _{X_{0}}+\left\Vert f(1+it)\right\Vert _{X_{1}}\xrightarrow{|t|\to\infty}0\}
\]
 with the norm $\left\Vert f\right\Vert _{\mathcal{F}_{X_{0},X_{1}}}=\max\left(\sup_{t\in\mathbb{R}}\left\Vert f(it)\right\Vert _{X_{0}},\sup_{t\in\mathbb{R}}\left\Vert f(1+it)\right\Vert _{X_{1}}\right)$.

For $\theta\in[0,1],$ define the \textbf{complex interpolation space}
$[X_{0},X_{1}]_{\theta}=\{f(\theta):f\in\mathcal{F}_{X_{0},X_{1}}\}$,
which is Banach under the norm \nomenclature{$\left[X_{0},X_{1}\right]_{\theta}$}{complex interpolation\nomrefpage}
\[
\left\Vert x\right\Vert _{[X_{0},X_{1}]_{\theta}}=\inf_{\substack{f\in\mathcal{F}_{X_{0},X_{1}}\\
f(\theta)=x
}
}\left\Vert f\right\Vert _{\mathcal{F}_{X_{0},X_{1}}}
\]
\end{defn}

\begin{itemize}
\item When $x\in X_{0}\cap X_{1}\backslash\{0\}$ , $\theta\in[0,1]$, $\varepsilon>0,$
define $f_{\varepsilon}(z)=e^{\varepsilon(z^{2}-\theta^{2})}\frac{x}{\left\Vert x\right\Vert _{X_{0}}^{1-z}\left\Vert x\right\Vert _{X_{1}}^{z}}$.
By the freedom in choosing $\varepsilon$, we conclude 
\[
\left\Vert x\right\Vert _{[X_{0},X_{1}]_{\theta}}\leq\inf_{\varepsilon>0}\left\Vert f_{\varepsilon}\right\Vert _{\mathcal{F}_{X_{0},X_{1}}}\left\Vert x\right\Vert _{X_{0}}^{1-\theta}\left\Vert x\right\Vert _{X_{1}}^{\theta}\leq\inf_{\varepsilon>0}\max\left(e^{\varepsilon(1-\theta^{2})},e^{-\epsilon\theta^{2}}\right)\left\Vert x\right\Vert _{X_{0}}^{1-\theta}\left\Vert x\right\Vert _{X_{1}}^{\theta}=\left\Vert x\right\Vert _{X_{0}}^{1-\theta}\left\Vert x\right\Vert _{X_{1}}^{\theta}
\]
\item When $\theta\in[0,1]$, by Poisson summation and Fourier series, we
can prove that
\[
\mathcal{F}_{X_{0},X_{1}}^{0}=\{e^{Cz^{2}}\sum_{j=1}^{N}e^{\lambda_{j}z}x_{j}:N\in\mathbb{N},C>0,\lambda_{j}\in\mathbb{R},x_{j}\in X_{0}\cap X_{1}\}
\]
is dense in $\mathcal{F}_{X_{0},X_{1}}$ (cf. \parencite[Lemma 4.2.3]{Lofstrom}).
This implies $X_{0}\cap X_{1}$ is dense in $[X_{0},X_{1}]_{\theta}$.\\
There is a simple extension of the above density result. Let $U$
be dense in $X_{0}\cap X_{1}$ and define $A(\Omega)=\{\phi\in C^{0}(\overline{\Omega}\to\mathbb{C}):\phi\text{ holomorphic in }\Omega\}.$
Then
\[
\mathcal{F}_{X_{0},X_{1}}^{U}=\{e^{Cz^{2}}\sum_{j=1}^{N}\phi_{j}(z)u_{j}:N\in\mathbb{N},C>0,\phi_{j}\in A(\Omega),u_{j}\in U\}
\]
is dense in $\mathcal{F}_{X_{0},X_{1}}$. This will lead to the abstract
Stein interpolation theorem.
\item Let $(Y_{0},Y_{1})$ be another interpolation couple and $T\in\mathcal{L}\left(\left(X_{0},X_{1}\right),\left(Y_{0},Y_{1}\right)\right)$.
Then for $\theta\in[0,1]$, almost by the definitions, we conclude
\[
\left\Vert T\right\Vert _{\mathcal{L}(\left[X_{0},X_{1}\right]_{\theta},\left[Y_{0},Y_{1}\right]_{\theta})}\leq\left\Vert T\right\Vert _{\mathcal{L}(X_{0},Y_{0})}^{1-\theta}\left\Vert T\right\Vert _{\mathcal{L}(X_{1},Y_{1})}^{\theta}
\]
\item If $P$ is a projection on $\left(X_{0},X_{1}\right)$ then $\left[PX_{0},PX_{1}\right]{}_{\theta}=P\left[X_{0},X_{1}\right]{}_{\theta}$
\end{itemize}
\begin{rem*}
A keen reader would notice that we use square brackets for complex
interpolation, and parentheses for real interpolation. One reason
is that the real interpolation methods easily extend to quasi-Banach
spaces, while the complex interpolation method does not. There is
a version of complex interpolation for special quasi-Banach spaces,
which is denoted by parentheses (cf. \parencite[Section 2.4.4]{triebel_function_I}),
but we shall omit it for simplicity.
\end{rem*}
\begin{blackbox}[Abstract Stein interpolation]
 Let $(X_{0},X_{1})$ and $(Y_{0},Y_{1})$ be interpolation couples
of complex Banach spaces and $U$ dense in $X_{0}\cap X_{1}.$ Let
$\Omega=\{z\in\mathbb{C}:0<\Ree z<1\}$ and $\left(T(z)\right)_{z\in\overline{\Omega}}$
be a family of linear mappings $T(z):U\to Y_{0}+Y_{1}$ such that
\begin{enumerate}
\item $\forall u\in U:\left(\overline{\Omega}\to Y_{0}+Y_{1},z\mapsto T(z)u\right)$
is continuous, bounded and analytic in $\Omega$.
\item For $j=0,1$ and $u\in U$: $\left(\mathbb{R}\to Y_{j},t\mapsto T(j+it)u\right)$
is continuous and bounded by $M_{j}\left\Vert u\right\Vert _{X_{j}}$
for some $M_{j}>0$.
\end{enumerate}
Then for $\theta\in[0,1],$ we can conclude 
\[
\left\Vert T(\theta)u\right\Vert _{\left[Y_{0},Y_{1}\right]_{\theta}}\leq M_{0}^{1-\theta}M_{1}^{\theta}\left\Vert u\right\Vert _{\left[X_{0},X_{1}\right]_{\theta}}\;\forall u\in U
\]
Consequently, by unique extension, we have $T(\theta)\in\mathcal{L}(\left[X_{0},X_{1}\right]_{\theta},\left[Y_{0},Y_{1}\right]_{\theta})$.
\end{blackbox}

\begin{proof}
See \parencite{Voigt1992}, which is a very short read.
\end{proof}
\begin{rem*}
We will only use Stein interpolation in \Subsecref{Stein-extrapolation-of}.
\end{rem*}

\subsection{Stein extrapolation of analyticity of semigroups\label{subsec:Stein-extrapolation-of}}

We are inspired by \cite[Theorem 3.1.1]{Stephan_thesis} (Stein extrapolation)
and \cite[Theorem 3.1.10]{Stephan_thesis} (Kato-Beurling extrapolation),
and wish to create variants for our own use. We will focus on Stein
extrapolation, since it is simpler to deal with.

There exists a subtle, but very important criterion to establish analyticity/holomorphicity:

\begin{blackbox}[Holo on total]
\label{blackbox:Holo_on_total} Let $\Omega\subset\mathbb{C}$ be
open and $X$ complex Banach. Let $f:\Omega\to X$ be a function.
Assume $N\leq X^{*}$ is \textbf{total} (separating points) and f
is locally bounded.

Then $f$ is analytic iff $\Lambda f$ is analytic $\forall\Lambda\in N$.
\end{blackbox}

\begin{proof}
This is a consequence of Krein-Smulian and the Vitali holomorphic
convergence theorem, and we refer to \cite[Theorem A.7]{Arendt_Laplace}.
\end{proof}
\begin{rem*}
It will quickly become obvious how crucial this criterion is for the
rest of the paper. Let us briefly note that an improvement has just
been discovered by Arendt \emph{et al.} \parencite{Arendt_holo_total}
(the author thanks Stephan Fackler for bringing this news).
\end{rem*}
\begin{cor}[Inheritance of analyticity]
\label{cor:Inherit_of_anal} Let $\Omega\subset\mathbb{C}$ be open
and $X,Y$ be complex Banach spaces where $j:X\hookrightarrow Y$
is a continuous injection. Let $f:\Omega\to X$ be locally bounded.
Then f is analytic iff $j\circ f$ is analytic.
\end{cor}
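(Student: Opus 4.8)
The forward implication is the easy one, and I would dispatch it first: $j$ is a continuous linear map, so if $f$ is analytic then for each $z\in\Omega$ the difference quotients $h^{-1}\bigl(f(z+h)-f(z)\bigr)$ converge in $X$ as $|h|\to 0$, whence $h^{-1}\bigl(jf(z+h)-jf(z)\bigr)=j\bigl(h^{-1}(f(z+h)-f(z))\bigr)$ converges in $Y$; thus $j\circ f$ is analytic (with $(j\circ f)'=j\circ f'$). No local boundedness is needed here.

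For the converse, the plan is to invoke the Blackbox on holomorphy over a total subspace (stated just above). I would take $N:=j^{*}(Y^{*})=\{\Lambda\circ j:\Lambda\in Y^{*}\}\subseteq X^{*}$. The first thing to verify is that $N$ is total on $X$: if $x\in X$ satisfies $\Lambda(jx)=0$ for every $\Lambda\in Y^{*}$, then $jx=0$ by Hahn--Banach, and hence $x=0$ since $j$ is injective; so $N$ separates points of $X$. Next, for each $\Lambda\in Y^{*}$ the scalar function $(\Lambda\circ j)\circ f=\Lambda\circ(j\circ f)$ is analytic, being the composition of the analytic function $j\circ f$ with the continuous linear functional $\Lambda$. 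Since $f$ is assumed locally bounded as a map into $X$, all hypotheses of the Blackbox are met with this choice of $N$, and we conclude that $f$ is analytic, finishing the proof.

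There is essentially no serious obstacle; the proof is a clean application of the Blackbox. The two points worth stressing are: (i) the local boundedness must be available in the strong (i.e.\ $X$-)topology --- this is precisely why it is a hypothesis of the statement rather than something one could hope to deduce from local boundedness of $j\circ f$ in $Y$; and (ii) the totality of $N$ genuinely uses the injectivity of $j$. Everything else is routine.
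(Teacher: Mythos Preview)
Your proof is correct and follows the same approach as the paper: take $N=j^{*}(Y^{*})\subseteq X^{*}$, observe it is total (the paper phrases this as ``$\operatorname{Im}(j^{*})$ is weak$^{*}$-dense, therefore total''), and apply the Blackbox. Your version simply spells out the Hahn--Banach step and the easy forward direction explicitly.
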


\begin{proof}
$\text{Im}(j^{*})$ is weak$^{*}$-dense, therefore total.
\end{proof}
\begin{cor}[Evaluation on dense set]
\label{cor:Eval_on_dense_set} Let $X,Y$ be complex Banach spaces
with $X_{0}\leq X$. Let $\Omega\subset\mathbb{C}$ be open and $f:\Omega\to\mathcal{L}(X,Y)$
be a function. Assume $X_{0}\leq X$ is weakly dense and f is locally
bounded.

Then $f$ is analytic $\iff$$\forall x_{0}\in X_{0},$ $f(\cdot)x_{0}:\Omega\to Y$
is analytic.
\end{cor}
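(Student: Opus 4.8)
The plan is to derive both implications from the criterion \Blackboxref{Holo_on_total}. The forward direction is formal: for each fixed $x_{0}\in X_{0}$ the evaluation map $\mathrm{ev}_{x_{0}}:\mathcal{L}(X,Y)\to Y$, $T\mapsto Tx_{0}$, is bounded and linear, so if $f$ is analytic then $f(\cdot)x_{0}=\mathrm{ev}_{x_{0}}\circ f$ is analytic as a composition of an analytic function with a bounded linear map.

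For the converse, I would introduce the subspace $N=\mathrm{span}\{\,y^{*}\circ\mathrm{ev}_{x_{0}}:x_{0}\in X_{0},\ y^{*}\in Y^{*}\,\}\subseteq\mathcal{L}(X,Y)^{*}$, where $(y^{*}\circ\mathrm{ev}_{x_{0}})(T)=y^{*}(Tx_{0})$, and show that $N$ is \emph{total} on $\mathcal{L}(X,Y)$. Indeed, suppose $T\in\mathcal{L}(X,Y)$ satisfies $y^{*}(Tx_{0})=0$ for all $x_{0}\in X_{0}$ and all $y^{*}\in Y^{*}$. Since $Y^{*}$ separates the points of $Y$, this forces $Tx_{0}=0$ for every $x_{0}\in X_{0}$. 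As $X_{0}$ is a weakly dense subspace, it is norm-dense (for a convex set, and in particular a subspace, the weak and norm closures coincide by Mazur's theorem; equivalently, $T$ is weak-to-weak continuous and annihilates the weakly dense set $X_{0}$, hence vanishes on all of $X$), so $T=0$ by continuity. Thus $N$ separates points.

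Now $f$ is locally bounded by hypothesis, and for any $\Lambda=\sum_{i}y_{i}^{*}\circ\mathrm{ev}_{x_{0,i}}\in N$ the scalar function $z\mapsto\Lambda f(z)=\sum_{i}y_{i}^{*}\bigl(f(z)x_{0,i}\bigr)$ is a finite linear combination of the functions $y_{i}^{*}\circ\bigl(f(\cdot)x_{0,i}\bigr)$, each of which is analytic because $f(\cdot)x_{0,i}$ is analytic and $y_{i}^{*}$ is bounded linear. Hence $\Lambda f$ is analytic for every $\Lambda\in N$, and since $N$ is total and $f$ is locally bounded, \Blackboxref{Holo_on_total} yields that $f$ is analytic. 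The only point carrying any content is the totality of $N$, i.e.\ the place where weak density of $X_{0}$ is used; the rest is bookkeeping with bounded linear maps, exactly as in \corref{Inherit_of_anal}.
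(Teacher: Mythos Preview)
Your proof is correct and follows the same approach as the paper: define the separating set $N=\mathrm{span}\{y^{*}\circ\mathrm{ev}_{x_{0}}:x_{0}\in X_{0},\,y^{*}\in Y^{*}\}\leq\mathcal{L}(X,Y)^{*}$, verify it is total using the weak density of $X_{0}$, and apply \Blackboxref{Holo_on_total}. You have simply spelled out more of the details (the forward direction and the Mazur/weak-continuity justification for totality) that the paper leaves implicit.
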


\begin{proof}
Consider $N_{X_{0}}=\text{span}\{y^{*}\circ\text{ev}_{x_{0}}:x_{0}\in X_{0},y^{*}\in Y^{*}\}\leq\mathcal{L}(X,Y)^{*}$.
It is total as $X_{0}$ is weakly dense. Use \Blackboxref{Holo_on_total}.
\end{proof}

\subsubsection{Semigroup definitions}

As mentioned before, we assume the reader is familiar with basic elements
of functional analysis, including semigroup theory as covered in \parencite[Appendix A.9]{Taylor_PDE1}.

Unfortunately, definitions vary depending on the authors, so we need
to be careful about which ones we are using.
\begin{defn}
For $\delta\in(0,\pi]$, define $\Sigma_{\delta}^{+}=\{z\in\mathbb{C}\backslash\{0\}:|\arg z|<\delta\}$,
$\Sigma_{\delta}^{-}=-\Sigma_{\delta}^{+}$, $\mathbb{D}=\{z\in\mathbb{C}:|z|<1\}.$
Also define $\Sigma_{0}^{+}=(0,\infty)$ and $\Sigma_{0}^{-}=-\Sigma_{0}^{+}$.

Let $X$ be a complex Banach space.

$(T(t))_{t\geq0}\subset\mathcal{L}(X)$ is called:
\begin{itemize}
\item a\textbf{ semigroup} when $T:[0,\infty)\to\mathcal{L}(X)$ is a monoid
homomorphism ($T(0)=1,T(t_{1}+t_{2})=T(t_{1})T(t_{2}))$
\item \textbf{degenerate} when $T:(0,\infty)\to\mathcal{L}(X)$ is continuous
in the SOT (\textbf{strong operator topology}).
\item \textbf{immediately norm-continuous }when $T:(0,\infty)\to\mathcal{L}(X)$
is norm-continuous.
\item \textbf{$C_{0}$ }(\textbf{strongly continuous})\textbf{ }when $T:[0,\infty)\to\mathcal{L}(X)$
is continuous in the SOT.
\item \textbf{bounded} when $T([0,\infty))$ is bounded in $\mathcal{L}(X)$,
and \textbf{locally bounded} when $T(K)$ is bounded $\forall K\subset[0,\infty)$
bounded. (so $C_{0}$ implies local boundedness by Banach-Steinhaus,
and the semigroup property implies we just need to test $K\subset[0,1)$)
\end{itemize}
$(T(z))_{z\in\Sigma_{\delta}^{+}\cup\{0\}}\subset\mathcal{L}(X)$
is called
\begin{itemize}
\item a\textbf{ semigroup }when $T:\Sigma_{\delta}^{+}\cup\{0\}\to\mathcal{L}(X)$
is a monoid homomorphism.
\item \textbf{$C_{0}$ }when $\forall\delta'\in(0,\delta),T:\Sigma_{\delta'}^{+}\cup\{0\}\to\mathcal{L}(X)$
is continuous in the SOT.
\item \textbf{bounded }when $T\left(\Sigma_{\delta'}^{+}\right)$ is bounded
$\forall\delta'\in(0,\delta)$ and \textbf{locally bounded} when $T(K)$
is bounded $\forall K\subset\Sigma_{\delta'}^{+}$ bounded. (so $C_{0}$
implies local boundedness, and the semigroup property implies we just
need to test $K\subset\mathbb{D}\cap\Sigma_{\delta'}^{+}$)
\item \textbf{analytic} when $T:\Sigma_{\delta}^{+}\to\mathcal{L}(X)$ is
analytic
\end{itemize}
We say $(T(t))_{t\geq0}$ is \textbf{analytic }of angle $\delta\in(0,\frac{\pi}{2}]$
if there is an extension $(T(z))_{z\in\Sigma_{\delta}^{+}\cup\{0\}}\subset\mathcal{L}(X)$
which is analytic and \uline{locally bounded}. If furthermore $(T(z))_{z\in\Sigma_{\delta}^{+}\cup\{0\}}$
is bounded, we say $(T(t))_{t\geq0}$ is \textbf{boundedly analytic
}of angle $\delta$.
\end{defn}

\begin{rem*}
A subtle problem is that when $(T(t))_{t\geq0}$ is bounded and analytic,
we cannot conclude $(T(t))_{t\geq0}$ is boundedly analytic (cf. \cite[Definition 3.7.3]{Arendt_Laplace}).
\end{rem*}
\begin{blackbox}
\label{blackbox:analytic_extension} If $(T(t))_{t\geq0}$ is a $C_{0}$
semigroup which is (boundedly) analytic of angle $\delta\in(0,\frac{\pi}{2}]$,
then $(T(z))_{z\in\Sigma_{\delta}^{+}\cup\{0\}}$ is a $C_{0},$ (bounded)
semigroup.
\end{blackbox}

\begin{proof}
The semigroup property comes from the identity theorem, and $C_{0}$
comes from the Vitali holomorphic convergence theorem. We refer to
\cite[Proposition 3.7.2]{Arendt_Laplace}.
\end{proof}
\begin{thm}[Sobolev tower]
 \label{thm:Sobolev_tower}Let $(e^{tA})_{t\geq0}$ be a $C_{0}$
semigroup on a (real/complex) Banach space $X$ with generator $A$
(implying $A$ is closed and densely defined). Then $\forall m\in\mathbb{N}_{1}$,
$D(A^{m})$ is a Banach space under the norm $\left\Vert x\right\Vert _{D(A^{m})}=\left\Vert x\right\Vert _{X}+\sum_{k=1}^{m}\left\Vert A^{k}x\right\Vert _{X}$,
and $D(A^{m})$ is dense in $X$.

As $e^{tA}$ and $A$ commute on $D(A)$, we conclude that $\left(e^{tA}\right)_{t\geq0}$,
after domain restriction, is also a $C_{0}$ semigroup on $D(A^{m})$
and $\left\Vert e^{tA}\right\Vert _{\mathcal{L}\left(D(A^{m})\right)}\leq\left\Vert e^{tA}\right\Vert _{\mathcal{L}\left(X\right)}\;\forall t\geq0$.

Lastly, if $X$ is a complex Banach space and $\left(e^{tA}\right)_{t\geq0}$
is (boundedly) analytic on $X$, $\left(e^{tA}\right)_{t\geq0}$ is
also (boundedly) analytic on $D(A^{m})$ after domain restriction.
\end{thm}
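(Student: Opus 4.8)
The plan is to establish the three assertions in sequence, with the core of the work being the density of $D(A^m)$ in $X$; the semigroup and analyticity statements then follow formally.

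First I would treat the Banach space structure. The norm $\|x\|_{D(A^m)} = \|x\|_X + \sum_{k=1}^m \|A^k x\|_X$ is well-defined on $D(A^m) = \{x : x, Ax, \ldots, A^{m-1}x \in D(A)\}$. To see completeness, take a Cauchy sequence $(x_j)$ in $D(A^m)$; then $(A^k x_j)_j$ is Cauchy in $X$ for each $0 \le k \le m$, so $A^k x_j \to y_k$ in $X$ with $y_0 = \lim x_j$. Since $A$ is closed (being a $C_0$ semigroup generator), an induction on $k$ shows $y_{k-1} \in D(A)$ and $A y_{k-1} = y_k$, hence $y_0 \in D(A^m)$ and $x_j \to y_0$ in the $D(A^m)$ norm. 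For density, the standard device is smoothing by integration against the semigroup: given $x \in X$ and a test function $\varphi \in C_c^\infty((0,\infty))$, the vector $x_\varphi = \int_0^\infty \varphi(t)\, e^{tA} x \, dt$ lies in $D(A^\infty) \subseteq D(A^m)$, because differentiating under the integral and integrating by parts moves $A$ onto $\varphi$; choosing $\varphi = \varphi_n$ an approximate identity concentrating at $0$ and using strong continuity gives $x_{\varphi_n} \to x$ in $X$. (Care is needed near $t = 0$: one uses local boundedness of $\|e^{tA}\|$, which holds by Banach--Steinhaus since the semigroup is $C_0$.)

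Next, for the semigroup-on-$D(A^m)$ claim: $e^{tA}$ maps $D(A^m)$ into itself and commutes with $A^k$ there, because $e^{tA}$ commutes with $A$ on $D(A)$ and one iterates. Hence $\|A^k e^{tA} x\|_X = \|e^{tA} A^k x\|_X \le \|e^{tA}\|_{\mathcal{L}(X)} \|A^k x\|_X$, which after summing over $k$ yields $\|e^{tA}\|_{\mathcal{L}(D(A^m))} \le \|e^{tA}\|_{\mathcal{L}(X)}$. Strong continuity on $D(A^m)$ is pointwise: for $x \in D(A^m)$, $e^{tA} x \to x$ and $A^k e^{tA} x = e^{tA} A^k x \to A^k x$ in $X$ as $t \downarrow 0$ (using $C_0$-continuity applied to the vector $A^k x \in X$), so $e^{tA} x \to x$ in $D(A^m)$; together with the semigroup identity (inherited trivially) and the uniform bound, $(e^{tA})_{t\ge0}$ is a $C_0$ semigroup on $D(A^m)$.

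Finally, for analyticity: suppose $X$ is complex and $(e^{tA})_{t \ge 0}$ extends to an analytic (resp.\ boundedly analytic) family $(e^{zA})_{z \in \Sigma_\delta^+ \cup \{0\}} \subset \mathcal{L}(X)$ which is locally bounded (resp.\ bounded). The same commutation argument gives $e^{zA}(D(A^m)) \subseteq D(A^m)$ with $\|e^{zA}\|_{\mathcal{L}(D(A^m))} \le \|e^{zA}\|_{\mathcal{L}(X)}$, so local boundedness (resp.\ boundedness) transfers to $D(A^m)$. It remains to see $z \mapsto e^{zA}$ is analytic as a map $\Sigma_\delta^+ \to \mathcal{L}(D(A^m))$. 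The inclusion $j : D(A^m) \hookrightarrow X$ is a continuous injection, but here we want analyticity into $\mathcal{L}(D(A^m))$, so I would instead invoke \Corref{Eval_on_dense_set}: the map is locally bounded into $\mathcal{L}(D(A^m), D(A^m))$, and for each $x$ in the dense subspace $D(A^{m+1}) \le D(A^m)$ (dense by the first part applied with $m+1$, or just by the smoothing construction above), the orbit $z \mapsto e^{zA} x$ is analytic with values in $D(A^m)$ — indeed $A^k e^{zA} x = e^{zA} A^k x$ is an $X$-analytic function of $z$ for each $k \le m$ by hypothesis applied to the vector $A^k x \in X$, and since the $D(A^m)$-norm is the sum of these, \Corref{Inherit_of_anal}-style reasoning (a vector-valued function into $D(A^m)$ is analytic iff each component $A^k(\cdot)$ is $X$-analytic, because $D(A^m)$ embeds into the finite product $\prod_{k=0}^m X$) gives $D(A^m)$-analyticity of the orbit. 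Then \Corref{Eval_on_dense_set} upgrades this to analyticity of $z \mapsto e^{zA} \in \mathcal{L}(D(A^m))$.

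The main obstacle is the density of $D(A^m)$ in $X$: one must be careful that the smoothing integral $\int_0^\infty \varphi(t) e^{tA} x\, dt$ genuinely lands in $D(A^m)$ and converges back to $x$, which rests on the local boundedness of $t \mapsto \|e^{tA}\|$ near $0$ and strong continuity — both automatic here — rather than on any global bound, so that the argument works for a general $C_0$ semigroup, not just a bounded one.
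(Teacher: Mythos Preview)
Your approach is essentially the same as the paper's, and your treatment of the standard parts (completeness of $D(A^m)$, density via smoothing integrals, the $C_0$ property on $D(A^m)$) is more explicit than the paper, which simply defers these to semigroup references. However, you have a gap at precisely the point the paper singles out as requiring proof: the phrase ``the same commutation argument gives $e^{zA}(D(A^m)) \subseteq D(A^m)$'' and the later identity $A^k e^{zA}x = e^{zA}A^k x$ both presuppose that $e^{zA}A = Ae^{zA}$ on $D(A)$ for \emph{complex} $z$ in the sector. For real $t$ this is standard, but for complex $z$ it is not automatic --- one does not know \emph{a priori} that $e^{zA}x \in D(A)$, nor that the closed operator $A$ commutes through the analytic extension; your real-$t$ argument does not carry over verbatim.

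The paper fills exactly this gap: invoking \Blackboxref{analytic_extension}, the extended family $(e^{zA})_{z\in\Sigma_\delta^+\cup\{0\}}$ is itself a $C_0$ semigroup, so $e^{zA}$ commutes with $e^{tA}$ for real $t$; pushing $e^{zA}$ through the difference quotient $\frac{e^{tA}-1}{t}x$ and letting $t\downarrow 0$ shows both that $e^{zA}x\in D(A)$ and that $Ae^{zA}x = e^{zA}Ax$ for $x\in D(A)$. Once this commutation is established, your concluding use of \Corref{Eval_on_dense_set} and \Corref{Inherit_of_anal} matches the paper's route to analyticity on $D(A^m)$.
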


\begin{proof}
Most are just the basics of semigroup theory (cf. \parencite[Appendix A.9]{Taylor_PDE1}).
We only prove the last assertion. All we need is commutativity: if
$\left(e^{tA}\right)_{t\geq0}$ is extended to $(e^{zA})_{z\in\Sigma_{\delta}^{+}\cup\{0\}}$,
we want to show $e^{zA}A=Ae^{zA}$ on $D(A)$.

By \Blackboxref{analytic_extension}, $(e^{zA})_{z\in\Sigma_{\delta}^{+}\cup\{0\}}$
is a $C_{0}$ semigroup. Therefore $\forall x\in D(A),\forall z\in\Sigma_{\delta}^{+}:$
\[
e^{zA}Ax=e^{zA}\left(X\text{-}\lim_{t\downarrow0}\frac{e^{tA}-1}{t}x\right)=X\text{-}\lim_{t\downarrow0}e^{zA}\frac{e^{tA}-1}{t}x=X\text{-}\lim_{t\downarrow0}\frac{e^{tA}-1}{t}e^{zA}x
\]
The last term implies $e^{zA}x\in D\left(A\right)$ and $e^{zA}Ax=Ae^{zA}x$.
Then use \Corref{Inherit_of_anal} and \Corref{Eval_on_dense_set}
to get analyticity.
\end{proof}

\subsubsection{Simple extrapolation (with core)}
\begin{lem}
\label{lem:from_core_U}Let $U$, X be complex Banach spaces and $U\hookrightarrow X$
be a continuous injection with dense image.
\begin{enumerate}
\item Let $(T(t))_{t\geq0}\subset\mathcal{L}(X)$ be locally bounded and
$T(t)U\leq U\;\forall t\geq0.$ Assume $(T(t))_{t\geq0}$ is a $C_{0}$
semigroup on $U$. Then $(T(t))_{t\geq0}$ on $X$ is also a $C_{0}$
semigroup.
\item Let $(T(z))_{z\in\Sigma_{\delta}^{+}\cup\{0\}}\subset\mathcal{L}(X)$
(where $\delta\in(0,\frac{\pi}{2}]$) be locally bounded and $T(z)U\leq U\;\forall z\in\Sigma_{\delta}^{+}$.
Assume $(T(z))_{z\in\Sigma_{\delta}^{+}\cup\{0\}}$ is a $C_{0},$
analytic semigroup on $U$. Then $(T(z))_{z\in\Sigma_{\delta}^{+}\cup\{0\}}$
on $X$ is also a $C_{0}$, analytic semigroup.
\end{enumerate}
\end{lem}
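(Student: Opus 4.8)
The plan is to push each structural property down from $U$ to $X$ using two ingredients that are already available: (i) $U$ is norm-dense in $X$ while $(T(\cdot))$ is locally bounded on $X$, which upgrades ``pointwise on $U$'' statements to ``pointwise on $X$'' statements in the spirit of \Lemref{dense_conv}; and (ii) the analyticity criterion \Corref{Eval_on_dense_set}, which promotes analyticity of all orbits $z\mapsto T(z)u$ ($u\in U$) to operator-valued analyticity of $z\mapsto T(z)$ on $X$, once more with local boundedness as the essential hypothesis.

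For part (1) I would first note that the monoid-homomorphism identities $T(0)=1$ and $T(t_1)T(t_2)=T(t_1+t_2)$ hold as operator identities on $U$; since both sides lie in $\mathcal{L}(X)$ and agree on the dense subspace $U$, they hold on $X$, so $(T(t))_{t\ge0}$ is a semigroup on $X$. For strong continuity at $0$ I would set $M:=\sup_{t\in[0,1]}\|T(t)\|_{\mathcal{L}(X)}<\infty$ (local boundedness); given $x\in X$ and $\varepsilon>0$, pick $u\in U$ with $\|x-u\|_X<\varepsilon$, and use the three-term bound
\[
\|T(t)x-x\|_X\le\|T(t)(x-u)\|_X+\|T(t)u-u\|_X+\|u-x\|_X\le(M+1)\varepsilon+\|T(t)u-u\|_X,
\]
together with $\|T(t)u-u\|_X\le\|T(t)u-u\|_U\to0$ as $t\downarrow0$ (the inclusion $U\hookrightarrow X$ is continuous and $(T(t))$ is $C_0$ on $U$); letting $t\downarrow0$ and then $\varepsilon\downarrow0$ yields $T(t)x\to x$ in $X$.

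For part (2) the semigroup identities transfer to $X$ by density exactly as above. For strong continuity, fix $\delta'\in(0,\delta)$ and $z_0\in\Sigma_{\delta'}^+\cup\{0\}$; a bounded neighborhood $K$ of $z_0$ inside $\Sigma_{\delta'}^+\cup\{0\}$ has $\sup_{z\in K}\|T(z)\|_{\mathcal{L}(X)}<\infty$ by local boundedness ($\|T(0)\|_{\mathcal{L}(X)}=1$ causing no problem), and the same three-term estimate --- using that $z\mapsto T(z)u$ is continuous into $U$, hence into $X$, at every point of $\Sigma_{\delta'}^+\cup\{0\}$ --- shows $z\mapsto T(z)x$ is SOT-continuous for every $x\in X$. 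For analyticity I would invoke \Corref{Eval_on_dense_set} with $\Omega=\Sigma_\delta^+$, $Y=X$, $X_0=U$ (norm-dense, hence weakly dense, in $X$) and $f(z)=T(z)\in\mathcal{L}(X)$: $f$ is locally bounded by hypothesis, and for each $u\in U$ the orbit $z\mapsto T(z)u$ is analytic into $U$ (analyticity of the semigroup on $U$), hence analytic into $X$ by composition with the continuous inclusion (cf.\ \Corref{Inherit_of_anal}); therefore $f$ is analytic on $\Sigma_\delta^+$, and $(T(z))_{z\in\Sigma_\delta^+\cup\{0\}}$ on $X$ is a $C_0$, analytic semigroup.

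I do not expect a genuine obstacle; the only point to keep in focus is that every one of these upgrades really needs the local-boundedness hypothesis on $X$ --- without it neither the density argument for the $C_0$ property nor the application of \Corref{Eval_on_dense_set} (which ultimately rests on the Krein--Smulian/Vitali machinery behind \Blackboxref{Holo_on_total}) would go through --- and that ``analytic semigroup'' in the sense used here means analytic \emph{and} locally bounded, the latter being supplied directly by the hypothesis.
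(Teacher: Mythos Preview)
Your argument is correct and follows exactly the same route as the paper: density of $U$ in $X$ for the semigroup identities, local boundedness plus dense convergence (\Lemref{dense_conv}) for the $C_0$ property, and \Corref{Eval_on_dense_set} for analyticity. If anything, you have written out more detail than the paper's three-line proof, and your emphasis on local boundedness being the crucial hypothesis mirrors the paper's own remark immediately following the lemma.
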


\begin{rem*}
The assumption of local boundedness on $X$ is important. We will
also use this result in \Subsecref{W1p-analyticity} to establish
the $W^{1,p}$-analyticity of the heat flow.
\end{rem*}
\begin{proof}
The semigroup property comes from the density of $U$ in $X$.

To get $C_{0}$ on $X$, use the local boundedness on $X$ and dense
convergence (\Lemref{dense_conv}).

For analyticity in (2), use \Corref{Eval_on_dense_set}.
\end{proof}
\begin{lem}[Core]
\label{lem:core} Let $A$ be an unbounded linear operator on a (real/complex)
Banach space $X$ and $E\leq D(A)$. E is called a \textbf{core }when
$E$ is dense in $\left(D(A),\left\Vert \cdot\right\Vert _{D(A)}\right)$.

If $A$ is the generator of a  $C_{0}$ semigroup on $X$, $E$ is
dense in $X$ and $e^{tA}E\leq E$, then E is a core.
\end{lem}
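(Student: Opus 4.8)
The plan is to reduce everything to the resolvent of $A$. Since $A$ generates the $C_0$ semigroup $(e^{tA})_{t\geq0}$, it is closed, $(D(A),\|\cdot\|_{D(A)})$ is a Banach space (\Thmref{Sobolev_tower}), and for any real $\lambda$ large enough (exceeding the exponential growth bound of the semigroup) we have $\lambda$ in the resolvent set of $A$, with
\[
Rx:=(\lambda-A)^{-1}x=\int_0^\infty e^{-\lambda t}\,e^{tA}x\,\mathrm{d}t\quad(x\in X),\qquad R\in\mathcal{L}(X),\qquad R(X)=D(A).
\]
Write $D_0$ for the closure of $E$ inside $(D(A),\|\cdot\|_{D(A)})$; the goal is to show $D_0=D(A)$. (This core criterion is classical; cf. \cite{engel2000one-parameter}.)

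First I would establish the key inclusion $R(E)\subseteq D_0$. Fix $x\in E\subseteq D(A)$. The point is that $t\mapsto e^{tA}x$ is continuous not merely into $X$ but into $(D(A),\|\cdot\|_{D(A)})$: indeed $e^{tA}x\in D(A)$ with $Ae^{tA}x=e^{tA}(Ax)$, and strong continuity of the semigroup applied to the two vectors $x$ and $Ax$ controls both pieces of the graph norm. Hence for each $N$ the $D(A)$-valued Riemann integral $\int_0^N e^{-\lambda t}e^{tA}x\,\mathrm{d}t$ exists and is a $\|\cdot\|_{D(A)}$-limit of Riemann sums $\sum_j e^{-\lambda t_j}e^{t_jA}x\,(t_{j+1}-t_j)$, each of which lies in $E$ since $E$ is a subspace with $e^{t_jA}E\subseteq E$; so $\int_0^N e^{-\lambda t}e^{tA}x\,\mathrm{d}t\in D_0$. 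Now let $N\to\infty$: this integral converges to $Rx$ in $X$, while (as $A$ is bounded from $(D(A),\|\cdot\|_{D(A)})$ to $X$) applying $A$ gives $\int_0^N e^{-\lambda t}e^{tA}(Ax)\,\mathrm{d}t$, which converges to $R(Ax)$ in $X$. So the convergence is in the graph norm, and $Rx\in D_0$ because $D_0$ is closed.

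To conclude, take an arbitrary $y\in D(A)$ and put $z:=(\lambda-A)y\in X$, so that $y=Rz$. Since $E$ is dense in $X$, pick $z_n\in E$ with $z_n\to z$ in $X$. Then $Rz_n\to Rz=y$ in $X$, and $ARz_n=\lambda Rz_n-z_n\to\lambda Rz-z=ARz=Ay$ in $X$; hence $Rz_n\to y$ in $\|\cdot\|_{D(A)}$. By the previous step $Rz_n\in D_0$, and $D_0$ is closed, so $y\in D_0$. Thus $D_0=D(A)$, i.e. $E$ is a core. Nothing changes in the real-Banach case, as $\lambda$ is chosen real throughout.

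The routine ingredients — the Laplace-transform formula for $R$, existence of $X$-valued Riemann integrals of continuous integrands, and the elementary bound $\|Rx\|_{D(A)}\lesssim\|x\|_X$ — I would only mention in passing. The single step that needs genuine care, and the only real obstacle, is the key inclusion: one must check the Riemann sums converge in the \emph{graph-norm} topology and not merely in $X$, which is exactly why first recording that $t\mapsto e^{tA}x$ is $D(A)$-valued continuous (for $x\in D(A)$) is the crux; everything afterwards is bookkeeping.
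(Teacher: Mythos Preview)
Your proof is correct. The paper's own argument is shorter and uses a different integral transform: instead of the resolvent $R=(\lambda-A)^{-1}=\int_0^\infty e^{-\lambda s}e^{sA}\,\mathrm{d}s$, it uses the short-time average $\frac{1}{t}\int_0^t e^{sA}\,\mathrm{d}s$. Given $x\in D(A)$, the paper picks $x_j\in E$ with $x_j\to x$ in $X$ and observes
\[
\frac{1}{t}\int_0^t e^{sA}x_j\,\mathrm{d}s\xrightarrow[j\to\infty]{\|\cdot\|_{D(A)}}\frac{1}{t}\int_0^t e^{sA}x\,\mathrm{d}s\xrightarrow[t\downarrow0]{\|\cdot\|_{D(A)}}x,
\]
with the left-hand integrals lying in the $\|\cdot\|_{D(A)}$-closure of $E$ by the same Riemann-sum reasoning you give. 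The first limit works because $A\int_0^t e^{sA}y\,\mathrm{d}s=e^{tA}y-y$ for any $y\in X$, so only $X$-convergence of $x_j$ is needed; the second uses that $(e^{sA})_{s\ge0}$ is $C_0$ on $D(A)$. Your resolvent route requires the extra (easy) step of rewriting $y=Rz$ and checking $Rz_n\to y$ in graph norm, whereas the paper's average hits $x$ directly as $t\downarrow0$. Both approaches hinge on exactly the point you flag as the crux: the Riemann sums converge in graph norm because $s\mapsto e^{sA}x_j$ is $D(A)$-continuous for $x_j\in E\subseteq D(A)$.
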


\begin{proof}
Let $x\in D(A)$. Then there is $\left(x_{j}\right)_{j\in\mathbb{N}}$
in $E$ such that $x_{j}\xrightarrow{X}x$. It is trivial to check
\[
\frac{1}{t}\int_{0}^{t}e^{sA}x_{j}\;\mathrm{d}s\xrightarrow[j\to\infty]{\left\Vert \cdot\right\Vert _{D(A)}}\frac{1}{t}\int_{0}^{t}e^{sA}x\;\mathrm{d}s\xrightarrow[t\downarrow0]{\left\Vert \cdot\right\Vert _{D(A)}}x
\]
 as $\left(\restr{e^{sA}}{D(A)}\right)_{s\geq0}$ is $\left\Vert \cdot\right\Vert _{D(A)}$-continuous.
Note that $\int_{0}^{t}e^{sA}x_{j}\;\mathrm{d}s$ is in the $\left\Vert \cdot\right\Vert _{D(A)}$-closure
of $E$ by the Riemann integral.
\end{proof}
\begin{thm}[Simple extrapolation with core]
\label{thm:core_extrapol}\label{thm:simple_core_extrapol} Let $(X_{0},X_{1})$
be an interpolation couple of complex Banach spaces and $X_{\theta}=[X_{0},X_{1}]_{\theta}$
for $\theta\in(0,1]$.

Let $\left(T(t)\right)_{t\geq0}\subset\mathcal{L}\left(\left(X_{0},X_{1}\right)\right)$
. Assume that on $X_{0}$, $\left(T(t)\right)_{t\geq0}$ is bounded.

Assume that on $X_{1}$, $\left(T(t)\right)_{t\geq0}$ is a $C_{0}$
semigroup, boundedly analytic of angle $\delta\in(0,\frac{\pi}{2}]$
with generator $A_{1}$.

Assume $\exists m\in\mathbb{N}_{1}:\;\left(D(A_{1}^{m}),\left\Vert \cdot\right\Vert _{D(A_{1}^{m})}\right)\hookrightarrow\left(X_{0}\cap X_{1},\left\Vert \cdot\right\Vert _{X_{0}\cap X_{1}}\right)\hookrightarrow X_{0}$
are continuous injections with dense images.

Then on $X_{\theta}$, $\left(T(t)\right)_{t\geq0}$ is a $C_{0}$
semigroup, and boundedly analytic of angle $\theta\delta$.
\end{thm}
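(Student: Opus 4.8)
The plan is to establish the three conclusions on $X_\theta=[X_0,X_1]_\theta$ in turn --- boundedness, strong continuity, and bounded analyticity of angle $\theta\delta$ --- using interpolation of operator norms for the quantitative estimates, the Sobolev tower (\Thmref{Sobolev_tower}) to supply the strong continuity that abstract Stein interpolation demands along the real axis, and \Corref{Inherit_of_anal}--\Corref{Eval_on_dense_set} to pass from pointwise Stein bounds to genuine $\mathcal L(X_\theta)$-analyticity. The easy part first: each $T(t)$ is a morphism of $(X_0,X_1)$, so $T(t)\in\mathcal L(X_\theta)$ with $\|T(t)\|_{\mathcal L(X_\theta)}\le M_0^{1-\theta}M_1^{\theta}$, where $M_0=\sup_{t\ge0}\|T(t)\|_{\mathcal L(X_0)}<\infty$ by hypothesis and $M_1=\sup_{t\ge0}\|T(t)\|_{\mathcal L(X_1)}<\infty$ since $(T(t))_{t\ge0}$ is boundedly analytic on $X_1$; thus it is a bounded semigroup on $X_\theta$. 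For $C_0$: $D(A_1^m)$ is dense in $X_0\cap X_1$ by hypothesis and $X_0\cap X_1$ is dense in $[X_0,X_1]_\theta$, so $D(A_1^m)$ is dense in $X_\theta$; by \Thmref{Sobolev_tower} the semigroup restricts to a $C_0$ semigroup on $(D(A_1^m),\|\cdot\|_{D(A_1^m)})$, so for $u\in D(A_1^m)$ we get $T(t)u\to u$ in $D(A_1^m)$, hence in $X_0\cap X_1$, hence in $X_\theta$, as $t\downarrow0$; dense convergence (\Lemref{dense_conv}) with the uniform bound upgrades this to all of $X_\theta$, and the semigroup law gives strong continuity on $[0,\infty)$.

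For analyticity, extend the semigroup on $X_1$ to its bounded analytic extension $(T(z))_{z\in\Sigma_\delta^+\cup\{0\}}$, a $C_0$ bounded semigroup on $X_1$ (\Blackboxref{analytic_extension}). Fix $\delta'\in(0,\delta)$; the goal is to extend $(T(t))_{t\ge0}$ to a family on $\Sigma_{\theta\delta'}^+$ uniformly bounded in $\mathcal L(X_\theta)$. Given $z_0=re^{i\alpha}\in\Sigma_{\theta\delta'}^+$, pick $\delta''\in(\delta',\delta)$ (so $|\alpha|/\theta<\delta'<\delta''$, chosen once and for all depending only on $\delta'$) and set $g(w)=r\exp\!\big(i(\alpha/\theta)w\big)$. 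Then $g$ is entire, carries $\overline\Omega=\{0\le\Re w\le1\}$ into $\Sigma_{\delta''}^+\subset\Sigma_\delta^+$, sends the edge $\Re w=0$ into $(0,\infty)$, sends $\Re w=1$ into the ray $\{\arg z=\alpha/\theta\}\subset\Sigma_{\delta''}^+$, and satisfies $g(\theta)=z_0$. Apply abstract Stein interpolation to $R(w):=T(g(w))$ with core $U:=D(A_1^m)$ (dense in $X_0\cap X_1$): since $w\mapsto T(g(w))$ is $\mathcal L(X_1)$-analytic on $\Omega$ and continuous on $\overline\Omega$ (its image stays in $\Sigma_\delta^+$), the map $w\mapsto T(g(w))u$ is bounded, continuous and analytic into $X_0+X_1$ for $u\in U$, giving hypothesis (1); hypothesis (2) for $j=1$ holds with $M_1'':=\sup_{z\in\Sigma_{\delta''}^+}\|T(z)\|_{\mathcal L(X_1)}<\infty$.

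The delicate point is hypothesis (2) for $j=0$, which asks that $t\mapsto R(it)u=T(g(it))u$ be continuous \emph{into $X_0$} and bounded by $M_0\|u\|_{X_0}$. The bound is immediate since $g(it)\in(0,\infty)$; the continuity is where a naïve argument stalls, because $(T(t))_{t\ge0}$ is not assumed strongly continuous on $X_0$ --- but for $u\in D(A_1^m)$ it follows from \Thmref{Sobolev_tower}, which makes $t\mapsto T(t)u$ continuous in $\|\cdot\|_{D(A_1^m)}$ and hence in $\|\cdot\|_{X_0}$ via $D(A_1^m)\hookrightarrow X_0$. Stein then yields $\|T(z_0)u\|_{X_\theta}\le M_0^{1-\theta}(M_1'')^{\theta}\|u\|_{X_\theta}$ for all $u\in U$, uniformly over $z_0\in\Sigma_{\theta\delta'}^+$. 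Extending each $T(z_0)$ from the dense subspace $U$ to $\widetilde T(z_0)\in\mathcal L(X_\theta)$, we obtain a family on $\Sigma_{\theta\delta}^+$ bounded on every proper subsector (hence locally bounded), agreeing with $T(t)$ on $[0,\infty)$, and inheriting the semigroup law from $U$ (using $T(z)D(A_1^m)\subset D(A_1^m)$, again \Thmref{Sobolev_tower}).

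It remains to see that $z\mapsto\widetilde T(z)$ is analytic. For fixed $u\in U$ the map $z\mapsto T(z)u$ is locally bounded into $X_\theta$ by the uniform Stein bound and analytic into $X_1\hookrightarrow X_0+X_1$, hence analytic into $X_\theta$ by \Corref{Inherit_of_anal} applied to $X_\theta\hookrightarrow X_0+X_1$; then \Corref{Eval_on_dense_set}, with $U$ weakly dense in $X_\theta$ and $\widetilde T$ locally bounded, gives analyticity of $z\mapsto\widetilde T(z)$ on $\Sigma_{\theta\delta}^+$. Combined with the first paragraph, $(T(t))_{t\ge0}$ on $X_\theta$ is a $C_0$ semigroup with an analytic extension to $\Sigma_{\theta\delta}^+$ bounded on every proper subsector, i.e. boundedly analytic of angle $\theta\delta$. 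The main obstacle is the pair of subtleties just flagged: abstract Stein needs strong continuity of the boundary families, which on the $X_0$-edge is absent from the hypotheses but rescued by the Sobolev-tower structure of $D(A_1^m)$; and Stein produces norm bounds only on the core $U$, so turning the extended family into a bona fide analytic $\mathcal L(X_\theta)$-valued function requires the local-boundedness-plus-inheritance mechanism of \Corref{Inherit_of_anal}--\Corref{Eval_on_dense_set} rather than a direct computation.
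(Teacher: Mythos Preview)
Your proof is correct and follows essentially the same route as the paper. Both arguments set $U=D(A_1^m)$, use the Sobolev tower (\Thmref{Sobolev_tower}) to make the extended semigroup $C_0$ and analytic on $U$, and then run abstract Stein interpolation with the parametrization $w\mapsto \rho e^{i\alpha w}$ (your $g(w)=r\exp(i(\alpha/\theta)w)$ is the same map up to relabelling $\alpha\to\alpha/\theta$) to obtain uniform $\mathcal L(X_\theta)$-bounds on proper subsectors of $\Sigma_{\theta\delta}^+$. The only organizational difference is that the paper invokes \Lemref{from_core_U} to package the passage from $U$ to $X_\theta$ (density plus local boundedness $\Rightarrow$ $C_0$ and analytic), whereas you unpack that lemma's content directly via \Lemref{dense_conv}, \Corref{Inherit_of_anal}, and \Corref{Eval_on_dense_set}; your identification of the $X_0$-edge continuity as the key place where the core hypothesis is used is exactly the point.
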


\begin{rem*}
The existence of a convenient core like $D(A_{1}^{m})$ is usually
a trivial consequence of Sobolev embedding. We can replace bounded
analyticity on $X_{1}$ and $X_{\theta}$ with analyticity, and boundedness
on $X_{0}$ with local boundedness via the usual rescaling argument
($\forall\delta'\in(0,\delta)\subset(0,\frac{\pi}{2}),\exists C_{\delta'}>0:\left\Vert e^{-C_{\delta'}z}T(z)\right\Vert _{\mathcal{L}(X_{1})}\lesssim_{\delta'}1\;\forall z\in\Sigma_{\delta'}^{+}$).

The existence of a core allows conditions on $X_{0}$ and $X_{1}$
to be more general than those in \cite[Theorem 3.1.1]{Stephan_thesis}
(which requires immediate norm-continuity on $X_{0}$), and actually
be equivalent to those in \cite[Theorem 3.1.10]{Stephan_thesis} (though
Kato-Beurling covers more than just complex interpolation). Once again,
the assumption of (local) boundedness on $X_{0}$ is important.

We will use this result to establish the $L^{p}$-analyticity of the
heat flow in \Subsecref{Lp-analyticity}.
\end{rem*}
\begin{proof}
Let $U=D(A_{1}^{m})$. Then $U$ is Banach as $A_{1}$ is closed.
Obviously $U\hookrightarrow X_{\theta}$ is a continuous injection
with dense image, and $(T(z))_{z\in\Sigma_{\delta}^{+}\cup\{0\}}$
is a $C_{0}$, bounded, analytic semigroup on $U$ (via Sobolev tower).

By \Lemref{from_core_U}, $\left(T(t)\right)_{t\geq0}$ is a $C_{0}$,
bounded semigroup on $X_{0}$. Also by \Lemref{from_core_U}, to get
the desired conclusion, we only need to show $\left(T(z)\right)_{z\in\Sigma_{\theta\delta}^{+}\cup\{0\}}$
is locally bounded in $\mathcal{L}(X_{\theta})$.

Fix $\delta'\in(0,\delta)$. We use abstract Stein interpolation.
Define the strip $\Omega=\{0<\text{Re}<1\}$. Let $\alpha\in(-\delta',\delta')$,
$\rho>0,u\in U$ and 
\[
L(z)=T(\rho e^{i\alpha z})u\;\forall z\in\overline{\Omega}
\]

Note that $U\leq X_{0}\cap X_{1}$ is dense. We check the other conditions
for interpolation:
\begin{itemize}
\item As $U\hookrightarrow X_{0}$ and $U\hookrightarrow X_{1}$ are continuous,
$(\overline{\Omega}\to X_{0}+X_{1},z\mapsto L(z)u)$ is continuous,
bounded on $\overline{\Omega}$ and analytic on $\Omega$ (as $L(z)u\in X_{1}\hookrightarrow X_{0}+X_{1}$).
\item For $j=0,1$ $\left(\mathbb{R}\to X_{j},s\mapsto L(j+is)u\right)$
is
\begin{itemize}
\item continuous since $U\hookrightarrow X_{j}$ is continuous.
\item bounded by $C_{j,T}\left\Vert u\right\Vert _{X_{j}}$ for some $C_{j,T}>0$
since $\left(T(t)\right)_{t\geq0}$ is bounded on $X_{0}$ and $\left(T(te^{i\alpha})\right)_{t\geq0}$
is bounded on $X_{1}$.
\end{itemize}
\end{itemize}
Then by Stein interpolation, we conclude $\{T(\rho e^{i\theta\alpha}):\rho>0,\alpha\in(-\delta',\delta')\}=T(\Sigma_{\theta\delta'}^{+})\subset\mathcal{L}(X_{\theta})$
is bounded.
\end{proof}

\subsubsection{Coreless version}

There is an alternative version which we will not use, but is of independent
interest:
\begin{thm}[Coreless extrapolation]
\label{thm:coreless_extrapol} Let $(X_{0},X_{1})$ be an interpolation
couple of complex Banach spaces and $X_{\theta}=[X_{0},X_{1}]_{\theta}$
for $\theta\in(0,1]$.

Let $\left(T(t)\right)_{t\geq0}\subset\mathcal{L}\left(\left(X_{0},X_{1}\right)\right)$
be \uline{a semigroup}. Assume that on $X_{0},$ $\left(T(t)\right)_{t\geq0}$
is bounded and \uline{degenerate}.

Assume that on $X_{1}$, $\left(T(t)\right)_{t\geq0}$ is a $C_{0}$
semigroup, boundedly analytic of angle $\delta\in(0,\frac{\pi}{2}]$
with generator $A_{1}$.

Then on $X_{\theta}$, $\left(T(t)\right)_{t\geq0}$ is a $C_{0}$
semigroup, boundedly analytic of angle $\theta\delta$.
\end{thm}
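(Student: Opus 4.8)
The plan is to run the argument of \Thmref{simple_core_extrapol} via abstract Stein interpolation on a sector, but with the degeneracy hypothesis on $X_0$ replacing the core $D(A_1^m)$ that was available there. Fix $\delta'\in(0,\delta)$ and set $M_0=\sup_{t\ge0}\|T(t)\|_{\mathcal L(X_0)}$ and $M_1=\sup_{w\in\Sigma_{\delta'}^{+}}\|T(w)\|_{\mathcal L(X_1)}$, both finite by the boundedness on $X_0$ and the bounded analyticity of angle $\delta$ on $X_1$; the latter also gives that $w\mapsto T(w)u$ is analytic on the open sector $\Sigma_\delta^{+}$ into $X_1$ for every $u\in X_1$. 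I would take $U=X_0\cap X_1$, which is dense in $X_0\cap X_1$ and, by the density results for complex interpolation spaces, dense in $X_\theta=[X_0,X_1]_\theta$.

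Fix $\rho>0$ and $\alpha\in(-\delta',\delta')$, let $\Omega=\{z\in\mathbb{C}:0<\Ree z<1\}$, and consider the family $\left(T(\rho e^{i\alpha z})|_U\right)_{z\in\overline\Omega}$ of linear maps $U\to X_0+X_1$. For $u\in U$ and $z=a+is\in\overline\Omega$ one has $\rho e^{i\alpha z}=\rho e^{-\alpha s}e^{i\alpha a}$ with $|\alpha a|\le|\alpha|<\delta'$, so $\rho e^{i\alpha z}\in\Sigma_{\delta'}^{+}$; hence $\|T(\rho e^{i\alpha z})u\|_{X_1}\le M_1\|u\|_{X_1}$, and since $w\mapsto T(w)u$ is analytic on $\Sigma_\delta^{+}\supset\Sigma_{\delta'}^{+}$, the map $z\mapsto T(\rho e^{i\alpha z})u$ is analytic on $\Omega$ and continuous and bounded on $\overline\Omega$ into $X_1\hookrightarrow X_0+X_1$. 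On the line $\Ree z=1$, $T(\rho e^{-\alpha s}e^{i\alpha})u$ is continuous into $X_1$ and bounded there by $M_1\|u\|_{X_1}$. On the line $\Ree z=0$, $T(\rho e^{-\alpha s})u$ has argument $0$, i.e.\ $\rho e^{-\alpha s}\in(0,\infty)$, and here degeneracy of $(T(t))_{t\ge0}$ on $X_0$ is exactly what makes $s\mapsto T(\rho e^{-\alpha s})u$ continuous into $X_0$, with norm $\le M_0\|u\|_{X_0}$. Abstract Stein interpolation (with $Y_0=X_0$, $Y_1=X_1$, $U$ as above) then yields, upon evaluation at $z=\theta$,
\[
\|T(\rho e^{i\alpha\theta})u\|_{X_\theta}\le M_0^{1-\theta}M_1^{\theta}\|u\|_{X_\theta}\qquad\forall u\in U.
\]
As $\rho>0$ and $\alpha\in(-\delta',\delta')$ vary, $\rho e^{i\alpha\theta}$ ranges over $\Sigma_{\theta\delta'}^{+}$, and since $U$ is dense in $X_\theta$, each $T(w)$ with $w\in\Sigma_{\theta\delta'}^{+}$ extends uniquely to $\mathcal L(X_\theta)$ with norm $\le M_0^{1-\theta}M_1^{\theta}$ (the extension agreeing with the given morphism $T(t)$ when $w=t>0$). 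Since $\delta'\in(0,\delta)$ was arbitrary, $(T(w))_{w\in\Sigma_{\theta\delta}^{+}}$ is bounded on every proper subsector of $\Sigma_{\theta\delta}^{+}$.

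It then remains to promote boundedness to analyticity and strong continuity on $X_\theta$. Fixing $\delta'<\delta$ and $u\in U$: $w\mapsto T(w)u$ is analytic $\Sigma_{\theta\delta'}^{+}\to X_1\hookrightarrow X_0+X_1$, hence analytic into $X_0+X_1$; it is locally bounded into $X_\theta$ by the estimate just proved; and since $X_\theta\hookrightarrow X_0+X_1$ is a continuous injection, \Corref{Inherit_of_anal} yields analyticity of $w\mapsto T(w)u$ into $X_\theta$. Then \Corref{Eval_on_dense_set}, applied to the locally bounded family $(T(w))_{w\in\Sigma_{\theta\delta'}^{+}}$ and the weakly dense subspace $U\le X_\theta$, shows $w\mapsto T(w)$ is analytic $\Sigma_{\theta\delta'}^{+}\to\mathcal L(X_\theta)$; these families agree on $(0,\infty)$ and so, by analytic continuation, patch into an analytic, locally bounded family on $\Sigma_{\theta\delta}^{+}$. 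For strong continuity, $\sup_{t\ge0}\|T(t)\|_{\mathcal L(X_\theta)}<\infty$, and for $u\in U$ the interpolation inequality $\|x\|_{X_\theta}\le\|x\|_{X_0}^{1-\theta}\|x\|_{X_1}^{\theta}$, combined with $T(t)u\xrightarrow{t\downarrow0}u$ in $X_1$ and $\sup_{t\ge0}\|T(t)u-u\|_{X_0}<\infty$, gives $T(t)u\to u$ in $X_\theta$; \Lemref{dense_conv} then extends this to all of $X_\theta$, and strong continuity at a general $t_0\ge0$ follows from the semigroup law and the uniform bound. Thus $(T(t))_{t\ge0}$ is a $C_0$ semigroup on $X_\theta$, boundedly analytic of angle $\theta\delta$.

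The step I expect to be the main obstacle is the verification of the Stein hypothesis along the line $\Ree z=0$: in the absence of a core there is no a priori strong continuity of the sector semigroup near the positive real axis, so $X_0$-continuity there must be squeezed out of the weaker degeneracy assumption — and that is the one place where degeneracy enters. A lesser subtlety is that $T(w)$ for complex $w$ is a priori only an operator on $X_1$, so its $X_\theta$-analyticity cannot even be formulated before the Stein estimate has placed $T(w)$ in $\mathcal L(X_\theta)$; this apparent circularity is broken by passing through the common superspace $X_0+X_1$ and \Corref{Inherit_of_anal}.
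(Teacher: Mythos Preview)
Your proof is correct and follows essentially the same approach as the paper: abstract Stein interpolation on the strip with $L(z)=T(\rho e^{i\alpha z})u$ for $u\in U=X_0\cap X_1$, invoking degeneracy on $X_0$ for continuity along $\Ree z=0$, then \Corref{Inherit_of_anal} and \Corref{Eval_on_dense_set} for analyticity, and the interpolation inequality $\|x\|_{X_\theta}\le\|x\|_{X_0}^{1-\theta}\|x\|_{X_1}^{\theta}$ plus dense convergence for $C_0$. The only cosmetic difference is that the paper establishes $C_0$ on $X_\theta$ before the Stein step (which is harmless either way), and verifies continuity at an arbitrary $t_0\ge0$ directly rather than reducing to $t_0=0$ via the semigroup law.
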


\begin{rem*}
The differences with the previous version are underlined. Again, via
rescaling we can replace bounded analyticity on $X_{1}$ and $X_{\theta}$
with analyticity, and boundedness on $X_{0}$ with local boundedness.
The conditions on $X_{0}$ and $X_{1}$ are still a bit more general
than those in \cite[Theorem 3.1.1]{Stephan_thesis}, which requires
immediate norm-continuity on $X_{0}$. In practice local boundedness
on $X_{0}$ can usually come from global analysis, while degeneracy
can come from Sobolev embedding and dense convergence (\Lemref{dense_conv}).
Immediate norm-continuity is harder to establish.

Note that \Thmref{coreless_extrapol} is not as general as \cite[Theorem 3.1.10]{Stephan_thesis}
(which removes the need for degeneracy and covers more than just complex
interpolation), though it is markedly easier to prove.
\end{rem*}
\begin{proof}
By interpolation, $\left(T(t)\right)_{t\geq0}$ is a bounded semigroup
on $X_{\theta}$.

Let $U=X_{0}\cap X_{1}$. Obviously $\left(T(t)\right)_{t\geq0}$
is a bounded semigroup on $U$.

Then observe that $\forall u\in U,\forall t,t_{0}\geq0:$
\[
\left\Vert \left(T(t)-T(t_{0})\right)u\right\Vert _{X_{\theta}}\le\left\Vert \left(T(t)-T(t_{0})\right)u\right\Vert _{X_{0}}^{1-\theta}\left\Vert \left(T(t)-T(t_{0})\right)u\right\Vert _{X_{1}}^{\theta}\lesssim\left\Vert \left(T(t)-T(t_{0})\right)u\right\Vert _{X_{1}}^{\theta}
\]
Since $\theta\neq0$, we have $T(t)u\xrightarrow[t\to t_{0}]{X_{\theta}}T(t_{0})u.$
As $\left(T(t)\right)_{t\geq0}$ is bounded on $X_{\theta}$ and $U$
is dense in $X_{\theta}$, we conclude $\left(T(t)\right)_{t\geq0}$
is $C_{0}$ on $X_{\theta}$ by dense convergence (\Lemref{dense_conv}).

Fix $\delta'\in(0,\delta)$. We use abstract Stein interpolation.
Define the strip $\Omega=\{0<\text{Re}<1\}$. Let $\alpha\in(-\delta',\delta')$,
$\rho>0,u\in U$ and 
\[
L(z)=T(\rho e^{i\alpha z})u\;\forall z\in\overline{\Omega}
\]

Note that $U=X_{0}\cap X_{1}$. We check the other conditions for
interpolation:
\begin{itemize}
\item As $U\hookrightarrow X_{0}$ and $U\hookrightarrow X_{1}$ are continuous,
$(\overline{\Omega}\to X_{0}+X_{1},z\mapsto L(z)u)$ is continuous,
bounded on $\overline{\Omega}$ and analytic on $\Omega$ (as $L(z)u\in X_{1}\hookrightarrow X_{0}+X_{1}$).
\item For $j=0,1$ $\left(\mathbb{R}\to X_{j},s\mapsto L(j+is)u\right)$
is
\begin{itemize}
\item continuous since $\left(T(t)\right)_{t\geq0}$ is degenerate on $X_{0}$
and $\left(T(te^{i\alpha})\right)_{t\geq0}$ is $C_{0}$ on $X_{1}$.
\item bounded by $C_{j,T}\left\Vert u\right\Vert _{X_{j}}$ for some $C_{j,T}>0$
since $\left(T(t)\right)_{t\geq0}$ is bounded on $X_{0}$ and $\left(T(te^{i\alpha})\right)_{t\geq0}$
is bounded on $X_{1}$.
\end{itemize}
\end{itemize}
By Stein interpolation, $\{T(\rho e^{i\theta\alpha}):\rho>0,\alpha\in(-\delta',\delta')\}=T(\Sigma_{\theta\delta'}^{+})\subset\mathcal{L}(X_{\theta})$
is bounded.

Finally, we just need to show $\left(T(z)\right)_{z\in\Sigma_{\theta\delta}^{+}\cup\{0\}}$
is analytic on $X_{\theta}$. Let $u\in U$. Then 
\[
\left(\Sigma_{\delta}^{+}\to X_{1}\hookrightarrow X_{0}+X_{1},z\mapsto T(z)u\right)
\]
 is analytic. Therefore $\left(\Sigma_{\theta\delta}^{+}\to X_{\theta}\hookrightarrow X_{0}+X_{1},z\mapsto T(z)u\right)$
is analytic, while $\left(\Sigma_{\theta\delta}^{+}\to X_{\theta},z\mapsto T(z)u\right)$
is locally bounded, so we can use \Corref{Inherit_of_anal} to conclude
$\left(\Sigma_{\theta\delta}^{+}\to X_{\theta},z\mapsto T(z)u\right)$
is analytic. As $U$ is dense in $X_{\theta}$, by \corref{Eval_on_dense_set},
we conclude $\left(\Sigma_{\theta\delta}^{+}\to\mathcal{L}\left(X_{\theta}\right),z\mapsto T(z)\right)$
is analytic.
\end{proof}

\subsection{Sectorial operators}

Recall that if $\left(T(t)\right)_{t\geq0}$ is a $C_{0}$ semigroup
on a complex Banach space $X$, then it has a closed, densely defined
generator $A$, and $T(t)=e^{tA}$ is exponentially bounded: $\left\Vert e^{tA}\right\Vert \lesssim_{\neg t}e^{Ct}$
for some $C>0.$ Then $\forall\zeta\in\{\text{Re}>C\}:\zeta\in\rho(A)$
and

\[
\frac{1}{\zeta-A}x=\int_{0}^{\infty}e^{-\zeta t}e^{tA}x\;\text{d}t\;\forall x\in X
\]

(cf. \parencite[Appendix A, Proposition 9.2]{Taylor_PDE1})

This means that the resolvent $\frac{1}{\zeta-A}$ is the Laplace
transform of the semigroup $e^{tA}$. This naturally leads to the
question when we can perform the inverse Laplace transform, to recover
the semigroup from the resolvent. This motivates the definition of
sectorial operators, which includes the Laplacian.

Unfortunately, there are wildly different definitions currently in
use by authors. The reader should study the definitions closely whenever
they consult any literature on sectorial operators (e.g. \parencite{lunardi1995analytic,Haase2006_thesis,Arendt_Laplace,engel2000one-parameter}).
\begin{defn}
Let $A$ be an unbounded operator on a complex Banach space $X$.
For $\theta\in[0,\pi),$ we say $A$ is
\begin{itemize}
\item \textbf{sectorial} of angle $\theta$ ($A\in\text{Sect}(\theta)$)
when $\left\{ %
\begin{tabular}{l}
$\sigma(A)\subset\overline{\Sigma_{\theta}^{-}}$\tabularnewline
$\forall\omega\in[0,\pi-\theta):M(A,\omega):=\sup\limits _{\lambda\in\Sigma_{\omega}^{+}}\left\Vert \frac{\lambda}{\lambda-A}\right\Vert <\infty$\tabularnewline
\end{tabular}\right.$
\item \textbf{quasi-sectorial} when $\exists a\in\mathbb{R}:A-a$ is sectorial.
\item \textbf{acutely sectorial} when $A\in\text{Sect}(\theta)$ for some
$\theta\in[0,\frac{\pi}{2})$
\item \textbf{acutely quasi-sectorial} when $\exists a\in\mathbb{R}:A-a$
is acutely sectorial.
\end{itemize}
For $r>0,\eta\in(\frac{\pi}{2},\pi)$, we define the (counterclockwise-oriented)
\textbf{Mellin curve} 
\[
\gamma_{r,\eta}=e^{i\eta}[r,\infty)\cup e^{-i\eta}[r,\infty)\cup re^{i[-\eta,\eta]}
\]
\end{defn}

\begin{rem*}
Depending on the author, ``sectorial'' can mean any of those four,
and that is not taking sign conventions into account (some authors
want $-\Delta$ to be sectorial), as well as whether $A$ should be
densely defined. The term ``quasi-sectorial'' is taken from \parencite{Haase2006_thesis}.

In particular, letting the spectrum be in the left half-plane means
we agree with \parencite{engel2000one-parameter,lunardi1995analytic}
and disagree with \parencite{Arendt_Laplace,Alan_Hinf_calculus_86,Haase2006_thesis}.
This is simply a personal preference, of being able to say ``the
Laplacian is sectorial'', or ``generators of $C_{0}$ analytic semigroups
are acutely sectorial''. Also, for bounded holomorphic calculus,
$e^{t\Delta}$ morally comes from $\left(e^{tz}\right)_{z\in\sigma(\Delta)}$
which is bounded in the left half-plane.
\end{rem*}
In keeping with tradition, here is the usual visualization:

\begin{figure}[H]
\centering{}\includegraphics[width=0.5\textwidth]{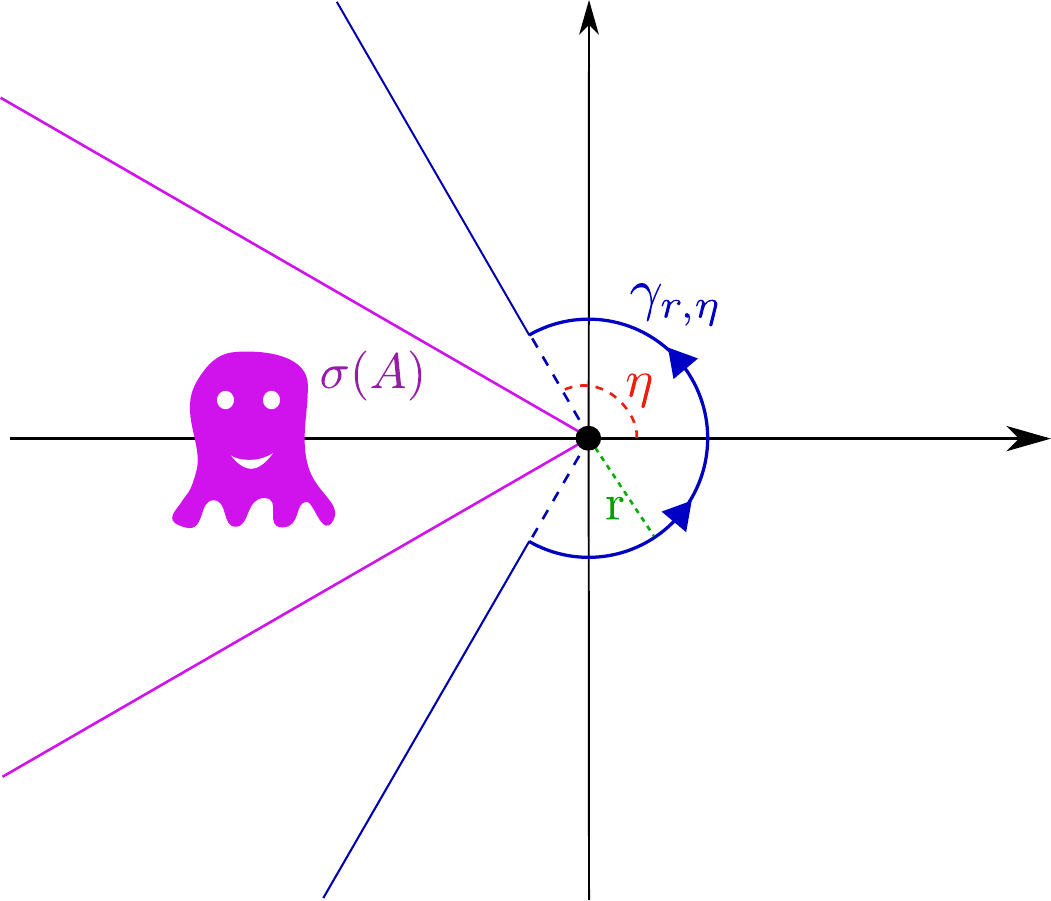}\caption{Acutely sectorial operators}
\end{figure}

\begin{blackbox}
\label{blackbox:analyticity}$A$ generates a $C_{0}$, boundedly
analytic semigroup on complex Banach space $X$ if and only if $A$
is densely defined and acutely sectorial.

When that happens, $\exists\delta\in\left(0,\frac{\pi}{2}\right)$
and $\eta\in(\frac{\pi}{2},\pi)$ such that $\left(e^{tA}\right)_{t\geq0}$
extends to $\left(e^{\zeta A}\right)_{\zeta\in\Sigma_{\delta}^{+}\cup\{0\}}$
and

\[
e^{\zeta A}=\frac{1}{2\pi i}\int_{\gamma_{r,\eta}}e^{\zeta z}\frac{1}{z-A}\;\text{d}z\;\;\;\forall\zeta\in\Sigma_{\delta}^{+},\forall r>0
\]

Also $\forall t>0,\forall k\in\mathbb{N}_{1}:e^{tA}(X)\leq D(A^{\infty})$,
$\left\Vert A^{k}e^{tA}\right\Vert \lesssim_{\neg t,\neg k}\frac{k^{k}}{t^{k}}$
and $\partial_{t}^{k}(e^{tA}x)=A^{k}e^{tA}x\;\forall x\in X$.
\end{blackbox}

\begin{rem*}
This is the aforementioned inverse Laplace transform. The Mellin curve
and the resolvent estimate in the definition of sectoriality ensure
sufficient decay for the integral to make sense. As it is a complex
line integral and the resolvent is analytic, the semigroup becomes
analytic.

A trivial consequence is that $D(A^{\infty})$ is dense in $X$ and
therefore a core.

When $A$ is densely defined and acutely quasi-sectorial, a simple
rescaling $e^{t(A-a)}=e^{-ta}e^{tA}$ implies $\left(e^{tA}\right)_{t\geq0}$
is a $C_{0}$, analytic semigroup.
\end{rem*}
\begin{proof}
See \parencite[Section II.4.a]{engel2000one-parameter}. The curious
figure $\frac{k^{k}}{t^{k}}$ comes from $A^{k}e^{tA}=\left(Ae^{\frac{t}{k}A}\right)^{k}$.
\end{proof}
\begin{thm}[Yosida's half-plane criterion]
 \label{thm:yosida_half_plane}$A$ is acutely quasi-sectorial if
and only if $\exists C>0$ such that
\begin{itemize}
\item $\{\Ree>C\}\subset\rho(A)$
\item $\sup\limits _{\lambda\in\{\Ree>C\}}\left\Vert \frac{\lambda}{\lambda-A}\right\Vert <\infty$
\end{itemize}
\end{thm}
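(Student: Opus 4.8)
The plan is to prove both implications. The ``only if'' direction is a short translation argument; the ``if'' direction is the standard passage from a half-plane resolvent bound to a sectorial one via the Neumann series for the resolvent.

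\emph{``Only if''.} Suppose $A-a\in\text{Sect}(\theta)$ with $\theta\in[0,\frac{\pi}{2})$. Since $\theta<\frac{\pi}{2}$ we may fix $\omega\in(\frac{\pi}{2},\pi-\theta)$, so that $M:=M(A-a,\omega)<\infty$ and $\{\Ree>0\}\subset\Sigma_{\omega}^{+}\subset\rho(A-a)$, i.e.\ $\{\Ree>a\}\subset\rho(A)$. Put $C=|a|+1>0$. For $\Ree\lambda>C$ one has $|\lambda-a|\geq\Ree\lambda-a>C-a\geq1$, hence $\|(\lambda-A)^{-1}\|=\|(\lambda-a)(\lambda-A)^{-1}\|/|\lambda-a|\leq M/|\lambda-a|$, and therefore $\|\lambda(\lambda-A)^{-1}\|\leq\|(\lambda-a)(\lambda-A)^{-1}\|+|a|\,\|(\lambda-A)^{-1}\|\leq M(1+|a|)$. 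This gives both bullet points with this $C$.

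\emph{``If''.} First normalize: given $\{\Ree>C\}\subset\rho(A)$ with $\sup_{\Ree\lambda>C}\|\lambda(\lambda-A)^{-1}\|=K$, set $B=A-(C+1)$. Then $\{\Ree>-1\}\subset\rho(B)$, and since $|\mu|/|\mu+C+1|\leq1+(C+1)/|\mu+C+1|\leq1+(C+1)/C$ for $\Ree\mu>-1$ (using $|\mu+C+1|\geq\Ree\mu+C+1>C$), we get $K':=\sup_{\Ree\mu>-1}\|\mu(\mu-B)^{-1}\|<\infty$; enlarging $K'$ we may assume $K'\geq1$. It suffices to show $B\in\text{Sect}(\theta)$ for some $\theta<\frac{\pi}{2}$, since then $A=B+(C+1)$ is acutely quasi-sectorial. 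Now invoke the Neumann series: if $\nu\in\rho(B)$ and $|z-\nu|<1/\|(\nu-B)^{-1}\|$ then $z\in\rho(B)$ with $\|(z-B)^{-1}\|\leq\|(\nu-B)^{-1}\|/(1-|z-\nu|\,\|(\nu-B)^{-1}\|)$. Applying this with $\nu=is$, $s\neq0$ (where $\|(is-B)^{-1}\|\leq K'/|s|$), we see $\rho(B)$ contains every disk $D(is,|s|/(2K'))$. Solving $|z-is|^{2}<(|s|/(2K'))^{2}$ for $s$ shows $\bigcup_{s>0}D(is,s/(2K'))=\{re^{i\phi}:r>0,\ |\phi-\tfrac{\pi}{2}|<\arcsin\tfrac{1}{2K'}\}$ and symmetrically for $s<0$; together with $\{\Ree>0\}\subset\rho(B)$ this gives $\Sigma_{\omega_{1}}^{+}\subset\rho(B)$ for $\omega_{1}=\tfrac{\pi}{2}+\arcsin\tfrac{1}{2K'}>\tfrac{\pi}{2}$.

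It remains to bound the resolvent on $\Sigma_{\omega_{1}}^{+}$. If $\Ree z>-1$ then $\|z(z-B)^{-1}\|\leq K'$ directly. If $\Ree z\leq-1$, then $\frac{\pi}{2}<|\arg z|<\omega_{1}$ places $z$ in one of the two cones above, so $z\in D(is,|s|/(2K'))$ for a suitable $s\neq0$; then $|z-is|\,\|(is-B)^{-1}\|\leq\tfrac12$ gives $\|(z-B)^{-1}\|\leq2K'/|s|$, while $|z|\leq|s|+|s|/(2K')\leq\tfrac32|s|$ gives $\|z(z-B)^{-1}\|\leq3K'$. Hence $M(B,\omega)\leq3K'<\infty$ for every $\omega<\omega_{1}$. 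Finally $\sigma(B)=\mathbb{C}\setminus\rho(B)\subset\mathbb{C}\setminus\Sigma_{\omega_{1}}^{+}\subset\overline{\Sigma_{\theta}^{-}}$ with $\theta:=\pi-\omega_{1}<\tfrac{\pi}{2}$, because $|\arg z|\geq\omega_{1}=\pi-\theta$ forces $|\arg(-z)|\leq\theta$. Thus $B\in\text{Sect}(\theta)$, completing the proof. The main obstacle is precisely this geometric Neumann-series step: one must shrink the disks (radius $|s|/(2K')$ rather than $|s|/K'$) so the resolvent estimate stays \emph{uniform} up to a fixed sector strictly larger than the right half-plane, and identify the exact half-angle $\arcsin\tfrac{1}{2K'}$ of the cone swept out by the disks centered on the imaginary axis; the two translations normalizing the half-plane and the inclusion $\mathbb{C}\setminus\Sigma_{\omega_{1}}^{+}\subset\overline{\Sigma_{\pi-\omega_{1}}^{-}}$ are routine bookkeeping.
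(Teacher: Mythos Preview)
Your proof is correct and follows the same approach as the paper: use the Neumann series (openness of the resolvent set) to enlarge the half-plane into a sector, then verify that the resolvent bound persists on the sector. The paper's proof is a two-sentence sketch that only treats the $\Leftarrow$ direction and leaves the resolvent estimate on the enlarged sector as ``by choosing $\eta$ near $\frac{\pi}{2}$, the resolvent estimate is retained''; you carry out the same geometry explicitly (half-angle $\arcsin\tfrac{1}{2K'}$, uniform bound $3K'$) and also supply the routine $\Rightarrow$ direction that the paper omits.
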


\begin{rem*}
This is how the $L^{p}$-analyticity of the heat flow is traditionally
established. Yet proving the resolvent estimate is nontrivial, as
it is quite a refinement of elliptic estimates, so we choose not to
do so. Interestingly, we will instead use this for the $B_{3,1}^{\frac{1}{3}}$-analyticity
of the heat flow in \Subsecref{Interpolation-and-B-analyticity},
though that case is especially easy since we already have analyticity
at the two endpoints $L^{3}$ and $W^{1,3}$.
\end{rem*}
\begin{proof}
We only need to prove $\Leftarrow$. Recall the proof of how $\rho(A)$
is open: $\forall\lambda\in\rho(A),B\left(\lambda,\left\Vert \frac{1}{\lambda-A}\right\Vert ^{-1}\right)\subset\rho(A)$.
Applying this allows us to open up $\{\text{Re}>C\}$ and get $C+\Sigma_{\eta}^{+}\subset\rho(A)$
for some $\eta\in(\frac{\pi}{2},\pi)$. By choosing $\eta$ near $\frac{\pi}{2}$,
the resolvent estimate is retained.
\end{proof}
\begin{defn}
Let $A$ be an unbounded operator on a Hilbert space $X$. Then $A$
is called
\begin{itemize}
\item \textbf{symmetric} when $\left\langle Ax,y\right\rangle =\left\langle x,Ay\right\rangle \;\forall x,y\in D(A)$,
or equivalently, $A\subset A^{*}$ (where $A$ and $A^{*}$ are identified
with their graphs).
\item \textbf{self-adjoint} when $A=A^{*}$. This implies $\sigma(A)\subset\mathbb{R}$
(cf. \parencite[Appendix A, Proposition 8.5]{Taylor_PDE1}).
\item \textbf{dissipative }when $\text{Re}\left\langle Ax,x\right\rangle \leq0\;\forall x\in D(A)$.
\end{itemize}
When $A$ is dissipative, $\forall\lambda\in\{\Ree>0\},\forall x\in D(A):\Ree\left\langle \left(\lambda-A\right)x,x\right\rangle \geq\Ree\left\langle \lambda x,x\right\rangle $
so $\left\Vert \left(\lambda-A\right)x\right\Vert \geq\Ree\lambda\left\Vert x\right\Vert $.

Recall how $\rho(A)$ is proved to be open: $\forall\lambda\in\rho(A),B\left(\lambda,\left\Vert \frac{1}{\lambda-A}\right\Vert ^{-1}\right)\subset\rho(A)$.
Consequently, if $A$ is dissipative and $\exists\lambda_{0}\in\{\text{Re}>0\}\cap\rho(A)$,
we can conclude $\{\text{Re}>0\}\subset\rho(A)$.
\end{defn}

\begin{thm}[Dissipative sectoriality]
\label{thm:Dissipative} Assume $X$ is a complex Hilbert space and
$A$ is an unbounded, self-adjoint, dissipative operator on X. Then
$A$ is acutely sectorial of angle $0$.
\end{thm}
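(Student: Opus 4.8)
Assume $X$ is a complex Hilbert space and $A$ is an unbounded, self-adjoint, dissipative operator on $X$. Then $A$ is acutely sectorial of angle $0$.

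The plan is to show that $\sigma(A) \subset (-\infty, 0]$ and then to establish the resolvent bound $\sup_{\lambda \in \Sigma_\omega^+} \|\lambda(\lambda-A)^{-1}\| < \infty$ for every $\omega \in [0,\pi)$ directly, exploiting self-adjointness. First I would recall that a self-adjoint operator has $\sigma(A) \subset \mathbb{R}$, which is stated in the excerpt. Combined with dissipativity, for real $\lambda > 0$ we have $\|(\lambda - A)x\| \geq \lambda \|x\|$, and since $\lambda - A$ is also self-adjoint hence has real spectrum and closed range, it is bijective; so $(0,\infty) \subset \rho(A)$. Together with $\sigma(A) \subset \mathbb{R}$ this gives $\sigma(A) \subset (-\infty,0]$, i.e. $\sigma(A) \subset \overline{\Sigma_0^-} = (-\infty,0]$, which is the spectral condition for $\text{Sect}(0)$.

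Next I would prove the resolvent estimate. Fix $\omega \in [0,\pi)$ and $\lambda \in \Sigma_\omega^+$, so $\lambda = |\lambda| e^{i\varphi}$ with $|\varphi| < \omega$. Since $A$ is self-adjoint with $\sigma(A) \subset (-\infty,0]$, the spectral theorem gives, for $x \in D(A)$,
\[
\|(\lambda - A)x\|^2 = \int_{(-\infty,0]} |\lambda - t|^2 \, d\langle E_t x, x\rangle,
\]
where $E$ is the spectral measure of $A$. For $t \leq 0$ and $\lambda = |\lambda|e^{i\varphi}$, one has the elementary bound $|\lambda - t| \geq c_\omega |\lambda|$ where $c_\omega = \sin((\pi - \omega)/2) > 0$ (indeed $|\lambda - t|$ is minimized, over $t \leq 0$, by the distance from $\lambda$ to the negative real axis, which is $|\lambda| \sin(\pi - |\varphi|) \geq |\lambda| \sin(\pi - \omega)$ when $\varphi$ is obtuse relative to the axis, and is simply $\geq |\lambda|$ when $\text{Re}\,\lambda \geq 0$; a clean uniform lower bound is $|\lambda - t| \geq |\lambda| \cos(\omega/2)$ or similar). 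Hence $\|(\lambda - A)x\| \geq c_\omega |\lambda| \|x\|$, which shows $\lambda - A$ is injective with closed range; self-adjointness of $A$ (so $\lambda - A$ has dense range whenever it is injective, since $(\lambda - A)^* = \bar\lambda - A$ is also injective by the same estimate) then gives $\lambda \in \rho(A)$ and $\|\lambda(\lambda - A)^{-1}\| \leq 1/c_\omega = M(A,\omega) < \infty$. This is exactly the second clause of $A \in \text{Sect}(0)$.

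The main obstacle — really the only nontrivial point — is pinning down the correct uniform constant $c_\omega$ in the geometric estimate $\inf_{t \leq 0}|\lambda - t| \geq c_\omega|\lambda|$ valid over all $\lambda \in \Sigma_\omega^+$, and making sure the argument that $\lambda - A$ is surjective (not merely bounded below) is clean; for this I would invoke that a densely defined operator with a bounded-below adjoint has dense range, and that $\bar\lambda - A$ satisfies the same lower bound by symmetry of the estimate under complex conjugation. Everything else is bookkeeping with the spectral theorem. I would conclude by noting that since the estimate holds for every $\omega \in [0,\pi)$, and in particular $\text{Sect}(0)$ requires $\omega$ ranging over $[0,\pi - 0) = [0,\pi)$, we have $A \in \text{Sect}(0)$, i.e. $A$ is acutely sectorial of angle $0$.
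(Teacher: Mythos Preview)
Your proof is correct and takes a genuinely different route from the paper. You invoke the spectral theorem to write $\|(\lambda-A)x\|^2=\int_{(-\infty,0]}|\lambda-t|^2\,d\langle E_t x,x\rangle$ and reduce the resolvent bound to the elementary geometric inequality $\inf_{t\leq 0}|\lambda-t|\geq c_\omega|\lambda|$ for $\lambda\in\Sigma_\omega^+$. The paper instead avoids the spectral theorem entirely: it picks $u=(z-A)^{-1}x$, computes $\frac{1}{z}\langle u,x\rangle=\|u\|^2-\frac{1}{z}\langle Au,u\rangle$, and observes that since $-\langle Au,u\rangle\geq 0$ (self-adjointness plus dissipativity), the quantity $1-\frac{1}{z}\langle Au,u\rangle$ lies in $1+\overline{\Sigma_\theta^+}$, whose distance to $0$ is a positive constant depending only on $\theta$. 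Your approach is perhaps more transparent about the spectral origin of the constant (it is literally $|\lambda|/\operatorname{dist}(\lambda,\sigma(A))$), while the paper's argument is more elementary, using only the inner-product structure; the paper itself remarks that both Euclidean geometry and the spectral theorem furnish alternative proofs. One small point: your candidate constants $c_\omega$ are stated somewhat tentatively; the clean answer is $c_\omega=1$ for $\omega\leq\pi/2$ and $c_\omega=\sin\omega$ for $\omega\in(\pi/2,\pi)$, since for $|\varphi|>\pi/2$ the nearest point of $(-\infty,0]$ to $\lambda=|\lambda|e^{i\varphi}$ is its real part, at distance $|\lambda|\sin|\varphi|>|\lambda|\sin\omega$.
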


\begin{rem*}
Though standard, this might be the most elegant theorem in the theory,
and later on will instantly imply the $L^{2}$-analyticity of the
heat flow in \Subsecref{L2-analyticity}. The theorem can also be
proved by Euclidean geometry. When $X$ is separable, we can also
use the spectral theorem for unbounded operators.
\end{rem*}
\begin{proof}
As $A$ is self-adjoint, $\mathbb{C}\backslash\mathbb{R}\subset\rho(A)$.
By dissipativity, we conclude $\sigma(A)\subset(-\infty,0].$ Also
by self-adjointness, $\text{Re}\left\langle Ax,x\right\rangle =\left\langle Ax,x\right\rangle \leq0\;\forall x\in D(A)$.

Arbitrarily pick $\theta\in(\frac{\pi}{2},\pi)$. We want to show
$\left\Vert \frac{z}{z-A}\right\Vert \lesssim_{\theta}1\;\forall z\in\Sigma_{\theta}^{+}$.

Let $x\in X$ and $u=\frac{1}{z-A}x$. As $\left|\left\langle u,x\right\rangle \right|\leq\left\Vert u\right\Vert _{X}\left\Vert x\right\Vert _{X}$,
we want to show $\left\Vert u\right\Vert _{X}^{2}\lesssim_{\theta}\left|\frac{1}{z}\left\langle u,x\right\rangle \right|$.
Note that 
\[
\frac{1}{z}\left\langle u,x\right\rangle =\frac{1}{z}\left\langle u,(z-A)u\right\rangle =\left\langle u,u\right\rangle -\frac{1}{z}\left\langle Au,u\right\rangle 
\]

WLOG assume $\left\Vert u\right\Vert _{X}=1$. Then we want $1\lesssim_{\theta}\left|1-\frac{1}{z}\left\langle Au,u\right\rangle \right|$.
Note that $-\left\langle Au,u\right\rangle \geq0$ and $-\frac{1}{z}\left\langle Au,u\right\rangle \in\Sigma_{\theta}^{+}$.
Then we are done since 
\[
\left|1-\frac{1}{z}\left\langle Au,u\right\rangle \right|\geq\text{dist}(0,1+\Sigma_{\theta}^{+})>0.
\]

By Euclidean geometry, we can even calculate $\text{dist}(0,1+\Sigma_{\theta}^{+})$.
We will not need it though.
\end{proof}

\section{Scalar function spaces\label{sec:Scalar-function-spaces}}

Throughout this section, we work with complex-valued functions.

\subsection{On $\mathbb{R}^{n}$}
\begin{defn}
Here we recall the various (inhomogeneous) function spaces which are
particularly suitable for interpolation. They are defined as subspaces
of $\mathcal{S}'(\mathbb{R}^{n})$ with certain norms being finite:
\begin{enumerate}
\item \textbf{Lebesgue-Sobolev spaces:} for $m\in\mathbb{N}_{0},p\in[1,\infty]$:
$\left\Vert f\right\Vert _{W^{m,p}(\mathbb{R}^{n})}\sim\sum_{k=0}^{m}\left\Vert \nabla^{k}f\right\Vert _{p}$
where $\nabla^{k}f\in L^{p}$ are tensors defined by distributions.\nomenclature{$W^{m,p}$}{Sobolev spaces\nomrefpage}
It is customary to write $H^{m}$ for $W^{m,2}$.
\item \textbf{Bessel potential spaces}: for $s\in\mathbb{R},p\in[1,\infty]$:
$\left\Vert f\right\Vert _{H^{s,p}(\mathbb{R}^{n})}\sim\left\Vert \left\langle \nabla\right\rangle ^{s}f\right\Vert _{p}$
where $\left\langle \nabla\right\rangle ^{s}=\left(1-\Delta\right)^{\frac{s}{2}}$
is the Bessel potential.
\item \textbf{Besov spaces}: for $s\in\mathbb{R},p\in[1,\infty],q\in\left[1,\infty\right]$:
$\left\Vert f\right\Vert _{B_{p,q}^{s}(\mathbb{R}^{n})}\sim\left\Vert P_{\leq1}f\right\Vert _{p}+\left\Vert N^{s}\left\Vert P_{N}f\right\Vert _{p}\right\Vert _{l_{N>1}^{q}}$
where $P_{N}$ and $P_{\leq N}$ (for $N\in2^{\mathbb{Z}}$) are the
standard Littlewood-Paley projections (cf. \parencite[Appendix A]{tao2006nonlinear}).\nomenclature{$B_{p,q}^{s}$}{Besov spaces\nomrefpage}
\item \textbf{Triebel-Lizorkin spaces}: for $s\in\mathbb{R},p\in[1,\infty),q\in\left[1,\infty\right]$:
$\left\Vert f\right\Vert _{F_{p,q}^{s}(\mathbb{R}^{n})}\sim\left\Vert P_{\leq1}f\right\Vert _{p}+\left\Vert N^{s}\left\Vert P_{N}f\right\Vert _{l_{N>1}^{q}}\right\Vert _{p}$\nomenclature{$F_{p,q}^{s}$}{Triebel-Lizorkin spaces\nomrefpage}
\end{enumerate}
\end{defn}

\begin{rem*}
As there are multiple characterizations for the same spaces, we only
define up to equivalent norms. Of course, the topologies induced by
equivalent norms are the same.

In the literature, ``Fractional Sobolev spaces'' like $W^{s,p}$
could either refer to $B_{p,p}^{s}$ (\textbf{Sobolev--Slobodeckij
spaces}) or $H^{s,p}$. We shall avoid using the term at all. There
are also some delicate issues with $F_{\infty,q}^{s}$ which we do
not need to discuss here (cf. \parencite[Section 2.3.4]{triebel_function_I}).
\end{rem*}
\begin{blackbox}
Recall from harmonic analysis (cf.\parencite[Section 2.5.6, 2.3.3, 2.11.2]{triebel_function_I}
and \parencite[Part 1, Chapter 3.1]{rieusset2002_recent_developments}):
\begin{itemize}
\item $W^{m,p}(\mathbb{R}^{n})=H^{m,p}(\mathbb{R}^{n})$ for $m\in\mathbb{N}_{0},p\in(1,\infty).$
\item $F_{p,2}^{s}(\mathbb{R}^{n})=H^{s,p}(\mathbb{R}^{n})$ for $s\in\mathbb{R},p\in(1,\infty)$.
\item $B_{p,1}^{m}(\mathbb{R}^{n})\hookrightarrow W^{m,p}(\mathbb{R}^{n})\hookrightarrow B_{p,\infty}^{m}(\mathbb{R}^{n})$
for $m\in\mathbb{N}_{0},p\in[1,\infty].$
\item $\mathcal{S}(\mathbb{R}^{n})$ is dense in $W^{m,p}(\mathbb{R}^{n})$,
$B_{p,q}^{s}(\mathbb{R}^{n})$ and $F_{p,q}^{s}(\mathbb{R}^{n})$
for $m\in\mathbb{N}_{0},\;s\in\mathbb{R},p\in[1,\infty),q\in[1,\infty)$.
\item $B_{p,\min(p,q)}^{s}(\mathbb{R}^{n})\hookrightarrow F_{p,q}^{s}(\mathbb{R}^{n})\hookrightarrow B_{p,\max(p,q)}^{s}(\mathbb{R}^{n})$
for $s\in\mathbb{R},p\in[1,\infty),q\in[1,\infty]$.
\item $\left(B_{p,q}^{s}\left(\mathbb{R}^{n}\right)\right)^{*}=B_{p',q'}^{-s}\left(\mathbb{R}^{n}\right)$
for $s\in\mathbb{R},p\in[1,\infty),q\in[1,\infty)$.\\
$\left(F_{p,q}^{s}\left(\mathbb{R}^{n}\right)\right)^{*}=F_{p',q'}^{-s}\left(\mathbb{R}^{n}\right)$
for $s\in\mathbb{R},p\in(1,\infty),q\in(1,\infty)$.
\end{itemize}
\end{blackbox}

\subsection{On domains\label{subsec:On-domains}}
\begin{defn}
\label{def:domain}A \textbf{$C^{\infty}$ domain} $\Omega$ in $\mathbb{R}^{n}$
is defined as an open subset of $\mathbb{R}^{n}$ with smooth boundary,
and scalar function spaces are then defined on $\Omega$. If $\Omega\subset S\subset\overline{\Omega}$,
let function spaces on $S$ implicitly refer to function spaces on
$\Omega$. This will make it possible to discuss function spaces on,
for example, $\overline{\mathbb{R}_{+}^{n}}\cap B_{\mathbb{R}^{n}}(0,1)$,
or compact Riemannian manifolds with boundary.
\end{defn}

Obviously, Sobolev spaces are still defined on domains by distributions.
The big question is finding a good characterization for $B_{p,q}^{s}$
and $F_{p,q}^{s}$ on domains, when the Fourier transform is no longer
available. This is among the main topics of Triebel's seminal books.
Let us review the results:
\begin{defn}
Let $\Omega$ be either $\mathbb{R}^{n},$ or the half-space $\mathbb{R}_{+}^{n}$,
or a bounded $C^{\infty}$ domain in $\mathbb{R}^{n}$.

Then $B_{p,q}^{s}(\Omega)$ and $F_{p,q}^{s}(\Omega)$ can simply
be defined as the restrictions of $B_{p,q}^{s}(\mathbb{R}^{n})$ and
$F_{p,q}^{s}(\mathbb{R}^{n})$ to $\Omega$ and
\[
\left\Vert f\right\Vert _{B_{p,q}^{s}(\Omega)}=\inf\{\left\Vert F\right\Vert _{B_{p,q}^{s}(\mathbb{R}^{n})}:F\in B_{p,q}^{s}(\mathbb{R}^{n}),\left.F\right|_{\Omega}=f\}\;\text{ for }s\in\mathbb{R};p,q\in[1,\infty]
\]
\[
\left\Vert f\right\Vert _{F_{p,q}^{s}(\Omega)}=\inf\{\left\Vert F\right\Vert _{F_{p,q}^{s}(\mathbb{R}^{n})}:F\in F_{p,q}^{s}(\mathbb{R}^{n}),\left.F\right|_{\Omega}=f\}\;\text{ for }s\in\mathbb{R},p\in[1,\infty),q\in[1,\infty]
\]
A more useful characterization is via BMD (\textbf{ball mean difference}).
Let $\tau_{h}f(x)=f(x+h)$ be the translation operator and $\Delta_{h}f=\tau_{h}f-f$
be the difference operator. Then for $m\in\mathbb{N}_{1}$, we can
define $\Delta_{h}^{m}=\left(\Delta_{h}\right)^{m}$ as the $m$-th
difference operator. As we need to stay on the domain $\Omega$, define
\[
V^{m}(x,t)=\frac{1}{m}\left(B(x,mt)\cap\Omega-x\right)\;\text{ for }x\in\Omega,t>0,m\in\mathbb{N}_{1}
\]
So $V^{m}(x,t)\subset B(0,t)$, $x+mV^{m}(x,t)\subset\Omega$ and
$\Delta_{h}^{l}f(x)$ is well-defined when $h\in V^{m}(x,t)$. Also
note for $t\in\left(0,1\right)$: $|V^{m}(x,t)|\sim_{\Omega,m}t^{n}$.
Then by \parencite[Section 3.5.3, 5.2.2]{triebel_function_II}:
\begin{enumerate}
\item For $m\in\mathbb{N}_{1},p\in[1,\infty],q\in[1,\infty],s\in(0,m),r\in[1,p]:$
\begin{equation}
\left\Vert f\right\Vert _{B_{p,q}^{s}(\Omega)}\sim\left\Vert f\right\Vert _{p}+\left\Vert t^{-s}\left\Vert \left\Vert \Delta_{h}^{m}f(x)\right\Vert _{L_{h}^{r}\left(\frac{1}{t^{n}}\mathrm{d}h,V^{m}(x,t)\right)}\right\Vert _{L_{x}^{p}(\Omega)}\right\Vert _{L^{q}\left(\frac{1}{t}\mathrm{d}t,(0,1)\right)}\label{eq:BMD}
\end{equation}
We carefully note here that $m>s$ (the difference operator must be
strictly higher-order than the regularity), and that the variable
$t$ is small, which will play a big role in \Thmref{product_estimate}.
We also note that this is different from the classical characterization
via differences (\parencite[Section 3.4.2]{triebel_function_I}, \parencite[Section 1.10.3]{triebel_function_II})
which analysts might be more familiar with:
\[
\left\Vert f\right\Vert _{B_{p,q}^{s}(\Omega)}\sim\left\Vert f\right\Vert _{p}+\left\Vert \left|h\right|^{-s}\left\Vert \Delta_{h,\Omega}^{m}f(x)\right\Vert _{L_{x}^{p}(\Omega)}\right\Vert _{L^{q}\left(\frac{\text{d}h}{|h|^{n}},B(0,1)\right)}
\]
where $\Delta_{h,\Omega}^{m}f(x)$ is the same as $\Delta_{h}^{m}f(x)$,
but zero wherever undefined, and $m\in\mathbb{N}_{1},p\in(1,\infty),q\in[1,\infty],s\in(0,m)$.
\item For $m\in\mathbb{N}_{1},p\in[1,\infty),q\in[1,\infty],s\in(0,m),r\in[1,p]:$
\[
\left\Vert f\right\Vert _{F_{p,q}^{s}(\Omega)}\sim\left\Vert f\right\Vert _{p}+\left\Vert t^{-s}\left\Vert \left\Vert \Delta_{h}^{m}f(x)\right\Vert _{L_{h}^{r}\left(\frac{\text{d}h}{t^{n}},V^{m}(x,t)\right)}\right\Vert _{L^{q}\left(\frac{\text{d}t}{t},(0,1)\right)}\right\Vert _{L_{x}^{p}(\Omega)}
\]
\end{enumerate}
\end{defn}

\begin{blackbox}[Diffeomorphisms and smooth multipliers]

\label{blackbox:diffeomorphism-multiplier} Every diffeomorphism on
$\mathbb{R}^{n}$ preserves (under pullback) the topology of
\begin{itemize}
\item $W^{k,p}(\mathbb{R}^{n})$ for $k\in\mathbb{N}_{0},p\in[1,\infty]$
\item $B_{p,q}^{s}(\mathbb{R}^{n})$ for $s\in\mathbb{R},p\in[1,\infty],q\in[1,\infty]$
\item $F_{p,q}^{s}(\mathbb{R}^{n})$ for $s\in\mathbb{R},p\in[1,\infty),q\in[1,\infty]$
\end{itemize}
Also on the same spaces, for $\phi\in C_{c}^{\infty}(\mathbb{R}^{n}),$
$f\mapsto\phi f$ is a bounded linear map .
\end{blackbox}

\begin{rem*}
This allows us to trivially define function spaces on compact Riemannian
manifolds with boundary via partitions of unity and give them unique
topologies.
\end{rem*}
\begin{proof}
For $W^{k,p}$ it is trivial. For $B_{p,q}^{s}$ and $F_{p,q}^{s}$,
see \parencite[Section 4.3, 4.2.2]{triebel_function_II} and \parencite[Section 2.8.2]{triebel_function_I}.
\end{proof}
\begin{blackbox}[Extension and trace]
\label{blackbox:trace_ext}

Let $\Omega$ be either the half-space $\mathbb{R}_{+}^{n}$ or a
bounded $C^{\infty}$ domain in $\mathbb{R}^{n}$.
\begin{enumerate}
\item \textbf{Stein extension}: There exists a common (continuous linear)
extension operator $\mathfrak{E}:W^{k,p}(\Omega)\hookrightarrow W^{k,p}(\mathbb{R}^{n})$
for all $k\in\mathbb{N}_{0},p\in[1,\infty]$
\item \textbf{Triebel extension}: For any $N\in\mathbb{N}_{1},$ there exists
a common (continuous linear) extension operator $\mathfrak{E}^{N}$
such that
\begin{enumerate}
\item $\mathfrak{E}^{N}:B_{p,q}^{s}(\Omega)\hookrightarrow B_{p,q}^{s}(\mathbb{R}^{n})$
for all $|s|<N,p\in[1,\infty],q\in[1,\infty]$
\item $\mathfrak{E}^{N}:F_{p,q}^{s}(\Omega)\hookrightarrow F_{p,q}^{s}(\mathbb{R}^{n})$
for all $|s|<N,p\in[1,\infty),q\in[1,\infty]$
\end{enumerate}
\item \textbf{Trace theorems}: Let $n\geq2$.
\begin{enumerate}
\item For $p\in[1,\infty],q\in[1,\infty],s>\frac{1}{p}:$ $B_{p,q}^{s}(\Omega)\twoheadrightarrow B_{p,q}^{s-\frac{1}{p}}(\partial\Omega)$
is a \textbf{retraction} (continuous surjection with a bounded linear
section as a right inverse).
\item For $p\in[1,\infty),q\in[1,\infty],s>\frac{1}{p}:$ $F_{p,q}^{s}(\Omega)\twoheadrightarrow B_{p,p}^{s-\frac{1}{p}}(\partial\Omega)$
is a retraction.
\item (Limiting case) For $p\in[1,\infty),$ $B_{p,1}^{\frac{1}{p}}(\Omega)\twoheadrightarrow L^{p}(\partial\Omega)$
and $W^{1,1}(\Omega)\twoheadrightarrow L^{1}(\partial\Omega)$ are
continuous surjections.
\end{enumerate}
\end{enumerate}
\end{blackbox}

\begin{rem*}
It is important to note that we do not have the trace theorem for,
say, $B_{3,2}^{\frac{1}{3}}(\Omega)$ (cf. \parencite[Section 3]{Schneider2011_trace})
\end{rem*}
\begin{proof}
$\;$
\begin{enumerate}
\item See \parencite[Section VI.3]{eliasstein1971_singular_integrals}.
\item See \parencite[Section 4.5, 5.1.3]{triebel_function_II}.
\item See \parencite[Section 2.7.2, 3.3.3]{triebel_function_I} and the
remarks.
\end{enumerate}
\end{proof}
\begin{cor}
\label{cor:sobolev_and_density}Let $\Omega$ be either the half-space
$\mathbb{R}_{+}^{n}$ or a bounded $C^{\infty}$ domain in $\mathbb{R}^{n}$.
\begin{itemize}
\item $F_{p,2}^{m}(\Omega)=W^{m,p}(\Omega)$ for $m\in\mathbb{N}_{0},p\in(1,\infty)$.
\item $B_{p,1}^{m}(\Omega)\hookrightarrow W^{m,p}(\Omega)\hookrightarrow B_{p,\infty}^{m}(\Omega)$
for $m\in\mathbb{N}_{0},p\in[1,\infty].$
\item $\mathcal{S}(\overline{\Omega})$ is dense in $W^{m,p}(\Omega)$,
$F_{p,q}^{s}(\Omega)$ and $B_{p,q}^{s}(\Omega)$ for $m\in\mathbb{N}_{0},\;s\in\mathbb{R},p\in[1,\infty),q\in[1,\infty)$.
\item $B_{p,\min(p,q)}^{s}(\Omega)\hookrightarrow F_{p,q}^{s}(\Omega)\hookrightarrow B_{p,\max(p,q)}^{s}(\Omega)$
for $s\in\mathbb{R},p\in[1,\infty),q\in[1,\infty]$
\end{itemize}
\end{cor}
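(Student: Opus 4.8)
The plan is to transfer every assertion from $\mathbb{R}^{n}$ to $\Omega$ by sandwiching between a suitable extension operator and the restriction operator, so that all the analytic content stays inside the two blackboxes already quoted: the list of harmonic-analysis facts on $\mathbb{R}^{n}$ and the Stein/Triebel extension-and-trace theorems. Two elementary observations do all the work. First, for each of the scales in play (a $W^{m,p}$, an $F^{s}_{p,q}$, or a $B^{s}_{p,q}$), the restriction map $R\colon X(\mathbb{R}^{n})\to X(\Omega)$, $F\mapsto F|_{\Omega}$, is linear with $\|RF\|_{X(\Omega)}\le\|F\|_{X(\mathbb{R}^{n})}$ — immediate from the infimum defining the norm on $\Omega$ (and even more direct for $W^{m,p}$). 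Second, Stein's $\mathfrak{E}$ and Triebel's $\mathfrak{E}^{N}$ are genuine extensions, i.e. $R\mathfrak{E}=\mathrm{id}$ and $R\mathfrak{E}^{N}=\mathrm{id}$ on the relevant space over $\Omega$. Hence any continuous inclusion $X(\mathbb{R}^{n})\hookrightarrow Y(\mathbb{R}^{n})$ produces a continuous inclusion $X(\Omega)\hookrightarrow Y(\Omega)$ via $f\mapsto(\text{extend})\mapsto(\text{inclusion on }\mathbb{R}^{n})\mapsto R$, provided the chosen extension operator sends $X(\Omega)$ into $X(\mathbb{R}^{n})$.

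Carrying this out: the sandwich using Stein's $\mathfrak{E}$ (which handles every $W^{k,p}$ simultaneously) gives $W^{m,p}(\Omega)\hookrightarrow B^{m}_{p,\infty}(\Omega)$ and $W^{m,p}(\Omega)\hookrightarrow F^{m}_{p,2}(\Omega)$ from the $\mathbb{R}^{n}$ facts $W^{m,p}(\mathbb{R}^{n})\hookrightarrow B^{m}_{p,\infty}(\mathbb{R}^{n})$ and $W^{m,p}(\mathbb{R}^{n})=H^{m,p}(\mathbb{R}^{n})=F^{m}_{p,2}(\mathbb{R}^{n})$. The sandwich using Triebel's $\mathfrak{E}^{N}$, with $N$ chosen larger than every regularity in sight (so $|s|<N$, $m<N$), gives the reverse inclusions $B^{m}_{p,1}(\Omega)\hookrightarrow W^{m,p}(\Omega)$, $F^{m}_{p,2}(\Omega)\hookrightarrow W^{m,p}(\Omega)$, and the full Besov--Triebel--Lizorkin sandwich $B^{s}_{p,\min(p,q)}(\Omega)\hookrightarrow F^{s}_{p,q}(\Omega)\hookrightarrow B^{s}_{p,\max(p,q)}(\Omega)$ from the corresponding $\mathbb{R}^{n}$ statements; here one uses that the \emph{same} operator $\mathfrak{E}^{N}$ is a common extension for both the $B$- and the $F$-scale. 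Combining the two directions yields $F^{m}_{p,2}(\Omega)=W^{m,p}(\Omega)$ with equivalent norms for $p\in(1,\infty)$, and the chain $B^{m}_{p,1}(\Omega)\hookrightarrow W^{m,p}(\Omega)\hookrightarrow B^{m}_{p,\infty}(\Omega)$ for $p\in[1,\infty]$.

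For the density statement, fix $f$ in one of $W^{m,p}(\Omega)$, $F^{s}_{p,q}(\Omega)$, $B^{s}_{p,q}(\Omega)$ with the stated ranges ($p\in[1,\infty)$, $q\in[1,\infty)$ in the Besov/Triebel-Lizorkin cases). Extend it to $\tilde f$ in the corresponding space on $\mathbb{R}^{n}$, using $\mathfrak{E}$ for $W^{m,p}$ and $\mathfrak{E}^{N}$ (with $N>|s|$) otherwise; by the density of $\mathcal{S}(\mathbb{R}^{n})$ in that space on $\mathbb{R}^{n}$, pick $g_{j}\in\mathcal{S}(\mathbb{R}^{n})$ with $g_{j}\to\tilde f$. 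Since $R$ is continuous, $Rg_{j}\to R\tilde f=f$ in the $\Omega$-space, and $Rg_{j}\in\mathcal{S}(\overline{\Omega})$ by the very definition of $\mathcal{S}(\overline{\Omega})$ as restrictions of Schwartz functions. Hence $\mathcal{S}(\overline{\Omega})$ is dense.

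I do not expect any genuine obstacle here: the corollary is a pure transfer-of-structure argument. The only points needing care are bookkeeping ones — choosing $N$ in $\mathfrak{E}^{N}$ strictly above every regularity that occurs; observing that no single extension operator serves all scales, so that the equality $F^{m}_{p,2}(\Omega)=W^{m,p}(\Omega)$ genuinely requires using Stein's extension for one inclusion and Triebel's for the other; and recording that restriction is a norm-nonincreasing left inverse of each extension operator. With those verified, every bullet follows immediately from the blackboxes.
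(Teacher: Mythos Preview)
Your proposal is correct and is exactly the approach the paper takes: its proof is the single line ``Use Triebel and Stein extensions.'' You have simply spelled out the sandwich argument (extend, apply the $\mathbb{R}^{n}$ result, restrict) that this line is shorthand for, including the correct observation that the equality $F_{p,2}^{m}(\Omega)=W^{m,p}(\Omega)$ uses Stein's $\mathfrak{E}$ for one inclusion and Triebel's $\mathfrak{E}^{N}$ for the other.
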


\begin{rem*}
When $\Omega$ is a bounded $C^{\infty}$ domain, $\mathcal{S}(\overline{\Omega})=C^{\infty}(\overline{\Omega})$.
\end{rem*}
\begin{proof}
Use Triebel and Stein extensions.
\end{proof}

\subsection{Holder \& Zygmund spaces}
\begin{defn}
Let $\Omega$ be either $\mathbb{R}^{n}$, the half-space $\mathbb{R}_{+}^{n}$
or a bounded $C^{\infty}$ domain in $\mathbb{R}^{n}$. Recall some
$L^{\infty}$ type spaces:
\begin{itemize}
\item \textbf{Holder spaces}: for $k\in\mathbb{N}_{0},\alpha\in(0,1]$,
\[
\left\Vert f\right\Vert _{C^{k,\alpha}(\Omega)}=\left\Vert f\right\Vert _{C^{k}(\Omega)}+\max_{|\beta|=k}\left[D^{\beta}f\right]_{C^{0,\alpha}(\Omega)}
\]
where $\left[g\right]_{C^{0,\alpha}(\Omega)}=\sup_{x\neq y}\frac{\left|g(x)-g(y)\right|}{\left|x-y\right|^{\alpha}}$
\item \textbf{Zygmund spaces}: for $s>0,$ define $\mathfrak{C}^{s}(\Omega)=B_{\infty,\infty}^{s}(\Omega)$
. Then for $m\in\mathbb{N}_{1},s\in(0,m)$:
\[
\left\Vert f\right\Vert _{\mathfrak{C}^{s}(\Omega)}\sim\left\Vert f\right\Vert _{L^{\infty}(\Omega)}+\left\Vert t^{-s}\left\Vert \left\Vert \Delta_{h}^{m}f(x)\right\Vert _{L_{h}^{\infty}\left(V^{m}(x,t)\right)}\right\Vert _{L_{x}^{\infty}(\Omega)}\right\Vert _{L_{t}^{\infty}\left((0,1)\right)}\sim\sup|f|+\sup_{0<|h|\leq1,x\in\Omega}\frac{\left|\Delta_{h,\Omega}^{m}f(x)\right|}{|h|^{s}}
\]
\end{itemize}
\nomenclature{$\mathfrak{C}^{s}(\Omega)$}{Zygmund spaces\nomrefpage}It
is well-known (cf. \parencite[Section 2.2.2, 2.5.7, 2.5.12, 2.8.3]{triebel_function_I})
that
\begin{itemize}
\item $\left\Vert f\right\Vert _{\mathfrak{C}^{k+\alpha}(\Omega)}\sim\text{\ensuremath{\left\Vert f\right\Vert }}_{C^{k}}+\max_{|\beta|=k}\left\Vert D^{\beta}f\right\Vert _{\mathfrak{C}^{\alpha}(\Omega)}$
for $k\in\mathbb{N}_{0},\alpha\in(0,1]$
\item $\left\Vert f\right\Vert _{\mathfrak{C}^{k+\alpha}(\Omega)}\sim\text{\ensuremath{\left\Vert f\right\Vert }}_{C^{k,\alpha}}$
for $k\in\mathbb{N}_{0},\alpha\in(0,1)$.
\item $\left\Vert fg\right\Vert _{\mathfrak{C}^{s}(\Omega)}\lesssim\text{\ensuremath{\left\Vert f\right\Vert }}_{\mathfrak{C}^{s}}\text{\ensuremath{\left\Vert g\right\Vert }}_{\mathfrak{C}^{s}}$
for $s>0$.
\end{itemize}
Note that $C^{0,1},C^{1}$ and $\mathfrak{C}^{1}$ are different.
\end{defn}

\subsection{Interpolation \& embedding\label{subsec:Interpolation-=000026-embedding}}
\begin{blackbox}[Interpolation]
Let $\Omega$ be either $\mathbb{R}^{n}$, the half-space $\mathbb{R}_{+}^{n}$
or a bounded $C^{\infty}$ domain in $\mathbb{R}^{n}$. Throughout
the theorem, always assume $\theta\in(0,1),s_{\theta}=(1-\theta)s_{0}+\theta s_{1}$.
\begin{enumerate}
\item $\left(B_{p,q_{0}}^{s_{0}}(\Omega),B_{p,q_{1}}^{s_{1}}(\Omega)\right)_{\theta,q}=B_{p,q}^{s_{\theta}}(\Omega)$
for $s_{0}\neq s_{1},s_{j}\in\mathbb{R},p\in[1,\infty],q_{j},q\in[1,\infty]$.\\
\\
$\left(F_{p,q_{0}}^{s_{0}}(\Omega),F_{p,q_{1}}^{s_{1}}(\Omega)\right)_{\theta,q}=B_{p,q}^{s_{\theta}}(\Omega)$
for $s_{0}\neq s_{1},s_{j}\in\mathbb{R},p\in[1,\infty),q_{j},q\in[1,\infty]$.
\item $\left(B_{p_{0},q_{0}}^{s_{0}}(\Omega),B_{p_{1},q_{1}}^{s_{1}}(\Omega)\right)_{\theta,p_{\theta}}=B_{p_{\theta},p_{\theta}}^{s_{\theta}}(\Omega)$
for $s_{0}\neq s_{1},s_{j}\in\mathbb{R},p_{j}\in[1,\infty],q_{j}\in[1,\infty],\frac{1}{p_{\theta}}=\frac{1-\theta}{p_{0}}+\frac{\theta}{p_{1}}=\frac{1-\theta}{q_{0}}+\frac{\theta}{q_{1}}$
\item $\left[B_{p_{0},q_{0}}^{s_{0}}(\Omega),B_{p_{1},q_{1}}^{s_{1}}(\Omega)\right]_{\theta}=B_{p_{\theta},q_{\theta}}^{s_{\theta}}(\Omega)$
and $\left[F_{p_{0},q_{0}}^{s_{0}}(\Omega),F_{p_{1},q_{1}}^{s_{1}}(\Omega)\right]_{\theta}=F_{p_{\theta},q_{\theta}}^{s_{\theta}}(\Omega)$
\\
\\
for $s_{j}\in\mathbb{R},p_{j}\in(1,\infty),q_{j}\in(1,\infty),\frac{1}{p_{\theta}}=\frac{1-\theta}{p_{0}}+\frac{\theta}{p_{1}},\frac{1}{q_{\theta}}=\frac{1-\theta}{q_{0}}+\frac{\theta}{q_{1}}$.
\item $\left[L^{p_{0}}(\Omega),L^{p_{1}}(\Omega)\right]_{\theta}=L^{p_{\theta}}(\Omega)$
for $p_{j}\in\left[1,\infty\right],\frac{1}{p_{\theta}}=\frac{1-\theta}{p_{0}}+\frac{\theta}{p_{1}}$.
\item $\left(W^{m_{0},p}(\Omega),W^{m_{1},p}(\Omega)\right)_{\theta,q}=B_{p,q}^{m_{\theta}}(\Omega)$
for $m_{j}\in\mathbb{N}_{0},m_{0}\neq m_{1},p\in[1,\infty],q\in[1,\infty]$,
$m_{\theta}=\left(1-\theta\right)m_{0}+\theta m_{1}$.
\end{enumerate}
\end{blackbox}

\begin{proof}
$\;$
\begin{enumerate}
\item Extension operators and \parencite[Section 2.4.2]{triebel_function_I}.
\item Extension operators and \parencite[Theorem 6.4.5]{Lofstrom}.
\item Extension operators and \parencite[Section 2.4.7]{triebel_function_I}.
\item Extension by zero and \parencite[Section 5.1.1]{Lofstrom}
\item Recall $B_{p,1}^{m}(\Omega)\hookrightarrow W^{m,p}(\Omega)\hookrightarrow B_{p,\infty}^{m}(\Omega)$
for $m\in\mathbb{N}_{0},p\in[1,\infty].$ Then apply 1.
\end{enumerate}
\end{proof}
\begin{blackbox}[Embedding]
 Let $\Omega$ be a bounded $C^{\infty}$ domain in $\mathbb{R}^{n}$.
Assume $\infty>s_{0}>s_{1}>-\infty.$ Then
\begin{enumerate}
\item $B_{p_{0},q_{0}}^{s_{0}}(\Omega)\hookrightarrow B_{p_{1},q_{1}}^{s_{1}}(\Omega)$
is compact for $p_{j}\in[1,\infty],q_{j}\in[1,\infty],\frac{1}{p_{1}}>\frac{1}{p_{0}}-\frac{s_{0}-s_{1}}{n}$\\
\\
$F_{p_{0},q_{0}}^{s_{0}}(\Omega)\hookrightarrow F_{p_{1},q_{1}}^{s_{1}}(\Omega)$
is compact for $p_{j}\in[1,\infty),q_{j}\in[1,\infty],\frac{1}{p_{1}}>\frac{1}{p_{0}}-\frac{s_{0}-s_{1}}{n}$
\item $B_{p_{0},q}^{s_{0}}(\Omega)\hookrightarrow B_{p_{1},q}^{s_{1}}(\Omega)$
is continuous for $p_{j}\in[1,\infty],q\in[1,\infty],\frac{1}{p_{1}}=\frac{1}{p_{0}}-\frac{s_{0}-s_{1}}{n}$\\
\\
$F_{p_{0},q_{0}}^{s_{0}}(\Omega)\hookrightarrow F_{p_{1},q_{1}}^{s_{1}}(\Omega)$
is continuous for $p_{j}\in[1,\infty),q_{j}\in[1,\infty],\frac{1}{p_{1}}=\frac{1}{p_{0}}-\frac{s_{0}-s_{1}}{n}$
\end{enumerate}
\end{blackbox}

\begin{proof}
$\;$
\begin{enumerate}
\item See \parencite[Section 4.3.2, Remark 1]{triebel_function_I} and \parencite[Section 3.3.1]{triebel_function_I}.
\item See \parencite[Section 3.3.1]{triebel_function_I}.
\end{enumerate}
\end{proof}
\begin{cor}
Let $\Omega$ be a bounded $C^{\infty}$ domain in $\mathbb{R}^{n}$.
Then
\begin{enumerate}
\item For $m_{j}\in\mathbb{N}_{0},m_{0}>m_{1},p_{j}\in[1,\infty],\frac{1}{p_{1}}>\frac{1}{p_{0}}-\frac{m_{0}-m_{1}}{n}$:\\
 $W^{m_{0},p_{0}}(\Omega)\hookrightarrow B_{p_{0},\infty}^{m_{0}}(\Omega)\hookrightarrow B_{p_{1},1}^{m_{1}}(\Omega)\hookrightarrow W^{m_{1},p_{1}}(\Omega)$
is compact.
\item For $m_{j}\in\mathbb{N}_{0},m_{0}>m_{1},p_{0}\in[1,\infty],\alpha\in\left(0,1\right),0>\frac{1}{p_{0}}-\frac{m_{0}-\left(m_{1}+\alpha\right)}{n}$:\\
 $W^{m_{0},p_{0}}(\Omega)\hookrightarrow B_{p_{0},\infty}^{m_{0}}(\Omega)\hookrightarrow B_{\infty,\infty}^{m_{1}+\alpha}(\Omega)=C^{m_{1},\alpha}(\overline{\Omega})$
is compact.
\item For $m\in\mathbb{N}_{1},p\in(1,\infty):W^{m,p}(\Omega)\hookrightarrow B_{p,\infty}^{m}(\Omega)\hookrightarrow B_{p,1}^{\frac{1}{p}}(\Omega)\twoheadrightarrow L^{p}(\partial\Omega)$
is compact.
\end{enumerate}
\end{cor}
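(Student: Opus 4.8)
The plan is to read each of the three displayed chains as a single \emph{compact} link --- supplied by the Embedding blackbox above --- sandwiched between \emph{bounded} links that have already been recorded in the excerpt, and then to invoke the elementary fact that pre- or post-composing a compact operator with a bounded operator again yields a compact operator. So the proof is essentially bookkeeping; the only content is to check, case by case, that the gain in smoothness is \emph{strictly} positive, so that the strict-inequality (compact) regime of the Embedding blackbox applies rather than its borderline continuous regime.

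For part (1), I would note that the outer arrows $W^{m_{0},p_{0}}(\Omega)\hookrightarrow B_{p_{0},\infty}^{m_{0}}(\Omega)$ and $B_{p_{1},1}^{m_{1}}(\Omega)\hookrightarrow W^{m_{1},p_{1}}(\Omega)$ are the continuous embeddings of \Corref{sobolev_and_density}, valid for all $p\in[1,\infty]$, while the middle arrow $B_{p_{0},\infty}^{m_{0}}(\Omega)\hookrightarrow B_{p_{1},1}^{m_{1}}(\Omega)$ is compact by part~(1) of the Embedding blackbox: the summability indices $\infty$ and $1$ lie in $[1,\infty]$, one has $\infty>m_{0}>m_{1}\geq0$, and $\tfrac{1}{p_{1}}>\tfrac{1}{p_{0}}-\tfrac{m_{0}-m_{1}}{n}$ is exactly the hypothesis. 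Composing the three gives the claim.

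For part (2), the first arrow is again the continuous embedding of \Corref{sobolev_and_density}; for $B_{p_{0},\infty}^{m_{0}}(\Omega)\hookrightarrow B_{\infty,\infty}^{m_{1}+\alpha}(\Omega)$ I would apply part~(1) of the Embedding blackbox with $s_{0}=m_{0}$, $s_{1}=m_{1}+\alpha$ and $p_{1}=\infty$: since $\alpha\in(0,1)$ and $m_{0}>m_{1}$ we have $m_{0}\geq m_{1}+1>m_{1}+\alpha$, hence $s_{0}>s_{1}$, and with $p_{1}=\infty$ the index condition reduces to precisely the hypothesis $0>\tfrac{1}{p_{0}}-\tfrac{m_{0}-(m_{1}+\alpha)}{n}$; finally $B_{\infty,\infty}^{m_{1}+\alpha}(\Omega)=\mathfrak{C}^{m_{1}+\alpha}(\Omega)=C^{m_{1},\alpha}(\overline{\Omega})$ with equivalent norms by the Zygmund--Holder identification $\left\Vert f\right\Vert _{\mathfrak{C}^{k+\alpha}}\sim\left\Vert f\right\Vert _{C^{k,\alpha}}$ ($\alpha\in(0,1)$) recalled above. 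For part (3), the first arrow is once more from \Corref{sobolev_and_density}; since $m\geq1$ and $p\in(1,\infty)$ one has $m>1/p$, so $B_{p,\infty}^{m}(\Omega)\hookrightarrow B_{p,1}^{1/p}(\Omega)$ is compact by part~(1) of the Embedding blackbox (with $p_{0}=p_{1}=p$ the condition $\tfrac{1}{p}>\tfrac{1}{p}-\tfrac{m-1/p}{n}$ is automatic), and the last arrow $B_{p,1}^{1/p}(\Omega)\twoheadrightarrow L^{p}(\partial\Omega)$ is the bounded surjection from the limiting case of the trace theorem in \Blackboxref{trace_ext}; composing yields a compact operator $W^{m,p}(\Omega)\to L^{p}(\partial\Omega)$.

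I do not anticipate any genuine obstacle: the whole argument is a direct combination of results already stated. The one point deserving care is the bookkeeping of Sobolev indices --- in particular remembering that the Embedding blackbox delivers \emph{compactness} only under a strict inequality, so one must verify that the smoothness gain ($m_{0}-m_{1}$, resp. $m_{0}-(m_{1}+\alpha)$, resp. $m-1/p$) is positive in every instance, which it plainly is under the stated hypotheses.
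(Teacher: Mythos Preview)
Your proposal is correct and is exactly the argument the paper intends: the corollary is stated immediately after the Embedding blackbox with no explicit proof, so it is meant to follow by precisely the bookkeeping you describe---sandwiching the compact Besov embedding between the bounded Sobolev--Besov inclusions of \Corref{sobolev_and_density} (and, for part~(3), the limiting trace map of \Blackboxref{trace_ext}). Your verification that the strict inequalities required for compactness hold in each case is the only nontrivial point, and you have checked it correctly.
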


\begin{rem*}
These include the Rellich-Kondrachov embeddings found in \parencite[Theorem 6.3]{adams2003sobolev},
so the Besov embeddings generalize Sobolev embeddings.
\end{rem*}

\subsection{Strip decay\label{subsec:Strip-decay}}

Some notation first: let $\Omega$ be a $C^{\infty}$ domain in $\mathbb{R}^{n}$
or a compact Riemannian manifold with or without boundary. Define
$\Omega_{>r}=\{x\in\Omega:\text{dist}(x,\partial\Omega)>r\}$ where
$\text{dist}(x,\partial\Omega)=\infty$ if $\partial\Omega=\emptyset$.
Similarly define $\Omega_{\geq r},\Omega_{<r},\Omega_{[r_{1},r_{2}]}$.\nomenclature{$\Omega_{<r}$}{$\{x\in\Omega:\text{dist}(x,\partial\Omega)<r\}$ \nomrefpage}

When $|\Omega|<\infty$ and $p\in[1,\infty)$, we write
\[
\left\Vert f\right\Vert _{L^{p}(\Omega,\text{avg})}=\left\Vert f(x)\right\Vert _{L_{x}^{p}(\frac{\text{d}x}{\left|\Omega\right|},\Omega)}=\left(\dashint_{\Omega}|f|^{p}\right)^{\frac{1}{p}}=\frac{1}{|\Omega|^{1/p}}\left(\int_{\Omega}|f|^{p}\right)^{\frac{1}{p}}
\]
By convention, we set $\left\Vert f\right\Vert _{L^{\infty}(\Omega,\text{avg})}=\left\Vert f(x)\right\Vert _{L_{x}^{\infty}(\frac{\text{d}x}{\left|\Omega\right|},\Omega)}=\left\Vert f\right\Vert _{L^{\infty}(\Omega)}$.
The implicit measure is of course the Riemannian measure. In such
mean integrals, the domain becomes a probability space.\nomenclature{$\left\Vert f\right\Vert _{L^{p}(\Omega,\avg)}$}{integration on probability space \nomrefpage}
\begin{thm}[Coarea formula]
\label{thm:coarea}$ $
\begin{enumerate}
\item For any $h\in\mathbb{R}^{n}$, the translation semigroup $\left(\tau_{th}\right)_{t\geq0}$
is a $C_{0}$ semigroup on $W^{m,p}(\mathbb{R}^{n})$, $B_{p,q}^{s}(\mathbb{R}^{n})$
and $F_{p,q}^{s}(\mathbb{R}^{n})$ for $m\in\mathbb{N}_{0},\;s\in\mathbb{R},p\in[1,\infty),q\in[1,\infty)$.
Consequently, for $p\in[1,\infty)$ and $f\in B_{p,1}^{\frac{1}{p}}(\mathbb{R}^{n})$
, 
\[
\left([0,\infty)\to L^{p}(\mathbb{R}^{n-1}),t\mapsto\left.\tau_{th}f\right|_{\mathbb{R}^{n-1}}\right)
\]
is continuous and bounded by $C\left\Vert f\right\Vert _{B_{p,1}^{\frac{1}{p}}(\mathbb{R}^{n})}$
for some $C>0$. 
\item Let $\Omega$ be a bounded $C^{\infty}$ domain in $\mathbb{R}^{n}$
(or a compact Riemannian manifold with boundary). Let $p\in[1,\infty)$.
Then for $f\in B_{p,1}^{\frac{1}{p}}(\Omega)$ and $r>0$ \uline{small}:
\begin{enumerate}
\item $\left([0,r)\to\mathbb{R},\rho\mapsto\left\Vert f\right\Vert _{L^{p}(\partial\Omega_{>\rho})}\right)$
is continuous and bounded by $C\left\Vert f\right\Vert _{B_{p,1}^{\frac{1}{p}}(\Omega)}$
for some $C>0$.
\item $\left\Vert f\right\Vert _{L^{p}(\Omega_{\leq r})}\sim_{\neg r}\left\Vert \left\Vert f\right\Vert _{L^{p}(\partial\Omega_{>\rho})}\right\Vert _{L_{\rho}^{p}((0,r))}$
\item $\left\Vert f\right\Vert _{L^{p}(\Omega_{\leq r},\avg)}\lesssim_{\neg r}\left\Vert f\right\Vert _{B_{p,1}^{\frac{1}{p}}(\Omega)}$
and $\left\Vert f\right\Vert _{L^{p}(\Omega_{\leq r},\avg)}\xrightarrow{r\downarrow0}\left\Vert f\right\Vert _{L^{p}(\partial\Omega,\avg)}$.
\item Let $I\subset\mathbb{R}$ be an open interval and $\mathfrak{f}\in L^{p}(I\to B_{p,1}^{\frac{1}{p}}(\Omega)),$then
$\left\Vert \mathfrak{f}\right\Vert _{L_{t}^{p}B_{p,1}^{\frac{1}{p}}(\Omega)}\gtrsim_{\neg r}\left\Vert \mathfrak{f}\right\Vert _{L_{t}^{p}L^{p}(\Omega_{\leq r},\avg)}\xrightarrow{r\downarrow0}\left\Vert \mathfrak{f}\right\Vert _{L_{t}^{p}L^{p}(\partial\Omega,\avg)}$. 
\end{enumerate}
\item Let $p\in[1,\infty),f\in W^{1,p}(\Omega),$ show that $\left\Vert f\right\Vert _{L^{p}\left(\Omega_{<r}\right)}\lesssim_{\neg r}r\left\Vert f\right\Vert _{W^{1,p}\left(\Omega_{<r}\right)}+r^{\frac{1}{p}}\left\Vert f\right\Vert _{L^{p}\left(\partial\Omega\right)}$
for $r>0$ small.
\end{enumerate}
\end{thm}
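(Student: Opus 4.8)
The plan is to prove the three parts in order, reducing parts (2) and (3) to part (1) together with standard tubular-neighborhood geometry near $\partial\Omega$. For part (1), the semigroup identities $\tau_0=\mathrm{id}$ and $\tau_{(t_1+t_2)h}=\tau_{t_1h}\tau_{t_2h}$ are immediate, and on each of $W^{m,p}(\mathbb{R}^n)$, $B^s_{p,q}(\mathbb{R}^n)$, $F^s_{p,q}(\mathbb{R}^n)$ the translations $\tau_{th}$ are isometries (the Littlewood--Paley pieces commute with translations and $L^p(\mathbb{R}^n)$ is translation-invariant), so the family is uniformly bounded. Strong continuity at $t=0$ holds on $\mathcal S(\mathbb{R}^n)$, which is dense since $p,q\in[1,\infty)$, so I would invoke \Lemref{dense_conv} to upgrade it to the whole space, which together with the semigroup property yields the $C_0$ property. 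For the consequence I would take $h=e_n$ after a rotation and note that $\tau_{th}f|_{\mathbb{R}^{n-1}}$ is the composition of the continuous curve $t\mapsto\tau_{th}f\in B^{1/p}_{p,1}(\mathbb{R}^n)$ with the bounded restriction into $B^{1/p}_{p,1}(\mathbb{R}^n_+)$ followed by the bounded trace $B^{1/p}_{p,1}(\mathbb{R}^n_+)\twoheadrightarrow L^p(\mathbb{R}^{n-1})$ from \Blackboxref{trace_ext}; the uniform bound $C\|f\|_{B^{1/p}_{p,1}}$ is just translation invariance of the $B^{1/p}_{p,1}(\mathbb{R}^n)$-norm.

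For part (2) I would fix $r_0>0$ small enough that $\mathrm{dist}(\cdot,\partial\Omega)$ is smooth on $\Omega_{<r_0}$ with unit gradient, so that the inward normal flow is a diffeomorphism $\Phi\colon\partial\Omega\times[0,r_0)\iso\Omega_{<r_0}$ carrying $\partial\Omega\times\{\rho\}$ onto $\partial\Omega_{>\rho}$, with Jacobian bounded above and below uniformly in $\rho$. Then (2b) is precisely the (Riemannian) coarea formula for the distance function, $\|f\|^p_{L^p(\Omega_{\leq r})}=\int_0^r\|f\|^p_{L^p(\partial\Omega_{>\rho})}\,\mathrm d\rho$ for smooth $f$, extended to general $f$ by density of $C^\infty(\overline\Omega)$ in $B^{1/p}_{p,1}(\Omega)$ once one knows the slices depend continuously on $f$ uniformly in $\rho$, which is (2a). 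To get (2a), pull $f$ back by $\Phi$, localize by a partition of unity on $\partial\Omega$, and use the Triebel extension of \Blackboxref{trace_ext} to reduce to a function in $B^{1/p}_{p,1}(\mathbb{R}^n)$ whose trace on $\{x_n=\rho\}$ is $\tau_{\rho e_n}(\cdot)|_{\mathbb{R}^{n-1}}$; part (1) then supplies both continuity in $\rho\in[0,r_0)$ and the uniform bound $C\|f\|_{B^{1/p}_{p,1}(\Omega)}$. For (2c), taking $f\equiv1$ in (2b) and continuity of $\rho\mapsto|\partial\Omega_{>\rho}|$ give $|\Omega_{\leq r}|\sim_{\neg r}r|\partial\Omega|$, hence $\|f\|^p_{L^p(\Omega_{\leq r},\mathrm{avg})}=\tfrac1{|\Omega_{\leq r}|}\int_0^r\|f\|^p_{L^p(\partial\Omega_{>\rho})}\,\mathrm d\rho$; bounding the integrand by its supremum and applying (2a) gives the estimate, while letting $r\downarrow0$ and using continuity of $\rho\mapsto\|f\|^p_{L^p(\partial\Omega_{>\rho})}$ at $0$ together with $\tfrac{r}{|\Omega_{\leq r}|}\to\tfrac1{|\partial\Omega|}$ gives the convergence. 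Finally (2d) follows by applying (2c) pointwise in $t$, raising to the $p$-th power and integrating for the inequality, and by dominated convergence — with dominating function $\|\mathfrak f(t)\|^p_{B^{1/p}_{p,1}(\Omega)}\in L^1_t$ — for the limit.

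For part (3) I would reduce to smooth $f$ by density ($C^\infty(\overline\Omega)$ dense in $W^{1,p}(\Omega)$, with $W^{1,p}(\Omega)\twoheadrightarrow L^p(\partial\Omega)$ continuous for all $p\in[1,\infty)$), and in the coordinates $\Phi$ write $f(y,\rho)=f(y,0)+\int_0^\rho\partial_\rho f(y,\sigma)\,\mathrm d\sigma$. Jensen's inequality then gives $|f(y,\rho)|^p\lesssim|f(y,0)|^p+r^{p-1}\int_0^r|\nabla f(y,\sigma)|^p\,\mathrm d\sigma$, and integrating over $\partial\Omega\times[0,r)$ against the bounded Jacobian yields $\|f\|^p_{L^p(\Omega_{<r})}\lesssim r\|f\|^p_{L^p(\partial\Omega)}+r^p\|f\|^p_{W^{1,p}(\Omega_{<r})}$; taking $p$-th roots finishes.

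The step I expect to be the main obstacle is the uniform-in-$\rho$ continuity asserted in (2a): the remainder is essentially bookkeeping around the coarea formula and the tubular neighborhood, but continuity of the slice norms genuinely needs the $C_0$ property of the translation semigroup on $B^{1/p}_{p,1}$ — and hence the fine summation index $q=1$ — since for $q>1$ the trace to $\partial\Omega_{>\rho}$ need not even be bounded, let alone vary continuously with $\rho$.
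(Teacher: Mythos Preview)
Your proposal is correct and follows essentially the same approach as the paper: density of $\mathcal{S}(\mathbb{R}^n)$ plus \Lemref{dense_conv} for part (1), reduction to the half-space via partition of unity and tubular-neighborhood coordinates for (2a), the classical coarea formula plus density for (2b), straightforward deductions for (2c)--(2d), and FTC in normal coordinates with H\"older/Jensen for part (3). The paper's proof is terser but structurally identical; your added remark on why $q=1$ is essential for (2a) is a welcome clarification.
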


\begin{proof}
$\;$ 
\begin{enumerate}
\item Use the density of $\mathcal{S}(\mathbb{R}^{n})$ and \Lemref{dense_conv}. 
\item $\;$ 
\begin{enumerate}
\item By partition of unity, geodesic normals, diffeomorphisms and the smallness
of $r$, reduce the problem to the half-space case, which is just
1).
\item Approximate $f$ in $B_{p,1}^{\frac{1}{p}}$ by $C^{\infty}(\overline{\Omega})$
functions. This is the well-known coarea formula, which corresponds
to Fubini's theorem in the half-space case. Note that $\left\Vert f\right\Vert _{L^{p}(\partial\Omega_{>\rho})}$
is defined by the trace theorem. See \parencite[Section III.5]{chavel2006riemannian}
for more details. 
\item For $r$ small, $\left|\Omega_{<r}\right|\sim\left|\partial\Omega\right|r$
and $\left|\partial\Omega_{>r}\right|\sim\left|\partial\Omega\right|$,
so 
\[
\left\Vert f\right\Vert _{L^{p}(\Omega_{\leq r},\text{avg})}\sim\left\Vert \left\Vert f\right\Vert _{L^{p}(\partial\Omega_{>\rho},\text{avg})}\right\Vert _{L_{\rho}^{p}((0,r),\text{avg})}\leq\sup_{\rho<r}\left\Vert f\right\Vert _{L^{p}(\partial\Omega_{>\rho},\text{avg})}
\]
and $\left\Vert \left\Vert f\right\Vert _{L^{p}(\partial\Omega_{>\rho},\text{avg})}\right\Vert _{L_{\rho}^{p}((0,r),\text{avg})}\xrightarrow{r\downarrow0}\left\Vert f\right\Vert _{L^{p}(\partial\Omega,\text{avg})}$
by continuity in a). 
\item Dominated convergence. 
\end{enumerate}
\item By the trace theorem, WLOG $f\in C^{\infty}(\overline{\Omega})$.
By partition of unity and diffeomorphisms, WLOG $\overline{\Omega}=\overline{\mathbb{R}_{+}^{n}}=\{(\mathbf{x},y):\mathbf{x}\in\mathbb{R}^{n-1},y\geq0\}$.
Then 
\begin{align*}
\left\Vert f\right\Vert _{L^{p}\left(\Omega_{<r}\right)} & \sim\left\Vert \left\Vert f(\mathbf{x},y)\right\Vert _{L_{y}^{p}\left([0,r]\right)}\right\Vert _{L_{\mathbf{x}}^{p}}\lesssim\left\Vert \left\Vert \left\Vert \partial_{y}f(\mathbf{x},\rho)\right\Vert _{L_{\rho}^{1}\left([0,y]\right)}+\left|f(\mathbf{x},0)\right|\right\Vert _{L_{y}^{p}\left([0,r]\right)}\right\Vert _{L_{\mathbf{x}}^{p}}\\
 & \lesssim\left\Vert \left\Vert \left\Vert \partial_{y}f(\mathbf{x},\rho)\right\Vert _{L_{\rho}^{p}\left([0,y]\right)}\left|y\right|^{\frac{1}{p'}}\right\Vert _{L_{y}^{p}\left([0,r]\right)}\right\Vert _{L_{\mathbf{x}}^{p}}+r^{\frac{1}{p}}\left\Vert f(\mathbf{x},0)\right\Vert _{L_{\mathbf{x}}^{p}}
\end{align*}
The first term $\lesssim\left\Vert \left\Vert \partial_{y}f(\mathbf{x},\rho)\right\Vert _{L_{\rho}^{p}\left([0,r]\right)}\left\Vert \left|y\right|^{\frac{1}{p'}}\right\Vert _{L_{y}^{p}\left([0,r]\right)}\right\Vert _{L_{\mathbf{x}}^{p}}\lesssim r\left\Vert \partial_{y}f\right\Vert _{L^{p}\left(\Omega_{<r}\right)}$.
So we are done.
\end{enumerate}
\end{proof}
\begin{thm}[Product estimate]
 \label{thm:product_estimate} Let $M$ be a bounded $C^{\infty}$
domain in $\mathbb{R}^{n}$ (or a compact Riemannian manifold with
boundary). Assume $r>0$ small, $f_{r}\in C^{\infty}(\overline{M})$
with support in $M_{<r}$. Then for $p\in\left(1,\infty\right)$,
$g\in B_{p,1}^{\frac{1}{p}}(M)$:
\[
\Vert f_{r}g\Vert_{B_{p,1}^{1/p}(M)}\lesssim_{M,\neg r}\Vert f_{r}\Vert_{B_{\infty,1}^{1/p}(M)}\Vert g\Vert_{L^{p}(M_{<4r})}+\Vert f_{r}\Vert_{L^{\infty}(M_{<r})}\Vert g\Vert_{B_{p,1}^{1/p}(M)}
\]
\end{thm}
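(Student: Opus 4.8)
The plan is to run everything through the ball mean difference characterization \Eqref{BMD}. Since $p\in(1,\infty)$, the regularity $s=\tfrac1p$ lies in $(0,1)$, so we may use first-order differences ($m=1$) and inner Lebesgue exponent $\ell=1$; after the standard reduction to the model case (partitions of unity, and boundary/Fermi coordinates near $\partial M$ in which $M_{<\rho}$ becomes $\{0\le y<\rho\}$, the constant $4$ leaving room for the metric distortion), \Eqref{BMD} reads, for any $F$,
\[
\left\Vert F\right\Vert _{B_{p,1}^{1/p}(M)}\sim\left\Vert F\right\Vert _{L^{p}(M)}+\int_{0}^{1}\frac{\mathrm{d}t}{t}\,t^{-1/p}\left\Vert \frac{1}{t^{n}}\int_{V^{1}(x,t)}\left|\Delta_{h}F(x)\right|\mathrm{d}h\right\Vert _{L_{x}^{p}(M)},
\]
and the same expression with $L_{x}^{p}$ replaced by $L_{x}^{\infty}$ computes $\left\Vert F\right\Vert _{B_{\infty,1}^{1/p}(M)}$. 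The whole argument rests on the first-order Leibniz identity $\Delta_{h}(f_{r}g)(x)=f_{r}(x+h)\,\Delta_{h}g(x)+g(x)\,\Delta_{h}f_{r}(x)$. The contribution of $f_{r}(x+h)\,\Delta_{h}g(x)$ is immediate: $\left|f_{r}(x+h)\right|\le\left\Vert f_{r}\right\Vert _{L^{\infty}(M_{<r})}$, so after plugging into the BMD integral and recognizing the BMD functional of $g$, this contribution (together with $\left\Vert f_{r}g\right\Vert _{L^{p}(M)}\le\left\Vert f_{r}\right\Vert _{L^{\infty}(M_{<r})}\left\Vert g\right\Vert _{L^{p}(M_{<r})}$) is $\lesssim\left\Vert f_{r}\right\Vert _{L^{\infty}(M_{<r})}\left\Vert g\right\Vert _{B_{p,1}^{1/p}(M)}$, one of the two terms on the right. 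It remains to bound the contribution $\mathcal{E}$ of $g(x)\,\Delta_{h}f_{r}(x)$, and I split the $t$-integral defining $\mathcal{E}$ at $t=3r$.

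For $t\le3r$: if $h\in V^{1}(x,t)$ then $\left|h\right|<t\le3r$, so $\Delta_{h}f_{r}(x)\ne0$ forces $x\in M_{<r}$ or $x+h\in M_{<r}$, hence $x\in M_{<4r}$ in either case. Thus the $L_{x}^{p}$-norm may be taken over $M_{<4r}$, giving
\[
\left\Vert \frac{1}{t^{n}}\int_{V^{1}(x,t)}\left|g(x)\right|\left|\Delta_{h}f_{r}(x)\right|\mathrm{d}h\right\Vert _{L_{x}^{p}(M)}\le\left\Vert g\right\Vert _{L^{p}(M_{<4r})}\left\Vert \frac{1}{t^{n}}\int_{V^{1}(x,t)}\left|\Delta_{h}f_{r}(x)\right|\mathrm{d}h\right\Vert _{L_{x}^{\infty}(M)};
\]
integrating against $t^{-1/p}\tfrac{\mathrm{d}t}{t}$ over $(0,3r)\subset(0,1)$ and using the $B_{\infty,1}^{1/p}$-BMD characterization of $f_{r}$ gives $\lesssim\left\Vert g\right\Vert _{L^{p}(M_{<4r})}\left\Vert f_{r}\right\Vert _{B_{\infty,1}^{1/p}(M)}$, the other term on the right.

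The range $t\ge3r$ is where the support trick fails, and is the main obstacle. Here I use $\left|\Delta_{h}f_{r}(x)\right|\le\left|f_{r}(x+h)\right|+\left|f_{r}(x)\right|$ together with the geometric bound $\left|M_{<r}\cap B(x,t)\right|\lesssim_{M}rt^{n-1}$ (valid since, for $r$ small, $M_{<r}$ is a collar of $\partial M$ of width $\sim r$), which yields $\tfrac{1}{t^{n}}\int_{V^{1}(x,t)}\left|f_{r}(x+h)\right|\mathrm{d}h\lesssim_{M}\tfrac{r}{t}\left\Vert f_{r}\right\Vert _{L^{\infty}}$; since also $\tfrac{1}{t^{n}}\int_{V^{1}(x,t)}\left|f_{r}(x)\right|\mathrm{d}h\lesssim\left|f_{r}(x)\right|$ and $\supp f_{r}\subset M_{<r}$, the $L_{x}^{p}$-norm is $\lesssim_{M}\tfrac{r}{t}\left\Vert f_{r}\right\Vert _{L^{\infty}}\left\Vert g\right\Vert _{L^{p}(M_{<r+t})}+\left\Vert f_{r}\right\Vert _{L^{\infty}}\left\Vert g\right\Vert _{L^{p}(M_{<r})}$. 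Now invoke the coarea estimate $\left\Vert g\right\Vert _{L^{p}(M_{<\rho})}\lesssim\rho^{1/p}\left\Vert g\right\Vert _{L^{p}(M_{<\rho},\avg)}\lesssim\rho^{1/p}\left\Vert g\right\Vert _{B_{p,1}^{1/p}(M)}$ from \Thmref{coarea}: using $M_{<r+t}\subset M_{<2t}$, the first summand contributes $\lesssim\left\Vert f_{r}\right\Vert _{L^{\infty}}\left\Vert g\right\Vert _{B_{p,1}^{1/p}}\,r\int_{3r}^{1}t^{-2}\,\mathrm{d}t\lesssim\left\Vert f_{r}\right\Vert _{L^{\infty}}\left\Vert g\right\Vert _{B_{p,1}^{1/p}}$, while the second contributes $\left\Vert f_{r}\right\Vert _{L^{\infty}}\left\Vert g\right\Vert _{L^{p}(M_{<r})}\int_{3r}^{1}t^{-1/p-1}\,\mathrm{d}t\lesssim r^{-1/p}\left\Vert f_{r}\right\Vert _{L^{\infty}}\left\Vert g\right\Vert _{L^{p}(M_{<r})}\lesssim\left\Vert f_{r}\right\Vert _{L^{\infty}}\left\Vert g\right\Vert _{B_{p,1}^{1/p}}$, the divergent $r^{-1/p}$ being exactly swallowed by the $r^{1/p}$ in $\left\Vert g\right\Vert _{L^{p}(M_{<r})}\sim r^{1/p}\left\Vert g\right\Vert _{L^{p}(M_{<r},\avg)}$. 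Adding the two ranges gives $\mathcal{E}\lesssim_{M}\left\Vert g\right\Vert _{L^{p}(M_{<4r})}\left\Vert f_{r}\right\Vert _{B_{\infty,1}^{1/p}(M)}+\left\Vert f_{r}\right\Vert _{L^{\infty}(M_{<r})}\left\Vert g\right\Vert _{B_{p,1}^{1/p}(M)}$, which with the earlier estimates is the claim. The delicate points are precisely this last cancellation and the geometric bound producing the gain $\tfrac{r}{t}$; together they are what force the companion of $\left\Vert f_{r}\right\Vert _{B_{\infty,1}^{1/p}}\left\Vert g\right\Vert _{L^{p}(M_{<4r})}$ to be $\left\Vert f_{r}\right\Vert _{L^{\infty}}\left\Vert g\right\Vert _{B_{p,1}^{1/p}}$ rather than anything localized to $M_{<4r}$.
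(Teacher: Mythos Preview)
Your proof is correct and follows the same overall strategy as the paper: the BMD characterization with first-order differences, the Leibniz split $\Delta_h(f_rg)=\tau_hf_r\,\Delta_hg+g\,\Delta_hf_r$, the support argument that confines $x$ to $M_{<4r}$ when $|h|$ is small, and the geometric overlap bound $|B(x,t)\cap M_{<r}|\lesssim rt^{n-1}$ for the remaining regime. The differences are organizational. The paper splits the spatial variable into $M_{<4r}$ and $M_{>4r}$, and on $M_{>4r}$ exploits the pointwise bound $t^{-n}\|\Delta_hf_r(x)\|_{L^1_h}\lesssim \tfrac{r}{x_n}\mathbf{1}_{t>x_n-r}$, then slices $g$ along level sets $\{x_n=\rho\}$ via the trace bound $\|g\|_{L^p(x_n=\rho)}\lesssim\|g\|_{B_{p,1}^{1/p}}$ and computes the weighted integral $r\|\rho^{-1}\|_{L^p_\rho([4r,\infty))}\sim r^{1/p}$ (this is where $p>1$ enters). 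You instead split the $t$-integral at $3r$ and, for $t>3r$, use the integrated strip bound $\|g\|_{L^p(M_{<\rho})}\lesssim\rho^{1/p}\|g\|_{B_{p,1}^{1/p}}$ from \Thmref{coarea} directly, which packages the slicing step and avoids the $x_n^{-1}$ weight; the same $r^{1/p}\cdot r^{-1/p}$ cancellation appears, just in the $|f_r(x)|$ term rather than in a weighted slice integral. Your route is marginally cleaner, though note that for $t$ near $1$ the strip $M_{<2t}$ may be all of $M$, so \Thmref{coarea} (stated for $r$ small) should be supplemented by the trivial bound $\|g\|_{L^p(M)}\lesssim\|g\|_{B_{p,1}^{1/p}}$ in that range.
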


\begin{rem*}
The theory of product and commutator estimates (Kato-Ponce, Coifman-Meyer
etc.) has a long and rich history which we will not recount here (cf.
\parencite{Kato1988_Ponce_commutator,tao2004lecture_coifman,Grafakos2014_commutator,Naibo2019_coifman_meyer}).
However, for our intended application, $f_{r}$ has very small support
and we want to use $\Vert g\Vert_{L^{p}(M_{<4r})}$ instead of $\Vert g\Vert_{L^{p}(M)}$
to control the product. Unfortunately there does not seem to be much,
if at all, literature on this issue. This theorem will only be used
for \Thmref{B1ppN0_is_B1ppN}, and is not necessary for Onsager's
conjecture.
\end{rem*}
\begin{proof}
By diffeomorphisms, partition of unity, and geodesic normals, WLOG
assume $M=\overline{\mathbb{R}_{+}^{n}}$ with $M_{<r}=\{x\in\mathbb{R}^{n}:0\leq x_{n}<r\}$.\\
Recall $\Vert g\Vert_{L^{p}(x_{n}=a)}\lesssim\Vert g\Vert_{B_{p,1}^{1/p}(\mathbb{R}_{+}^{n})}\;\forall0\leq a<\infty$
where $\Vert g\Vert_{L^{p}(x_{n}=a)}:=\Vert g\Vert_{L^{p}(\{x\in\mathbb{R}^{n}:x_{n}=a\})}$
is defined by the trace theorem.

WLOG, assume $\left\Vert f_{r}\right\Vert _{\infty}\leq1.$ Recall
the characterization of Besov spaces by ball mean difference (BMD)
and write $V(x,t)$ for $V^{1}(x,t)$ (see \Eqref{BMD}). Then
\[
\Vert f_{r}g\Vert_{B_{p,1}^{1/p}(M)}\sim\Vert f_{r}g\Vert_{L^{p}(M)}+\left\Vert t^{-\frac{1}{p}-n}\left\Vert \Vert\Delta_{h}(f_{r}g)(x)\Vert_{L_{h}^{1}(V(x,t))}\right\Vert _{L_{x}^{p}(M)}\right\Vert _{L_{t}^{1}(\frac{\text{d}t}{t},(0,1))}
\]
The term $\Vert f_{r}g\Vert_{L^{p}(M)}$ is easily bounded and thrown
away. For the remaining term, we use the identity $\Delta_{h}(f_{r}g)=\Delta_{h}f_{r}g+\tau_{h}f_{r}\Delta_{h}g$
to bound it by
\[
\left\Vert t^{-\frac{1}{p}-n}\left\Vert \Vert\Delta_{h}f_{r}(x)\Vert_{L_{h}^{1}(V(x,t))}g(x)\right\Vert _{L_{x}^{p}(M)}\right\Vert _{L^{1}(\frac{\text{d}t}{t},(0,1))}+\left\Vert t^{-\frac{1}{p}-n}\left\Vert \left\Vert f_{r}\right\Vert _{\infty}\Vert\Delta_{h}g(x)\Vert_{L_{h}^{1}(V(x,t))}\right\Vert _{L_{x}^{p}(M)}\right\Vert _{L^{1}(\frac{\text{d}t}{t},(0,1))}
\]
The second term here is just $\Vert f_{r}\Vert_{L^{\infty}}\Vert g\Vert_{B_{p,1}^{1/p}(M)}$,
so throw it away. For the remaining term, by using $\left\Vert \cdot\right\Vert _{L^{p}(M)}\lesssim\left\Vert \cdot\right\Vert _{L^{p}(M_{<4r})}+\left\Vert \cdot\right\Vert _{L^{p}(M_{>4r})}$
and
\[
\left\Vert \Vert\Delta_{h}f_{r}(x)\Vert_{L_{h}^{1}(V(x,t))}g(x)\right\Vert _{L_{x}^{p}(M_{<4r})}\lesssim\left\Vert \Vert\Delta_{h}f_{r}(x)\Vert_{L_{h}^{1}(V(x,t))}\right\Vert _{L_{x}^{\infty}(M_{<4r})}\left\Vert g(x)\right\Vert _{L_{x}^{p}(M_{<4r})}
\]
we are left with

\[
\Vert f_{r}\Vert_{B_{\infty,1}^{1/p}(M)}\Vert g\Vert_{L^{p}(M_{<4r})}+\left\Vert t^{-\frac{1}{p}-n}\left\Vert \Vert\Delta_{h}f_{r}(x)\Vert_{L_{h}^{1}(V(x,t))}g(x)\right\Vert _{L_{x}^{p}(M_{>4r})}\right\Vert _{L^{1}(\frac{\text{d}t}{t},(0,1))}
\]
Throwing away the first term, we have arrived at the important estimate:
what happens on $M_{>4r}.$ It will turn out that the values of $g$
on $M_{>4r}$ are well-controlled by $\Vert g\Vert_{B_{p,1}^{1/p}(M)}$.
To begin, recall $f_{r}$ is supported on $M_{<r}$ and use the crude
geometric estimate 
\[
t^{-n}\Vert\Delta_{h}f_{r}(x)\Vert_{L_{h}^{1}(V(x,t))}=t^{-n}\Vert f_{r}(x+h)\Vert_{L_{h}^{1}(V(x,t))}\lesssim\frac{\left|B(x,t)\cap M_{<r}\right|}{\left|B(x,t)\right|}\lesssim\frac{r}{x_{n}}\mathbf{1}_{t>x_{n}-r}\;\forall x\in M_{>4r},\forall t\in(3r,1)
\]
Note that $t>3r$ comes from $t>x_{n}-r>4r-r.$ So we have used the
``room'' from $4r$ to get an $O(r)$-lower bound for $t$. By $x_{n}<r+t$,
we now only need to bound
\[
\left\Vert t^{-\frac{1}{p}}\left\Vert g(x)\frac{r}{x_{n}}\right\Vert _{L_{x}^{p}(M_{\left[4r,r+t\right]})}\right\Vert _{L^{1}(\frac{\text{d}t}{t},(3r,1))}
\]
Obviously, we will integrate $g$ on $x_{n}$-slices (using $p>1$):
\[
\left\Vert g(x)\frac{r}{x_{n}}\right\Vert _{L_{x}^{p}(M_{\left[4r,r+t\right]})}=r\left\Vert \frac{1}{\rho}\left\Vert g\right\Vert _{L^{p}(x_{n}=\rho)}\right\Vert _{L_{\rho}^{p}([4r,r+t])}\lesssim\Vert g\Vert_{B_{p,1}^{1/p}(M)}r\left\Vert \frac{1}{\rho}\right\Vert _{L_{\rho}^{p}([4r,\infty))}\lesssim r^{\frac{1}{p}}\Vert g\Vert_{B_{p,1}^{1/p}(M)}
\]
Then we are done (using $p<\infty$):
\[
r^{\frac{1}{p}}\left\Vert t^{-\frac{1}{p}}\right\Vert _{L^{1}(\frac{\text{d}t}{t},(3r,1))}=\left\Vert \left(\frac{r}{t}\right)^{\frac{1}{p}}\right\Vert _{L^{1}(\frac{\text{d}t}{t},(3r,1))}=\left\Vert \left(\frac{1}{t}\right)^{\frac{1}{p}}\right\Vert _{L^{1}(\frac{\text{d}t}{t},(3,\frac{1}{r}))}\lesssim_{\neg r}1
\]
\end{proof}

\section{Hodge theory}

We stick closely to the terminology and symbols of \parencite{Schwarz1995},
with some careful exceptions.

\subsection{The setting\label{subsec:The-setting}}
\begin{defn}
Define a \textbf{$\partial$-manifold} as a paracompact, Hausdorff,
metric-complete, oriented, smooth manifold, with no or smooth boundary.
\end{defn}

Note that this means $B_{\mathbb{R}^{n}}(0,1)$ is not a $\partial$-manifold
(as it is not complete), but $\overline{B_{\mathbb{R}^{n}}(0,1)}$
is.

\emph{For the rest of this paper}, unless mentioned otherwise, we
work on $M$ which is a compact Riemannian $n$-dimensional $\partial$-manifold
(where $n\geq2$), and use $\nu$ \nomenclature{$\nu$}{outwards unit normal vector field on $\partial M$\nomrefpage}
to denote the outwards unit normal vector field on $\partial M$.

As before, define $M_{>r}=\{x\in M:\text{dist}(x,\partial M)>r\}$,
and similarly for $M_{\geq r},M_{<r},M_{[r_{1},r_{2}]}$ etc.

For $r>0$ small, the map $\left(\partial M\times[0,r)\to M_{<r},(x,t)\mapsto\exp_{x}(-t\nu)\right)$
is a diffeomorphism, which we call a \textbf{Riemannian collar}. Then
$\nu$ can be extended via geodesics to a smooth vector field $\widetilde{\nu}$
which is of unit length near the boundary (cut off at some point away
from the boundary, but we only care about the area near the boundary).
\nomenclature{$\widetilde{\nu}$}{extension of $\nu$ near $\partial M$\nomrefpage}

Let $\vol$ stand for the Riemannian volume form orienting $M$ and
$\vol_{\partial}$ for that of $\partial M$. Let $\jmath:\partial M\hookrightarrow M$
be the smooth inclusion map \nomenclature{$\jmath$}{$\jmath:\partial M\hookrightarrow M$  is the smooth inclusion map\nomrefpage}
and $\iota$ stand for interior product (contraction) of differential
forms.\nomenclature{$\iota$}{interior product (contraction) of differential forms\nomrefpage}
Note that for a smooth differential form $\omega$, $\jmath^{*}\omega$
only depends on $\restr{\omega}{\partial M}$, so by abuse of notation,
we can write
\[
\vol_{\partial}=\jmath^{*}(\iota_{\nu}\vol)
\]
\nomenclature{$\vol_\partial$}{volume form of $\partial M$\nomrefpage}where
$\iota_{\nu}\vol\in\restr{\Omega^{n-1}\left(M\right)}{\partial M}$.
Additionally, the Stokes theorem reads $\int_{M}d\omega=\int_{\partial M}\jmath^{*}\omega$
for $\omega\in\Omega^{n-1}(M)$.

\subsubsection{Vector bundles\label{subsec:Vector-bundles}}

Let $\mathbb{F}$ be a real vector bundle over $M$ with a Riemannian
fiber metric $\left\langle \cdot,\cdot\right\rangle _{\mathbb{F}}$.

Define
\begin{itemize}
\item $\Gamma(\mathbb{F})$ : the space of smooth sections of $\mathbb{F}$
\item $\Gamma_{c}(\mathbb{F})$ : smooth sections with compact support (so
$\Gamma_{c}(\mathbb{F})=\Gamma(\mathbb{F})$ since $M$ is compact)
\item $\Gamma_{00}(\mathbb{F}$) : smooth sections with compact support
in $\accentset{\circ}{M}$ (the interior of $M$).\nomenclature{$\Gamma(\mathbb{F}),\;\Gamma_{c}(\mathbb{F}),\;\Gamma_{00}(\mathbb{F})$}{the space of smooth sections of $\mathbb{F}$ with different support conditions\nomrefpage}
\end{itemize}
\begin{rem*}
We are following \parencite{Schwarz1995}, where Hodge theory is also
formulated for non-compact $M$. In the book, $\Gamma_{0}\mathbb{F}$
is used instead of $\Gamma_{00}\mathbb{F}$ to denote compact support
in $\accentset{\circ}{M}$. As that can be confused with having zero
trace, we opt to write $\Gamma_{00}\mathbb{F}$ instead.
\end{rem*}
Then on $\Gamma_{c}(\mathbb{F}$), define the dot product

\[
\left\langle \left\langle \sigma,\theta\right\rangle \right\rangle =\int_{M}\left\langle \sigma,\theta\right\rangle _{\mathbb{F}}\vol
\]
\nomenclature{$\left\langle \left\langle \sigma,\theta\right\rangle \right\rangle$}{dot product on $\Gamma(\mathbb{F})$\nomrefpage}

and $\left|\sigma\right|_{\mathbb{F}}=\sqrt{\left\langle \sigma,\sigma\right\rangle _{\mathbb{F}}}.$
Then for $p\in[1,\infty)$, $L^{p}\Gamma(\mathbb{F})$ is the completion
of $\Gamma_{c}(\mathbb{F})$ under the norm 
\[
\left\Vert \sigma\right\Vert _{L^{p}\Gamma(\mathbb{F})}=\left\Vert \left|\sigma\right|_{\mathbb{F}}\right\Vert _{L^{p}(M)}
\]

Let $\nabla^{\mathbb{F}}$ be a connection on $\mathbb{F}$. Then
for $\sigma\in\Gamma(\mathbb{F}),\nabla^{\mathbb{F}}\sigma\in\Gamma(T^{\ast}M\otimes\mathbb{F})$
and we can define the fiber metric
\[
\left\langle \alpha\otimes\sigma,\beta\otimes\theta\right\rangle _{T^{*}M\otimes\mathbb{F}}=\left\langle \alpha,\beta\right\rangle _{T^{*}M}\left\langle \sigma,\theta\right\rangle _{\mathbb{F}}
\]

In local coordinates (Einstein notation):
\[
\left\langle \nabla^{\mathbb{F}}\sigma,\nabla^{\mathbb{F}}\theta\right\rangle _{T^{*}M\otimes\mathbb{F}}=\left\langle dx^{i}\otimes\nabla_{i}^{\mathbb{F}}\sigma,dx^{j}\otimes\nabla_{j}^{\mathbb{F}}\sigma\right\rangle _{\mathbb{F}}=\left\langle dx^{i},dx^{j}\right\rangle _{T^{*}M}\left\langle \nabla_{i}^{\mathbb{F}}\sigma,\nabla_{j}^{\mathbb{F}}\theta\right\rangle _{\mathbb{F}}=g^{ij}\left\langle \nabla_{i}^{\mathbb{F}}\sigma,\nabla_{j}^{\mathbb{F}}\theta\right\rangle _{\mathbb{F}}
\]

For higher derivatives, define the $k$-jet fiber metric
\[
\left\langle \sigma,\theta\right\rangle _{J^{k}\mathbb{F}}=\sum_{0\leq j\leq k}\left\langle \left(\nabla^{\mathbb{F}}\right)^{(j)}\sigma,\left(\nabla^{\mathbb{F}}\right)^{(j)}\theta\right\rangle _{\left(\bigotimes^{j}T^{*}M\right)\otimes\mathbb{F}}
\]

and $\left|\sigma\right|_{J^{k}\mathbb{F}}=\sqrt{\left\langle \sigma,\sigma\right\rangle _{J^{k}\mathbb{F}}}.$
Then we have Cauchy-Schwarz: $\left|\left\langle \sigma,\theta\right\rangle _{J^{k}\mathbb{F}}\right|\leq\left|\sigma\right|_{J^{k}\mathbb{F}}\left|\theta\right|_{J^{k}\mathbb{F}}.$

Then for $m\in\mathbb{N}_{0},p\in[1,\infty),$ we define the Sobolev
space $W^{m,p}\Gamma(\mathbb{F})$ as the completion of $\Gamma_{c}(\mathbb{F})$
under the norm 
\[
\left\Vert \sigma\right\Vert _{W^{m,p}\Gamma\left(\mathbb{F}\right)}=\left\Vert \left|\sigma\right|_{J^{m}\mathbb{F}}\right\Vert _{L^{p}(M)}
\]

It is worth noting that $\left|\sigma\right|_{J^{m}\mathbb{F}}$,
up to some constants, does not depend on $\nabla^{\mathbb{F}}$. Indeed,
assume there is another connection $\widetilde{\nabla}^{\mathbb{F}}$,
then $\nabla^{\mathbb{F}}-\widetilde{\nabla}^{\mathbb{F}}$ is tensorial:
\[
\left(\nabla_{X}^{\mathbb{F}}-\widetilde{\nabla}_{X}^{\mathbb{F}}\right)(f\sigma)=f\left(\nabla_{X}^{\mathbb{F}}-\widetilde{\nabla}_{X}^{\mathbb{F}}\right)(\sigma)=\left(\nabla_{fX}^{\mathbb{F}}-\widetilde{\nabla}_{fX}^{\mathbb{F}}\right)(\sigma)\;\text{for }f\in C^{\infty}(M),\sigma\in\Gamma(\mathbb{F}),X\in\mathfrak{X}M
\]

So there is a $C^{\infty}(M)$-multilinear map $A:\mathfrak{X}M\otimes_{C^{\infty}(M)}\Gamma(\mathbb{F})\to\Gamma(\mathbb{F})$
such that $\left(\nabla_{X}^{\mathbb{F}}-\widetilde{\nabla}_{X}^{\mathbb{F}}\right)(\sigma)=A(X,\sigma)$.
By the compactness of $M$ and the boundedness of $A$, we conclude
$\left|\sigma\right|_{J^{m}\mathbb{F},\nabla^{\mathbb{F}}}\sim\left|\sigma\right|_{J^{m}\mathbb{F},\widetilde{\nabla}^{\mathbb{F}}}$.
Therefore the topology of $W^{m,p}\Gamma(\mathbb{F})$ is uniquely
defined.
\begin{defn}[Distributions]
 Set $\mathscr{D}\Gamma\left(\mathbb{F}\right)=\Gamma_{00}\left(\mathbb{F}\right)$
as the space of \textbf{test sections} and $\mathscr{D}'\Gamma\left(\mathbb{F}\right)=\left(\mathscr{D}\Gamma\left(\mathbb{F}\right)\right)^{*}$
the space of \textbf{distributional sections}. As usual, in the category
of locally convex TVS, $\mathscr{D}\Gamma\left(\mathbb{F}\right)$
is given \textbf{Schwartz's topology} as the colimit of $\{\Gamma\left(\mathbb{F}\right)_{K}:K\subset\accentset{\circ}{M}\text{ compact}\}$,
where $\Gamma\left(\mathbb{F}\right)_{K}:=\{\sigma\in\Gamma\left(\mathbb{F}\right):\supp\sigma\subset K\}$
has the \textbf{Frechet} $C^{\infty}$ topology.
\end{defn}

\subsubsection{Compatibility with scalar function spaces\label{subsec:Compatibility-with-scalar}}

We aim to show that the global definitions of Sobolev spaces in \Subsecref{Vector-bundles}
are compatible with the definitions of Sobolev spaces by local coordinates.

Let $\left(\psi_{\alpha},U_{\alpha}\right)_{\alpha}$ be a finite
partition of unity, where $U_{\alpha}$ is open in $M$ and $\psi_{\alpha}$
is supported in $U_{\alpha}$. Normally in differential geometry,
$U_{\alpha}$ is diffeomorphic to either $\overline{\mathbb{R}_{+}^{n}}\cap B_{\mathbb{R}^{n}}(0,1)$
or $B_{\mathbb{R}^{n}}(0,1).$ However, it is problematic that the
half-ball does not have $C^{\infty}$ boundary, so we use some piecewise-linear
functions and mollification to create a bounded $C^{\infty}$ domain.

\begin{figure}[H]
\begin{centering}
\includegraphics{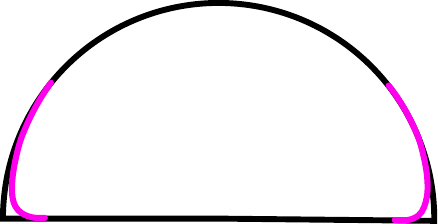}
\par\end{centering}
\caption{Smoothing the corners}

\end{figure}

So WLOG, $\overline{U_{\alpha}}$ is diffeomorphic to the closure
of a bounded $C^{\infty}$ domain in $\mathbb{R}^{n}$, and scalar
function spaces are well-defined on $U_{\alpha}$ (recall \Defref{domain}).
Note that $\supp\psi_{\alpha}$ might intersect with $\partial M$.

For $U_{\alpha}$ chosen small enough, the bundle $\mathbb{F}$ on
$U_{\alpha}$ is diffeomorphic to $U_{\alpha}\times F$ (where $F$
is the typical fiber of $\mathbb{F}$).

Let $\left(e_{\beta}^{\alpha}\right)_{\beta}$ be the coordinate sections
on $\supp\psi_{\alpha}$, and cut off such that $\supp\psi_{\alpha}\subset\accentset{\circ}{\left(\supp e_{\beta}^{\alpha}\right)}\subset\supp e_{\beta}^{\alpha}\subset U_{\alpha}$.
Let $\sigma\in\Gamma\left(\mathbb{F}\right).$ Then there exist $c_{\beta}^{\alpha}(\sigma)\in C_{c}^{\infty}(U_{\alpha})$
such that $\supp c_{\beta}^{\alpha}(\sigma)\subset\supp\psi_{\alpha}$,
$\psi_{\alpha}\sigma=\sum_{\beta}c_{\beta}^{\alpha}(\sigma)e_{\beta}^{\alpha}$
and
\[
\sigma=\sum_{\alpha,\beta}c_{\beta}^{\alpha}(\sigma)e_{\beta}^{\alpha}
\]

Now, observe that $\left|\sigma\right|_{\mathbb{F}}\sim\sum_{\alpha}\left|\psi_{\alpha}\sigma\right|_{\mathbb{F}}$
and 
\[
\left|\psi_{\alpha}\sigma\right|_{\mathbb{F}}=\left(\sum_{\beta,\beta'}c_{\beta}^{\alpha}(\sigma)c_{\beta'}^{\alpha}(\sigma)\left\langle e_{\beta}^{\alpha},e_{\beta'}^{\alpha}\right\rangle _{\mathbb{F}}\right)^{\frac{1}{2}}\sim\left(\sum_{\beta}\left|c_{\beta}^{\alpha}(\sigma)\right|^{2}\right)^{\frac{1}{2}}
\]
To see this, let $x\in\supp\psi_{\alpha}$ and $\left\langle e_{\beta}^{\alpha},e_{\beta'}^{\alpha}\right\rangle _{\mathbb{F}}(x)=B_{\beta\beta'}(x)$.
Then $B_{x}(u,v):=\sum_{\beta,\beta'}u_{\beta}v_{\beta'}B_{\beta\beta'}(x)$
is a positive-definite inner product, which induces a norm on a finite-dimensional
vector space, where all norms are equivalent. Then simply note $B_{x}(u,u)$
is continuous in variable $x\in\supp\psi_{\alpha}$.

Also, in local coordinates, there are $s_{i\beta}^{\gamma}\in C_{c}^{\infty}(U_{\alpha})$
such that $\nabla_{i}^{\mathbb{F}}e_{\beta}^{\alpha}=\sum_{\gamma}s_{i\beta}^{\gamma}e_{\gamma}^{\alpha}$
on $\supp\psi_{\alpha}.$ Then 
\[
\nabla_{i}^{\mathbb{F}}(\psi_{\alpha}\sigma)=\sum_{\beta}\partial_{i}c_{\beta}^{\alpha}(\sigma)e_{\beta}^{\alpha}+\sum_{\beta,\gamma}c_{\beta}^{\alpha}(\sigma)s_{i\beta}^{\gamma}e_{\gamma}^{\alpha}=\sum_{\beta}d_{i\beta}^{\alpha}(\sigma)e_{\beta}^{\alpha}\quad\text{where }d_{i\beta}^{\alpha}(\sigma)=\partial_{i}c_{\beta}^{\alpha}(\sigma)+\sum_{\gamma}c_{\gamma}^{\alpha}(\sigma)s_{i\gamma}^{\beta}
\]
So $\left|\sigma\right|_{J^{1}\mathbb{F}}\sim\sum_{\alpha,\beta}|c_{\beta}^{\alpha}(\sigma)|+\sum_{\alpha,\beta,i}|d_{i\beta}^{\alpha}(\sigma)|\sim\sum_{\alpha,\beta}|c_{\beta}^{\alpha}(\sigma)|+\sum_{\alpha,\beta,i}|\partial_{i}c_{\beta}^{\alpha}(\sigma)|$.

Similarly $\left|\sigma\right|_{J^{m}\mathbb{F}}\sim\sum_{\alpha,\beta}\sum_{k\leq m}\left|\nabla^{(k)}c_{\beta}^{\alpha}(\sigma)\right|$.

So for $m\in\mathbb{N}_{0},p\in[1,\infty),$
\[
\left\Vert \sigma\right\Vert _{W^{m,p}}\sim\sum_{\alpha,\beta}\left\Vert c_{\beta}^{\alpha}(\sigma)\right\Vert _{W^{m,p}(U_{\alpha},\mathbb{R})}
\]

Now define $S\sigma=\left(c_{\beta}^{\alpha}(\sigma)\right)_{\alpha,\beta}$
and $R\left(c_{\beta}^{\alpha}\right)_{\alpha,\beta}=\sum_{\alpha,\beta}c_{\beta}^{\alpha}e_{\beta}^{\alpha}$.
Then $RS=1$ on $\Gamma\mathbb{\left(F\right)}$ and $P:=SR$ is a
projection on $\prod_{\alpha,\beta}C^{\infty}(\overline{U_{\alpha}})$.
Note that $P$ depends on the choice of partition of unity. By looking
into the definitions of $R$ and $S$, we can extend this to have
$P=SR$ as a continuous projection on $\prod_{\alpha,\beta}L^{1}(U_{\alpha})$
and 
\[
\left\Vert P\left(c_{\beta}^{\alpha}\right)_{\alpha,\beta}\right\Vert _{\prod_{\alpha,\beta}W^{m,p}(U_{\alpha})}\lesssim\sum_{\alpha,\beta}\left\Vert c_{\beta}^{\alpha}\right\Vert _{W^{m,p}(U_{\alpha})}\text{ for }m\in\mathbb{N}_{0},p\in\left[1,\infty\right],c_{\beta}^{\alpha}\in W^{m,p}(U_{\alpha})
\]

The keen reader should have noticed we never mentioned the case $p=\infty$
in \Subsecref{Vector-bundles} as we defined $W^{m,p}\Gamma\mathbb{\left(F\right)}$
by the completion of smooth sections, and $C^{\infty}(M)$ is not
dense in $W^{m,\infty}(M)$. Now, however, by using local coordinates,
we are justified in defining $W^{m,p}\Gamma\mathbb{\left(F\right)}=\{\sum_{\alpha,\beta}c_{\beta}^{\alpha}e_{\beta}^{\alpha}:c_{\beta}^{\alpha}\in W^{m,p}(U_{\alpha})\}$
for $m\in\mathbb{N}_{0},p\in[1,\infty]$ with the norm defined (up
to equivalent norms) as
\[
\left\Vert \sum_{\alpha,\beta}c_{\beta}^{\alpha}e_{\beta}^{\alpha}\right\Vert _{W^{m,p}\Gamma\mathbb{F}}:=\left\Vert S\sum_{\alpha,\beta}c_{\beta}^{\alpha}e_{\beta}^{\alpha}\right\Vert _{\prod_{\alpha,\beta}W^{m,p}(U_{\alpha})}
\]

Then $B_{p,q}^{s}\Gamma\left(\mathbb{F}\right)$ and $F_{p,q}^{s}\Gamma\left(\mathbb{F}\right)$
can be defined similarly. In other words, for $m\in\mathbb{N}_{0},p\in[1,\infty],q\in[1,\infty],s\geq0$:
\begin{itemize}
\item $W^{m,p}\Gamma\left(\mathbb{F}\right)\simeq P\prod_{\alpha,\beta}W^{m,p}(U_{\alpha})$
\item $B_{p,q}^{s}\Gamma\left(\mathbb{F}\right)\simeq P\prod_{\alpha,\beta}B_{p,q}^{s}(U_{\alpha})$
\item $F_{p,q}^{s}\Gamma\left(\mathbb{F}\right)\simeq P\prod_{\alpha,\beta}F_{p,q}^{s}(U_{\alpha}),\;p\neq\infty$
\end{itemize}
By using \Blackboxref{diffeomorphism-multiplier}, we can show the
Banach topologies of these spaces are uniquely defined (independent
of the choices of $\psi_{\alpha},U_{\alpha}$). For convenience (such
as working with Holder's inequality), we still use the Sobolev norms
$W^{m,p}$ ($m\in\mathbb{N}_{0},p\in[1,\infty)$) defined globally
in \Subsecref{Vector-bundles}.

All theorems from \secref{Scalar-function-spaces} that worked on
bounded $C^{\infty}$ domains carry over to our setting on $M$, \emph{mutatis
mutandis}. For instance, $B_{3,1}^{\frac{1}{3}}\Gamma(\mathbb{F})\twoheadrightarrow L^{3}\left.\Gamma(\mathbb{F})\right|_{\partial M}$
is a continuous surjection and
\[
B_{3,1}^{\frac{1}{3}}\Gamma(\mathbb{F})=\left(L^{3}\Gamma(\mathbb{F}),W^{1,3}\Gamma(\mathbb{F})\right)_{\frac{1}{3},1}
\]

Moreover, for $p\in(1,\infty)$, $L^{p}\Gamma(\mathbb{F})$ is reflexive.
By Holder's inequality, $\left(L^{p}\Gamma\left(\mathbb{F}\right)\right)^{*}=L^{p'}\Gamma\left(\mathbb{F}\right)$
for $p\in(1,\infty)$.

\subsubsection{Complexification issue}

A small step which we omitted is complexification. As $\mathbb{F}$
is a real vector bundle, the previous definitions only give $W^{m,p}\Gamma\left(\mathbb{F}\right)\simeq P\prod_{\alpha,\beta}W^{m,p}(U_{\alpha},\mathbb{R})$
for $m\in\mathbb{N}_{0},p\in[1,\infty]$. In working with real manifolds,
differential forms/tensors and their dot products, we always assume
real-valued coefficients for sections, but whenever we need to use
theorems involving complex Banach spaces or the theory of function
spaces, we assume an implicit complexification step. Fortunately,
no complications arise from complexification (see \Secref{Complexification}
for the full reasoning), so for the rest of the paper we can ignore
this detail. When we want to be explicit, we will specify the scalars
we are using, e.g. $\mathbb{R}W^{m,p}\Gamma(\mathbb{F})$ versus $\mathbb{C}W^{m,p}\Gamma(\mathbb{F})$.\nomenclature{$\mathbb{R}W^{m,p},\;\mathbb{C}W^{m,p}$}{real and complexified versions of function space\nomrefpage}

\subsection{Differential forms \& boundary\label{subsec:Differential-forms-=000026}}

Unless mentioned otherwise, the metric is the Riemannian metric, and
the connection is the \textbf{Levi-Civita connection}.

For $X\in\mathfrak{X}M,$ define $\mathbf{n}X=\left\langle X,\nu\right\rangle \nu\in\left.\mathfrak{X}M\right|_{\partial M}$
(the \textbf{normal part}) and $\mathbf{t}X=\left.X\right|_{\partial M}-\mathbf{n}X$
(the \textbf{tangential part}). We note that $\mathbf{t}X$ and $\mathbf{n}X$
only depend on $\restr{X}{\partial M}$, so $\mathbf{t}$ and $\mathbf{n}$
can be defined on $\restr{\mathfrak{X}M}{\partial M}$, and by abuse
of notation, $\mathbf{t}\left(\left.\mathfrak{X}M\right|_{\partial M}\right)\iso\mathfrak{X}(\partial M)$.
\nomenclature{$\mathfrak{X}M$}{set of smooth vector fields on $M$\nomrefpage}
\nomenclature{$\mathbf{t}$}{tangential part\nomrefpage}\nomenclature{$\mathbf{n}$}{normal part\nomrefpage}

For $\omega\in\Omega^{k}\left(M\right),$ define $\mathbf{t}\omega$
and $\mathbf{n}\omega$ by 
\[
\mathbf{t}\omega(X_{1},...,X_{k}):=\omega(\mathbf{t}X_{1},...,\mathbf{t}X_{k})\;\;\forall X_{j}\in\mathfrak{X}M,j=1,...,k
\]
and $\mathbf{n}\omega=\left.\omega\right|_{\partial M}-\mathbf{t}\omega$.
\nomenclature{$\Omega^{k}\left(M\right)$}{set of smooth differential forms on $M$\nomrefpage}
By abuse of notation, we similarly observe that $\mathbf{t}\left(\left.\Omega^{k}\left(M\right)\right|_{\partial M}\right)\iso\Omega^{k}(\partial M)=\jmath^{*}\left(\left.\Omega^{k}\left(M\right)\right|_{\partial M}\right)=\jmath^{*}\left(\Omega^{k}\left(M\right)\right)$.

Recall the \textbf{musical isomorphism}: $X_{p}^{\flat}(Y_{p})=\left\langle X_{p},Y_{p}\right\rangle $
and $\left\langle \omega_{p}^{\sharp},Y_{p}\right\rangle =\omega_{p}(Y_{p})$
for $p\in M,\omega_{p}\in T_{p}^{*}M,X_{p}\in T_{p}M,Y_{p}\in T_{p}M$.\nomenclature{$X_{p}^{\flat},\;\omega_{p}^{\sharp}$}{musical isomorphism\nomrefpage}

Recall the usual \textbf{Hodge star operator }$\star:\Omega^{k}(M)\iso\Omega^{n-k}(M)$,
\textbf{exterior derivative} $d:\Omega^{k}(M)\to\Omega^{k+1}(M)$,
\textbf{codifferential} $\delta:\Omega^{k}(M)\to\Omega^{k-1}(M)$,
and \textbf{Hodge Laplacian} $\Delta=-\left(d\delta+\delta d\right)$
(cf. \parencite[Section 2.10]{Taylor_PDE1} and \parencite[Definition 1.2.2]{Schwarz1995})\nomenclature{$\star$}{Hodge star\nomrefpage}\nomenclature{$\delta$}{codifferential\nomrefpage}\nomenclature{$\Delta$}{Hodge Laplacian\nomrefpage}.

We will often use \textbf{Penrose abstract index notation} (cf. \parencite[Section 2.4]{wald1984general}),
which should not be confused with the similar-looking \textbf{Einstein
notation} for local coordinates, or the similar-sounding \textbf{Penrose
graphical notation}. In Penrose notation, we collect the usual identities
in differential geometry (cf. \parencite{jeffrey_lee2009manifolds}):
\begin{itemize}
\item For any tensor $T_{a_{1}...a_{k}}$, define $\left(\nabla T\right)_{ia_{1}...a_{k}}=\nabla_{i}T_{a_{1}...a_{k}}$
and $\div T=\nabla^{i}T_{ia_{2}...a_{k}}.$
\item $\left(d\omega\right)_{ba_{1}...a_{k}}=\left(k+1\right)\widetilde{\nabla}_{[b}\omega_{a_{1}...a_{k}]}\;\forall\omega\in\Omega^{k}(M)$
where $\widetilde{\nabla}$ is any torsion-free connection.
\item $\left(\delta\omega\right)_{a_{1}...a_{k-1}}=-\nabla^{b}\omega_{ba_{1}...a_{k-1}}=-(\div w)_{a_{1}...a_{k-1}}\forall\omega\in\Omega^{k}(M)$
\item $\left(\nabla_{a}\nabla_{b}-\nabla_{b}\nabla_{a}\right)T^{ij}{}_{kl}=-R_{ab\sigma}{}^{i}T^{\sigma j}{}_{kl}-R_{ab\sigma}{}^{j}T^{i\sigma}{}_{kl}+R_{abk}{}^{\sigma}T^{ij}{}_{\sigma l}+R_{abl}{}^{\sigma}T^{ij}{}_{k\sigma}$
for any tensor $T^{ij}{}_{kl}$, where $R$ is the \textbf{Riemann
curvature tensor }and $\nabla$ the Levi-Civita connection.\nomenclature{$R_{abcd}$}{Riemann curvature tensor\nomrefpage}
Similar identities hold for other types of tensors. When we do not
care about the exact indices and how they contract, we can just write
the \textbf{schematic identity }$\left(\nabla_{a}\nabla_{b}-\nabla_{b}\nabla_{a}\right)T^{ij}{}_{kl}=R*T.$
As $R$ is bounded on compact $M$, interchanging derivatives is a
zeroth-order operation on $M$.
\item For tensor $T_{a_{1}...a_{k}}$, define the \textbf{Weitzenbock curvature
operator} \textbf{\nomenclature{$\Ric$}{Weitzenbock curvature operator\nomrefpage}}
\[
\Ric(T)_{a_{1}...a_{k}}=2\sum_{j=1}^{k}\nabla_{[i}\nabla_{a_{j}]}T_{a_{1}...a_{j-1}}{}^{i}{}_{a_{j+1}...a_{k}}=\sum_{j}R_{a_{j}}{}^{\sigma}T_{a_{1}...a_{j-1}\sigma a_{j+1}...a_{k}}-\sum_{j\neq l}R_{a_{j}}{}^{\mu}{}_{a_{l}}{}^{\sigma}T_{a_{1}...\sigma...\mu...a_{k}}
\]
where $R_{ab}=R_{a\sigma b}{}^{\sigma}$ is the \textbf{Ricci tensor}.
The invariant form is 
\[
\Ric(T)(X_{1},...X_{k})=\sum_{a}\left(R(\partial_{i},X_{a})T\right)(X_{1},...,X_{a-1},\partial^{i},X_{a+1},...,X_{k})\;\forall X_{j}\in\mathfrak{X}M
\]
where $\partial^{i}=g^{ij}\partial_{j}$ and $R(\partial_{i},\partial_{j})=\nabla_{i}\nabla_{j}-\nabla_{j}\nabla_{i}$
(Penrose notation). Note that $\left\langle R(\partial_{a},\partial_{b})\partial_{d},\partial_{c}\right\rangle =R_{abcd}$.
Special cases include $\Ric(f)=0\;\forall f\in C^{\infty}(M)$ and
$\Ric(X)_{a}=R_{a}{}^{\sigma}X_{\sigma}\;\forall X\in\mathfrak{X}M$
(justifying the notation $\Ric$).\\
In local coordinates 
\[
\Ric\left(\omega\right)=dx^{j}\wedge\left(R(\partial_{i},\partial_{j})\omega\cdot\partial^{i}\right)\;\forall\omega\in\Omega^{k}(M),
\]
where $\cdot$ stands for contraction (interior product). Then we
have the \textbf{Weitzenbock formula}:
\[
\Delta\omega=\nabla_{i}\nabla^{i}\omega-\Ric(\omega)\;\forall\omega\in\Omega^{k}(M)
\]
where $\nabla_{i}\nabla^{i}\omega=\tr(\nabla^{2}\omega)$ is also
called the \textbf{connection Laplacian}, which differs from the Hodge
Laplacian by a zeroth-order term. The geometry of $M$ and differential
forms are more easily handled by the Hodge Laplacian, while the connection
Laplacian is more useful in calculations with tensors and the Penrose
notation.
\item For tensors $T_{a_{1}...a_{k}}$ and $Q_{a_{1}...a_{k}}$, the \textbf{tensor
inner product} is $\left\langle T,Q\right\rangle =T_{a_{1}...a_{k}}Q^{a_{1}...a_{k}}$.\nomenclature{$\left\langle T,Q\right\rangle$}{tensor inner product\nomrefpage}
But for $\omega,\eta\in\Omega^{k}(M),$ there is another dot product,
called the \textbf{Hodge inner product}, where 
\[
\left\langle \omega,\eta\right\rangle _{\Lambda}=\frac{1}{k!}\left\langle \omega,\eta\right\rangle 
\]
 So $\left|\omega\right|_{\Lambda}=\sqrt{\frac{1}{k!}}\left|\omega\right|.$
Then we define $\left\langle \left\langle \omega,\eta\right\rangle \right\rangle =\int_{M}\left\langle \omega,\eta\right\rangle \vol$
and $\left\langle \left\langle \omega,\eta\right\rangle \right\rangle _{\Lambda}=\int_{M}\left\langle \omega,\eta\right\rangle _{\Lambda}\vol$\nomenclature{$\left\langle \omega,\eta\right\rangle _{\Lambda},\left\langle \left\langle \omega,\;\eta\right\rangle \right\rangle _{\Lambda}$}{Hodge inner product\nomrefpage}.
Recall that $\omega\wedge\star\eta=\left\langle \omega,\eta\right\rangle _{\Lambda}\vol$
$\forall\omega\in\Omega^{k}(M),\forall\eta\in\Omega^{k}(M).$ Also
\[
\left\langle \left\langle d\omega,\eta\right\rangle \right\rangle _{\Lambda}=\left\langle \left\langle \omega,\delta\eta\right\rangle \right\rangle _{\Lambda}\;\forall\omega\in\Omega_{00}^{k}(M),\forall\eta\in\Omega_{00}^{k+1}(M)
\]
So $\left\langle \cdot,\cdot\right\rangle _{\Lambda}$ is more convenient
for integration by parts and the Hodge star. Nevertheless, as they
only differ up to a constant factor, we can still define $W^{m,p}\Omega^{k}(M)$
($m\in\mathbb{N}_{0},p\in[1,\infty))$ by $\left\langle \cdot,\cdot\right\rangle $
as in \Subsecref{The-setting}. Finally, by the Weitzenbock formula
and Penrose notation, we easily get the \textbf{Bochner formula}:
\[
\frac{1}{2}\Delta\left(|\omega|^{2}\right)=\frac{1}{2}\nabla_{i}\nabla^{i}\left(\left\langle \omega,\omega\right\rangle \right)=\left\langle \Delta\omega,\omega\right\rangle +|\nabla\omega|^{2}+\left\langle \Ric\left(\omega\right),\omega\right\rangle 
\]
\end{itemize}
\begin{rem*}
In \parencite{Schwarz1995}, the conventions are a bit different,
with $\Delta=\left(d\delta+\delta d\right),\Delta^{\Lambda}=-\nabla_{i}\nabla^{i},R^{W}=-\Ric$
and $\mathcal{N}$ the inwards unit normal vector field. Also the
difference between $\left\langle \left\langle \cdot,\cdot\right\rangle \right\rangle $
and $\left\langle \left\langle \cdot,\cdot\right\rangle \right\rangle _{\Lambda}$
is not made explicit in the book. We will not use such notation.
\end{rem*}
\begin{lem}
Some basic identities: \label{lem:basic-identities-forms-boundary}
\begin{enumerate}
\item $\forall\omega\in\Omega^{k}\left(M\right):\mathbf{t}\omega=0\iff\jmath^{*}\omega=0$
. Similarly, $\mathbf{n}\omega=0\iff\iota_{\nu}\omega=0$.
\item $\left(\mathbf{t}X\right)^{\flat}=\mathbf{t}(X^{\flat})\;\forall X\in\mathfrak{X}M$
\item $\jmath^{*}\mathbf{t}\omega=\jmath^{*}\omega$, $\mathbf{t}\omega=\iota_{\nu}(\nu^{\flat}\wedge\omega)$,
$\mathbf{n}\omega=\nu^{\flat}\wedge\iota_{\nu}\omega,$ $\mathbf{t}(\omega\wedge\eta)=\mathbf{t}\omega\wedge\mathbf{t}\eta$
$\forall\omega\in\Omega^{k}(M),\forall\eta\in\Omega^{l}(M)$
\item $\left\langle \left\langle \mathbf{t}\omega,\eta\right\rangle \right\rangle _{\Lambda}=\left\langle \left\langle \mathbf{t}\omega,\mathbf{t}\eta\right\rangle \right\rangle _{\Lambda}=\left\langle \left\langle \omega,\mathbf{t}\eta\right\rangle \right\rangle _{\Lambda}$
$\forall\omega,\eta\in\Omega^{k}(M)$
\item $\mathbf{t}\left(\star\omega\right)=\star\left(\mathbf{n}\omega\right)$,
$\mathbf{n}\left(\star\omega\right)=\star\left(\mathbf{t}\omega\right)$,
$\star d\omega=\left(-1\right)^{k+1}\delta\star\omega$, $\star\delta\omega=\left(-1\right)^{k}d\star\omega,$
$\star\Delta\omega=\Delta\star\omega$ $\forall\omega\in\Omega^{k}(M)$
\item $\jmath^{*}\mathbf{t}d\omega=\jmath^{*}d\omega=d^{\partial M}\jmath^{*}\omega=d^{\partial M}\jmath^{*}\mathbf{t}\omega$
$\forall\omega\in\Omega^{k}(M)$
\item Let $\omega\in\Omega^{k}(M)$. If $\mathbf{t}\omega=0$ then $\mathbf{t}d\omega=0$.
If $\mathbf{n}\omega=0$ then $\mathbf{n}\delta\omega=0$.
\item $\iota_{\nu}\omega=\mathbf{t}\left(\iota_{\nu}\omega\right)=\iota_{\nu}\mathbf{n}\omega$
$\forall\omega\in\Omega^{k}(M)$
\item $\jmath^{*}\left(\omega\wedge\star\eta\right)=\left\langle \jmath^{*}\omega,\jmath^{*}\iota_{\nu}\eta\right\rangle _{\Lambda}\vol_{\partial}$
$\forall\omega\in\Omega^{k}(M),\forall\eta\in\Omega^{k+1}(M)$
\end{enumerate}
\end{lem}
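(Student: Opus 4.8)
The plan is to reduce all nine items to a single pointwise picture at a boundary point $p \in \partial M$. Fix an oriented orthonormal basis $(e_1,\ldots,e_n)$ of $T_pM$ with $e_n = \nu$, so that $(e_1,\ldots,e_{n-1})$ is an oriented orthonormal basis of $T_p\partial M$, and let $(e^1,\ldots,e^n)$ be the dual coframe, with $e^n = \nu^\flat$. Every $\omega \in \Omega^k(M)$ then decomposes at $p$ uniquely as $\omega|_p = \alpha + e^n \wedge \beta$ with $\alpha, \beta$ built only from $e^1,\ldots,e^{n-1}$, and from the definitions of $\mathbf{t}$ and $\mathbf{n}$ one reads off $\mathbf{t}\omega|_p = \alpha$, $\mathbf{n}\omega|_p = e^n \wedge \beta$. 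With this normal form, (1), (3), (8) are immediate by inspection: $\jmath^*$ at $p$ kills exactly the forms containing an $e^n$ factor, $\iota_\nu\alpha = 0$ and $\iota_\nu(e^n\wedge\beta) = \beta$, and wedging of tangential forms is closed. Item (2) follows by unwinding $\mathbf{t}X = X|_{\partial M} - \langle X,\nu\rangle\nu$ against the definition of $\flat$, and for (4) the normal form shows $\langle\mathbf{t}\omega,\mathbf{n}\eta\rangle = 0$ and $\langle\mathbf{n}\omega,\mathbf{t}\eta\rangle = 0$ fiberwise (tangential and normal parts sit in orthogonal subspaces of $\Lambda^k$), whence all three equalities by bilinearity.

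For the Hodge-star identities in (5) I would again use the frame: $\star$ sends a purely tangential $\alpha$ to $e^n \wedge \star_{\partial M}\alpha$ and sends $e^n \wedge \beta$ to (a sign times) $\star_{\partial M}\beta$, which is tangential; isolating which summand survives $\mathbf{t}$ respectively $\mathbf{n}$ gives $\mathbf{t}\star = \star\mathbf{n}$ and $\mathbf{n}\star = \star\mathbf{t}$. The relations $\star d = (-1)^{k+1}\delta\star$, $\star\delta = (-1)^{k}d\star$, $\star\Delta = \Delta\star$ are then purely algebraic consequences of the definition $\delta = \pm\,\star d\star$ together with $\star\star = (-1)^{k(n-k)}$, as recorded in \parencite[Section 2.10]{Taylor_PDE1} and \cite{Schwarz1995}. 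Item (6) combines naturality of the exterior derivative under pullback, $\jmath^* d = d^{\partial M}\jmath^*$, with $\jmath^*\mathbf{t}\omega = \jmath^*\omega$ from (3) applied to both $\omega$ and $d\omega$. Then (7) is a two-line deduction: $\mathbf{t}\omega = 0 \Rightarrow \jmath^*\omega = 0$ (by (1)) $\Rightarrow \jmath^* d\omega = d^{\partial M}\jmath^*\omega = 0 \Rightarrow \mathbf{t}d\omega = 0$ (by (1) again); the codifferential case follows by conjugating with $\star$ via (5), i.e. $\mathbf{n}\omega = 0 \Rightarrow \mathbf{t}\star\omega = 0 \Rightarrow \mathbf{t}d\star\omega = 0 \Rightarrow \mathbf{n}\star d\star\omega = 0 \Rightarrow \mathbf{n}\delta\omega = 0$.

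Finally, for (9): write $\jmath^*(\omega\wedge\star\eta) = \jmath^*\omega \wedge \jmath^*(\star\eta)$, and use $\jmath^*(\star\eta) = \jmath^*(\mathbf{t}\star\eta) = \jmath^*(\star\,\mathbf{n}\eta)$ by (3) and (5), with $\mathbf{n}\eta = \nu^\flat \wedge \iota_\nu\eta$. The frame computation from (5) identifies $\jmath^*\star_M(\nu^\flat \wedge \iota_\nu\eta)$ with $\star_{\partial M}\jmath^*\iota_\nu\eta$; combined with the defining property of the Hodge star on the $(n-1)$-manifold $\partial M$ this yields $\jmath^*(\omega\wedge\star\eta) = \jmath^*\omega \wedge \star_{\partial M}\jmath^*\iota_\nu\eta = \langle\jmath^*\omega,\jmath^*\iota_\nu\eta\rangle_\Lambda\,\vol_{\partial}$. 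The only genuinely delicate part of the whole lemma is the sign bookkeeping — confirming in (5) that the $\mathbf{t}$/$\mathbf{n}$–$\star$ identities carry no extraneous sign under our conventions, and in (9) that the boundary orientation $\vol_{\partial} = \jmath^*(\iota_\nu\vol)$ makes the constant exactly $1$; everything here is consistent with the conventions of \cite{Schwarz1995}, which we follow, so the remaining work is mechanical verification in the adapted frame.
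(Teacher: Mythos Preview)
Your proposal is correct. For items (1)--(8) the paper simply declines to give any argument (``We will only prove the last assertion''), so your adapted-frame sketch is strictly more than what the paper offers there, and it is the standard way to see these facts.

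For item (9) the two arguments diverge. You pass through the boundary's own Hodge star: after reducing to $\jmath^*\omega \wedge \jmath^*(\star_M\mathbf{n}\eta)$, you invoke the compatibility $\jmath^*\star_M(\nu^\flat\wedge\gamma) = \star_{\partial M}\jmath^*\gamma$ for tangential $\gamma$, and then apply the defining relation $\alpha\wedge\star_{\partial M}\beta = \langle\alpha,\beta\rangle_\Lambda\vol_\partial$ on $\partial M$. The paper instead never introduces $\star_{\partial M}$: it lifts the identity back to $\restr{\Omega}{\partial M}$ by wedging with $\nu^\flat$ (using $\vol|_{\partial M} = \nu^\flat\wedge\iota_\nu\vol$), then works entirely with $\star_M$ via
\[
\nu^\flat\wedge\mathbf{t}\omega\wedge\mathbf{t}(\star\eta) = \nu^\flat\wedge\mathbf{t}\omega\wedge\star\mathbf{n}\eta = \langle\nu^\flat\wedge\mathbf{t}\omega,\,\nu^\flat\wedge\iota_\nu\eta\rangle_\Lambda\vol = \langle\mathbf{t}\omega,\iota_\nu\eta\rangle_\Lambda\vol,
\]
the last step being that $\nu^\flat\wedge(\cdot)$ is an isometry from tangential to normal forms. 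The paper's route is slightly more economical in that it sidesteps the $\star_M$--$\star_{\partial M}$ compatibility and its attendant sign check; your route has the advantage of making the intrinsic boundary geometry explicit. Both are fine.
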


\begin{proof}
We will only prove the last assertion. Observe that $\jmath^{*}\left(\vol\right)=0$
so $\vol|_{\partial M}=\mathbf{n}\vol=\nu^{\flat}\wedge\iota_{\nu}\vol$.
Recall $\vol_{\partial}=\jmath^{*}(\iota_{\nu}\vol)$ and $\mathbf{t}\Omega^{k}\iso\jmath^{*}\Omega^{k}$,
so the problem is equivalent to proving
\[
\nu^{\flat}\wedge\mathbf{t}\omega\wedge\mathbf{t}\star\eta=\left\langle \mathbf{t}\omega,\mathbf{t}\iota_{\nu}\eta\right\rangle _{\Lambda}\mathrm{\mathrm{vol}}\text{ on }\partial M
\]
Simply observe that $\mathbf{t}\iota_{\nu}\eta=\iota_{\nu}\eta$ and
\[
\nu^{\flat}\wedge\mathbf{t}\omega\wedge\mathbf{t}\left(\star\eta\right)=\nu^{\flat}\wedge\mathbf{t}\omega\wedge\star\mathbf{n}\eta=\left\langle \nu^{\flat}\wedge\mathbf{t}\omega,\mathbf{n}\eta\right\rangle _{\Lambda}\mathrm{vol}=\left\langle \nu^{\flat}\wedge\mathbf{t}\omega,\nu^{\flat}\wedge\iota_{\nu}\eta\right\rangle _{\Lambda}\mathrm{vol}=\left\langle \mathbf{t}\omega,\iota_{\nu}\eta\right\rangle _{\Lambda}\mathrm{vol}
\]
\end{proof}
\begin{thm}[Integration of tensors and forms by parts]
$\;$\label{thm:integrate_tensor_forms_by_parts}
\begin{enumerate}
\item For tensors $T_{a_{1}...a_{k}}$ and $Q_{a_{1}...a_{k+1}}$,
\begin{equation}
\int_{M}\nabla_{i}\left(T_{a_{1}...a_{k}}Q^{ia_{1}...a_{k}}\right)=\int_{M}\nabla_{i}T_{a_{1}...a_{k}}Q^{ia_{1}...a_{k}}+\int_{M}T_{a_{1}...a_{k}}\nabla_{i}Q^{ia_{1}...a_{k}}=\int_{\partial M}\nu_{i}T_{a_{1}...a_{k}}Q^{ia_{1}...a_{k}}
\end{equation}
In other words, $\int_{M}\left\langle \nabla T,Q\right\rangle \vol+\int_{M}\left\langle T,\div Q\right\rangle \vol=\int_{\partial M}\left\langle \nu\otimes T,Q\right\rangle \vol_{\partial}$.
\item For $p\in(1,\infty),\omega\in\mathbb{R}W^{1,p}\Omega^{k},\eta\in\mathbb{R}W^{1,p'}\Omega^{k+1}:$
\begin{equation}
\left\langle \left\langle d\omega,\eta\right\rangle \right\rangle _{\Lambda}=\left\langle \left\langle \omega,\delta\eta\right\rangle \right\rangle _{\Lambda}+\left\langle \left\langle \jmath^{*}\omega,\jmath^{*}\iota_{\nu}\eta\right\rangle \right\rangle _{\Lambda}\label{eq:by_parts_d_delta}
\end{equation}
 where $\left\langle \left\langle \jmath^{*}\omega,\jmath^{*}\iota_{\nu}\eta\right\rangle \right\rangle _{\Lambda}=\int_{\partial M}\left\langle \jmath^{*}\omega,\jmath^{*}\iota_{\nu}\eta\right\rangle _{\Lambda}\vol_{\partial}$.
\item For $p\in(1,\infty),\omega\in\mathbb{R}W^{2,p}\Omega^{k}(M),\eta\in\mathbb{R}W^{1,p'}\Omega^{k}(M):$
\begin{equation}
\mathcal{D}(\omega,\eta)=\left\langle \left\langle -\Delta\omega,\eta\right\rangle \right\rangle _{\Lambda}+\left\langle \left\langle \jmath^{*}\iota_{\nu}d\omega,\jmath^{*}\eta\right\rangle \right\rangle _{\Lambda}-\left\langle \left\langle \jmath^{*}\delta\omega,\jmath^{*}\iota_{\nu}\eta\right\rangle \right\rangle _{\Lambda}
\end{equation}
where $\mathcal{D}(\omega,\eta):=\left\langle \left\langle d\omega,d\eta\right\rangle \right\rangle _{\Lambda}+\left\langle \left\langle \delta\omega,\delta\eta\right\rangle \right\rangle _{\Lambda}$
is called the \textbf{Dirichlet integral}. \nomenclature{$\mathcal{D}(\omega,\eta)$}{Dirichlet integral\nomrefpage}
\end{enumerate}
\end{thm}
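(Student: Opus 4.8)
The plan is to establish the three identities in sequence, each from the preceding one together with the pointwise relations in \Lemref{basic-identities-forms-boundary}. \textbf{Part (1).} For smooth tensors $T_{a_{1}\dots a_{k}}$ and $Q_{a_{1}\dots a_{k+1}}$ I would form the vector field $V^{i}=T_{a_{1}\dots a_{k}}Q^{ia_{1}\dots a_{k}}$. Metric compatibility of $\nabla$ gives the Leibniz rule $\nabla_{i}V^{i}=(\nabla_{i}T_{a_{1}\dots a_{k}})Q^{ia_{1}\dots a_{k}}+T_{a_{1}\dots a_{k}}\nabla_{i}Q^{ia_{1}\dots a_{k}}$, which is the first equality once integrated over $M$. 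The second equality is the divergence theorem $\int_{M}\Div V\,\vol=\int_{\partial M}\langle V,\nu\rangle\,\vol_{\partial}$, which in turn is Stokes' theorem $\int_{M}d(\iota_{V}\vol)=\int_{\partial M}\jmath^{*}(\iota_{V}\vol)$ combined with Cartan's formula $d(\iota_{V}\vol)=(\Div V)\vol$ and the identity $\jmath^{*}(\iota_{V}\vol)=\langle V,\nu\rangle\vol_{\partial}$ (the tangential part of $V$ contributes $0$ after pullback). Recasting in invariant form yields the stated tensor Green identity; no density argument is needed since this is a statement about smooth tensors.

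\textbf{Part (2).} First for smooth $\omega\in\Omega^{k}$, $\eta\in\Omega^{k+1}$: since $\eta$ is alternating, the antisymmetrization in $(d\omega)_{ba_{1}\dots a_{k}}=(k+1)\nabla_{[b}\omega_{a_{1}\dots a_{k}]}$ may be dropped against $\eta^{ba_{1}\dots a_{k}}$, so $\langle d\omega,\eta\rangle_{\Lambda}=\tfrac{1}{k!}(\nabla_{b}\omega_{a_{1}\dots a_{k}})\eta^{ba_{1}\dots a_{k}}$ and hence $\left\langle\left\langle d\omega,\eta\right\rangle\right\rangle_{\Lambda}=\tfrac{1}{k!}\int_{M}\langle\nabla\omega,\eta\rangle$. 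Applying (1) with $T=\omega,\,Q=\eta$ and using $(\delta\eta)_{a_{1}\dots a_{k}}=-\nabla^{b}\eta_{ba_{1}\dots a_{k}}$ turns the bulk term into $\left\langle\left\langle\omega,\delta\eta\right\rangle\right\rangle_{\Lambda}$ and the boundary term into $\tfrac{1}{k!}\int_{\partial M}\omega_{a_{1}\dots a_{k}}(\iota_{\nu}\eta)^{a_{1}\dots a_{k}}\vol_{\partial}=\int_{\partial M}\langle\omega,\iota_{\nu}\eta\rangle_{\Lambda}\vol_{\partial}$; since $\iota_{\nu}\eta$ is tangential and $\mathbf{t}(\cdot)|_{\partial M}\cong\jmath^{*}(\cdot)$ is a fiberwise isometry onto $\Omega^{\bullet}(\partial M)$, this last integral equals $\left\langle\left\langle\jmath^{*}\omega,\jmath^{*}\iota_{\nu}\eta\right\rangle\right\rangle_{\Lambda}$ after discarding $\mathbf{n}\omega$ by \Lemref{basic-identities-forms-boundary} (equivalently, integrate $d(\omega\wedge\star\eta)=\langle d\omega,\eta\rangle_{\Lambda}\vol-\langle\omega,\delta\eta\rangle_{\Lambda}\vol$ over $M$ and invoke $\jmath^{*}(\omega\wedge\star\eta)=\langle\jmath^{*}\omega,\jmath^{*}\iota_{\nu}\eta\rangle_{\Lambda}\vol_{\partial}$ from the same lemma). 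To pass to general $\omega\in\mathbb{R}W^{1,p}\Omega^{k}$, $\eta\in\mathbb{R}W^{1,p'}\Omega^{k+1}$, approximate both by smooth forms: $d$ and $\delta$ are bounded $W^{1,\cdot}\to L^{\cdot}$, and by the trace theorem $\omega\mapsto\jmath^{*}\omega$ and $\eta\mapsto\jmath^{*}\iota_{\nu}\eta$ are bounded into $L^{p}(\partial M)$ and $L^{p'}(\partial M)$, so each of the four terms is continuous and passes to the limit by Hölder.

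\textbf{Part (3).} I would apply (2) twice, always arranging for the extra derivative to land on $\omega$ so the regularity hypotheses are met. Taking the first slot of (2) to be $\eta\in\mathbb{R}W^{1,p'}\Omega^{k}$ and the second to be $d\omega\in\mathbb{R}W^{1,p}\Omega^{k+1}$ (legitimate since $\omega\in W^{2,p}$), and using symmetry of $\left\langle\left\langle\cdot,\cdot\right\rangle\right\rangle_{\Lambda}$,
\[
\left\langle\left\langle d\omega,d\eta\right\rangle\right\rangle_{\Lambda}=\left\langle\left\langle\delta d\omega,\eta\right\rangle\right\rangle_{\Lambda}+\left\langle\left\langle\jmath^{*}\iota_{\nu}d\omega,\jmath^{*}\eta\right\rangle\right\rangle_{\Lambda}.
\]
Taking instead the first slot to be $\delta\omega\in\mathbb{R}W^{1,p}\Omega^{k-1}$ and the second to be $\eta\in\mathbb{R}W^{1,p'}\Omega^{k}$,
\[
\left\langle\left\langle\delta\omega,\delta\eta\right\rangle\right\rangle_{\Lambda}=\left\langle\left\langle d\delta\omega,\eta\right\rangle\right\rangle_{\Lambda}-\left\langle\left\langle\jmath^{*}\delta\omega,\jmath^{*}\iota_{\nu}\eta\right\rangle\right\rangle_{\Lambda}.
\]
Adding these and using $\delta d+d\delta=-\Delta$ gives precisely $\mathcal{D}(\omega,\eta)=\left\langle\left\langle-\Delta\omega,\eta\right\rangle\right\rangle_{\Lambda}+\left\langle\left\langle\jmath^{*}\iota_{\nu}d\omega,\jmath^{*}\eta\right\rangle\right\rangle_{\Lambda}-\left\langle\left\langle\jmath^{*}\delta\omega,\jmath^{*}\iota_{\nu}\eta\right\rangle\right\rangle_{\Lambda}$; notably no Hodge-star gymnastics are required with this ordering of arguments.

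\textbf{Main obstacle.} The mathematical content is classical (essentially \cite{Schwarz1995}), so the real care is organizational. The first subtlety is in (3): the arguments of (2) must be paired in exactly the order that keeps every hypothesis at the regularity actually available — any other pairing would demand a derivative of $\eta$ that we do not have, since $\eta$ is only $W^{1,p'}$. The second is in (2): one must verify that all four terms, the two boundary pairings included, depend continuously on $(\omega,\eta)$ in the $W^{1,p}\times W^{1,p'}$ topology, for which it is essential that traces of $W^{1,p}$ forms lie in $L^{p}(\partial M)$. The only genuinely delicate \emph{computation} is the Penrose bookkeeping in (2) — the $1/k!$ factors coming from the Hodge inner product and the sign in $\delta\eta=-\Div\eta$.
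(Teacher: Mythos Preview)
Your proof is correct and matches the paper's approach. The paper's argument for (2) is exactly the wedge-product/Stokes computation you list parenthetically (starting from $\jmath^{*}(\omega\wedge\star\eta)$ and using Lemma~\ref{lem:basic-identities-forms-boundary}(9)), rather than the Penrose route via (1); but since you give both and they are obviously equivalent, and since the paper dismisses (1) and (3) with ``divergence theorem'' and ``Trivial'' respectively, there is no substantive difference --- you have simply supplied the details the paper omits, including the careful ordering of arguments in (3) so that the $W^{2,p}$ derivative always lands on $\omega$.
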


\begin{proof}
$\;$
\begin{enumerate}
\item Let $X^{i}=T_{a_{1}...a_{k}}Q^{ia_{1}...a_{k}}.$ Then it is just
the divergence theorem.
\item By approximation, it is enough to prove the smooth case.
\begin{align*}
\int_{\partial M}\left\langle \jmath^{*}\omega,\jmath^{*}\iota_{\nu}\eta\right\rangle _{\Lambda}\vol_{\partial} & =\int_{\partial M}\jmath^{*}\left(\omega\wedge\star\eta\right)=\int_{M}d\left(\omega\wedge\star\eta\right)\\
 & =\int_{M}d\omega\wedge\star\eta+(-1)^{k}\int_{M}\omega\wedge d\star\eta=\left\langle \left\langle d\omega,\eta\right\rangle \right\rangle _{\Lambda}-\left\langle \left\langle \omega,\delta\eta\right\rangle \right\rangle _{\Lambda}
\end{align*}
\item Trivial. 
\end{enumerate}
\end{proof}

\subsection{Boundary conditions and potential theory}
\begin{defn}
We define:
\begin{itemize}
\item $\Omega_{D}^{k}(M)=\{\omega\in\Omega^{k}(M):\mathbf{t}\omega=0\}$
(\textbf{Dirichlet boundary condition})
\item $\Omega_{\text{hom}D}^{k}(M)=\{\omega\in\Omega^{k}(M):\mathbf{t}\omega=0,\mathbf{t}\delta\omega=0\}$
(\textbf{relative Dirichlet boundary condition})
\item $\Omega_{N}^{k}(M)=\{\omega\in\Omega^{k}(M):\mathbf{n}\omega=0\}$
(\textbf{Neumann boundary condition})
\item $\Omega_{\hom N}^{k}(M)=\{\omega\in\Omega^{k}(M):\mathbf{n}\omega=0,\mathbf{n}d\omega=0\}$
(\textbf{absolute Neumann boundary condition})
\item $\Omega_{0}^{k}\left(M\right)=\Omega_{D}^{k}\left(M\right)\cap\Omega_{N}^{k}\left(M\right)$
(\textbf{trace-zero boundary condition})
\item $\mathcal{H}^{k}(M)=\{\omega\in\Omega^{k}(M):d\omega=0,\delta\omega=0\}$
(\textbf{harmonic fields})
\item $\mathcal{H}_{D}^{k}(M)=\mathcal{H}^{k}(M)\cap\Omega_{D}^{k}(M)$
(\textbf{Dirichlet fields})
\item $\mathcal{H}_{N}^{k}(M)=\mathcal{H}^{k}(M)\cap\Omega_{N}^{k}(M)$
(\textbf{Neumann fields})
\end{itemize}
\end{defn}

\begin{rem*}
\nomenclature{$\Omega_{D}^{k},\Omega_{\text{hom}D}^{k}$}{different Dirichlet conditions for differential forms\nomrefpage}\nomenclature{$\Omega_{N}^{k},\Omega_{\hom N}^{k}$}{different Neumann conditions for differential forms\nomrefpage}\nomenclature{$\mathcal{H}^{k},\mathcal{H}_{D}^{k},\mathcal{H}_{N}^{k}$}{harmonic fields, then with Dirichlet and Neumann conditions\nomrefpage}

In writing the function spaces, we omit $M$ when there is no possible
confusion. Note that $\Omega_{00}^{k}$ (compact support in $\accentset{\circ}{M}$)
is different from $\Omega_{0}^{k}$.

We can readily extend these definitions to less regular spaces by
replacing $\omega\in\Omega^{k}$ with, for example, $\omega\in B_{3,1}^{\frac{1}{3}}\Omega^{k}$.
Boundary conditions are defined via the trace theorem, and therefore
require some regularity. For example, $B_{3,1}^{\frac{1}{3}}\Omega_{N}^{k}$
makes sense, while $L^{2}\Omega_{N}^{k}$ and $H^{1}\Omega_{\text{hom}N}^{k}$
do not make sense.

Observe that $L^{2}\text{-cl}\left(\Omega_{N}^{k}\right)$ (\textbf{closure}
in the $L^{2}$ norm)\nomenclature{$L^{2}\text{-cl}\left(\cdot\right)$}{closure under $L^2$ norm\nomrefpage}
is just $L^{2}\Omega^{k}$ since $\Omega_{00}^{k}$ is dense in $L^{2}\Omega^{k}$.

Most of these symbols come from \parencite{Schwarz1995}. Note that
in \parencite{Schwarz1995}, the difference between $L^{2}X$ and
$L^{2}\text{-cl}(X)$ (where $X$ is some space) is not made explicit.

Function spaces of type $p=\infty$ are problematic since the smooth
members are not dense (see \Corref{sobolev_and_density}). For instance,
$W^{m,\infty}\Omega^{k}\neq W^{m,\infty}\text{-cl}(\Omega^{k})$ in
general.

A special case is when $k=0$: $\Omega_{N}^{0}(M)=\Omega^{0}(M)=C^{\infty}(M)$
and $\Omega_{\text{hom}D}^{0}(M)=\Omega_{D}^{0}(M)$. Indeed, the
conditions for $\Omega_{\text{hom}D}^{0}$ and $\Omega_{\text{hom}N}^{0}$
are what analysts often call ``Dirichlet'' and ``Neumann'' boundary
conditions respectively.

In fluid dynamics, the condition for $\Omega_{N}^{1}$ is also called
``impermeable'', while $\Omega_{0}^{1}$ is ``no-slip''. On the
other hand, $\Omega_{\text{hom}N}^{1}$ is often given various names,
such as ``Navier-type'', ``free boundary'' or ``Hodge'' \parencite{Mitrea2009_Sylvie_Neumann_Laplacian,Monniaux_2013_Various_boundary_Stokes,Baba2016}.
The consensus, however, seems to be that $\Omega_{\text{hom}N}^{1}$
should be called the ``absolute boundary condition'' \parencite{wu1991_absolute,Hsu_2002_Multiplier_Heat_Absolute,Chen_2009_Stokes_absolute,baudoin2017stochastic_absolute,ouyang2017multiplicative},
which explains our choice of naming.
\end{rem*}
\begin{lem}
We have \textbf{Hodge duality}:
\begin{itemize}
\item $\star:\Omega_{D}^{k}(M)\iso\Omega_{N}^{n-k}(M)$, $\star:\Omega_{\hom D}^{k}(M)\iso\Omega_{\hom N}^{n-k}(M)$,
$\star:\mathcal{H}_{D}^{k}(M)\iso\mathcal{H}_{N}^{n-k}(M)$.
\item $\nabla_{X}(\star\omega)=\star\left(\nabla_{X}\omega\right)$, $\left|\star\omega\right|_{\Lambda}=\left|\omega\right|_{\Lambda}$
for $\omega\in\Omega^{k},X\in\mathfrak{X}M$.
\item For $m\in\mathbb{N}_{0},p\in[1,\infty)$, we have $\star:W^{m,p}\Omega_{D}^{k}(M)\iso W^{m,p}\Omega_{N}^{n-k}(M)$,
$\star:W^{m,p}\Omega_{\hom D}^{k}(M)\iso W^{m,p}\Omega_{\hom N}^{n-k}(M)$.
\end{itemize}
\end{lem}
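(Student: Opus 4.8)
The plan is to reduce the whole lemma to the pointwise algebraic identities for $\star$ already recorded in \Lemref{basic-identities-forms-boundary} (items 5--6): $\mathbf{n}(\star\omega)=\star(\mathbf{t}\omega)$, $\mathbf{t}(\star\omega)=\star(\mathbf{n}\omega)$, $\star d\omega=(-1)^{k+1}\delta\star\omega$, $\star\delta\omega=(-1)^{k}d\star\omega$, together with $\star\star=(-1)^{k(n-k)}$ on $\Omega^{k}$ and the defining relation $\alpha\wedge\star\beta=\langle\alpha,\beta\rangle_{\Lambda}\vol$.

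For the second bullet, the isometry is the computation $\langle\star\omega,\star\omega\rangle_{\Lambda}\vol=\star\omega\wedge\star\star\omega=(-1)^{k(n-k)}\star\omega\wedge\omega=\omega\wedge\star\omega=\langle\omega,\omega\rangle_{\Lambda}\vol$, using $\star\omega\wedge\omega=(-1)^{k(n-k)}\omega\wedge\star\omega$. For $\nabla_{X}(\star\omega)=\star(\nabla_{X}\omega)$ I would fix an arbitrary point $p$ and an orthonormal frame parallel at $p$: there $\star$ acts on the induced basis of $\Lambda^{\bullet}T^{*}M$ by constant coefficients and the covariant derivatives of the basis forms vanish, so the two sides agree at $p$, and $p$ was arbitrary (equivalently: $\nabla$ is metric and kills $\vol$, while $\star$ is built from $\langle\cdot,\cdot\rangle$ and $\vol$). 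By the Leibniz rule this gives $\star\,\nabla^{(j)}\omega=\nabla^{(j)}(\star\omega)$ for every $j$, hence $\left|\star\omega\right|_{J^{m}}=\left|\omega\right|_{J^{m}}$ pointwise.

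The first bullet is then a direct substitution on each space. If $\mathbf{t}\omega=0$ then $\mathbf{n}(\star\omega)=\star(\mathbf{t}\omega)=0$, so $\star(\Omega_{D}^{k})\subset\Omega_{N}^{n-k}$; since $\star$ is a linear bijection $\Omega^{k}\to\Omega^{n-k}$ with inverse $(-1)^{k(n-k)}\star$, the symmetric computation gives $\star(\Omega_{N}^{n-k})\subset\Omega_{D}^{k}$, so $\star:\Omega_{D}^{k}\iso\Omega_{N}^{n-k}$. If moreover $\mathbf{t}\delta\omega=0$ then $\mathbf{n}(d\star\omega)=(-1)^{k}\mathbf{n}(\star\delta\omega)=(-1)^{k}\star(\mathbf{t}\delta\omega)=0$, giving $\star:\Omega_{\hom D}^{k}\iso\Omega_{\hom N}^{n-k}$. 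If $\omega\in\mathcal{H}_{D}^{k}$, i.e.\ $d\omega=\delta\omega=\mathbf{t}\omega=0$, then $d\star\omega=(-1)^{k}\star\delta\omega=0$, $\delta\star\omega=(-1)^{k+1}\star d\omega=0$ and $\mathbf{n}\star\omega=\star\mathbf{t}\omega=0$, so $\star\omega\in\mathcal{H}_{N}^{n-k}$, and $(-1)^{k(n-k)}\star$ does the reverse; hence $\star:\mathcal{H}_{D}^{k}\iso\mathcal{H}_{N}^{n-k}$.

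For the third bullet, the second bullet shows $\star$ is a smooth bundle map with constant coefficients in any orthonormal frame, so by the boundedness of smooth bundle maps on $W^{m,p}$ (cf.\ \Blackboxref{diffeomorphism-multiplier} and \Subsecref{Compatibility-with-scalar}, using compactness of $M$) it is bounded $W^{m,p}\Omega^{k}\to W^{m,p}\Omega^{n-k}$ with bounded inverse $(-1)^{k(n-k)}\star$, i.e.\ a topological isomorphism, for all $m\in\mathbb{N}_{0}$, $p\in[1,\infty)$. The commutations $\star\mathbf{t}=\mathbf{n}\star$, $\star d=\pm\,\delta\star$, $\star\delta=\pm\,d\star$ are identities of continuous operators on smooth forms, hence persist on $W^{m,p}$ by density and continuity, using the trace theorem (\Blackboxref{trace_ext}, \Factref{fact_trace}) for the statements involving $\mathbf{t}$; so the first-bullet computations carry over at the level of traces, showing $\star$ maps $W^{m,p}\Omega_{D}^{k}$ onto $W^{m,p}\Omega_{N}^{n-k}$ and $W^{m,p}\Omega_{\hom D}^{k}$ onto $W^{m,p}\Omega_{\hom N}^{n-k}$. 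Restricting the topological isomorphism $\star:W^{m,p}\Omega^{k}\iso W^{m,p}\Omega^{n-k}$ to these closed subspaces finishes it. The one genuinely delicate point is precisely this last step — verifying that $\star$ intertwines the trace maps that \emph{define} $\Omega_{D}^{k}$, $\Omega_{\hom D}^{k}$ etc.\ in the $W^{m,p}$ category, not merely on smooth forms — but it is a routine density argument; everything else is bookkeeping with the sign $(-1)^{k(n-k)}$.
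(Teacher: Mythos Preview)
The paper states this lemma without proof, treating it as a routine consequence of the identities already collected in \Lemref{basic-identities-forms-boundary}. Your argument is correct and supplies exactly those details. One small inaccuracy: from $|\star\omega|_{\Lambda}=|\omega|_{\Lambda}$ you deduce $|\star\omega|_{J^{m}}=|\omega|_{J^{m}}$, but the $J^{m}$-norm in \Subsecref{Vector-bundles} is built from the \emph{tensor} inner product, and $\langle\cdot,\cdot\rangle_{\Lambda}=\tfrac{1}{k!}\langle\cdot,\cdot\rangle$, so one only gets $|\nabla^{(j)}(\star\omega)|=\sqrt{(n-k)!/k!}\,|\nabla^{(j)}\omega|$. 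This $k$-dependent constant is of course harmless for the $W^{m,p}$ isomorphism you actually need.
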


We stress that harmonic fields are \textbf{harmonic forms}, i.e. $\Delta\omega=0$,
but the converse is not true in general.
\begin{thm}[4 versions]
\label{thm:4_version_gaffney} Let $\omega\in\Omega^{k}(M)$ be a
harmonic form. Then $\omega$ is a harmonic field if either
\begin{enumerate}
\item $\mathbf{t}\omega=0,\mathbf{n}\omega=0$ (trace-zero)
\item $\mathbf{t}\omega=0,\mathbf{t}\delta\omega=0$ (relative Dirichlet)
\item $\mathbf{n}\omega=0,\mathbf{n}d\omega=0$ (absolute Neumann)
\item $\mathbf{t}\delta\omega=0,\mathbf{n}d\omega=0$ (\textbf{Gaffney})
\end{enumerate}
\end{thm}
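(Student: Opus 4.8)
The plan is to test the Dirichlet-integral identity of \Thmref{integrate_tensor_forms_by_parts}, part (3), against $\eta=\omega$ itself. Since $\omega$ is smooth and $M$ is compact, the regularity hypotheses there hold trivially, and we obtain
\[
\left\langle \left\langle d\omega,d\omega\right\rangle \right\rangle _{\Lambda}+\left\langle \left\langle \delta\omega,\delta\omega\right\rangle \right\rangle _{\Lambda}=\left\langle \left\langle -\Delta\omega,\omega\right\rangle \right\rangle _{\Lambda}+\left\langle \left\langle \jmath^{*}\iota_{\nu}d\omega,\jmath^{*}\omega\right\rangle \right\rangle _{\Lambda}-\left\langle \left\langle \jmath^{*}\delta\omega,\jmath^{*}\iota_{\nu}\omega\right\rangle \right\rangle _{\Lambda}.
\]
By hypothesis $\Delta\omega=0$, so the bulk term $\left\langle \left\langle -\Delta\omega,\omega\right\rangle \right\rangle _{\Lambda}$ vanishes. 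Both $\left\langle \left\langle d\omega,d\omega\right\rangle \right\rangle _{\Lambda}$ and $\left\langle \left\langle \delta\omega,\delta\omega\right\rangle \right\rangle _{\Lambda}$ are non-negative and, since $\omega$ is smooth, vanish only when $d\omega\equiv0$ respectively $\delta\omega\equiv0$. Hence it suffices to show that under each of the four boundary conditions \emph{both} remaining boundary integrals are zero; this forces $d\omega=0$ and $\delta\omega=0$, i.e. $\omega$ is a harmonic field.

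For the bookkeeping I would use \Lemref{basic-identities-forms-boundary}, part (1): $\mathbf{t}\alpha=0\iff\jmath^{*}\alpha=0$, and $\mathbf{n}\alpha=0\iff\iota_{\nu}\alpha=0$ (the latter in particular giving $\jmath^{*}\iota_{\nu}\alpha=0$). The four cases then read off immediately. Trace-zero: $\mathbf{t}\omega=0$ kills $\jmath^{*}\omega$ and $\mathbf{n}\omega=0$ kills $\jmath^{*}\iota_{\nu}\omega$. Relative Dirichlet: $\mathbf{t}\omega=0$ kills $\jmath^{*}\omega$ and $\mathbf{t}\delta\omega=0$ kills $\jmath^{*}\delta\omega$. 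Absolute Neumann: $\mathbf{n}d\omega=0$ kills $\jmath^{*}\iota_{\nu}d\omega$ and $\mathbf{n}\omega=0$ kills $\jmath^{*}\iota_{\nu}\omega$. Gaffney: $\mathbf{n}d\omega=0$ kills $\jmath^{*}\iota_{\nu}d\omega$ and $\mathbf{t}\delta\omega=0$ kills $\jmath^{*}\delta\omega$. In every case the right-hand side vanishes entirely.

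The underlying pattern, worth recording in the write-up, is that the first boundary integral $\left\langle \left\langle \jmath^{*}\iota_{\nu}d\omega,\jmath^{*}\omega\right\rangle \right\rangle _{\Lambda}$ dies under \emph{either} $\mathbf{n}d\omega=0$ \emph{or} $\mathbf{t}\omega=0$, while the second boundary integral $\left\langle \left\langle \jmath^{*}\delta\omega,\jmath^{*}\iota_{\nu}\omega\right\rangle \right\rangle _{\Lambda}$ dies under \emph{either} $\mathbf{t}\delta\omega=0$ \emph{or} $\mathbf{n}\omega=0$; the four conditions in the statement are precisely the four ways of picking one alternative from each pair, which is why they are exactly the natural self-adjoint boundary conditions for the Hodge Laplacian. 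Consequently there is essentially no obstacle here: the only substantive ingredient is \Thmref{integrate_tensor_forms_by_parts}, part (3), which is itself a two-line consequence of the integration-by-parts formula relating $d$ and $\delta$ (part (2)) together with $\Delta=-(d\delta+\delta d)$, and everything else is the routine case check above.
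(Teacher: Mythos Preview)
Your proof is correct and is exactly the approach the paper takes: the paper's own proof is the single line ``Trivial to show $\mathcal{D}(\omega,\omega)=0$ via integration by parts,'' and you have simply spelled out that triviality using \Thmref{integrate_tensor_forms_by_parts}, part (3), together with the case check on the boundary terms.
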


\begin{proof}
Trivial to show $\mathcal{D}(\omega,\omega)=0$ via integration by
parts.
\end{proof}
\begin{rem*}
The four conditions correspond to four different versions of the Poisson
equation $\Delta\omega=\eta$ (cf. \parencite[Section 3.4]{Schwarz1995}),
and four ways we can make $\Delta$ self-adjoint. In this paper, we
will just focus on the absolute Neumann Laplacian and the absolute
Neumann heat flow.

Gaffney, one of the earliest figures in the field, showed that the
Laplacian corresponding to the 4th boundary condition is self-adjoint
and called it the ``Neumann problem'' (cf. \parencite{Gaffney1954heat,Conner1954}).
We, however, feel the name ``Neumann'' should only be used when
its Hodge dual is Dirichlet-related (for instance, the Dirichlet potential
vs the Neumann potential, to be introduced shortly). Therefore, absent
a better rationalization or convention, we see no reason not to honor
the name of the mathematician.

In the same vein, some authors consider the 1st condition to be the
``Dirichlet boundary condition'' (following the intuition from the
scalar case, where the trace and the tangential part coincide). By
the same reasoning as above, we choose not to do so in this paper.
\end{rem*}
\begin{blackbox}[Dirichlet/Neumann fields]
 \label{blackbox:Dirichlet_Neumann_fields}$\mathcal{H}_{D}^{k}(M)$
and $\mathcal{H}_{N}^{k}(M)$ are finite-dimensional, and therefore
complemented in $\mathbb{R}W^{m,p}\Omega^{k}(M)$ $\forall m\in\mathbb{N}_{0},p\in[1,\infty]$.
\end{blackbox}

\begin{rem*}
All norms on $\mathcal{H}_{N}^{k}$ are equivalent, so we \uline{do
not need to specify which norm on \mbox{$\mathcal{H}_{N}^{k}$} we
are using} at any time.

These are very nice spaces, yet they often prevent uniqueness for
boundary value problems. We almost always want to work on their orthogonal
complements, where Hodge theory truly shines.
\end{rem*}
\begin{proof}
See \parencite[Theorem 2.2.6]{Schwarz1995}.
\end{proof}
\begin{cor}
$\forall m\in\mathbb{N}_{0},p\in[1,\infty]$, there is a continuous
projection $P_{m,p}:\mathbb{R}W^{m,p}\Omega^{k}\twoheadrightarrow\mathcal{H}_{N}^{k}$
such that\label{cor:ortho_projection}
\begin{itemize}
\item it is compatible across different Sobolev spaces, i.e. $P_{m_{0},p_{0}}(\omega)=P_{m_{1},p_{1}}(\omega)$
if $\omega\in W^{m_{0},p_{0}}\Omega^{k}\cap W^{m_{1},p_{1}}\Omega^{k}$.
\item $1-P_{m,p}:\mathbb{R}W^{m,p}\Omega^{k}\twoheadrightarrow W^{m,p}\left(\mathcal{H}_{N}^{k}\right)^{\perp}:=\{\omega\in W^{m,p}\Omega^{k}:\left\langle \left\langle \omega,\phi\right\rangle \right\rangle _{\Lambda}=0\;\forall\phi\in\mathcal{H}_{N}^{k}\}$
is also a compatible projection.
\end{itemize}
\end{cor}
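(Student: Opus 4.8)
The plan is to bypass the purely existential ``complemented'' assertion of \Blackboxref{Dirichlet_Neumann_fields} by writing down an explicit $L^{2}$-orthogonal projection, which will be bounded on every Sobolev scale at once and will automatically yield the complementary projection. Since $\mathcal{H}_{N}^{k}(M)=\mathcal{H}^{k}(M)\cap\Omega_{N}^{k}(M)$ is by definition a subspace of $\Omega^{k}(M)$, all of its elements are smooth, and by \Blackboxref{Dirichlet_Neumann_fields} it is finite-dimensional; hence the restriction of $\left\langle \left\langle \cdot,\cdot\right\rangle \right\rangle _{\Lambda}$ to $\mathcal{H}_{N}^{k}$ is a genuine inner product, and Gram--Schmidt produces a basis $\phi_{1},\dots,\phi_{N}\in\mathcal{H}_{N}^{k}$ with $\left\langle \left\langle \phi_{i},\phi_{j}\right\rangle \right\rangle _{\Lambda}=\delta_{ij}$. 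I would then set
\[
P_{m,p}\omega=\sum_{j=1}^{N}\left\langle \left\langle \omega,\phi_{j}\right\rangle \right\rangle _{\Lambda}\phi_{j}\qquad\text{for }\omega\in\mathbb{R}W^{m,p}\Omega^{k}.
\]

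Next I would verify the formal properties, which are all routine. For any $m\in\mathbb{N}_{0}$, $p\in[1,\infty]$ and $\omega\in W^{m,p}\Omega^{k}\hookrightarrow L^{p}\Omega^{k}$, each smooth $\phi_{j}$ lies in $L^{p'}\Omega^{k}$ (as $M$ is compact), so Hölder's inequality together with fiberwise Cauchy--Schwarz gives $\left|\left\langle \left\langle \omega,\phi_{j}\right\rangle \right\rangle _{\Lambda}\right|\lesssim\left\Vert \omega\right\Vert _{L^{p}}\lesssim\left\Vert \omega\right\Vert _{W^{m,p}}$; since moreover $\phi_{j}\in W^{m,p}\Omega^{k}$, this shows $P_{m,p}\in\mathcal{L}(\mathbb{R}W^{m,p}\Omega^{k})$ with range contained in $\operatorname{span}\{\phi_{1},\dots,\phi_{N}\}=\mathcal{H}_{N}^{k}$. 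Orthonormality of the $\phi_{j}$ yields $P_{m,p}^{2}=P_{m,p}$ and $\restr{P_{m,p}}{\mathcal{H}_{N}^{k}}=\mathrm{id}$, so $P_{m,p}$ is a projection onto $\mathcal{H}_{N}^{k}$. Compatibility across Sobolev scales is immediate, because the scalar $\left\langle \left\langle \omega,\phi_{j}\right\rangle \right\rangle _{\Lambda}=\int_{M}\left\langle \omega,\phi_{j}\right\rangle _{\Lambda}\vol$ depends only on $\omega$ as an integrable form and not on which $W^{m,p}$ it is viewed in; hence $P_{m_{0},p_{0}}\omega=P_{m_{1},p_{1}}\omega$ whenever $\omega\in W^{m_{0},p_{0}}\Omega^{k}\cap W^{m_{1},p_{1}}\Omega^{k}$.

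Finally, for the second bullet, $1-P_{m,p}$ is again a continuous projection, and its range is $\ker P_{m,p}=\{\omega\in W^{m,p}\Omega^{k}:\left\langle \left\langle \omega,\phi_{j}\right\rangle \right\rangle _{\Lambda}=0,\ j=1,\dots,N\}$. Since the $\phi_{j}$ span $\mathcal{H}_{N}^{k}$, this kernel equals $W^{m,p}(\mathcal{H}_{N}^{k})^{\perp}$, and as $1-P_{m,p}$ restricts to the identity there, it is indeed a projection onto $W^{m,p}(\mathcal{H}_{N}^{k})^{\perp}$; its compatibility follows from that of $P_{m,p}$. I do not anticipate any genuine obstacle: the one substantive point is to replace the existential ``complemented'' statement by the canonical orthogonal projection built from a fixed smooth $L^{2}$-orthonormal basis of $\mathcal{H}_{N}^{k}$, which is precisely what makes the family $(P_{m,p})_{m,p}$ coherent across all $m$ and $p$ simultaneously.
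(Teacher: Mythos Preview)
Your proof is correct and takes essentially the same approach as the paper: both construct the $L^{2}$-orthogonal projection onto $\mathcal{H}_{N}^{k}$ and observe that it is bounded and compatible on every $W^{m,p}$. The only difference is cosmetic—you write the projection concretely via an orthonormal basis, while the paper phrases it abstractly as $\mathcal{J}\circ\mathcal{I}_{m,p}$ where $\mathcal{I}_{m,p}:W^{m,p}\Omega^{k}\to(\mathcal{H}_{N}^{k})^{*}$ is the pairing map and $\mathcal{J}$ is the inverse of the Riesz isomorphism $\mathcal{H}_{N}^{k}\iso(\mathcal{H}_{N}^{k})^{*}$; in your basis this is exactly your formula.
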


\begin{proof}
Define the continuous linear map $\mathcal{I}_{m,p}:W^{m,p}\Omega^{k}\to\left(\mathcal{H}_{N}^{k}\right)^{*}$
where 
\[
\mathcal{I}_{m,p}\omega(\phi)=\left\langle \left\langle \omega,\phi\right\rangle \right\rangle _{\Lambda}\forall\phi\in\mathcal{H}_{N}^{k},\forall\omega\in W^{m,p}\Omega^{k}
\]
 Then note that $\left(\phi_{1},\phi_{2}\right)\mapsto\left\langle \left\langle \phi_{1},\phi_{2}\right\rangle \right\rangle _{\Lambda}$
is a positive-definite inner product on $\mathcal{H}_{N}^{k}$, so
$\restr{\mathcal{I}_{m,p}}{\mathcal{H}_{N}^{k}}:\mathcal{H}_{N}^{k}\iso\left(\mathcal{H}_{N}^{k}\right)^{*}$.
We also observe that $\restr{\mathcal{I}_{m,p}}{\mathcal{H}_{N}^{k}}$
does not depend on $m,p$, so we can define the continuous inverse
$\mathcal{J}:\left(\mathcal{H}_{N}^{k}\right)^{*}\iso\mathcal{H}_{N}^{k}$.
Then we can just set $P_{m,p}=\mathcal{J}\circ\mathcal{I}_{m,p}$.
As we defined $\mathcal{I}_{m,p}$ by $\left\langle \left\langle \cdot,\cdot\right\rangle \right\rangle _{\Lambda}$,
$P_{m,p}$ is compatible across different $m,p$.
\end{proof}
\begin{rem*}
From now on, for $\omega\in W^{m,p}\Omega^{k}$, we can decompose
$\boxed{\omega=\mathcal{P}^{N}\omega+\mathcal{P}^{N\perp}\omega}$
where $\mathcal{P}^{N}\omega=\omega_{\mathcal{H}_{N}^{k}}\in\mathcal{H}_{N}^{k}$
and $\mathcal{P}^{N\perp}\omega=\omega_{\left(\mathcal{H}_{N}^{k}\right)^{\perp}}\in W^{m,p}\left(\mathcal{H}_{N}^{k}\right)^{\perp}$.
The decomposition is \textbf{natural}, i.e. continuous and compatible
across different Sobolev spaces. By Hodge duality, similarly define
$\mathcal{P}^{D}$ and $\mathcal{P}^{D\perp}$ \nomenclature{$\mathcal{P}^{N},\;\mathcal{P}^{N\perp},\;\mathcal{P}^{D},\;\mathcal{P}^{D\perp}$}{natural orthogonal decomposition\nomrefpage}.
Note $\mathcal{P}^{N\perp}W^{1,p}\Omega_{N}^{k}\leq W^{1,p}\Omega_{N}^{k}$
and $\mathcal{P}^{N\perp}W^{2,p}\Omega_{\hom N}^{k}\leq W^{2,p}\Omega_{\hom N}^{k}$.
\end{rem*}
\begin{blackbox}[Potential theory]
\label{blackbox:potential_estimates} For $m\in\mathbb{N}_{0},p\in\left(1,\infty\right)$,
we define the \textbf{injective Neumann Laplacian} 
\[
\Delta_{N}:\mathcal{P}^{N\perp}W^{m+2,p}\Omega_{\hom N}^{k}\to\mathcal{P}^{N\perp}W^{m,p}\Omega^{k}
\]
\nomenclature{$\Delta_{N}$}{injective Neumann Laplacian\nomrefpage}
as simply $\Delta$ under domain restriction. Then $\left(-\Delta_{N}\right)^{-1}$
is called the \textbf{Neumann potential}, which is bounded (and actually
a Banach isomorphism). $\Delta_{N}$ can also be thought of as an
unbounded operator on $\mathcal{P}^{N\perp}W^{m,p}\Omega^{k}$.

By Hodge duality, we also define the Dirichlet counterparts $\Delta_{D}$
and $\left(-\Delta_{D}\right)^{-1}$.\nomenclature{$\left(-\Delta_{N}\right)^{-1},\;\left(-\Delta_{D}\right)^{-1}$}{Neumann and Dirichlet potentials\nomrefpage}
\end{blackbox}

\begin{proof}
See \parencite[Section 2.2, 2.3]{Schwarz1995}
\end{proof}
\begin{rem*}
Because duality is involved, we stay away from $p\in\{1,\infty\}$.
Amazingly enough, this is the only elliptic estimate we will need
for the rest of the paper. One could say the whole theory is a functional
analytic consequence of elliptic regularity (much like how the Nash
embedding theorem is a consequence of Schauder estimates, following
Günther's approach \parencite{tao_nash_notes}).

There are many identities which might seem complicated, but are actually
trivial to check and helpful for grasping the intuition behind routine
operations in Hodge theory, as well as its rich algebraic structure.
\end{rem*}
\begin{defn*}
We write $d_{c}$ as $d$ restricted to $W^{1,p}\Omega_{D}^{k}$ and
$\delta_{c}$ as $\delta$ restricted to $W^{1,p}\Omega_{N}^{k}$
for $p\in\left(1,\infty\right)$\nomenclature{$\delta_{c},d_{c}$}{adjoints of $d$ and $\delta$\nomrefpage}.
We will prove in \Subsecref{Distributions-and-adjoints} that they
are essentially adjoints of $\delta$ and $d$. Let us note that $\Delta_{N}=-\left(d\delta_{c}+\delta_{c}d\right)$
on $\mathcal{P}^{N\perp}W^{2,p}\Omega_{\hom N}^{k}$.
\end{defn*}
\begin{cor}
\label{cor:basics_of_potentials}Let $p\in(1,\infty)$. Some basic
properties:
\begin{enumerate}
\item $\mathcal{P}^{D\perp}\delta=\delta$ and $\mathcal{P}^{N\perp}d=d$
on $W^{1,p}\Omega^{k}$.\\
$\mathcal{P}^{N\perp}\delta_{c}=\delta_{c}$ on $W^{1,p}\Omega_{N}^{k}$
and $\mathcal{P}^{D\perp}d_{c}=d_{c}$ on $W^{1,p}\Omega_{D}^{k}$.
\item $\left(-\Delta_{D}\right)^{-1}\delta=\delta\left(-\Delta_{D}\right)^{-1}$
on $\mathcal{P}^{D\perp}W^{1,p}\Omega^{k}$ and $\left(-\Delta_{N}\right)^{-1}d=d\left(-\Delta_{N}\right)^{-1}$
on $\mathcal{P}^{N\perp}W^{1,p}\Omega^{k}$.\\
$\left(-\Delta_{N}\right)^{-1}\delta_{c}=\delta_{c}\left(-\Delta_{N}\right)^{-1}$
on $\mathcal{P}^{N\perp}W^{1,p}\Omega_{N}^{k}$ and $\left(-\Delta_{D}\right)^{-1}d_{c}=d_{c}\left(-\Delta_{D}\right)^{-1}$
on $\mathcal{P}^{D\perp}W^{1,p}\Omega_{D}^{k}$.
\item $\delta=\delta\mathcal{P}^{D\perp}=\delta\mathcal{P}^{N\perp}$ and
$d=d\mathcal{P}^{D\perp}=d\mathcal{P}^{N\perp}$ on $W^{1,p}\Omega^{k}$.
\item $d\delta d=d\left(\delta d+d\delta\right)=d\left(-\Delta\right)$.\\
$\delta d\delta\left(-\Delta_{D}\right)^{-1}=\delta$ on $\mathcal{P}^{D\perp}W^{1,p}\Omega^{k}$
and $d\delta d(-\Delta_{N})^{-1}=d$ on $\mathcal{P}^{N\perp}W^{1,p}\Omega^{k}$.
\item $d\left(W^{2,p}\Omega_{\hom N}^{k}\right)=d\left(W^{2,p}\Omega_{N}^{k}\right)\cap W^{1,p}\Omega_{N}^{k+1},$
$\delta\left(W^{2,p}\Omega_{\hom D}^{k}\right)=\delta\left(W^{2,p}\Omega_{D}^{k}\right)\cap W^{1,p}\Omega_{D}^{k-1}$.\\
$d\left(W^{3,p}\Omega_{\hom N}^{k}\right)\leq W^{2,p}\Omega_{\hom N}^{k+1}$,
$\delta\left(W^{3,p}\Omega_{\hom D}^{k}\right)\leq W^{2,p}\Omega_{\hom D}^{k-1}$.
\end{enumerate}
\end{cor}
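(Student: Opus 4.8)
\textbf{Proof proposal for Corollary \ref{cor:basics_of_potentials}.}

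The plan is to verify each identity by reducing it to the defining properties of the natural decomposition $\mathcal{P}^{N},\mathcal{P}^{N\perp}$ (Corollary \ref{cor:ortho_projection}) and the mapping properties of $\left(-\Delta_{N}\right)^{-1}$ (Blackbox \ref{blackbox:potential_estimates}), together with the elementary facts $d^{2}=0$, $\delta^{2}=0$, $\Delta = -(d\delta+\delta d)$, and the behaviour of $d,\delta$ on the harmonic fields. Everything is by Hodge duality symmetric between the $D$ and $N$ statements, so I would prove the $N$-version throughout and invoke the $\star$ isomorphisms from the Hodge duality lemma for the $D$-version.

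First I would establish (1). Since $\mathcal{H}_{N}^{k+1}$ consists of closed and coclosed forms, for any $\omega\in W^{1,p}\Omega^{k}$ and any $\phi\in\mathcal{H}_{N}^{k+1}$, integration by parts (Theorem \ref{thm:integrate_tensor_forms_by_parts}, Equation \ref{eq:by_parts_d_delta}) gives $\langle\langle d\omega,\phi\rangle\rangle_{\Lambda}=\langle\langle\omega,\delta\phi\rangle\rangle_{\Lambda}+\langle\langle\jmath^{*}\omega,\jmath^{*}\iota_{\nu}\phi\rangle\rangle_{\Lambda}=0$, because $\delta\phi=0$ and $\mathbf{n}\phi=0$ forces $\iota_{\nu}\phi=0$ by Lemma \ref{lem:basic-identities-forms-boundary}(1). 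Hence $d\omega\perp\mathcal{H}_{N}^{k+1}$, i.e. $\mathcal{P}^{N\perp}d\omega=d\omega$; the restricted statement for $\delta_{c}$ on $W^{1,p}\Omega_{N}^{k}$ is the same computation (now the boundary term vanishes since $\mathbf{n}\delta\phi$-type terms are absent and $\delta_c$ maps into $(\mathcal H_N^{k-1})^\perp$ directly). For (3), $d=d\mathcal{P}^{N\perp}$ holds because $d$ kills $\mathcal{H}_{N}^{k}\ni\mathcal{P}^{N}\omega$; combining with (1) applied to $\mathcal{P}^{D\perp}$ handled dually gives the full chain. For (2), the point is that $\left(-\Delta_{N}\right)^{-1}$ and $d$ each commute with $\Delta$ and preserve the $\mathcal{P}^{N\perp}$-subspaces, so applying $\left(-\Delta_{N}\right)\left(-\Delta_{N}\right)^{-1}d = d = d\left(-\Delta_{N}\right)\left(-\Delta_{N}\right)^{-1}$ and using that $d$ commutes with $\Delta$ on smooth forms (since $d\Delta=\Delta d$) then peeling off the isomorphism $\left(-\Delta_{N}\right)^{-1}$ yields the claim; I would be slightly careful to check that $d$ indeed maps $\mathcal{P}^{N\perp}W^{m+2,p}\Omega^k_{\hom N}$ into $\mathcal{P}^{N\perp}W^{m+1,p}\Omega^{k+1}_{\hom N}$, which is exactly the content needed from part (5). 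Part (4) is then purely formal: $d\delta d = d(\delta d+d\delta)=d(-\Delta)$ since $d\cdot d\delta=0$, and composing with the potential and using (2),(3) gives $d\delta d(-\Delta_N)^{-1}=d(-\Delta)(-\Delta_N)^{-1}=d$ on $\mathcal{P}^{N\perp}W^{1,p}\Omega^k$.

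The genuinely substantive step is (5), the characterization $d\left(W^{2,p}\Omega_{\hom N}^{k}\right)=d\left(W^{2,p}\Omega_{N}^{k}\right)\cap W^{1,p}\Omega_{N}^{k+1}$ and the inclusion $d(W^{3,p}\Omega^k_{\hom N})\le W^{2,p}\Omega^{k+1}_{\hom N}$. The inclusion $\subseteq$ in the equality is immediate from Lemma \ref{lem:basic-identities-forms-boundary}(7) (if $\mathbf{n}\omega=0$ then $\mathbf{n}\delta\omega=0$; dually $\mathbf{t}\omega=0\Rightarrow\mathbf{t}d\omega=0$) — more precisely $\mathbf n\omega=0$ and $\mathbf n d\omega=0$ give $d\omega\in W^{1,p}\Omega^{k+1}_N$ and $d\omega\in d(W^{2,p}\Omega^k_N)$. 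For $\supseteq$ one takes $\omega\in W^{2,p}\Omega^k_N$ with $\mathbf n d\omega=0$ and must \emph{replace} it by a representative in $\Omega^k_{\hom N}$ with the same differential: write $\omega=\mathcal{P}^{N}\omega+\mathcal{P}^{N\perp}\omega$; the harmonic part is killed by $d$, and $\mathcal{P}^{N\perp}\omega\in\mathcal{P}^{N\perp}W^{2,p}\Omega^k_N$ — but this need not yet satisfy $\mathbf n d(\mathcal P^{N\perp}\omega)=0$ unless $\mathbf n d\omega=0$ is already given, which it is, and $d\mathcal P^N\omega=0$, so in fact $\mathcal P^{N\perp}\omega$ already lies in $W^{2,p}\Omega^k_{\hom N}$ and has $d(\mathcal P^{N\perp}\omega)=d\omega$. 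The last inclusion $d(W^{3,p}\Omega^k_{\hom N})\le W^{2,p}\Omega^{k+1}_{\hom N}$ requires showing $\mathbf n d(d\omega)=\mathbf n(0)=0$ trivially and $\mathbf n d\omega=0$ by hypothesis, plus the regularity bookkeeping that $d$ loses one derivative — so the honest work here is just making sure all the trace conditions survive, which Lemma \ref{lem:basic-identities-forms-boundary}(7) and the naturality of $\mathcal P^{N\perp}$ deliver. I expect no real obstacle, only the need to be scrupulous about which trace conditions are \emph{hypotheses} versus \emph{automatic consequences}; the main place to slow down is confirming $d$ respects the $\mathcal P^{N\perp}$-splitting at the $W^{m,p}$ level used in step (2).
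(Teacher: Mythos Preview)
Your proposal is correct and follows essentially the same route as the paper: integration by parts for (1), then checking that both sides of (2) land in the domain of the injective $\Delta_{N}$ (using (1)) so that one may cancel, with (3)--(5) being formal consequences of the definitions and $d^{2}=\delta^{2}=0$.

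One remark: you overcomplicate part (5), which the paper correctly treats as trivial. For the inclusion $\supseteq$ in the first identity, if $\omega\in W^{2,p}\Omega_{N}^{k}$ and $\mathbf{n}d\omega=0$, then $\omega$ is \emph{already} in $W^{2,p}\Omega_{\hom N}^{k}$ by definition --- no replacement via $\mathcal{P}^{N\perp}$ is needed. Consequently your concern about (2) depending on a ``substantive'' (5) is misplaced: (5) is immediate from the definitions and Lemma~\ref{lem:basic-identities-forms-boundary}(7), and can be invoked freely.
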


\begin{rem*}
A good mnemonic device is that $\Delta_{N}$ is formed by $d$ and
$\delta_{c}$, so $\left(-\Delta_{N}\right)^{-1}$ commutes with $d$
and $\delta_{c}$.
\end{rem*}
\begin{proof}
$ $
\begin{enumerate}
\item Integration by parts.
\item Just check that the expressions are defined by using 1).
\end{enumerate}
The rest is trivial.
\end{proof}

\subsection{Hodge decomposition\label{subsec:Hodge-decomposition}}

We proceed differently from \parencite{Schwarz1995}, by using a more
algebraic approach in order to derive some results not found in the
book. There will be a lot of identities gathered through experience,
so their appearances can seem unmotivated at first. Hence, as motivation,
let's look at an example of a problem we will need Hodge theory for:
is it true that $W^{2,p}\Omega_{\mathrm{hom}N}^{k}$ is dense in $W^{1,p}\Omega_{N}^{k}$
for $p\in\left(1,\infty\right)$? The problem is more subtle than
it seems, and it is true that the heat flow, once constructed, will
imply the answer is yes. But we do not yet have the heat flow, and
it turns out this problem is needed for the $W^{1,p}$-analyticity
of the heat flow itself. This foundational approximation of boundary
conditions can be done easily once we understand Hodge theory and
the myriad connections between different boundary conditions.

Let $\omega\in W^{m,p}\Omega^{k}$ $(m\in\mathbb{N}_{0},p\in(1,\infty))$.
In one line, the \textbf{Hodge-Morrey decomposition algorithm} is

\[
\boxed{\omega=d_{c}\delta\left(-\Delta_{D}\right)^{-1}\mathcal{P}^{D\perp}\omega+\delta_{c}d\left(-\Delta_{N}\right)^{-1}\mathcal{P}^{N\perp}\omega+\omega_{\mathcal{H}^{k}}}
\]
where $\mathcal{P}^{D\perp}\omega=\omega_{\left(\mathcal{H}_{D}^{k}\right)^{\perp}},\mathcal{P}^{N\perp}\omega=\omega_{\left(\mathcal{H}_{N}^{k}\right)^{\perp}}$
are defined as in \Corref{ortho_projection}, and $\omega_{\mathcal{H}^{k}}$
is simply defined by subtraction. This is the heart of the matter,
and the rest is arguably just bookkeeping.

Note that if $\omega\in W^{1,p}\Omega^{k}$, $d\omega=d\delta d\left(-\Delta_{N}\right)^{-1}\mathcal{P}^{N\perp}\omega+d\omega_{\mathcal{H}^{k}}=d\mathcal{P}^{N\perp}\omega+d\omega_{\mathcal{H}^{k}}=d\omega+d\omega_{\mathcal{H}^{k}}$.
So $d\omega_{\mathcal{H}^{k}}=0$ and similarly $\delta\omega_{\mathcal{H}^{k}}=0$,
justifying the notation. A mild warning is that we do not yet have
$W^{1,p}\mathcal{H}^{k}=W^{1,p}\text{-}\mathrm{cl}\left(\mathcal{H}^{k}\right)$.

As we will keep referring to this decomposition, let us define
\begin{itemize}
\item $\mathcal{P}_{1}=d_{c}\delta\left(-\Delta_{D}\right)^{-1}\mathcal{P}^{D\perp}$.
Then $\mathcal{P}_{1}=d_{c}\left(-\Delta_{D}\right)^{-1}\delta\mathcal{P}^{D\perp}=d_{c}\left(-\Delta_{D}\right)^{-1}\delta$
on $W^{1,p}\Omega^{k}$.
\item $\mathcal{P}_{2}=\delta_{c}d\left(-\Delta_{N}\right)^{-1}\mathcal{P}^{N\perp}$
Then $\mathcal{P}_{2}=\delta_{c}\left(-\Delta_{N}\right)^{-1}d$ on
$W^{1,p}\Omega^{k}$.
\item $\mathcal{P}_{3}=1-\mathcal{P}_{1}-\mathcal{P}_{2}$.
\end{itemize}
We observe that the decomposition $1=\mathcal{P}_{1}+\mathcal{P}_{2}+\mathcal{P}_{3}$
is natural (continuous and compatible across different Sobolev spaces)
since all the operations are natural. In particular, $\mathcal{P}_{j}$
(for $j\in\{1,2,3\}$) is a zeroth-order operator, and if $\omega$
is smooth, so is $\mathcal{P}_{j}\omega$ by Sobolev embedding. Recall
that $\mathbf{t}\omega=0$ implies $\mathbf{t}d\omega=0$, while $\mathbf{n}\omega=0$
implies $\mathbf{n}\delta\omega=0$ (\Lemref{basic-identities-forms-boundary}).

\nomenclature{$\mathcal{P}_{1}\omega,\;\mathcal{P}_{2}\omega,\;\mathcal{P}_{3}\omega$}{the component projections in Hodge decomposition\nomrefpage}

\begin{thm}[Smooth decomposition]
 Some basic properties of $\mathcal{P}_{j}$ on $\Omega^{k}$:
\begin{enumerate}
\item $\mathcal{P}_{1}\delta=0$ on $\Omega^{k+1}$ and $\mathcal{P}_{2}d=0$
on $\Omega^{k-1}$.\\
$\mathcal{P}_{1}=\mathcal{P}_{2}=0$ on $\mathcal{H}^{k}$.
\item $\mathcal{P}_{3}\delta_{c}=0$ on $\Omega_{N}^{k+1}$ and $\mathcal{P}_{3}d_{c}=0$
on $\Omega_{D}^{k-1}$.
\item $\mathcal{P}_{j}\mathcal{P}_{i}=\delta_{ij}\mathcal{P}_{i}$. Therefore
$\Omega^{k}=\bigoplus_{j=1}^{3}\mathcal{P}_{j}\left(\Omega^{k}\right)$.
\item $\mathcal{P}_{1}\left(\Omega^{k}\right)=d_{c}\left(\Omega_{D}^{k-1}\right)=d_{c}\mathcal{P}^{D\perp}\left(\Omega_{D}^{k-1}\right)=d_{c}\delta\mathcal{P}^{D\perp}\left(\Omega_{\hom D}^{k}\right)\leq\Omega_{D}^{k}$.\\
$\mathcal{P}_{2}\left(\Omega^{k}\right)=\delta_{c}\left(\Omega_{N}^{k+1}\right)=\delta_{c}\mathcal{P}^{N\perp}\left(\Omega_{N}^{k+1}\right)=\delta_{c}d\mathcal{P}^{N\perp}\left(\Omega_{\hom N}^{k}\right)\leq\Omega_{N}^{k}$.\\
$\mathcal{P}_{3}\left(\Omega^{k}\right)=\mathcal{H}^{k}$.
\item $\Omega^{k}=\bigoplus_{j=1}^{3}\mathcal{P}_{j}\left(\Omega^{k}\right)$
is $\left\langle \left\langle \cdot,\cdot\right\rangle \right\rangle _{\Lambda}$-orthogonal
decomposition.
\end{enumerate}
\end{thm}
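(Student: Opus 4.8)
The plan is to verify each assertion directly from the definitions of $\mathcal{P}_1, \mathcal{P}_2, \mathcal{P}_3$, the basic properties of the potentials collected in \Corref{basics_of_potentials}, and the boundary-condition lemma \Lemref{basic-identities-forms-boundary}, exploiting throughout the mnemonic that $\left(-\Delta_D\right)^{-1}$ commutes with $d$ and $\delta_c$ while $\left(-\Delta_N\right)^{-1}$ commutes with $\delta$ and $d_c$, and that $d\delta d(-\Delta_N)^{-1}=d$ on $\mathcal{P}^{N\perp}W^{1,p}\Omega^k$ (and its Dirichlet dual). First I would handle (1): since $\mathcal{P}_1 = d_c(-\Delta_D)^{-1}\delta$ on $W^{1,p}\Omega^k$ and $\delta\delta = 0$, applying $\mathcal{P}_1$ to $\delta\eta$ with $\eta\in\Omega^{k+1}$ gives $d_c(-\Delta_D)^{-1}\delta\delta\eta = 0$; dually $\mathcal{P}_2 d = 0$ on $\Omega^{k-1}$. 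On $\mathcal{H}^k$, both $d$ and $\delta$ vanish, so $\mathcal{P}_1 = d_c(-\Delta_D)^{-1}\delta$ and $\mathcal{P}_2 = \delta_c(-\Delta_N)^{-1}d$ annihilate it; hence $\mathcal{P}_3 = 1$ there. For (2), if $\omega = \delta_c\eta$ with $\eta\in\Omega_N^{k+1}$, then $\mathcal{P}_1\omega = d_c(-\Delta_D)^{-1}\delta\delta_c\eta = 0$ (as $\delta\delta_c = \delta\delta = 0$) and $\mathcal{P}_2\omega = \delta_c(-\Delta_N)^{-1}d\delta_c\eta = \delta_c\eta$ — here I would use that $\delta_c d\delta_c(-\Delta_N)^{-1}$ acts as identity via the dual of item 4 in \Corref{basics_of_potentials}, together with $\mathcal{P}^{N\perp}\delta_c\eta = \delta_c\eta$ — so $\mathcal{P}_3\delta_c\eta = \omega - 0 - \omega = 0$; the Dirichlet version is Hodge-dual.

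For (3), the idempotency $\mathcal{P}_j\mathcal{P}_i = \delta_{ij}\mathcal{P}_i$ follows by combining (1) and (2): $\mathcal{P}_1(\Omega^k)\subset d_c(\Omega_D^{k-1})$ so $\mathcal{P}_2\mathcal{P}_1 = 0$ by (2); $\mathcal{P}_2(\Omega^k)\subset\delta_c(\Omega_N^{k+1})$ so $\mathcal{P}_1\mathcal{P}_2 = 0$; $\mathcal{P}_3(\Omega^k) = \mathcal{H}^k$ by (4) so $\mathcal{P}_1\mathcal{P}_3 = \mathcal{P}_2\mathcal{P}_3 = 0$ by (1); and $\mathcal{P}_1\mathcal{P}_1 = \mathcal{P}_1$, $\mathcal{P}_2\mathcal{P}_2 = \mathcal{P}_2$ come from checking $\delta d_c(-\Delta_D)^{-1}\delta = \delta$ via $d\delta d(-\Delta_N)^{-1}$-type identities; finally $\mathcal{P}_3\mathcal{P}_3 = \mathcal{P}_3$ follows since $\mathcal{P}_3 = 1 - \mathcal{P}_1 - \mathcal{P}_2$ is idempotent once the cross terms vanish. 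The direct sum decomposition $\Omega^k = \bigoplus_{j=1}^3\mathcal{P}_j(\Omega^k)$ is then immediate from $1 = \mathcal{P}_1 + \mathcal{P}_2 + \mathcal{P}_3$ plus idempotency and pairwise annihilation. For (4), the chain of equalities for $\mathcal{P}_1(\Omega^k)$ is unwound stepwise: $\mathcal{P}_1(\Omega^k) = d_c\delta(-\Delta_D)^{-1}\mathcal{P}^{D\perp}(\Omega^k)$, and since $\delta(-\Delta_D)^{-1}\mathcal{P}^{D\perp}(\Omega^k)\subset\Omega_D^{k-1}$ (the codomain of the Neumann-dual potential composed with $\delta$) while any element of $d_c(\Omega_D^{k-1})$ already lies in $\mathcal{P}_1(\Omega^k)$ because $\mathcal{P}_1 d_c = d_c$ on $\Omega_D^{k-1}$ (check using (2) and (3)), we get equality; the $\mathcal{P}^{D\perp}$ and $\delta\mathcal{P}^{D\perp}(\Omega_{\hom D}^k)$ reformulations follow from \Corref{basics_of_potentials}(3) and the potential's mapping properties. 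The inclusion $\mathcal{P}_1(\Omega^k)\subset\Omega_D^k$ uses that $d_c$ lands in $\Omega_D$; dually for $\mathcal{P}_2$ and $\Omega_N^k$, and $\mathcal{P}_3(\Omega^k) = \mathcal{H}^k$ since $\mathcal{P}_3\omega$ is $\omega$ minus an exact-plus-coexact piece, hence $d\mathcal{P}_3\omega = d\omega - d\delta d(-\Delta_N)^{-1}\mathcal{P}^{N\perp}\omega = 0$ and likewise $\delta\mathcal{P}_3\omega = 0$.

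For (5), orthogonality of the three summands under $\left\langle\left\langle\cdot,\cdot\right\rangle\right\rangle_\Lambda$: take $\alpha = d_c\beta\in\mathcal{P}_1(\Omega^k)$ with $\beta\in\Omega_D^{k-1}$ and $\gamma = \delta_c\epsilon\in\mathcal{P}_2(\Omega^k)$ with $\epsilon\in\Omega_N^{k+1}$; then $\left\langle\left\langle d_c\beta, \delta_c\epsilon\right\rangle\right\rangle_\Lambda = \left\langle\left\langle \beta, \delta\delta_c\epsilon\right\rangle\right\rangle_\Lambda = 0$ by integration by parts (\Thmref{integrate_tensor_forms_by_parts}, with the boundary term vanishing since $\beta\in\Omega_D$) and $\delta\delta_c = 0$. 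For $\mathcal{H}^k \perp \mathcal{P}_1(\Omega^k)$: $\left\langle\left\langle h, d_c\beta\right\rangle\right\rangle_\Lambda = \left\langle\left\langle \delta h, \beta\right\rangle\right\rangle_\Lambda = 0$ since $\delta h = 0$; similarly $\mathcal{H}^k\perp\mathcal{P}_2(\Omega^k)$. The main obstacle I anticipate is not any single identity but keeping scrupulous track of which boundary conditions hold at each stage so that the boundary terms in the by-parts formulas genuinely vanish — in particular making sure $d_c$ and $\delta_c$ are applied only to forms satisfying the Dirichlet resp. Neumann condition, and that the potentials are being fed inputs in the correct $\mathcal{P}^{D\perp}$ or $\mathcal{P}^{N\perp}$ subspace; each such bookkeeping step is routine but an error anywhere collapses the argument. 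Once the smooth case is in hand, naturality of all the operators (noted before the theorem) lets the decomposition pass to $W^{m,p}\Omega^k$ when needed later.
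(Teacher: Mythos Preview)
Your approach is essentially the paper's, and the argument is correct. Two small points: first, your stated mnemonic is reversed --- it is $(-\Delta_N)^{-1}$ that commutes with $d$ and $\delta_c$, and $(-\Delta_D)^{-1}$ with $\delta$ and $d_c$ (see \Corref{basics_of_potentials}), though your actual computations use the correct direction. Second, the paper gets idempotency more cheaply than you propose: once all cross terms $\mathcal{P}_i\mathcal{P}_j=0$ ($i\neq j$) are established from (1) and (2), one simply writes $\mathcal{P}_2=(\mathcal{P}_1+\mathcal{P}_2+\mathcal{P}_3)\mathcal{P}_2=\mathcal{P}_2^2$, avoiding any direct verification of $\delta d_c(-\Delta_D)^{-1}\delta=\delta$.
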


\begin{proof}
$ $
\begin{enumerate}
\item On $\Omega^{k+1}$, $\mathcal{P}_{1}\delta=d_{c}\left(-\Delta_{D}\right)^{-1}\delta\delta=0$.\\
Let $\eta\in\mathcal{H}^{k}.$ Then $\mathcal{P}_{1}\eta=d_{c}\left(-\Delta_{D}\right)^{-1}\delta\eta=0$.
\item We just need $\mathcal{P}_{2}\delta_{c}=\delta_{c}$ on $\Omega_{N}^{k+1}$.
Indeed, $\mathcal{P}_{2}\delta_{c}=\delta_{c}d\left(-\Delta_{N}\right)^{-1}\delta_{c}\mathcal{P}^{N\perp}=\delta_{c}d\delta_{c}\left(-\Delta_{N}\right)^{-1}\mathcal{P}^{N\perp}=\delta_{c}\mathcal{P}^{N\perp}=\delta_{c}$.
\item By 1), $\mathcal{P}_{2}\mathcal{P}_{1}=\mathcal{P}_{1}\mathcal{P}_{2}=\mathcal{P}_{1}\mathcal{P}_{3}=\mathcal{P}_{2}\mathcal{P}_{3}=0$.
By 2), $\mathcal{P}_{3}\mathcal{P}_{2}=\mathcal{P}_{3}\mathcal{P}_{1}=0$.
Then observe $\mathcal{P}_{2}=(\mathcal{P}_{1}+\mathcal{P}_{2}+\mathcal{P}_{3})\mathcal{P}_{2}=\mathcal{P}_{2}^{2}$.
Similarly, $\mathcal{P}_{1}^{2}=\mathcal{P}_{1}$ and $\mathcal{P}_{3}^{2}=\mathcal{P}_{3}$.
\item Recall $\mathcal{P}_{3}\left(\Omega^{k}\right)\leq\mathcal{H}^{k}.$
It becomes an equality since $\mathcal{P}_{2}\left(\mathcal{H}^{k}\right)=\mathcal{P}_{1}\left(\mathcal{H}^{k}\right)=0$.\\
Similarly, obviously $\mathcal{P}_{1}\left(\Omega^{k}\right)=d_{c}\delta\mathcal{P}^{D\perp}\left(\Omega_{\hom D}^{k}\right)\leq d_{c}\left(\Omega_{D}^{k-1}\right)$.
It becomes an equality since $\mathcal{P}_{2}d=0$ and $\mathcal{P}_{3}d_{c}=0$.
\item Trivial.
\end{enumerate}
\end{proof}
To extend this to Sobolev spaces, we will need to use distributions
and duality.

\begin{cor}[Sobolev version]
 Some basic properties of $\mathcal{P}_{j}$ on $W^{m,p}\Omega^{k}$
$(m\in\mathbb{N}_{0},p\in(1,\infty))$:\label{cor:Sobolev_Hodge_projections}
\begin{enumerate}
\item $\left\langle \left\langle \mathcal{P}_{j}\omega,\phi\right\rangle \right\rangle _{\Lambda}=\left\langle \left\langle \omega,\mathcal{P}_{j}\phi\right\rangle \right\rangle _{\Lambda}$
$\forall\omega\in W^{m,p}\Omega^{k},\forall\phi\in\Omega_{00}^{k},j=1,2,3$
\item $\mathcal{P}_{1}\delta=0$ on $W^{m+1,p}\Omega^{k+1}$ and $\mathcal{P}_{2}d=0$
on $W^{m+1,p}\Omega^{k-1}$.
\item $\mathcal{P}_{1}=\mathcal{P}_{2}=0$ on $W^{m+1,p}\mathcal{H}^{k}$
and $W^{m,p}\text{-}\mathrm{cl}\left(\mathcal{H}^{k}\right)$.
\item $\mathcal{P}_{3}\delta_{c}=0$ on $W^{m+1,p}\Omega_{N}^{k+1}$ and
$\mathcal{P}_{3}d_{c}=0$ on $W^{m+1,p}\Omega_{D}^{k-1}$.
\item $\mathcal{P}_{j}\mathcal{P}_{i}=\delta_{ij}\mathcal{P}_{i}$. Therefore
$W^{m,p}\Omega^{k}=\bigoplus_{j=1}^{3}\mathcal{P}_{j}\left(W^{m,p}\Omega^{k}\right)$.
\item $\mathcal{P}_{3}\left(W^{m,p}\Omega^{k}\right)=W^{m,p}\mathcal{H}^{k}$
for $m\geq1$ and $W^{m,p}\text{-}\mathrm{cl}\left(\mathcal{H}^{k}\right)$
for $m\geq0$.\\
$\mathcal{P}_{2}\left(W^{m,p}\Omega^{k}\right)=\delta_{c}\left(W^{m+1,p}\Omega_{N}^{k+1}\right)=\delta_{c}d\mathcal{P}^{N\perp}\left(W^{m+2,p}\Omega_{\hom N}^{k}\right)$
.\\
$\mathcal{P}_{1}\left(W^{m,p}\Omega^{k}\right)=d_{c}\left(W^{m+1,p}\Omega_{D}^{k-1}\right)=d_{c}\delta\mathcal{P}^{D\perp}\left(W^{m+2,p}\Omega_{\hom D}^{k}\right)$.
\item $\mathbf{t}\mathcal{P}_{1}=0$ and $\mathbf{n}\mathcal{P}_{2}=0$
on $W^{m+1,p}\Omega^{k}$.
\item For $p\geq2,$ $W^{m,p}\Omega^{k}=\bigoplus_{j=1}^{3}\mathcal{P}_{j}\left(W^{m,p}\Omega^{k}\right)$
is $\left\langle \left\langle \cdot,\cdot\right\rangle \right\rangle _{\Lambda}$-orthogonal
decomposition.
\item $W^{m,p}\text{-}\mathrm{cl}\left(d_{c}\left(\Omega_{D}^{k-1}\right)\right)=d_{c}\left(W^{m+1,p}\Omega_{D}^{k-1}\right)$
and $W^{m,p}\text{-}\mathrm{cl}\left(\delta_{c}\left(\Omega_{N}^{k+1}\right)\right)=\delta_{c}\left(W^{m+1,p}\Omega_{N}^{k+1}\right)$.\\
$W^{m+1,p}\text{-}\mathrm{cl}\left(\mathcal{H}^{k}\right)=W^{m+1,p}\mathcal{H}^{k}$.
\item $d=d\left(\mathcal{P}_{1}+\mathcal{P}_{2}+\mathcal{P}_{3}\right)=d\mathcal{P}_{2}=d\mathcal{P}^{N\perp}=\mathcal{P}^{N\perp}d$
on $W^{m+1,p}\Omega^{k}$.\\
Consequently, $\mathbf{n}d\mathcal{P}_{2}\left(W^{m+2,p}\Omega_{\hom N}^{k}\right)=\mathbf{n}d\left(W^{m+2,p}\Omega_{\hom N}^{k}\right)=0$,
and $\mathcal{P}_{2}\left(W^{m+2,p}\Omega_{\hom N}^{k}\right)\leq W^{m+2,p}\Omega_{\hom N}^{k}$.\\
We also have 
\[
d\left(W^{m+1,p}\Omega^{k-1}\right)=d\mathcal{P}_{2}\left(W^{m+1,p}\Omega^{k-1}\right)=d\left(W^{m+1,p}\Omega_{N}^{k-1}\right)=d\mathcal{P}^{N\perp}\left(W^{m+1,p}\Omega_{N}^{k-1}\right)
\]
\item $\delta_{c}=\mathcal{P}_{2}\delta_{c}$ on $W^{m+1,p}\Omega_{N}^{k}$
and
\[
\mathcal{P}_{2}\left(W^{m+1,p}\Omega^{k}\right)=\delta_{c}\left(W^{m+2,p}\Omega_{N}^{k+1}\right)=\delta_{c}d\mathcal{P}^{N\perp}\left(W^{m+3,p}\Omega_{\hom N}^{k}\right)=\delta_{c}\mathcal{P}^{N\perp}\left(W^{m+2,p}\Omega_{\hom N}^{k+1}\right)
\]
\end{enumerate}
\end{cor}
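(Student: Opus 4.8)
The plan is to bootstrap the whole corollary from the already-established smooth decomposition theorem, using three facts: (i) every $\mathcal{P}_j$ is an explicit composition of $d,\delta,d_c,\delta_c$, the potentials $(-\Delta_D)^{-1},(-\Delta_N)^{-1}$ of \Blackboxref{potential_estimates}, and the harmonic projections $\mathcal{P}^{D\perp},\mathcal{P}^{N\perp}$ of \Corref{ortho_projection}, hence is bounded on every $W^{m,p}\Omega^k$ ($p\in(1,\infty)$) and compatible across $m,p$; (ii) $\Omega_{00}^k$, a fortiori $\Omega^k$, is dense in $W^{m,p}\Omega^k$ for $p<\infty$ (\Corref{sobolev_and_density}); (iii) $\mathcal{P}_j$ preserves smoothness by elliptic regularity, so $\mathcal{P}_j(\Omega_{00}^k)\subseteq\mathcal{P}_j(\Omega^k)$ is governed by the smooth theorem. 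Item 2 and the idempotence in item 5 are then mostly operator algebra: $\mathcal{P}_1\delta=d_c(-\Delta_D)^{-1}\delta\delta=0$ and $\mathcal{P}_2 d=\delta_c(-\Delta_N)^{-1}dd=0$ since $d^2=\delta^2=0$, while $\mathcal{P}_i\mathcal{P}_j=\delta_{ij}\mathcal{P}_i$ and $\sum_j\mathcal{P}_j=1$ hold on $\Omega^k$ by the smooth theorem and extend to $W^{m,p}\Omega^k$ by continuity and density (with the low-regularity subcases $m=0$ routed through smooth approximants, which carry no boundary condition). For item 1, the smooth theorem makes $\sum_j\mathcal{P}_j=1$ a $\langle\langle\cdot,\cdot\rangle\rangle_\Lambda$-orthogonal decomposition of $\Omega^k$, so $\langle\langle\mathcal{P}_j\omega,\phi\rangle\rangle_\Lambda=\langle\langle\mathcal{P}_j\omega,\mathcal{P}_j\phi\rangle\rangle_\Lambda=\langle\langle\omega,\mathcal{P}_j\phi\rangle\rangle_\Lambda$ for smooth $\omega$ and $\phi\in\Omega_{00}^k$; for fixed $\phi$ this is continuous in $\omega$ and $\Omega^k$ is dense, so it holds for all $\omega\in W^{m,p}\Omega^k$. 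The orthogonality in item 8 for $p\ge2$ follows the same way, using $W^{m,p}\Omega^k\hookrightarrow L^2\Omega^k$ on compact $M$ to invoke the $L^2$-orthogonality.

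Next come the vanishing statements. For $\omega\in W^{m+1,p}\mathcal{H}^k$ (so $d\omega=\delta\omega=0$) item 3 is immediate from $\mathcal{P}_1=d_c(-\Delta_D)^{-1}\delta$ and $\mathcal{P}_2=\delta_c(-\Delta_N)^{-1}d$ (valid on $W^{1,p}\Omega^k$ by \Corref{basics_of_potentials}); on $W^{m,p}\text{-cl}(\mathcal{H}^k)$ it follows by continuity and density of $\mathcal{H}^k$. Item 4 ($\mathcal{P}_3d_c=0$ on $W^{m+1,p}\Omega_D^{k-1}$ and its Hodge dual) follows from item 1 together with the integration-by-parts adjoint relation of \Thmref{integrate_tensor_forms_by_parts}: the Dirichlet condition $\mathbf{t}\alpha=0$ kills the boundary term, so $\langle\langle\mathcal{P}_3d_c\alpha,\phi\rangle\rangle_\Lambda=\langle\langle d\alpha,\mathcal{P}_3\phi\rangle\rangle_\Lambda=\langle\langle\alpha,\delta\mathcal{P}_3\phi\rangle\rangle_\Lambda=0$ for every $\phi\in\Omega_{00}^k$ (using $\mathcal{P}_3\phi\in\mathcal{H}^k$), and $\Omega_{00}^k$ is total in $(W^{m+1,p}\Omega^k)^\ast$. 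Item 7 ($\mathbf{t}\mathcal{P}_1=0$, $\mathbf{n}\mathcal{P}_2=0$) holds on $\Omega^k$ by the smooth theorem and \Lemref{basic-identities-forms-boundary}, and the left sides are continuous on $W^{m+1,p}\Omega^k$ via the trace theorem. Item 12 and the operator assertions of item 10 ($d=d\mathcal{P}_2=d\mathcal{P}^{N\perp}=\mathcal{P}^{N\perp}d$) are the same kind of bookkeeping: $d$ kills the ranges of $\mathcal{P}_1$ (exact, $dd_c=0$) and $\mathcal{P}_3$ (harmonic), $d\mathcal{P}_2=\mathcal{P}^{N\perp}d$ is in \Corref{basics_of_potentials}, and the boundary refinements $\mathbf{n}d\mathcal{P}_2=0$ and $\mathcal{P}_2(W^{m+2,p}\Omega_{\hom N}^k)\le W^{m+2,p}\Omega_{\hom N}^k$ come from feeding the range descriptions of item 6 into \Lemref{basic-identities-forms-boundary}.

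The delicate part is the exact identification of ranges (items 6, 12) and the closure statements of item 9, which do not follow from a naive density argument because smooth forms \emph{satisfying} a boundary condition need not be dense in their $W^{m,p}$ analogues a priori. Here one uses that each $\mathcal{P}_j$ is a bounded idempotent on $W^{m,p}\Omega^k$, hence has closed range; since $\Omega_{00}^k$ is dense in $W^{m,p}\Omega^k$ and $\mathcal{P}_j(\Omega_{00}^k)\subseteq\mathcal{P}_j(\Omega^k)$ equals $d_c(\Omega_D^{k-1})$, $\delta_c(\Omega_N^{k+1})$, $\mathcal{H}^k$ respectively by the smooth theorem, we get $\mathcal{P}_j(W^{m,p}\Omega^k)=W^{m,p}\text{-cl}\bigl(\mathcal{P}_j(\Omega_{00}^k)\bigr)$, i.e. the equalities $W^{m,p}\text{-cl}\bigl(d_c(\Omega_D^{k-1})\bigr)=\mathcal{P}_1(W^{m,p}\Omega^k)$ of item 9 (and likewise for $\delta_c$, $\mathcal{H}^k$). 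The factorization $\mathcal{P}_1=d_c\circ\bigl[\delta(-\Delta_D)^{-1}\mathcal{P}^{D\perp}\bigr]$, with the bracketed operator mapping $W^{m,p}\Omega^k\to W^{m+1,p}\Omega_D^{k-1}$ by \Blackboxref{potential_estimates}, gives $\mathcal{P}_1(W^{m,p}\Omega^k)\subseteq d_c(W^{m+1,p}\Omega_D^{k-1})$, and the reverse inclusion follows because $d_c\alpha=\mathcal{P}_1(d_c\alpha)$ for $\alpha\in W^{m+1,p}\Omega_D^{k-1}$ (apply the already-proved $\mathcal{P}_2d=0$ and $\mathcal{P}_3d_c=0$); chasing the potential's mapping properties then yields the finer form $=d_c\delta\mathcal{P}^{D\perp}(W^{m+2,p}\Omega_{\hom D}^k)$. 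The same scheme with $d\leftrightarrow\delta$, $D\leftrightarrow N$ handles $\mathcal{P}_2$; for $\mathcal{P}_3$ one gets $\mathcal{P}_3(W^{m,p}\Omega^k)=W^{m,p}\text{-cl}(\mathcal{H}^k)$, upgraded for $m\ge1$ to $W^{m,p}\mathcal{H}^k$ by noting that $\mathcal{P}_3$ restricts to the identity on the ($W^{m,p}$-closed) space $W^{m,p}\mathcal{H}^k$ (from $\mathcal{P}_1=d_c(-\Delta_D)^{-1}\delta$, $\mathcal{P}_2=\delta_c(-\Delta_N)^{-1}d$).

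I expect the main obstacle to be exactly this last cluster. The adjoint and commutation identities of \Corref{basics_of_potentials} are valid only against forms in the correct boundary class ($d_c$ pairs with $\delta$ only on $\Omega_D$-forms, $\delta_c$ with $d$ only on $\Omega_N$-forms, $(-\Delta_D)^{-1}$ commutes with $d_c,\delta$ only on $\mathcal{P}^{D\perp}$-forms), so every manipulation must be threaded through the ranges of the projections rather than done formally; and the split between $m=0$ and $m\ge1$ has to be tracked throughout, since $(-\Delta_D)^{-1}\delta$ and $(-\Delta_N)^{-1}d$ only make sense once a derivative is available — which is precisely why $\mathcal{P}_3(W^{m,p}\Omega^k)$ is claimed to equal $W^{m,p}\mathcal{H}^k$ only for $m\ge1$, and why the distinction $W^{m,p}\mathcal{H}^k$ versus $W^{m,p}\text{-cl}(\mathcal{H}^k)$ surfaces in items 3, 6, and 9.
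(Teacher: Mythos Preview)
Your proposal is correct and follows essentially the same strategy as the paper: bootstrap from the smooth decomposition theorem via density of $\Omega^{k}$ in $W^{m,p}\Omega^{k}$, continuity of the $\mathcal{P}_{j}$, and integration by parts. The paper's proof is extremely terse (items 5--12 are dismissed as ``trivial''), and your write-up simply spells out the details; the only minor tactical difference is that for items 3 and 4 the paper leans on item 1 (self-adjointness of $\mathcal{P}_{j}$ against $\Omega_{00}^{k}$) together with the orthogonalities $W^{m+1,p}\mathcal{H}^{k}\perp d_{c}(\Omega_{D}^{k-1})$ and $\delta_{c}(W^{m+1,p}\Omega_{N}^{k+1})\perp\mathcal{H}^{k}$, whereas you sometimes appeal directly to the explicit formulas $\mathcal{P}_{1}=d_{c}(-\Delta_{D})^{-1}\delta$ and $\mathcal{P}_{2}=\delta_{c}(-\Delta_{N})^{-1}d$ on $W^{1,p}\Omega^{k}$ --- both routes are valid and use the same ingredients.
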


\begin{rem*}
Note that $L^{p}\text{-}\mathrm{cl}\left(\mathcal{H}^{k}\right)$
$(p\in(1,\infty))$ is defined, while $L^{p}\mathcal{H}^{k}$ is not.
\end{rem*}
\begin{proof}
$ $
\begin{enumerate}
\item Observe $\mathcal{P}_{1}\omega\in d_{c}\left(W^{m+1,p}\Omega_{D}^{k-1}\right),\mathcal{P}_{2}\omega\in\delta_{c}\left(W^{m+1,p}\Omega_{N}^{k+1}\right),\mathcal{P}_{3}\omega\in W^{m,p}\text{-}\mathrm{cl}\left(\mathcal{H}^{k}\right)$.
Simply show $d_{c}\left(W^{m+1,p}\Omega_{D}^{k-1}\right)\perp\delta_{c}\left(\Omega_{N}^{k+1}\right)$,
$W^{m,p}\text{-}\mathrm{cl}\left(\mathcal{H}^{k}\right)\perp d_{c}\left(\Omega_{D}^{k-1}\right)$,
and so forth via integration by parts.
\item $W^{m+1,p}\Omega^{k+1}=W^{m+1,p}\text{-}\mathrm{cl}\left(\Omega^{k+1}\right)$.
\item The case $W^{m,p}\text{-}\mathrm{cl}\left(\mathcal{H}^{k}\right)$
is trivial. For $\omega\in W^{m+1,p}\mathcal{H}^{k},$ $\left\langle \left\langle \mathcal{P}_{1}\omega,\phi\right\rangle \right\rangle _{\Lambda}=\left\langle \left\langle \omega,\mathcal{P}_{1}\phi\right\rangle \right\rangle _{\Lambda}=0\;\forall\phi\in\Omega_{00}^{k}$
since $W^{m+1,p}\mathcal{H}^{k}\perp d_{c}(\Omega_{D}^{k-1}$) (integration
by parts).
\item Let $\omega\in W^{m+1,p}\Omega_{N}^{k+1}$. Then $\left\langle \left\langle \mathcal{P}_{3}\delta_{c}\omega,\phi\right\rangle \right\rangle _{\Lambda}=\left\langle \left\langle \delta_{c}\omega,\mathcal{P}_{3}\phi\right\rangle \right\rangle _{\Lambda}=0\;\forall\phi\in\Omega_{00}^{k}$
since $\delta_{c}\left(W^{m+1,p}\Omega_{N}^{k+1}\right)\perp\mathcal{H}^{k}$.
\end{enumerate}
The rest is trivial.
\end{proof}
To connect Hodge decomposition to fluid dynamics, we will need the
\textbf{Friedrichs decomposition}: 
\[
\mathcal{P}_{3}=\left(\mathcal{P}^{N}+\mathcal{P}^{N\perp}\right)\mathcal{P}_{3}=\mathcal{P}_{3}^{N}+\mathcal{P}_{3}^{\mathrm{ex}}
\]
 \nomenclature{$\mathcal{P}_{3}^{N},\;\mathcal{P}_{3}^{\mathrm{ex}},\;\mathcal{P}_{3}^{D},\;\mathcal{P}_{3}^{\mathrm{co}}$}{Friedrichs decomposition\nomrefpage}
where
\begin{itemize}
\item $\mathcal{P}_{3}^{N}:=\mathcal{P}^{N}\mathcal{P}_{3}=\mathcal{P}^{N}=\mathcal{P}_{3}\mathcal{P}^{N}$
(as $\mathcal{P}^{N\perp}\mathcal{P}_{1}=\mathcal{P}_{1}$ and $\mathcal{P}^{N\perp}\mathcal{P}_{2}=\mathcal{P}_{2}$)
\item $\mathcal{P}_{3}^{\mathrm{ex}}:=\mathcal{P}^{N\perp}\mathcal{P}_{3}=\mathcal{P}_{3}\mathcal{P}^{N\perp}$
\end{itemize}
We similarly define $\mathcal{P}_{3}^{D},\mathcal{P}_{3}^{\mathrm{co}}$
via Hodge duality. Note that $\mathrm{ex}$ and $\mathrm{co}$ stand
for ``exact'' and ``coexact'' (and we will see why shortly).

Then we define $\mathbb{P}:=\mathcal{P}_{3}^{N}+\mathcal{P}_{2}$
as the \textbf{Leray projection}.\nomenclature{$\mathbb{P}$}{Leray projection\nomrefpage}
Then $1=\left(\mathcal{P}_{3}^{\text{ex}}+\mathcal{P}_{1}\right)+\left(\mathcal{P}_{3}^{N}+\mathcal{P}_{2}\right)=\left(\mathcal{P}_{3}^{\text{ex}}+\mathcal{P}_{1}\right)+\mathbb{P}$
is called the \textbf{Helmholtz decomposition}.
\begin{thm}[Friedrichs decomposition]
\label{thm:Friedrichs_Decomposition}Basic properties of $\mathcal{P}_{3}^{N},\mathcal{P}_{3}^{\mathrm{ex}}$
on $W^{m,p}\Omega^{k}$ $(m\in\mathbb{N}_{0},p\in(1,\infty))$:
\begin{enumerate}
\item $\mathcal{P}_{3}^{\mathrm{ex}}=d\delta(-\Delta_{N})^{-1}\mathcal{P}_{3}^{\mathrm{ex}}$
on $W^{m,p}\Omega^{k}$.
\item $\mathcal{P}_{3}^{\mathrm{ex}}\left(W^{m,p}\Omega^{k}\right)=W^{m,p}\text{-}\mathrm{cl}\left(\mathcal{H}^{k}\right)\cap d\left(W^{m+1,p}\Omega^{k-1}\right)$.
\item $\left(\mathcal{P}_{3}^{\mathrm{ex}}+\mathcal{P}_{1}\right)\left(W^{m,p}\Omega^{k}\right)=d\left(W^{m+1,p}\Omega^{k-1}\right)=d\left(W^{m+1,p}\Omega_{N}^{k-1}\right)=d\mathcal{P}^{N\perp}\left(W^{m+1,p}\Omega_{N}^{k-1}\right)$.
\item $\mathbb{P}\left(W^{m,p}\Omega^{k}\right)=\left(\mathcal{P}_{3}^{N}+\mathcal{P}_{2}\right)\left(W^{m,p}\Omega^{k}\right)=\Ker\left(\restr{\delta_{c}}{W^{m,q}\Omega_{N}^{k}}\right)$
when $m\geq1$ and $W^{m,p}\text{-}\mathrm{cl}\left(\Ker\left(\restr{\delta_{c}}{\Omega_{N}^{k}}\right)\right)$
when $m\geq0$.
\item $\left(\mathcal{P}_{3}+\mathcal{P}_{2}\right)\left(W^{m,p}\Omega^{k}\right)=\Ker\left(\restr{\delta}{W^{m,q}\Omega^{k}}\right)$
when $m\geq1$ and $W^{m,p}\text{-}\mathrm{cl}\left(\Ker\left(\restr{\delta}{\Omega^{k}}\right)\right)$
when $m\geq0$.
\item $\mathcal{P}^{N\perp}\mathbb{P}=\mathcal{P}_{2}=\mathbb{P}\mathcal{P}^{N\perp}$
on $W^{m,p}\Omega^{k}$.\\
Therefore $d\mathbb{P}=d\mathcal{P}^{N\perp}\mathbb{P}=d\mathcal{P}_{2}=d=d\mathcal{P}^{N\perp}=\mathcal{P}^{N\perp}d$
on $W^{m+1,p}\Omega^{k}$.
\item $\mathbb{P}\left(W^{m+2,p}\Omega_{\hom N}^{k}\right)\leq\mathcal{P}_{2}\left(W^{m+2,p}\Omega_{\hom N}^{k}\right)\oplus\mathcal{H}_{N}^{k}\leq W^{m+2,p}\Omega_{\hom N}^{k}$.
\end{enumerate}
\end{thm}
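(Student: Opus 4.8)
The plan is to derive all seven assertions by elementary algebra with the projections $\mathcal{P}_{1},\mathcal{P}_{2},\mathcal{P}_{3}$, the orthogonal splitting $1=\mathcal{P}^{N}+\mathcal{P}^{N\perp}$, and the Neumann potential $(-\Delta_{N})^{-1}$, drawing repeatedly on \Corref{Sobolev_Hodge_projections}. First I would record two preliminary facts. From \Thmref{integrate_tensor_forms_by_parts} (the $d/\delta$ integration-by-parts identity) together with $\iota_{\nu}\phi=0$ for $\phi\in\mathcal{H}_{N}^{k}$ (\Lemref{basic-identities-forms-boundary}), one obtains $d(W^{m+1,p}\Omega^{k-1})\perp\mathcal{H}_{N}^{k}$ and $\delta_{c}(W^{m+1,p}\Omega_{N}^{k+1})\perp\mathcal{H}_{N}^{k}$; since $\mathcal{P}_{1}(W^{m,p}\Omega^{k})=d_{c}(W^{m+1,p}\Omega_{D}^{k-1})\subseteq d(W^{m+1,p}\Omega^{k-1})$ and $\mathcal{P}_{2}(W^{m,p}\Omega^{k})=\delta_{c}(W^{m+1,p}\Omega_{N}^{k+1})$ by \Corref{Sobolev_Hodge_projections}, this gives $\mathcal{P}^{N}\mathcal{P}_{1}=\mathcal{P}^{N}\mathcal{P}_{2}=0$, hence $\mathcal{P}^{N}\mathcal{P}_{3}=\mathcal{P}^{N}$ and $\mathcal{P}_{3}^{\mathrm{ex}}=\mathcal{P}_{3}-\mathcal{P}^{N}$. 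Secondly, since $\mathcal{P}_{2}=\delta_{c}d(-\Delta_{N})^{-1}\mathcal{P}^{N\perp}$ and $\mathcal{P}_{3}^{\mathrm{ex}}=\mathcal{P}^{N\perp}\mathcal{P}_{3}$ both factor through $\mathcal{P}^{N\perp}$, one has $\mathcal{P}_{2}\mathcal{P}^{N\perp}=\mathcal{P}^{N\perp}\mathcal{P}_{2}=\mathcal{P}_{2}$ and $\mathcal{P}_{2}\mathcal{P}_{3}^{\mathrm{ex}}=\mathcal{P}_{2}(\mathcal{P}_{3}-\mathcal{P}^{N})=-\mathcal{P}_{2}\mathcal{P}^{N}=0$, using that $\mathcal{P}_{2}$ annihilates $W^{m,p}\text{-}\mathrm{cl}(\mathcal{H}^{k})\supseteq\mathcal{H}_{N}^{k}$.

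With this in hand, (1) is quick: $\mathcal{P}_{3}^{\mathrm{ex}}\omega\in\mathcal{P}^{N\perp}W^{m,p}\Omega^{k}$, the domain of the Neumann potential, so $\eta:=(-\Delta_{N})^{-1}\mathcal{P}_{3}^{\mathrm{ex}}\omega\in\mathcal{P}^{N\perp}W^{m+2,p}\Omega_{\hom N}^{k}$ and $(d\delta_{c}+\delta_{c}d)\eta=\mathcal{P}_{3}^{\mathrm{ex}}\omega$; but $\delta_{c}d\eta$ is, by the very definition of $\mathcal{P}_{2}$, the form $\mathcal{P}_{2}\mathcal{P}_{3}^{\mathrm{ex}}\omega=0$, leaving $\mathcal{P}_{3}^{\mathrm{ex}}\omega=d\delta_{c}\eta=d\delta(-\Delta_{N})^{-1}\mathcal{P}_{3}^{\mathrm{ex}}\omega$. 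For (2), ``$\subseteq$'' is immediate from (1) (the right side is exact with primitive in $W^{m+1,p}\Omega^{k-1}$) and from $\mathcal{P}_{3}^{\mathrm{ex}}(W^{m,p}\Omega^{k})\subseteq\mathcal{P}_{3}(W^{m,p}\Omega^{k})=W^{m,p}\text{-}\mathrm{cl}(\mathcal{H}^{k})$; for ``$\supseteq$'', if $\xi=d\mu\in W^{m,p}\text{-}\mathrm{cl}(\mathcal{H}^{k})$ then $\mathcal{P}_{3}\xi=\xi$ and $\xi\perp\mathcal{H}_{N}^{k}$ gives $\mathcal{P}^{N}\xi=0$, so $\mathcal{P}_{3}^{\mathrm{ex}}\xi=\mathcal{P}^{N\perp}\xi=\xi$. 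For (3), the chain of equalities past the first one is \Corref{Sobolev_Hodge_projections}; ``$\subseteq$'' follows by adding $\mathcal{P}_{1}(W^{m,p}\Omega^{k})\subseteq d(W^{m+1,p}\Omega^{k-1})$ to (2), and ``$\supseteq$'' by writing an exact $d\mu$ as $\mathcal{P}_{1}d\mu+\mathcal{P}_{2}d\mu+\mathcal{P}_{3}d\mu$ and using $\mathcal{P}_{2}d\mu=0$, $\mathcal{P}^{N}d\mu=0$, so $d\mu=(\mathcal{P}_{1}+\mathcal{P}_{3}^{\mathrm{ex}})d\mu$.

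For (4) with $m\geq1$: ``$\subseteq$'' holds since $\mathcal{P}_{3}^{N}\omega\in\mathcal{H}_{N}^{k}$ and $\mathcal{P}_{2}\omega$ is a Neumann form (by $\mathbf{n}\mathcal{P}_{2}=0$) with $\delta_{c}\mathcal{P}_{2}\omega=\delta\delta_{c}d(\cdots)=0$; ``$\supseteq$'' is the crux: if $\omega\in W^{m,p}\Omega_{N}^{k}$ and $\delta\omega=0$ then $\mathcal{P}_{1}\omega=d_{c}(-\Delta_{D})^{-1}\delta\omega=0$, hence $\mathcal{P}_{3}^{\mathrm{ex}}\omega=\mathcal{P}^{N\perp}(\omega-\mathcal{P}_{2}\omega)=\mathcal{P}^{N\perp}\omega-\mathcal{P}_{2}\omega$, which is a harmonic field (it lies in $\mathcal{P}_{3}(W^{m,p}\Omega^{k})=W^{m,p}\mathcal{H}^{k}$) with vanishing normal part (a difference of Neumann forms), i.e. a Neumann field, but simultaneously lies in the range $W^{m,p}(\mathcal{H}_{N}^{k})^{\perp}$ of $\mathcal{P}^{N\perp}$, so it is $0$ and $\omega=\mathcal{P}_{3}^{N}\omega+\mathcal{P}_{2}\omega=\mathbb{P}\omega$. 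The $m=0$ case follows because $\mathbb{P}=\mathcal{P}^{N}+\mathcal{P}_{2}$ is a continuous idempotent on $L^{p}\Omega^{k}$, hence has closed range, and carries smooth forms to smooth forms by elliptic regularity, so $\mathbb{P}(\Omega^{k})=\Ker(\restr{\delta_{c}}{\Omega_{N}^{k}})$ by the previous case and $\mathbb{P}(L^{p}\Omega^{k})=L^{p}\text{-}\mathrm{cl}(\mathbb{P}(\Omega^{k}))$ by density. Claim (5) is the identical argument with $\delta_{c}$ replaced by $\delta$, the Neumann constraint dropped, and $1-\mathcal{P}_{1}=\mathcal{P}_{2}+\mathcal{P}_{3}$ as the relevant projection. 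Finally, (6) is pure algebra: $\mathcal{P}^{N\perp}\mathbb{P}=\mathcal{P}^{N\perp}(\mathcal{P}^{N}+\mathcal{P}_{2})=\mathcal{P}_{2}$, $\mathbb{P}\mathcal{P}^{N\perp}=(\mathcal{P}^{N}+\mathcal{P}_{2})\mathcal{P}^{N\perp}=\mathcal{P}_{2}\mathcal{P}^{N\perp}=\mathcal{P}_{2}$, and the displayed chain follows from $d\mathcal{P}^{N}=0$ together with \Corref{Sobolev_Hodge_projections}; and (7) holds since $\mathbb{P}\omega=\mathcal{P}^{N}\omega+\mathcal{P}_{2}\omega$ with $\mathcal{P}^{N}\omega\in\mathcal{H}_{N}^{k}\subseteq W^{m+2,p}\Omega_{\hom N}^{k}$ and $\mathcal{P}_{2}(W^{m+2,p}\Omega_{\hom N}^{k})\leq W^{m+2,p}\Omega_{\hom N}^{k}$ (\Corref{Sobolev_Hodge_projections}), the sum being direct because its summands sit in the complementary ranges of $\mathcal{P}^{N}$ and $\mathcal{P}^{N\perp}$. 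I expect the only genuinely delicate step to be the ``$\supseteq$'' inclusion in (4) and (5); everything else is bookkeeping, and the key idea there is that $\mathcal{P}_{3}^{\mathrm{ex}}\omega$ is forced to be at once a Neumann field and orthogonal to every Neumann field, hence zero.
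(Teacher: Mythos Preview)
Your proof is correct and for most of the seven items follows the same projection algebra as the paper. The one place where you genuinely diverge is the ``$\supseteq$'' direction in (4). The paper argues by duality: for $\omega\in\Ker(\restr{\delta_{c}}{W^{m,p}\Omega_{N}^{k}})$ one has $\omega\perp d(\Omega^{k-1})$ via integration by parts, and then $\langle\langle\mathcal{P}_{1}\omega,\phi\rangle\rangle_{\Lambda}=\langle\langle\omega,\mathcal{P}_{1}\phi\rangle\rangle_{\Lambda}=0$ and $\langle\langle\mathcal{P}_{3}^{\mathrm{ex}}\omega,\phi\rangle\rangle_{\Lambda}=\langle\langle\omega,\mathcal{P}_{3}^{\mathrm{ex}}\phi\rangle\rangle_{\Lambda}=0$ for all test $\phi$, since both $\mathcal{P}_{1}\phi$ and $\mathcal{P}_{3}^{\mathrm{ex}}\phi$ are exact. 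You instead read $\mathcal{P}_{1}\omega=0$ straight from the formula $\mathcal{P}_{1}=d_{c}(-\Delta_{D})^{-1}\delta$, and then kill $\mathcal{P}_{3}^{\mathrm{ex}}\omega$ by trapping it in $\mathcal{H}_{N}^{k}\cap(\mathcal{H}_{N}^{k})^{\perp}$. The paper's route is uniform (both pieces die by the same orthogonality to exact forms) and extends verbatim to (5); your route is more structural, never tests against $\Omega_{00}^{k}$, and makes explicit why the Neumann hypothesis on $\omega$ is essential in (4) but not in (5). Both are clean.
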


\begin{proof}
$ $
\begin{enumerate}
\item On $\Omega^{k}$: $\delta d(-\Delta_{N})^{-1}\mathcal{P}_{3}^{\mathrm{ex}}=\delta(-\Delta_{N})^{-1}d\mathcal{P}_{3}^{\mathrm{ex}}=0$,
so $\mathcal{P}_{3}^{\mathrm{ex}}=(-\Delta)(-\Delta_{N})^{-1}\mathcal{P}_{3}^{\mathrm{ex}}=d\delta(-\Delta_{N})^{-1}\mathcal{P}_{3}^{\mathrm{ex}}$.
Then we are done by density.
\item $\mathcal{P}_{3}^{N}d=\mathcal{P}_{3}\mathcal{P}^{N}d=0$ as $\mathcal{P}^{N\perp}d=d$.
\item $\mathcal{P}_{2}d=0$ and $\mathcal{P}_{3}^{N}d=0$.
\item We first prove the smooth version. Let $\omega\in\Ker\left(\restr{\delta_{c}}{\Omega_{N}^{k}}\right)$.
Then $\left\langle \left\langle \mathcal{P}_{1}\omega,\mathcal{P}_{1}\omega\right\rangle \right\rangle _{\Lambda}=\left\langle \left\langle \mathcal{P}_{1}\omega,\omega\right\rangle \right\rangle _{\Lambda}=0$
as $\Ker\left(\restr{\delta_{c}}{\Omega_{N}^{k}}\right)\perp d\left(\Omega^{k-1}\right)$,
so $\mathcal{P}_{1}\omega=0$. Similarly, $\mathcal{P}_{3}^{\mathrm{ex}}\omega=0$.
Then $\left(\mathcal{P}_{3}^{N}+\mathcal{P}_{2}\right)\Omega^{k}=\Ker\left(\restr{\delta_{c}}{\Omega_{N}^{k}}\right)$.\\
For $W^{m,p}\Omega^{k}$, the case $W^{m,p}\text{-}\mathrm{cl}\left(\Ker\left(\restr{\delta_{c}}{\Omega_{N}^{k}}\right)\right)$
is trivial. Then assume $m\geq1$ and $\omega\in\Ker\left(\restr{\delta_{c}}{W^{m,q}\Omega_{N}^{k}}\right).$
We can show $\mathcal{P}_{1}\omega=\mathcal{P}_{3}^{\mathrm{ex}}\omega=0$
as distributions since $\Ker\left(\restr{\delta_{c}}{W^{m,q}\Omega_{N}^{k}}\right)\perp d(\Omega^{k-1})$.
\item Just note that $\Ker\left(\restr{\delta}{W^{m,q}\Omega^{k}}\right)\perp d_{c}(\Omega_{D}^{k-1})$
and argue similarly.
\item Easy to check that $\mathcal{P}^{N\perp}\mathcal{P}_{3}^{N}=\mathcal{P}_{3}^{N}\mathcal{P}^{N\perp}=0$
and $\mathcal{P}^{N\perp}\mathcal{P}_{2}=\mathcal{P}_{2}\mathcal{P}^{N\perp}=\mathcal{P}_{2}$.
\item Trivial.
\end{enumerate}
\end{proof}
\begin{rem*}
Similar results for $\mathcal{P}_{3}^{D},\mathcal{P}_{3}^{\mathrm{co}}$
hold by Hodge duality. When $M$ has no boundary, $\mathcal{H}^{k}=\mathcal{H}_{D}^{k}=\mathcal{H}_{N}^{k}$
so $\mathcal{P}_{3}=\mathcal{P}_{3}^{N}=\mathcal{P}_{3}^{D}$.

A simple consequence of the Hodge-Helmholtz decomposition is that

\[
\frac{\Ker\left(\restr{\delta_{c}}{\Omega_{N}^{k}}\right)}{\delta_{c}\left(\Omega_{N}^{k+1}\right)}=\frac{\left(\mathcal{P}_{3}^{N}+\mathcal{P}_{2}\right)\left(\Omega^{k}\right)}{\mathcal{P}_{2}\left(\Omega^{k}\right)}=\mathcal{P}_{3}^{N}\left(\Omega^{k}\right)=\frac{\left(\mathcal{P}_{3}+\mathcal{P}_{1}\right)\left(\Omega^{k}\right)}{\left(\mathcal{P}_{3}^{\mathrm{ex}}+\mathcal{P}_{1}\right)\left(\Omega^{k}\right)}=\frac{\Ker\left(\restr{d}{\Omega^{k}}\right)}{d\left(\Omega^{k-1}\right)}
\]
This can be rewritten as $\boxed{\mathbb{H}_{a}^{k}\left(M\right)=\mathcal{H}_{N}^{k}\left(M\right)=\mathbb{H}_{\mathrm{dR}}^{k}\left(M,d\right)}$
(\textbf{Hodge isomorphism theorem}) where $\mathbb{H}_{\mathrm{dR}}^{k}\left(M,d\right):=\frac{\Ker\left(\restr{d}{\Omega^{k}}\right)}{d\left(\Omega^{k-1}\right)}$
is called the \textbf{$k$-th} \textbf{de Rham cohomology group},
and $\mathbb{H}_{a}^{k}\left(M\right):=\frac{\Ker\left(\restr{\delta_{c}}{\Omega_{N}^{k}}\right)}{\delta_{c}\left(\Omega_{N}^{k+1}\right)}$
is called the \textbf{$k$-th} \textbf{absolute de Rham cohomology
group}. In particular, $\beta^{k}\left(M\right):=\dim\mathcal{H}_{N}^{k}\left(M\right)=\dim\mathbb{H}_{\mathrm{dR}}^{k}\left(M,d\right)$
is called the \textbf{$k$-th Betti number} of $M$. Note that the
Hodge dual of $\mathbb{H}_{a}^{n-k}\left(M\right)$ is $\mathbb{H}_{r}^{k}\left(M\right):=\frac{\Ker\left(\restr{d_{c}}{\Omega_{D}^{k}}\right)}{d_{c}\left(\Omega_{D}^{k-1}\right)}$,
the \textbf{$k$-th} \textbf{relative de Rham cohomology group}.

We can also define right inverses (\emph{potentials}) for $d,\delta,\delta_{c},d_{c}$
(see \Subsecref{Hodge-Sobolev-spaces}).

In many ways, Hodge theory reduces otherwise complicated boundary
value problems into purely algebraic calculations. A standard Hodge-theoretic
calculation related to the Euler equation is given later in \Subsecref{Calc_pressure}.
We can also derive a general form of the Poincare inequality:
\end{rem*}
\begin{cor}[Poincare-Hodge-Dirac inequality]
\label{cor:poincare_ineq_Neumann} Let $\omega\in\mathcal{P}^{N\perp}W^{m+1,p}\Omega_{N}^{k}$
$(m\in\mathbb{N}_{0},p\in(1,\infty))$. Then 
\[
\left\Vert \omega\right\Vert _{W^{m+1,p}}\sim\left\Vert d\omega\right\Vert _{W^{m,p}}+\left\Vert \delta_{c}\omega\right\Vert _{W^{m,p}}
\]
 and we have a bijection 
\[
\mathcal{P}^{N\perp}W^{m+1,p}\Omega_{N}^{k}\xrightarrow{d\oplus\delta_{c}}d\left(W^{m+1,p}\Omega^{k}\right)\oplus\delta_{c}\left(W^{m+1,p}\Omega_{N}^{k}\right)=\left(\mathcal{P}_{1}+\mathcal{P}_{3}^{\text{ex}}\right)(W^{m,p}\Omega^{k+1})\oplus\mathcal{P}_{2}(W^{m,p}\Omega^{k-1})
\]

In particular, $\boxed{\left(d\oplus\delta_{c}\right)^{-1}\left(d\eta,\delta_{c}\upsilon\right)=\mathcal{P}_{2}\left(\eta-\upsilon\right)+\upsilon\;\forall\eta,\upsilon\in\mathcal{P}^{N\perp}W^{m+1,p}\Omega_{N}^{k}}$.
\end{cor}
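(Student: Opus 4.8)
The plan is to show $d\oplus\delta_{c}$ is a continuous bijection between the two Banach spaces in the statement; the norm equivalence then follows from the bounded inverse theorem, and the boxed formula from a one-line check. One direction, $\left\Vert d\omega\right\Vert _{W^{m,p}}+\left\Vert \delta_{c}\omega\right\Vert _{W^{m,p}}\lesssim\left\Vert \omega\right\Vert _{W^{m+1,p}}$, is immediate since $d,\delta_{c}$ are first-order; everything else is map-theoretic.

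First I would verify both sides are Banach. The domain $\mathcal{P}^{N\perp}W^{m+1,p}\Omega_{N}^{k}$ is closed in $W^{m+1,p}\Omega^{k}$ (the Neumann condition is closed by the trace theorem, and $\mathcal{P}^{N}$ is continuous by \Corref{ortho_projection}). For the target, item (3) of \Thmref{Friedrichs_Decomposition} (in degree $k+1$) identifies $d(W^{m+1,p}\Omega^{k})$ with $(\mathcal{P}_{1}+\mathcal{P}_{3}^{\mathrm{ex}})(W^{m,p}\Omega^{k+1})$, and item (6) of \Corref{Sobolev_Hodge_projections} (in degree $k-1$) identifies $\delta_{c}(W^{m+1,p}\Omega_{N}^{k})$ with $\mathcal{P}_{2}(W^{m,p}\Omega^{k-1})$; as ranges of continuous projections these are closed, so the target is Banach. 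The same items also give $d(W^{m+1,p}\Omega^{k})=d\mathcal{P}^{N\perp}(W^{m+1,p}\Omega_{N}^{k})$, and since $\delta_{c}$ annihilates $\mathcal{H}_{N}^{k}=\mathcal{P}^{N}(\cdot)$ one has $\delta_{c}=\delta_{c}\mathcal{P}^{N\perp}$ on $W^{m+1,p}\Omega_{N}^{k}$; hence every element of the target has the form $(d\eta,\delta_{c}\upsilon)$ with $\eta,\upsilon\in\mathcal{P}^{N\perp}W^{m+1,p}\Omega_{N}^{k}$.

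Surjectivity and the inverse: given such $\eta,\upsilon$, put $\omega:=\mathcal{P}_{2}(\eta-\upsilon)+\upsilon$. Because $\mathcal{P}_{2}$ is a natural order-zero projection that preserves $\mathcal{P}^{N\perp}$ and, by $\mathbf{n}\mathcal{P}_{2}=0$ (item (7) of \Corref{Sobolev_Hodge_projections}), the Neumann condition, $\omega$ lies in the domain; $d\omega=d\eta$ by $d\mathcal{P}_{2}=d$ (item (6) of \Thmref{Friedrichs_Decomposition}) and $\delta_{c}\omega=\delta_{c}\upsilon$ by $\delta_{c}\mathcal{P}_{2}=0$ (since $\mathcal{P}_{2}(W^{m+1,p}\Omega^{k})\subset\delta_{c}(W^{m+2,p}\Omega_{N}^{k+1})$ and $\delta_{c}\delta_{c}=0$ on Neumann forms, using item (7) of \Lemref{basic-identities-forms-boundary}). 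This gives surjectivity; combined with injectivity it makes $\mathcal{P}_{2}(\eta-\upsilon)+\upsilon$ the well-defined $(d\oplus\delta_{c})^{-1}$, i.e. the boxed formula. For injectivity: if $\omega\in\mathcal{P}^{N\perp}W^{m+1,p}\Omega_{N}^{k}$ and $d\omega=\delta_{c}\omega=0$, then in the Hodge--Morrey decomposition $\mathcal{P}_{1}\omega=d_{c}(-\Delta_{D})^{-1}\delta_{c}\omega=0$, $\mathcal{P}_{2}\omega=\delta_{c}(-\Delta_{N})^{-1}d\omega=0$, and $\mathcal{P}_{3}\omega=\mathcal{P}_{3}^{\mathrm{ex}}\omega$ (as $\mathcal{P}_{3}^{N}\omega=\mathcal{P}^{N}\omega=0$); by item (1) of \Thmref{Friedrichs_Decomposition} and the commutation $(-\Delta_{N})^{-1}\delta_{c}=\delta_{c}(-\Delta_{N})^{-1}$ (item (2) of \Corref{basics_of_potentials}), $\mathcal{P}_{3}^{\mathrm{ex}}\omega=d\,\delta_{c}(-\Delta_{N})^{-1}\omega=d(-\Delta_{N})^{-1}\delta_{c}\omega=0$, so $\omega=0$. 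Now $d\oplus\delta_{c}$ is a continuous bijection of Banach spaces, hence open, so $\left\Vert \omega\right\Vert _{W^{m+1,p}}\lesssim\left\Vert d\omega\right\Vert _{W^{m,p}}+\left\Vert \delta_{c}\omega\right\Vert _{W^{m,p}}$.

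I expect the conceptual obstacle to be the harmonic--exact component $\mathcal{P}_{3}^{\mathrm{ex}}\omega$: since $d$ and $\delta_{c}$ both kill it, a direct bound of $\left\Vert \mathcal{P}_{3}^{\mathrm{ex}}\omega\right\Vert $ by the right-hand side looks impossible --- the only lever is the Neumann condition, which makes $\omega-\mathbb{P}\omega$ a closed Neumann form with $-\Delta_{N}(\omega-\mathbb{P}\omega)=d\delta_{c}\omega$ so that the Neumann potential controls it, but that route additionally runs into a negative-order Sobolev technicality at $m=0$. Routing everything through injectivity plus the open mapping theorem is exactly the device that avoids this: only the qualitative vanishing of $\mathcal{P}_{3}^{\mathrm{ex}}\omega$ and the identification of the range as a closed subspace are needed, and the Hodge--Morrey/Friedrichs bookkeeping already supplies both.
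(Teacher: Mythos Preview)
Your proposal is correct and follows essentially the same route as the paper: identify the target as a closed subspace via the Hodge--Morrey/Friedrichs projections, prove $d\oplus\delta_{c}$ is a bijection (surjectivity via the explicit candidate $\omega=\mathcal{P}_{2}(\eta-\upsilon)+\upsilon$, checked using $d\mathcal{P}_{2}=d$ and $\delta_{c}\mathcal{P}_{2}=0$), and then invoke open mapping for the norm equivalence. The only difference is that you spell out injectivity in detail whereas the paper simply asserts it; the paper's implicit reasoning is that $d\omega=\delta_{c}\omega=0$ with $\mathbf{n}\omega=0$ forces $\omega$ into $\mathcal{H}_{N}^{k}$, so $\mathcal{P}^{N\perp}\omega=\omega$ gives $\omega=0$.
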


\begin{proof}
Observe that
\begin{itemize}
\item $\mathcal{P}^{N\perp}W^{m+1,p}\Omega_{N}^{k}\xrightarrow{d\oplus\delta_{c}}d\mathcal{P}^{N\perp}\left(W^{m+1,p}\Omega_{N}^{k}\right)\oplus\delta_{c}\mathcal{P}^{N\perp}\left(W^{m+1,p}\Omega_{N}^{k}\right)$
is a continuous injection.
\item $d\mathcal{P}^{N\perp}\left(W^{m+1,p}\Omega_{N}^{k}\right)=d\left(W^{m+1,p}\Omega^{k}\right)=\left(\mathcal{P}_{1}+\mathcal{P}_{3}^{\mathrm{ex}}\right)(W^{m,p}\Omega^{k+1})$
by \Corref{Sobolev_Hodge_projections}.
\item $\delta_{c}\mathcal{P}^{N\perp}\left(W^{m+1,p}\Omega_{N}^{k}\right)=\delta_{c}\left(W^{m+1,p}\Omega_{N}^{k}\right)=\mathcal{P}_{2}(W^{m,p}\Omega^{k-1})$
by \Corref{basics_of_potentials} and \ref{cor:Sobolev_Hodge_projections}.
\end{itemize}
By open mapping, we only need to prove $d\oplus\delta_{c}$ (the \textbf{injective
Hodge-Dirac operator}) is surjective: let $\eta,\upsilon\in\mathcal{P}^{N\perp}W^{m+1,p}\Omega_{N}^{k}$.
We want to find $\omega\in\mathcal{P}^{N\perp}W^{m+1,p}\Omega_{N}^{k}$
such that $d\omega=d\eta,\delta_{c}\omega=\delta_{c}\upsilon$. By
the restriction $\delta_{c}\omega=\delta_{c}\upsilon$, the freedom
is in choosing 
\[
\vartheta:=\omega-\upsilon\in\mathcal{P}^{N\perp}\text{Ker \ensuremath{\left(\restr{\delta_{c}}{W^{m+1,p}\Omega_{N}^{k}}\right)}}=\mathcal{P}^{N\perp}\mathbb{P}(W^{m+1,p}\Omega^{k})=\mathcal{P}_{2}(W^{m+1,p}\Omega^{k})
\]
 such that $d\omega=d\upsilon+d\vartheta=d\eta$. In other words,
we want $\vartheta$ such that $d\vartheta=d\left(\eta-\upsilon\right)$
and $\mathcal{P}_{2}\vartheta=\vartheta$. Then we are done by setting
$\vartheta=\mathcal{P}_{2}\left(\eta-\upsilon\right)$.
\end{proof}
\begin{rem*}
We note that a less general version of the Poincare inequality was
used in \parencite{Schwarz1995} to establish the potential estimates
in \Blackboxref{potential_estimates} as well as \Blackboxref{Dirichlet_Neumann_fields}.
A more general version \parencite[Lemma 2.4.10]{Schwarz1995} deals
with the case $p\geq2$. Our version here only requires $p\in\left(1,\infty\right)$.

Among other things, the inequality allows the following approximation
of boundary conditions, which will play a crucial role for the $W^{1,p}$-analyticity
of the heat flow in \Subsecref{W1p-analyticity}.
\end{rem*}
\begin{cor}
\label{cor:crucial_boundary_approx_homN} Let $p\in\left(1,\infty\right).$
\begin{enumerate}
\item $W^{1,p}\Omega_{N}^{k}=d\left(W^{2,p}\Omega_{\mathrm{hom}N}^{k-1}\right)\oplus\mathrm{Ker}\left(\restr{\delta_{c}}{W^{1,p}\Omega_{N}^{k}}\right)$
and $\Omega_{N}^{k}=d\left(\Omega_{\mathrm{hom}N}^{k-1}\right)\oplus\mathrm{Ker}\left(\restr{\delta_{c}}{\Omega_{N}^{k}}\right)$.
\item $L^{p}\text{-}\mathrm{cl}\left(d\left(\Omega_{\mathrm{hom}N}^{k}\right)\right)=d\left(W^{1,p}\Omega_{N}^{k}\right)=d\left(W^{1,p}\Omega^{k}\right)$.
\item $W^{1,p}\text{-}\mathrm{cl}\left(W^{2,p}\Omega_{\mathrm{hom}N}^{k}\right)=W^{1,p}\Omega_{N}^{k}$.
\end{enumerate}
\end{cor}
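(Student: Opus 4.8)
The plan is to read the whole corollary off the Hodge–Friedrichs decomposition together with the fact (from \Blackboxref{potential_estimates}) that the Neumann potential $(-\Delta_N)^{-1}$ is a Banach isomorphism at \emph{every} regularity level: first prove the splitting (1) by a purely algebraic argument, then deduce a density statement for smooth absolute–Neumann forms, and finally bootstrap (3) and (2) from these. For (1), given $\omega\in W^{1,p}\Omega_N^k$ I would write $\omega=(1-\mathbb{P})\omega+\mathbb{P}\omega$. By \Thmref{Friedrichs_Decomposition} the second term lies in $\mathrm{Ker}\bigl(\restr{\delta_c}{W^{1,p}\Omega_N^k}\bigr)=\mathbb{P}(W^{1,p}\Omega^k)$, while $1-\mathbb{P}=\mathcal{P}_1+\mathcal{P}_3^{\mathrm{ex}}$ has range $d(W^{2,p}\Omega^{k-1})=d(W^{2,p}\Omega_N^{k-1})$ on $W^{1,p}\Omega^k$, so $(1-\mathbb{P})\omega=d\eta$ for some $\eta\in\mathcal{P}^{N\perp}W^{2,p}\Omega_N^{k-1}$; since $\mathbb{P}\omega$ is also Neumann, $\mathbf{n}\,d\eta=\mathbf{n}(1-\mathbb{P})\omega=\mathbf{n}\omega-\mathbf{n}\mathbb{P}\omega=0$, whence $\eta\in W^{2,p}\Omega_{\mathrm{hom}N}^{k-1}$. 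The reverse inclusion $d(W^{2,p}\Omega_{\mathrm{hom}N}^{k-1})\subset W^{1,p}\Omega_N^k$ is immediate from the definition, and directness follows because an exact form is killed by $\mathbb{P}$: if $\xi=d\eta$ lies in $\mathrm{Ker}(\restr{\delta_c}{W^{1,p}\Omega_N^k})=\mathbb{P}(W^{1,p}\Omega^k)$ then $\xi=\mathbb{P}\xi=0$. The smooth statement is the identical argument applied to the smooth Hodge–Friedrichs decomposition.

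Next I would prove the density lemma: $\Omega_{\mathrm{hom}N}^k$ is $W^{m,p}$-dense in $W^{m,p}\Omega_{\mathrm{hom}N}^k$ for $m\geq 2$. Using \Corref{ortho_projection}, split $W^{m,p}\Omega_{\mathrm{hom}N}^k=\mathcal{H}_N^k\oplus\mathcal{P}^{N\perp}W^{m,p}\Omega_{\mathrm{hom}N}^k$; the first summand is finite-dimensional and smooth (\Blackboxref{Dirichlet_Neumann_fields}). For the second, $(-\Delta_N)^{-1}\colon\mathcal{P}^{N\perp}W^{m-2,p}\Omega^k\to\mathcal{P}^{N\perp}W^{m,p}\Omega_{\mathrm{hom}N}^k$ is a Banach isomorphism, compatible across $m$; smooth forms are dense in $\mathcal{P}^{N\perp}W^{m-2,p}\Omega^k$, and a smooth source has a smooth potential (it lies in every $W^{\ell,p}$, hence in $C^{\infty}$). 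Applying $(-\Delta_N)^{-1}$ yields a dense set of smooth $\Omega_{\mathrm{hom}N}$-forms, and similarly a dense set of smooth $\mathcal{P}^{N\perp}\Omega_{\mathrm{hom}N}$-forms.

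For (3): the inclusion $W^{1,p}\text{-}\mathrm{cl}(W^{2,p}\Omega_{\mathrm{hom}N}^k)\subset W^{1,p}\Omega_N^k$ is trivial since $W^{1,p}\Omega_N^k$ is closed. For the reverse, take $\omega\in W^{1,p}\Omega_N^k$ and use (1): $\omega=d\eta+\mathbb{P}\omega$ with $\eta\in W^{2,p}\Omega_{\mathrm{hom}N}^{k-1}$. Approximating $\eta$ in $W^{2,p}$ by smooth $\eta_j\in\Omega_{\mathrm{hom}N}^{k-1}$ (density lemma) gives $d\eta_j\to d\eta$ in $W^{1,p}$ with $d\eta_j\in\Omega_{\mathrm{hom}N}^k$. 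Write $\mathbb{P}\omega=\mathcal{P}^N\omega+\mathcal{P}_2\omega$; the first term is a smooth Neumann field, and for the second, since $\mathcal{P}_2=\delta_c d(-\Delta_N)^{-1}\mathcal{P}^{N\perp}$ and $d\mathcal{P}^{N\perp}=d$ one checks $\mathcal{P}_2\omega=\delta_c\gamma$ with $\gamma=(-\Delta_N)^{-1}(d\omega)$, the source $d\omega$ being an exact (hence $\mathcal{H}_N$-orthogonal) $L^p$-form. Approximate that source in $L^p$ by smooth \emph{Neumann} forms $g_j\in\mathcal{P}^{N\perp}\Omega_N^{k+1}$; then $\gamma_j:=(-\Delta_N)^{-1}g_j$ is smooth with $\mathbf{n}\gamma_j=0$, $\mathbf{n}d\gamma_j=0$ (it is in the range of the potential), and $\mathbf{n}\Delta\gamma_j=-\mathbf{n}g_j=0$, so by $\delta d+d\delta=-\Delta$ and the codifferential–trace identity of \Lemref{basic-identities-forms-boundary} we get $\mathbf{n}\delta\gamma_j=0$ and $\mathbf{n}d\delta\gamma_j=0$, i.e. $\delta\gamma_j\in\Omega_{\mathrm{hom}N}^k$, while $\gamma_j\to\gamma$ in $W^{2,p}$ gives $\delta\gamma_j\to\mathcal{P}_2\omega$ in $W^{1,p}$. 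Hence $\omega\in W^{1,p}\text{-}\mathrm{cl}(W^{2,p}\Omega_{\mathrm{hom}N}^k)$. Part (2) then follows quickly: $d(W^{1,p}\Omega_N^k)=d(W^{1,p}\Omega^k)$ and the $L^p$-closedness of $d(W^{1,p}\Omega^k)=(\mathcal{P}_1+\mathcal{P}_3^{\mathrm{ex}})(L^p\Omega^{k+1})$ come from \Thmref{Friedrichs_Decomposition}, giving $L^p\text{-}\mathrm{cl}(d(\Omega_{\mathrm{hom}N}^k))\subset d(W^{1,p}\Omega^k)$; conversely any $\xi=d\alpha$ with $\alpha\in W^{1,p}\Omega_N^k$ is, by (3), a $W^{1,p}$-limit of $\alpha_j\in W^{2,p}\Omega_{\mathrm{hom}N}^k$, each $d\alpha_j$ is by the density lemma an $L^p$-limit of $d(\Omega_{\mathrm{hom}N}^k)$, and a diagonal argument concludes.

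The main obstacle is precisely the coexact (divergence-free) component of (3): a generic smooth coexact Neumann form is \emph{not} absolute–Neumann, so it cannot be approximated by smooth $\Omega_{\mathrm{hom}N}$-forms in any naive way. The resolution — representing it as $\delta_c$ of a Neumann potential and approximating the \emph{unconstrained} source by Neumann forms, so that the Laplacian of the approximating potential again satisfies the Neumann condition and hence its codifferential is absolute–Neumann — is the crux, and it is exactly here that one needs $(-\Delta_N)^{-1}$ (and its smoothing and its extrapolation to all Sobolev scales) rather than just the Poincaré–Hodge–Dirac estimate \Corref{poincare_ineq_Neumann}. Everything else (Steps 1 and 2) is bookkeeping once the Hodge machinery of the preceding subsections is in hand.
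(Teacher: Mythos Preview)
Your proof is correct, but the route you take for parts (2) and (3) differs from the paper's, and your closing assessment of what is ``needed'' is precisely backwards.

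The paper proves the three parts in order $1\to 2\to 3$. Part (2) is a one-liner: from part (1) (smooth version) one has $d(\Omega_{\mathrm{hom}N}^{k})=(\mathcal{P}_1+\mathcal{P}_3^{\mathrm{ex}})\Omega_N^{k+1}$, and since $\Omega_N^{k+1}$ is $L^p$-dense in $L^p\Omega^{k+1}$ and $\mathcal{P}_1+\mathcal{P}_3^{\mathrm{ex}}$ is bounded, the closure is $(\mathcal{P}_1+\mathcal{P}_3^{\mathrm{ex}})L^p\Omega^{k+1}=d(W^{1,p}\Omega^k)$. For part (3) the paper uses the Poincar\'e--Hodge--Dirac isomorphism of \Corref{poincare_ineq_Neumann} directly: the explicit inverse $(d\oplus\delta_c)^{-1}(d\eta,\delta_c\upsilon)=\mathcal{P}_2(\eta-\upsilon)+\upsilon$ shows that $(d\oplus\delta_c)^{-1}\bigl[d(\Omega_{\mathrm{hom}N}^{k})\oplus\delta_c(\Omega_N^k)\bigr]=\mathcal{P}^{N\perp}\Omega_{\mathrm{hom}N}^k$, and then taking $W^{1,p}$-closures on the left amounts to taking $L^p$-closures on the right, which by part (2) and the obvious $L^p\text{-}\mathrm{cl}(\delta_c\Omega_N^k)=\delta_c(W^{1,p}\Omega_N^k)$ gives exactly $\mathcal{P}^{N\perp}W^{1,p}\Omega_N^k$.

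You instead prove a separate density lemma for $\Omega_{\mathrm{hom}N}$ in $W^{m,p}\Omega_{\mathrm{hom}N}$ via the Neumann potential, then attack (3) by approximating the exact and coexact pieces by hand (the coexact piece via approximating the \emph{source} of the potential by Neumann forms), and finally derive (2) from (3). This works and is a nice concrete construction, but it is longer, and contrary to your final paragraph the paper shows that the Poincar\'e--Hodge--Dirac estimate alone (plus the algebraic Hodge machinery already in place) suffices --- no separate appeal to the smoothing of $(-\Delta_N)^{-1}$ across the Sobolev scale is required.
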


\begin{proof}
$ $
\begin{enumerate}
\item Because $\mathbb{P}W^{1,p}\Omega^{k}\leq W^{1,p}\Omega_{N}^{k},$
we conclude $\mathbb{P}W^{1,p}\Omega^{k}=\mathbb{P}W^{1,p}\Omega_{N}^{k}$.
Meanwhile, $\left(\mathcal{P}_{1}+\mathcal{P}_{3}^{\mathrm{ex}}\right)W^{1,p}\Omega_{N}^{k}=\left(1-\mathbb{P}\right)W^{1,p}\Omega_{N}^{k}\leq W^{1,p}\Omega_{N}^{k}$,
so $\left(\mathcal{P}_{1}+\mathcal{P}_{3}^{\mathrm{ex}}\right)W^{1,p}\Omega_{N}^{k}\leq d\left(W^{2,p}\Omega_{N}^{k-1}\right)\cap W^{1,p}\Omega_{N}^{k}=d\left(W^{2,p}\Omega_{\mathrm{hom}N}^{k-1}\right)$.
\item $L^{p}\text{-}\mathrm{cl}\left(\left(\mathcal{P}_{1}+\mathcal{P}_{3}^{\mathrm{ex}}\right)\Omega_{N}^{k+1}\right)=\left(\mathcal{P}_{1}+\mathcal{P}_{3}^{\mathrm{ex}}\right)L^{p}\text{-}\mathrm{cl}\left(\Omega_{N}^{k+1}\right)=\left(\mathcal{P}_{1}+\mathcal{P}_{3}^{\mathrm{ex}}\right)L^{p}\Omega^{k+1}.$
\item We are done if $W^{1,p}\text{-}\mathrm{cl}\left(\mathcal{P}^{N\perp}\Omega_{\mathrm{hom}N}^{k}\right)=\mathcal{P}^{N\perp}W^{1,p}\Omega_{N}^{k}$.\\
Recall $\mathcal{P}_{2}\left(\Omega_{\mathrm{hom}N}^{k}\right)\leq\Omega_{\mathrm{hom}N}^{k}$
and $\delta_{c}\left(\Omega_{N}^{k}\right)=\delta_{c}\mathcal{P}^{N\perp}\left(\Omega_{\hom N}^{k}\right)$
by \Corref{Sobolev_Hodge_projections}, so by the formula of $\left(d\oplus\delta_{c}\right)^{-1}$
from \Corref{poincare_ineq_Neumann}:
\[
\left(d\oplus\delta_{c}\right)^{-1}\left[d\left(\Omega_{\mathrm{hom}N}^{k}\right)\oplus\delta_{c}\left(\Omega_{N}^{k}\right)\right]=\left(d\oplus\delta_{c}\right)^{-1}\left[d\mathcal{P}^{N\perp}\left(\Omega_{\mathrm{hom}N}^{k}\right)\oplus\delta_{c}\mathcal{P}^{N\perp}\left(\Omega_{\hom N}^{k}\right)\right]=\mathcal{P}^{N\perp}\Omega_{\mathrm{hom}N}^{k}
\]
So
\begin{align*}
W^{1,p}\text{-}\mathrm{cl}\left(\mathcal{P}^{N\perp}\Omega_{\mathrm{hom}N}^{k}\right) & =\left(d\oplus\delta_{c}\right)^{-1}\left[L^{p}\text{-}\mathrm{cl}\left(d\left(\Omega_{\mathrm{hom}N}^{k}\right)\right)\oplus L^{p}\text{-}\mathrm{cl}\left(\delta_{c}\left(\Omega_{N}^{k}\right)\right)\right]\\
 & =\left(d\oplus\delta_{c}\right)^{-1}\left[d\left(W^{1,p}\Omega^{k}\right)\oplus\delta_{c}\left(W^{1,p}\Omega_{N}^{k}\right)\right]\\
 & =\mathcal{P}^{N\perp}W^{1,p}\Omega_{N}^{k}
\end{align*}
\end{enumerate}
\end{proof}

\subsection{An easy mistake\label{subsec:An-easy-mistake}}

Let $p\in\left(1,\infty\right),\omega\in\Omega_{\mathrm{hom}N}^{k}$.
In other words, $\mathbf{n}\omega=0$ and $\mathbf{n}d\omega=0$.
Using intuition from Euclidean space, it is tempting to conclude $\nabla_{\nu}\omega=0$,
but this is not true in general.

We will not use Penrose notation but work in local coordinates on
$\partial M$, with $\partial_{1},...\partial_{n-1}$ for directions
on $\partial M$ and $\partial_{n}$ for the direction of $\widetilde{\nu}$.
Let $\{a_{1},...,a_{k}\}\subset\{1,...,n-1$\}. Observe that $\mathbf{n}d\omega=0$
implies

\[
0=\left(d\omega\right)_{na_{1}...a_{k}}=\partial_{n}\omega_{a_{1}...a_{k}}+\sum_{i}(\pm1)\partial_{a_{i}}\omega_{na_{1}...\widehat{a_{i}}...a_{k}}=\partial_{n}\omega_{a_{1}...a_{k}}
\]
since $\omega_{na_{1}...\widehat{a_{i}}...a_{k}}=0$ on $\partial M$.
Then recall $\partial_{n}\omega_{a_{1}...a_{k}}=\left(\nabla_{n}\omega\right)_{a_{1}...a_{k}}+\Gamma*\omega$
where $\Gamma*\omega$ is schematic for some terms with the Christoffel
symbols. As $\Gamma$ is bounded on $M$, we conclude $\left|\mathbf{t}\nabla_{\nu}\omega\right|\lesssim|\omega|$
and $\left|\mathbf{t}\nabla_{\nu}\omega\right|_{\Lambda}\lesssim|\omega|_{\Lambda}$
on $\partial M$. Then 
\[
\iota_{\nu}d\left(|\omega|^{2}\right)=\nabla_{\nu}\left\langle \omega,\omega\right\rangle =2\left\langle \nabla_{\nu}\omega,\omega\right\rangle =2\left\langle \mathbf{t}\nabla_{\nu}\omega,\omega\right\rangle 
\]
so $\left|\nabla_{\nu}\left(|\omega|^{2}\right)\right|\lesssim\left|\omega\right|^{2}$
on $\partial M$. This will be important in establishing the $L^{p}$-analyticity
of the heat flow in \Subsecref{Lp-analyticity}.

\section{Heat flow\label{sec:Heat-flow}}

As promised, we now obtain a simple construction of the heat flow.
We still work on the same setting as in \Subsecref{The-setting}.

\subsection{$L^{2}$-analyticity\label{subsec:L2-analyticity}}

Recall that $\Delta_{N}$ is an unbounded operator on $\mathbb{R}\mathcal{P}^{N\perp}L^{2}\Omega^{k}$
and $\left(-\Delta_{N}\right)^{-1}$ is bounded. It is trivial to
check that $\left(-\Delta_{N}\right)^{-1}$ is symmetric, therefore
self-adjoint. Then $\Delta_{N}$ is also self-adjoint. Then for $\omega\in D(\Delta_{N})=\mathcal{P}^{N\perp}H^{2}\Omega_{\mathrm{hom}N}^{k}$:
$\left\langle \left\langle \Delta_{N}\omega,\omega\right\rangle \right\rangle _{\Lambda}=-\mathcal{D}(\omega,\omega)\leq0.$
So $\Delta_{N}$ is dissipative. Therefore, by a complexification
argument , $\Delta_{N}^{\mathbb{C}}$ is acutely sectorial of angle
$0$ by \Thmref{Dissipative} and $\left(e^{t\Delta_{N}^{\mathbb{C}}}\right)_{t\geq0}$
is a $C_{0}$, analytic semigroup on $\mathbb{C}\mathcal{P}^{N\perp}L^{2}\Omega^{k}$.
By \Blackboxref{analyticity}, we can derive some basic facts about
$e^{t\Delta_{N}}:$
\begin{itemize}
\item For $m\in\mathbb{N}_{1},$ $D(\Delta_{N}^{m})\leq\mathcal{P}^{N\perp}H^{2m}\Omega^{k}$
and $\left\Vert \Delta_{N}^{m}\omega\right\Vert _{L^{2}}\sim\left\Vert \omega\right\Vert _{H^{2m}}\sim\left\Vert \omega\right\Vert _{D\left(\Delta_{N}^{m}\right)}\;\forall\omega\in D\left(\Delta_{N}^{m}\right)$
by potential estimates. Recall that $\left(e^{t\Delta_{N}}\right)_{t\geq0}$
on $\left(D(\Delta_{N}^{m}),\left\Vert \cdot\right\Vert _{H^{2m}}\right)$
is also a $C_{0}$ semigroup by Sobolev tower (\Thmref{Sobolev_tower}).
\item For $t>0$, by either the spectral theorem (with a complexification
step) or semigroup theory, $e^{t\Delta_{N}}$ is a self-adjoint contraction
on $\mathbb{R}\mathcal{P}^{N\perp}L^{2}\Omega^{k}$, with image in
$D(\Delta_{N}^{\infty})\leq\mathcal{P}^{N\perp}\Omega^{k}$ by the
analyticity of $\left(e^{s\Delta_{N}^{\mathbb{C}}}\right)_{s\geq0}$.
\item $\forall\omega\in\mathcal{P}^{N\perp}L^{2}\Omega^{k},$ $\left((0,\infty)\to\mathcal{P}^{N\perp}\Omega^{k},t\mapsto e^{t\Delta_{N}}\omega\right)$
is $C^{\infty}$-continuous by Sobolev tower. Let $m\in\mathbb{N}_{1},$
then $\partial_{t}^{m}\left(e^{t\Delta_{N}}\omega\right)=\Delta_{N}^{m}e^{t\Delta_{N}}\omega$
and $\left\Vert e^{t\Delta_{N}}\omega\right\Vert _{H^{2m}}\sim\left\Vert \Delta_{N}^{m}e^{t\Delta_{N}}\omega\right\Vert _{L^{2}}\lesssim_{\neg m,\neg t}\frac{m^{m}}{t^{m}}\text{\ensuremath{\left\Vert \omega\right\Vert }}_{L^{2}}$
\end{itemize}
Next we define the \textbf{non-injective Neumann Laplacian} $\widetilde{\Delta_{N}}$
\nomenclature{$\widetilde{\Delta_{N}}$}{non-injective Neumann Laplacian\nomrefpage}
as an unbounded operator on $L^{2}\Omega^{k}$ with $D(\widetilde{\Delta_{N}}^{m})=D(\Delta_{N}^{m})\oplus\mathcal{H}_{N}^{k}$
and $\widetilde{\Delta_{N}}^{m}=\Delta_{N}^{m}\oplus0$ $\forall m\in\mathbb{N}_{1}$.
By using either the spectral theorem or checking the definitions manually,
$\widetilde{\Delta_{N}}$ is also a self-adjoint, dissipative operator.
Then we also get an analytic heat flow, and $\widetilde{\Delta_{N}}=\Delta_{N}\oplus0_{\mathcal{H}_{N}^{k}}$
with $e^{t\widetilde{\Delta_{N}}}=e^{t\Delta_{N}}\oplus\mathrm{Id}_{\mathcal{H}_{N}^{k}}$.

Recall that for $m\in\mathbb{N}_{0},p\in\left(1,\infty\right),\omega\in W^{m,p}\Omega^{k}:$
$\left\Vert \omega\right\Vert _{W^{m,p}}\sim\left\Vert \mathcal{P}^{N\perp}\omega\right\Vert _{W^{m,p}}+\left\Vert \mathcal{P}^{N}\omega\right\Vert _{\mathcal{H}_{N}^{k}}$
where we do not need to specify the norm on $\mathcal{H}_{N}^{k}$
as they're all equivalent. Then the previous results for $\Delta_{N}$
can easily be extended to $\widetilde{\Delta_{N}}:$
\begin{itemize}
\item For $m\in\mathbb{N}_{1},$ $D(\widetilde{\Delta_{N}}^{m})\leq H^{2m}\Omega^{k}$
and $\forall\omega\in D(\widetilde{\Delta_{N}}^{m})$: $\left\Vert \widetilde{\Delta_{N}}^{m}\omega\right\Vert _{L^{2}}\sim\left\Vert \mathcal{P}^{N\perp}\omega\right\Vert _{H^{2m}}$
and $\left\Vert \omega\right\Vert _{D(\widetilde{\Delta_{N}}^{m})}\sim\left\Vert \omega\right\Vert _{H^{2m}}$.
Recall $\left(e^{t\widetilde{\Delta_{N}}}\right)_{t\geq0}$ on $D(\widetilde{\Delta_{N}}^{m})$
is also an a $C_{0}$ semigroup. (Sobolev tower)
\item For $t>0$, by either the spectral theorem (with a complexification
step) or semigroup theory, $e^{t\widetilde{\Delta_{N}}}$ is a self-adjoint
contraction on $\mathbb{R}L^{2}\Omega^{k}$, with image in $D\left(\widetilde{\Delta_{N}}^{\infty}\right)\leq\Omega^{k}$.
\item $\forall\omega\in L^{2}\Omega^{k},$ $\left((0,\infty)\to\Omega^{k},t\mapsto e^{t\widetilde{\Delta_{N}}}\omega\right)$
is $C^{\infty}$-continuous by Sobolev tower. Let $m\in\mathbb{N}_{1},$
then $\partial_{t}^{m}\left(e^{t\widetilde{\Delta_{N}}}\omega\right)=\widetilde{\Delta_{N}}^{m}e^{t\widetilde{\Delta_{N}}}\omega$
and 
\[
\left\Vert e^{t\widetilde{\Delta_{N}}}\omega\right\Vert _{H^{2m}}\sim\left\Vert e^{t\widetilde{\Delta_{N}}}\mathcal{P}^{N\perp}\omega\right\Vert _{H^{2m}}+\left\Vert \mathcal{P}^{N}\omega\right\Vert _{\mathcal{H}_{N}^{k}}\lesssim_{\neg m,\neg t}\frac{m^{m}}{t^{m}}\left\Vert \mathcal{P}^{N\perp}\omega\right\Vert _{L^{2}}+\left\Vert \mathcal{P}^{N}\omega\right\Vert _{\mathcal{H}_{N}^{k}}
\]
\end{itemize}
By these estimates, we conclude that $e^{t\widetilde{\Delta_{N}}}\xrightarrow{t\to\infty}\mathcal{P}^{N}$
in $\mathcal{L}\left(L^{2}\Omega^{k}\right)$ (\textbf{Kodaira projection}).
In fact, this is how Hodge decomposition was done historically.

\subsection{$L^{p}$-analyticity\label{subsec:Lp-analyticity}}

Though we could use the same symbols $\Delta_{N}$ and $\widetilde{\Delta_{N}}$
for the Neumann Laplacian on $L^{p}$, that can create confusion regarding
the domains. Let them still refer to the unbounded operators on $\mathbb{R}\mathcal{P}^{N\perp}L^{2}\Omega^{k}$
and $\mathbb{R}L^{2}\Omega^{k}$ as before. However, $e^{t\Delta_{N}}$
and $e^{t\widetilde{\Delta_{N}}}$ are compatible across all $L^{p}$
spaces (as we will see).

First we note that $\Omega_{00}^{k}\leq D\left(\widetilde{\Delta_{N}}^{\infty}\right)$
so $D\left(\widetilde{\Delta_{N}}^{\infty}\right)$ is dense in $L^{p}$
$\forall p\in\left(1,\infty\right)$.

Then for $L^{p}$-analyticity, we make a Gronwall-type argument (adapted
from \parencite[Appendix A]{Isett2015_heat} to handle the boundary).
\begin{thm}[Local boundedness]
 \label{thm:local_boundedness_Lp} For $p\in\left(1,\infty\right),s\in\left(0,1\right)$
and $u\in D\left(\widetilde{\Delta_{N}}^{\infty}\right):$
\[
\left\Vert e^{s\widetilde{\Delta_{N}}}u\right\Vert _{p}\lesssim_{p}\left\Vert u\right\Vert _{p}
\]
\end{thm}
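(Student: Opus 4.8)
The plan is to run a Bochner--Gronwall argument on $\left\Vert\omega(s)\right\Vert_{L^{p}(M)}^{p}$, where $\omega(s):=e^{s\widetilde{\Delta_{N}}}u$, deriving a differential inequality $\frac{d}{ds}\left\Vert\omega(s)\right\Vert_{L^{p}(M)}^{p}\le C_{p}\left\Vert\omega(s)\right\Vert_{L^{p}(M)}^{p}$ and then invoking Gronwall on $s\in(0,1)$. Recall from \Subsecref{L2-analyticity} that for $u\in D(\widetilde{\Delta_{N}}^{\infty})$ the curve $s\mapsto\omega(s)$ is jointly smooth with $\partial_{s}\omega=\widetilde{\Delta_{N}}\omega=\Delta\omega$ (Hodge Laplacian) and $\omega(s)\in\Omega_{\hom N}^{k}$ for all $s\ge0$, since $D(\widetilde{\Delta_{N}}^{\infty})\subset\Omega_{\hom N}^{k}$. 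To sidestep the non-differentiability of $t\mapsto t^{p/2}$ at $0$, I would first regularize: fix $\delta>0$ and put $f:=|\omega|^{2}+\delta\ge\delta$, which is smooth. Differentiating under the integral, using the Bochner formula $\langle\Delta\omega,\omega\rangle=\tfrac12\Delta(|\omega|^{2})-|\nabla\omega|^{2}-\langle\Ric(\omega),\omega\rangle$, the scalar identity $\Delta(|\omega|^{2})=\nabla_{i}\nabla^{i}f$, and the integration-by-parts formula \Thmref{integrate_tensor_forms_by_parts}(1), one obtains
\[
\frac{d}{ds}\int_{M}f^{p/2}=\frac{p}{2}\int_{\partial M}f^{p/2-1}\nabla_{\nu}(|\omega|^{2})-\frac{p(p-2)}{4}\int_{M}f^{p/2-2}|\nabla f|^{2}-p\int_{M}f^{p/2-1}|\nabla\omega|^{2}-p\int_{M}f^{p/2-1}\langle\Ric(\omega),\omega\rangle .
\]

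Next I would dispose of the interior terms. Since $\Ric$ is a bounded bundle endomorphism on the compact $M$, $|\langle\Ric(\omega),\omega\rangle|\lesssim|\omega|^{2}\le f$, so that term contributes $\lesssim_{p}\int_{M}f^{p/2}$, a Gronwall term. For the two gradient terms, Cauchy--Schwarz gives $|\nabla f|^{2}=|\nabla|\omega|^{2}|^{2}\le4|\nabla\omega|^{2}|\omega|^{2}\le4f|\nabla\omega|^{2}$, hence $f^{p/2-2}|\nabla f|^{2}\le4f^{p/2-1}|\nabla\omega|^{2}$; combining this with $-p\int_{M}f^{p/2-1}|\nabla\omega|^{2}$, and separating the cases $p\ge2$ (second term already has the good sign, discard the first) and $1<p<2$ (bound the first by $4$ times its coefficient, then add), one finds the total gradient contribution is $\le-c_{p}\int_{M}f^{p/2-1}|\nabla\omega|^{2}$ with $c_{p}:=p\min(1,p-1)>0$. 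So far,
\[
\frac{d}{ds}\int_{M}f^{p/2}\le-c_{p}\int_{M}f^{p/2-1}|\nabla\omega|^{2}+C\int_{M}f^{p/2}+\frac{p}{2}\int_{\partial M}f^{p/2-1}\nabla_{\nu}(|\omega|^{2}).
\]

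The boundary term is the heart of the matter, and this is where the \emph{absolute} Neumann condition $\mathbf{n}d\omega=0$ (not merely $\mathbf{n}\omega=0$) is used: by \Subsecref{An-easy-mistake}, $\bigl|\nabla_{\nu}(|\omega|^{2})\bigr|\lesssim|\omega|^{2}\le f$ on $\partial M$ for $\omega\in\Omega_{\hom N}^{k}$, so the boundary term is $\lesssim\int_{\partial M}f^{p/2}$. Setting $g:=f^{p/4}\in C^{\infty}(M)$, we have $\int_{\partial M}f^{p/2}=\left\Vert g\right\Vert_{L^{2}(\partial M)}^{2}$, $\left\Vert g\right\Vert_{L^{2}(M)}^{2}=\int_{M}f^{p/2}$, and $|\nabla g|^{2}=\tfrac{p^{2}}{16}f^{p/2-2}|\nabla f|^{2}\lesssim_{p}f^{p/2-1}|\nabla\omega|^{2}$. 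The trace map $W^{1,2}(M)\to L^{2}(\partial M)$ is compact (trace into $B_{2,2}^{1/2}(\partial M)$ followed by the compact embedding $B_{2,2}^{1/2}(\partial M)\hookrightarrow L^{2}(\partial M)$), so Ehrling's inequality (\Thmref{Ehrling}) gives, for every $\varepsilon>0$,
\[
\int_{\partial M}f^{p/2}=\left\Vert g\right\Vert_{L^{2}(\partial M)}^{2}\le\varepsilon\left\Vert g\right\Vert_{W^{1,2}(M)}^{2}+C_{\varepsilon}\left\Vert g\right\Vert_{L^{2}(M)}^{2}\lesssim_{p}\varepsilon\int_{M}f^{p/2-1}|\nabla\omega|^{2}+(\varepsilon+C_{\varepsilon})\int_{M}f^{p/2}.
\]
Choosing $\varepsilon$ small enough to absorb the gradient piece into $-c_{p}\int_{M}f^{p/2-1}|\nabla\omega|^{2}$ leaves $\frac{d}{ds}\int_{M}f^{p/2}\le C_{p}\int_{M}f^{p/2}$, whence by Gronwall $\int_{M}f_{\delta}(s)^{p/2}\le e^{C_{p}s}\int_{M}f_{\delta}(0)^{p/2}$. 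Letting $\delta\downarrow0$ (the surplus $\delta$-constants, arising from $(|\omega|^{2}+\delta)^{p/2}\le2^{p/2}(|\omega|^{p}+\delta^{p/2})$, vanish) yields $\left\Vert\omega(s)\right\Vert_{L^{p}(M)}^{p}\le e^{C_{p}s}\left\Vert u\right\Vert_{L^{p}(M)}^{p}$, so $\left\Vert e^{s\widetilde{\Delta_{N}}}u\right\Vert_{L^{p}(M)}\lesssim_{p}\left\Vert u\right\Vert_{L^{p}(M)}$ for $s\in(0,1)$.

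The main obstacle is precisely the boundary term $\int_{\partial M}f^{p/2-1}\nabla_{\nu}(|\omega|^{2})$ produced by the integration by parts: without the extra hypothesis $\mathbf{n}d\omega=0$ there is no pointwise bound on $\nabla_{\nu}(|\omega|^{2})$ over $\partial M$, and the whole scheme collapses (this is exactly the point of \Subsecref{An-easy-mistake}). A secondary, purely technical point is to carry the $\delta$-regularization through so that every constant (in the Cauchy--Schwarz gradient bound, the Ricci bound, the Ehrling absorption, and the passage $\delta\downarrow0$) is uniform in $\delta$ and in $s\in(0,1)$; this is routine once the structure above is in place.
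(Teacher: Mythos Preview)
Your argument is correct and follows the same Bochner--Gronwall--Ehrling skeleton as the paper: differentiate an $L^{p}$ energy, use the Bochner/Weitzenbock formula, integrate by parts, control the boundary term via the pointwise bound $|\nabla_{\nu}(|\omega|^{2})|\lesssim|\omega|^{2}$ from \Subsecref{An-easy-mistake} (which requires the full absolute Neumann condition), and then absorb the boundary integral using Ehrling on $H^{1}(M)\to L^{2}(\partial M)$.

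The one genuine technical difference is how you deal with the non-smoothness of $|\omega|^{p}$. The paper first reduces to $p\ge 2$ by $L^{2}$-self-adjointness and duality, then to $p=4K$ (so that $|\omega|^{p}=(|\omega|^{2})^{2K}$ is automatically smooth) by complex interpolation; this keeps the computation clean and avoids carrying any auxiliary parameter. You instead run the argument uniformly for all $p\in(1,\infty)$ by the $\delta$-regularization $f=|\omega|^{2}+\delta$, which forces you to track the sign of the $-\tfrac{p(p-2)}{4}$ gradient term and to use the Kato-type bound $|\nabla f|^{2}\le 4f|\nabla\omega|^{2}$ to recover a good coefficient $c_{p}=p\min(1,p-1)$ in the range $1<p<2$. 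Your route is slightly more hands-on but more self-contained (no duality or interpolation step), and it makes transparent that the argument works directly for $1<p<2$. Both buy exactly the same conclusion.
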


\begin{proof}
By duality and the density of $D\left(\widetilde{\Delta_{N}}^{\infty}\right)$
in $L^{2}\cap L^{p}$, WLOG assume $p\geq2.$ By complex interpolation
(with a complexification step), WLOG assume $p=4K$ where $K$ is
a large natural number.

Let $U(s)=e^{s\Delta_{N}}u$, so $\partial_{s}U=\Delta U$ and 
\[
\partial_{s}\left(|U|^{4K}\right)=2K|U|^{4K-2}\left\langle 2\Delta U,U\right\rangle \stackrel{\text{Bochner}}{=}2K|U|^{4K-2}\left(\Delta\left(|U|^{2}\right)-2\left|\nabla U\right|^{2}-2\left\langle \Ric\left(U\right),U\right\rangle \right)
\]

So 
\[
\partial_{s}\int_{M}|U|^{4K}\leq2K\int_{M}|U|^{4K-2}\Delta\left(|U|^{2}\right)+\mathcal{O}_{M,K}\left(\int_{M}|U|^{4K}\right)
\]

Let $f=\left|U\right|^{2}$. As $U\in D\left(\widetilde{\Delta_{N}}^{\infty}\right)\leq\Omega_{\hom N}^{k}$,
$\left|\nabla_{\nu}f\right|\lesssim f$ on $\partial M$ by \Subsecref{An-easy-mistake}.
By Gronwall, we just need $\int_{M}f^{2K-1}\Delta f\lesssim\int_{M}f^{2K}$
(pseudo-dissipativity). Simply integrate by parts:
\begin{align*}
\left\langle \left\langle \Delta f,f^{2K-1}\right\rangle \right\rangle  & =-\left\langle \left\langle df,d\left(f^{2K-1}\right)\right\rangle \right\rangle +\left\langle \left\langle \nabla_{\nu}f,f^{2K-1}\right\rangle \right\rangle \\
 & =-(2K-1)\int_{M}\left|df\right|^{2}f^{2K-2}+\mathcal{O}_{M}\left(\int_{\partial M}f^{2K}\right)\\
 & =-\frac{2K-1}{K^{2}}\int_{M}\left|d\left(f^{K}\right)\right|^{2}+\mathcal{O}_{M}\left(\int_{\partial M}f^{2K}\right)
\end{align*}

Let $F=|f|^{K}$. So for any $\varepsilon>0,$ we want $C_{\varepsilon}>0$
such that $\int_{\partial M}F^{2}\leq\varepsilon\int_{M}\left|dF\right|^{2}+C_{\varepsilon}\int_{M}F^{2}$.
This follows from Ehrling's inequality, and the fact that $H^{1}(M)\to L^{2}(\partial M)$
is compact.
\end{proof}
So $\left(e^{t\widetilde{\Delta_{N}}}\right)_{t\geq0}$ can be uniquely
extended by density to $L^{2}\Omega^{k}+L^{p}\Omega^{k}$ and $\restr{e^{t\widetilde{\Delta_{N}}}}{L^{p}\Omega^{k}}\in\mathcal{L}\left(L^{p}\Omega^{k}\right)$.
With a complexification step and an appropriate core chosen by Sobolev
embedding, local boundedness on $L^{p}$ implies $L^{p}$-analyticity
for all $p\in\left(1,\infty\right)$ by \Thmref{simple_core_extrapol}.

Let $A_{p}$ be the generator of $\left(e^{t\widetilde{\Delta_{N}}}\right)_{t\geq0}$
on $L^{p}\Omega^{k}.$\nomenclature{$A_p$}{generator of heat flow on $L^p$\nomrefpage}
By the definition of generator, $A_{p}=\widetilde{\Delta_{N}}$ on
$D\left(\widetilde{\Delta_{N}}^{\infty}\right)$. In our terminology,
$A_{p}^{\mathbb{C}}$ is acutely quasi-sectorial. But we want a more
concrete description of $D(A_{p})$.
\begin{lem}
Let $p\in\left(1,\infty\right)$. Then $\left(D(A_{p}),\left\Vert \cdot\right\Vert _{D(A_{p})}\right)\sim\left(W^{2,p}\Omega_{\hom N}^{k},\left\Vert \cdot\right\Vert _{W^{2,p}}\right)\;$
and $W^{2,p}\text{-}\mathrm{cl}\left(D\left(\widetilde{\Delta_{N}}^{\infty}\right)\right)=W^{2,p}\Omega_{\hom N}^{k}$.\label{lem:D(A_p)}
\end{lem}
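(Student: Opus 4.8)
The plan is to identify $D(A_p)$ by a bootstrapping-plus-density argument, using the already-established $L^2$ theory, elliptic regularity (i.e. the potential estimates in \Blackboxref{potential_estimates}), and the Poincare-Hodge-Dirac inequality (\Corref{poincare_ineq_Neumann}) together with the boundary approximation in \Corref{crucial_boundary_approx_homN}.

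First I would show the inclusion $\left(D(A_p),\|\cdot\|_{D(A_p)}\right)\hookrightarrow\left(W^{2,p}\Omega_{\hom N}^k,\|\cdot\|_{W^{2,p}}\right)$ is bounded. Take $u\in D(A_p)$ and set $v=A_p u\in L^p\Omega^k$. Since $\left(e^{t\widetilde{\Delta_N}}\right)_{t\geq0}$ is analytic on $L^p$, we have $e^{tA_p}u\in D(A_p^\infty)\subset W^{2,p}$ for $t>0$, with $A_p(e^{tA_p}u)=e^{tA_p}v\xrightarrow{t\downarrow0}v$ in $L^p$. On $D(\widetilde{\Delta_N}^\infty)$ the generator acts as $\widetilde{\Delta_N}=\Delta$, so for each $t>0$, writing $u_t=e^{tA_p}u$, we have $\Delta u_t = e^{tA_p}v$ and $u_t\in\Omega_{\hom N}^k$. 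Now apply the potential estimate: decomposing $u_t=\mathcal{P}^N u_t+\mathcal{P}^{N\perp}u_t$, the harmonic part is controlled by any norm (all norms on $\mathcal{H}_N^k$ are equivalent, and $\|\mathcal{P}^N u_t\|\lesssim\|u_t\|_{L^p}$), while $\mathcal{P}^{N\perp}u_t\in\mathcal{P}^{N\perp}W^{2,p}\Omega_{\hom N}^k$ satisfies $\|\mathcal{P}^{N\perp}u_t\|_{W^{2,p}}\sim\|\Delta_N\mathcal{P}^{N\perp}u_t\|_{L^p}=\|\mathcal{P}^{N\perp}e^{tA_p}v\|_{L^p}\lesssim\|v\|_{L^p}$, since $\left(-\Delta_N\right)^{-1}$ is a Banach isomorphism and $\mathcal{P}^{N\perp}$ commutes with $\Delta$. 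Hence $\|u_t\|_{W^{2,p}}\lesssim\|u\|_{L^p}+\|v\|_{L^p}=\|u\|_{D(A_p)}$ uniformly in $t\in(0,1)$. Since $u_t\to u$ in $L^p$ and $W^{2,p}\Omega^k\hookrightarrow L^p\Omega^k$, a weak-compactness argument (reflexivity of $W^{2,p}$ for $p\in(1,\infty)$, Banach-Alaoglu, and uniqueness of the limit in the weaker topology) gives $u\in W^{2,p}\Omega^k$ with $\|u\|_{W^{2,p}}\lesssim\|u\|_{D(A_p)}$; and the boundary condition $\mathbf{n}u=0,\mathbf{n}du=0$ passes to the limit via the continuity of the trace map $W^{2,p}\Omega^k\twoheadrightarrow W^{1,p}\restr{\Omega^k}{\partial M}$, so $u\in W^{2,p}\Omega_{\hom N}^k$.

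For the reverse inclusion $W^{2,p}\Omega_{\hom N}^k\hookrightarrow D(A_p)$: given $u\in W^{2,p}\Omega_{\hom N}^k$, I would like to show $\frac{1}{t}(e^{tA_p}u-u)$ converges in $L^p$ as $t\downarrow0$. The cleanest route is by density: $D(\widetilde{\Delta_N}^\infty)$ is dense in $W^{2,p}\Omega_{\hom N}^k$ — this is precisely the claim $W^{2,p}\text{-}\mathrm{cl}\left(D(\widetilde{\Delta_N}^\infty)\right)=W^{2,p}\Omega_{\hom N}^k$, which I would prove first. For that, recall $\mathcal{P}^{N\perp}W^{2,p}\Omega_{\hom N}^k$ and $\mathcal{H}_N^k$ split $W^{2,p}\Omega_{\hom N}^k$; the finite-dimensional $\mathcal{H}_N^k$ consists of smooth forms lying in $D(\widetilde{\Delta_N}^\infty)$, so it suffices to approximate elements of $\mathcal{P}^{N\perp}W^{2,p}\Omega_{\hom N}^k$. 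Applying $(-\Delta_N)^{-1}$ to $C^\infty$ forms dense in $\mathcal{P}^{N\perp}L^p\Omega^k$ and using the isomorphism property of the Neumann potential produces smooth forms in $\mathcal{P}^{N\perp}\Omega_{\hom N}^k\subset D(\widetilde{\Delta_N}^\infty)$ dense in $\mathcal{P}^{N\perp}W^{2,p}\Omega_{\hom N}^k$; alternatively one can invoke \Corref{crucial_boundary_approx_homN}(3) and the elliptic regularity of $(-\Delta_N)^{-1}$ in tandem. Once density is in hand: on $D(\widetilde{\Delta_N}^\infty)$ we have $A_p=\Delta$ with $\|A_p w\|_{L^p}=\|\Delta w\|_{L^p}\lesssim\|w\|_{W^{2,p}}$; since $A_p$ is closed, this bound extends to all of $W^{2,p}\Omega_{\hom N}^k$, giving $W^{2,p}\Omega_{\hom N}^k\subset D(A_p)$ with $\|u\|_{D(A_p)}\lesssim\|u\|_{W^{2,p}}$. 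Combining the two inclusions gives the equivalence of norms.

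The main obstacle I anticipate is the uniform-in-$t$ elliptic estimate in the first inclusion, specifically making sure the boundary condition is handled correctly: $e^{tA_p}u$ lies in $\Omega_{\hom N}^k$ for $t>0$ so the potential estimate applies cleanly, but one must check that the $W^{2,p}$-bound is genuinely uniform down to $t=0$ (this is where analyticity, via $\|A_p e^{tA_p}\|\lesssim 1/t$ being the \emph{wrong} bound, is replaced by the observation $A_p e^{tA_p}u=e^{tA_p}A_p u=e^{tA_p}v$ with $\|e^{tA_p}\|_{\mathcal{L}(L^p)}\lesssim1$ from local boundedness, \Thmref{local_boundedness_Lp}). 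The secondary subtlety is the density statement $W^{2,p}\text{-}\mathrm{cl}\left(D(\widetilde{\Delta_N}^\infty)\right)=W^{2,p}\Omega_{\hom N}^k$, which genuinely relies on the Hodge-theoretic machinery (\Corref{crucial_boundary_approx_homN}) rather than on naive mollification, since mollification does not respect the boundary conditions — this is exactly the phenomenon flagged in \Subsecref{Hodge-decomposition}.
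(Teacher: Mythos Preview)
Your route differs from the paper's and contains a genuine circularity in the first inclusion. You write ``$e^{tA_p}u\in D(A_p^\infty)\subset W^{2,p}$'' and then ``$u_t\in\Omega_{\hom N}^k$'', but the inclusion $D(A_p)\subset W^{2,p}\Omega_{\hom N}^k$ is precisely what you are proving. $L^p$-analyticity alone only gives $e^{tA_p}u\in D(A_p^\infty)$; it does not tell you this space sits inside $W^{2,p}$, nor that the absolute Neumann boundary conditions hold, until the lemma is established. What you actually need is $e^{tA_p}u\in D(\widetilde{\Delta_N}^\infty)$ for $t>0$ and every $u\in L^p$. For $p\geq 2$ this is immediate from $L^p\hookrightarrow L^2$ and the $L^2$ theory; for $p<2$ it requires a genuine bootstrap (approximate $u$ by $u_j\in D(\widetilde{\Delta_N}^\infty)$, use $L^p$-analyticity to bound $\|A_p^m e^{tA_p}u_j\|_{L^p}$, then iterate the potential estimate on $e^{tA_p}u_j\in D(\widetilde{\Delta_N}^\infty)$ to get uniform $W^{2m,p}$ control and pass to the limit). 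There is also a slip in your density paragraph: $\mathcal{P}^{N\perp}\Omega_{\hom N}^k\subset D(\widetilde{\Delta_N}^\infty)$ is false, since membership in $D(\widetilde{\Delta_N}^\infty)$ demands $\Delta^m\omega\in\Omega_{\hom N}^k$ for \emph{all} $m$. The correct dense set is $D(\Delta_N^\infty)$ itself, which is $(-\Delta_N)^{\pm1}$-invariant; applying $(-\Delta_N)^{-1}$ to a merely smooth form will not land you back in $D(\Delta_N^\infty)$. Your alternative appeal to \Corref{crucial_boundary_approx_homN} does not help, as that result concerns $W^{1,p}$-closure, not $W^{2,p}$.

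The paper's proof sidesteps all of this via the core lemma (\Lemref{core}): since $D(\widetilde{\Delta_N}^\infty)$ is dense in $L^p$ and semigroup-invariant, it is automatically dense in $(D(A_p),\|\cdot\|_{D(A_p)})$. On this core the equivalence $\|u\|_{D(A_p)}\sim\|u\|_{W^{2,p}}$ is a one-line application of the potential estimate (\Blackboxref{potential_estimates}) after splitting off $\mathcal{H}_N^k$. Hence $D(A_p)=W^{2,p}\text{-}\mathrm{cl}(D(\widetilde{\Delta_N}^\infty))$ directly, with no heat-flow regularization, weak compactness, or separate treatment of the two inclusions. The closure is then identified as $W^{2,p}\Omega_{\hom N}^k$ by transporting the $L^p$-density of $D(\Delta_N^\infty)$ through the Banach isomorphism $\Delta_N:\mathcal{P}^{N\perp}W^{2,p}\Omega_{\hom N}^k\iso\mathcal{P}^{N\perp}L^p\Omega^k$, which is your density argument done with the right dense set.
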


\begin{proof}
Observe that $\forall u\in D\left(\widetilde{\Delta_{N}}^{\infty}\right):$
$\mathcal{P}^{N\perp}u\in D(\Delta_{N}^{\infty})$ and 
\[
\left\Vert u\right\Vert _{D(A_{p})}=\left\Vert u\right\Vert _{p}+\left\Vert \widetilde{\Delta_{N}}u\right\Vert _{p}\sim\left\Vert \mathcal{P}^{N}u\right\Vert _{\mathcal{H}_{N}^{k}}+\left\Vert \mathcal{P}^{N\perp}u\right\Vert _{p}+\left\Vert \Delta_{N}\mathcal{P}^{N\perp}u\right\Vert _{p}\sim\left\Vert \mathcal{P}^{N}u\right\Vert _{\mathcal{H}_{N}^{k}}+\left\Vert \mathcal{P}^{N\perp}u\right\Vert _{W^{2,p}}\sim\left\Vert u\right\Vert _{W^{2,p}}
\]
Then $\left\Vert \cdot\right\Vert _{D(A_{p})}\sim\left\Vert \cdot\right\Vert _{W^{2,p}}$
since $D\left(\widetilde{\Delta_{N}}^{\infty}\right)$ is a dense
core in $\left(D(A_{p}),\left\Vert \cdot\right\Vert _{D(A_{p})}\right)$
(see \Lemref{core}). This also implies $D(A_{p})=W^{2,p}\text{-}\mathrm{cl}\left(D\left(\widetilde{\Delta_{N}}^{\infty}\right)\right)=W^{2,p}\text{-}\mathrm{cl}\left(D\left(\Delta_{N}^{\infty}\right)\right)\oplus\mathcal{H}_{N}^{k}$.

Recall that $D\left(\Delta_{N}^{\infty}\right)\leq\left(\mathcal{P}^{N\perp}W^{2,p}\Omega_{\hom N}^{k},\left\Vert \cdot\right\Vert _{W^{2,p}}\right)\stackrel{\Delta_{N}}{\iso}\left(\mathcal{P}^{N\perp}L^{p}\Omega^{k},\left\Vert \cdot\right\Vert _{L^{p}}\right)$.
Since $L^{p}\text{-}\mathrm{cl}\left(\Delta_{N}D\left(\Delta_{N}^{\infty}\right)\right)=L^{p}\text{-}\mathrm{cl}\left(D\left(\Delta_{N}^{\infty}\right)\right)=\mathcal{P}^{N\perp}L^{p}\Omega^{k},$
we conclude $W^{2,p}\text{-}\mathrm{cl}\left(D\left(\Delta_{N}^{\infty}\right)\right)=\left(-\Delta_{N}\right)^{-1}\left(\mathcal{P}^{N\perp}L^{p}\Omega^{k}\right)=\mathcal{P}^{N\perp}W^{2,p}\Omega_{\hom N}^{k}$
and we are done.
\end{proof}
So for $p\in\left(1,\infty\right),s\in\left(0,1\right)$ and $u\in L^{p}\Omega^{k}$:
$\left\Vert e^{s\widetilde{\Delta_{N}}}u\right\Vert _{W^{2,p}}\lesssim\frac{1}{s}\left\Vert u\right\Vert _{p}$.
That implies $\left\Vert e^{s\widetilde{\Delta_{N}}}u\right\Vert _{W^{1,p}}\lesssim\frac{1}{\sqrt{s}}\left\Vert u\right\Vert _{p}$
by complex interpolation (with complexification), using $\left[\mathbb{C}L^{p},\mathbb{C}W^{2,p}\right]_{\frac{1}{2}}=\left[\mathbb{C}F_{p,2}^{0},\mathbb{C}F_{p,2}^{2}\right]_{\frac{1}{2}}=\mathbb{C}F_{p,2}^{1}$.

Obviously, $D\left(A_{p}^{\infty}\right)=\{\omega\in\Omega_{\mathrm{\hom}N}^{k}:\Delta^{m}\omega\in W^{2,p}\Omega_{\hom N}^{k}\;\forall m\in\mathbb{N}_{0}\}=D\left(\widetilde{\Delta_{N}}^{\infty}\right)$
by Sobolev embedding.

Additionally, by the density of $D\left(A_{p}^{\infty}\right)$ in
$L^{p}$, we can show by approximation that 
\[
\left\langle \left\langle e^{t\widetilde{\Delta_{N}}}\omega,\eta\right\rangle \right\rangle _{\Lambda}=\left\langle \left\langle \omega,e^{t\widetilde{\Delta_{N}}}\eta\right\rangle \right\rangle _{\Lambda}\;\forall\omega\in L^{p}\Omega^{k},\eta\in L^{p'}\Omega^{k},p\in\left(1,\infty\right),t\geq0
\]

This implies that $e^{t\widetilde{\Delta_{N}}}\mathcal{P}^{N\perp}=\mathcal{P}^{N\perp}e^{t\widetilde{\Delta_{N}}}$
on $W^{m,p}\Omega^{k}$ $\forall m\in\mathbb{N}_{0},\forall p\in\left(1,\infty\right)$.

\subsection{$W^{1,p}$-analyticity\label{subsec:W1p-analyticity}}

We first observe that $W^{1,p}\text{-}\mathrm{cl}\left(D\left(\widetilde{\Delta_{N}}^{\infty}\right)\right)=W^{1,p}\text{-}\mathrm{cl}\left(W^{2,p}\text{-}\mathrm{cl}\left(D\left(\widetilde{\Delta_{N}}^{\infty}\right)\right)\right)=W^{1,p}\text{-}\mathrm{cl}\left(W^{2,p}\Omega_{\hom N}^{k}\right)=W^{1,p}\Omega_{N}^{k}$
by \Corref{crucial_boundary_approx_homN} and \Lemref{D(A_p)}.

Because we will soon be dealing with differential forms of different
degrees, define $\Omega(M)=\bigoplus_{k=0}^{n}\Omega^{k}(M)$ as the
\textbf{graded algebra} of differential forms where multiplication
is the wedge product. We simply define $W^{m,p}\Omega(M)=\bigoplus_{k=0}^{n}W^{m,p}\Omega^{k}(M)$,
and similarly for $B_{p,q}^{s},F_{p,q}^{s}$ spaces. Spaces like $\Omega_{D}\left(M\right)$,
$\Omega_{00}\left(M\right)$ or $W^{m,p}\Omega_{\mathrm{hom}N}$ are
also defined by direct sums. The dot products $\left\langle \cdot,\cdot\right\rangle _{\Lambda}$
and $\left\langle \left\langle \cdot,\cdot\right\rangle \right\rangle _{\Lambda}$
are also definable as the sum from each degree. Also define $\mathcal{H}(M)=\bigoplus_{k=0}^{n}\mathcal{H}^{k}(M)$.

As an example, $\omega\in L^{2}\Omega\left(M\right)$ and $\eta\in L^{2}\Omega\left(M\right)$
would imply $\omega\wedge\eta\in L^{1}\Omega\left(M\right)$. We also
recover integration by parts: 
\[
\left\langle \left\langle d\omega,\eta\right\rangle \right\rangle _{\Lambda}=\left\langle \left\langle \omega,\delta\eta\right\rangle \right\rangle _{\Lambda}+\left\langle \left\langle \jmath^{*}\omega,\jmath^{*}\iota_{\nu}\eta\right\rangle \right\rangle _{\Lambda}\;\forall\omega\in\mathbb{R}W^{1,p}\Omega\left(M\right),\forall\eta\in\mathbb{R}W^{1,p'}\Omega\left(M\right),p\in\left(1,\infty\right)
\]

Then we can set $D\left(\widetilde{\Delta_{N}}\right)=H^{2}\Omega_{\hom N}$
and $D\left(A_{p}\right)=W^{2,p}\Omega_{\hom N}$ ($p\in\left(1,\infty\right)$),
and previous results such as sectoriality or the Poincare inequality
still hold true in this new degree-independent framework, \emph{mutatis
mutandis}.
\begin{thm}[Commuting with derivatives I]
 \label{thm:commute_derivatives_I}Let $p\in\left(1,\infty\right).$
\begin{enumerate}
\item $\delta_{c}\left(D\left(A_{p}^{\infty}\right)\right)\leq D\left(A_{p}^{\infty}\right)$
and $d\left(D\left(A_{p}^{\infty}\right)\right)\leq D\left(A_{p}^{\infty}\right)$
\item Let $\omega\in D(A_{p})=W^{2,p}\Omega_{\hom N}$ and $\mathfrak{D}\in\{d,\delta_{c},\delta_{c}d,d\delta_{c}\}$.
Then for $t>0:$ $\mathfrak{D}e^{t\widetilde{\Delta_{N}}}\omega=e^{t\widetilde{\Delta_{N}}}\mathfrak{D}\omega$.
\end{enumerate}
\end{thm}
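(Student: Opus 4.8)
The plan is to prove part (1) first, since it is the technical heart, and then deduce part (2) by a clean semigroup argument. For part (1), the key observation is that $d$ and $\delta_c$ intertwine the relevant boundary conditions with each other: by \Lemref{basic-identities-forms-boundary}, $\mathbf{n}\omega = 0$ implies $\mathbf{n}\delta\omega = 0$, and $\mathbf{t}\omega = 0$ implies $\mathbf{t}d\omega = 0$; combined with the Hodge-theoretic facts in \Corref{basics_of_potentials} and \Corref{Sobolev_Hodge_projections}, one checks that $d$ and $\delta_c$ map $W^{m+2,p}\Omega_{\mathrm{hom}N}$ into $W^{m+1,p}\Omega_{\mathrm{hom}N}$ (using $d(W^{3,p}\Omega_{\mathrm{hom}N}^k) \leq W^{2,p}\Omega_{\mathrm{hom}N}^{k+1}$ and its $\delta_c$-dual from \Corref{basics_of_potentials}(5)). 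Iterating, since $D(A_p^\infty) = D(\widetilde{\Delta_N}^\infty) = \{\omega \in \Omega_{\mathrm{hom}N} : \Delta^m\omega \in W^{2,p}\Omega_{\mathrm{hom}N}\ \forall m\}$ by \Lemref{D(A_p)} and Sobolev embedding, and since $d$ and $\delta_c$ commute with $\Delta$ on smooth forms (they commute with $d\delta + \delta d$ degree by degree, and $\delta_c$ is just $\delta$ restricted, $d_c$ just $d$ restricted, on the relevant domains), we get $\Delta^m(d\omega) = d(\Delta^m\omega) \in W^{2,p}\Omega_{\mathrm{hom}N}$ for all $m$ whenever $\omega \in D(A_p^\infty)$, hence $d\omega \in D(A_p^\infty)$, and likewise for $\delta_c$. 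The cases $\delta_c d$ and $d\delta_c$ follow by composition.

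For part (2), I would use the standard fact that if $B$ is a bounded operator commuting with the generator $A_p$ on a core, then $B$ commutes with the semigroup $e^{tA_p}$. More precisely: fix $\mathfrak{D} \in \{d, \delta_c, \delta_c d, d\delta_c\}$. For $\omega \in D(A_p^\infty)$, both $s \mapsto \mathfrak{D}e^{s\widetilde{\Delta_N}}\omega$ and $s \mapsto e^{s\widetilde{\Delta_N}}\mathfrak{D}\omega$ are curves in $D(A_p^\infty)$ (using part (1) and the Sobolev tower, \Thmref{Sobolev_tower}, which gives $e^{s\widetilde{\Delta_N}}$ acting on $D(A_p^\infty)$). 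Since $\widetilde{\Delta_N}$ commutes with $\mathfrak{D}$ on $D(A_p^\infty)$ (again part (1), plus the algebraic identity that $\Delta$ commutes with $d$ and $\delta$), both curves satisfy the same ODE $\partial_s u = \widetilde{\Delta_N} u$ with the same initial value $\mathfrak{D}\omega$ at $s = 0$; by uniqueness for the semigroup ODE they agree for all $s \geq 0$. This proves the identity for $\omega \in D(A_p^\infty)$. Then, since $\mathfrak{D}$ is continuous from $W^{2,p}\Omega_{\mathrm{hom}N}$ into some $W^{1,p}\Omega$ (or $L^p\Omega$), and $D(A_p^\infty)$ is dense in $D(A_p) = W^{2,p}\Omega_{\mathrm{hom}N}$ (a core, by \Lemref{core}), a density argument — using that $e^{t\widetilde{\Delta_N}}$ is bounded on $W^{1,p}\Omega$ by the $W^{1,p}$-analyticity established just above, and bounded on $W^{2,p}\Omega_{\mathrm{hom}N} = D(A_p)$ by the Sobolev tower — extends the identity to all $\omega \in W^{2,p}\Omega_{\mathrm{hom}N}$.

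The main obstacle I anticipate is the bookkeeping in part (1): verifying carefully that $d$ and $\delta_c$ genuinely preserve the iterated domains $D(A_p^\infty)$, which requires being precise about why $\delta_c d e^{t\widetilde{\Delta_N}}\omega$ lands back in the right boundary-condition space (the absolute Neumann condition $\mathbf{n}(\cdot) = 0$ AND $\mathbf{n}d(\cdot) = 0$ must both be checked, and the composition $\delta_c d$ can a priori leave $\Omega_{\mathrm{hom}N}$). The resolution is that $\delta_c d + d\delta_c = -\widetilde{\Delta_N}$ preserves $D(A_p^\infty)$ trivially, and $d\delta_c$, $\delta_c d$ individually preserve it once $d$ and $\delta_c$ each do — so it all reduces to the two base cases for $d$ and $\delta_c$, which are handled by the chain of Hodge-decomposition identities already catalogued in \Corref{basics_of_potentials} and \Corref{Sobolev_Hodge_projections}. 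A secondary subtlety is keeping the $\langle\langle \cdot, \cdot\rangle\rangle_\Lambda$-adjoint relationship between $d$ and $\delta_c$ straight when invoking the self-adjointness of $e^{t\widetilde{\Delta_N}}$, but this is exactly the content of the integration-by-parts identity \Eqref{by_parts_d_delta} and causes no real trouble.
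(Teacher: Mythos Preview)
Your overall strategy is sound, but there are two concrete issues, one minor and one more serious.

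\textbf{Part (1): a false intermediate claim.} You write that $d$ and $\delta_c$ map $W^{m+2,p}\Omega_{\hom N}$ into $W^{m+1,p}\Omega_{\hom N}$, citing ``its $\delta_c$-dual from \Corref{basics_of_potentials}(5)''. No such statement exists there: the Hodge dual of $d(W^{3,p}\Omega_{\hom N})\leq W^{2,p}\Omega_{\hom N}$ is $\delta(W^{3,p}\Omega_{\hom D})\leq W^{2,p}\Omega_{\hom D}$, which concerns the Dirichlet side. And in fact $\delta_c(W^{3,p}\Omega_{\hom N})\leq W^{2,p}\Omega_{\hom N}$ is \emph{false} in general: for $\eta\in W^{3,p}\Omega_{\hom N}$ one gets $\mathbf{n}\delta\eta=0$ and $\mathbf{n}\delta d\eta=0$, but $\mathbf{n}d\delta\eta = -\mathbf{n}\Delta\eta - \mathbf{n}\delta d\eta = -\mathbf{n}\Delta\eta$, which need not vanish. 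What rescues the argument on $D(A_p^\infty)$---and what the paper makes explicit---is precisely that $\eta\in D(A_p^\infty)$ forces $\mathbf{n}\Delta\eta=0$ as well, so $\mathbf{n}d\delta\eta=0$ follows. Your final paragraph gestures at this but never isolates the key step; you should state it outright rather than appeal to a nonexistent dual.

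\textbf{Part (2): a circular reference.} You invoke ``the $W^{1,p}$-analyticity established just above'' to close the density argument. But in the paper's logical order, $W^{1,p}$-analyticity is proven \emph{after} this theorem and \emph{uses} it (the Poincar\'e inequality together with $\mathfrak{D}e^{z\widetilde{\Delta_N}}=e^{z\widetilde{\Delta_N}}\mathfrak{D}$ is exactly what yields the $W^{1,p}$ bound). So your argument is circular as written. The fix is easy: your density step only needs convergence in $L^p$, since $\mathfrak{D}\colon W^{2,p}\to L^p$ is continuous and $e^{t\widetilde{\Delta_N}}$ is already known to be $L^p$-bounded. The paper avoids the density step altogether by a slightly different maneuver: for $\omega\in D(A_p)$ and $t,h>0$ it first derives $e^{h\widetilde{\Delta_N}}\mathfrak{D}e^{t\widetilde{\Delta_N}}\omega=\mathfrak{D}e^{(t+h)\widetilde{\Delta_N}}\omega$ (since $e^{t\widetilde{\Delta_N}}\omega\in D(A_p^\infty)$ already), then sends $t\downarrow 0$ using only $e^{t\widetilde{\Delta_N}}\omega\to\omega$ in $W^{2,p}$ and $L^p$-boundedness of $e^{h\widetilde{\Delta_N}}$. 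Either route works once you drop the $W^{1,p}$ crutch.
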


\begin{proof}
$ $
\begin{enumerate}
\item Let $\eta\in D\left(A_{p}^{\infty}\right)$. Obviously $d\eta\in W^{2,p}\Omega_{\hom N}$,
so $d\Delta^{m}\eta\in W^{2,p}\Omega_{\hom N}\;\forall m\in\mathbb{N}_{0}$.\\
Observe that $\mathbf{n}\eta=0$ implies $\mathbf{n}\delta\eta=0$,
and $\mathbf{n}d\eta=0$ implies $\mathbf{n}\delta d\eta=0.$ But
$\mathbf{n}\Delta\eta=0$ so $\mathbf{n}d\delta\eta=0$ and we conclude
$\delta_{c}\eta\in W^{2,p}\Omega_{\hom N}$. Similarly, $\delta_{c}\Delta^{m}\eta\in W^{2,p}\Omega_{\hom N}\;\forall m\in\mathbb{N}_{0}$.
\item Let $t>0$. Note that $\mathfrak{D}e^{t\widetilde{\Delta_{N}}}\omega\in D\left(A_{p}^{\infty}\right).$\\
Then $\frac{e^{h\widetilde{\Delta_{N}}}-1}{h}e^{t\widetilde{\Delta_{N}}}\omega\xrightarrow[h\downarrow0]{C^{\infty}}\widetilde{\Delta_{N}}e^{t\widetilde{\Delta_{N}}}\omega$
so $\partial_{t}\left(\mathfrak{D}e^{t\widetilde{\Delta_{N}}}\omega\right)=\mathfrak{D}\widetilde{\Delta_{N}}e^{t\widetilde{\Delta_{N}}}\omega=\widetilde{\Delta_{N}}\mathfrak{D}e^{t\widetilde{\Delta_{N}}}\omega$.
Therefore 
\[
e^{h\widetilde{\Delta_{N}}}\mathfrak{D}e^{t\widetilde{\Delta_{N}}}\omega=\mathfrak{D}e^{\left(t+h\right)\widetilde{\Delta_{N}}}\omega\;\forall t>0,\forall h>0
\]
Note that $\mathfrak{D}e^{\left(t+h\right)\widetilde{\Delta_{N}}}\omega\xrightarrow[t\downarrow0]{L^{p}}\mathfrak{D}e^{h\widetilde{\Delta_{N}}}\omega$
since $e^{\left(t+h\right)\widetilde{\Delta_{N}}}\omega\xrightarrow[t\downarrow0]{C^{\infty}}e^{h\widetilde{\Delta_{N}}}\omega$.\\
On the other hand, $e^{h\widetilde{\Delta_{N}}}\mathfrak{D}e^{t\widetilde{\Delta_{N}}}\omega\xrightarrow[t\downarrow0]{L^{p}}e^{h\widetilde{\Delta_{N}}}\mathfrak{D}\omega$
as $e^{t\widetilde{\Delta_{N}}}\omega\xrightarrow[t\downarrow0]{W^{2,p}}\omega$
(\uline{why we need \mbox{$\omega\in D(A_{p})$}}).\\
So $\mathfrak{D}e^{h\widetilde{\Delta_{N}}}\omega=e^{h\widetilde{\Delta_{N}}}\mathfrak{D}\omega\;\forall h>0$.
\end{enumerate}
\end{proof}
We can extend this via complexification. For $\omega\in\mathbb{C}W^{2,p}\Omega_{\mathrm{hom}N},$
$\mathfrak{D}^{\mathbb{C}}e^{t\widetilde{\Delta_{N}^{\mathbb{C}}}}\omega=e^{t\widetilde{\Delta_{N}^{\mathbb{C}}}}\mathfrak{D}^{\mathbb{C}}\omega\;\forall t>0$.

By $L^{p}$-analyticity, $\exists\alpha=\alpha(p)>0$ such that $\left(e^{z\widetilde{\Delta_{N}^{\mathbb{C}}}}\right)_{z\in\Sigma_{\alpha}^{+}\cup\{0\}}$
is a $C_{0}$, locally bounded, analytic semigroup on $\mathbb{C}L^{p}\Omega$.
Then by the identity theorem, $\mathfrak{D}^{\mathbb{C}}e^{z\widetilde{\Delta_{N}^{\mathbb{C}}}}\omega=e^{z\widetilde{\Delta_{N}^{\mathbb{C}}}}\mathfrak{D}^{\mathbb{C}}\omega\;\forall z\in\Sigma_{\alpha}^{+}$.
\begin{thm}[$W^{1,p}$-analyticity]
  $\left(e^{z\widetilde{\Delta_{N}^{\mathbb{C}}}}\right)_{z\in\Sigma_{\alpha}^{+}\cup\{0\}}$
is a $C_{0}$, analytic semigroup on $\mathbb{C}W^{1,p}\Omega_{N}$.
\end{thm}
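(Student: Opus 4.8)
The plan is to deduce $W^{1,p}$-analyticity from the $L^p$-analyticity already in hand, via a core argument, using $\mathbb{C}D(A_p)=\mathbb{C}W^{2,p}\Omega_{\mathrm{hom}N}$ (with the $W^{2,p}$-norm) as the intermediate Banach space. Concretely I would apply \Lemref{from_core_U}(2) with $X=\mathbb{C}W^{1,p}\Omega_N$ and $U=\mathbb{C}D(A_p)$. Two of the hypotheses are immediate: $U\hookrightarrow X$ is a continuous injection with dense image by \Corref{crucial_boundary_approx_homN}(3) (in the complexified, degree-summed form noted at the start of \Subsecref{W1p-analyticity}), and $\bigl(e^{z\widetilde{\Delta_N^{\mathbb C}}}\bigr)_{z\in\Sigma_\alpha^+\cup\{0\}}$ is a $C_0$, analytic semigroup on $U=D(A_p^1)$ by the Sobolev tower theorem (\Thmref{Sobolev_tower}), since $A_p^{\mathbb C}$ is acutely quasi-sectorial and hence (after rescaling) generates an analytic semigroup on $\mathbb{C}L^p\Omega$. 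The semigroup preserves $U$: at $z=0$ it is the identity, and for $z\in\Sigma_\alpha^+$ it maps $\mathbb{C}L^p\Omega$ into $D(A_p^\infty)\leq\mathbb{C}D(A_p)$ by \Blackboxref{analyticity}.

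The remaining hypothesis, local boundedness of $\bigl(e^{z\widetilde{\Delta_N^{\mathbb C}}}\bigr)_{z\in\Sigma_{\alpha'}^+\cup\{0\}}$ on $\mathbb{C}W^{1,p}\Omega_N$ for each $\alpha'\in(0,\alpha)$, is where the Hodge machinery is used, and I would first prove it on the dense subspace $\mathbb{C}W^{2,p}\Omega_{\mathrm{hom}N}$. Write $\omega=\mathcal{P}^N\omega+\mathcal{P}^{N\perp}\omega$. On the finite-dimensional piece $\mathcal{P}^N\omega\in\mathcal{H}_N$ the heat flow acts as the identity, so that part is trivially controlled. On $\mathcal{P}^{N\perp}\omega\in\mathcal{P}^{N\perp}W^{2,p}\Omega_{\mathrm{hom}N}$, the Poincaré--Hodge--Dirac inequality (\Corref{poincare_ineq_Neumann} with $m=0$) gives $\left\Vert\eta\right\Vert_{W^{1,p}}\sim\left\Vert d\eta\right\Vert_{L^p}+\left\Vert\delta_c\eta\right\Vert_{L^p}$, and since $d,\delta_c$ commute with $e^{z\widetilde{\Delta_N^{\mathbb C}}}$ on $\mathbb{C}W^{2,p}\Omega_{\mathrm{hom}N}$ (the real statement is \Thmref{commute_derivatives_I}, extended to the sector by the identity theorem as noted just before the theorem) and $e^{z\widetilde{\Delta_N^{\mathbb C}}}$ commutes with $\mathcal{P}^{N\perp}$,
\[
\left\Vert e^{z\widetilde{\Delta_N^{\mathbb C}}}\mathcal{P}^{N\perp}\omega\right\Vert_{W^{1,p}}\sim\left\Vert e^{z\widetilde{\Delta_N^{\mathbb C}}}d\,\mathcal{P}^{N\perp}\omega\right\Vert_{L^p}+\left\Vert e^{z\widetilde{\Delta_N^{\mathbb C}}}\delta_c\mathcal{P}^{N\perp}\omega\right\Vert_{L^p}\lesssim_{\alpha'}\left\Vert d\,\mathcal{P}^{N\perp}\omega\right\Vert_{L^p}+\left\Vert\delta_c\mathcal{P}^{N\perp}\omega\right\Vert_{L^p}\sim\left\Vert\omega\right\Vert_{W^{1,p}},
\]
the middle step being local boundedness of $e^{z\widetilde{\Delta_N^{\mathbb C}}}$ on $\mathbb{C}L^p\Omega$. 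Adding the two pieces, $\left\Vert e^{z\widetilde{\Delta_N^{\mathbb C}}}\omega\right\Vert_{W^{1,p}}\lesssim_{\alpha'}\left\Vert\omega\right\Vert_{W^{1,p}}$ uniformly for $z$ in bounded subsets of $\Sigma_{\alpha'}^+$, for all $\omega\in\mathbb{C}W^{2,p}\Omega_{\mathrm{hom}N}$.

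To finish I extend this bound to all of $\mathbb{C}W^{1,p}\Omega_N$: $e^{z\widetilde{\Delta_N^{\mathbb C}}}$ maps the $W^{1,p}$-dense subspace $\mathbb{C}W^{2,p}\Omega_{\mathrm{hom}N}$ into $\mathbb{C}W^{1,p}\Omega_N$ with the above $W^{1,p}$-bound, so it extends to a bounded operator on $\mathbb{C}W^{1,p}\Omega_N$; this extension agrees with the $L^p$-flow because both are continuous maps $W^{1,p}\Omega_N\to L^p\Omega$ coinciding on a dense set, and its image lies in $W^{1,p}\Omega_N$ since that space is $W^{1,p}$-closed. Then all hypotheses of \Lemref{from_core_U}(2) hold, giving the claimed $C_0$, analytic semigroup on $\mathbb{C}W^{1,p}\Omega_N$. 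I do not expect a serious obstacle here: the real content — commutation of the heat flow with $d$ and $\delta_c$, and the identity $W^{1,p}\text{-}\mathrm{cl}(W^{2,p}\Omega_{\mathrm{hom}N})=W^{1,p}\Omega_N$ — has already been secured, and the point to handle with a little care is simply that the Poincaré inequality is only available on the $\mathcal{P}^{N\perp}$-component, which is why the harmonic splitting is carried through.
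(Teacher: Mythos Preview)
Your proposal is correct and follows essentially the same approach as the paper: apply \Lemref{from_core_U}(2) with core $U=\mathbb{C}D(A_p)=\mathbb{C}W^{2,p}\Omega_{\mathrm{hom}N}$ (dense in $\mathbb{C}W^{1,p}\Omega_N$ by \Corref{crucial_boundary_approx_homN}), reduce to local boundedness on the core, split off $\mathcal{P}^N$, and on $\mathcal{P}^{N\perp}$ use the Poincar\'e inequality together with the commutation $\mathfrak{D}^{\mathbb{C}}e^{z\widetilde{\Delta_N^{\mathbb{C}}}}=e^{z\widetilde{\Delta_N^{\mathbb{C}}}}\mathfrak{D}^{\mathbb{C}}$ for $\mathfrak{D}\in\{d,\delta_c\}$ and the $L^p$-local boundedness. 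Your write-up is a touch more explicit than the paper's about the density extension step, but the argument is the same.
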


\begin{proof}
Note that $\left(D(A_{p}^{\mathbb{C}}),\left\Vert \cdot\right\Vert _{W^{2,p}}\right)$
is dense in $\left(\mathbb{C}W^{1,p}\Omega_{N},\left\Vert \cdot\right\Vert _{W^{1,p}}\right)$
by \Corref{crucial_boundary_approx_homN}.

So by \Lemref{from_core_U}, we just need to show $\left(e^{z\widetilde{\Delta_{N}^{\mathbb{C}}}}\right)_{z\in\Sigma_{\alpha}^{+}\cup\{0\}}\subset\mathcal{L}\left(\mathbb{C}W^{1,p}\Omega_{N}\right)$
and is locally bounded. So it is enough to show 
\[
\left\Vert e^{z\widetilde{\Delta_{N}^{\mathbb{C}}}}u\right\Vert _{W^{1,p}}\lesssim\left\Vert u\right\Vert _{W^{1,p}}\;\forall u\in D\left(A_{p}^{\mathbb{C}}\right),\forall z\in\mathbb{D}\cap\Sigma_{\alpha}^{+}
\]
Consider $\mathcal{P}^{N\perp}u$, then we only need $\left\Vert e^{z\widetilde{\Delta_{N}^{\mathbb{C}}}}u\right\Vert _{W^{1,p}}\lesssim\left\Vert u\right\Vert _{W^{1,p}}\;\forall u\in\mathcal{P}^{N\perp}D\left(A_{p}^{\mathbb{C}}\right),\forall z\in\mathbb{D}\cap\Sigma_{\alpha}^{+}$.

Recall $e^{t\widetilde{\Delta_{N}}}\mathcal{P}^{N\perp}=\mathcal{P}^{N\perp}e^{t\widetilde{\Delta_{N}}}$
from \Subsecref{Lp-analyticity}. By the Poincare inequality (\Corref{poincare_ineq_Neumann}):
\begin{eqnarray*}
\left\Vert e^{z\widetilde{\Delta_{N}^{\mathbb{C}}}}u\right\Vert _{W^{1,p}} & \sim & \left\Vert d^{\mathbb{C}}e^{z\widetilde{\Delta_{N}^{\mathbb{C}}}}u\right\Vert _{p}+\left\Vert \delta_{c}^{\mathbb{C}}e^{z\widetilde{\Delta_{N}^{\mathbb{C}}}}u\right\Vert _{p}=\left\Vert e^{z\widetilde{\Delta_{N}^{\mathbb{C}}}}d^{\mathbb{C}}u\right\Vert _{p}+\left\Vert e^{z\widetilde{\Delta_{N}^{\mathbb{C}}}}\delta_{c}^{\mathbb{C}}u\right\Vert _{p}\\
 & \lesssim & \left\Vert d^{\mathbb{C}}u\right\Vert _{p}+\left\Vert \delta_{c}^{\mathbb{C}}u\right\Vert _{p}\sim\left\Vert u\right\Vert _{W^{1,p}}\;\forall u\in\mathcal{P}^{N\perp}D\left(A_{p}^{\mathbb{C}}\right),\forall z\in\mathbb{D}\cap\Sigma_{\alpha}^{+}
\end{eqnarray*}
\end{proof}
\begin{cor}
\label{cor:commute_derivatives_II}Let $\omega\in W^{1,p}\Omega_{N}$
and $\mathfrak{D}\in\{d,\delta_{c}\}$. Then for $t>0:$ $\mathfrak{D}e^{t\widetilde{\Delta_{N}}}\omega=e^{t\widetilde{\Delta_{N}}}\mathfrak{D}\omega$.
\end{cor}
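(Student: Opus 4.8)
The plan is to bootstrap from the $W^{1,p}$-analyticity just established, together with the commutation result (\Thmref{commute_derivatives_I}) for the smooth core, by a density argument. Fix $p\in\left(1,\infty\right)$, $\omega\in W^{1,p}\Omega_{N}$, $\mathfrak{D}\in\{d,\delta_{c}\}$ and $t>0$. The target identity $\mathfrak{D}e^{t\widetilde{\Delta_{N}}}\omega=e^{t\widetilde{\Delta_{N}}}\mathfrak{D}\omega$ is already known for $\omega\in D(A_{p})=W^{2,p}\Omega_{\hom N}$ by \Thmref{commute_derivatives_I}(2). Since $W^{2,p}\Omega_{\hom N}$ is dense in $W^{1,p}\Omega_{N}$ by \Corref{crucial_boundary_approx_homN}, it suffices to check that both sides of the identity are continuous as maps $W^{1,p}\Omega_{N}\to L^{p}\Omega$.

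For the right-hand side: $\mathfrak{D}:W^{1,p}\Omega_{N}\to L^{p}\Omega$ is bounded (it is a first-order operator, and the restriction to $W^{1,p}\Omega_{N}$ of $d$ or of $\delta_{c}$ is continuous into $L^{p}$), and $e^{t\widetilde{\Delta_{N}}}\in\mathcal{L}(L^{p}\Omega)$ by the $L^{p}$-analyticity established in \Subsecref{Lp-analyticity}; hence $\omega\mapsto e^{t\widetilde{\Delta_{N}}}\mathfrak{D}\omega$ is bounded $W^{1,p}\Omega_{N}\to L^{p}\Omega$. For the left-hand side: by the $W^{1,p}$-analyticity theorem, $e^{t\widetilde{\Delta_{N}}}\in\mathcal{L}(W^{1,p}\Omega_{N})$ (indeed, locally bounded in $\mathcal{L}(\mathbb{C}W^{1,p}\Omega_{N})$ for $t\in(0,1)$, and then for all $t>0$ by the semigroup property), so $\mathfrak{D}e^{t\widetilde{\Delta_{N}}}:W^{1,p}\Omega_{N}\to W^{1,p}\Omega_{N}\to L^{p}\Omega$ is bounded. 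Applying \Lemref{dense_conv} with $X=W^{1,p}\Omega_{N}$, $Y=L^{p}\Omega$, $X_{0}=W^{2,p}\Omega_{\hom N}$, and the two (single, hence trivially bounded) operators $T_{1}=T=$ either side — or, more simply, just noting that two continuous maps agreeing on a dense set agree everywhere — we obtain the identity for all $\omega\in W^{1,p}\Omega_{N}$.

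I do not expect a serious obstacle here: the only mildly delicate point is making sure the codomain for the continuity argument is chosen correctly. One should land in $L^{p}\Omega$ (not $W^{1,p}$), because $\mathfrak{D}$ only gains continuity as a map \emph{out of} $W^{1,p}$, and on $W^{1,p}\Omega_{N}$ we cannot expect $e^{t\widetilde{\Delta_{N}}}\mathfrak{D}\omega$ to be any better than $L^{p}$ without invoking further smoothing. (If one wants the identity to hold \emph{in} $W^{1,p}$ as well, one can then additionally observe that both sides in fact lie in $\mathfrak{D}(W^{2,p}\Omega_{\hom N})$-type spaces after the smoothing of $e^{t\widetilde{\Delta_{N}}}$ for $t>0$, but this is not needed for the stated conclusion.) Everything else — boundedness of $\mathfrak{D}$, membership $e^{t\widetilde{\Delta_{N}}}\in\mathcal{L}(W^{1,p}\Omega_{N})$, density of the core — is available verbatim from the preceding results.
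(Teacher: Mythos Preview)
Your argument is correct, but it is not the route the paper takes. The paper's proof is literally ``Same as before, but with $e^{t\widetilde{\Delta_{N}}}\omega\xrightarrow[t\downarrow0]{W^{1,p}}\omega$'': it re-runs the dynamic argument from \Thmref{commute_derivatives_I}(2) verbatim (establish $e^{h\widetilde{\Delta_{N}}}\mathfrak{D}e^{t\widetilde{\Delta_{N}}}\omega=\mathfrak{D}e^{(t+h)\widetilde{\Delta_{N}}}\omega$ for $t,h>0$, then let $t\downarrow0$), except that the limit step now uses the newly proved $C_{0}$-property of the semigroup on $W^{1,p}\Omega_{N}$ in place of the $W^{2,p}$-convergence used for $\omega\in D(A_{p})$. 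By contrast, you freeze $t$ and use a static density argument: the identity is known on the dense subspace $W^{2,p}\Omega_{\hom N}$, and both sides are bounded $W^{1,p}\Omega_{N}\to L^{p}\Omega$ thanks to the $W^{1,p}$-boundedness of $e^{t\widetilde{\Delta_{N}}}$ just established. Both approaches lean on the $W^{1,p}$-analyticity theorem in an essential way (the paper on its $C_{0}$ part, you on its boundedness part), so neither is logically cheaper; yours is arguably cleaner since it avoids repeating the ODE manipulation, while the paper's has the virtue of being self-contained at the level of a single $\omega$ without invoking \Corref{crucial_boundary_approx_homN} again.
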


\begin{proof}
Same as before, but with $e^{t\widetilde{\Delta_{N}}}\omega\xrightarrow[t\downarrow0]{W^{1,p}}\omega$.
\end{proof}
Let $A_{1,p}$ be the generator of $\left(e^{t\widetilde{\Delta_{N}}}\right)_{t\geq0}$
on $W^{1,p}\Omega_{N}$. \nomenclature{$A_{1,p}$}{generator of heat flow on $W^{1,p}$\nomrefpage}
Then $A_{1,p}$ and $A_{p}$ agree on $D\left(A_{p}^{2}\right)$ by
the definition of generators, so $A_{1,p}=\widetilde{\Delta_{N}}$
on $D\left(\widetilde{\Delta_{N}}^{\infty}\right)$. By potential
estimates, $\left\Vert \cdot\right\Vert _{D\left(A_{1,p}\right)}\sim\left\Vert \cdot\right\Vert _{W^{3,p}}$
on $D\left(\widetilde{\Delta_{N}}^{\infty}\right)$ and therefore
on $\left\Vert \cdot\right\Vert _{W^{3,p}}\mathrm{\text{-}\mathrm{cl}}\left(D\left(\widetilde{\Delta_{N}}^{\infty}\right)\right)=D\left(A_{1,p}\right)$.
By the same argument as in \Lemref{D(A_p)}, $D\left(A_{1,p}\right)=\left(-\Delta_{N}\right)^{-1}\left(\mathcal{P}^{N\perp}W^{1,p}\Omega_{N}\right)\oplus\mathcal{H}_{N}\geq D\left(\widetilde{\Delta_{N}}^{\infty}\right)$.
\begin{thm}[Compatibility with Hodge-Helmholtz]
 \label{thm:compat_Helmholtz_leray}Let $m\in\mathbb{N}_{0},p\in(1,\infty),t>0$.
By \Corref{commute_derivatives_II} and \Corref{Sobolev_Hodge_projections}:
\begin{itemize}
\item $e^{t\widetilde{\Delta_{N}}}d\left(W^{m+1,p}\Omega_{N}\right)=d\left(e^{t\widetilde{\Delta_{N}}}W^{m+1,p}\Omega_{N}\right)\leq d\left(\Omega_{N}\right)=d\left(\Omega\right)$.
\item $e^{t\widetilde{\Delta_{N}}}\delta_{c}\left(W^{m+1,p}\Omega_{N}\right)=\delta_{c}\left(e^{t\widetilde{\Delta_{N}}}W^{m+1,p}\Omega_{N}\right)\leq\delta_{c}\left(\Omega_{N}\right)=\delta_{c}\left(\Omega_{\mathrm{hom}N}\right)$.
\end{itemize}
As $e^{t\widetilde{\Delta_{N}}}=1$ on $\mathcal{H}_{N}$, we finally
conclude $e^{t\widetilde{\Delta_{N}}}\left(\mathcal{P}_{3}^{\mathrm{ex}}+\mathcal{P}_{1}\right)=\left(\mathcal{P}_{3}^{\mathrm{ex}}+\mathcal{P}_{1}\right)e^{t\widetilde{\Delta_{N}}},e^{t\widetilde{\Delta_{N}}}\mathcal{P}_{2}=\mathcal{P}_{2}e^{t\widetilde{\Delta_{N}}}$
and $e^{t\widetilde{\Delta_{N}}}\mathcal{P}_{3}^{N}=\mathcal{P}_{3}^{N}e^{t\widetilde{\Delta_{N}}}=\mathcal{P}_{3}^{N}$
on $W^{m,p}\Omega\left(M\right)$. Also, $e^{t\widetilde{\Delta_{N}}}\mathbb{P}=\mathbb{P}e^{t\widetilde{\Delta_{N}}}$
on $W^{m,p}\Omega\left(M\right)$ where $\mathbb{P}$ is the Leray
projection.

By the definition of generators, 
\[
\widetilde{\Delta_{N}}\left(\mathcal{P}_{3}^{\mathrm{ex}}+\mathcal{P}_{1}\right)=\left(\mathcal{P}_{3}^{\mathrm{ex}}+\mathcal{P}_{1}\right)\widetilde{\Delta_{N}},\mathcal{P}_{3}^{N}\widetilde{\Delta_{N}}=\widetilde{\Delta_{N}}\mathcal{P}_{3}^{N}=0,\mathcal{P}_{2}\widetilde{\Delta_{N}}=\widetilde{\Delta_{N}}\mathcal{P}_{2}=\widetilde{\Delta_{N}}\mathbb{P}=\mathbb{P}\widetilde{\Delta_{N}}
\]
 on $D(A_{p})=W^{2,p}\Omega_{\mathrm{hom}N}$.
\end{thm}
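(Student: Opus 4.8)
The plan is to reduce the entire statement to the commutation $\mathfrak{D}e^{t\widetilde{\Delta_{N}}}\omega=e^{t\widetilde{\Delta_{N}}}\mathfrak{D}\omega$ for $\mathfrak{D}\in\{d,\delta_{c}\}$ (\Corref{commute_derivatives_II}), combined with the explicit descriptions of the Hodge and Friedrichs ranges from \Corref{Sobolev_Hodge_projections} and \Thmref{Friedrichs_Decomposition}, while using the smoothing of the heat flow to stay inside the Sobolev-regular subspaces.

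First I would prove the two displayed invariances. Fix $m\in\mathbb{N}_{0}$, $p\in(1,\infty)$, $t>0$ and take $\omega\in W^{m+1,p}\Omega_{N}\subseteq W^{1,p}\Omega_{N}$. By \Corref{commute_derivatives_II}, $e^{t\widetilde{\Delta_{N}}}d\omega=d(e^{t\widetilde{\Delta_{N}}}\omega)$ and $e^{t\widetilde{\Delta_{N}}}\delta_{c}\omega=\delta_{c}(e^{t\widetilde{\Delta_{N}}}\omega)$. For $t>0$ the image of the heat flow lies in $D(\widetilde{\Delta_{N}}^{\infty})\le\Omega_{\hom N}$ (\Subsecref{Lp-analyticity}), so $e^{t\widetilde{\Delta_{N}}}\omega$ is smooth and satisfies the Neumann condition, hence $e^{t\widetilde{\Delta_{N}}}\omega\in W^{m+1,p}\Omega_{N}$. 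Therefore $e^{t\widetilde{\Delta_{N}}}d(W^{m+1,p}\Omega_{N})=d(e^{t\widetilde{\Delta_{N}}}W^{m+1,p}\Omega_{N})\subseteq d(\Omega_{N})=d(\Omega)$ and $e^{t\widetilde{\Delta_{N}}}\delta_{c}(W^{m+1,p}\Omega_{N})=\delta_{c}(e^{t\widetilde{\Delta_{N}}}W^{m+1,p}\Omega_{N})\subseteq\delta_{c}(\Omega_{\hom N})$, where the range identities $d(\Omega_{N})=d(\Omega)$ and $\delta_{c}(\Omega_{N})=\delta_{c}(\Omega_{\hom N})$ are furnished by \Corref{Sobolev_Hodge_projections} (and \Corref{crucial_boundary_approx_homN}); for $t=0$ everything is trivial.

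Next I would convert these invariances into commutation with projections. By \Thmref{Friedrichs_Decomposition} the closed subspace $(\mathcal{P}_{3}^{\mathrm{ex}}+\mathcal{P}_{1})(W^{m,p}\Omega)$ equals $d(W^{m+1,p}\Omega_{N})$, by \Corref{Sobolev_Hodge_projections} the closed subspace $\mathcal{P}_{2}(W^{m,p}\Omega)$ equals $\delta_{c}(W^{m+1,p}\Omega_{N})$, and $\mathcal{P}_{3}^{N}=\mathcal{P}^{N}$ has range $\mathcal{H}_{N}$, on which $e^{t\widetilde{\Delta_{N}}}=1$ (\Subsecref{L2-analyticity}). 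The three continuous idempotents $\mathcal{P}_{3}^{\mathrm{ex}}+\mathcal{P}_{1}$, $\mathcal{P}_{2}$, $\mathcal{P}_{3}^{N}$ are mutually annihilating and sum to the identity — this is routine from $\mathcal{P}_{j}\mathcal{P}_{i}=\delta_{ij}\mathcal{P}_{i}$, the Friedrichs splitting $\mathcal{P}_{3}=\mathcal{P}_{3}^{N}+\mathcal{P}_{3}^{\mathrm{ex}}$, and $\mathcal{P}^{N\perp}\mathcal{P}_{2}=\mathcal{P}_{2}\mathcal{P}^{N\perp}=\mathcal{P}_{2}$ — so $W^{m,p}\Omega$ is the topological direct sum of their three ranges and each idempotent is the projection onto its range along the other two. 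The previous paragraph shows $e^{t\widetilde{\Delta_{N}}}$ maps each of the three ranges into itself, and since any operator preserving every summand of a direct sum commutes with the associated projections, we obtain $e^{t\widetilde{\Delta_{N}}}(\mathcal{P}_{3}^{\mathrm{ex}}+\mathcal{P}_{1})=(\mathcal{P}_{3}^{\mathrm{ex}}+\mathcal{P}_{1})e^{t\widetilde{\Delta_{N}}}$, $e^{t\widetilde{\Delta_{N}}}\mathcal{P}_{2}=\mathcal{P}_{2}e^{t\widetilde{\Delta_{N}}}$ and $e^{t\widetilde{\Delta_{N}}}\mathcal{P}_{3}^{N}=\mathcal{P}_{3}^{N}e^{t\widetilde{\Delta_{N}}}=\mathcal{P}_{3}^{N}$ on $W^{m,p}\Omega$; summing the last two relations yields $e^{t\widetilde{\Delta_{N}}}\mathbb{P}=\mathbb{P}e^{t\widetilde{\Delta_{N}}}$.

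Finally, the generator identities follow by taking $m=0$: each $Q\in\{\mathcal{P}_{3}^{\mathrm{ex}}+\mathcal{P}_{1},\mathcal{P}_{2},\mathcal{P}_{3}^{N},\mathbb{P}\}$ is bounded on $L^{p}\Omega$ and commutes with $e^{t\widetilde{\Delta_{N}}}$ there, so for $\omega\in D(A_{p})=W^{2,p}\Omega_{\hom N}$ the convergence $\frac{1}{t}(e^{t\widetilde{\Delta_{N}}}-1)\omega\to\widetilde{\Delta_{N}}\omega$ in $L^{p}$ gives, after applying $Q$ and commuting, $\frac{1}{t}(e^{t\widetilde{\Delta_{N}}}-1)Q\omega\to Q\widetilde{\Delta_{N}}\omega$ in $L^{p}$; by the definition of the generator $A_{p}$ this forces $Q\omega\in D(A_{p})$ and $\widetilde{\Delta_{N}}Q\omega=Q\widetilde{\Delta_{N}}\omega$. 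Since $\mathcal{P}_{3}^{N}$ has range $\mathcal{H}_{N}$, on which $\widetilde{\Delta_{N}}=0$, we get $\mathcal{P}_{3}^{N}\widetilde{\Delta_{N}}=\widetilde{\Delta_{N}}\mathcal{P}_{3}^{N}=0$, and then $\widetilde{\Delta_{N}}\mathbb{P}=\widetilde{\Delta_{N}}\mathcal{P}_{2}$ and $\mathbb{P}\widetilde{\Delta_{N}}=\mathcal{P}_{2}\widetilde{\Delta_{N}}$ because $\mathbb{P}=\mathcal{P}_{2}+\mathcal{P}_{3}^{N}$. The only points I expect to need real care are (i) that $e^{t\widetilde{\Delta_{N}}}$ genuinely returns into the Sobolev-regular subspaces $d(W^{m+1,p}\Omega_{N})$ and $\delta_{c}(W^{m+1,p}\Omega_{N})$, which is handled by the smoothing property rather than any quantitative $W^{m,p}$-estimate for the flow, and (ii) that the commutation already holds at the $L^{p}$ level, which is exactly the $m=0$ instance of the previous paragraph, so that the generator argument applies verbatim.
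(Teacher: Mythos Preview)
Your proposal is correct and follows essentially the same approach the paper indicates: the theorem is essentially self-proving in the paper (no separate proof environment), relying on \Corref{commute_derivatives_II} for the commutation of $e^{t\widetilde{\Delta_{N}}}$ with $d,\delta_{c}$, on \Corref{Sobolev_Hodge_projections} and \Thmref{Friedrichs_Decomposition} for the range identifications, on $e^{t\widetilde{\Delta_{N}}}=1$ on $\mathcal{H}_{N}$, and on the definition of the generator for the last display. Your explicit verification that $\mathcal{P}_{3}^{\mathrm{ex}}+\mathcal{P}_{1}$, $\mathcal{P}_{2}$, $\mathcal{P}_{3}^{N}$ are mutually annihilating idempotents summing to the identity, together with the heat-flow invariance of each range, is exactly the mechanism the paper leaves implicit.
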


We briefly note that in the no-boundary case, we have $\Omega=\Omega_{N}=\Omega_{\mathrm{hom}N}$,
$\widetilde{\Delta_{N}}=\widetilde{\Delta_{D}}=\Delta$, $e^{t\Delta}\mathcal{P}_{1}=\mathcal{P}_{1}e^{t\Delta}$
on $W^{m,p}\Omega$, $\mathcal{P}_{1}\Delta=\Delta\mathcal{P}_{1}$
on $W^{2,p}\Omega$.
\begin{rem*}
The operator $\mathbb{P}\widetilde{\Delta_{N}}$, with the domain
$\mathbb{P}D(A_{p})$, is a well-defined unbounded operator on $\mathbb{P}L^{p}\Omega$.
By our arguments, its complexification is acutely sectorial, and $\mathbb{P}\widetilde{\Delta_{N}}=\widetilde{\Delta_{N}},e^{t\mathbb{P}\widetilde{\Delta_{N}}}=e^{t\widetilde{\Delta_{N}}}$
on $\mathbb{P}L^{p}\Omega$. Other authors call it the \textbf{Stokes
operator} corresponding to the ``Navier-type'' / ``free'' boundary
condition \parencite{Miyakawa_Lp_analyticity_Navier,Giga1982_Nonstationary_first_order,Mitrea2009_Sylvie_Neumann_Laplacian,mitrea2009_Nonlinear_hodge_NS,Baba2016}.
\end{rem*}

\subsection{Distributions and adjoints\label{subsec:Distributions-and-adjoints}}

Like the Littlewood-Paley projection, the heat flow does not preserve
compact supports in $\accentset{\circ}{M}$. So applying the heat
flow to a distribution is not well-defined. This can be a problem
as we will need to heat up the nonlinear term in the Euler equation
for Onsager's conjecture. For the Littlewood-Paley projection, we
fixed it by introducing tempered distributions. That in turn motivates
the following definition.
\begin{defn}
Let $I\subset\mathbb{R}$ be an open interval. Define
\begin{itemize}
\item $\mathscr{D}\Omega^{k}=\Omega_{00}^{k}=\colim\{\left(\Omega_{00}^{k}\left(K\right),C^{\infty}\text{ topo}\right):K\subset\accentset{\circ}{M}\text{ compact}\}$
as the space of \textbf{test $k$-forms} with Schwartz's topology
(colimit in the category of locally convex TVS).
\item $\mathscr{D}'\Omega^{k}=\left(\mathscr{D}\Omega^{k}\right)^{*}$ as
the space of\textbf{ $k\text{-}$currents} (or \textbf{distributional
$k$}-\textbf{forms}), equipped with the weak{*} topology.
\item $\mathscr{D}_{N}\Omega^{k}=D\left(\widetilde{\Delta_{N}}^{\infty}\right)$
as the space of \textbf{heated $k$-forms }with the Frechet $C^{\infty}$
topology and $\mathscr{D}'_{N}\Omega^{k}=\left(\mathscr{D}_{N}\Omega^{k}\right)^{*}$
as the space of\textbf{ heatable $k$-currents} (or \textbf{heatable
distributional $k$}-\textbf{forms}) with the weak{*} topology. \nomenclature{$\mathscr{D}_{N}\Omega^{k}, \mathscr{D}'_{N}\Omega^{k}$}{heated forms and heatable currents\nomrefpage}
\item \textbf{Spacetime test forms}: $\mathscr{D}\left(I,\Omega^{k}\right)=C_{c}^{\infty}\left(I,\Omega_{00}^{k}\right)=\colim\{\left(C_{c}^{\infty}\left(I_{1},\Omega_{00}^{k}(K)\right),C^{\infty}\text{ topo}\right):I_{1}\times K\subset I\times\accentset{\circ}{M}\text{ compact}\}$
and $\mathscr{D}_{N}\left(I,\Omega^{k}\right)=\colim\{\left(C_{c}^{\infty}\left(I_{1},\mathscr{D}_{N}\Omega^{k}\right),C^{\infty}\text{ topo}\right):I_{1}\subset I\text{ compact}\}$.
\item \textbf{Spacetime distributions} $\mathscr{D}'\left(I,\Omega^{k}\right)=\mathscr{D}\left(I,\Omega^{k}\right)^{*}$,
$\mathscr{D}'_{N}\left(I,\Omega^{k}\right)=\mathscr{D}_{N}\left(I,\Omega^{k}\right)^{*}.$
\end{itemize}
Obviously $\mathscr{D}\Omega^{k}\xhookrightarrow{i}\mathscr{D}_{N}\Omega^{k}$,
so there is an adjoint $\mathscr{D}'_{N}\Omega^{k}\xrightarrow{i^{*}}\mathscr{D}'\Omega^{k}.$
Unfortunately, $\Imm(i)$ is not dense so $i^{*}$ is not injective.
Nevertheless, we will make $i^{*}$ the implicit canonical map from
$\mathscr{D}'_{N}$ to $\mathscr{D}'$. In particular, $\omega_{j}\xrightarrow{\mathscr{D}'_{N}}0$
implies $\omega_{j}\xrightarrow{\mathscr{D'}}0$. Similarly, $\mathscr{D}\left(I,\Omega^{k}\right)\hookrightarrow\mathscr{D}_{N}\left(I,\Omega^{k}\right)$
and $\mathscr{D}'_{N}\left(I,\Omega^{k}\right)\to\mathscr{D}'\left(I,\Omega^{k}\right)$.

By Sobolev tower (\Thmref{Sobolev_tower}), we observe that $e^{t\widetilde{\Delta_{N}}}\phi\xrightarrow[t\downarrow0]{C^{\infty}}\phi\;\forall\phi\in\mathscr{D}{}_{N}\Omega^{k}$.

For $\Lambda\in\mathscr{D}'_{N}\Omega^{k}$, $t\geq0$ and $\phi\in\mathscr{D}{}_{N}\Omega^{k}$,
we define $e^{t\widetilde{\Delta_{N}}}\Lambda\left(\phi\right)=\Lambda\left(e^{t\widetilde{\Delta_{N}}}\phi\right)$.
As $\Lambda$ is continuous, $\exists m_{0},m_{1}\in\mathbb{N}_{0}$
such that $\left|\Lambda\left(\phi\right)\right|\lesssim\left\Vert \phi\right\Vert _{C^{m_{0}}}\lesssim\left\Vert \phi\right\Vert _{H^{m_{1}}}$.
Then for $t>0$ and $\phi\in\mathscr{D}{}_{N}\Omega^{k}$: $\left|e^{t\widetilde{\Delta_{N}}}\Lambda\left(\phi\right)\right|\lesssim\left\Vert e^{t\widetilde{\Delta_{N}}}\phi\right\Vert _{H^{m_{1}}}\lesssim_{t,m_{1}}\left\Vert \phi\right\Vert _{L^{2}}$
$\implies e^{t\widetilde{\Delta_{N}}}\Lambda\in L^{2}\Omega^{k}$
and $e^{t\widetilde{\Delta_{N}}}\Lambda=e^{\frac{t}{2}\widetilde{\Delta_{N}}}e^{\frac{t}{2}\widetilde{\Delta_{N}}}\Lambda\in\mathscr{D}{}_{N}\Omega^{k}$.

Also, for $p\in\left(1,\infty\right)$ and $\omega\in L^{p}\Omega^{k}$,
$e^{t\widetilde{\Delta_{N}}}\omega$ is the same in $L^{p}\Omega^{k}$
and $\mathscr{D}'_{N}\Omega^{k}$.
\end{defn}

\begin{rem*}
We note an important limitation: though heated forms are closed under
$d$ and $\delta$ by \Thmref{commute_derivatives_I}, because of
integration by parts, we cannot naively define $\delta$ or $\Delta$
on heatable currents.

Analogous concepts such as $\mathscr{D}_{D}$ and $\mathscr{D}'_{D}$
can be defined via Hodge duality for the relative Dirichlet heat flow.
\end{rem*}
Recall the graded algebra $\Omega(M)=\bigoplus_{k=0}^{n}\Omega^{k}(M)$
from \Subsecref{W1p-analyticity}. We can easily define $\mathscr{D}\Omega,\mathscr{D}_{N}\Omega$
etc. by direct sums.

For $\Lambda\in\mathscr{D}'_{N}\Omega$ and $\phi\in\mathscr{D}{}_{N}\Omega$,
we can define $\delta_{c}^{\mathscr{D}'_{N}}\Lambda\left(\phi\right)=\Lambda\left(d\phi\right)$
and $d^{\mathscr{D}'_{N}}\Lambda\left(\phi\right)=\Lambda\left(\delta_{c}\phi\right)$.
These will be consistent with the smooth versions, though we take
care to note that
\begin{equation}
\left\langle \left\langle \delta_{c}^{\mathscr{D}'_{N}}\omega,\phi\right\rangle \right\rangle _{\Lambda}=\left\langle \left\langle \omega,d\phi\right\rangle \right\rangle _{\Lambda}=\left\langle \left\langle \delta\omega,\phi\right\rangle \right\rangle _{\Lambda}+\left\langle \left\langle \jmath^{*}\iota_{\nu}\omega,\jmath^{*}\phi\right\rangle \right\rangle _{\Lambda}\;\forall\omega\in W^{1,p}\Omega,\phi\in\mathscr{D}_{N}\Omega,p\in\left(1,\infty\right)\label{eq:delta_D'_N}
\end{equation}
So $\delta_{c}^{\mathscr{D}'_{N}}$ agrees with $\delta_{c}$ on $W^{1,p}\Omega_{N}$
as defined previously. In particular, $\widetilde{\Delta_{N}}^{\mathscr{D}'_{N}}=-\left(d^{\mathscr{D}'_{N}}\delta_{c}^{\mathscr{D}'_{N}}+\delta_{c}^{\mathscr{D}'_{N}}d^{\mathscr{D}'_{N}}\right)$
is well-defined on $\mathscr{D}'_{N}\Omega$.

Note that $\delta^{\mathscr{D}'_{N}}\Lambda$ cannot be defined since
there is $\phi\in\mathscr{D}{}_{N}\Omega$ such that $d_{c}\phi$
is not defined.

For convenience, we also write $\Lambda\left(\phi\right)=\left\langle \left\langle \Lambda,\phi\right\rangle \right\rangle _{\Lambda}$
(abuse of notation) and $\Lambda^{\varepsilon}=e^{\varepsilon\widetilde{\Delta_{N}}}\Lambda$
for $\varepsilon>0$. Observe that for all $\Lambda\in\mathscr{D}'_{N}\Omega,\phi\in\mathscr{D}_{N}\Omega:$
\[
\left\langle \left\langle d\left(\Lambda^{\varepsilon}\right),\phi\right\rangle \right\rangle _{\Lambda}=\left\langle \left\langle \Lambda^{\varepsilon},\delta_{c}\phi\right\rangle \right\rangle _{\Lambda}=\left\langle \left\langle \Lambda,\left(\delta_{c}\phi\right)^{\varepsilon}\right\rangle \right\rangle _{\Lambda}=\left\langle \left\langle \Lambda,\delta_{c}\left(\phi^{\varepsilon}\right)\right\rangle \right\rangle _{\Lambda}=\left\langle \left\langle \left(d^{\mathscr{D}'_{N}}\Lambda\right)^{\varepsilon},\phi\right\rangle \right\rangle _{\Lambda}
\]
Then $d\left(\Lambda^{\varepsilon}\right)=\left(d^{\mathscr{D}'_{N}}\Lambda\right)^{\varepsilon}$
and similarly $\delta_{c}\left(\Lambda^{\varepsilon}\right)=\left(\delta_{c}^{\mathscr{D}'_{N}}\Lambda\right)^{\varepsilon}\forall\Lambda\in\mathscr{D}'_{N}\Omega$.
\begin{problem*}[Consistency problem]
 For $p\in\left(1,\infty\right)$, we have $L^{p}\Omega\hookrightarrow\mathscr{D}'_{N}\Omega$
and $L^{p}\Omega\hookrightarrow\mathscr{D}'\Omega$, and we can identify
$\mathscr{D}'_{N}\Omega\cap L^{p}\Omega=\mathscr{D}'\Omega\cap L^{p}\Omega=L^{p}\Omega$.
Let $d^{\mathscr{D'}}$ and $d^{\mathscr{D}'_{N}}$ be $d$ defined
on $\mathscr{D'}$ and $\mathscr{D}'_{N}$ respectively. For $\omega\in L^{p}\Omega,$
if $d^{\mathscr{D'}}\omega\in\mathscr{D}'\Omega\cap L^{p}\Omega$
, the question is whether we can say $d^{\mathscr{D}'_{N}}\omega\in\mathscr{D}'_{N}\Omega\cap L^{p}\Omega$.

More explicitly, if $\alpha,\omega\in L^{p}\Omega$ and $\left\langle \left\langle \alpha,\phi_{0}\right\rangle \right\rangle _{\Lambda}=\left\langle \left\langle \omega,\delta_{c}\phi_{0}\right\rangle \right\rangle _{\Lambda}\;\forall\phi_{0}\in\mathscr{D}\Omega,$
can we say $\left\langle \left\langle \alpha,\phi\right\rangle \right\rangle _{\Lambda}=\left\langle \left\langle \omega,\delta_{c}\phi\right\rangle \right\rangle _{\Lambda}\;\forall\phi\in\mathscr{D}_{N}\Omega$?
The answer is yes, and the method is analogous to some key steps in
\Subsecref{Justification} and \Subsecref{Interpolation-and-B-analyticity}.
\end{problem*}
Recall the cutoffs $\psi_{r}$ from \Eqref{cutoff}.
\begin{lem}
\label{lem:approx_consistency}Let $p\in\left(1,\infty\right)$ and
$\phi\in W^{1,p}\Omega_{N}^{k}.$ Then $\left(1-\psi_{r}\right)\phi\xrightarrow[r\downarrow0]{L^{p}}\phi$
and $\delta_{c}\left(\left(1-\psi_{r}\right)\phi\right)\xrightarrow[r\downarrow0]{L^{p}}\delta_{c}\phi$.
\end{lem}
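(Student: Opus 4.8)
The statement has two parts: the $L^p$-convergence $(1-\psi_r)\phi \to \phi$, which is trivial, and the $L^p$-convergence $\delta_c((1-\psi_r)\phi) \to \delta_c\phi$, which is the real content. For the first part, I would note that $\psi_r$ is supported in $M_{<r}$ with $\|\psi_r\|_\infty \leq 1$, so $\|\psi_r\phi\|_{L^p} \leq \|\phi\|_{L^p(M_{<r})} \xrightarrow{r\downarrow 0} 0$ by dominated convergence (the sets $M_{<r}$ shrink to the null set $\partial M$). For the second part, the first move is to commute $\delta_c$ past the cutoff using the Leibniz-type rule for the codifferential: since $\delta$ acting on a product of a function and a form produces the form-part of $\delta$ plus a term involving $d\psi_r$ contracted with $\phi$, we get $\delta_c((1-\psi_r)\phi) = (1-\psi_r)\delta_c\phi + \iota_{(d\psi_r)^\sharp}\phi$ (up to sign), where I have used that $\phi \in \Omega_N^k$ so $(1-\psi_r)\phi$ still satisfies the Neumann condition and $\delta_c$ is legitimately applied. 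Then $(1-\psi_r)\delta_c\phi \to \delta_c\phi$ in $L^p$ by the same dominated-convergence argument as the first part (note $\delta_c\phi \in L^p$ since $\phi \in W^{1,p}$), so everything reduces to showing the commutator term $\iota_{(d\psi_r)^\sharp}\phi \to 0$ in $L^p$.

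\textbf{The commutator term.} Recall from \Eqref{cutoff} that $\nabla\psi_r = f_r\widetilde{\nu}$ with $|f_r| \lesssim \frac{1}{r}$ and $\supp\psi_r \subset M_{<r}$, so $|\iota_{(d\psi_r)^\sharp}\phi| \lesssim \frac{1}{r}|\iota_{\widetilde{\nu}}\phi|\,\mathbf{1}_{M_{<r}}$, and it suffices to show $\frac{1}{r}\|\iota_{\widetilde{\nu}}\phi\|_{L^p(M_{<r})} \to 0$. The key point is that the Neumann condition $\mathbf{n}\phi = 0$ is \emph{equivalent} to $\iota_\nu\phi = 0$ on $\partial M$ (\Lemref{basic-identities-forms-boundary}(1)), so the scalar-valued function $\iota_{\widetilde{\nu}}\phi$ (which lies in $W^{1,p}(M)$ since $\phi \in W^{1,p}\Omega_N^k$ and $\widetilde{\nu}$ is smooth) has \emph{zero trace} on $\partial M$. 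This is exactly the setting where the strip-decay estimate in the coarea formula applies: by \Thmref{coarea}(3), for $g \in W^{1,p}(M)$, $\|g\|_{L^p(M_{<r})} \lesssim_{\neg r} r\|g\|_{W^{1,p}(M_{<r})} + r^{1/p}\|g\|_{L^p(\partial M)}$; applying this to $g = \iota_{\widetilde{\nu}}\phi$ and using $\|g\|_{L^p(\partial M)} = 0$, we obtain $\frac{1}{r}\|\iota_{\widetilde{\nu}}\phi\|_{L^p(M_{<r})} \lesssim \|\iota_{\widetilde{\nu}}\phi\|_{W^{1,p}(M_{<r})} \to 0$ as $r\downarrow 0$ by dominated convergence (the $W^{1,p}$-density is concentrated on the shrinking sets $M_{<r}$).

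\textbf{Expected obstacle.} The arguments are all routine once the right reduction is made; the only genuine subtlety is the bookkeeping that makes the commutator term legitimately a scalar function with vanishing trace. Concretely, one should verify carefully that $\iota_{(d\psi_r)^\sharp}\phi = f_r\,\iota_{\widetilde{\nu}}\phi$ as forms of degree $k-1$, that $\iota_{\widetilde{\nu}}\phi \in W^{1,p}$ with $\jmath^*$-trace controlled by $\mathbf{t}(\iota_{\widetilde{\nu}}\phi)$ and hence by the trace of $\phi$, and that $\iota_\nu\phi = 0$ on $\partial M$ forces this trace to vanish componentwise — this is where \Lemref{basic-identities-forms-boundary} does the work. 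A minor point is that $\widetilde{\nu}$ is cut off away from the boundary, but since $f_r$ is supported in $M_{<r}$ where $\widetilde{\nu}$ has unit length and is the geodesic extension of $\nu$, this causes no trouble for $r$ small. No step should require more than the already-established coarea/strip-decay machinery and the basic identities for $\mathbf{t}$, $\mathbf{n}$, $\iota_\nu$.
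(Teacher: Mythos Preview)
Your proposal is correct and follows essentially the same approach as the paper: expand $\delta_c((1-\psi_r)\phi)=(1-\psi_r)\delta_c\phi+f_r\iota_{\widetilde\nu}\phi$ via the Leibniz rule, note that $\iota_{\widetilde\nu}\phi$ has zero trace on $\partial M$ by the Neumann condition, and apply the strip-decay estimate \Thmref{coarea}(3) to get $\frac{1}{r}\|\iota_{\widetilde\nu}\phi\|_{L^p(M_{<r})}\lesssim\|\iota_{\widetilde\nu}\phi\|_{W^{1,p}(M_{<r})}\to 0$. One minor slip: $\iota_{\widetilde\nu}\phi$ is a $(k-1)$-form, not a scalar function (as you yourself note later in the ``Expected obstacle'' paragraph), but the coarea estimate extends componentwise to sections as the paper remarks, so this does not affect the argument.
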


\begin{proof}
In Penrose notation, 
\begin{align*}
\delta_{c}\left(\left(1-\psi_{r}\right)\phi\right)_{a_{1}...a_{k-1}} & =-\nabla^{i}\left(\left(1-\psi_{r}\right)\phi\right)_{ia_{1}...a_{k-1}}=\nabla^{i}\psi_{r}\phi_{ia_{1}...a_{k-1}}-\left(1-\psi_{r}\right)\nabla^{i}\phi_{ia_{1}...a_{k-1}}\\
\implies\delta_{c}\left(\left(1-\psi_{r}\right)\phi\right) & =\iota_{\nabla\psi_{r}}\phi+\left(1-\psi_{r}\right)\delta_{c}\phi=f_{r}\iota_{\widetilde{\nu}}\phi+\left(1-\psi_{r}\right)\delta_{c}\phi
\end{align*}
Then we only need $f_{r}\iota_{\widetilde{\nu}}\phi\xrightarrow[r\downarrow0]{L^{p}}0$.
As $\iota_{\widetilde{\nu}}\phi=0$ on $\partial M$, by \Thmref{coarea},
$\left\Vert f_{r}\iota_{\widetilde{\nu}}\phi\right\Vert _{L^{p}}\lesssim\frac{1}{r}\left\Vert \iota_{\widetilde{\nu}}\phi\right\Vert _{L^{p}\left(M_{<r}\right)}\lesssim\left\Vert \iota_{\widetilde{\nu}}\phi\right\Vert _{W^{1,p}\left(M_{<r}\right)}\xrightarrow{r\downarrow0}0$.
\end{proof}
Then we can conclude $\{\omega\in L^{p}\Omega(M):d^{\mathscr{D}'_{N}}\omega\in L^{p}\}=\{\omega\in L^{p}\Omega(M):d^{\mathscr{D}'}\omega\in L^{p}\}.$

Recall that for an unbounded operator $A$, we write $\left(A,D(A)\right)$
to specify its domain.
\begin{thm}[Adjoints of $d,\delta$]
 For $p\in\left(1,\infty\right),$ the closure of $\left(d,\Omega\left(M\right)\right)$
as well as $\left(d,\mathscr{D}_{N}\Omega\left(M\right)\right)$ on
$L^{p}\Omega\left(M\right)$ is $d_{L^{p}}$ where $D(d_{L^{p}})=\{\omega\in L^{p}\Omega(M):d^{\mathscr{D}'_{N}}\omega\in L^{p}\}=\{\omega\in L^{p}\Omega(M):d^{\mathscr{D}'}\omega\in L^{p}\}$.

By Hodge duality, the closure of $\left(\delta,\Omega\left(M\right)\right)$
as well as $\left(\delta,\mathscr{D}_{D}\Omega\left(M\right)\right)$
on $L^{p}\Omega\left(M\right)$ is $\delta_{L^{p}}$ where $D(\delta_{L^{p}})=\{\omega\in L^{p}\Omega(M):\delta^{\mathscr{D}'_{D}}\omega\in L^{p}\}=\{\omega\in L^{p}\Omega(M):\delta^{\mathscr{D}'}\omega\in L^{p}\}$.

Define $\boxed{\delta_{c,L^{p}}=d_{L^{p'}}^{*}}$ and $\boxed{d_{c,L^{p}}=\delta_{L^{p'}}^{*}}$.
Then $\delta_{c,L^{p}}$ is the closure of $\left(\delta,\mathscr{D}_{N}\Omega\left(M\right)\right)$
as well as $\left(\delta,\mathscr{D}\Omega\left(M\right)\right)$.
Also, $D\left(\delta_{c,L^{p}}\right)=\{\omega\in L^{p}\Omega(M):\delta_{c}^{\mathscr{D}'_{N}}\omega\in L^{p}\}$.

Similarly, $d_{c,L^{p}}$ is the closure of $\left(d,\mathscr{D}_{D}\Omega\left(M\right)\right)$
and $\left(d,\mathscr{D}\Omega\left(M\right)\right)$. Also, $D\left(d_{c,L^{p}}\right)=\{\omega\in L^{p}\Omega(M):d_{c}^{\mathscr{D}'_{D}}\omega\in L^{p}\}$.
\end{thm}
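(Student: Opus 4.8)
The plan is to establish the closure statement for $\left(d,\Omega(M)\right)$ and $\left(d,\mathscr{D}_{N}\Omega(M)\right)$ first, then obtain everything else by Hodge duality and functional-analytic adjoint bookkeeping. The operator $d_{L^p}$ with the maximal domain $\{\omega\in L^p\Omega:d^{\mathscr{D}'_N}\omega\in L^p\}$ is obviously closed: if $\omega_j\to\omega$ and $d^{\mathscr{D}'_N}\omega_j\to\alpha$ in $L^p$, then both convergences hold in $\mathscr{D}'_N\Omega$ (since $L^p\Omega\hookrightarrow\mathscr{D}'_N\Omega$ is continuous), and $d^{\mathscr{D}'_N}$ is weak$^*$-continuous on $\mathscr{D}'_N\Omega$, so $d^{\mathscr{D}'_N}\omega=\alpha\in L^p$. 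The equality of the two descriptions $\{d^{\mathscr{D}'_N}\omega\in L^p\}=\{d^{\mathscr{D}'}\omega\in L^p\}$ is exactly the consistency problem resolved just above via \Lemref{approx_consistency} (applied with the roles of $d$ and $\delta_c$ swapped, i.e.\ approximating a test form $\phi\in\mathscr{D}_N\Omega$ by $(1-\psi_r)\phi\in\mathscr{D}\Omega$ and checking $(1-\psi_r)\phi\to\phi$, $d\big((1-\psi_r)\phi\big)\to d\phi$ in $L^{p'}$ --- note $\phi$ need not satisfy any boundary condition for $d$, so this direction is even easier).

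Next I would show $d_{L^p}$ is the $L^p$-closure of $\left(d,\Omega(M)\right)$. Since smooth forms are a subset of the maximal domain and $d_{L^p}$ is closed, the closure of $\left(d,\Omega\right)$ is contained in $d_{L^p}$; for the reverse, given $\omega$ in the maximal domain I would mollify by the heat flow and use $\omega^\varepsilon=e^{\varepsilon\widetilde{\Delta_N}}\omega\in\mathscr{D}_N\Omega\subset\Omega$ with $\omega^\varepsilon\xrightarrow{L^p}\omega$ (\Factref{Fact_heat}(2d) extended to differential forms) and $d\omega^\varepsilon=(d^{\mathscr{D}'_N}\omega)^\varepsilon\xrightarrow{L^p}d^{\mathscr{D}'_N}\omega$, using the commutation $d\left(\Lambda^\varepsilon\right)=\left(d^{\mathscr{D}'_N}\Lambda\right)^\varepsilon$ derived just before the consistency problem together with the fact that $d^{\mathscr{D}'_N}\omega\in L^p$ so its heat-mollification converges to it in $L^p$. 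This simultaneously shows the closure of $\left(d,\mathscr{D}_N\Omega\right)$ is $d_{L^p}$, since each $\omega^\varepsilon$ lies in $\mathscr{D}_N\Omega$ and the graph-norm approximation is through $\mathscr{D}_N\Omega$. The Dirichlet statement for $\delta_{L^p}$ follows by applying the Hodge star, which is an isometry (up to constants) on all $L^p$ spaces and intertwines $d$ and $\delta$ while swapping $\mathscr{D}_N\leftrightarrow\mathscr{D}_D$ (\Lemref{basic-identities-forms-boundary}, Hodge duality).

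For the adjoint identities, I would define $\delta_{c,L^p}:=d_{L^{p'}}^*$ and $d_{c,L^p}:=\delta_{L^{p'}}^*$ and unwind. Since $d_{L^{p'}}$ is the $L^{p'}$-closure of $\left(d,\mathscr{D}_N\Omega\right)$, which is densely defined, its Banach-space adjoint $d_{L^{p'}}^*$ acts on $L^p=\left(L^{p'}\right)^*$ and $\omega\in D(d_{L^{p'}}^*)$ with $d_{L^{p'}}^*\omega=\alpha$ iff $\left\langle\left\langle\alpha,\phi\right\rangle\right\rangle_\Lambda=\left\langle\left\langle\omega,d\phi\right\rangle\right\rangle_\Lambda$ for all $\phi$ in the domain; testing against $\phi\in\mathscr{D}_N\Omega$ gives exactly $\alpha=\delta_c^{\mathscr{D}'_N}\omega$ by the definition $\delta_c^{\mathscr{D}'_N}\omega(\phi)=\omega(d\phi)$, so $D\left(\delta_{c,L^p}\right)=\{\omega\in L^p\Omega:\delta_c^{\mathscr{D}'_N}\omega\in L^p\}$. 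That $\delta_{c,L^p}$ is also the closure of $\left(\delta,\mathscr{D}_N\Omega\right)$ and of $\left(\delta,\mathscr{D}\Omega\right)$ follows by the same heat-mollification argument as for $d$: for $\omega$ in this maximal domain, $\omega^\varepsilon\in\mathscr{D}_N\Omega\subset\mathscr{D}\Omega\cap\Omega$ wait --- $\mathscr{D}_N\Omega$ forms need not have compact support, so for the $\left(\delta,\mathscr{D}\Omega\right)$ claim I would instead first mollify to land in $\mathscr{D}_N\Omega$, then cut off by $(1-\psi_r)$ as in \Lemref{approx_consistency} (checking $\iota_\nu\omega^\varepsilon=0$ on $\partial M$ so the cutoff convergence of $\delta_c\big((1-\psi_r)\omega^\varepsilon\big)$ goes through). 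The $d_{c,L^p}$ statements are the Hodge duals. The main obstacle I anticipate is purely bookkeeping discipline: correctly tracking which boundary conditions are automatic for test forms versus which require the $(1-\psi_r)$ cutoff, and making sure the two layers of approximation (heat-mollify to gain smoothness, then cutoff to gain compact support) are composed in an order that keeps both the $L^p$-convergence of the form and of its $\delta_c$-image --- the coarea estimate $\|f_r\iota_{\widetilde{\nu}}\phi\|_{L^p}\lesssim\|\iota_{\widetilde{\nu}}\phi\|_{W^{1,p}(M_{<r})}\to0$ is what makes the second layer work, and it genuinely needs $\iota_\nu(\cdot)=0$ on the boundary, which is why the heat-mollification (landing in $\mathscr{D}_N\Omega$, hence in $\Omega_N$) must come first.
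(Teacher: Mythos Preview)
Your proposal is correct and follows the paper's approach for the first half: showing $d_{L^p}$ is closed and then heat-mollifying $\omega^\varepsilon=e^{\varepsilon\widetilde{\Delta_N}}\omega$ to exhibit the graph-norm approximation through $\mathscr{D}_N\Omega$ is exactly what the paper does.

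The one genuine difference is in how you handle the claim that $\delta_{c,L^p}$ is the closure of $\left(\delta,\mathscr{D}\Omega\right)$ and $\left(\delta,\mathscr{D}_N\Omega\right)$. You propose a constructive two-layer approximation (heat-mollify into $\mathscr{D}_N\Omega$, then cut off by $(1-\psi_r)$ via \Lemref{approx_consistency} to land in $\mathscr{D}\Omega$). The paper instead uses a double-annihilator argument: writing $\mathfrak{J}(x,y)=(-y,x)$, it computes directly that
\[
\Bigl(\left(L^{p}\oplus L^{p}\right)\text{-}\mathrm{cl}\left\{ (-\delta_{c}\phi,\phi):\phi\in\mathscr{D}\Omega\right\} \Bigr)^{\perp}=G\left(d_{L^{p'}}\right),
\]
and since $\mathfrak{J}\left(G(\delta_{c,L^p})\right)=G(d_{L^{p'}})^{\perp}$ by the definition of adjoint, reflexivity gives $G(\delta_{c,L^p})=(L^p\oplus L^p)\text{-}\mathrm{cl}\{(\phi,\delta_c\phi):\phi\in\mathscr{D}\Omega\}$ immediately, with the same computation for $\phi\in\mathscr{D}_N\Omega$. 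The paper's route is slicker because it never touches the cutoff $\psi_r$ for this step; your route is more explicit and reuses \Lemref{approx_consistency}, which is also fine. For the domain characterization $D(\delta_{c,L^p})=\{\omega:\delta_c^{\mathscr{D}'_N}\omega\in L^p\}$ you are implicitly using that $\mathscr{D}_N\Omega$ is a core for $d_{L^{p'}}$ (just proved) to reduce testing from all of $D(d_{L^{p'}})$ to $\mathscr{D}_N\Omega$; the paper makes this reduction explicit by inserting the heat-mollified test form $\phi^\varepsilon$ and passing to the limit.
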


\begin{proof}
Firstly, it is trivial to check $d_{L^{p}}$ is closed and $\left(d,\Omega\left(M\right)\right)$
is closable $(\omega_{j}\xrightarrow{L^{p}}0\text{ }$ and $d\omega_{j}\xrightarrow{L^{p}}\eta$
would imply $\eta=0$ since $d\omega_{j}\xrightarrow{\mathscr{D}'}0$).
Then let $\omega\in D(d_{L^{p}})$. We can conclude $\left(\omega^{\varepsilon},d\left(\omega^{\varepsilon}\right)\right)=\left(\omega^{\varepsilon},\left(d^{\mathscr{D}'_{N}}\omega\right)^{\varepsilon}\right)\xrightarrow[\varepsilon\downarrow0]{L^{p}\oplus L^{p}}\left(\omega,d^{\mathscr{D}'_{N}}\omega\right)$.
This also gives the closure of $\left(d,\mathscr{D}_{N}\Omega\left(M\right)\right)$.

Then let $G\left(\delta_{c,L^{p}}\right)\leq L^{p}\Omega\oplus L^{p}\Omega$
be the graph of $\delta_{c,L^{p}}$. Similarly for $G\left(d_{L^{p'}}\right)\leq L^{p'}\Omega\oplus L^{p'}\Omega$.
Write $\mathfrak{J}(x,y)=(-y,x).$ By the definition of adjoints,
$\mathfrak{J}\left(G\left(\delta_{c,L^{p}}\right)\right)=G\left(d_{L^{p'}}\right)^{\perp}$.
Then observe that 
\begin{align*}
\Bigl(\left(L^{p}\oplus L^{p}\right)\text{-}\mathrm{cl}\left\{ \left(-\delta_{c}\phi,\phi\right):\phi\in\mathscr{D}\Omega\right\} \Bigr)^{\perp} & =\{\left(\omega_{1},\omega_{2}\right)\in L^{p'}\oplus L^{p'}:\left\langle \left\langle \omega_{1},\delta_{c}\phi\right\rangle \right\rangle _{\Lambda}=\left\langle \left\langle \omega_{2},\phi\right\rangle \right\rangle _{\Lambda}\;\forall\phi\in\mathscr{D}\Omega\}\\
 & =\{\left(\omega_{1},\omega_{2}\right)\in L^{p'}\oplus L^{p'}:\omega_{2}=d^{\mathscr{D}'}\omega_{1}\}=G\left(d_{L^{p'}}\right)
\end{align*}
Then $G\left(\delta_{c,L^{p}}\right)=\left(L^{p}\oplus L^{p}\right)\text{-}\mathrm{cl}\left\{ \left(\phi,\delta_{c}\phi\right):\phi\in\mathscr{D}\Omega\right\} $.
Do the same for $\phi\in\mathscr{D}_{N}\Omega$. Finally, by the definition
of adjoints: 
\begin{align*}
D\left(\delta_{c,L^{p}}\right) & =\left\{ \omega\in L^{p}\Omega(M):\left|\left\langle \left\langle \omega,d_{L^{p'}}\phi\right\rangle \right\rangle _{\Lambda}\right|\lesssim\left\Vert \phi\right\Vert _{L^{p'}}\;\forall\phi\in D\left(d_{L^{p'}}\right)\right\} \\
 & =\left\{ \omega\in L^{p}\Omega(M):\left|\delta_{c}^{\mathscr{D}_{N}^{'}}\omega\left(\phi^{\varepsilon}\right)\right|=\left|\left\langle \left\langle \omega,d\phi^{\varepsilon}\right\rangle \right\rangle _{\Lambda}\right|=\left|\left\langle \left\langle \omega,\left(d_{L^{p'}}\phi\right)^{\varepsilon}\right\rangle \right\rangle _{\Lambda}\right|\lesssim\left\Vert \phi^{\varepsilon}\right\Vert _{L^{p'}}\;\forall\phi\in D\left(d_{L^{p'}}\right),\forall\varepsilon>0\right\} \\
 & =\left\{ \omega\in L^{p}\Omega(M):\left|\delta_{c}^{\mathscr{D}_{N}^{'}}\omega\left(\phi\right)\right|\lesssim\left\Vert \phi\right\Vert _{L^{p'}}\;\forall\phi\in\mathscr{D}_{N}\Omega\right\} =\{\omega\in L^{p}\Omega(M):\delta_{c}^{\mathscr{D}'_{N}}\omega\in L^{p}\}
\end{align*}
For the third equal sign, we implicitly used the fact that $e^{t\widetilde{\Delta_{N}}}\phi\xrightarrow[t\downarrow0]{C^{\infty}}\phi\;\forall\phi\in\mathscr{D}{}_{N}\Omega^{k}$.
\end{proof}
In particular, $W^{1,p}\Omega_{N}=W^{1,p}\text{-}\mathrm{cl}\left(\mathscr{D}_{N}\Omega\right)\leq D\left(\delta_{c,L^{p}}\right)$.
Similarly, $W^{1,p}\Omega_{D}\leq D\left(d_{c,L^{p}}\right)$. This
makes our choice of notation consistent.

Interestingly, a literature search yields a similar result regarding
the adjoints of $d$ and $\delta$ in \parencite[Proposition 4.3]{AlanMcIntosh2004_Lip_domain},
where the authors used Lie flows \emph{on the domain M} which is bounded
in $\mathbb{R}^{n}$, as well as zero extensions to $\mathbb{R}^{n}$
to characterize $D(d_{L^{p}})$ and $D\left(d_{L^{p}}^{*}\right)$.
In \parencite[Equation 2.12]{Mitrea2009_Sylvie_Neumann_Laplacian},
for $\eta\in D(\delta_{L^{p}})$, the authors defined $\nu\vee\eta\in B_{p,p}^{-\frac{1}{p}}\Omega\left(\partial M\right)=\left(B_{p',p'}^{\frac{1}{p}}\Omega\left(\partial M\right)\right)^{*}$
($p\in\left(1,\infty\right)$) by 
\[
\left\langle \left\langle \nu\vee\eta,\jmath^{*}\omega\right\rangle \right\rangle _{\Lambda}=\left\langle \left\langle \eta,d\omega\right\rangle \right\rangle _{\Lambda}-\left\langle \left\langle \delta^{\mathscr{D}'}\eta,\omega\right\rangle \right\rangle _{\Lambda}\;\forall\omega\in\Omega\left(M\right)
\]
 which is reminiscent of \Eqref{delta_D'_N}. Note that $\left\langle \left\langle \nu\vee\eta,\jmath^{*}\omega\right\rangle \right\rangle _{\Lambda}$
is abuse of notation (referring to the natural pairing via duality).
Recall from \Blackboxref{trace_ext} that $W^{1,p'}\Omega\left(M\right)=F_{p',2}^{1}\Omega\left(M\right)\stackrel{\mathrm{Trace}}{\twoheadrightarrow}B_{p',p'}^{\frac{1}{p}}\restr{\Omega\left(M\right)}{\partial M}$
has a bounded linear section $\mathrm{Ext}$, so it is possible to
choose $\omega$ such that $\left\Vert \jmath^{*}\omega\right\Vert _{B_{p',p'}^{\frac{1}{p}}}\sim\left\Vert \omega\right\Vert _{W^{1,p'}}$
and therefore $\nu\vee\eta$ is well-defined with
\[
\left\Vert \nu\vee\eta\right\Vert _{B_{p,p}^{-\frac{1}{p}}}\sim\sup_{\substack{\omega\in W^{1,p'}\Omega\left(M\right)\\
\left\Vert \jmath^{*}\omega\right\Vert _{B_{p',p'}^{1/p}}=1
}
}\left|\left\langle \left\langle \nu\vee\eta,\jmath^{*}\omega\right\rangle \right\rangle _{\Lambda}\right|\lesssim\text{\ensuremath{\left\Vert \eta\right\Vert }}_{L^{p}}+\left\Vert \delta^{\mathscr{D}'}\eta\right\Vert _{L^{p}}
\]
Of course, for $\eta\in W^{1,p}\Omega$, $\nu\vee\eta=\jmath^{*}\iota_{\nu}\eta$.
We can now show an alternative description of $D\left(\delta_{c,L^{p}}\right)$:
\begin{thm}
For $p\in\left(1,\infty\right)$, $D\left(\delta_{c,L^{p}}\right)=\{\eta\in L^{p}\Omega(M):\delta_{c}^{\mathscr{D}'_{N}}\eta\in L^{p}\}=\{\eta\in L^{p}\Omega(M):\delta^{\mathscr{D}'}\eta\in L^{p}\;\text{and }\nu\vee\eta=0\}$.
\end{thm}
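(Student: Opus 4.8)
The plan is to show the two set descriptions of $D\left(\delta_{c,L^{p}}\right)$ agree by a double inclusion, using the previously established identity
\[
\left\langle \left\langle \delta_{c}^{\mathscr{D}'_{N}}\omega,\phi\right\rangle \right\rangle _{\Lambda}=\left\langle \left\langle \omega,d\phi\right\rangle \right\rangle _{\Lambda}\;\forall\omega\in\mathscr{D}'_{N}\Omega,\phi\in\mathscr{D}_{N}\Omega
\]
together with the integration-by-parts formula $\left\langle \left\langle \delta^{\mathscr{D}'}\eta,\omega\right\rangle \right\rangle _{\Lambda}=\left\langle \left\langle \eta,d\omega\right\rangle \right\rangle _{\Lambda}-\left\langle \left\langle \nu\vee\eta,\jmath^{*}\omega\right\rangle \right\rangle _{\Lambda}$ defining $\nu\vee\eta$. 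The key observation is that for $\eta\in L^{p}\Omega$ we always have $\delta^{\mathscr{D}'}\eta=i^{*}\left(\delta_{c}^{\mathscr{D}'_{N}}\eta\right)$ as currents (restricting the functional $\phi\mapsto\left\langle \left\langle\eta,d\phi\right\rangle\right\rangle_\Lambda$ from $\mathscr{D}_N\Omega$ to the smaller space $\mathscr{D}\Omega$), so the condition ``$\delta_{c}^{\mathscr{D}'_{N}}\eta\in L^{p}$'' already implies ``$\delta^{\mathscr{D}'}\eta\in L^{p}$''. What remains, in that direction, is to extract the boundary information: given $\eta$ with $\delta_{c}^{\mathscr{D}'_{N}}\eta\in L^{p}$, I would test against $\omega\in W^{1,p'}\Omega$, write $\omega=\left(1-\psi_r\right)\omega+\psi_r\omega$, note $\left(1-\psi_r\right)\omega$ can be approximated in $W^{1,p'}$ by elements of $\mathscr{D}\Omega\subset\mathscr{D}_N\Omega$ (standard, since it has support in $\accentset{\circ}{M}$), and compare $\left\langle \left\langle\eta,d\omega\right\rangle\right\rangle_\Lambda-\left\langle \left\langle\delta_c^{\mathscr{D}'_N}\eta,\omega\right\rangle\right\rangle_\Lambda$ with its $\mathscr{D}$-analogue; the difference must be $\left\langle \left\langle\nu\vee\eta,\jmath^{*}\omega\right\rangle\right\rangle_\Lambda$, and since testing against $\mathscr{D}_N\Omega$ sees the full boundary trace while testing against $\mathscr{D}\Omega$ does not, this forces $\nu\vee\eta=0$.

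For the reverse inclusion, suppose $\eta\in L^{p}\Omega$ with $\delta^{\mathscr{D}'}\eta\in L^{p}$ and $\nu\vee\eta=0$. I want to show that the functional $\phi\mapsto\left\langle \left\langle\eta,d\phi\right\rangle\right\rangle_\Lambda$ on $\mathscr{D}_N\Omega$ is $L^{p'}$-bounded, i.e. $\delta_{c}^{\mathscr{D}'_{N}}\eta\in L^{p}$ and in fact equals $\delta^{\mathscr{D}'}\eta$. The natural move is to smooth: for $\phi\in\mathscr{D}_N\Omega$ and $\varepsilon>0$, write $\phi=\left(1-\psi_r\right)\phi+\psi_r\phi$ and estimate $\left\langle \left\langle\eta,d\phi\right\rangle\right\rangle_\Lambda$. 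On $\left(1-\psi_r\right)\phi$ (support in the interior) the smooth integration-by-parts gives $\left\langle \left\langle\delta^{\mathscr{D}'}\eta,(1-\psi_r)\phi\right\rangle\right\rangle_\Lambda$, which is $L^{p'}$-bounded; on the remainder $\psi_r\phi$ one uses the definition of $\nu\vee\eta$ — since $\psi_r\phi\in W^{1,p'}\Omega$ (well, $\phi$ is smooth so this is fine) with $\jmath^{*}(\psi_r\phi)=\jmath^*\phi$ near the boundary, the boundary term is $\left\langle \left\langle\nu\vee\eta,\jmath^{*}\phi\right\rangle\right\rangle_\Lambda=0$ by hypothesis, and letting $r\downarrow0$ with Lemma~\ref{lem:approx_consistency}-type control (here $\iota_{\widetilde\nu}$ need not vanish, but $\left\|\psi_r\phi\right\|_{W^{1,p'}(M_{<r})}\to0$ suffices because $\phi$ is bounded) kills the remainder and yields $\left\langle \left\langle\eta,d\phi\right\rangle\right\rangle_\Lambda=\left\langle \left\langle\delta^{\mathscr{D}'}\eta,\phi\right\rangle\right\rangle_\Lambda$. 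This shows $\delta_{c}^{\mathscr{D}'_{N}}\eta=\delta^{\mathscr{D}'}\eta\in L^{p}$, completing the inclusion.

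The main obstacle I anticipate is the bookkeeping around the \emph{boundary pairing}: the functional $\nu\vee\eta$ lives in the negative-order Besov space $B_{p,p}^{-1/p}\restr{\Omega}{\partial M}$ and is only paired with traces $\jmath^{*}\omega$ for $\omega\in W^{1,p'}\Omega$, so one has to be careful that the cutoff functions $\psi_r\phi$, and their limits, stay in a class where both the trace theorem (\Blackboxref{trace_ext}) and the coarea/strip-decay estimates (\Thmref{coarea}) apply, and that the duality pairings are taken in compatible spaces. In particular one must check that ``$\nu\vee\eta=0$'' — an a priori condition only about pairing against $\jmath^{*}\omega$, $\omega\in W^{1,p'}\Omega$ — genuinely captures the vanishing of the boundary contribution against \emph{all} $\jmath^{*}\phi$, $\phi\in\mathscr{D}_N\Omega$; this is true since $\jmath^{*}\left(\mathscr{D}_N\Omega\right)$ is dense enough in the relevant trace space, but making that precise is the delicate point. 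Everything else is a routine combination of the adjoint characterization just proved, smoothing by $e^{\varepsilon\widetilde{\Delta_{N}}}$, and the cutoff argument from \Lemref{approx_consistency}.
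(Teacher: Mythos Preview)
Your proposal is correct in spirit but substantially more complicated than the paper's argument, and contains one false (though ultimately unused) claim. The paper avoids cutoffs entirely by exploiting the trivial inclusion $\mathscr{D}_N\Omega\subset\Omega(M)$: the defining formula $\left\langle\left\langle\nu\vee\eta,\jmath^*\omega\right\rangle\right\rangle_\Lambda=\left\langle\left\langle\eta,d\omega\right\rangle\right\rangle_\Lambda-\left\langle\left\langle\delta^{\mathscr{D}'}\eta,\omega\right\rangle\right\rangle_\Lambda$ is stated for smooth $\omega$, so it applies directly to any $\omega\in\mathscr{D}_N\Omega$. For the forward direction, if $\alpha:=\delta_c^{\mathscr{D}'_N}\eta\in L^p$ then $\alpha=\delta^{\mathscr{D}'}\eta$ as $L^p$ elements, and the definition of $\delta_c^{\mathscr{D}'_N}$ gives $\left\langle\left\langle\eta,d\omega\right\rangle\right\rangle_\Lambda=\left\langle\left\langle\alpha,\omega\right\rangle\right\rangle_\Lambda$ for $\omega\in\mathscr{D}_N\Omega$, hence $\left\langle\left\langle\nu\vee\eta,\jmath^*\omega\right\rangle\right\rangle_\Lambda=0$ for all such $\omega$; then density of $\jmath^*(\mathscr{D}_N\Omega)$ in $B_{p',p'}^{1/p}\Omega(\partial M)$ (via $W^{1,p'}\text{-}\mathrm{cl}(\mathscr{D}_N\Omega)=W^{1,p'}\Omega_N$ and $\jmath^*(W^{1,p'}\Omega_N)=\jmath^*(W^{1,p'}\Omega)$) gives $\nu\vee\eta=0$. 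For the reverse direction the same inclusion finishes in one line: if $\nu\vee\eta=0$ then $\left\langle\left\langle\eta,d\omega\right\rangle\right\rangle_\Lambda=\left\langle\left\langle\delta^{\mathscr{D}'}\eta,\omega\right\rangle\right\rangle_\Lambda$ for all $\omega\in\Omega(M)\supset\mathscr{D}_N\Omega$, so $\delta_c^{\mathscr{D}'_N}\eta=\delta^{\mathscr{D}'}\eta\in L^p$.

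Your cutoff decomposition $\phi=(1-\psi_r)\phi+\psi_r\phi$ would also work in the reverse direction---for any fixed $r$, no limit needed---but note that your claim $\left\Vert\psi_r\phi\right\Vert_{W^{1,p'}(M_{<r})}\to0$ is \emph{false}: since $|\nabla\psi_r|\sim 1/r$ and $|M_{<r}|\sim r$, one has $\left\Vert(\nabla\psi_r)\phi\right\Vert_{L^{p'}(M_{<r})}\sim r^{-1/p}\to\infty$. This estimate is never actually needed, so your argument survives, but it signals that the cutoff machinery is adding noise rather than signal. The density statement you flag as ``the delicate point'' is indeed the heart of the matter and is exactly what the paper proves; no \Lemref{approx_consistency}-style strip-decay is required to reach it.
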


\begin{proof}
Assume $\eta\in L^{p}\Omega(M)$ and $\delta_{c}^{\mathscr{D}'_{N}}\eta\in L^{p}$.
Then $\exists\alpha\in L^{p}\Omega\left(M\right):$ $\alpha=\delta_{c}^{\mathscr{D}'_{N}}\eta=\delta^{\mathscr{D}'}\eta$.
By the definition of $\nu\vee\eta$, $\left\langle \left\langle \alpha,\omega\right\rangle \right\rangle _{\Lambda}+\left\langle \left\langle \nu\vee\eta,\jmath^{*}\omega\right\rangle \right\rangle _{\Lambda}=\left\langle \left\langle \eta,d\omega\right\rangle \right\rangle _{\Lambda}\;\forall\omega\in\Omega\left(M\right)$.
By the definition of $\delta_{c}^{\mathscr{D}'_{N}}\eta$, $\left\langle \left\langle \alpha,\omega\right\rangle \right\rangle _{\Lambda}=\left\langle \left\langle \eta,d\omega\right\rangle \right\rangle _{\Lambda}\;\forall\omega\in\mathscr{D}_{N}\Omega$.
So $\left\langle \left\langle \nu\vee\eta,\jmath^{*}\omega\right\rangle \right\rangle _{\Lambda}=0\;\forall\omega\in\mathscr{D}_{N}\Omega$.
Recall that $\mathrm{Ext}$ (the right inverse of $\mathrm{Trace}$)
is bounded, so $B_{p',p'}^{\frac{1}{p}}\text{-}\mathrm{cl}\left(\jmath^{*}\left(\mathscr{D}_{N}\Omega\right)\right)\stackrel{\mathrm{Ext}}{=}\jmath^{*}\left(W^{1,p}\text{-}\mathrm{cl}\left(\mathscr{D}_{N}\Omega\right)\right)=\jmath^{*}\left(W^{1,p}\Omega_{N}\left(M\right)\right)\stackrel{\mathrm{Ext}}{=}\jmath^{*}\left(W^{1,p}\Omega\left(M\right)\right)=B_{p',p'}^{\frac{1}{p}}\Omega\left(\partial M\right)$.
Therefore $\nu\vee\eta=0$.

Conversely, now assume $\eta\in L^{p}\Omega(M),$ $\delta^{\mathscr{D}'}\eta=\alpha\in L^{p}$
and $\nu\vee\eta=0$. Then by the definition of $\nu\vee\eta$ for
$\eta\in D(\delta_{L^{p}})$, $\left\langle \left\langle \alpha,\omega\right\rangle \right\rangle _{\Lambda}=\left\langle \left\langle \eta,d\omega\right\rangle \right\rangle _{\Lambda}\;\forall\omega\in\Omega\left(M\right)$.
The formula also holds for $\omega\in\mathscr{D}_{N}\Omega$, and
therefore $\delta_{c}^{\mathscr{D}_{N}'}\eta=\alpha\in L^{p}$.
\end{proof}
This result agrees with \parencite[Equation 2.17]{Mitrea2009_Sylvie_Neumann_Laplacian}.
Our characterization of the adjoints of $d$ and $\delta$ further
highlights how heatable currents are truly natural objects in Hodge
theory, independent of the theory of heat flows.

In particular, it is trivial to show $\mathbb{P}L^{p}\Omega=L^{p}\text{-}\mathrm{cl}\Ker\left(\restr{\delta_{c}}{\Omega_{N}}\right)=\{\eta\in D\left(\delta_{c,L^{p}}\right):\delta_{c}^{\mathscr{D}'_{N}}\eta=0\}$
for $p\in\left(1,\infty\right)$.
\begin{rem*}
The name ``heatable current'' simply refers to the largest topological
vector space of differential forms (and hence vector fields) for which
the heat equation can be solved (i.e. \emph{heatable}), and once we
apply the heat flow a heatable current becomes heated. The name ``current''
for distributional forms was introduced by Georges de Rham \parencite{rham1984differentiable},
likely with its physical equivalents in mind, and has since become
standard in various areas of mathematics such as geometric measure
theory and complex manifolds.

It is not easy to search for literature dealing with the subject and
how it relates to Hodge theory. They are mentioned in a couple of
papers \parencite{Braverman_1997_tempered_currents,Troyanov_2009_Hodge_temperate_currents}
dealing with ``tempered currents'' or ``temperate currents'' on
$\mathbb{R}^{n}$ -- differential forms with tempered-distributional
coefficients. Yet the notion of ``tempered'' -- not growing too
fast -- does not make sense on a compact manifold with boundary.
Arguably, it is the ability to facilitate the heat flow, or the Littlewood-Paley
projection, that most characterizes tempered distributions and makes
them ideal for harmonic analysis. For scalar functions, much more
is known (cf. \parencite{Kerkyacharian_2014_Heat_Dirichlet_decomposition,DuongThinh_Weighted_besov_operator,Taniguchi_2018_Neumann_Besov}
and their references). In the same vein, various results from harmonic
analysis should also hold for heatable currents.
\end{rem*}

\subsection{Square root\label{subsec:Square-root}}

We will not need this for the rest of the paper, but a popular question
is the characterization of the square root of the Laplacian.

By the Poincare inequality, $\mathcal{P}^{N\perp}H^{1}\Omega_{N}^{k}$
is a Hilbert space where the $H^{1}$-inner product can be replaced
by $\left(\omega,\eta\right)\mapsto\mathcal{D}(\omega,\eta)$ (the
Dirichlet integral). The space is dense in $\mathcal{P}^{N\perp}L^{2}\Omega^{k}$.
Define $\mathcal{A}$ as an unbounded operator on $\mathcal{P}^{N\perp}L^{2}\Omega^{k}$
where 
\[
D\left(\mathcal{A}\right)=\{\omega\in\mathcal{P}^{N\perp}H^{1}\Omega_{N}^{k}:\left|\mathcal{D}(\omega,\eta)\right|\lesssim_{\omega}\left\Vert \eta\right\Vert _{2}\;\forall\eta\in\mathcal{P}^{N\perp}H^{1}\Omega_{N}^{k}\}
\]
and $\left\langle \left\langle \mathcal{A}\omega,\eta\right\rangle \right\rangle _{\Lambda}=\mathcal{D}(\omega,\eta)\;\forall\omega\in D(\mathcal{A}),\forall\eta\in\mathcal{P}^{N\perp}H^{1}\Omega_{N}^{k}$.
Easy to check that $\left\langle \left\langle \mathcal{A}\omega,\eta\right\rangle \right\rangle _{\Lambda}=\mathcal{D}\left(\left(-\Delta_{N}\right)^{-1}\mathcal{A}\omega,\eta\right)$
$\forall\eta\in\mathcal{P}^{N\perp}H^{1}\Omega_{N}^{k}$. Therefore
$\omega=\left(-\Delta_{N}\right)^{-1}\mathcal{A}\omega\in\mathcal{P}^{N\perp}H^{2}\Omega_{\mathrm{hom}N}^{k}$
and $\mathcal{A}\omega=\left(-\Delta_{N}\right)\omega\;\forall\omega\in D(A)$,
so $\mathcal{A}\subset-\Delta_{N}$. It is trivial to check $D\left(-\Delta_{N}\right)\leq D\left(\mathcal{A}\right)$,
so $\mathcal{A}=-\Delta_{N}$.

By \textbf{Friedrichs extension} (cf. \parencite[Appendix A, Proposition 8.7]{Taylor_PDE1},
\parencite[Section 8, Proposition 2.2]{Taylor_PDE2}), we conclude
that 
\begin{eqnarray*}
\mathbb{C}\mathcal{P}^{N\perp}H^{1}\Omega_{N}^{k} & = & \left[\mathbb{C}\mathcal{P}^{N\perp}L^{2}\Omega^{k},\left(D\left(\Delta_{N}^{\mathbb{C}}\right),\left\Vert \cdot\right\Vert _{D\left(\Delta_{N}^{\mathbb{C}}\right)}\right)\right]_{\frac{1}{2}}=\left[\mathbb{C}\mathcal{P}^{N\perp}L^{2}\Omega^{k},\mathbb{C}\mathcal{P}^{N\perp}H^{2}\Omega_{\mathrm{hom}N}^{k}\right]_{\frac{1}{2}}\\
 & = & \left(D\left(\sqrt{-\Delta_{N}^{\mathbb{C}}}\right),\left\Vert \cdot\right\Vert _{D\left(\sqrt{-\Delta_{N}^{\mathbb{C}}}\right)}\right)
\end{eqnarray*}

By direct summing, we can extend the result to $\widetilde{\Delta_{N}}$
to get 
\[
\mathbb{C}H^{1}\Omega_{N}^{k}=\left[\mathbb{C}L^{2}\Omega^{k},\mathbb{C}H^{2}\Omega_{\mathrm{hom}N}^{k}\right]_{\frac{1}{2}}=\left(D\left(\sqrt{-\widetilde{\Delta_{N}^{\mathbb{C}}}}\right),\left\Vert \cdot\right\Vert _{D\left(\sqrt{-\widetilde{\Delta_{N}^{\mathbb{C}}}}\right)}\right)
\]
We note that the norms are only defined up to equivalent norms, and
$\left\Vert \cdot\right\Vert _{D(\mathcal{A})}$ is not the same as
$\left\Vert \cdot\right\Vert _{D(\mathcal{A})}^{*}$ (see \Secref{Common-notation}).
This difference is not always made explicit in \parencite{Taylor_PDE1,Taylor_PDE2}.

\subsection{Some trace-zero results}

Although we will not need them for the rest of the paper, let us briefly
delineate some results regarding the trace-zero Laplacian (cf. \Thmref{4_version_gaffney})
which are similar to those obtained above for the absolute Neumann
Laplacian. We begin by retracing our steps from \Corref{ortho_projection}.

Define $\mathcal{H}_{0}^{k}\left(M\right)=\mathcal{H}_{N}^{k}\left(M\right)\cap\mathcal{H}_{D}^{k}\left(M\right)$.
Obviously, $\mathcal{H}_{0}^{k}\left(M\right)$ is finite-dimensional
and we can define $\mathcal{P}^{0}$ and $\mathcal{P}^{0\perp}$ the
same way we did for $\mathcal{P}^{N}$ and $\mathcal{P}^{N\perp}$
in \Corref{ortho_projection}. When $M$ has no boundary, $\mathcal{P}^{0\perp}=\mathcal{P}^{N\perp}$
and $\mathcal{P}^{0}=\mathcal{P}^{N}=\mathcal{P}_{3}$.

It is a celebrated theorem, following from the \textbf{Aronszajn continuation}
\textbf{theorem} \parencite{Aronszajn1962_continuation}, that $\mathcal{H}_{0}^{k}\left(M\right)=0$
when every connected component of $M$ has nonempty boundary (cf.
\parencite[Theorem 3.4.4]{Schwarz1995}). When that happens, $\mathcal{P}^{0\perp}=1$
and $\mathcal{P}^{0}=0$.
\begin{blackbox}[Potential theory]
\label{blackbox:potential_estimates-trace_zero} For $m\in\mathbb{N}_{0},p\in\left(1,\infty\right)$,
we define the \textbf{injective trace-zero Laplacian} 
\[
\Delta_{0}:\mathcal{P}^{0\perp}W^{m+2,p}\Omega_{0}^{k}\to\mathcal{P}^{0\perp}W^{m,p}\Omega^{k}
\]
as simply $\Delta$ under domain restriction. Then $\left(-\Delta_{0}\right)^{-1}$
is called the \textbf{trace-zero potential}, which is bounded. $\Delta_{0}$
can also be thought of as an unbounded operator on $\mathcal{P}^{0\perp}W^{m,p}\Omega_{0}^{k}$.
\end{blackbox}

\begin{proof}
We only need to prove the theorem on each connected component of $M$.
So WLOG, $M$ is connected. If $\partial M=\emptyset$, we are back
to the absolute Neumann case in \Blackboxref{potential_estimates}.
When $\partial M\neq\emptyset$, $\mathcal{P}^{0\perp}=1$ and we
only need to show the trace-zero Poisson problem $\left(\Delta\omega,\restr{\omega}{\partial M}\right)=\left(\eta,0\right)$
is uniquely solvable for each $\eta\in W^{m,p}\Omega^{k}$. This is
\parencite[Theorem 3.4.10]{Schwarz1995}.
\end{proof}
Consequently, we have a trivial decomposition
\[
\omega=\mathcal{P}^{0\perp}\omega+\mathcal{P}^{0}\omega=d\delta\left(-\Delta_{0}\right)^{-1}\mathcal{P}^{0\perp}\omega+\delta d\left(-\Delta_{0}\right)^{-1}\mathcal{P}^{0\perp}\omega+\mathcal{P}^{0}\omega
\]
for $\omega\in W^{m,p}\Omega^{k}$, $m\in\mathbb{N}_{0}$, $p\in\left(1,\infty\right)$.
This decomposition is not as useful as the Hodge-Morrey decomposition
(\Subsecref{Hodge-decomposition}) since the the first two terms are
not orthogonal. However, it does mean that, when $\mathcal{P}^{0}=0$,
every differential form is a sum of exact and coexact forms.

For $\omega\in\mathcal{P}^{0\perp}W^{m+2,p}\Omega_{0}^{k},m\in\mathbb{N}_{0},p\in\left(1,\infty\right)$,
we also have $\omega=\left(-\Delta_{0}\right)^{-1}\left(-\Delta_{0}\right)\omega=\left(-\Delta_{0}\right)^{-1}\left(d\delta\omega+\delta d\omega\right),$
so $\left\Vert \omega\right\Vert _{W^{m+2,p}}\sim\left\Vert \delta\omega\right\Vert _{W^{m+1,p}}+\left\Vert d\omega\right\Vert _{W^{m+1,p}}$.
This trick is not enough to get the full Poincare inequality $\left\Vert \omega\right\Vert _{W^{1,p}}\sim\left\Vert \delta\omega\right\Vert _{p}+\left\Vert d\omega\right\Vert _{p}$,
and therefore \parencite[Lemma 2.4.10.iv]{Schwarz1995} might be wrong.

As $\left(-\Delta_{0}\right)^{-1}$ is symmetric and bounded on $\mathcal{P}^{0\perp}L^{2}\Omega^{k}$,
we conclude $\Delta_{0}$ is a self-adjoint and dissipative operator
on $\mathcal{P}^{0\perp}L^{2}\Omega^{k}$, with the domain $D\left(\Delta_{0}\right)=\mathcal{P}^{0\perp}H^{2}\Omega_{0}^{k}$.
This means $\Delta_{0}^{\mathbb{C}}$ is acutely sectorial on $\mathbb{C}\mathcal{P}^{0\perp}L^{2}\Omega^{k}$.

Next we define the \textbf{non-injective trace-zero Laplacian} $\widetilde{\Delta_{0}}$
as an unbounded operator on $L^{2}\Omega^{k}$ with $D\left(\widetilde{\Delta_{0}}^{m}\right)=D\left(\Delta_{0}^{m}\right)\oplus\mathcal{H}_{0}^{k}$
and $\widetilde{\Delta_{0}}^{m}=\Delta_{0}^{m}\oplus0$ $\forall m\in\mathbb{N}_{1}$.
Again, $\widetilde{\Delta_{0}^{\mathbb{C}}}$ is acutely sectorial
on $\mathbb{C}L^{2}\Omega^{k}$ and $\left\Vert \omega\right\Vert _{D\left(\widetilde{\Delta_{0}}^{m}\right)}\sim\left\Vert \omega\right\Vert _{H^{2m}}\;\forall\omega\in D\left(\widetilde{\Delta_{0}}^{m}\right),\forall m\in\mathbb{N}_{1}$.
In particular, $D\left(\widetilde{\Delta_{0}}\right)=\mathcal{P}^{0\perp}H^{2}\Omega_{0}^{k}\oplus\mathcal{H}_{0}^{k}=H^{2}\Omega_{0}^{k}$.

For $L^{p}$-analyticity, observe that on $\partial M$: $\left|\nabla_{\nu}\left(|\omega|^{2}\right)\right|=2\left|\left\langle \nabla_{\nu}\omega,\omega\right\rangle \right|=0\lesssim\left|\omega\right|^{2}$
$\forall\omega\in W^{2,p}\Omega_{0}^{k},\forall p\in\left(1,\infty\right)$.
So we argue as in \Thmref{local_boundedness_Lp}, and $L^{p}$-analyticity
follows.
\begin{rem*}
The operator $\mathbb{P}\widetilde{\Delta_{0}}$, with the domain
$H^{2}\Omega_{0}^{k}\cap\mathbb{P}L^{2}\Omega^{k}$, is a well-defined
unbounded operator on $\mathbb{P}L^{2}\Omega^{k}$. It is called the
\textbf{Stokes operator} corresponding to the trace-zero/no-slip boundary
condition, as discussed in \parencite{Fujita1964_Kato_Navier_Stokes,Giga1985_Lp_Stokes_solution,Mitrea2008_Stokes_Fujita_Kato_approach}
and others. It lies outside the scope of this paper. For more information,
see \parencite{Hieber2018_Stokes_operator_survey} and its references.
\end{rem*}

\section{Results related to the Euler equation}

\subsection{Hodge-Sobolev spaces\label{subsec:Hodge-Sobolev-spaces}}

We will have need of negative-order Sobolev spaces when we calculate
the pressure in the Euler equation.

Recall the space of heatable currents $\mathscr{D}'_{N}\Omega$ (defined
in \Subsecref{Distributions-and-adjoints}). Note that $\mathcal{P}^{N\perp}$
is well-defined on $\mathscr{D}_{N}^{'}\Omega$ by $\left\langle \left\langle \mathcal{P}^{N\perp}\Lambda,\phi\right\rangle \right\rangle _{\Lambda}=\left\langle \left\langle \Lambda,\mathcal{P}^{N\perp}\phi\right\rangle \right\rangle $
$\forall\Lambda\in\mathscr{D}'_{N}\Omega,\forall\phi\in\mathscr{D}_{N}\Omega$.
Same for $\mathcal{P}^{N}$, and we can uniquely identify $\mathcal{P}^{N}\Lambda\in\mathcal{H}_{N}$
$\forall\Lambda\in\mathscr{D}_{N}^{'}\Omega$.

Similarly, $\mathbb{P}\left(\mathscr{D}_{N}\Omega\right)\leq\mathscr{D}_{N}\Omega$
(use \Thmref{compat_Helmholtz_leray} and \Thmref{Friedrichs_Decomposition}),
so $\mathbb{P}$, $1-\mathbb{P}=\left(\mathcal{P}_{1}+\mathcal{P}_{3}^{\text{ex}}\right)$
and $\mathcal{P}_{2}=\mathbb{P}-\mathcal{P}^{N}$ are well-defined
on $\mathscr{D}'_{N}\Omega$.

For all $p\in\left(1,\infty\right)$, define $D_{N}=d^{\mathscr{D}'_{N}}+\delta_{c}^{\mathscr{D}'_{N}}$
on $\mathcal{P}^{N\perp}\mathscr{D}'_{N}\Omega$ and $\widetilde{D_{N}}=d^{\mathscr{D}'_{N}}+\delta_{c}^{\mathscr{D}'_{N}}$
on $\mathscr{D}'_{N}\Omega$ as the\textbf{ injective} and \textbf{non-injective
(Neumann) Hodge-Dirac operators}. \nomenclature{$D_{N}$, $\widetilde{D_{N}}$}{the injective and non-injective (Neumann) Hodge-Dirac operators\nomrefpage}

By the Poincare inequality (\Corref{poincare_ineq_Neumann}), it is
easy to check that $\restr{D_{N}}{\mathcal{P}^{N\perp}\mathscr{D}{}_{N}\Omega}:\mathcal{P}^{N\perp}\mathscr{D}{}_{N}\Omega\rightarrow\mathcal{P}^{N\perp}\mathscr{D}{}_{N}\Omega$
is bijective. Consequently, so is $D_{N}$ on $\mathcal{P}^{N\perp}\mathscr{D}'_{N}\Omega$.

Observe that $\forall m\in\mathbb{N}_{0},\forall p\in\left(1,\infty\right),\forall\alpha\in\mathcal{P}^{N\perp}W^{m,p}\Omega\left(M\right),\exists!\beta=\left(D_{N}\right)^{-1}\alpha\in\mathcal{P}^{N\perp}W^{m+1,p}\Omega_{N}$
and 
\begin{equation}
\left\Vert \beta\right\Vert _{W^{m+1,p}}\sim\left\Vert \alpha\right\Vert _{W^{m,p}}=\left\Vert d\beta+\delta_{c}\beta\right\Vert _{W^{m,p}}\sim\left\Vert d\beta\right\Vert _{W^{m,p}}+\left\Vert \delta_{c}\beta\right\Vert _{W^{m,p}}\label{eq:poincare_split_dirac}
\end{equation}
because $\mathcal{P}^{N\perp}W^{m,p}\Omega=d\left(W^{m+1,p}\Omega\right)\oplus\delta_{c}\left(W^{m+1,p}\Omega_{N}\right)$
is a direct sum of closed subspaces (corresponding to $\mathcal{P}_{1}+\mathcal{P}_{3}^{\mathrm{ex}}$
and $\mathcal{P}_{2}$).

Note that we do not have $d^{\mathscr{D}'_{N}}D_{N}=D_{N}d^{\mathscr{D}'_{N}}$,
but $d^{\mathscr{D}'_{N}}D_{N}^{2}=D_{N}^{2}d^{\mathscr{D}'_{N}}=-\Delta_{N}^{\mathscr{D}'_{N}}d^{\mathscr{D}'_{N}}$
is true.
\begin{defn}
For $m\in\mathbb{Z},p\in\left(1,\infty\right)$, let $W^{m,p}\left(D_{N}\right):=\left(D_{N}\right)^{-m}\left(\mathcal{P}^{N\perp}L^{p}\Omega\right)=\{\alpha\in\mathcal{P}^{N\perp}\mathscr{D}'_{N}\Omega:\left(D_{N}\right)^{m}\alpha\in L^{p}\Omega\}$
and $W^{m,p}\left(\widetilde{D_{N}}\right):=W^{m,p}\left(D_{N}\right)\oplus\mathcal{H}_{N}$.
They are Banach spaces under the norms $\left\Vert \alpha\right\Vert _{W^{m,p}\left(D_{N}\right)}:=\left\Vert \left(D_{N}\right)^{m}\alpha\right\Vert _{L^{p}\Omega}$
and $\left\Vert \beta\right\Vert _{W^{m,p}\left(\widetilde{D_{N}}\right)}:=\left\Vert \mathcal{P}^{N\perp}\beta\right\Vert _{W^{m,p}\left(D_{N}\right)}+\left\Vert \mathcal{P}^{N}\beta\right\Vert _{\mathcal{H}_{N}}$.
\nomenclature{$W^{m,p}\left(D_{N}\right)$, $W^{m,p}\left(\widetilde{D_{N}}\right)$}{Hodge-Sobolev spaces\nomrefpage}
\end{defn}

In a sense, these are comparable to homogeneous and inhomogeneous
Bessel potential spaces. We can extend the definitions to fractional
powers, but that is outside the scope of this paper.

It is trivial to check that $\left\Vert \alpha\right\Vert _{W^{m,p}\left(\widetilde{D_{N}}\right)}\sim\left\Vert \alpha\right\Vert _{W^{m,p}\Omega}$
$\forall\alpha\in\mathscr{D}_{N}\Omega,\forall m\in\mathbb{N}_{0},\forall p\in\left(1,\infty\right)$.
\begin{thm}
\label{thm:Hodge-Sobolev-basic-properties}Some basic properties of
$W^{m,p}\left(\widetilde{D_{N}}\right)$:
\begin{enumerate}
\item $\mathscr{D}_{N}\Omega$ is dense in $W^{m,p}\left(\widetilde{D_{N}}\right)$
$\forall m\in\mathbb{Z},\forall p\in\left(1,\infty\right)$.
\item $W^{m,p}\left(\widetilde{D_{N}}\right)=W^{m,p}\text{-}\mathrm{cl}\left(\mathscr{D}_{N}\Omega\right)$
$\forall m\in\mathbb{N}_{0},\forall p\in\left(1,\infty\right)$.
\item $\left\Vert d^{\mathscr{D}'_{N}}\beta\right\Vert _{W^{m,p}\left(\widetilde{D_{N}}\right)}+\left\Vert \delta_{c}^{\mathscr{D}'_{N}}\beta\right\Vert _{W^{m,p}\left(\widetilde{D_{N}}\right)}\lesssim\left\Vert \beta\right\Vert _{W^{m+1,p}\left(\widetilde{D_{N}}\right)}$
$\forall\beta\in W^{m+1,p}\left(\widetilde{D_{N}}\right),\forall m\in\mathbb{Z},\forall p\in\left(1,\infty\right)$\\
Then $\mathcal{P}_{2}=\delta_{c}^{\mathscr{D}'_{N}}d^{\mathscr{D}'_{N}}\left(-\Delta_{N}^{\mathscr{D}'_{N}}\right)^{-1}\mathcal{P}^{N\perp}=\delta_{c}^{\mathscr{D}'_{N}}\left(-\Delta_{N}^{\mathscr{D}'_{N}}\right)^{-1}d^{\mathscr{D}'_{N}}$
and $\mathbb{P}=\mathcal{P}_{2}+\mathcal{P}^{N}$ are of order 0 on
$W^{m,p}\left(\widetilde{D_{N}}\right)$.
\item $\left(W^{m,p}\left(\widetilde{D_{N}}\right)\right)^{*}=W^{-m,p'}\left(\widetilde{D_{N}}\right)$
$\forall m\in\mathbb{Z},\forall p\in\left(1,\infty\right)$ via the
pairing $\left\langle \alpha,\phi\right\rangle _{W^{-m,p}\left(\widetilde{D_{N}}\right),W^{m,p'}\left(\widetilde{D_{N}}\right)}=\left\langle \left\langle D_{N}^{-m}\mathcal{P}^{N\perp}\alpha,D_{N}^{m}\mathcal{P}^{N\perp}\phi\right\rangle \right\rangle _{\Lambda}+\left\langle \left\langle \mathcal{P}^{N}\alpha,\mathcal{P}^{N}\phi\right\rangle \right\rangle _{\Lambda}$
\end{enumerate}
\end{thm}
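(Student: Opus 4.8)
The plan is to reduce everything to the isometry $D_N : \mathcal{P}^{N\perp}\mathscr{D}'_N\Omega \to \mathcal{P}^{N\perp}\mathscr{D}'_N\Omega$ and its powers, which by \Corref{poincare_ineq_Neumann} restrict to Banach isomorphisms between $\mathcal{P}^{N\perp}W^{m+1,p}\Omega_N$ and $\mathcal{P}^{N\perp}W^{m,p}\Omega$, and to leverage the density results already established for $\mathscr{D}_N\Omega$. For (1), I would first handle $m\geq 0$: since $\mathscr{D}_N\Omega = D(\widetilde{\Delta_N}^\infty)$ is a core for $A_p$ (\Blackboxref{analyticity}, and the remark that $D(A^\infty)$ is a core), and $W^{m,p}\text{-}\mathrm{cl}(\mathscr{D}_N\Omega)$ contains $W^{2,p}\Omega_{\hom N}$-closures etc., one shows $\mathscr{D}_N\Omega$ is $W^{m,p}$-dense in $W^{m,p}(\widetilde{D_N})$ directly from its definition as $(D_N)^{-m}(\mathcal{P}^{N\perp}L^p\Omega)\oplus\mathcal{H}_N$ together with the density of $\mathscr{D}_N\Omega$ in $L^p$ (\Factref{D_N-basic-properties}) and the fact that $D_N^{-m}$ maps $\mathscr{D}_N\Omega$ into itself (it maps into $\mathcal{P}^{N\perp}\mathscr{D}_N\Omega$ by $W^{k,p}$-boundedness at every order). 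For $m<0$, given $\alpha\in W^{m,p}(D_N)$ we have $D_N^{m}\alpha\in\mathcal{P}^{N\perp}L^p\Omega$; approximating $D_N^m\alpha$ in $L^p$ by $\phi_j\in\mathcal{P}^{N\perp}\mathscr{D}_N\Omega$ and setting $\alpha_j = D_N^{-m}\phi_j\in\mathscr{D}_N\Omega$ gives $\alpha_j\to\alpha$ in the $W^{m,p}(D_N)$-norm by definition of that norm; add $\mathcal{P}^N\alpha$ (already in $\mathcal{H}_N\subset\mathscr{D}_N\Omega$). Then (2) is the statement that for $m\geq 0$ the abstract completion $W^{m,p}(\widetilde{D_N})$ coincides with the closure of $\mathscr{D}_N\Omega$ inside $W^{m,p}\Omega$; this follows from (1) once one checks the two norms are equivalent on $\mathscr{D}_N\Omega$, which is exactly the remark $\left\Vert \alpha\right\Vert _{W^{m,p}(\widetilde{D_N})}\sim\left\Vert \alpha\right\Vert _{W^{m,p}\Omega}$ stated just before the theorem, plus \Corref{crucial_boundary_approx_homN} to identify the closure with $W^{m,p}$-cl$(\mathscr{D}_N\Omega)$.

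For (3), the key algebraic identity is $d^{\mathscr{D}'_N}D_N^2 = D_N^2 d^{\mathscr{D}'_N} = -\Delta_N^{\mathscr{D}'_N}d^{\mathscr{D}'_N}$ (stated in the excerpt), together with the commutation relations $\left(-\Delta_N\right)^{-1}d = d\left(-\Delta_N\right)^{-1}$ and $\left(-\Delta_N\right)^{-1}\delta_c = \delta_c\left(-\Delta_N\right)^{-1}$ from \Corref{basics_of_potentials}, promoted to $\mathscr{D}'_N$ via the now-standard duality/heat-flow argument ($\langle\langle e^{\varepsilon\widetilde{\Delta_N}}(\cdot),\phi\rangle\rangle = \langle\langle(\cdot),e^{\varepsilon\widetilde{\Delta_N}}\phi\rangle\rangle$ and let $\varepsilon\downarrow 0$). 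Writing $\beta = D_N^{-(m+1)}\gamma$ with $\gamma\in\mathcal{P}^{N\perp}L^p\Omega$, one needs $\left\Vert D_N^m\, d^{\mathscr{D}'_N}\beta\right\Vert_{L^p}\lesssim\left\Vert\gamma\right\Vert_{L^p}$; using $D_N^m d = D_N^{-2}D_N^{m+2}d = D_N^{-2}d D_N^{m+2} = \dots$ one shifts all powers of $D_N$ past $d$ at the cost of parity bookkeeping (even powers commute freely; for odd $m$ one absorbs a single $D_N$ which costs exactly one derivative, matched against the one derivative gained from $d$), reducing to the $L^p$-boundedness of $d\,D_N^{-1} = d\,(d+\delta_c)^{-1}$, i.e. a zeroth-order Hodge-projection-type operator, which is bounded by \Eqref{poincare_split_dirac}. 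The formula $\mathcal{P}_2 = \delta_c^{\mathscr{D}'_N}d^{\mathscr{D}'_N}(-\Delta_N^{\mathscr{D}'_N})^{-1}\mathcal{P}^{N\perp}$ and the order-$0$ claim for $\mathcal{P}_2,\mathbb{P}$ then follow by the same shifting, since $-\Delta_N^{\mathscr{D}'_N} = D_N^2$ on $\mathcal{P}^{N\perp}$ so $\mathcal{P}_2 = \delta_c D_N^{-1}\cdot d D_N^{-1}$ is visibly a composition of two order-$(-1)$-to-$0$ operators arranged to be net order $0$, and $\mathbb{P} = \mathcal{P}_2 + \mathcal{P}^N$ with $\mathcal{P}^N$ of finite rank.

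For (4), the duality pairing is defined so as to split off the finite-dimensional $\mathcal{H}_N$ piece (on which the pairing is the honest $L^2$ inner product, positive-definite by \Corref{ortho_projection}) and to reduce the infinite-dimensional part to the self-duality $\left(\mathcal{P}^{N\perp}L^p\Omega\right)^* = \mathcal{P}^{N\perp}L^{p'}\Omega$, which holds by Hölder (established in \Subsecref{Compatibility-with-scalar}) since $\mathcal{P}^{N\perp}$ is a bounded projection compatible across $L^p$ spaces. The map $\alpha\mapsto\big(\phi\mapsto\langle\langle D_N^{-m}\mathcal{P}^{N\perp}\alpha, D_N^m\mathcal{P}^{N\perp}\phi\rangle\rangle_\Lambda + \langle\langle\mathcal{P}^N\alpha,\mathcal{P}^N\phi\rangle\rangle_\Lambda\big)$ is clearly bounded $W^{-m,p'}(\widetilde{D_N})\to (W^{m,p}(\widetilde{D_N}))^*$ by definition of the norms and Hölder; injectivity and surjectivity come from: any $\Lambda\in(W^{m,p}(\widetilde{D_N}))^*$ restricts to a functional on $\mathcal{P}^{N\perp}W^{m,p}(D_N)\cong \mathcal{P}^{N\perp}L^p\Omega$ (via the isomorphism $D_N^m$), hence is represented by some $g\in\mathcal{P}^{N\perp}L^{p'}\Omega$, and one sets $\mathcal{P}^{N\perp}\alpha = D_N^{-(-m)}g = D_N^{m}g$ (unwinding indices carefully) while $\mathcal{P}^N\alpha$ is read off from the $\mathcal{H}_N$-component via the inner product isomorphism; density of $\mathscr{D}_N\Omega$ from (1) guarantees the representation is unique and that the pairing formula is forced. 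The main obstacle I anticipate is (3): keeping the parity-of-$m$ bookkeeping straight when commuting $D_N$ past $d$ and $\delta_c$ — $D_N$ itself does not commute with $d$ (only $D_N^2$ does), so the odd-$m$ case genuinely requires writing $D_N^{2j+1}d = D_N^{2j}(D_N d) = D_N^{2j}(\delta_c d)$ and then using $\delta_c d\,(-\Delta_N)^{-1} = -\mathcal{P}^{N\perp}$-type identities to land back on an order-$0$ operator, and one must verify each intermediate expression is well-defined on $\mathscr{D}'_N\Omega$ (no accidental appearance of $d_c$ or $\delta$, which are not defined on heatable currents — the remark in \Subsecref{Distributions-and-adjoints}) before extending from $\mathscr{D}_N\Omega$ by density.
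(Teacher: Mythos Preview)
Your proposal is correct and follows the same overall strategy as the paper, with a few minor differences in execution. For (1) the paper handles all $m\in\mathbb{Z}$ uniformly in one line (since $D_N^m(\mathcal{P}^{N\perp}\mathscr{D}_N\Omega)=\mathcal{P}^{N\perp}\mathscr{D}_N\Omega$ is dense in $\mathcal{P}^{N\perp}L^p\Omega$), so your split into $m\geq 0$ versus $m<0$ is unnecessary. For (2) the paper takes the heat-flow approximants $\alpha^\varepsilon$ directly and uses $D_N^m(\alpha^\varepsilon)=(D_N^m\alpha)^\varepsilon\to D_N^m\alpha$ in $L^p$ together with \Eqref{poincare_split_dirac} to get $\alpha^\varepsilon\to\alpha$ in $W^{m,p}$; your route via (1) plus norm equivalence also works but is slightly less direct. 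For (3) the paper's parity argument is organized around the direct-sum identity $\|d\omega+\delta_c\omega\|_{L^p}\sim\|d\omega\|_{L^p}+\|\delta_c\omega\|_{L^p}$ from \Eqref{poincare_split_dirac}: for $m=2k$ one reads off $\|dD_N^{2k}\beta\|+\|\delta_c D_N^{2k}\beta\|\sim\|D_N^{2k+1}\beta\|$ immediately, and for $m=2k+1$ one uses $D_N d=\delta_c d$, $D_N\delta_c=d\delta_c$ on $W^{2,p}\Omega_{\hom N}$ and the same direct-sum splitting applied to $\delta_c d$ and $d\delta_c$. This is the same parity bookkeeping you anticipate, just phrased more compactly. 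Part (4) is identical to your argument.
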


\begin{proof}
$ $
\begin{enumerate}
\item Because $D_{N}^{m}\left(\mathcal{P}^{N\perp}\mathscr{D}_{N}\Omega\right)=\mathcal{P}^{N\perp}\mathscr{D}_{N}\Omega$
is dense in $\mathcal{P}^{N\perp}L^{p}\Omega$.
\item We only need $W^{m,p}\left(D_{N}\right)=\mathcal{P}^{N\perp}W^{m,p}\left(\widetilde{D_{N}}\right)\leq W^{m,p}\text{-}\mathrm{cl}\left(\mathcal{P}^{N\perp}\mathscr{D}_{N}\Omega\right)$.
Let $\alpha\in\mathcal{P}^{N\perp}W^{m,p}\left(\widetilde{D_{N}}\right)$
and $\alpha^{\varepsilon}=e^{\varepsilon\widetilde{\Delta_{N}}}\alpha$
as usual. Then $D_{N}^{m}\left(\alpha^{\varepsilon}\right)=\left(D_{N}^{m}\alpha\right)^{\varepsilon}\xrightarrow[\varepsilon\downarrow0]{L^{p}}D_{N}^{m}\alpha$.
So $D_{N}^{-m}D_{N}^{m}\left(\alpha^{\varepsilon}\right)=\alpha^{\varepsilon}\xrightarrow[\varepsilon\downarrow0]{W^{m,p}}\alpha$
by \Eqref{poincare_split_dirac}. 
\item Let $D_{N}^{m+1}\mathcal{P}^{N\perp}\beta\in L^{p}$. Then $D_{N}^{m}\mathcal{P}^{N\perp}\beta\in\mathcal{P}^{N\perp}W^{1,p}\Omega_{N}$
by \Eqref{poincare_split_dirac}.\\
\\
When $m=2k$ $\left(k\in\mathbb{Z}\right)$: $\left\Vert dD_{N}^{2k}\mathcal{P}^{N\perp}\beta\right\Vert _{L^{p}}+\left\Vert \delta_{c}D_{N}^{2k}\mathcal{P}^{N\perp}\beta\right\Vert _{L^{p}}\sim\left\Vert dD_{N}^{2k}\mathcal{P}^{N\perp}\beta+\delta_{c}D_{N}^{2k}\mathcal{P}^{N\perp}\beta\right\Vert _{L^{p}}=\left\Vert D_{N}^{2k+1}\mathcal{P}^{N\perp}\beta\right\Vert _{L^{p}}$.\\
\\
When $m=2k+1$ $\left(k\in\mathbb{Z}\right)$: $D_{N}^{2k}\mathcal{P}^{N\perp}\beta\in\mathcal{P}^{N\perp}W^{2,p}\Omega_{\hom N}$
and 
\begin{align*}
 & \left\Vert D_{N}dD_{N}^{2k}\mathcal{P}^{N\perp}\beta\right\Vert _{L^{p}}+\left\Vert D_{N}\delta_{c}D_{N}^{2k}\mathcal{P}^{N\perp}\beta\right\Vert _{L^{p}}=\left\Vert \delta_{c}dD_{N}^{2k}\mathcal{P}^{N\perp}\beta\right\Vert _{L^{p}}+\left\Vert d\delta_{c}D_{N}^{2k}\mathcal{P}^{N\perp}\beta\right\Vert _{L^{p}}\\
\sim & \left\Vert \delta_{c}dD_{N}^{2k}\mathcal{P}^{N\perp}\beta+d\delta_{c}D_{N}^{2k}\mathcal{P}^{N\perp}\beta\right\Vert _{L^{p}}=\left\Vert D_{N}^{2k+2}\mathcal{P}^{N\perp}\beta\right\Vert _{L^{p}}
\end{align*}
\item Simply observe that $\left(W^{m,p}\left(D_{N}\right)\right)^{*}=W^{-m,p'}\left(D_{N}\right)$
via the isomorphisms $W^{m,p}\left(D_{N}\right)\stackrel{D_{N}^{m}}{\iso}\mathcal{P}^{N\perp}L^{p}\Omega$
and $W^{-m,p'}\left(D_{N}\right)\stackrel{D_{N}^{-m}}{\iso}\mathcal{P}^{N\perp}L^{p'}\Omega$.
\end{enumerate}
\end{proof}
\begin{rem*}
We briefly note that $D_{N}$ with the domain $\mathcal{P}^{N\perp}H^{1}\Omega_{N}$
is self-adjoint on $\mathcal{P}^{N\perp}L^{2}\Omega$ and its complexification
is therefore ``bisectorial''. For more on this, see \parencite{Alan_Hinf_calculus_86,mcintosh2010operator,mcintosh_hal_Hodge_dirac}.
\end{rem*}
\begin{cor}
\label{cor:div_negative_est} Assume $U\in\mathbb{P}L^{2}\mathfrak{X}$.
Define $\mathrm{div}(U\otimes U)\in\mathscr{D}'_{N}\mathfrak{X}$
by $\left\langle \left\langle \mathrm{div}(U\otimes U),X\right\rangle \right\rangle _{\Lambda}:=-\left\langle \left\langle U\otimes U,\nabla X\right\rangle \right\rangle $
$\forall X\in\mathscr{D}_{N}\mathfrak{X}$.

If $p\in\left(1,\infty\right)$ and $U\otimes U\in L^{p}\Gamma\left(TM\otimes TM\right)$,
then $\left\Vert \mathrm{div}(U\otimes U)^{\flat}\right\Vert _{W^{-1,p}\left(\widetilde{D_{N}}\right)}\lesssim\left\Vert U\otimes U\right\Vert _{L^{p}}$.
\end{cor}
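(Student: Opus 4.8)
The plan is to exploit the negative-order Hodge–Sobolev machinery just developed: since $W^{-1,p}(\widetilde{D_N})=\bigl(W^{1,p'}(\widetilde{D_N})\bigr)^{*}=\bigl(W^{1,p'}\Omega\bigr)^{*}$ (using \Thmref{Hodge-Sobolev-basic-properties}(2) and (4), with $W^{1,p'}(\widetilde{D_N})$ having norm equivalent to $\left\Vert\cdot\right\Vert_{W^{1,p'}}$), it suffices to test $\mathrm{div}(U\otimes U)^{\flat}$ against elements of a dense subspace of $W^{1,p'}\Omega$ and bound the pairing by $\left\Vert U\otimes U\right\Vert_{L^p}\left\Vert\cdot\right\Vert_{W^{1,p'}}$. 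The natural dense subspace is $\mathscr{D}_N\mathfrak{X}$, which is $W^{1,p'}$-dense in $W^{1,p'}\Omega_N$ by \Corref{crucial_boundary_approx_homN}, and then I also need to handle the full $W^{1,p'}\Omega$ (not just $\Omega_N$); but since $\mathrm{div}(U\otimes U)$ is a \emph{heatable current}, i.e. only pairs with $\mathscr{D}_N\mathfrak{X}$ in the first place, and $W^{-1,p}(\widetilde{D_N})$ is by definition a subspace of $\mathcal{P}^{N\perp}\mathscr{D}'_N\Omega\oplus\mathcal{H}_N$, the pairing in \Thmref{Hodge-Sobolev-basic-properties}(4) already routes everything through $\mathscr{D}_N$ — so testing on $\mathscr{D}_N\mathfrak{X}$ is exactly right.

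Concretely, first I would observe that for $X\in\mathscr{D}_N\mathfrak{X}$ we have, by definition,
\[
\left|\left\langle\left\langle\mathrm{div}(U\otimes U),X\right\rangle\right\rangle_{\Lambda}\right|=\left|\left\langle\left\langle U\otimes U,\nabla X\right\rangle\right\rangle\right|\leq\left\Vert U\otimes U\right\Vert_{L^p}\left\Vert\nabla X\right\Vert_{L^{p'}}\lesssim\left\Vert U\otimes U\right\Vert_{L^p}\left\Vert X\right\Vert_{W^{1,p'}},
\]
by Hölder (valid since $U\otimes U\in L^p$ and $\nabla X\in L^{p'}$ as $X$ is smooth). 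Next I would reconcile the two descriptions of the functional: on the one hand $\mathrm{div}(U\otimes U)^{\flat}\in\mathscr{D}'_N\mathfrak{X}$; on the other, the estimate above says it extends to a bounded functional on $(\mathscr{D}_N\mathfrak{X},\left\Vert\cdot\right\Vert_{W^{1,p'}})$, hence by density on $W^{1,p'}\text{-}\mathrm{cl}(\mathscr{D}_N\mathfrak{X})=W^{1,p'}\Omega_N=W^{1,p'}(\widetilde{D_N})$ (as sets, up to equivalent norms). By the duality in \Thmref{Hodge-Sobolev-basic-properties}(4), any such bounded functional is represented by a unique element of $W^{-1,p}(\widetilde{D_N})$ whose norm is comparable to the operator norm, which we just bounded by $\left\Vert U\otimes U\right\Vert_{L^p}$. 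The final bookkeeping step is to check that this representative coincides with $\mathrm{div}(U\otimes U)^{\flat}$ as a heatable current — which is immediate since both agree on the dense set $\mathscr{D}_N\mathfrak{X}$ and $\mathscr{D}_N\mathfrak{X}\hookrightarrow W^{-1,p}(\widetilde{D_N})^{*}$-weakly separates points.

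I would also want to note the compatibility point: the $\mathrm{div}(U\otimes U)$ here is the \emph{same} object as the $\left(\Div(U\otimes V)\right)^{\flat}$ with $U=V\in\mathbb{P}L^2\mathfrak{X}$ from \Subsecref{heating_nonlinear}, where no boundary term appears precisely because $\left\langle\nu,U\right\rangle=0$ in the trace sense (as $U\in\mathbb{P}L^2\mathfrak{X}\leq L^2\text{-}\mathrm{cl}(\mathfrak{X}_N)$, though the trace is not literally defined, the defining formula drops the boundary integral). The main obstacle — really the only subtlety — is the interplay between "heatable current" bookkeeping and the Banach-space duality: one must be careful that $W^{-1,p}(\widetilde{D_N})$, defined via $D_N^{-1}(\mathcal{P}^{N\perp}L^p\Omega)\oplus\mathcal{H}_N$, genuinely contains $\mathrm{div}(U\otimes U)^{\flat}$ and that the abstract isomorphism $W^{-1,p}(\widetilde{D_N})\cong(W^{1,p'}\Omega)^{*}$ sends it to the functional $X\mapsto-\left\langle\left\langle U\otimes U,\nabla X\right\rangle\right\rangle$ — but this is exactly what the mollification argument $\alpha^{\varepsilon}\to\alpha$ in the proof of \Thmref{Hodge-Sobolev-basic-properties}(2) delivers, applied to $\alpha=\mathcal{P}^{N\perp}\mathrm{div}(U\otimes U)^{\flat}$, so it reduces to a one-line invocation once the densities are lined up.
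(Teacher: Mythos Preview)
Your proposal is correct and rests on the same core estimate as the paper: Hölder gives $|\langle\langle\mathrm{div}(U\otimes U),X\rangle\rangle_\Lambda|\lesssim\|U\otimes U\|_{L^p}\|X\|_{W^{1,p'}}$ for $X\in\mathscr{D}_N\mathfrak{X}$, and the identification $W^{1,p'}(\widetilde{D_N})\sim W^{1,p'}\Omega_N$ does the rest. The only difference is packaging: rather than invoking the abstract duality of \Thmref{Hodge-Sobolev-basic-properties}(4) and then checking that the dual representative agrees with the heatable current, the paper works directly from the definition of the $W^{-1,p}(\widetilde{D_N})$-norm by testing $D_N^{-1}\mathcal{P}^{N\perp}\mathrm{div}(U\otimes U)^\flat$ against $\phi\in\mathscr{D}_N\Omega$ (using $\|(D_N^{-1}\mathcal{P}^{N\perp}\phi)_1\|_{W^{1,p'}}\lesssim\|\phi\|_{L^{p'}}$ from Poincar\'e) to obtain $L^p$-membership, which \emph{is} the membership criterion---so your bookkeeping step is absorbed into the definition. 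One minor slip in your write-up: $W^{1,p'}(\widetilde{D_N})$ is $W^{1,p'}\Omega_N$, not $W^{1,p'}\Omega$, so there is no ``full $W^{1,p'}\Omega$'' to worry about.
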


\begin{proof}
For $\eta\in\Omega\left(M\right),$ write $\eta_{k}$ for the part
of $\eta$ in $\Omega^{k}$. Let $\phi\in\mathscr{D}_{N}\Omega$,
then
\[
\left|\left\langle \left\langle D_{N}^{-1}\mathcal{P}^{N\perp}\mathrm{div}(U\otimes U)^{\flat},\phi\right\rangle \right\rangle _{\Lambda}\right|=\left|\left\langle \left\langle U\otimes U,\nabla\left(D_{N}^{-1}\mathcal{P}^{N\perp}\phi\right)_{1}^{\sharp}\right\rangle \right\rangle \right|\lesssim\text{\ensuremath{\left\Vert U\otimes U\right\Vert _{L^{p}}}}\left\Vert \phi\right\Vert _{L^{p'}}
\]
This implies $\mathrm{div}(U\otimes U)^{\flat}\in W^{-1,p}\left(\widetilde{D_{N}}\right)$.
Then observe $\left|\left\langle \left\langle \mathrm{div}(U\otimes U)^{\flat},\phi\right\rangle \right\rangle _{\Lambda}\right|=\left|\left\langle \left\langle U\otimes U,\nabla\left(\phi\right)_{1}^{\sharp}\right\rangle \right\rangle \right|\lesssim\text{\ensuremath{\left\Vert U\otimes U\right\Vert _{L^{p}}}}\left\Vert \phi\right\Vert _{W^{1,p'}\left(\widetilde{D_{N}}\right)}$.
\end{proof}

\subsection{Calculating the pressure\label{subsec:Calc_pressure}}

In this subsection, we assume that $\partial_{t}\mathcal{V}+\mathrm{div}(\mathcal{V}\otimes\mathcal{V})+\grad\mathfrak{p}\stackrel{\mathscr{D}_{N}^{'}\left(I,\mathfrak{X}\right)}{=\joinrel=\joinrel}0$,
$\mathcal{V}\in L_{\mathrm{loc}}^{2}\left(I,\mathbb{P}L^{2}\mathfrak{X}\right)$,
$\mathfrak{p}\in L_{\mathrm{loc}}^{1}(I\times M)$. This is true,
for instance, in the case of Onsager's conjecture (see \Subsecref{Justification}
and \Subsecref{heating_nonlinear}).

We first note that $\mathcal{H}_{N}^{0}=\mathcal{H}^{0}=\{\text{locally constant functions}\}$.
Then we can show $\mathcal{V}$ uniquely determines $\mathfrak{p}$
by a formula, up to a difference in $\mathcal{H}_{N}^{0}$ ($d\mathfrak{p}$
is always unique). It is no loss of generality to \uline{set \mbox{$\mathfrak{p}=\mathcal{P}^{N\perp}\mathfrak{p}$}}
(implying $\int_{M}\mathfrak{p}=0$).
\begin{enumerate}
\item Assume $\mathcal{V}\otimes\mathcal{V}\in L_{t}^{q}W^{m+1,p}\Gamma\left(TM\otimes TM\right)$
for some $m\in\mathbb{N}_{0},p\in\left(1,\infty\right),q\in[1,\infty]$.

Let $\omega=\mathrm{div}(\mathcal{V}\otimes\mathcal{V})^{\flat}$.
Then $d^{\mathscr{D}'_{N}}\mathfrak{p}\stackrel{\mathscr{D}_{N}^{'}\left(I,\mathfrak{X}\right)}{=\joinrel=\joinrel}\left(\mathbb{P}-1\right)\omega\in L_{t}^{q}W^{m,p}\Omega^{1}$.
By the Poincare inequality (\Corref{poincare_ineq_Neumann}), there
is a unique $\mathfrak{f}\in L_{t}^{q}\mathcal{P}^{N\perp}W^{m+1,p}\Omega^{0}$
such that $d\mathfrak{f}=\left(\mathbb{P}-1\right)\omega\stackrel{\mathscr{D}_{N}^{'}\left(I,\mathfrak{X}\right)}{=\joinrel=\joinrel}d^{\mathscr{D}'_{N}}\mathfrak{p}$.
An explicit formula is $\mathfrak{f}=-R_{d}\omega$ where $R_{d}:=\mathcal{P}^{N\perp}\delta\left(-\Delta_{D}\right)^{-1}\mathcal{P}^{D\perp}+\mathcal{P}^{N\perp}\delta\left(-\Delta_{N}\right)^{-1}\mathcal{P}_{3}^{\mathrm{ex}}$
is the potential for $d$.

We aim to show $\mathfrak{f}=\mathfrak{p}$. Let $\psi\in C_{c}^{\infty}\left(I,\mathscr{D}_{N}\Omega^{0}\right)$.
Then because $\Omega^{0}=\mathcal{P}_{2}\left(\Omega^{0}\right)\oplus\mathcal{P}_{3}^{N}\left(\Omega^{0}\right),$
we conclude $\mathcal{P}^{N\perp}\psi=\delta_{c}\phi$ where $\phi:=d\left(-\Delta_{N}\right)^{-1}\mathcal{P}^{N\perp}\psi\in C_{c}^{\infty}\left(I,\mathscr{D}_{N}\Omega^{1}\right)$
and
\[
\int_{I}\left\langle \left\langle \mathfrak{f},\psi\right\rangle \right\rangle _{\Lambda}=\int_{I}\left\langle \left\langle \mathfrak{f},\mathcal{P}^{N\perp}\psi\right\rangle \right\rangle _{\Lambda}=\int_{I}\left\langle \left\langle \mathfrak{f},\delta_{c}\phi\right\rangle \right\rangle _{\Lambda}=\int_{I}\left\langle \left\langle d\mathfrak{f},\phi\right\rangle \right\rangle _{\Lambda}=\int_{I}\left\langle \left\langle d^{\mathscr{D}'_{N}}\mathfrak{p},\phi\right\rangle \right\rangle _{\Lambda}=\int_{I}\left\langle \left\langle \mathfrak{p},\psi\right\rangle \right\rangle _{\Lambda}
\]
Therefore $\mathfrak{p}=\mathfrak{f}$ and $\boxed{\left\Vert \mathfrak{p}\right\Vert _{L_{t}^{q}W^{m+1,p}}\lesssim\left\Vert \omega\right\Vert _{L_{t}^{q}W^{m,p}}\lesssim\left\Vert \mathcal{V}\otimes\mathcal{V}\right\Vert _{L_{t}^{q}W^{m+1,p}}}$.
\item Assume $\mathcal{V}\otimes\mathcal{V}\in L_{t}^{q}L^{p}\Gamma\left(TM\otimes TM\right)$
for some $p\in\left(1,\infty\right),q\in[1,\infty]$.

Let $\omega=\mathrm{div}(\mathcal{V}\otimes\mathcal{V})^{\flat}$.
Then $d^{\mathscr{D}'_{N}}\mathfrak{p}\stackrel{\mathscr{D}_{N}^{'}\left(I,\mathfrak{X}\right)}{=\joinrel=\joinrel}\left(\mathbb{P}-1\right)\omega\in L_{t}^{q}W^{-1,p}\left(\widetilde{D_{N}}\right)$
by \Corref{div_negative_est} and \Thmref{Hodge-Sobolev-basic-properties}.
Then $-\delta_{c}^{\mathscr{D}'_{N}}d^{\mathscr{D}'_{N}}\mathfrak{p}\stackrel{\mathscr{D}_{N}^{'}\left(I,\mathfrak{X}\right)}{=\joinrel=\joinrel}\delta_{c}^{\mathscr{D}'_{N}}\left(1-\mathbb{P}\right)\omega=\delta_{c}^{\mathscr{D}'_{N}}\omega\in L_{t}^{q}W^{-2,p}\left(\widetilde{D_{N}}\right)$
and $\mathfrak{p}=-D_{N}^{-2}\delta_{c}^{\mathscr{D}'_{N}}\omega$,
so $\left\Vert \mathfrak{p}\right\Vert _{L_{t}^{q}L^{p}}\lesssim\left\Vert \delta_{c}^{\mathscr{D}'_{N}}\omega\right\Vert _{L_{t}^{q}W^{-2,p}\left(\widetilde{D_{N}}\right)}\lesssim\left\Vert \omega\right\Vert _{L_{t}^{q}W^{-1,p}\left(\widetilde{D_{N}}\right)}\lesssim\left\Vert \mathcal{V}\otimes\mathcal{V}\right\Vert _{L_{t}^{q}L^{p}}$.

\end{enumerate}
\begin{rem*}
It is also possible to define $R_{\delta_{c}}:=d^{\mathscr{D}'_{N}}\left(-\Delta_{N}^{\mathscr{D}'_{N}}\right)^{-1}\mathcal{P}^{N\perp}$
on $\mathscr{D}'_{N}\Omega$ and have $R_{d}=\left(D_{N}^{-1}-R_{\delta_{c}}\right)\mathcal{P}^{N\perp}$
on $\mathscr{D}'_{N}\Omega$. This would then imply $\left\Vert R_{d}\alpha\right\Vert _{W^{m+1,p}\left(\widetilde{D_{N}}\right)}\lesssim\left\Vert \alpha\right\Vert _{W^{m,p}\left(\widetilde{D_{N}}\right)}$
$\forall\alpha\in W^{m,p}\left(\widetilde{D_{N}}\right),\forall m\in\mathbb{Z},\forall p\in\left(1,\infty\right)$.
\end{rem*}

\subsection{On an interpolation identity}

\label{subsec:Interpolation-and-B-analyticity}Let $p\in\left(1,\infty\right)$.
We are faced with the difficulty of finding a good interpolation characterization
for $B_{p,1}^{\frac{1}{p}}\Omega_{N}$. We do have $B_{p,1}^{\frac{1}{p}}\Omega=\left(L^{p}\Omega,W^{1,p}\Omega\right)_{\frac{1}{p},1}$
(complexification, then projection onto the real part), but our heat
flow is not analytic on $\mathbb{C}W^{1,p}\Omega$. The hope is that
$B_{p,1}^{\frac{1}{p}}\Omega_{N}=\left(L^{p}\Omega,W^{1,p}\Omega_{N}\right)_{\frac{1}{p},1}$,
and our first guess is to try to find some kind of projection. Indeed,
the Leray projection yields
\begin{equation}
\mathbb{P}B_{p,1}^{\frac{1}{p}}\Omega=\left(\mathbb{P}L^{p}\Omega,\mathbb{P}W^{1,p}\Omega\right)_{\frac{1}{p},1}\label{eq:interpolation_difficult_P}
\end{equation}
and the heat flow is well-behaved on $\mathbb{P}W^{1,p}\Omega=\mathbb{P}W^{1,p}\Omega_{N}$
(\Thmref{Friedrichs_Decomposition}, \Thmref{compat_Helmholtz_leray}).
By interpolation, $\mathbb{P}$ is $B_{p,1}^{\frac{1}{p}}$-continuous,
so $\mathbf{n}\mathbb{P}:B_{p,1}^{\frac{1}{p}}\Omega\to L^{p}\restr{\Omega}{\partial M}$
is continuous and $\mathbb{P}B_{p,1}^{\frac{1}{p}}\Omega=\mathbb{P}B_{p,1}^{\frac{1}{p}}\Omega_{N}$.

This is enough to get all the Besov estimates we will need for Onsager's
conjecture.

Additionally, it is true that the heat semigroup is also $C_{0}$
and analytic on $\mathbb{C}\mathbb{P}B_{p,1}^{\frac{1}{p}}\Omega_{N}$
by Yosida's half-plane criterion (\Thmref{yosida_half_plane}). Unlike
the $L^{p}$-analyticity case, here we already have analyticity on
the 2 endpoints, so the criterion simply follows by interpolation.
Alternatively, observe that there exists $C>0$ such that $\sup_{t>0}\left\Vert t\left(\widetilde{\Delta_{N}^{\mathbb{C}}}-C\right)e^{t\left(\widetilde{\Delta_{N}^{\mathbb{C}}}-C\right)}\right\Vert _{\mathcal{L}\left(V\right)}<\infty$
for $V\in\{\mathbb{C}\mathbb{P}L^{p}\Omega,\mathbb{C}\mathbb{P}W^{1,p}\Omega_{N}\}$.
Therefore it also holds for $V=\mathbb{C}\mathbb{P}B_{p,1}^{\frac{1}{p}}\Omega_{N}$
by interpolation, and that is another criterion for analyticity (\parencite[Section II, Theorem 4.6.c]{engel2000one-parameter}).

Unfortunately, this does not tell us about the relationship between
$\left(L^{p}\Omega,W^{1,p}\Omega_{N}\right)_{\frac{1}{p},1}$ and
$B_{p,1}^{\frac{1}{p}}\Omega_{N}$. Obviously $\left(L^{p}\Omega,W^{1,p}\Omega_{N}\right)_{\frac{1}{p},1}\hookrightarrow B_{p,1}^{\frac{1}{p}}\Omega_{N}$
by the density of $W^{1,p}\Omega_{N}$. The other direction is more
delicate. Interpolation involving boundary conditions is often nontrivial.
The reader can see \parencite{Guidetti1991_Interpolation_boundary,Lofstrm1992_interpol_neumann,Amann2019_vol2}
to get an idea of the challenges involved, especially at the critical
regularity levels $\mathbb{N}+\frac{1}{p}$.

Nevertheless, there are a few interesting things we can say about
these spaces.
\begin{defn}[Neumann condition on strip]
 For vector field $X$ and $r>0$ small$,$ with $\psi_{r}$ as in
\Eqref{cutoff}, define 
\[
\mathbf{n}_{r}X=\psi_{r}\left\langle X,\widetilde{\nu}\right\rangle \widetilde{\nu}\text{ and }\mathbf{t}_{r}X=X-\mathbf{n}_{r}X
\]
Then define $\mathfrak{X}_{N,r}=\{X\in\mathfrak{X}:\left\langle X,\widetilde{\nu}\right\rangle =0\text{ on }M_{<r}\}$.
Similarly we can define $W^{m,p}\mathfrak{X}_{N,r}$ and $B_{p,q}^{s}\mathfrak{X}_{N,r}$
by setting $\left\Vert \left\langle X,\widetilde{\nu}\right\rangle \right\Vert _{L^{1}\left(M_{<r}\right)}=0$.
We note that $L^{3}\mathfrak{X}_{N,r}$ makes sense since the definition
does not require the trace theorem, unlike $L^{3}\mathfrak{X}_{N}$
which is ill-defined.
\end{defn}

Some basic facts:
\begin{enumerate}
\item $\mathbf{t}_{r}\mathfrak{X}\leq\mathfrak{X}_{N,\frac{r}{2}}$
\item $\mathbf{t}_{r}=1$ and $\mathbf{n}_{r}=0$ on $\mathfrak{X}_{N,r}$
\item $\mathbf{t}_{\frac{r}{2}}\mathbf{t}_{r}=\mathbf{t}_{r}$
\item $\left\Vert \mathbf{t}_{r}X\right\Vert _{W^{m,p}}\lesssim_{r,m,p}\left\Vert X\right\Vert _{W^{m,p}}$
for $m\in\mathbb{N}_{0},p\in[1,\infty]$
\item $W^{m,p}\mathfrak{X}_{N,r}$ and $B_{p,q}^{s}\mathfrak{X}_{N,r}$
are Banach for $m\in\mathbb{N}_{0},p\in[1,\infty],s\geq0,q\in[1,\infty]$
\item $B_{p,q}^{m_{\theta}}\mathfrak{X}_{N,r}\xhookrightarrow{\mathbf{t}_{r}=1}\left(W^{m_{0},p}\mathfrak{X}_{N,\frac{r}{2}},W^{m_{1},p}\mathfrak{X}_{N,\frac{r}{2}}\right)_{\theta,q}\hookrightarrow B_{p,q}^{m_{\theta}}\mathfrak{X}_{N,\frac{r}{2}}$
for $\theta\in\left(0,1\right),m_{j}\in\mathbb{N}_{0},m_{0}\neq m_{1},p\in[1,\infty],q\in[1,\infty]$,
$m_{\theta}=\left(1-\theta\right)m_{0}+\theta m_{1}$.
\end{enumerate}
\begin{rem*}
The last assertion is proven by the definition of the $J$-method,
and it works like partial interpolation. The reader can notice the
similarity with the Littlewood-Paley projection $(P_{\leq N}P_{\leq\frac{N}{2}}=P_{\leq\frac{N}{2}})$.
The hope is that $\mathbf{t}_{r}X\xrightarrow{t\downarrow0}X$ in
a good way for $X\in\mathfrak{X}_{N}$.
\end{rem*}
A subtle issue is that for $X\in B_{p,q}^{m_{\theta}}\mathfrak{X}_{N,r}$,
$\left\Vert X\right\Vert _{\left(W^{m_{0},p}\mathfrak{X}_{N,\frac{r}{2}},W^{m_{1},p}\mathfrak{X}_{N,\frac{r}{2}}\right)_{\theta,q}}\lesssim_{r}\left\Vert X\right\Vert _{B_{p,q}^{m_{\theta}}\mathfrak{X}_{N,r}}$.
The implicit constant which depends on $r$ can blow up as $r\downarrow0$.

Define $B_{p,q}^{s}\mathfrak{X}_{N,0+}=B_{p,q}^{s}\text{-}\mathrm{cl}\left(\cup_{r>0\text{ small}}B_{p,q}^{s}\mathfrak{X}_{N,r}\right)$
and $W^{m,p}\mathfrak{X}_{N,0+}=W^{m,p}\text{-}\mathrm{cl}\left(\cup_{r>0\text{ small}}W^{m,p}\mathfrak{X}_{N,r}\right)$.

Then we recover the usual spaces by results from \Subsecref{Strip-decay}:
\begin{thm}
\label{thm:B1ppN0_is_B1ppN} Let $p\in(1,\infty)$:
\begin{enumerate}
\item $L^{p}\mathfrak{X}_{N,0+}=L^{p}\mathfrak{X},$ $W^{1,p}\mathfrak{X}_{N,0+}=W^{1,p}\mathfrak{X}_{N}$.
\item $B_{p,1}^{\frac{1}{p}}\mathfrak{X}_{N,0+}=B_{p,1}^{\frac{1}{p}}\mathfrak{X}_{N}$.
\end{enumerate}
\end{thm}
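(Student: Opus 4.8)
\textbf{Proof plan for Theorem \ref{thm:B1ppN0_is_B1ppN}.}

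The plan is to prove the two statements in order, using the strip-decay machinery of \Subsecref{Strip-decay} together with the product estimate \Thmref{product_estimate}. For part (1), first note that $\mathbf{t}_r\mathfrak{X}\leq\mathfrak{X}_{N,\frac{r}{2}}$ and $\mathbf{t}_r X=X-\psi_r\langle X,\widetilde{\nu}\rangle\widetilde{\nu}$, so for any smooth $X$ we have $X-\mathbf{t}_r X=\psi_r\langle X,\widetilde{\nu}\rangle\widetilde{\nu}$ supported in $M_{<r}$. In the $L^p$ case this immediately gives $\|X-\mathbf{t}_r X\|_{L^p}\lesssim\|X\|_{L^p(M_{<r})}\xrightarrow{r\downarrow0}0$ by dominated convergence, and since smooth vector fields are dense in $L^p\mathfrak{X}$ and $\|\mathbf{t}_r\|_{\mathcal{L}(L^p\mathfrak{X})}\lesssim_{r,p}1$ is not uniform — but here we only need convergence on a dense set plus the fact that each $\mathbf{t}_r X$ lies in $L^p\mathfrak{X}_{N,\frac{r}{2}}$ — we conclude $L^p\mathfrak{X}_{N,0+}=L^p\mathfrak{X}$. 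For the $W^{1,p}$ case, one inclusion $W^{1,p}\mathfrak{X}_{N,0+}\hookrightarrow W^{1,p}\mathfrak{X}_N$ follows because $W^{1,p}\mathfrak{X}_{N,r}\leq W^{1,p}\mathfrak{X}_N$ (the Neumann trace vanishes, being identically zero near $\partial M$) and $W^{1,p}\mathfrak{X}_N$ is closed. For the reverse inclusion, take $X\in W^{1,p}\mathfrak{X}_N$; by \Corref{crucial_boundary_approx_homN} we may assume $X$ is smooth with $\mathbf{n}X=0$, and then estimate $\|X-\mathbf{t}_r X\|_{W^{1,p}}=\|\psi_r\langle X,\widetilde{\nu}\rangle\widetilde{\nu}\|_{W^{1,p}}$. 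The function $\langle X,\widetilde{\nu}\rangle$ vanishes on $\partial M$, so by the coarea-type bound in \Thmref{coarea}(3), $\|\langle X,\widetilde{\nu}\rangle\|_{L^p(M_{<r})}\lesssim r\|X\|_{W^{1,p}(M_{<r})}+r^{1/p}\|\langle X,\nu\rangle\|_{L^p(\partial M)}=r\|X\|_{W^{1,p}(M_{<r})}$; combined with $\|\nabla\psi_r\|_\infty\lesssim\frac{1}{r}$ this controls the $W^{1,p}$ norm of the correction by something vanishing as $r\downarrow0$, giving $X\in W^{1,p}\mathfrak{X}_{N,0+}$.

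For part (2), the inclusion $B_{p,1}^{1/p}\mathfrak{X}_{N,0+}\hookrightarrow B_{p,1}^{1/p}\mathfrak{X}_N$ follows from $B_{p,1}^{1/p}\mathfrak{X}_{N,r}\leq B_{p,1}^{1/p}\mathfrak{X}_N$ (the trace theorem \Factref{fact_trace} gives a continuous trace to $L^p\restr{\mathfrak{X}M}{\partial M}$, and on $\mathfrak{X}_{N,r}$ the relevant component is zero near the boundary) together with closedness of $B_{p,1}^{1/p}\mathfrak{X}_N$. The substantial direction is $B_{p,1}^{1/p}\mathfrak{X}_N\hookrightarrow B_{p,1}^{1/p}\mathfrak{X}_{N,0+}$: given $X\in B_{p,1}^{1/p}\mathfrak{X}_N$, approximate in $B_{p,1}^{1/p}$ by smooth $X_j\in\mathfrak{X}_N$ (possible by \Factref{fact_coarea}/density and since $\mathfrak{X}_N$-members can be approximated as in \Corref{crucial_boundary_approx_homN} after trading $B_{p,1}^{1/p}$ for $W^{1,p}$-density, or more directly using that $B_{p,1}^{1/p}\mathfrak{X}_N=B_{p,1}^{1/p}\text{-}\mathrm{cl}$ of smooth Neumann fields). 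It then suffices to show that for smooth $X$ with $\mathbf{n}X=0$ we have $\mathbf{t}_r X\xrightarrow{r\downarrow0}X$ in $B_{p,1}^{1/p}$, i.e. $\|\psi_r\langle X,\widetilde{\nu}\rangle\widetilde{\nu}\|_{B_{p,1}^{1/p}}\xrightarrow{r\downarrow0}0$. Apply \Thmref{product_estimate} with $f_r=\psi_r$ (smooth, supported in $M_{<r}$, $\|\psi_r\|_\infty\leq1$) and $g=\langle X,\widetilde{\nu}\rangle$, obtaining
\[
\|\psi_r\,g\|_{B_{p,1}^{1/p}}\lesssim\|\psi_r\|_{B_{\infty,1}^{1/p}}\|g\|_{L^p(M_{<4r})}+\|\psi_r\|_{L^\infty(M_{<r})}\|g\|_{B_{p,1}^{1/p}(M_{<r})}.
\]
For the second term, $\|g\|_{B_{p,1}^{1/p}(M_{<r})}\to0$ since $g\in B_{p,1}^{1/p}$ and the support shrinks (DCT on the BMD characterization, or since $g$ is smooth it is controlled by $\|g\|_{W^{1,p}(M_{<r})}+\|g\|_{L^p(M_{<r})}\to0$). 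For the first term, $\|\psi_r\|_{B_{\infty,1}^{1/p}}$ is bounded uniformly in $r$ — this is the one genuinely delicate point — because $\psi_r=\Psi_r(\mathrm{dist}(\cdot,\partial M))$ with $\|\Psi_r'\|_\infty\lesssim\frac1r$ and support of width $\lesssim r$, so a direct BMD computation (exactly as in the proof of \Thmref{product_estimate}, where $t^{-n}\|\Delta_h\psi_r(x)\|_{L_h^1(V(x,t))}\lesssim\min(1,\frac{r}{t})$ after suitable truncation) yields $\|\psi_r\|_{B_{\infty,1}^{1/p}}\lesssim_{\neg r}1$; meanwhile $\|g\|_{L^p(M_{<4r})}\lesssim r\|X\|_{W^{1,p}}\to0$ as in part (1) since $g$ vanishes on $\partial M$. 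Hence the whole expression tends to $0$, proving $X\in B_{p,1}^{1/p}\mathfrak{X}_{N,0+}$.

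The main obstacle I expect is the uniform-in-$r$ bound $\|\psi_r\|_{B_{\infty,1}^{1/p}}\lesssim_{\neg r}1$: the $B_{\infty,1}^{1/p}$ norm is scale-critical for the family $\psi_r$ (a bump of height $1$, width $r$, slope $1/r$), so naive scaling gives a bound of order $r^{1/p}\cdot r^{-1/p}=1$ only if the $\ell^1$-summation over dyadic scales is handled carefully — one must exploit that $\psi_r$ is identically $0$ or $1$ outside the collar $M_{[\frac{r}{2},\frac{3r}{4}]}$ so that differences $\Delta_h\psi_r$ are nonzero only for $|h|\gtrsim$ (distance to the collar) on a set of the right measure, killing the high-frequency tail. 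This is precisely the geometric estimate already carried out inside the proof of \Thmref{product_estimate}, so the cleanest route is to factor that lemma's internal bound out as the needed ingredient rather than re-deriving it. Everything else is routine density and dominated-convergence bookkeeping.
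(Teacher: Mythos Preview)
Part (1) is fine and matches the paper. For part (2) there are two genuine gaps. First, the claim $\|\psi_r\|_{B_{\infty,1}^{1/p}}\lesssim_{\neg r}1$ is false: one has $\|\psi_r\|_{B_{\infty,1}^{1/p}}\sim r^{-1/p}$. Your scaling heuristic misfires because $B_{\infty,q}^{s}$ carries no $L^p$-volume factor to supply a compensating positive power of $r$ (indeed $\|\psi_r\|_{(L^\infty,W^{1,\infty})_{1/p,q}}\lesssim 1^{1/p'}(1/r)^{1/p}$ for every $q$), and the geometric estimate inside the proof of \Thmref{product_estimate} that you invoke only controls $\dashint|\Delta_h\psi_r(x)|$ for $x\in M_{>4r}$, whereas the $L_x^\infty$-sup in the $B_{\infty,1}^{1/p}$ norm is attained at $x$ near $\partial M$, where for $t\gtrsim r$ one has $\dashint_{|h|<t}|\Delta_h\psi_r(x)|\sim 1$ and hence $\int_r^1 t^{-1/p-1}\,dt\sim r^{-1/p}$. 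Second, you have misquoted \Thmref{product_estimate}: the second term is $\|f_r\|_{L^\infty}\|g\|_{B_{p,1}^{1/p}(M)}$ with the \emph{global} Besov norm, not $\|g\|_{B_{p,1}^{1/p}(M_{<r})}$; this does not vanish as $r\downarrow0$, so the product estimate alone cannot yield $\|\mathbf{n}_r X_j\|_{B_{p,1}^{1/p}}\to0$ even for smooth Neumann $X_j$. (There is also a circularity issue: the density of $\mathfrak{X}_N$ in $B_{p,1}^{1/p}\mathfrak{X}_N$ that you assume is essentially the statement being proved.)

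The paper's route sidesteps all three points. The product estimate, with the correct bound $\|\psi_r\|_{B_{\infty,1}^{1/p}}\lesssim r^{-1/p}$, is used only to get \emph{uniform boundedness} $\|\mathbf{n}_r Y\|_{B_{p,1}^{1/p}}\lesssim\|Y\|_{B_{p,1}^{1/p}}$ for all $Y$, via the coarea bound $r^{-1/p}\|\langle Y,\widetilde{\nu}\rangle\|_{L^p(M_{<4r})}\sim\|\langle Y,\widetilde{\nu}\rangle\|_{L^p(M_{<4r},\avg)}\lesssim\|Y\|_{B_{p,1}^{1/p}}$. Convergence is obtained by a separate argument: approximate $X\in B_{p,1}^{1/p}\mathfrak{X}_N$ by \emph{arbitrary} smooth $X_j$ (not Neumann, so no circularity), apply the interpolation inequality $\|\mathbf{n}_r X_j\|_{B_{p,1}^{1/p}}\lesssim\|\mathbf{n}_r X_j\|_{L^p}^{1/p'}\|\mathbf{n}_r X_j\|_{W^{1,p}}^{1/p}$ together with \Thmref{coarea}(3) to obtain $\limsup_{r\downarrow0}\|\mathbf{n}_r X_j\|_{B_{p,1}^{1/p}}\lesssim\|\langle X_j,\nu\rangle\|_{L^p(\partial M)}$, and finish by the triangle inequality, uniform boundedness, and $\|\langle X_j,\nu\rangle\|_{L^p(\partial M)}\to 0$.
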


\begin{proof}
$\;$
\begin{enumerate}
\item Let $X\in L^{p}\mathfrak{X}$. Then $\mathbf{n}_{r}X\xrightarrow[r\downarrow0]{L^{p}}0$
by shrinking support. If $X\in W^{1,p}\mathfrak{X}_{N},$ then by
\Thmref{coarea} 
\begin{align*}
\left\Vert \mathbf{n}_{r}X\right\Vert _{W^{1,p}} & =\left\Vert \psi_{r}\left\langle X,\widetilde{\nu}\right\rangle \right\Vert _{W^{1,p}\left(M_{<r}\right)}\lesssim\left\Vert \psi_{r}\right\Vert _{W^{1,\infty}\left(M_{<r}\right)}\left\Vert \left\langle X,\widetilde{\nu}\right\rangle \right\Vert _{L^{p}\left(M_{<r}\right)}+\left\Vert \psi_{r}\right\Vert _{L^{\infty}}\left\Vert \left\langle X,\widetilde{\nu}\right\rangle \right\Vert _{W^{1,p}\left(M_{<r}\right)}\\
 & \lesssim\frac{1}{r}\left\Vert \left\langle X,\widetilde{\nu}\right\rangle \right\Vert _{L^{p}\left(M_{<r}\right)}+\left\Vert \left\langle X,\widetilde{\nu}\right\rangle \right\Vert _{W^{1,p}\left(M_{<r}\right)}\lesssim\left\Vert \left\langle X,\widetilde{\nu}\right\rangle \right\Vert _{W^{1,p}\left(M_{<r}\right)}\xrightarrow{r\downarrow0}0
\end{align*}
\item Let $Y\in B_{p,1}^{\frac{1}{p}}\mathfrak{X}.$ As $B_{\infty,\infty}^{\frac{1}{p}}=\left(L^{\infty},W^{1,\infty}\right)_{\frac{1}{p},\infty}$
and $\psi_{r}\in W^{1,\infty},$ we conclude $\left\Vert \psi_{r}\right\Vert _{B_{\infty,\infty}^{\frac{1}{p}}}\lesssim\left\Vert \psi_{r}\right\Vert _{L^{\infty}}^{\frac{1}{p'}}\left\Vert \psi_{r}\right\Vert _{W^{1,\infty}}^{\frac{1}{p}}\lesssim\left(\frac{1}{r}\right)^{\frac{1}{p}}.$\\
 Then by \Thmref{coarea} and \Thmref{product_estimate} : 
\begin{align*}
\left\Vert \mathbf{n}_{r}Y\right\Vert _{B_{p,1}^{\frac{1}{p}}} & \lesssim_{\neg r}\left\Vert \psi_{r}\right\Vert _{B_{\infty,1}^{\frac{1}{p}}\left(M\right)}\left\Vert \left\langle Y,\widetilde{\nu}\right\rangle \right\Vert _{L^{p}\left(M_{<4r}\right)}+\left\Vert \psi_{r}\right\Vert _{L^{\infty}}\left\Vert \left\langle Y,\widetilde{\nu}\right\rangle \right\Vert _{B_{p,1}^{\frac{1}{p}}\left(M\right)}\lesssim\left(\frac{1}{r}\right)^{\frac{1}{p}}\left\Vert \left\langle Y,\widetilde{\nu}\right\rangle \right\Vert _{L^{p}\left(M_{<4r}\right)}+\left\Vert Y\right\Vert _{B_{p,1}^{\frac{1}{p}}}\\
 & \lesssim\left\Vert \left\langle Y,\widetilde{\nu}\right\rangle \right\Vert _{L^{p}\left(M_{<4r},\mathrm{avg}\right)}+\left\Vert Y\right\Vert _{B_{p,1}^{\frac{1}{p}}\left(M\right)}\lesssim_{\neg r}\left\Vert Y\right\Vert _{B_{p,1}^{\frac{1}{p}}}
\end{align*}
Therefore $\left\Vert \mathbf{n}_{r}Y\right\Vert _{B_{p,1}^{1/p}}$
does not blow up as $r\downarrow0$. Then we make a dense convergence
argument: assume $X\in B_{p,1}^{\frac{1}{p}}\mathfrak{X}_{N}$ and
let $X_{j}\in\mathfrak{X}$ such that $X_{j}\xrightarrow{B_{p,1}^{1/p}}X$,
then $\left\Vert \left\langle X_{j},\nu\right\rangle \right\Vert _{L^{p}(\partial M)}\xrightarrow{j\to\infty}0$.
Note that we do not have $\mathbf{n}X_{j}=0$. By \Thmref{coarea}:
\begin{align*}
\left\Vert \mathbf{n}_{r}X_{j}\right\Vert _{B_{p,1}^{\frac{1}{p}}} & \lesssim\left\Vert \mathbf{n}_{r}X_{j}\right\Vert _{L^{p}}^{\frac{1}{p'}}\left\Vert \mathbf{n}_{r}X_{j}\right\Vert _{W^{1,p}}^{\frac{1}{p}}\\
 & \lesssim\left\Vert \left\langle X_{j},\widetilde{\nu}\right\rangle \right\Vert _{L^{p}\left(M_{<r}\right)}^{\frac{1}{p'}}\left(\left\Vert \psi_{r}\right\Vert _{W^{1,\infty}\left(M_{<r}\right)}^{\frac{1}{p}}\left\Vert \left\langle X_{j},\widetilde{\nu}\right\rangle \right\Vert _{L^{p}\left(M_{<r}\right)}^{\frac{1}{p}}+\left\Vert \psi_{r}\right\Vert _{L^{\infty}}^{\frac{1}{p}}\left\Vert \left\langle X_{j},\widetilde{\nu}\right\rangle \right\Vert _{W^{1,p}\left(M_{<r}\right)}^{\frac{1}{p}}\right)\\
 & \lesssim\left\Vert \left\langle X_{j},\widetilde{\nu}\right\rangle \right\Vert _{L^{p}\left(M_{<r}\right)}\left(\frac{1}{r}\right)^{\frac{1}{p}}+\left\Vert \left\langle X_{j},\widetilde{\nu}\right\rangle \right\Vert _{L^{p}\left(M_{<r}\right)}^{\frac{1}{p'}}\left\Vert \left\langle X_{j},\widetilde{\nu}\right\rangle \right\Vert _{W^{1,p}\left(M_{<r}\right)}^{\frac{1}{p}}\\
 & \lesssim r^{\frac{1}{p'}}\left\Vert \left\langle X_{j},\widetilde{\nu}\right\rangle \right\Vert _{W^{1,p}\left(M_{<r}\right)}+\left\Vert \left\langle X_{j},\nu\right\rangle \right\Vert _{L^{p}\left(\partial M\right)}+\left\Vert \left\langle X_{j},\widetilde{\nu}\right\rangle \right\Vert _{W^{1,p}\left(M_{<r}\right)}
\end{align*}
So $\limsup_{r\downarrow0}\left\Vert \mathbf{n}_{r}X_{j}\right\Vert _{B_{p,1}^{\frac{1}{p}}}\lesssim\left\Vert \left\langle X_{j},\nu\right\rangle \right\Vert _{L^{p}\left(\partial M\right)}$
and 
\begin{align*}
\limsup_{r\downarrow0}\left\Vert \mathbf{n}_{r}X\right\Vert _{B_{p,1}^{\frac{1}{p}}} & \lesssim\limsup_{r\downarrow0}\left\Vert \mathbf{n}_{r}\left(X-X_{j}\right)\right\Vert _{B_{p,1}^{\frac{1}{p}}}+\limsup_{r\downarrow0}\left\Vert \mathbf{n}_{r}X_{j}\right\Vert _{B_{p,1}^{\frac{1}{p}}}\\
 & \lesssim\left\Vert X-X_{j}\right\Vert _{B_{p,1}^{\frac{1}{p}}}+\left\Vert \left\langle X_{j},\nu\right\rangle \right\Vert _{L^{p}\left(\partial M\right)}
\end{align*}
As $j$ is arbitrary, let $j\to\infty$ and $\limsup_{r\downarrow0}\left\Vert \mathbf{n}_{r}X\right\Vert _{B_{p,1}^{\frac{1}{p}}}=0$.
\end{enumerate}
\end{proof}
These results hold not just for vector fields, but also for differential
forms once we perform the proper modifications: for differential form
$\omega$, define $\mathbf{n}_{r}\omega=\psi_{r}\widetilde{\nu}^{\flat}\wedge\left(\iota_{\widetilde{\nu}}\omega\right)$,
$\mathbf{t}_{r}\omega=\omega-\mathbf{n}_{r}\omega$, $W^{m,p}\Omega_{r}^{k}=\{\omega\in W^{m,p}\Omega^{k}:\iota_{\widetilde{\nu}}\omega=0\text{ on }M_{<r}\}$,
replace $\left\langle X,\widetilde{\nu}\right\rangle $ with $\iota_{\widetilde{\nu}}\omega$
in the proofs etc. In particular, $B_{p,1}^{\frac{1}{p}}\Omega_{N,0+}^{k}=B_{p,1}^{\frac{1}{p}}\Omega_{N}^{k}$
for $p\in\left(1,\infty\right)$.

\appendix

\section{Complexification\label{sec:Complexification}}

Throughout this small subsection, the \uline{overline always stands
for conjugation}, and not topological closure.

Let $\mathbb{R}X$ be a real NVS, then a \textbf{complexification}
of $\mathbb{R}X$ is a tuple $\left(\mathbb{C}X,\mathbb{R}X\xhookrightarrow{\phi}\mathbb{C}X\right)$
such that
\begin{enumerate}
\item $\mathbb{C}X$ is a complex NVS.
\item $\phi$ is a linear, continuous injection and $\phi(\mathbb{R}X)\oplus i\phi(\mathbb{R}X)=\mathbb{C}X$.
\item $\left\Vert \phi(x)\right\Vert _{\mathbb{C}X}=\left\Vert x\right\Vert _{\mathbb{R}X}$
and $\left\Vert \phi(x)+i\phi(y)\right\Vert _{\mathbb{C}X}=\left\Vert \phi(x)-i\phi(y)\right\Vert _{\mathbb{C}X}$
$\forall x,y\in\mathbb{R}X$.
\end{enumerate}
The last property says $\left\Vert \cdot\right\Vert _{\mathbb{C}X}$
is a \textbf{complexification norm}. By treating $\phi(\mathbb{R}X)$
as the real part, $\forall z\in\mathbb{C}X,$ we can define $\Re z,\Im z$
as the real and imaginary parts respectively, so $z=\Re z+i\Im z$.
Then define $\overline{z}=\Re z-i\Im z$. So $\overline{\lambda z}=\overline{\lambda}\overline{z}\;\forall z\in\mathbb{C}X,\forall\lambda\in\mathbb{C}$.

\paragraph{Construction}

A standard construction of such a complexification is $\mathbb{C}X=\mathbb{R}X\otimes_{\mathbb{R}}\mathbb{C}$.
As $\mathbb{R}X$ is a flat and free $\mathbb{R}$-module, $0\to\mathbb{R}\hookrightarrow\mathbb{C}\stackrel{\Im}{\twoheadrightarrow}\mathbb{R}\to0$
induces $0\to\mathbb{R}X\xhookrightarrow{\phi}\mathbb{C}X\stackrel{\Im}{\twoheadrightarrow}\mathbb{R}X\to0$
as a split short exact sequence and $\mathbb{C}X=\phi(\mathbb{R}X)\oplus i\phi(\mathbb{R}X)$.
Then we can make $\phi$ implicit and not write it again. The representation
$z=x+iy=\left(x,y\right)$\textbf{ }is unique. Easy to see that any
two complexifications of $\mathbb{R}X$ must be isomorphic as $\mathbb{C}$-modules.

We define the \textbf{minimal complexification norm }(also called
\textbf{Taylor norm}) 
\[
\left\Vert x+iy\right\Vert _{T}:=\sup_{\theta\in[0,2\pi]}\left\Vert x\cos\theta-y\sin\theta\right\Vert _{\mathbb{R}X}=\sup_{\theta\in[0,2\pi]}\left\Vert \Re e^{i\theta}\left(x+iy\right)\right\Vert _{\mathbb{R}X}\;\forall x,y\in\mathbb{R}X
\]

Any other complexification norm is equivalent to $\left\Vert \cdot\right\Vert _{T}$.
\begin{proof}
Let $\left\Vert \cdot\right\Vert _{B}$ be another complexification
norm. Then $\left\Vert \Re e^{i\theta}\left(x+iy\right)\right\Vert _{\mathbb{R}X}=\left\Vert \Re e^{i\theta}\left(x+iy\right)\right\Vert _{B}\leq\left\Vert x+iy\right\Vert _{B}$
(\uline{minimal}) and $\left\Vert x+iy\right\Vert _{B}\leq\left\Vert x\right\Vert _{\mathbb{R}X}+\left\Vert y\right\Vert _{\mathbb{R}X}=\left\Vert \Re\left(x+iy\right)\right\Vert _{\mathbb{R}X}+\left\Vert \Re\left(-i\left(x+iy\right)\right)\right\Vert _{\mathbb{R}X}\leq2\left\Vert x+iy\right\Vert _{T}$.
\end{proof}
So the topology of $\mathbb{C}X$ is unique. It is more convenient,
however, to set $\left\Vert x+iy\right\Vert _{\mathbb{C}X}=\left\Vert \left(x,y\right)\right\Vert _{\mathbb{R}X\oplus\mathbb{R}X}=\left(\left\Vert x\right\Vert _{\mathbb{R}X}^{2}+\left\Vert y\right\Vert _{\mathbb{R}X}^{2}\right)^{\frac{1}{2}}\;\forall x,y\in\mathbb{R}X$.
Easy to see that any two complexifications of $\mathbb{R}X$ must
be isomorphic as complex NVS, so we write $\boxed{\mathbb{C}X=\mathbb{R}X\otimes_{\mathbb{R}}\mathbb{C}}$
from this point on, and if $\mathbb{R}X$ is normed, so is $\mathbb{C}X$.
Obviously, if $\mathbb{R}X$ is Banach, so is $\mathbb{C}X$, and
when that happens, we call $\left(\mathbb{R}X,\mathbb{C}X\right)$
a \textbf{Banach complexification couple}.

\paragraph{Real operators}

Let $\left(\mathbb{R}X,\mathbb{C}X\right)$ and $\left(\mathbb{R}Y,\mathbb{C}Y\right)$
be 2 Banach complexification couples.
\begin{itemize}
\item An operator $A:D\left(A\right)\leq\mathbb{C}X\to\mathbb{C}Y$ is called
a \textbf{real operator} when $D\left(A\right)=\mathbb{C}\Re D(A)$
and $A\Re\left(D\left(A\right)\right)\leq\mathbb{R}Y.$ In particular,
$A(x,y)=\left(Ax,Ay\right)$ $\forall x,y\in\mathbb{R}X$.
\item An unbounded $\mathbb{R}$-linear operator $T:D(T)\leq\mathbb{R}X\to\mathbb{R}Y$
has a natural complexified version $T^{\mathbb{C}}=T\otimes_{\mathbb{R}}1_{\mathbb{C}}:\mathbb{C}X\to\mathbb{C}Y$
where $D\left(T^{\mathbb{C}}\right)=\mathbb{C}D\left(T\right)$. Obviously
$T^{\mathbb{C}}$ is a real operator and we write $\left(\mathbb{R}X,\mathbb{C}X\right)\xrightarrow{\left(T,T^{\mathbb{C}}\right)}\left(\mathbb{R}Y,\mathbb{C}Y\right)$.\nomenclature{$\mathbb{C}Y,T^{\mathbb{C}}$}{complexification of spaces and operators \nomrefpage}
\begin{itemize}
\item $\overline{D\left(T^{\mathbb{C}}\right)}=D\left(T^{\mathbb{C}}\right)$
and $T^{\mathbb{C}}\overline{z}=\overline{T^{\mathbb{C}}z}\;\forall z\in\mathbb{C}X$.
\item $T$ is closed $\iff T^{\mathbb{C}}$ is closed. Same for bounded,
compact, densely defined.
\end{itemize}
\item For any unbounded $\mathbb{C}$-linear operator $A:D\left(A\right)\leq\mathbb{C}X\to\mathbb{C}Y$
such that $D\left(A\right)=\mathbb{C}\Re\left(D\left(A\right)\right)$,
define 2 real operators $\left\{ \begin{aligned} & \Re A=\left(\Re\circ\left(\restr{A}{\Re D(A)}\right)\right)^{\mathbb{C}}\\
 & \Im A=\left(\Im\circ\left(\restr{A}{\Re D(A)}\right)\right)^{\mathbb{C}}
\end{aligned}
\right.$\\
Then $A=\Re A+i\Im A.$ We can see that $A$ is real $\iff\Re A=A\iff\Im A=0$.
Also, $A$ is bounded $\iff$ $\Re A,\Im A$ are bounded.
\end{itemize}

\paragraph{Spectrum}

For $\left(\mathbb{R}X,\mathbb{C}X\right)\xrightarrow{\left(T,T^{\mathbb{C}}\right)}\left(\mathbb{R}Y,\mathbb{C}Y\right)$,
define
\begin{itemize}
\item $\rho(T):=\rho\left(T^{\mathbb{C}}\right),\sigma(T):=\sigma\left(T^{\mathbb{C}}\right)$.
\item $\rho_{\mathbb{R}}(T):=\{\lambda\in\mathbb{R}:\lambda-T\text{ is boundedly invertible}\}$
and $\sigma_{\mathbb{R}}(T):=\mathbb{R}\backslash\rho_{\mathbb{R}}(T)$.
\end{itemize}
If $\zeta\in\mathbb{C}$ and $\zeta-T^{\mathbb{C}}$ is boundedly
invertible, so is $\overline{\zeta}-T^{\mathbb{C}}$. So $\overline{\sigma\left(T\right)}=\sigma\left(T\right)$
and $\overline{\rho(T)}=\rho(T)$.\\
For $\lambda\in\mathbb{R}$, $\lambda-T^{\mathbb{C}}$ is boundedly
invertible $\iff$ $\lambda-T$ is boundedly invertible. So $\rho_{\mathbb{R}}(T)=\rho(T)\cap\mathbb{R}$
and $\sigma_{\mathbb{R}}\left(T\right)=\sigma\left(T\right)\cap\mathbb{R}$.

\subparagraph{Semigroup}

\begin{onehalfspace}
$T$ generates an $\mathbb{R}$-linear $C_{0}$ semigroup $\iff$
$T^{\mathbb{C}}$ generates a $\mathbb{C}$-linear $C_{0}$ semigroup.
When that happens, $\left(e^{tT}\right)^{\mathbb{C}}=e^{tT^{\mathbb{C}}}$.
\end{onehalfspace}
\begin{proof}
When either happens, $T$ and $T^{\mathbb{C}}$ are densely defined.
Also, $T-j$ and $T^{\mathbb{C}}-j$ are boundedly invertible for
$j\in\mathbb{N}$ large enough, so $T$ and $T^{\mathbb{C}}$ are
closed. Easy to use Hille-Yosida to show both $T$ and $T^{\mathbb{C}}$
must generate $C_{0}$ semigroups.

As in the proof of Hille-Yosida, define the Yosida approximations
$T_{j}=T\frac{1}{1-\frac{1}{j}T}$, $T_{j}^{\mathbb{C}}=T^{\mathbb{C}}\frac{1}{1-\frac{1}{j}T^{\mathbb{C}}}=\left(T_{j}\right)^{\mathbb{C}}$.
As $T_{j}$ and $T_{j}^{\mathbb{C}}$ are bounded, $\left(e^{tT_{j}}\right)^{\mathbb{C}}=e^{tT_{j}^{\mathbb{C}}}$
by power series expansion. Then $\left(e^{tT}\right)^{\mathbb{C}}=e^{tT^{\mathbb{C}}}$
as $e^{tT}=\lim_{j\to\infty}e^{tT_{j}}$ pointwise.
\end{proof}

\paragraph{Hilbert spaces}

Let $\mathbb{R}H$ be a real Hilbert space with inner product $\left\langle \cdot,\cdot\right\rangle $.
Then $\mathbb{C}H$ is also Hilbert with the inner product
\[
\left\langle x_{1}+iy_{1},x_{2}+iy_{2}\right\rangle _{\mathbb{C}H}:=\left\langle x_{1},x_{2}\right\rangle +\left\langle y_{1},y_{2}\right\rangle +i\left(\left\langle y_{1},x_{2}\right\rangle -\left\langle x_{1},y_{2}\right\rangle \right)\;\forall x_{j},y_{j}\in\mathbb{R}H
\]

Then $\left\Vert x+iy\right\Vert _{\mathbb{C}H}=\left(\left\Vert x\right\Vert _{\mathbb{R}H}^{2}+\left\Vert y\right\Vert _{\mathbb{R}H}^{2}\right)^{\frac{1}{2}}\;\forall x,y\in\mathbb{R}H$,
consistent with our previously chosen norm.

Also, $\left\langle z_{1},z_{2}\right\rangle _{\mathbb{C}H}=\overline{\left\langle z_{2},z_{1}\right\rangle _{\mathbb{C}H}}\;\forall z_{1},z_{2}\in\mathbb{C}H.$

Let $\left(A,A^{\mathbb{C}}\right):\left(\mathbb{R}H,\mathbb{C}H\right)\to\left(\mathbb{R}H,\mathbb{C}H\right)$
be unbounded.
\begin{itemize}
\item $A$ is symmetric $\iff A^{\mathbb{C}}$ is symmetric. When that happens,
$\left\langle Ax+iAy,x+iy\right\rangle _{\mathbb{C}H}=\left\langle Ax,x\right\rangle +\left\langle Ay,y\right\rangle \;\forall x,y\in\mathbb{R}H$.
\item $\mathbb{C}\left(\mathbb{R}H\oplus\mathbb{R}H\right)=\mathbb{C}H\oplus\mathbb{C}H$
and $G\left(A^{\mathbb{C}}\right)=\mathbb{C}G\left(A\right)$ (graphs).
Also $\mathbb{C}\left(G\left(A\right)^{\perp}\right)=G\left(A^{\mathbb{C}}\right)^{\perp}$.
\item $A$ is self-adjoint $\iff A^{\mathbb{C}}$ is self-adjoint. When
this happens, $\sigma(A)=\sigma(A^{\mathbb{C}})\subset\mathbb{R}$.
\item $A$ is dissipative $\iff$ $A^{\mathbb{C}}$ is dissipative.
\end{itemize}
For more information on complexification, see \parencite[Appendix C]{Glueck_thesis}. 

\printnomenclature{}

\printbibliography
\end{document}